\theoremstyle{plain}
\newtheorem{thm}{Theorem}[section]
\newtheorem{definition}{Definition}
\newtheorem{lemma}[thm]{Lemma}
\newtheorem{corollary}[thm]{Corollary}
\newtheorem{remark}[thm]{Remark}
\numberwithin{equation}{section}
\theoremstyle{remark}
\def\Xint#1{\mathchoice
  {\XXint\displaystyle\textstyle{#1}}%
  {\XXint\textstyle\scriptstyle{#1}}%
  {\XXint\scriptstyle\scriptscriptstyle{#1}}%
  {\XXint\scriptscriptstyle\scriptscriptstyle{#1}}%
  \!\int}
\def\XXint#1#2#3{{\setbox0=\hbox{$#1{#2#3}{\int}$}
  \vcenter{\hbox{$#2#3$}}\kern-.5\wd0}}
\def\dashint{\Xint-}
\title{\textbf{The Methods of Layer Potentials for General Elliptic Homogenization Problems \\
in Lipschitz Domains}}
\author{ Qiang Xu
\thanks{Corresponding author.}
\thanks{Email: xuqiang@math.pku.edu.cn.}\\
School of Mathematical Sciences, Peking University, \\
Beijing, 100871, PR China. \vspace{0.2cm}\\
Peihao Zhao
\thanks{Email: zhaoph@lzu.edu.cn}\\
School of Mathematics and Statistics, Lanzhou University, \\
Lanzhou, 710000, PR China. \vspace{0.2cm}\\
Shulin Zhou
\thanks{Email: szhou@math.pku.edu.cn.}\\
School of Mathematical Sciences, Peking University, \\
Beijing, 100871, PR China.
}
\begin{document}
\allowdisplaybreaks
\maketitle
\begin{abstract}
In terms of layer potential methods,
this paper is devoted to study the $L^2$ boundary value problems for nonhomogeneous elliptic
operators with rapidly oscillating coefficients in a periodic setting.
Under a low regularity assumption on the coefficients,
we establish the solvability for
Dirichlet, regular and Neumann problems in a bounded Lipschitz domain, as well as,
the uniform nontangential maximal function estimates and square function estimates.
The main difficulty is reflected in two aspects:
(i) we can not treat the lower order terms
as a compact perturbation to the leading term due to the low regularity assumption;
(ii) the nonhomogeneous systems do not possess a scaling-invariant property in general.
Although this work may be regarded as a follow-up to C. Kenig and Z. Shen's in \cite{SZW24},
we make an effort to find a clear way of how to handle the nonhomogeneous operators
by using the known results of the homogenous ones. Also, we mention that the periodicity condition
plays a key role in the scaling-invariant estimates.
\\
\textbf{Key words:} Layer potential; elliptic homogenization; Lipschitz domains.
\end{abstract}

\tableofcontents

\section{Introduction and main results}

The quantitative results for the general elliptic systems with Dirichlet or Neumann boundary conditions,
arising in the periodic homogenization theory, have been systematically studied.
The papers \cite{QXS,QXS1} mainly concerned the optimal uniform regularity estimates,
which were derived from the related results of M. Avellaneda,
F. Lin \cite{MAFHL,MAFHL5,MAFHL3}
and, of C. Kenig, F. Lin, Z. Shen \cite{SZW12}, respectively.
In terms of convergence rates,
the paper \cite{QX2} received an almost sharp error estimate $O(\varepsilon\ln(1/\varepsilon))$
in Lipschitz domains under no regularity assumption on the coefficients, inspired by
C. Kenig, F. Lin, Z. Shen \cite{SZW12} and by T. Suslina \cite{TS}.

In this paper, we turn to study the well-posedness of the $L^2$ Dirichlet, regularity, and Neumann problems
for nonhomogeneous elliptic systems with rapidly oscillating periodic coefficients.
More precisely, we continue to consider the following operators depending on a parameter $\varepsilon > 0$,
\begin{eqnarray*}
\mathcal{L}_{\varepsilon} =
-\text{div}\big[A(x/\varepsilon)\nabla +V(x/\varepsilon)\big]
+ B(x/\varepsilon)\nabla +c(x/\varepsilon) + \lambda I,
\end{eqnarray*}
where $\lambda>0$ is a constant, and $I$ denotes the identity matrix.

\subsection{Basic assumptions}

Let $d\geq 3$, $m\geq 1$, and $1 \leq i,j \leq d$.
Suppose that $A = (a_{ij}^{\alpha\beta})$, $V=(V_i^{\alpha\beta})$,
$B=(B_i^{\alpha\beta})$ and $c=(c^{\alpha\beta})$ are real measurable functions,
satisfying the following conditions:
\begin{itemize}
\item the uniform ellipticity condition
\begin{equation}\label{a:1}
 \mu |\xi|^2 \leq a_{ij}^{\alpha\beta}(y)\xi_i^\alpha\xi_j^\beta\leq \mu^{-1} |\xi|^2
 \quad \text{for}~y\in\mathbb{R}^d,~\xi\in\mathbb{R}^{md},
 ~\text{where}~ \mu>0;
\end{equation}
 (The summation convention for repeated indices is used throughout.)
\item the periodicity condition
\begin{equation}\label{a:2}
A(y+z) = A(y),~~ V(y+z) = V(y),
~~B(y+z) = B(y),~~ c(y+z) = c(y)
\end{equation}
for $y\in\mathbb{R}^d$ and $z\in \mathbb{Z}^d$;
\item the boundedness condition
\begin{equation}\label{a:3}
\max\big\{\|V\|_{L^{\infty}(\mathbb{R}^d)},
 ~\|B\|_{L^{\infty}(\mathbb{R}^d)},~\|c\|_{L^{\infty}(\mathbb{R}^d)}\big\}
 \leq \kappa;
\end{equation}
\item the regularity condition
\begin{equation}\label{a:4}
 \max\big\{ \|A\|_{C^{0,\tau}(\mathbb{R}^d)},~ \|V\|_{C^{0,\tau}(\mathbb{R}^d)},
 ~\|B\|_{C^{0,\tau}(\mathbb{R}^d)}\big\} \leq \kappa,
 \qquad \text{where}~\tau\in(0,1)~\text{and}~\kappa > 0.
\end{equation}
\end{itemize}


Throughout the paper, we always assume $\Omega\subset\mathbb{R}^d$ is a bounded Lipschitz domain, and $R_0$ denotes
the diameter of $\Omega$, unless otherwise stated. To establish the existence,
the following constant is crucial,
\begin{equation}\label{KEY:1}
\lambda_0 = \frac{c(m,d)}{\mu}\Big\{\|V\|_{L^\infty(\mathbb{R}^d)}^2 +
\|B\|_{L^\infty(\mathbb{R}^d)}^2+ \|c\|_{L^\infty(\mathbb{R}^d)}\Big\}.
\end{equation}

\subsection{Main results}

\begin{thm}[scaling-invariant estimates]\label{thm:0.1}
Let $B=B(0,1)$ be a unite ball in $\mathbb{R}^d$.
Suppose that the coefficients of $\mathcal{L}_\varepsilon$
satisfy $\eqref{a:1}$ and $\eqref{a:3}$ with $\lambda\geq\lambda_0$.
Let $u_\varepsilon\in H^1(4B;\mathbb{R}^m)$ be a weak solution to
$\mathcal{L}_\varepsilon(u_\varepsilon) = 0$ in $4B$, satisfying
\begin{equation*}
 \bigg(\dashint_{4B} |u_\varepsilon|^2\bigg)^{1/2}
 =  \bigg(\frac{1}{|4B|}\int_{4B} |u_\varepsilon|^2dx\bigg)^{1/2}\leq 1.
\end{equation*}
\begin{itemize}
  \item If $A\in\emph{VMO}(\mathbb{R}^d)$ satisfies $\eqref{a:2}$,
  then for any $2\leq p<\infty$, there exists a constant
  \begin{equation}\label{KEY:2}
  C_p = C_p\Big(\mu,m,d,p,\|A\|_{\emph{VMO}},
  \|V/\sqrt{\lambda}\|_{L^\infty(\mathbb{R}^d)},
  \|B/\sqrt{\lambda}\|_{L^\infty(\mathbb{R}^d)},
  \|c/\lambda\|_{L^\infty(\mathbb{R}^d)}\Big)
  \end{equation}
  such that
\begin{equation}\label{pri:0.1}
\|u_\varepsilon\|_{W^{1,p}(B)}
\leq C_p.
\end{equation}
 Moreover, for any $\sigma\in(0,1)$, there exists a constant
 $C_\sigma = C_p$ with $p=d/(1-\sigma)$ such that
 \begin{equation}\label{pri:0.2}
 (\emph{i})~~[u_\varepsilon]_{C^{0,\sigma}(B)} \leq C_\sigma,
 \qquad\quad (\emph{ii})~~\|u_\varepsilon\|_{L^{\infty}(B)} \leq C_\sigma.
 \end{equation}
\item  If the coefficients $A$ and $V$ satisfy $\eqref{a:2}$, $\eqref{a:4}$, then
there exists a constant
  \begin{equation}\label{KEY:3}
  C_\tau =
  C_\tau\Big(\mu,m,d,\tau,\big[A\big]_{C^{0,\tau}(\mathbb{R}^d)},
  \big[V/\sqrt{\lambda}\big]_{C^{0,\tau}(\mathbb{R}^d)},
  C_{\bar{p}}\Big) \text{~with~~} \bar{p}>d,
  \end{equation}
  such that
\begin{equation}\label{pri:0.3}
\begin{aligned}
\|\nabla u_\varepsilon\|_{L^\infty(B)}\leq C_\tau.
\end{aligned}
\end{equation}
\end{itemize}
\end{thm}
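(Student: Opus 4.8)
The plan is to regard $\mathcal{L}_\varepsilon$ as a perturbation of the operator $\mathcal{L}_\varepsilon^{0,\lambda}:=-\text{div}(A(x/\varepsilon)\nabla\,\cdot\,)+\lambda I$ (and, for the gradient bound, of $\mathcal{L}'_\varepsilon:=-\text{div}(A(x/\varepsilon)\nabla\,\cdot\,+V(x/\varepsilon)\,\cdot\,)+\lambda I$), whose uniform interior estimates are already available, to move the genuinely lower-order terms to the right-hand side, and to re-insert the remaining $V$-term either through an iteration (for $W^{1,p}$) or by keeping it attached to the flux (for the Lipschitz bound). The hypothesis $\lambda\ge\lambda_0$ is used throughout to let $\lambda I$ dominate the contributions of $V,B,c$, and it is by keeping $\lambda$ with the principal part — so that the data carry weights $\lambda^{-1/2}$ and $\lambda^{-1}$ — that the dimensionless norms $\|V/\sqrt\lambda\|_{L^\infty}$, $\|B/\sqrt\lambda\|_{L^\infty}$, $\|c/\lambda\|_{L^\infty}$ in $\eqref{KEY:2}$, $\eqref{KEY:3}$ are the only coefficient quantities that appear.

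\emph{Proof of $\eqref{pri:0.1}$--$\eqref{pri:0.2}$.} First I would record a $\lambda$-weighted Caccioppoli inequality $\big(\dashint_{2B}|\nabla u_\varepsilon|^2\big)^{1/2}+\sqrt\lambda\,\big(\dashint_{2B}|u_\varepsilon|^2\big)^{1/2}\le C$, obtained by testing $\mathcal{L}_\varepsilon u_\varepsilon=0$ against $\varphi^2u_\varepsilon$, using $\eqref{a:1}$, $\eqref{a:3}$ and Young's inequality, and absorbing the $V,B,c$ contributions into $\lambda\int\varphi^2|u_\varepsilon|^2$ thanks to $\lambda\ge\lambda_0$; the constant depends only on $\mu,m,d$ and the three dimensionless ratios. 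Rewriting $\mathcal{L}_\varepsilon u_\varepsilon=0$ as $\mathcal{L}_\varepsilon^{0,\lambda}u_\varepsilon=\text{div}(V(x/\varepsilon)u_\varepsilon)-B(x/\varepsilon)\nabla u_\varepsilon-c(x/\varepsilon)u_\varepsilon$, I invoke the known uniform interior $W^{1,p}$ estimate for $\mathcal{L}_\varepsilon^{0,\lambda}$ with $A\in\mathrm{VMO}$ satisfying $\eqref{a:2}$ (cf.\ \cite{SZW12,QXS1}) in its $\lambda$-weighted form, under which the divergence datum contributes $\|V/\sqrt\lambda\|_{L^\infty}\sqrt\lambda\|u_\varepsilon\|_{L^p}$ and the non-divergence datum contributes $\|B/\sqrt\lambda\|_{L^\infty}\|\nabla u_\varepsilon\|+\|c/\lambda\|_{L^\infty}\sqrt\lambda\|u_\varepsilon\|$ in the relevant norms, so only the dimensionless ratios enter. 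A bootstrap then closes the argument: starting from $p_0=2$, Sobolev embedding upgrades $L^{p_k}$-control of $\nabla u_\varepsilon$ and $u_\varepsilon$ to $L^{p_k^{*}}$-control of $u_\varepsilon$ (with $1/p_k^{*}=1/p_k-1/d$), which is exactly what is needed to run the estimate at the next exponent $p_{k+1}=p_k^{*}$; after finitely many steps, each on a slightly smaller ball inside $4B$, one reaches every $p<\infty$ with constant of the form $\eqref{KEY:2}$. Finally $\eqref{pri:0.2}$ is Morrey's embedding $W^{1,p}\hookrightarrow C^{0,\sigma}$ with $p=d/(1-\sigma)$, applied on an intermediate ball.

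\emph{Proof of $\eqref{pri:0.3}$.} By the first part, $\|u_\varepsilon\|_{C^{0,\sigma}(2B)}+\|\nabla u_\varepsilon\|_{L^{\bar p}(2B)}\le C_{\bar p}$ for some $\bar p>d$. Here the recast above is useless for a Lipschitz bound: the divergence datum $V(x/\varepsilon)u_\varepsilon$ has $C^{0,\tau}$-seminorm of order $\varepsilon^{-\tau}$, so it cannot be fed as a Hölder datum into an Avellaneda--Lin-type Lipschitz estimate for the homogeneous operator. Instead I keep $V$ attached to the flux: the equation becomes $\mathcal{L}'_\varepsilon u_\varepsilon=-B(x/\varepsilon)\nabla u_\varepsilon-c(x/\varepsilon)u_\varepsilon=:h_\varepsilon$, whose coefficients are undifferentiated, so $\|h_\varepsilon\|_{L^{\bar p}(2B)}$ is controlled (with a $\lambda^{-1}$ weight exhibiting $\|B/\sqrt\lambda\|_{L^\infty},\|c/\lambda\|_{L^\infty}$) by $C_{\bar p}$. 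I then apply the uniform interior Lipschitz estimate for $\mathcal{L}'_\varepsilon$ with coefficients satisfying $\eqref{a:2}$, $\eqref{a:4}$, proved by the Avellaneda--Lin compactness scheme (cf.\ \cite{MAFHL,QXS}) — in which the oscillation of $V(x/\varepsilon)$ in the flux is absorbed into the homogenization of $\mathcal{L}'_\varepsilon$ through its correctors, and it is precisely here that the $C^{0,\tau}$-regularity of $V$ (and of $A$) is used — while the $L^{\bar p}$-datum with $\bar p>d$ is handled by the classical non-divergence component of that estimate. This gives $\|\nabla u_\varepsilon\|_{L^\infty(B)}\le C\big(\|u_\varepsilon\|_{L^2(2B)}+\|h_\varepsilon\|_{L^{\bar p}(2B)}\big)\le C_\tau$ of the form $\eqref{KEY:3}$.

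\emph{Scaling-invariance and the main obstacle.} Every constant above has the asserted dependence because under the rescaling $v(x)=u_\varepsilon(Rx)$ the operator is sent to one of the same type with coefficients $\big(A(R\cdot/\varepsilon),\,RV(R\cdot/\varepsilon),\,RB(R\cdot/\varepsilon),\,R^2c(R\cdot/\varepsilon),\,\lambda R^2\big)$, under which $\|V/\sqrt\lambda\|_{L^\infty}$, $\|B/\sqrt\lambda\|_{L^\infty}$, $\|c/\lambda\|_{L^\infty}$ are invariant and $[V/\sqrt\lambda]_{C^{0,\tau}}$ is non-increasing for $R\le\varepsilon$; together with periodicity of the coefficients and $\lambda\ge\lambda_0$ (which keeps every rescaled operator coercive), this is what makes both the $W^{1,p}$ bootstrap and the compactness iteration close uniformly. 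The one genuine difficulty — the reason the lower-order terms cannot be treated as a compact perturbation — is the $\text{div}(V(x/\varepsilon)\,\cdot\,)$ part in the Lipschitz estimate: it must be homogenized rather than perturbed away, and the construction and estimates of the associated correctors is the step I expect to require the most care.
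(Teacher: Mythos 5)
Your plan for $\eqref{pri:0.1}$--$\eqref{pri:0.2}$ is essentially the one the paper follows: the $W^{1,p}$ bootstrap for $\mathcal{L}_\varepsilon^{0,\lambda}$ with $A\in\mathrm{VMO}$, together with iterated Caccioppoli to trade the finitely many powers of $\|V\|,\|B\|,\|c\|,\lambda$ produced by the bootstrap against powers of $\lambda^{-1/2}$ coming from $\|u_\varepsilon\|_{L^2}$, so that only the dimensionless ratios $\|V/\sqrt\lambda\|_\infty,\|B/\sqrt\lambda\|_\infty,\|c/\lambda\|_\infty$ survive. The paper simply starts from the already-known estimate of \cite[Thm.\ 3.3]{QXS} (with its explicit $(X+Y+Z)^{k_0}$ constant, $k_0=[d/2]+1$) rather than re-running the bootstrap, and then uses the refined Caccioppoli inequality $\eqref{pri:2.1}$ to extract the factor $\lambda^{-k/2}$. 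You would need to supply the $\lambda$-weighted normalization of the intermediate $W^{1,p}$ estimates you invoke, but the mechanism is the same.

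The genuine gap is in the Lipschitz bound, and it is exactly the step you flag as ``requiring the most care.'' You propose to apply a uniform interior Lipschitz estimate for $\mathcal{L}'_\varepsilon=-\mathrm{div}(A(x/\varepsilon)\nabla+V(x/\varepsilon)\,\cdot)+\lambda I$ with an $L^{\bar p}$ right-hand side, citing the Avellaneda--Lin compactness scheme. But no such estimate with the constant shape $\eqref{KEY:3}$ is available in \cite{MAFHL} (which covers only $-\mathrm{div}(A(x/\varepsilon)\nabla)$), and although \cite[Thm.\ 4.4]{QXS} does prove a Lipschitz bound for this operator, the paper points out explicitly that the constant there is \emph{not} scaling-invariant: the transformation $u_\varepsilon=[I+\varepsilon\chi_0(x/\varepsilon)]v_\varepsilon$ behind that estimate rescales badly when $R>1$ (the rescaled operator changes both its periodicity and its $C^{0,\tau}$ bound). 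Establishing the estimate with the constant in the form $\eqref{KEY:3}$ is precisely the content of Steps 2--4 of the paper's proof, and it is not a routine citation. The paper's route differs from yours in a substantive way: rather than homogenizing $\mathcal{L}'_\varepsilon$ directly via new correctors, the paper performs the multiplicative change of unknown $u_\varepsilon=T(x,\varepsilon)v_\varepsilon=[I+\varepsilon\chi_0(x/\varepsilon)]v_\varepsilon$, which converts the equation into $-\mathrm{div}(A(x/\varepsilon)\nabla v_\varepsilon)=\mathrm{div}(\tilde f+\varepsilon A\chi_0\nabla v_\varepsilon)+\widetilde F$ with $\tilde f=\varepsilon V\chi_0 v_\varepsilon$ of size $O(\varepsilon)$ in $C^{0,\sigma_1}$, so that the \emph{pure} Avellaneda--Lin estimate for $L_\varepsilon=-\mathrm{div}(A(x/\varepsilon)\nabla)$ applies; the argument is then closed by the nonuniform intermediate estimates $\eqref{f:0.5}$--$\eqref{f:0.6}$, the fixed interior Schauder bound $\eqref{f:0.3}$, and a choice $\sigma_1\le\min\{\tau,\sigma/2\}$ that makes the $\varepsilon^{\sigma-2\sigma_1}$ factor benign for $\varepsilon\le\varepsilon_0$, with the case $\varepsilon\in[\varepsilon_0,1]$ handled separately. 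Your closing paragraph's remark that $[V/\sqrt\lambda]_{C^{0,\tau}}$ is non-increasing under rescaling for $R\le\varepsilon$ is true but does not address the troublesome regime $R>1$, which is where the paper has to work.
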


Here the definition of the space $\text{VMO}(\mathbb{R}^d)$ may be found in \cite[pp.43]{S4}.

\begin{definition}
We say that the operator
$\mathcal{L}_\varepsilon$ satisfies the properties $(\emph{H}_1)$ or $(\emph{H}_2)$,
if there exists
a constant $C_0>0$ such that the fundamental
solution $\mathbf{\Gamma}_\varepsilon(x,y)$ associated with $\mathcal{L}_\varepsilon$
has the size estimate
$$
 \big|\mathbf{\Gamma}_\varepsilon(x,y)\big|\leq C_0|x-y|^{2-d}
\leqno{(\emph{H}_1)}
$$
for any $x,y\in\mathbb{R}^d$ with $x\not= y$,
or there exists a constant $C_{00}>0$ such that we have the decay estimates
$$
\begin{aligned}
\big|\nabla_x\mathbf{\Gamma}_\varepsilon(x,y)\big|
+ \big|\nabla_y\mathbf{\Gamma}_\varepsilon(x,y)\big|
&\leq C_{00}|x-y|^{1-d}, \\
\big|\nabla_x\nabla_y\mathbf{\Gamma}_\varepsilon(x,y)\big|
&\leq C_{00}|x-y|^{-d}.
\end{aligned} \leqno{(\emph{H}_2)}
$$
\end{definition}

\begin{thm}[local boundedness properties]\label{thm:0.2}
Let $B=B(x,R)$ for some $x\in{\mathbb{R}^d}$ and $R>0$.
Suppose that the coefficients of $\mathcal{L}_\varepsilon$
satisfy $\eqref{a:1}$ and $\eqref{a:3}$ with $\lambda\geq\lambda_0$.
Let $u_\varepsilon\in H^1(4B;\mathbb{R}^m)$ be a weak solution to
$\mathcal{L}_\varepsilon(u_\varepsilon) = \emph{div}(f) + F$ in $4B$,
where $f\in L^p(4B;\mathbb{R}^{md})$ with $p>d$ and,
$F\in L^q(4B;\mathbb{R}^m)$ with $q>(d/2)$.
\begin{itemize}
  \item If the operator $\mathcal{L}_\varepsilon$ satisfies the property $(\emph{H}_1)$,
  then there exists a constant $C_1$, independent of $R$ and $\varepsilon$, such that
\begin{equation}\label{pri:0.4}
\|u_\varepsilon\|_{L^\infty(B)}
\leq C_1\Bigg\{\Big(\dashint_{2B} |u_\varepsilon|^2 \Big)^{1/2}
+ R\Big(\dashint_{2B}|f|^p \Big)^{1/p}
+R^2\Big(\dashint_{2B}|F|^q \Big)^{1/q}\Bigg\}
\end{equation}
where $C_1$ depends on $\mu,d,m,p,q,
\|V/\sqrt{\lambda}\|_{L^\infty(\mathbb{R}^d)},
\|B/\sqrt{\lambda}\|_{L^\infty(\mathbb{R}^d)}$ and $C_0$.
\item If the operator $\mathcal{L}_\varepsilon$
additionally satisfies $(\emph{H}_2)$,
then one may derive
\begin{equation}\label{pri:0.5}
\begin{aligned}
\|\nabla u_\varepsilon\|_{L^\infty(B)}
\leq C_2\Bigg\{\frac{1}{R}\Big(\dashint_{2B} |u_\varepsilon|^2 \Big)^{1/2}
+ \|f\|_{L^\infty(2B)} + R^\sigma[f]_{C^\sigma(2B)}
+R\Big(\dashint_{2B}|F|^p \Big)^{1/p}\Bigg\},
\end{aligned}
\end{equation}
where $\sigma\in(0,1)$, and $C_2$ depends on
$m,d,p,q,\sigma,\|V/\sqrt{\lambda}\|_{L^\infty(\mathbb{R}^d)},
\|B/\sqrt{\lambda}\|_{L^\infty(\mathbb{R}^d)},C_0$ and $C_{00}$.
\end{itemize}
\end{thm}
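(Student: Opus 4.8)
The plan is to reduce both estimates to interior estimates for a related homogeneous (constant lower-order-free is not possible, but zero-order-free) problem by freezing the lower order terms and absorbing them into the right-hand side, and then invoke the representation formula via the fundamental solution $\mathbf{\Gamma}_\varepsilon$. First I would rescale: setting $v_\varepsilon(y) = u_\varepsilon(x + Ry)$ and $\delta = \varepsilon/R$, the function $v_\varepsilon$ solves an equation of the same type on $4B(0,1)$ with coefficients $A(Ry/\varepsilon)$, etc., and with right-hand side $\operatorname{div}(Rf(x+Ry)) + R^2 F(x+Ry)$; crucially the hypothesis $\lambda \geq \lambda_0$ is scaling invariant in the normalized variables $V/\sqrt{\lambda}$, $B/\sqrt{\lambda}$, $c/\lambda$, so the constants $C_0$, $C_{00}$ in $(\mathrm{H}_1)$, $(\mathrm{H}_2)$ are unchanged. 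This reduces everything to the case $R=1$, and the factors $R$, $R^2$ in front of the $f$- and $F$-norms appear exactly as in \eqref{pri:0.4}.

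For the first estimate \eqref{pri:0.4}, I would use the Green/fundamental-solution representation on a slightly smaller ball: for $B' = B(x,3/2)$ write $u_\varepsilon = w_\varepsilon + h_\varepsilon$ where $w_\varepsilon$ is the Newtonian-type potential of the data $\operatorname{div}(f) + F$ against $\mathbf{\Gamma}_\varepsilon$ on $B'$, and $h_\varepsilon$ solves $\mathcal{L}_\varepsilon h_\varepsilon = 0$. The potential part is controlled by $(\mathrm{H}_1)$: since $|\mathbf{\Gamma}_\varepsilon(x,y)| \leq C_0 |x-y|^{2-d}$, one gets $\|w_\varepsilon\|_{L^\infty} \lesssim \|f\|_{L^p} + \|F\|_{L^q}$ for $p > d$, $q > d/2$ by the standard Hölder/Young estimate for the kernels $|x-y|^{2-d}$ and $|x-y|^{1-d}$ (the latter from integrating by parts the $\operatorname{div}(f)$ term, which moves a derivative onto $\mathbf{\Gamma}_\varepsilon$ — but since we only have the size bound $(\mathrm{H}_1)$ and not $(\mathrm{H}_2)$, I would instead keep $f$ and test against $\nabla_y$ only after using Caccioppoli, or simply estimate the div term via duality and the $L^{p'}\to L^{p}$ boundedness; this is the one delicate bookkeeping point). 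The homogeneous part $h_\varepsilon = u_\varepsilon - w_\varepsilon$ then satisfies the De Giorgi–Nash–Moser-type local boundedness $\|h_\varepsilon\|_{L^\infty(B)} \lesssim (\dashint_{2B}|h_\varepsilon|^2)^{1/2}$, which follows from \eqref{a:1}, \eqref{a:3} together with $\lambda \geq \lambda_0$ (the Moser iteration works uniformly in $\varepsilon$ because the lower-order terms are absorbed using $\lambda_0$); combining and using the triangle inequality to pass back from $h_\varepsilon$ to $u_\varepsilon$ yields \eqref{pri:0.4}.

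For the gradient estimate \eqref{pri:0.5}, the same decomposition is used but now with the stronger hypothesis $(\mathrm{H}_2)$. Differentiating the representation formula, $\nabla w_\varepsilon$ is expressed through $\nabla_x \mathbf{\Gamma}_\varepsilon$ and $\nabla_x \nabla_y \mathbf{\Gamma}_\varepsilon$; the bound $|\nabla_x \mathbf{\Gamma}_\varepsilon(x,y)| \leq C_{00}|x-y|^{1-d}$ handles the $F$-term giving $R(\dashint|F|^p)^{1/p}$, and the bound $|\nabla_x\nabla_y \mathbf{\Gamma}_\varepsilon(x,y)| \leq C_{00}|x-y|^{-d}$ together with a Hölder modulus-of-continuity splitting of $f$ (subtracting $f(x)$ to exploit the cancellation of the Calderón–Zygmund-type kernel $|x-y|^{-d}$) produces the $\|f\|_{L^\infty} + R^\sigma [f]_{C^\sigma}$ terms. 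The remainder $h_\varepsilon = u_\varepsilon - w_\varepsilon$ is $\mathcal{L}_\varepsilon$-harmonic, and I would apply the interior Lipschitz estimate $\|\nabla h_\varepsilon\|_{L^\infty(B)} \lesssim (\dashint_{2B}|h_\varepsilon|^2)^{1/2}$ — which is precisely the scaling-invariant estimate \eqref{pri:0.3} of Theorem \ref{thm:0.1} (valid once $A, V \in C^{0,\tau}$, which is subsumed in whatever makes $(\mathrm{H}_2)$ hold) — and then recombine.

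The main obstacle, in my view, is the $\operatorname{div}(f)$ term under only the weak hypothesis $(\mathrm{H}_1)$ in the first part: without a gradient bound on $\mathbf{\Gamma}_\varepsilon$ one cannot naively integrate by parts, so the cleanest route is to first establish an interior Caccioppoli/reverse-Hölder bound controlling $\|u_\varepsilon\|_{L^\infty}$ by lower-order norms of $u_\varepsilon$ and the data in Lebesgue spaces via Moser iteration applied directly to the equation $\mathcal{L}_\varepsilon u_\varepsilon = \operatorname{div}(f) + F$ (treating $f$, $F$ as inhomogeneities in the iteration), thereby avoiding the fundamental solution altogether for \eqref{pri:0.4} and invoking it only for the more refined \eqref{pri:0.5}. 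Care must also be taken that every constant that enters depends on $V,B,c$ only through the scale-invariant combinations $V/\sqrt{\lambda}$, $B/\sqrt{\lambda}$, $c/\lambda$, which is ensured by systematically carrying out the rescaling $y \mapsto x + Ry$ before applying any of the iteration or potential estimates.
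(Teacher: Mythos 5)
Your overall framework—localize with a cutoff, represent $u_\varepsilon$ through $\mathbf{\Gamma}_\varepsilon$, estimate the potential by the kernel bounds, and handle the Hölder modulus of $f$ by subtracting $f(x)$ to exploit the cancellation in the $|x-y|^{-d}$ kernel under $(\text{H}_2)$—matches the paper's strategy, and your observation that the hypothesis $\lambda\geq\lambda_0$ and the constants $C_0,C_{00}$ are scale-invariant, so one may rescale to $R=1$, is correct and is tacitly used. However, there is one serious gap that undermines both your main route and your proposed ``cleanest route,'' and a second, smaller point you flag but do not resolve.

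The serious gap is the repeated appeal to De Giorgi--Nash--Moser iteration. You claim the local boundedness $\|h_\varepsilon\|_{L^\infty(B)}\lesssim(\dashint_{2B}|h_\varepsilon|^2)^{1/2}$ for the $\mathcal{L}_\varepsilon$-harmonic remainder follows from \eqref{a:1}, \eqref{a:3} and $\lambda\geq\lambda_0$ alone, and you further propose to prove \eqref{pri:0.4} ``avoiding the fundamental solution altogether'' by applying Moser iteration directly to $\mathcal{L}_\varepsilon u_\varepsilon=\operatorname{div}(f)+F$. This is false for $m>1$: the theorem is for elliptic \emph{systems}, where local boundedness of weak solutions with merely bounded measurable coefficients fails in general (De Giorgi's counterexample). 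The hypothesis $(\text{H}_1)$ is in the theorem precisely because this boundedness is not automatic. Indeed Theorem~\ref{thm:2.1} establishes that $(\text{H}_1)$, the estimate (ii) of \eqref{pri:0.2}, and \eqref{pri:0.4} are pairwise \emph{equivalent}; if Moser iteration gave \eqref{pri:0.4} from structure conditions alone, $(\text{H}_1}$ would be a free consequence and the equivalence would collapse. To control the homogeneous piece one must invoke $(\text{H}_1)$ (or the local boundedness \eqref{pri:2.2.1} derived from it), not a scalar iteration argument. The paper sidesteps the decomposition $u_\varepsilon=w_\varepsilon+h_\varepsilon$ entirely by writing the exact representation of $u_\varepsilon(x)$ from $\mathcal{L}_\varepsilon(\varphi u_\varepsilon)=\dots$, so the ``boundary layer'' terms carrying $\nabla\varphi$ take the place of your $h_\varepsilon$; they live on an annulus away from $x$ where $\mathbf{\Gamma}_\varepsilon(x,\cdot)$ is regular.

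The second point concerns the $\operatorname{div}(f)$ term under $(\text{H}_1)$, which you correctly identify as delicate but leave unresolved. The device the paper uses is worth recording: after integrating by parts to obtain $\int\nabla_y\mathbf{\Gamma}_\varepsilon(x,y)f(y)\varphi(y)\,dy$, one splits the domain into dyadic annuli $\{2^{-k-1}R\leq|x-y|\leq 2^{-k}R\}$. On each annulus $y\mapsto\mathbf{\Gamma}_\varepsilon(x,y)$ solves the adjoint equation with no singularity, so Caccioppoli's inequality \eqref{pri:2.6} converts the $L^2$ average of $\nabla_y\mathbf{\Gamma}_\varepsilon$ into the $L^2$ average of $\mathbf{\Gamma}_\varepsilon$ over a slightly enlarged annulus, where $(\text{H}_1)$ gives a pointwise bound; summing the geometric series (convergent because $p>d$) yields the $R(\dashint_{2B}|f|^p)^{1/p}$ term. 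Without the dyadic decomposition the argument does not close, so ``test against $\nabla_y$ after Caccioppoli'' is the right intuition but not yet a proof. The same single application of Caccioppoli (no dyadic sum needed, since there is no singularity) handles the boundary-layer terms $A(y/\varepsilon)\nabla\varphi\,u_\varepsilon$ supported on the outer annulus.
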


\begin{thm}\label{thm:2.1}
Suppose that the coefficients of $\mathcal{L}_\varepsilon$
satisfy $\eqref{a:1}$ and $\eqref{a:3}$ with $\lambda\geq\max\{\lambda_0,\mu\}$.
\begin{itemize}
  \item If $A\in\emph{VMO}(\mathbb{R}^d)$ satisfies $\eqref{a:2}$, then there
  exists a unique fundamental solution $\mathbf{\Gamma}_\varepsilon(x,y)$  being
  H\"older continuous in
$\{(x,y)\in\mathbb{R}^d\times\mathbb{R}^d:x\not=y\}$, such that
  \begin{equation}\label{relation:1}
  \text{$(\emph{H}_1)$}
  \Longleftrightarrow
  \emph{(ii)~in~}\eqref{pri:0.2}
  \Longleftrightarrow    \eqref{pri:0.4}.
  \end{equation}
  \item If the coefficients $A$ and $V$ satisfy $\eqref{a:2}$, $\eqref{a:4}$, then
  we have
  \begin{equation}\label{relation:2}
  \text{$(\emph{H}_2)$} \Longleftrightarrow \eqref{pri:0.3}
   \Longleftrightarrow    \eqref{pri:0.5}.
  \end{equation}
\end{itemize}
\end{thm}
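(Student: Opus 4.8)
The plan is to establish Theorem~\ref{thm:2.1} by a chain of implications, treating the two bullets separately but with the same overall philosophy: the pointwise bounds $(\mathrm{H}_1)$, $(\mathrm{H}_2)$ on the fundamental solution are \emph{equivalent} to the interior Lipschitz/$L^\infty$ estimates of Theorems~\ref{thm:0.1} and~\ref{thm:0.2} because each direction can be read off from the other via a scaling/rescaling argument together with the representation formula for weak solutions. First I would recall the construction of $\mathbf{\Gamma}_\varepsilon$: under \eqref{a:1}, \eqref{a:3} with $\lambda\geq\max\{\lambda_0,\mu\}$ the bilinear form associated to $\mathcal{L}_\varepsilon$ is coercive on $H^1(\mathbb{R}^d;\mathbb{R}^m)$ (this is exactly the point of the threshold $\lambda_0$ in \eqref{KEY:1}), so a Green/fundamental matrix exists in the sense of Hofmann--Kim or Gr\"uter--Widman; the extra VMO hypothesis on $A$ upgrades it to be H\"older continuous off the diagonal, which gives the uniqueness and continuity claim preceding \eqref{relation:1}.

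For the first bullet, the implication $(\mathrm{H}_1)\Rightarrow\eqref{pri:0.4}$ is precisely the content of Theorem~\ref{thm:0.2} (first item), so nothing new is needed there; and $\eqref{pri:0.4}\Rightarrow(\mathrm{ii})\text{ in }\eqref{pri:0.2}$ follows by specializing \eqref{pri:0.4} to $f=0$, $F=0$ and rescaling the ball $B(x,R)$ to the unit ball, exploiting that the constant in \eqref{pri:0.4} is independent of $R$ and $\varepsilon$ (periodicity is what allows the rescaled operator to stay in the same class with the same constants). The remaining and genuinely substantive implication is $(\mathrm{ii})\text{ in }\eqref{pri:0.2}\Rightarrow(\mathrm{H}_1)$: here I would fix $y$, apply the uniform $L^\infty$ bound to the function $x\mapsto\mathbf{\Gamma}_\varepsilon(x,y)$ on an annulus $B(y,2r)\setminus B(y,r/2)$ where it solves $\mathcal{L}_\varepsilon(\cdot)=0$, dominate its $L^2$ average over the larger annulus by the known $L^{2^*}$-type energy bound for the fundamental solution (which only uses coercivity, not smoothness), and then optimize in $r\sim|x-y|$ to recover the $|x-y|^{2-d}$ decay. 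One has to be slightly careful to handle the transpose operator symmetrically so that the estimate holds in both variables, but this is standard.

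For the second bullet the structure is identical with gradients: $(\mathrm{H}_2)\Rightarrow\eqref{pri:0.5}$ is the second item of Theorem~\ref{thm:0.2}; $\eqref{pri:0.5}\Rightarrow\eqref{pri:0.3}$ is the $f=F=0$, rescaled-to-unit-ball specialization; and $\eqref{pri:0.3}\Rightarrow(\mathrm{H}_2)$ is obtained by applying the uniform interior gradient bound \eqref{pri:0.3} to $x\mapsto\mathbf{\Gamma}_\varepsilon(x,y)$ and to $y\mapsto\nabla_x\mathbf{\Gamma}_\varepsilon(x,y)$ on dyadic annuli, feeding in the already-established size bound $(\mathrm{H}_1)$ (available here because \eqref{a:4} implies $A\in\mathrm{VMO}$ and $\eqref{pri:0.3}$ trivially implies $(\mathrm{ii})\text{ in }\eqref{pri:0.2}$, hence the first bullet applies) to control the $L^2$ average, and rescaling. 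The mixed second derivative $\nabla_x\nabla_y\mathbf{\Gamma}_\varepsilon$ is handled by differentiating the representation $\nabla_x\mathbf{\Gamma}_\varepsilon(x,y)$ in $y$ and applying the interior gradient estimate once more on a slightly smaller annulus.

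The main obstacle, as flagged in the introduction, is that one cannot treat $V,B,c$ as a compact perturbation and one does not have scale invariance for free: every rescaling $x\mapsto x/r$ turns $\mathcal{L}_\varepsilon$ with parameter $\lambda$ into an operator of the same form but with $\lambda$ replaced by $r^2\lambda$ and lower-order coefficients rescaled, so to keep the constants uniform I must track the dimensionless quantities $\|V/\sqrt\lambda\|_{L^\infty}$, $\|B/\sqrt\lambda\|_{L^\infty}$, $\|c/\lambda\|_{L^\infty}$ — which is exactly why Theorems~\ref{thm:0.1} and~\ref{thm:0.2} were stated with constants depending on those ratios rather than on $\kappa$ and $\lambda$ separately. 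The care needed is to verify at each rescaling step that $r^2\lambda\geq\lambda_0(r\,\cdot)$ still holds (it does, since $\lambda_0$ scales by $r^2$ as well by \eqref{KEY:1}) and that the hypothesis $\lambda\geq\mu$ is only used once, to construct $\mathbf{\Gamma}_\varepsilon$, and is not needed again in the equivalences; everything else is a bookkeeping of annuli and Caccioppoli/energy estimates for the fundamental solution that require no regularity of the coefficients.
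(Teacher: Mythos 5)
Your proposal is correct and follows essentially the same route as the paper: the chain of implications is identical, with $(\mathrm{H}_1)\Rightarrow\eqref{pri:0.4}\Rightarrow(\text{ii})$ and $(\mathrm{H}_2)\Rightarrow\eqref{pri:0.5}\Rightarrow\eqref{pri:0.3}$ read off from Theorem~\ref{thm:0.2} by specializing to $f=F=0$ and rescaling, and the two substantive converses $(\text{ii})\Rightarrow(\mathrm{H}_1)$ and $\eqref{pri:0.3}\Rightarrow(\mathrm{H}_2)$ obtained by applying the interior $L^\infty$ (resp. Lipschitz) estimate to the fundamental solution on annuli $B(y,2r)\setminus B(y,r/2)$, controlling the $L^2$ average via the $W^{1,s}$/$L^{2^*}$ energy bounds that come for free from coercivity. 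The paper simply packages those two converse directions by reference---to its own Theorem~\ref{thm:2.3.1} for the size bound and to a cited lemma (\cite{QXS}, Lemma 4.11, or \cite{MAFHL}) for the gradient bound---whereas you sketch the duality-plus-local-boundedness argument inline; the underlying mechanism is the same, and your remark that periodicity and the scaling form of $\lambda_0$ in \eqref{KEY:1} are what make the rescaled constants uniform is precisely the point the paper emphasizes in its discussion of Theorem~\ref{thm:0.1}.
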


\begin{thm}[asymptotic expansions]\label{thm:0.4}
Suppose that the coefficients of $\mathcal{L}_\varepsilon$ satisfy
$\eqref{a:1}$, $\eqref{a:2}$ and $\eqref{a:3}$ with $\lambda\geq\max\{\lambda_0,\mu\}$.
Assume
$A\in \emph{VMO}(\mathbb{R}^d)$ and let
$\mathbf{\Gamma}_{\varepsilon}(x,y),\mathbf{\Gamma}_{0}(x,y)$ be two fundamental solutions of
$\mathcal{L}_\varepsilon$ and $\mathcal{L}_0$, respectively.
Then there holds
\begin{equation}\label{pri:0.6}
\big|\mathbf{\Gamma}_{\varepsilon}(x,y)-\mathbf{\Gamma}_0(x,y)\big|
\leq \frac{C\varepsilon}{|x-y|^{d-1}}
\end{equation}
for any $x,y\in\mathbb{R}^d$ with $x\not=y$. Moreover,
if the coefficients $A,V,B$ additionally satisfy $\eqref{a:4}$,
then we have
\begin{equation}\label{pri:0.7}
\begin{aligned}
\big|\nabla_x\mathbf{\Gamma}_{\varepsilon}(x,y)-\nabla_x\mathbf{\Gamma}_0(x,y)
-\nabla\chi_0(x/\varepsilon)\mathbf{\Gamma}_0(x,y)
- \nabla\chi_k(x/\varepsilon)\nabla_{x_k}\mathbf{\Gamma}_0(x,y)\big|
&\leq \frac{C\varepsilon^{\rho}}{|x-y|^{d-1+\rho}},\\
\big|\nabla_y\mathbf{\Gamma}_{\varepsilon}(x,y)-\nabla_y\mathbf{\Gamma}_0(x,y)
-\nabla\chi_0^*(y/\varepsilon)\mathbf{\Gamma}_0(x,y)
- \nabla\chi_k^*(y/\varepsilon)\nabla_{y_k}\mathbf{\Gamma}_0(x,y)\big|
&\leq \frac{C\varepsilon^{\rho}}{|x-y|^{d-1+\rho}}
\end{aligned}
\end{equation}
for any $x,y\in\mathbb{R}^d$ with $x\not=y$ and $0<\rho<1$,
where $C$ depends only on $\mu,\kappa,\lambda,\tau,d,m$ and $\rho$.
\end{thm}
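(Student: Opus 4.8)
The plan is to derive the asymptotic expansions for the fundamental solutions from the corresponding expansions for solutions of the Dirichlet problem, which are the standard convergence-rate results in periodic homogenization (in the spirit of \cite{SZW12} and \cite{TS}), combined with the pointwise size and decay estimates $(\text{H}_1)$, $(\text{H}_2)$ that are available under the stated hypotheses via Theorem~\ref{thm:2.1} (note that $A\in\text{VMO}$ gives \eqref{pri:0.4}, hence $(\text{H}_1)$, and the H\"older condition \eqref{a:4} gives \eqref{pri:0.5}, hence $(\text{H}_2)$). First I would fix $y\in\mathbb{R}^d$ and consider, for a small parameter $r=|x_0-y|$, the function $u_\varepsilon(x)=\mathbf{\Gamma}_\varepsilon(x,y)-\mathbf{\Gamma}_0(x,y)$ on a ball $B(y,2r)^{c}\cap B(x_0,r/2)$, away from the pole. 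In that region both $\mathbf{\Gamma}_\varepsilon(\cdot,y)$ and $\mathbf{\Gamma}_0(\cdot,y)$ solve $\mathcal{L}_\varepsilon u=0$ and $\mathcal{L}_0 u=0$ respectively; the difference satisfies $\mathcal{L}_\varepsilon(\mathbf{\Gamma}_\varepsilon(\cdot,y)-\mathbf{\Gamma}_0(\cdot,y))=(\mathcal{L}_0-\mathcal{L}_\varepsilon)\mathbf{\Gamma}_0(\cdot,y)$, which is the usual source term for the first-order corrector argument.

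The key steps, in order, are: (1) set up the first-order approximation $w_\varepsilon=\mathbf{\Gamma}_\varepsilon(\cdot,y)-\mathbf{\Gamma}_0(\cdot,y)-\varepsilon\chi_k(\cdot/\varepsilon)\nabla_{x_k}\mathbf{\Gamma}_0(\cdot,y)-\varepsilon\chi_0(\cdot/\varepsilon)\mathbf{\Gamma}_0(\cdot,y)$, where $\chi_k,\chi_0$ are the correctors adapted to $\mathcal{L}_\varepsilon$ (including the zeroth-order corrector $\chi_0$ forced by the $V$ and lower-order terms, consistent with the $\nabla\chi_0$ appearing in \eqref{pri:0.7}); (2) compute $\mathcal{L}_\varepsilon w_\varepsilon$ and show it has the structure $\text{div}(f_\varepsilon)+F_\varepsilon$ with $\|f_\varepsilon\|,\|F_\varepsilon\|$ controlled by $\varepsilon$ times higher derivatives of $\mathbf{\Gamma}_0(\cdot,y)$, which are in turn controlled by negative powers of $|x-y|$ using the known regularity estimates for $\mathcal{L}_0$ (the homogenized constant-coefficient operator); (3) apply the local boundedness estimates \eqref{pri:0.4} and \eqref{pri:0.5} from Theorem~\ref{thm:0.2} on the ball $B(x_0,r/2)$, together with a rescaling $x=x_0+r z$ to make the estimate scaling-invariant — here the periodicity condition is essential, since it is what makes the rescaled operator again of the same form with the same bounds; (4) combine the bound on $w_\varepsilon$ with the pointwise size bounds $|\varepsilon\chi_k(\cdot/\varepsilon)|\lesssim\varepsilon$ and the gradient estimates for $\mathbf{\Gamma}_0$ to conclude \eqref{pri:0.6} for the zeroth-order statement, and then differentiate the approximation identity to get the gradient statements \eqref{pri:0.7}; for the $\nabla_y$ estimates I would repeat the argument with the adjoint operator $\mathcal{L}_\varepsilon^*$, whose correctors are the $\chi_k^*,\chi_0^*$ appearing in the second line of \eqref{pri:0.7}, using that $\mathbf{\Gamma}_\varepsilon(x,y)=\mathbf{\Gamma}_\varepsilon^*(y,x)^{\,t}$.

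For the size estimate \eqref{pri:0.6} a slightly softer argument also works: bound the ``smoothing'' error $\mathbf{\Gamma}_\varepsilon-\mathbf{\Gamma}_0$ in $L^2$ over dyadic annuli $|x-y|\sim 2^k$ using a duality/energy argument against a test function, obtain an $O(\varepsilon)$ annular $L^2$ bound, and upgrade to the pointwise bound \eqref{pri:0.6} via the interior Lipschitz/H\"older estimate for $\mathcal{L}_\varepsilon$ on a ball of radius comparable to $|x-y|$ — this is the route closest to \cite{SZW12}. I expect the main obstacle to be the bookkeeping of the lower-order terms $V,B,c$ and the non-scaling-invariance flagged in the introduction: because the system is nonhomogeneous one cannot simply rescale $\mathcal{L}_\varepsilon$ to a fixed operator, so one must carry the normalized coefficients $V/\sqrt{\lambda}$, $B/\sqrt{\lambda}$, $c/\lambda$ and the extra zeroth-order corrector $\chi_0$ through every estimate, and verify that the error term $\mathcal{L}_\varepsilon w_\varepsilon$ still collects only a factor of $\varepsilon$ (not $\varepsilon$ times a negative power of $\lambda$) after using $\lambda\geq\max\{\lambda_0,\mu\}$. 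Getting the sharp exponent $\varepsilon^\rho$ with any $\rho<1$ in \eqref{pri:0.7} (rather than $\varepsilon$) is exactly the standard loss coming from the lack of smoothness of the boundary layer / the $C^{0,\tau}$ regularity of the coefficients, and is handled by interpolating the $O(\varepsilon)$ energy estimate against the $O(1)$ Lipschitz bound on $\nabla\mathbf{\Gamma}_\varepsilon$.
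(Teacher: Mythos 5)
Your plan identifies all the right ingredients — the first-order correctors $\chi_0,\chi_k$, the local boundedness and Lipschitz estimates \eqref{pri:0.4}--\eqref{pri:0.5}, the decay of $\mathbf{\Gamma}_0$, the adjoint picture for $\nabla_y$ via ${}^*\mathbf{\Gamma}_\varepsilon(x,y)=[\mathbf{\Gamma}_\varepsilon(y,x)]^t$, and the $\varepsilon^\rho$ loss coming from interpolating the $O(\varepsilon)$ estimate against the $O(1)$ Lipschitz bound. In that sense your strategy coincides with the paper's (both descend from Avellaneda--Lin \cite{MAFHL3}).

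However, the main branch of your argument (steps 1--4) has a real gap, and your ``softer alternative'' paragraph is not an alternative but actually a necessary ingredient. To apply the local boundedness estimate \eqref{pri:0.4} to the corrector
$w_\varepsilon=\mathbf{\Gamma}_\varepsilon(\cdot,y)-\mathbf{\Gamma}_0(\cdot,y)-\varepsilon\chi_0(\cdot/\varepsilon)\mathbf{\Gamma}_0(\cdot,y)-\varepsilon\chi_k(\cdot/\varepsilon)\nabla_k\mathbf{\Gamma}_0(\cdot,y)$
on a ball $B$ away from the pole, you must feed in the quantity $\big(\dashint_{2B}|w_\varepsilon|^2\big)^{1/2}$; this requires an $L^2$-average bound on $\mathbf{\Gamma}_\varepsilon-\mathbf{\Gamma}_0$ near $y$, and you cannot get it from the global energy estimate \eqref{pri:2.0.1} because $\mathbf{\Gamma}_0(\cdot,y)\notin H^2(\mathbb{R}^d)$ (the pole singularity makes $\tilde f,\tilde F$ non-square-integrable globally). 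The only way to close this is precisely the duality step: for arbitrary $f\in C^1_0(B(y,r))$ consider $\mathcal{L}_\varepsilon(u_\varepsilon)=f=\mathcal{L}_0(u_0)$, use the global $H^1$ corrector estimate for $w_\varepsilon$ built from these regular solutions, upgrade it to a pointwise bound via \eqref{pri:0.4}, and then dualize to obtain
$\big(\dashint_{B(y,r)}|\mathbf{\Gamma}_\varepsilon(x,z)-\mathbf{\Gamma}_0(x,z)|^2dz\big)^{1/2}\lesssim\varepsilon r^{1-d}$.
This is the paper's \eqref{f:5.5}. Only afterward does one introduce $\Phi_\varepsilon^x$ (or its non-starred analogue for the $\nabla_x$ bound) and apply \eqref{pri:0.4} resp.\ \eqref{pri:0.5} locally on $B(y,2r)$, using the dualized $L^2$ bound as the required input. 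Also, ``differentiate the approximation identity'' is too weak: to get \eqref{pri:0.7} one has to apply the Lipschitz estimate \eqref{pri:0.5} to $\Phi_\varepsilon^x$ and track the $[\breve f]_{C^{0,\sigma}}$ term, which produces an $\varepsilon^{-\sigma}$ factor; choosing $\rho=1-\sigma$ is where the exponent comes from. Finally you omit the trivial case $|x-y|\le\varepsilon$, which must be handled separately since then the claim reduces to the size estimate $|x-y|^{2-d}$.
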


In the estimate $\eqref{pri:0.7}$, the notation $\chi_i$ and $\chi_i^*$ with $i=0,\cdots,d$ are correctors associated with
$\mathcal{L}_\varepsilon$, $\mathcal{L}_\varepsilon^*$, respectively,
in which $\mathcal{L}_\varepsilon^*$ is the adjoint operator of $\mathcal{L}_\varepsilon$ (see
Definition $\ref{def:2.1}$ and Subsection $\ref{subsec:2.1}$). Although we do not require
that the operator $\mathcal{L}_\varepsilon$ is self-adjoint, the symmetry condition on the leading term is still necessary
to be imposed, i.e.,
\begin{equation*}
 A^* = A, \qquad\quad \big( a_{ji}^{\beta\alpha} = a_{ij}^{\alpha\beta}\big).
\end{equation*}

\begin{thm}[$L^2$ Dirichlet problems]\label{thm:1.1}
Suppose that the coefficients of $\mathcal{L}_\varepsilon$
satisfy $\eqref{a:1}$, $\eqref{a:2}$, $\eqref{a:3}$ and $\eqref{a:4}$
with $A^*= A$ and $\lambda\geq\max\{\mu,\lambda_0\}$. Also,
the coefficients $V,B$ additionally satisfy $\|V-B\|_{L^\infty(\mathbb{R}^d)}\leq \epsilon_0$,
where $\epsilon_0>0$ is sufficiently small.
Then for any $g\in L^2(\partial\Omega;\mathbb{R}^m)$,
there exists a unique solution $u_\varepsilon\in H^{1/2}(\Omega;\mathbb{R}^m)$
satisfying
\begin{equation}\label{pde:1.1}
(\mathbf{DH_\varepsilon})\left\{
\begin{aligned}
\mathcal{L}_\varepsilon(u_\varepsilon) &= 0 &\quad &\emph{in}~~\Omega, \\
 u_\varepsilon &= g &\emph{n.t.~}&\emph{on} ~\partial\Omega, \\
 (u_\varepsilon)^* &\in L^2(\partial\Omega). &\quad &
\end{aligned}\right.
\end{equation}
Moreover, the solution satisfies the uniform estimates
\begin{equation}\label{pri:1.1}
\big\|(u_\varepsilon)^*\big\|_{L^2(\partial\Omega)}
\leq C\|g\|_{L^2(\partial\Omega)},
\end{equation}
and the square function estimate
\begin{equation}\label{pri:1.4}
\int_{\Omega}|\nabla u_\varepsilon|^2 \emph{dist}(x,\partial\Omega)dx
\leq C\int_{\partial\Omega} |g|^2 dS
\end{equation}
where $C$ depends only on $\mu,\kappa,\lambda,m,d$ and $\Omega$.
\end{thm}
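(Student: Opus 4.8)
The plan is to establish the $L^2$ Dirichlet problem $(\mathbf{DH_\varepsilon})$ by the method of layer potentials, reducing solvability to the invertibility on $L^2(\partial\Omega;\mathbb{R}^m)$ of a boundary integral operator built from the fundamental solution $\mathbf{\Gamma}_\varepsilon$. The first step is to set up the single layer potential $\mathcal{S}_\varepsilon g(x) = \int_{\partial\Omega}\mathbf{\Gamma}_\varepsilon(x,y)g(y)\,dS(y)$ and the associated boundary operator. Because the coefficients satisfy $\eqref{a:1}$–$\eqref{a:4}$ with $A^*=A$, Theorem~$\ref{thm:0.4}$ gives the asymptotic expansion $\eqref{pri:0.6}$–$\eqref{pri:0.7}$, and Theorem~$\ref{thm:0.2}$ (via $(\mathrm{H}_1)$ and $(\mathrm{H}_2)$, whose hypotheses hold here by Theorems~$\ref{thm:0.1}$ and $\ref{thm:2.1}$) gives the pointwise kernel bounds. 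These are exactly the ingredients needed to run the Calder\'on–Zygmund / Rellich-identity machinery on the Lipschitz boundary: one obtains the nontangential maximal function estimate $\|(\mathcal{S}_\varepsilon g)^*\|_{L^2(\partial\Omega)} \lesssim \|g\|_{H^{-1/2}(\partial\Omega)}$ and, more importantly, a Rellich-type estimate comparing tangential and normal derivatives of $\mathcal{S}_\varepsilon g$ on $\partial\Omega$.

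The second and central step is to prove the uniform (in $\varepsilon$) invertibility of the boundary single-layer operator $S_\varepsilon : L^2(\partial\Omega;\mathbb{R}^m) \to H^1(\partial\Omega;\mathbb{R}^m)$, or equivalently the relevant jump operator for the Dirichlet problem. Here I would exploit the smallness hypothesis $\|V-B\|_{L^\infty}\leq\epsilon_0$: this is precisely the condition under which the bilinear form associated with $\mathcal{L}_\varepsilon$ becomes, up to the symmetric leading part and the $\lambda I$ coercive term, close enough to a form for which the Rellich identity closes. Concretely, integrating by parts the identity $\langle \mathcal{L}_\varepsilon u_\varepsilon, (e\cdot\nabla)u_\varepsilon\rangle$ over $\Omega$ against a vector field $e$ transverse to $\partial\Omega$, the lower-order terms $V,B,c,\lambda$ produce boundary and interior error terms; the symmetry $A^*=A$ kills the antisymmetric obstruction in the leading term, and $\|V-B\|_{L^\infty}\leq\epsilon_0$ controls the remaining first-order discrepancy, so that $\|\partial_\nu u_\varepsilon\|_{L^2(\partial\Omega)}$ and $\|\nabla_{\tan}u_\varepsilon\|_{L^2(\partial\Omega)}$ are comparable with constants independent of $\varepsilon$. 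Combined with the scaling-invariant estimates of Theorem~$\ref{thm:0.1}$ and the asymptotic expansion of Theorem~$\ref{thm:0.4}$ (which lets one transfer the known invertibility for the homogenized operator $\mathcal{L}_0$ to $\mathcal{L}_\varepsilon$ for small $\varepsilon$, with a separate compactness/continuity argument bridging the $\varepsilon$-range bounded away from $0$), this yields the uniform bound $\|S_\varepsilon^{-1}\| \leq C$.

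Given invertibility, the solution is $u_\varepsilon = \mathcal{S}_\varepsilon(S_\varepsilon^{-1}g)$, and $\eqref{pri:1.1}$ follows immediately from the layer potential bound composed with $\|S_\varepsilon^{-1}\|\leq C$. For the square function estimate $\eqref{pri:1.4}$, I would use the standard equivalence between the square function $\int_\Omega |\nabla u_\varepsilon|^2\,\mathrm{dist}(x,\partial\Omega)\,dx$ and $\|(u_\varepsilon)^*\|_{L^2(\partial\Omega)}^2$ for solutions of the elliptic system — proved via Caccioppoli's inequality on Whitney cubes together with the nontangential maximal estimate — noting that the $\varepsilon$-uniformity of the constants in Caccioppoli comes from the $\lambda$-coercivity ($\lambda\geq\mu$) and the normalized bounds $\|V/\sqrt\lambda\|_{L^\infty}$, $\|B/\sqrt\lambda\|_{L^\infty}$, $\|c/\lambda\|_{L^\infty}$ appearing throughout Theorems~$\ref{thm:0.1}$–$\ref{thm:0.2}$. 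Uniqueness follows from the maximum-principle-free energy argument: if $u_\varepsilon$ solves $(\mathbf{DH_\varepsilon})$ with $g=0$, the nontangential bound plus a Green's identity forces $\int_\Omega |\nabla u_\varepsilon|^2 + \lambda|u_\varepsilon|^2 \lesssim \|g\|_{L^2}^2 = 0$.

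The main obstacle I anticipate is Step~2: making the Rellich identity close \emph{uniformly in $\varepsilon$} in the presence of non-compact lower-order terms. The difficulty (i) flagged in the abstract — that $V,B,c$ cannot be treated as a compact perturbation — means the usual "principal part invertible $\Rightarrow$ full operator Fredholm of index zero" shortcut is unavailable; instead one must track the lower-order contributions quantitatively, and it is exactly the smallness $\|V-B\|_{L^\infty}\leq\epsilon_0$ together with the symmetry $A^*=A$ that must be shown to absorb the bad terms. The second subtlety is the lack of scaling invariance (difficulty (ii)): the Rellich estimate must be proved at a fixed scale and then its constant shown to be $\varepsilon$-independent, which is where the periodicity hypothesis $\eqref{a:2}$ and the uniform interior estimates of Theorems~$\ref{thm:0.1}$ and $\ref{thm:0.2}$ do the decisive work.
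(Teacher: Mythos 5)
Your choice of layer potential is wrong for the Dirichlet problem. You set $u_\varepsilon=\mathcal{S}_\varepsilon(S_\varepsilon^{-1}g)$ with $g\in L^2(\partial\Omega)$, but $S_\varepsilon^{-1}$ maps $H^1(\partial\Omega)\to L^2(\partial\Omega)$, so this expression is not even defined for general $g\in L^2$; the single layer potential (with its $L^2\to H^1$ invertibility, Theorem \ref{thm:6.4}) solves the \emph{regularity} problem $(\mathbf{RH}_\varepsilon)$, not the $L^2$ Dirichlet problem. The paper instead solves $(\mathbf{DH}_\varepsilon)$ with the double layer potential: $u_\varepsilon = \mathcal{D}_{\mathcal{L}_\varepsilon}\big((-\tfrac12 I + \mathcal{K}_{\mathcal{L}_\varepsilon^*})^{-1}g\big)$, using the jump relation $\eqref{Id:6.2}$ and the invertibility of $-\tfrac12 I + \mathcal{K}_{\mathcal{L}_\varepsilon^*}$ on $L^2(\partial\Omega)$ from Theorem \ref{thm:6.5}. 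This is not a presentational nuance: the double layer kernel has one derivative more than the single layer kernel, which is exactly what matches $L^2$ data on a Lipschitz boundary.

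You have also misplaced the role of the hypothesis $\|V-B\|_{L^\infty}\leq\epsilon_0$. It is not used to ``close'' the Rellich identity — the Rellich estimates (Lemma \ref{lemma:4.1}, Lemma \ref{lemma:4.6}, Lemma \ref{lemma:6.2}) and the resulting invertibility of $\pm\tfrac12 I + \mathcal{K}_\varepsilon$ on $L^2$ hold with no smallness on $V-B$; that is why the Neumann and regularity theorems carry no such restriction. The smallness is needed precisely because the lower-order terms destroy the clean duality that exists in the homogeneous case: the dual of $\mathcal{K}_{\mathcal{L}_\varepsilon}$ is $\mathcal{K}_{\mathcal{L}_\varepsilon}^* = \mathcal{K}_{\mathcal{L}_\varepsilon^*} + T_\varepsilon$, where $T_\varepsilon$ has kernel $n(y)(B^*-V^*)(y/\varepsilon)\mathbf{\Gamma}_\varepsilon(x,y)$ (see $\eqref{eq:5.4}$ and Theorem \ref{thm:4.4}). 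Rellich plus duality controls $(\pm\tfrac12 I + \mathcal{K}_{\mathcal{L}_\varepsilon}^*)^{-1}$, and then $\|V-B\|_{L^\infty}\leq\epsilon_0$ makes $\|(\pm\tfrac12 I + \mathcal{K}_{\mathcal{L}_\varepsilon}^*)^{-1}T_\varepsilon\|<1$ so a Neumann series recovers $(\pm\tfrac12 I + \mathcal{K}_{\mathcal{L}_\varepsilon^*})^{-1}$ (Theorems \ref{thm:4.6}, \ref{thm:6.5}). Your appeal to a ``Green's identity'' for uniqueness also fails as stated: with only $(u_\varepsilon)^*\in L^2(\partial\Omega)$ the solution lies merely in $H^{1/2}(\Omega)$, so integration by parts is not directly available; the paper's uniqueness argument (end of the proof of Theorem \ref{thm:4.2}) uses a Green's function representation through a cut-off $\psi_r$ and lets $r\to 0$ via the radial maximal function. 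Finally, the square function estimate $\eqref{pri:1.4}$ is not a ``standard equivalence'' in this setting — the paper (Theorem \ref{thm:5.5}) decomposes $u_\varepsilon=v_\varepsilon+w_\varepsilon$, treating the lower-order terms as weighted source data for the leading operator $L_\varepsilon$, and invokes the weighted square-function estimates of \cite{GZS1,SZW2} rather than a naive Whitney-cube argument.
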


Note that the second line of $(\mathbf{DH_\varepsilon})$
means that the solution $u_\varepsilon$ converges to $f$ in a nontangenial way
instead of in the sense of trace, and using the abbreviation ``n.t.'' depicts this difference.
The notation $(u_\varepsilon)^*$ in the estimate $\eqref{pri:1.1}$
represents the nontangential maximal function of
$u_\varepsilon$ on $\partial\Omega$, defined by
\begin{equation*}
(v)^*(P) = \sup_{y\in\Lambda_{N_0}^{+}(P)}|v(y)|,
\qquad \Lambda_{N_0}^{+}(P) = \big\{y\in\Omega:|y-P|\leq N_0
\text{dist}(y,\partial\Omega)\big\},
\end{equation*}
where $x\in\partial\Omega$, and $N_0$ depending on $d$ and the character of $\Omega$ is sufficiently large.
The quantity
in the left-hand side of $\eqref{pri:1.4}$ is referred to as the square function of $u_\varepsilon$.

In order to state the Neumann boundary value problem,
the conormal derivatives related to $\mathcal{L}_\varepsilon$ is defined as
\begin{equation}\label{op:2}
\frac{\partial}{\partial\nu_\varepsilon}
 = n\cdot\big[A(\cdot/\varepsilon)\nabla + V(\cdot/\varepsilon)\big]
 \qquad \text{on}~\partial\Omega,
\end{equation}
where $n=(n_1,\cdots,n_d)$ is the outward unit normal
vector to $\partial\Omega$.

\begin{thm}[$L^2$ Neumann problems]\label{thm:1.2}
Suppose that the coefficients of $\mathcal{L}_\varepsilon$
satisfy $\eqref{a:1}$, $\eqref{a:2}$, $\eqref{a:3}$ and $\eqref{a:4}$ with $A=A^*$ and
$\lambda\geq\max\{\mu,\lambda_0\}$.
Then for any $f\in L^2(\partial\Omega;\mathbb{R}^m)$,
there exists a unique solution $u_\varepsilon$ in $C^{1}(\Omega;\mathbb{R}^m)$
such that
\begin{equation}\label{pde:1.2}
(\mathbf{NH_\varepsilon})\left\{
\begin{aligned}
\mathcal{L}_\varepsilon(u_\varepsilon) &= 0 &\quad &\emph{in}~~\Omega, \\
 \frac{\partial u_\varepsilon}{\partial\nu_\varepsilon} &= f &\emph{n.t.~}&\emph{on} ~\partial\Omega, \\
 (\nabla u_\varepsilon)^* &\in L^2(\partial\Omega), &\quad &
\end{aligned}\right.
\end{equation}
and it satisfies the uniform estimate
\begin{equation}\label{pri:1.2}
\big\|(\nabla u_\varepsilon)^*\big\|_{L^2(\partial\Omega)}
\leq C\|f\|_{L^2(\partial\Omega)},
\end{equation}
where $C$ depends only on $\mu,\kappa,\lambda,m,d$ and $\Omega$.
\end{thm}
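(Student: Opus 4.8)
The plan is to realise $u_\varepsilon$ as a single layer potential
\[
\mathcal{S}_\varepsilon(g)(x)=\int_{\partial\Omega}\mathbf{\Gamma}_\varepsilon(x,y)\,g(y)\,dS(y),\qquad x\in\mathbb{R}^d\setminus\partial\Omega ,
\]
and to solve $(\mathbf{NH_\varepsilon})$ by inverting a boundary operator on $L^2(\partial\Omega;\mathbb{R}^m)$. First I would record the mapping properties of $\mathcal{S}_\varepsilon$. Under the present hypotheses ($A=A^*\in C^{0,\tau}\subset\mathrm{VMO}$, \eqref{a:4}, $\lambda\ge\max\{\mu,\lambda_0\}$), Theorems \ref{thm:0.1} and \ref{thm:2.1} show that $(\mathrm{H}_1)$ and $(\mathrm{H}_2)$ both hold with constants independent of $\varepsilon$, so $\nabla_x\mathbf{\Gamma}_\varepsilon$, $\nabla_y\mathbf{\Gamma}_\varepsilon$ and $\nabla_x\nabla_y\mathbf{\Gamma}_\varepsilon$ obey Calder\'on--Zygmund bounds uniformly in $\varepsilon$. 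Combining these with the asymptotic expansion \eqref{pri:0.7} — which displays the oscillatory part of $\nabla_x\mathbf{\Gamma}_\varepsilon$ as $\nabla\chi(x/\varepsilon)$-modulations of the constant-coefficient kernels $\mathbf{\Gamma}_0$ and $\nabla_x\mathbf{\Gamma}_0$ up to an $O\big(\varepsilon^{\rho}|x-y|^{1-d-\rho}\big)$ error — one obtains the uniform nontangential maximal function bound $\big\|(\nabla\mathcal{S}_\varepsilon g)^{*}\big\|_{L^2(\partial\Omega)}\le C\|g\|_{L^2(\partial\Omega)}$, the nontangential convergence of $\nabla\mathcal{S}_\varepsilon g$ to its boundary traces, and the jump relations: the conormal derivative $\frac{\partial}{\partial\nu_\varepsilon}\mathcal{S}_\varepsilon(g)$ equals $\big(-\tfrac12 I+\mathcal{K}_\varepsilon^{*}\big)g$ from inside $\Omega$ and $\big(\tfrac12 I+\mathcal{K}_\varepsilon^{*}\big)g$ from outside, where $\mathcal{K}_\varepsilon^{*}$ is the $L^2$-bounded principal value operator with kernel $\frac{\partial}{\partial\nu_\varepsilon(x)}\mathbf{\Gamma}_\varepsilon(x,y)$; moreover $\nabla_{\tan}\mathcal{S}_\varepsilon g$ is the same from the two sides.

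The heart of the proof is the invertibility of $-\tfrac12 I+\mathcal{K}_\varepsilon^{*}$ on $L^2(\partial\Omega;\mathbb{R}^m)$ with a bound uniform in $\varepsilon$. For this I would derive two-sided Rellich estimates for weak solutions of $\mathcal{L}_\varepsilon(u_\varepsilon)=0$ with $(\nabla u_\varepsilon)^{*}\in L^2(\partial\Omega)$: testing the equation against $h\cdot\nabla u_\varepsilon$ for a fixed vector field $h\in C_0^\infty(\mathbb{R}^d;\mathbb{R}^d)$ with $h\cdot n\ge c_0>0$ on $\partial\Omega$, integrating by parts over Lipschitz subdomains exhausting $\Omega$, and using $A=A^{*}$ to reorganise the leading boundary terms, one gets
\[
\int_{\partial\Omega}|\nabla u_\varepsilon|^2\,dS\ \lesssim\ \int_{\partial\Omega}\Big|\frac{\partial u_\varepsilon}{\partial\nu_\varepsilon}\Big|\,|\nabla_{\tan}u_\varepsilon|\,dS\ +\ (\text{solid remainder}),
\]
together with the companion estimate in which $\partial u_\varepsilon/\partial\nu_\varepsilon$ and $\nabla_{\tan}u_\varepsilon$ are interchanged. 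The delicate point is that the solid remainders generated by $V(x/\varepsilon)$, $B(x/\varepsilon)$, $c(x/\varepsilon)$ and $\lambda I$ do \emph{not} rescale away as in the homogeneous case; they can be absorbed precisely because $\lambda\ge\lambda_0$, which is where the definition \eqref{KEY:1}, the periodicity, and the uniform gradient bound \eqref{pri:0.3} enter. Applying the two Rellich bounds to $u_\varepsilon=\mathcal{S}_\varepsilon g$ and using the jump relations — with $\nabla_{\tan}\mathcal{S}_\varepsilon g$ common to the two sides — gives $\big\|(-\tfrac12 I+\mathcal{K}_\varepsilon^{*})g\big\|_{L^2}\approx\big\|(\tfrac12 I+\mathcal{K}_\varepsilon^{*})g\big\|_{L^2}$; since the difference of these two operators is the identity, this forces $\|g\|_{L^2}\le C\big\|(-\tfrac12 I+\mathcal{K}_\varepsilon^{*})g\big\|_{L^2}$, so $-\tfrac12 I+\mathcal{K}_\varepsilon^{*}$ is injective with closed range. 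Surjectivity follows by duality: its transpose $-\tfrac12 I+\mathcal{K}_\varepsilon$ is the double layer boundary operator of the adjoint $\mathcal{L}_\varepsilon^{*}$, whose injectivity on $L^2(\partial\Omega;\mathbb{R}^m)$ reduces to a uniqueness statement for a homogeneous interior problem settled by the coercivity of the energy form of $\mathcal{L}_\varepsilon^{*}$; alternatively one runs the continuity method along a deformation of $\mathcal{L}_\varepsilon$ to a constant-coefficient operator, the point being that the Rellich constants remain uniform along the deformation.

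Finally, $u_\varepsilon:=\mathcal{S}_\varepsilon\big((-\tfrac12 I+\mathcal{K}_\varepsilon^{*})^{-1}f\big)$ solves $(\mathbf{NH_\varepsilon})$: it satisfies $\mathcal{L}_\varepsilon(u_\varepsilon)=0$ in $\Omega$, hence lies in $C^1(\Omega;\mathbb{R}^m)$ by the interior estimate of Theorem \ref{thm:0.1}; its conormal derivative equals $f$ nontangentially by the jump relation; and \eqref{pri:1.2} follows from the bound on $(\nabla\mathcal{S}_\varepsilon\,\cdot\,)^{*}$ together with the operator bound on $(-\tfrac12 I+\mathcal{K}_\varepsilon^{*})^{-1}$. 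Uniqueness: if $\mathcal{L}_\varepsilon(u_\varepsilon)=0$, $\partial u_\varepsilon/\partial\nu_\varepsilon=0$ n.t.\ and $(\nabla u_\varepsilon)^{*}\in L^2(\partial\Omega)$, then Green's identity on the approximating subdomains yields an energy identity whose left-hand side is coercive for $\lambda\ge\lambda_0$, forcing $u_\varepsilon\equiv0$. I expect the main obstacle to be the Rellich step — controlling the solid remainders coming from the low-regularity lower-order coefficients with constants genuinely independent of $\varepsilon$ — which is exactly the non-perturbative difficulty (i) flagged in the introduction and the place where periodicity is used decisively.
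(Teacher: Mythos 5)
Your overall architecture — represent the solution as a single layer potential, establish nontangential maximal function bounds via the asymptotic expansion \eqref{pri:0.7}, derive the jump relations, and invert the boundary operator via two-sided Rellich estimates together with a continuity argument — is the right one and matches the paper's strategy. The final construction $u_\varepsilon = \mathcal{S}_\varepsilon\bigl((\tfrac12 I + \mathcal{K}_\varepsilon)^{-1}f\bigr)$ (modulo your sign/adjoint conventions) and the uniqueness argument via the coercivity identity \eqref{eq:6.1} are exactly as in the paper.

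There is, however, a genuine gap at the heart of the Rellich step. When you test $\mathcal{L}_\varepsilon u_\varepsilon=0$ against $h\cdot\nabla u_\varepsilon$, the solid remainder terms contain $\nabla\bigl(A(x/\varepsilon)\bigr)=\varepsilon^{-1}(\nabla A)(x/\varepsilon)$ and $\operatorname{div}\bigl(V(x/\varepsilon)\bigr)=\varepsilon^{-1}(\operatorname{div}V)(x/\varepsilon)$; see the identities \eqref{eq:4.1}, \eqref{eq:4.2}. These terms are $O(\varepsilon^{-1})$ in size, and no fixed constant $\lambda\geq\lambda_0$ can absorb them: the role of $\lambda_0$ (Lemma \ref{lemma:2.1}) is purely to make the bilinear form coercive, uniformly in $\varepsilon$, and has nothing to do with the unbounded oscillation of $\nabla A(x/\varepsilon)$. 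Your claim that the solid remainders ``can be absorbed precisely because $\lambda\geq\lambda_0$'' is therefore false as stated, and without a replacement the Rellich inequality you write down will not hold with an $\varepsilon$-independent constant. The paper's actual mechanism is a two-scale argument: (i) the Rellich identity is applied only on subdomains $D_r=B(P,r)\cap\Omega$ of diameter $r\sim\varepsilon$, reducing by rescaling to the $\varepsilon=1$ operator $\mathcal{L}$ treated in Section \ref{sec:4} — there the coefficient gradients are $O(1)$ and the remainders are controlled by a $\theta^{\tau_0}$-absorption in Lemma \ref{lemma:4.6} and the localized estimate \eqref{pri:5.14}, first under the extra smoothness hypothesis \eqref{a:5.1} and then without it via the ``Improvements'' subsection; (ii) the contributions from the boundary layer $\Omega\setminus\Sigma_\varepsilon$ are controlled by the $O(\varepsilon^{1/2})$ convergence-rate estimates in $H^1$ (Theorems \ref{thm:6.1}, \ref{thm:6.2}); and (iii) the small- and large-scale pieces are assembled by a covering and integration argument in Lemma \ref{lemma:6.2}. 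This small/large scale split, and the fact that it is a convergence-rate estimate (not an energy absorption) that handles the large scale, is the decisive use of periodicity that your outline gestures at but does not supply.
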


\begin{thm}[$L^2$ regularity problems]\label{thm:1.3}
Assume the same conditions as in Theorem $\ref{thm:1.2}$.
Then for any $g\in H^1(\partial\Omega;\mathbb{R}^m)$, the following equations
\begin{equation}\label{pde:1.3}
(\mathbf{RH_\varepsilon})\left\{
\begin{aligned}
\mathcal{L}_\varepsilon(u_\varepsilon) &= 0 &\quad &\emph{in}~~\Omega, \\
 u_\varepsilon &= g &\emph{n.t.~}&\emph{on} ~\partial\Omega, \\
 (\nabla u_\varepsilon)^* &\in L^2(\partial\Omega) &\quad &
\end{aligned}\right.
\end{equation}
has a unique solution $u_\varepsilon$ in $H^{3/2}(\Omega;\mathbb{R}^m)$,
satisfying the uniform estimate
\begin{equation}\label{pri:1.3}
\big\|(\nabla u_\varepsilon)^*\big\|_{L^2(\partial\Omega)}
+ \big\|(u_\varepsilon)^*\big\|_{L^2(\partial\Omega)}
\leq C\|g\|_{H^1(\partial\Omega)},
\end{equation}
where $C$ depends only on $\mu,\kappa,\lambda,m,d$ and $\Omega$.
\end{thm}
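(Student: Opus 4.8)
The plan is to construct $u_\varepsilon$ as a single layer potential and to obtain uniqueness from coercivity. Recall the single layer potential associated with $\mathcal{L}_\varepsilon$,
\[
\mathcal{S}_\varepsilon(h)(x)=\int_{\partial\Omega}\mathbf{\Gamma}_\varepsilon(x,y)\,h(y)\,dS(y),\qquad x\in\mathbb{R}^d\setminus\partial\Omega,
\]
together with its properties established above: $\mathcal{L}_\varepsilon(\mathcal{S}_\varepsilon h)=0$ in $\Omega$ and in the exterior domain $\Omega_-:=\mathbb{R}^d\setminus\overline{\Omega}$; $\mathcal{S}_\varepsilon$ maps $L^2(\partial\Omega;\mathbb{R}^m)$ boundedly into $H^1(\partial\Omega;\mathbb{R}^m)$ and into $H^{3/2}(\Omega;\mathbb{R}^m)$; the nontangential maximal bound $\|(\nabla\mathcal{S}_\varepsilon h)^*\|_{L^2(\partial\Omega)}+\|(\mathcal{S}_\varepsilon h)^*\|_{L^2(\partial\Omega)}\le C\|h\|_{L^2(\partial\Omega)}$; the nontangential attainment of the boundary value $\mathcal{S}_\varepsilon h|_{\partial\Omega}$; and the jump relation $h=\big(\frac{\partial(\mathcal{S}_\varepsilon h)}{\partial\nu_\varepsilon}\big)_{-}-\big(\frac{\partial(\mathcal{S}_\varepsilon h)}{\partial\nu_\varepsilon}\big)_{+}$ between the interior and exterior conormal derivatives. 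Thus it suffices to show that
\[
\mathcal{S}_\varepsilon\colon L^2(\partial\Omega;\mathbb{R}^m)\longrightarrow H^1(\partial\Omega;\mathbb{R}^m)
\]
is an isomorphism; then $u_\varepsilon:=\mathcal{S}_\varepsilon(\mathcal{S}_\varepsilon^{-1}g)$ solves $(\mathbf{RH_\varepsilon})$, lies in $H^{3/2}(\Omega;\mathbb{R}^m)$, and obeys $\eqref{pri:1.3}$.

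For the lower bound I would combine the jump relation with the Rellich-type identities for $\mathcal{L}_\varepsilon$ on $\Omega$ and on $\Omega_-$ already established in the earlier sections --- in which the non--scaling-invariant lower-order boundary and interior remainders from the integration by parts are absorbed into the leading term via the hypothesis $\lambda\ge\max\{\mu,\lambda_0\}$, and, for the exterior piece, the decay $|\mathcal{S}_\varepsilon h(x)|\lesssim|x|^{2-d}$ (here $d\ge3$) is exploited --- to get, with $C$ independent of $\varepsilon$,
\[
\Big\|\big(\tfrac{\partial(\mathcal{S}_\varepsilon h)}{\partial\nu_\varepsilon}\big)_{\pm}\Big\|_{L^2(\partial\Omega)}\le C\Big(\|\nabla_{\tan}\mathcal{S}_\varepsilon h\|_{L^2(\partial\Omega)}+\|\mathcal{S}_\varepsilon h\|_{L^2(\partial\Omega)}\Big)\le C\,\|\mathcal{S}_\varepsilon h\|_{H^1(\partial\Omega)},
\]
and hence $\|h\|_{L^2(\partial\Omega)}\le C\|\mathcal{S}_\varepsilon h\|_{H^1(\partial\Omega)}$; in particular $\mathcal{S}_\varepsilon$ is injective with closed range. (Injectivity is also immediate from coercivity: if $\mathcal{S}_\varepsilon h=0$ on $\partial\Omega$, then $w:=\mathcal{S}_\varepsilon h$ solves $\mathcal{L}_\varepsilon w=0$ with vanishing nontangential trace in both $\Omega$ and $\Omega_-$, so testing against $w$, using $\lambda\ge\lambda_0$ and the decay at infinity, forces $w\equiv0$ in $\mathbb{R}^d$, whence $h=0$.)

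For surjectivity I would run the method of continuity, in the spirit of reducing to the homogeneous operator. For $t\in[0,1]$ set
\[
\mathcal{L}_\varepsilon^{(t)}=-\mathrm{div}\big[A(x/\varepsilon)\nabla+tV(x/\varepsilon)\big]+tB(x/\varepsilon)\nabla+tc(x/\varepsilon)+\lambda I,
\]
so that $\mathcal{L}_\varepsilon^{(1)}=\mathcal{L}_\varepsilon$ and $\mathcal{L}_\varepsilon^{(0)}=-\mathrm{div}(A(x/\varepsilon)\nabla)+\lambda I$; this family satisfies $\eqref{a:1}$--$\eqref{a:4}$ uniformly in $t$ and $\lambda\ge\lambda_0\ge\lambda_0^{(t)}$, so the lower bound of the previous step holds for the single layers $\mathcal{S}_\varepsilon^{(t)}$ with a constant uniform in $t$ and $\varepsilon$. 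One then verifies that $t\mapsto\mathcal{S}_\varepsilon^{(t)}$ is continuous in the operator norm from $L^2(\partial\Omega;\mathbb{R}^m)$ to $H^1(\partial\Omega;\mathbb{R}^m)$, by a comparison estimate for $\mathbf{\Gamma}_\varepsilon^{(t)}-\mathbf{\Gamma}_\varepsilon^{(s)}$ and its gradient along the lines of Theorem $\ref{thm:0.4}$. Since $\mathcal{S}_\varepsilon^{(0)}$ is an isomorphism $L^2(\partial\Omega)\to H^1(\partial\Omega)$ by the layer-potential theory of the purely second-order homogeneous operator ($\cite{SZW24}$; the positive zeroth-order term $\lambda I$ is a harmless addition, e.g.\ by a further homotopy scaling $\lambda$ down to $0$), the method of continuity yields that $\mathcal{S}_\varepsilon=\mathcal{S}_\varepsilon^{(1)}$ is an isomorphism, which finishes the existence part and the estimate $\eqref{pri:1.3}$. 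For uniqueness, given a solution of $(\mathbf{RH_\varepsilon})$ with $g=0$, a Whitney decomposition of $\Omega$ converts $(\nabla u_\varepsilon)^*\in L^2(\partial\Omega)$ into $\nabla u_\varepsilon\in L^2(\Omega)$, so $u_\varepsilon\in H^1(\Omega)$; the vanishing nontangential trace puts $u_\varepsilon\in H^1_0(\Omega;\mathbb{R}^m)$, and testing $\mathcal{L}_\varepsilon u_\varepsilon=0$ against $u_\varepsilon$ with coercivity ($\lambda\ge\lambda_0$) gives $u_\varepsilon\equiv0$; in particular the $H^{3/2}(\Omega)$ solution is unique.

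The hard part will be the uniform Rellich estimate bounding $\big\|\big(\partial(\mathcal{S}_\varepsilon h)/\partial\nu_\varepsilon\big)_{\pm}\big\|_{L^2(\partial\Omega)}$ by $\|\nabla_{\tan}\mathcal{S}_\varepsilon h\|_{L^2(\partial\Omega)}$: because $\mathcal{L}_\varepsilon$ carries lower-order terms and has no scaling invariance, the Rellich integration by parts produces lower-order boundary and interior remainders that have to be absorbed uniformly in $\varepsilon$ --- and, along the homotopy, uniformly in $t$ --- which is precisely the role of the threshold $\lambda\ge\max\{\mu,\lambda_0\}$. This estimate is, however, already the core of the Neumann analysis behind Theorem $\ref{thm:1.2}$; granting it, the steps above are routine.
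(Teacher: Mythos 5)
Your overall strategy matches the paper's: take $u_\varepsilon=\mathcal{S}_\varepsilon(\mathcal{S}_\varepsilon^{-1}g)$, prove the invertibility of $\mathcal{S}_\varepsilon\colon L^2(\partial\Omega)\to H^1(\partial\Omega)$ via a Rellich lower bound combined with a method-of-continuity argument, and finish with the nontangential maximal function estimates for the single layer potential plus coercivity for uniqueness. This is essentially Theorem \ref{thm:6.4} followed by the paper's proof of Theorem \ref{thm:1.3}. The one substantive difference, and the place where a genuine gap appears, is the choice of homotopy and its endpoint.

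You propose $\mathcal{L}_\varepsilon^{(t)}$ obtained by scaling $V,B,c$ by $t$, so that the endpoint $t=0$ is $L_\varepsilon+\lambda I$, and then appeal to Kenig--Shen's theory \cite{SZW24} with the remark that ``the positive zeroth-order term $\lambda I$ is a harmless addition, e.g.\ by a further homotopy scaling $\lambda$ down to $0$.'' That secondary homotopy does not actually go through with the machinery available here. The paper's fundamental-solution theory (Theorem \ref{thm:2.3.1}), the small-scale Rellich estimate (Theorem \ref{thm:5.3} and its consequence \eqref{pri:5.14}), and the invertibility theorems (\ref{thm:4.1}, \ref{thm:4.3}, \ref{thm:6.4}) are all stated under $\lambda\ge\max\{\lambda_0,\mu\}$, which forces $\lambda\ge\mu>0$; once you scale $\lambda$ below $\mu$ you leave the range in which the decay estimates \eqref{pri:2.11}--\eqref{pri:2.13} and the Rellich--continuity apparatus have been established, so you cannot get the uniform a priori bound needed for that second continuity argument. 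The paper sidesteps this entirely: its homotopy is $\mathcal{L}_\varepsilon^{\theta}=\theta\mathcal{L}_\varepsilon+(1-\theta)\mathcal{L}_{x_0}$, whose $\theta=0$ endpoint is a \emph{constant-coefficient} operator $\mathcal{L}_{x_0}$ (coefficients frozen at a point) for which the invertibility is proved independently in Section \ref{sec:3} (Theorem \ref{thm:3.3}) via the compactness of $\mathcal{K}_0-\mathcal{K}_{\widehat A}$. If you want to keep your homotopy endpoint $L_\varepsilon+\lambda I$, the clean way to justify it is not a second homotopy but a Fredholm argument: by Lemma \ref{lemma:2.3.1} the kernel of $\mathcal{S}_{L_\varepsilon+\lambda I}-\mathcal{S}_{L_\varepsilon}$ satisfies $O(|x-y|^{4-d})$ and $O(|x-y|^{3-d})$ estimates for itself and its gradient, so the difference is compact $L^2(\partial\Omega)\to H^1(\partial\Omega)$ (as in Lemma \ref{lemma:3.6}); since $\mathcal{S}_{L_\varepsilon}$ is invertible by \cite{SZW24} and $\mathcal{S}_{L_\varepsilon+\lambda I}$ is injective (from the Rellich bound, which does hold at $t=0$ since $\lambda\ge\mu$), invertibility follows by index zero.

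Two smaller points. First, the continuity in $t$ of $\mathcal{S}_\varepsilon^{(t)}$ as an operator $L^2\to H^1$ does not come from Theorem \ref{thm:0.4}, which compares $\mathbf{\Gamma}_\varepsilon$ with the homogenized kernel $\mathbf{\Gamma}_0$; what is needed is a comparison of two $\varepsilon$-oscillating operators with different lower-order coefficients, i.e.\ Lemmas \ref{lemma:4.3}--\ref{lemma:4.4} and estimate \eqref{pri:5.11} (with the $\varepsilon$-dependent constant acknowledged in Remark \ref{remark:4.1} and the proof of Theorem \ref{thm:6.4}). Second, the bound $\|(\mathcal{S}_\varepsilon h)^*\|_{L^2(\partial\Omega)}\le C\|h\|_{L^2(\partial\Omega)}$ is not actually written down in the paper; Lemma \ref{lemma:6.1} records $(\nabla\mathcal{S}_\varepsilon h)^*$ and $(\mathcal{D}_\varepsilon h)^*$, and the paper's own proof of Theorem \ref{thm:1.3} obtains the $\|(u_\varepsilon)^*\|$ bound instead via the radial maximal function route \eqref{pri:4.1}, \eqref{pri:3.2.1}, \eqref{pri:6.3}. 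Your claim is nevertheless correct and easy to supply (the kernel $|\mathbf{\Gamma}_\varepsilon|\le C|x-y|^{2-d}$ makes $(\mathcal{S}_\varepsilon h)^*$ controlled pointwise by the Riesz potential $I_1(|h|)$ on $\partial\Omega$), so this is a presentational rather than a mathematical issue.
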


\subsection{Motivation and informal summary of results}

The results in Theorems $\ref{thm:1.1},\ref{thm:1.2},\ref{thm:1.3}$ are quite similar to those obtained
by Kenig and Shen in \cite{SZW24}
for the homogeneous operator $L_\varepsilon = -\text{div}(A(x/\varepsilon)\nabla)$.
Frankly speaking, the main ideas in the paper have been developed in \cite{SZW24,GZS1},
and we plan to simply introduce their strategies and then state ours.

Step 1. Approximate the fundamental solution $\Gamma_{1}(x,y)$ by
freezing the coefficients in the case of $|x-y|\leq 1$ and
using its asymptotic expansion for $|x-y|> 1$,
where $\Gamma_{1}(x,y)$ is the fundamental solution of $L_1 = -\text{div}(A(x)\nabla)$.
The purpose is to give $L^p$ bounds with $1<p<\infty$ for singular integral operators on
Lipschitz surfaces, such as Theorems $\ref{lemma:3.3},\ref{thm:5.2},\ref{thm:6.10}$.

Step 2. Establish the following Rellich estimates:
\begin{equation}\label{pri:0.8}
 \|\nabla u_\varepsilon\|_{L^2(\partial\Omega)}
 \leq C\big\|\frac{\partial u_\varepsilon}{\partial n_\varepsilon}\big\|_{L^2(\partial\Omega)},
 \qquad\text{and}\quad
 \|\nabla u_\varepsilon\|_{L^2(\partial\Omega)}
 \leq C\|\nabla_{\text{tan}}u_\varepsilon\|_{L^2(\partial\Omega)},
\end{equation}
where the notation $\partial/\partial n_\varepsilon$ denotes the normal derivative related to
$L_\varepsilon$ and the notation $\nabla_{\text{tan}}$ represents the tangential derivatives.
The estimate $\eqref{pri:0.8}$ is a crucial ingredient in the use of layer potentials to solve
$L^2$ boundary value problems in Lipschitz domains. This step in fact includes two aspects:
\begin{itemize}
\item in the case of small scales, the estimate $\eqref{pri:0.8}$ is
based upon the so-called Rellich identity;
\item for the large cases, it is due to a localization technique coupled with the convergence rate
$O(\varepsilon^{1/2})$ in the $H^1(\Omega)$-norm.
\end{itemize}

Step 3. The continuity arguments together with the Fredholm operator theory
were applied to show the invertibility of the trace operators.

\begin{remark}
\emph{In fact, there are two ways to derive the estimate $\eqref{pri:0.8}$ for the large scale.
The method stated in Step 2 has been fully developed in \cite{GZS1}, originally suggested in \cite{SZW12},
and we will also employ it to develop a similar type estimate (see Section 5). Another way is due to a basic insight that
the difference $Q(u)(x^\prime,x_d) = u(x^\prime,x_d+1)-u(x^\prime,x_d)$ is a solution whenever
$u$ is a solution on account of the periodicity of $A$, which may be found in \cite{SZW24,SZW25}.
Also, we mention that this observation is meaningful in a nonperiodic setting.}
\end{remark}

\begin{remark}
\emph{In terms of the homogeneous property of $L_\varepsilon$, one may obtain
$\Gamma_\varepsilon(x,y) = \varepsilon^{2-d}\Gamma_1(x/\varepsilon,y/\varepsilon)$ for any
$x,y\in\mathbb{R}^d$ with $x\not=y$. Due to this property,
the $L^p$ bounds on singular
integral operators related to $\Gamma_\varepsilon(x,y)$ may be reduced
to the scale $\varepsilon = 1$ by rescaling arguments. In this sense, the large scale means $r>1$ while
the small scale means $0<r\leq 1$ in the paper \cite{SZW24,SZW25}, and it is a little different from ours.}
\end{remark}

Compared to Kenig and Shen's work in \cite{SZW24}, we confront with the following main difficulties:
\begin{itemize}
  \item although a series of uniform interior estimates has been developed in \cite{QXS1},
  we have not established the scaling-invariant estimates for $\mathcal{L}_\varepsilon$ yet,
  which means there is no evidence that the hypotheses $(\text{H}_1)$ and $(\text{H}_1)$ are clearly true under
  some suitable conditions. However, the size estimates for the fundamental solution
  $\mathbf{\Gamma}_\varepsilon(x,y)$ play an essential role in the layer potential method;
  \item due to the nonhomogeneous property of $\mathcal{L}_\varepsilon$, we can not account on the property
  \begin{equation}\label{eq:0.3}
  \mathbf{\Gamma}_\varepsilon(x,y) = r^{2-d}\mathbf{\Gamma}_{\varepsilon/r}(x/r,y/r)
  \qquad \forall x,y\in\mathbb{R}^d\text{~with~}x\not=y \text{~and~}r>0,
  \end{equation}
  which will bring some difficulties at least in a technical point of view;
  \item the lower order terms in homogenized operator $\mathcal{L}_0$ may be regarded as a
  compact perturbation, while the counterparts of $\mathcal{L}_\varepsilon$
  can not be handled in the same way. Meanwhile, how to approximate the fundamental
  solution $\mathbf{\Gamma}_\varepsilon(x,y)$ in an effective way is unknown, at least, to us.
\end{itemize}

According to the difficulties above,
we try to introduce our strategies and the source of the ideas.

The first difficulty has been overcome by Theorems $\ref{thm:0.1}$,$\ref{thm:0.2}$ and $\ref{thm:2.1}$,
in which the scaling-invariant estimates are most crucial, and we will outline the main ideas therein.

An important finding is that the constant $\lambda_0$ should be given in the form of $\eqref{a:5}$,
which guarantees that the operator $\mathcal{L}_\varepsilon$ will be ``positive'' in the case of $\lambda\geq\lambda_0$,
and this property will benefit the so-called Caccioppoli's inequality
\begin{equation*}
\mu \int_{B(0,R)} |\nabla u_\varepsilon|^2 dx
+\lambda\int_{B(0,R)} |u_\varepsilon|^2 dx
\leq \frac{C_\mu}{R^2}\int_{B(0,2R)} |u_\varepsilon|^2 dx,
\end{equation*}
where $\mathcal{L}_\varepsilon(u_\varepsilon) = 0$ in $B(0,2R)$ with any $R>0$.
We remark that the constant $C_\mu$ depends only on $\mu,m,d$. This is the first step
to derive the scaling-invariant estimate. Then, another key observation is
that the bootstrap'' process is based upon a limited iteration, and the following quantities
\begin{equation*}
 \|V\|_{L^\infty(\mathbb{R}^d)}/\sqrt{\lambda},
 \quad \|B\|_{L^\infty(\mathbb{R}^d)}/\sqrt{\lambda}, \quad\text{and}\quad
\|c\|_{L^\infty(\mathbb{R}^d)}/\lambda
\end{equation*}
are scaling-invariant. Hence, the task is reduced to track the constant,
and we repeat using Caccioppoli's inequality
to send the factor ``$\lambda^{-\frac{1}{2}}$'' to the coefficients $V,B$ and $c$.
The details may be found in the proof Theorem $\ref{thm:0.1}$.
Generally speaking, the main ideas in Theorem $\ref{thm:0.1}$ is using
Caccioppoli's inequality stated above (or see Lemma $\ref{lemma:2.2}$) to improve
the related estimates in \cite{QXS1} such that
\begin{equation*}
\begin{aligned}
C(\mu,\kappa,\lambda,m,d,p\|A\|_{\text{VMO}}) &\rightsquigarrow \eqref{KEY:2}, \\
C(\mu,\kappa,\lambda,m,d,\tau) &\rightsquigarrow \eqref{KEY:3}.
\end{aligned}
\end{equation*}
We end this paragraph by mention that the ideas used here are inspired by Shen's work in \cite{SZW26} and its references therein.

The second difficult mainly appears in Theorem $\ref{thm:0.4}$ and in layer potential methods,
since the property $\eqref{eq:0.3}$ is not true for $\mathcal{L}_\varepsilon$, it is impossible to
rescale the related estimates to the case $\varepsilon = 1$ as in \cite{MAFHL3,SZW24}.
Thus we have to consider the estimates such as $L^p$ bounds for singular
integral operators on Lipschitz surfaces in terms of small and
large scales, separately. Here the small scale means $0<r\leq \varepsilon$ while the large scale means
$r>\varepsilon$. In fact, the results in Theorem $\ref{thm:0.4}$ gave us the asymptotic expansions of
fundamental solutions in large scales, which will benefit the later discussion
in the case of $r>\varepsilon$. The rescaling arguments merely worked for small scales, which means
showing the related estimates in small scales is equivalent to proving it in the case of $\varepsilon=1$.
The reader may clearly find this point from the relationship between Lemma $\ref{lemma:4.7}$ and Lemma
$\ref{lemma:5.2}$. We comment that although no tough difficulty
arises from lacking the property $\eqref{eq:0.3}$, it obviously increases the cases we have to handle,
and the main ideas in the proof of Theorem $\ref{thm:0.4}$ are still from \cite{MAFHL3}.

To dissolve the third one, motivated by Shen's work in \cite{SZW23},
the main ideas are to make a comparison with the homogeneous operator
$L_\varepsilon$, which have also been applied to the homogenized problems in Section $\ref{sec:3}$. Thus, we establish
three comparing lemmas (see Lemmas $\ref{lemma:3.2}$, $\ref{lemma:4.7}$ and $\ref{lemma:5.2}$).
To do so, an equality on the difference between two fundamental
solutions has been shown in Lemma $\ref{lemma:2.3.1}$. Observing the equality, it is not hard to see
that the difficulties produced by the lower order terms and lower regularity assumptions.
In fact, the comparing lemmas can not be directly achieved by
using the equality $\eqref{eq:5.1}$ in Lemma $\ref{lemma:2.3.1}$. We have to seek
the ``freezing coefficient'' arguments as an transitional stage. To our surprising,
not only leading term but also the lower order terms are inevitably frozen
at the same point to make the proof reasonable. It is very different from the classical cases such as
the well known Schauder estimates, in which it suffices to freeze the leading term.
Meanwhile, a similar case happen to using the continuity method to show the invertibility
of the trace operators. That is one reason why we use a section to discuss homogenized systems in detail.

Concerning homogenized systems, we point out that the nontangential maximal function estimates do not
depend on layer potentials. Instead, by using the results obtained in \cite{GaoW},
the radial maximal function coupled with a interior estimate will
lead to the desired estimates (see Theorem $\ref{thm:3.2}$, $\ref{thm:3.4}$), which releases us from
the Rellich identity compared to the case of variable coefficients. In this sense, there provides a new
way to derive this kind of estimates, and its original thinking belongs to \cite{SZW2}.
On the other hand, let $\mathcal{K}_{\widehat{A}}, \mathcal{K}_{0}$ be the trace operators related to
$L_0$ and $\mathcal{L}_0$, respectively (see Lemma $\ref{lemma:3.6}$). One may verify that
$\mathcal{K}_{0} - \mathcal{K}_{\widehat{A}}$ will be a compact operator on
$L^p(\partial\Omega;\mathbb{R}^m)$ by the Sobolev embedding theorem,
and this together with nontangential maximal function estimates will show
the well-posedness of $L^2$ Dirichlet, Neumann and regular problems for the
elliptic systems with constant coefficients. We mention that the methods used here may be extended to
$L^p$ boundary value problems without any real difficulty.

\begin{remark}
\emph{In terms of pseudodifferential operators, the research on boundary
value problems of nonhomogeneous elliptic systems is not new.
We refer the reader to \cite{HMT,DMMMMT,MMMT} and their references therein for more details.
The method of freezing coefficients used here was originally developed in \cite{MMMT} for Laplace-Beltrami
operator on Lipschitz subdomains of Riemannian manifolds.
Compared to theirs, obvious differences are that the argument approached here
requires lower regularity assumption on coefficients, and the expression of $\mathcal{L}_\varepsilon$
appears more complicated.}
\end{remark}

\begin{remark}
\emph{In the case of $m=1$, Kenig and Shen established the solvability for $L^p$ Dirichlet
problem for $L_\varepsilon = -\text{div}(A(x/\varepsilon)\nabla)$ with $2-\delta<p<\infty$, originally
obtained by Dahlberg, in \cite{SZW25}. Also, the $L^p$ Neumann and regularity problems were solved
for the sharp range $1<p<2+\delta$, in which the coefficient $A$ satisfies elliptic, symmetric, periodic
and a certain square-Dini conditions. We refer the reader to \cite{K,KP,KP1} for the related fields on
$L^p$ boundary value problems with minimal smoothness assumptions.
For $m>1$ and $\partial\Omega\in C^{1,\tau}$, the well-posedness of $L^p$ Dirichlet, Neumann and regular
problems obtained by Kenig, Lin and Shen in \cite{SZW3}
due to a real method coupled with reverse H\"older's
inequality and Neumann functions (see for example \cite{SZW27}). We will study this topic in a separate work, and
how to extend the related reverse H\"older's
inequality to the case of $\partial\Omega\in C^1$ is still open.}
\end{remark}

\begin{remark}
\emph{As we mentioned before, the size estimates $(\text{H}_1)$ and $(\text{H}_2)$ for fundamental solutions
play an crucial role in layer potential methods, and Theorem $\ref{thm:2.1}$ proved that they are
equivalent to the scaling-invariant estimates $\eqref{pri:0.2}$ and $\eqref{pri:0.3}$, respectively.
In fact, this result has partially been pointed out by Hofmann and Kim in \cite{HS}, and
we made a few improvements to their related proofs, inspired by Avellaneda, Lin \cite{MAFHL}. Also,
to obtain a sharp estimate,
the decay estimates of Green functions for $\mathcal{L}_\varepsilon$ have already been developed
in \cite{QXS}, in which the constant additionally depends on $\Omega$. We finally mention that
the paper \cite{DHM} recently studied a very similar model in the classical case, in which the authors
gave a scaling-invariant estimate through the Giorgi-Nash-Moser theory.}
\end{remark}

\begin{remark}
\emph{The asymptotic expansions $\eqref{pri:0.7}$ is not sharp, but enough to approximate
$\nabla\mathbf{\Gamma}_\varepsilon(x,y)$ in the large scales. Besides,
following the ideas
in \cite{MAFHL3}, it is not hard to derive the $L^p$ boundedness of operator
$(\partial/\partial x_i)(-\mathcal{L})^{-1}(\partial/\partial x_j)$,
where the notation $\mathcal{L}$ denotes $\mathcal{L}_1$  by ignoring the the subscript.
We also mention that the results in Theorems $\ref{thm:0.1},\ref{thm:0.2},\ref{thm:2.1}$ and $\ref{thm:0.4}$ are
independent of the symmetry condition $A=A^*$.
From the point of view of convergence rates, the estimates
$\eqref{pri:1.1},\eqref{pri:1.4}, \eqref{pri:1.2}$ and $\eqref{pri:1.3}$
are much harder than the Lipschitz estimate $\eqref{pri:0.4}$ in the quantitative estimates.
Finally,
we remark that the well-posedness of $L^p$ boundary value problems are active topics and, without
attempting to be exhaustive, we refer the reader to
\cite{MAFHL5,BMSHSTA,CMM,DKV,Fabes,GaoW,GZS1,GX,HMT,JK,K,KP,KP1,DMMMMT,MMMT,S4,SZW23,SZW27,SZW26,V}
and the references therein for more details.}
\end{remark}

\subsection{Outline of the paper}
In the next section, we mainly present the proofs for Theorems $\ref{thm:0.1}$, $\ref{thm:0.2}$ and $\ref{thm:0.4}$, and
the existence and size estimates of fundamental solutions of $\mathcal{L}_\varepsilon$, as well as
some basic definitions, lemmas, theorems. Section $\ref{sec:3}$ is devoted to show the well-posedness
of $L^2$ boundary value problems for the nonhomogeneous elliptic systems with constant coefficients.
Although Section $\ref{sec:4}$ merely hand the related problems in the small scales,
it is the core of the paper, and the last section will give the whole proofs for Theorems
$\ref{thm:1.1}$, $\ref{thm:1.2}$ and $\ref{thm:1.3}$.

\section{Preliminaries}

\begin{definition}\label{def:2.1}
Let $\mathcal{L}_\varepsilon^*$ be the adjoint operator of $\mathcal{L}_\varepsilon$,
which is given by
\begin{equation*}
\mathcal{L}_\varepsilon^* =
-\text{div}\big[A^*(x/\varepsilon)\nabla +V^*(x/\varepsilon)\big]
+ B^*(x/\varepsilon)\nabla +c^*(x/\varepsilon) + \lambda I
\end{equation*}
where
\begin{equation*}
A^* = (A_{ji})^t = (a_{ji}^{\beta\alpha}),
\quad V^* = B^t = (B_i^{\beta\alpha}),
\quad B^* = V^t = (V_i^{\beta\alpha}),
\quad c^* = c^t =(c^{\beta\alpha}).
\end{equation*}
Then we define
the bilinear forms $\mathrm{B}_{\mathcal{L}_\varepsilon;\mathbb{R}^d}[\cdot,\cdot]$ and
$\mathrm{B}_{\mathcal{L}_\varepsilon^*;\mathbb{R}^d}[\cdot,\cdot]$,
associated with $\mathcal{L}_\varepsilon$ and $\mathcal{L}_\varepsilon^*$ as
\begin{equation*}
\begin{aligned}
&\mathrm{B}_{\mathcal{L}_\varepsilon;\mathbb{R}^d}[u,v]
=\int_{\mathbb{R}^d}\bigg\{\Big[a_{ij}^{\alpha\beta}\Big(\frac{x}{\varepsilon}\Big)
\frac{\partial u^\beta}{\partial x_j}
+ V_i^{\alpha\beta}\Big(\frac{x}{\varepsilon}\Big)u^\beta\Big]
\frac{\partial v^\alpha}{\partial x_i}
+\Big[B_i^{\alpha\beta}\Big(\frac{x}{\varepsilon}\Big)
\frac{\partial u^\beta}{\partial x_i} +
c^{\alpha\beta}\Big(\frac{x}{\varepsilon}\Big)u^\beta + \lambda u^\alpha\Big]v^\alpha\bigg\} dx,\\
& \mathrm{B}_{\mathcal{L}_\varepsilon^*;\mathbb{R}^d}[v,u]
=\int_{\mathbb{R}^d}\bigg\{\Big[a_{ji}^{\beta\alpha}\Big(\frac{x}{\varepsilon}\Big)
\frac{\partial v^\alpha}{\partial x_j}
+ B_i^{\beta\alpha}\Big(\frac{x}{\varepsilon}\Big)v^\alpha\Big]
\frac{\partial u^\beta}{\partial x_i}
+\Big[V_i^{\beta\alpha}\Big(\frac{x}{\varepsilon}\Big)
\frac{\partial v^\alpha}{\partial x_i} +
c^{\beta\alpha}\Big(\frac{x}{\varepsilon}\Big)v^\alpha + \lambda v^\beta\Big]u^\beta\bigg\} dx
\end{aligned}
\end{equation*}
for any $u,v\in H^1(\mathbb{R}^d;\mathbb{R}^m)$, respectively.
\end{definition}

Let $X,Y:\mathbb{R}^d\to \mathbb{R}^m$ be vector-valued functions, and
$E:\mathbb{R}^d\to \mathbb{R}^{m\times m}$ be a matrix-valued function,
and due to the fact that
\begin{equation*}
 \int_{\mathbb{R}^d} X E Y^t dx =
 \int_{\mathbb{R}^d} Y E^t X^t dx,
\end{equation*}
it is not hard to see that
\begin{equation}
\mathrm{B}_{\mathcal{L}_\varepsilon;\mathbb{R}^d}[u,v]
= \mathrm{B}_{\mathcal{L}_\varepsilon^*;\mathbb{R}^d}[v,u]
\end{equation}
for any $u,v\in H^1(\mathbb{R}^d;\mathbb{R}^m)$.

\begin{lemma}[Energy estimates]\label{lemma:2.1}
Suppose that the coefficients of $\mathcal{L}_\varepsilon$ satisfy
the conditions $\eqref{a:1}$ and $\eqref{a:3}$. Then we have the boundedness property
\begin{equation}
\Big|\mathrm{B}_{\mathcal{L}_\varepsilon;\mathbb{R}^d}
[u,v]\Big|
\leq C\|u\|_{H^1(\mathbb{R}^d)}\|v\|_{H^1(\mathbb{R}^d)}
\qquad \forall u,v\in H^1(\mathbb{R}^d;\mathbb{R}^m),
\end{equation}
where $C$ depends on $\mu,\kappa,\lambda,m,d$. Moreover,
there exists a constant $\lambda_0$ given in the form of $\eqref{KEY:1}$
such that for any $\lambda\geq\lambda_0$ there holds the coercivity property
\begin{equation}\label{pri:2.0.2}
\mu\|\nabla u\|_{L^2(\mathbb{R}^d)}^2
+\lambda\|u\|_{L^2(\mathbb{R}^d)}^2
\leq 2\mathrm{B}_{\mathcal{L}_\varepsilon;\mathbb{R}^d}[u,u]
\end{equation}
for any $u\in H^1(\mathbb{R}^d;\mathbb{R}^m)$.
\end{lemma}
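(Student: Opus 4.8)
The plan is to split the bilinear form into its leading part and its lower-order part, and to control the lower-order part by the ellipticity constant $\mu$ and Young's inequality, absorbing the bad terms into the good ones. For the boundedness property, I would simply apply the Cauchy-Schwarz inequality term by term to each of the four groups in $\mathrm{B}_{\mathcal{L}_\varepsilon;\mathbb{R}^d}[u,v]$: the leading term is bounded by $\mu^{-1}\|\nabla u\|_{L^2}\|\nabla v\|_{L^2}$ using \eqref{a:1}, while the three terms involving $V$, $B$, $c$ and $\lambda I$ are bounded using \eqref{a:3} and the crude estimates $\|u\|_{L^2}, \|\nabla u\|_{L^2}\le \|u\|_{H^1}$; collecting the constants gives a bound depending only on $\mu,\kappa,\lambda,m,d$, as claimed. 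This part is routine and I would dispatch it in one or two lines.

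For the coercivity property, I would write
\begin{equation*}
\mathrm{B}_{\mathcal{L}_\varepsilon;\mathbb{R}^d}[u,u]
= \int_{\mathbb{R}^d} a_{ij}^{\alpha\beta}(x/\varepsilon)\frac{\partial u^\beta}{\partial x_j}\frac{\partial u^\alpha}{\partial x_i}\,dx
+ \int_{\mathbb{R}^d}\Big[V_i^{\alpha\beta}(x/\varepsilon)u^\beta\frac{\partial u^\alpha}{\partial x_i}
+ B_i^{\alpha\beta}(x/\varepsilon)\frac{\partial u^\beta}{\partial x_i}u^\alpha
+ c^{\alpha\beta}(x/\varepsilon)u^\beta u^\alpha\Big]\,dx
+ \lambda\|u\|_{L^2(\mathbb{R}^d)}^2.
\end{equation*}
By \eqref{a:1} the first integral is at least $\mu\|\nabla u\|_{L^2}^2$. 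For the middle integral, I would use \eqref{a:3} together with Young's inequality $|ab|\le \frac{\mu}{8}a^2 + \frac{2}{\mu}b^2$ on the two first-order terms: each of them is bounded in absolute value by $\frac{\mu}{8}\|\nabla u\|_{L^2}^2 + \frac{C(m,d)}{\mu}\|V\|_{L^\infty}^2\|u\|_{L^2}^2$ (resp.\ with $\|B\|_{L^\infty}$), and the zeroth-order term by $\|c\|_{L^\infty}\|u\|_{L^2}^2$. Hence the middle integral is at least $-\frac{\mu}{4}\|\nabla u\|_{L^2}^2 - \frac{c(m,d)}{\mu}\big\{\|V\|_{L^\infty}^2+\|B\|_{L^\infty}^2+\|c\|_{L^\infty}\big\}\|u\|_{L^2}^2 = -\frac{\mu}{4}\|\nabla u\|_{L^2}^2 - \lambda_0\|u\|_{L^2}^2$, where $\lambda_0$ is exactly the constant in \eqref{KEY:1} once the combinatorial constant $c(m,d)$ is chosen appropriately. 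Combining the three pieces,
\begin{equation*}
\mathrm{B}_{\mathcal{L}_\varepsilon;\mathbb{R}^d}[u,u]
\ge \tfrac{3\mu}{4}\|\nabla u\|_{L^2}^2 + (\lambda-\lambda_0)\|u\|_{L^2}^2
\ge \tfrac{\mu}{2}\|\nabla u\|_{L^2}^2 + \tfrac{\lambda}{2}\|u\|_{L^2}^2
\end{equation*}
whenever $\lambda\ge\lambda_0$, which is \eqref{pri:2.0.2} after multiplying by $2$.

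The only point requiring care — and the place where the precise form of $\lambda_0$ in \eqref{KEY:1} is pinned down — is the bookkeeping of the constant $c(m,d)$ in Young's inequality: one must choose the split in $|ab|\le \eta a^2 + \tfrac{1}{4\eta}b^2$ so that the accumulated coefficient of $\|\nabla u\|_{L^2}^2$ coming from the two first-order terms does not exceed $\mu/4$ (leaving $3\mu/4 > \mu/2$ for the final bound), while simultaneously the coefficient of $\|u\|_{L^2}^2$ matches $\tfrac{c(m,d)}{\mu}\{\|V\|_{L^\infty}^2+\|B\|_{L^\infty}^2+\|c\|_{L^\infty}\}$; since $V$ and $B$ each carry an $m\times d$ array of entries, the constant $c(m,d)$ absorbs the resulting sum over indices. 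Everything else is a direct application of \eqref{a:1}, \eqref{a:3} and Cauchy-Schwarz, with no homogenization input needed — the estimate is uniform in $\varepsilon$ simply because the $L^\infty$ and ellipticity bounds are invariant under $y\mapsto x/\varepsilon$.
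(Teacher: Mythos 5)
Your proposal is correct and follows the standard energy-estimate argument that the paper itself invokes without reproducing (the paper's proof consists of the single remark that it ``is standard and quite similar to [Lemma 3.1]{QXS1}''), so there is no substantive divergence to report. One bookkeeping point worth tightening: in your displayed chain you write $\mathrm{B}[u,u]\ge\frac{3\mu}{4}\|\nabla u\|_{L^2}^2+(\lambda-\lambda_0)\|u\|_{L^2}^2\ge\frac{\mu}{2}\|\nabla u\|_{L^2}^2+\frac{\lambda}{2}\|u\|_{L^2}^2$ ``whenever $\lambda\ge\lambda_0$,'' but the second inequality requires $\lambda-\lambda_0\ge\lambda/2$, i.e.\ $\lambda\ge 2\lambda_0$, if $\lambda_0$ denotes the Young-inequality constant appearing in the middle term; what actually saves you is exactly the point you make informally at the end — $c(m,d)$ in \eqref{KEY:1} is free, so one \emph{defines} $\lambda_0$ to be twice the absorbed constant, and then $\lambda\ge\lambda_0$ does imply $\lambda-\lambda_0/2\ge\lambda/2$ — but the symbol $\lambda_0$ is being used for two different quantities in the displayed inequalities, and it would be cleaner to name the intermediate constant (say $\lambda_0'$) and then set $\lambda_0:=2\lambda_0'$.
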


\begin{thm}[Weak solutions]\label{thm:2.0}
Suppose that the coefficients of $\mathcal{L}_\varepsilon$ satisfy
the conditions $\eqref{a:1}$, $\eqref{a:3}$.
If $\lambda\geq\max\{\lambda_0,\mu\}$,
then for any $F\in H^{-1}(\mathbb{R}^d;\mathbb{R}^m)$ there exists
a unique weak solution $u_\varepsilon\in H^{1}(\mathbb{R}^d;\mathbb{R}^m)$ to
$\mathcal{L}_\varepsilon(u_\varepsilon) = F$ in $\mathbb{R}^d$, and
we have the uniform estimate
\begin{equation}\label{pri:2.0.1}
\|u_\varepsilon\|_{H^{1}(\mathbb{R}^d)}
\leq C\|F\|_{H^{-1}(\mathbb{R}^d)},
\end{equation}
where $C$ depends only on $\mu,d,m$.
\end{thm}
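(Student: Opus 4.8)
The plan is a direct application of the Lax--Milgram theorem to the bilinear form $\mathrm{B}_{\mathcal{L}_\varepsilon;\mathbb{R}^d}[\cdot,\cdot]$ on the Hilbert space $H^1(\mathbb{R}^d;\mathbb{R}^m)$, followed by an energy argument to extract the quantitative bound. By Lemma \ref{lemma:2.1}, under \eqref{a:1} and \eqref{a:3} the form is bounded on $H^1(\mathbb{R}^d)\times H^1(\mathbb{R}^d)$, and for $\lambda\geq\lambda_0$ it satisfies the coercivity estimate \eqref{pri:2.0.2}. Hence for every bounded linear functional $F\in H^{-1}(\mathbb{R}^d;\mathbb{R}^m)$ the Lax--Milgram theorem produces a unique $u_\varepsilon\in H^1(\mathbb{R}^d;\mathbb{R}^m)$ with
\[
\mathrm{B}_{\mathcal{L}_\varepsilon;\mathbb{R}^d}[u_\varepsilon,v] = \langle F,v\rangle \qquad \text{for all } v\in H^1(\mathbb{R}^d;\mathbb{R}^m),
\]
which is precisely the notion of a weak solution to $\mathcal{L}_\varepsilon(u_\varepsilon)=F$ in $\mathbb{R}^d$.

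For the uniform estimate \eqref{pri:2.0.1}, I would test the weak formulation with $v=u_\varepsilon$ and invoke \eqref{pri:2.0.2}:
\[
\mu\|\nabla u_\varepsilon\|_{L^2(\mathbb{R}^d)}^2 + \lambda\|u_\varepsilon\|_{L^2(\mathbb{R}^d)}^2 \leq 2\,\mathrm{B}_{\mathcal{L}_\varepsilon;\mathbb{R}^d}[u_\varepsilon,u_\varepsilon] = 2\langle F,u_\varepsilon\rangle \leq 2\|F\|_{H^{-1}(\mathbb{R}^d)}\|u_\varepsilon\|_{H^1(\mathbb{R}^d)}.
\]
Since $\lambda\geq\mu$, the left-hand side dominates $\mu\big(\|\nabla u_\varepsilon\|_{L^2(\mathbb{R}^d)}^2 + \|u_\varepsilon\|_{L^2(\mathbb{R}^d)}^2\big) = \mu\|u_\varepsilon\|_{H^1(\mathbb{R}^d)}^2$, so dividing through by $\|u_\varepsilon\|_{H^1(\mathbb{R}^d)}$ (the case $u_\varepsilon=0$ being trivial) yields $\|u_\varepsilon\|_{H^1(\mathbb{R}^d)}\leq (2/\mu)\|F\|_{H^{-1}(\mathbb{R}^d)}$; any $d,m$ dependence enters only through the normalization of the $H^1$ and $H^{-1}$ norms.

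The essential points to emphasize, rather than any genuine obstacle, are that the resulting constant is independent of $\varepsilon$ because both the coercivity constant $\mu$ in \eqref{pri:2.0.2} and the threshold $\lambda_0$ of \eqref{KEY:1} depend only on the $L^\infty$ bounds and the ellipticity constant, none of which see the $\varepsilon$-scaling of the coefficients; and that the hypothesis $\lambda\geq\mu$ is used exactly to convert the $\lambda$-weighted coercivity into a full $H^1$ bound whose constant does not degenerate as $\lambda\to\infty$. No compactness or perturbation argument is needed here --- the low-regularity and non-scaling-invariance difficulties flagged in the introduction only surface later, when one passes from this $\mathbb{R}^d$-solvability to the finer interior and boundary estimates.
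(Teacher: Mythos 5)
Your proof is correct and is precisely the argument the paper has in mind: the authors remark only that Theorem \ref{thm:2.0} ``follows from Lemma \ref{lemma:2.1} straightforwardly,'' i.e.\ a direct Lax--Milgram application using the boundedness and coercivity \eqref{pri:2.0.2}, exactly as you wrote. Your energy estimate, using $\lambda\geq\mu$ to absorb both the gradient and zero-order terms into $\mu\|u_\varepsilon\|_{H^1}^2$, is also the intended route to the $\varepsilon$-independent constant.
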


\begin{remark}
\emph{The proof of Lemma $\ref{lemma:2.1}$ is standard and quite similar to \cite[Lemma 3.1]{QXS1},
so we do not reproduce here, and
it is well known that Theorem $\ref{thm:2.0}$ follows from Lemma $\ref{lemma:2.1}$ straightforwardly.}
\end{remark}

\subsection{Correctors}\label{subsec:2.1}
Define the correctors $\chi_k = (\chi_{k}^{\alpha\beta})$ with $0\leq k\leq d$, related to $\mathcal{L}_\varepsilon$ as follows:
\begin{equation}
\left\{ \begin{aligned}
 &L_1(\chi_k) = \text{div}(V)  \quad \text{in}~ \mathbb{R}^d, \\
 &\chi_k\in H^1_{per}(Y;\mathbb{R}^{m^2})~~\text{and}~\int_Y\chi_k dy = 0
\end{aligned}
\right.
\end{equation}
for $k=0$, and
\begin{equation}
 \left\{ \begin{aligned}
  &L_1(\chi_k^\beta + P_k^\beta) = 0 \quad \text{in}~ \mathbb{R}^d, \\
  &\chi_k^\beta \in H^1_{per}(Y;\mathbb{R}^m)~~\text{and}~\int_Y\chi_k^\beta dy = 0
 \end{aligned}
 \right.
\end{equation}
for $1\leq k\leq d$, where $P_k^\beta = x_k(0,\cdots,1,\cdots,0)$ with 1 in the
$\beta^{\text{th}}$ position, $Y = (0,1]^d \cong \mathbb{R}^d/\mathbb{Z}^d$, and $H^1_{per}(Y;\mathbb{R}^m)$ denotes the closure
of $C^\infty_{per}(Y;\mathbb{R}^m)$ in $H^1(Y;\mathbb{R}^m)$.
Note that $C^\infty_{per}(Y;\mathbb{R}^m)$ is the subset of $C^\infty(Y;\mathbb{R}^m)$, which collects all $Y$-periodic vector-valued functions. By asymptotic expansion arguments (see \cite[pp.103]{ABJLGP} or \cite[pp.31]{VSO}), we obtain the homogenized operator
\begin{equation}\label{eq:2.1.2}
 \mathcal{L}_0 = -\text{div}(\widehat{A}\nabla+ \widehat{V}) + \widehat{B}\nabla + \widehat{c} + \lambda I,
\end{equation}
where $\widehat{A} = (\widehat{a}_{ij}^{\alpha\beta})$, $\widehat{V}=(\widehat{V}_i^{\alpha\beta})$,
$\widehat{B} = (\widehat{B}_i^{\alpha\beta})$ and $\widehat{c}= (\widehat{c}^{\alpha\beta})$ are given by
\begin{equation}\label{eq:2.1.1}
\begin{aligned}
\widehat{a}_{ij}^{\alpha\beta} = \int_Y \big[a_{ij}^{\alpha\beta} + a_{ik}^{\alpha\gamma}\frac{\partial\chi_j^{\gamma\beta}}{\partial y_k}\big] dy, \qquad
\widehat{V}_i^{\alpha\beta} = \int_Y \big[V_i^{\alpha\beta} + a_{ij}^{\alpha\gamma}\frac{\partial\chi_0^{\gamma\beta}}{\partial y_j}\big] dy, \\
\widehat{B}_i^{\alpha\beta} = \int_Y \big[B_i^{\alpha\beta} + B_j^{\alpha\gamma}\frac{\partial\chi_i^{\gamma\beta}}{\partial y_j}\big] dy, \qquad
\widehat{c}^{\alpha\beta} = \int_Y \big[c^{\alpha\beta} + B_i^{\alpha\gamma}\frac{\partial\chi_0^{\gamma\beta}}{\partial y_i}\big] dy.
\end{aligned}
\end{equation}

\begin{lemma}
Suppose that the coefficients of $\mathcal{L}_\varepsilon$ satisfy the conditions
$\eqref{a:1}$, $\eqref{a:2}$ and $\eqref{a:3}$. Then there holds
\begin{equation}\label{pri:2.1.1}
 \max_{0\leq k\leq d}\|\nabla\chi_k\|_{L^2(Y)}\leq C(\mu,\kappa,d,m).
\end{equation}
Moreover, if we further assume $A=A^*$, then we have
\begin{equation}\label{a:5}
\left\{\begin{aligned}
&\widehat{A}=\widehat{A}^*,\\
&\mu |\xi|^2 \leq \xi^T\widehat{A}\xi\leq \mu^{-1} |\xi|^2 \quad\forall\xi\in\mathbb{R}^{md}, \\
&\max\big\{|\widehat{V}|,~|\widehat{B}|,~|\widehat{c}|\big\}\leq C(\mu,\kappa,d,m).
\end{aligned}\right.
\end{equation}
\end{lemma}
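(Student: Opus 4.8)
The plan is to run four short steps, each amounting to testing a defining cell problem against a suitable test function and invoking $\eqref{a:1}$, $\eqref{a:3}$ and periodicity; no deep input is required.

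\emph{Step 1 (the energy bound $\eqref{pri:2.1.1}$).} For $1\leq k\leq d$ and each fixed $\beta$, I would use $\chi_k^\beta\in H^1_{per}(Y;\mathbb{R}^m)$ as a test function in the weak form of $L_1(\chi_k^\beta+P_k^\beta)=0$, which gives $\int_Y a_{ij}^{\gamma\delta}\,\partial_{y_j}\chi_k^{\delta\beta}\,\partial_{y_i}\chi_k^{\gamma\beta}\,dy=-\int_Y a_{ij}^{\gamma\delta}\,\partial_{y_j}P_k^{\delta\beta}\,\partial_{y_i}\chi_k^{\gamma\beta}\,dy$. Since $|\nabla P_k^\beta|\equiv 1$ and $|Y|=1$, the coercivity in $\eqref{a:1}$ on the left together with $\|A\|_{L^\infty(\mathbb{R}^d)}\leq\mu^{-1}$ and Cauchy--Schwarz on the right yield $\|\nabla\chi_k^\beta\|_{L^2(Y)}\leq\mu^{-2}$. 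For $k=0$, testing $L_1(\chi_0)=\text{div}(V)$ against $\chi_0$ gives $\int_Y a_{ij}^{\gamma\delta}\,\partial_{y_j}\chi_0^{\delta\beta}\,\partial_{y_i}\chi_0^{\gamma\beta}\,dy=-\int_Y V_i^{\gamma\beta}\,\partial_{y_i}\chi_0^{\gamma\beta}\,dy$, so $\mu\|\nabla\chi_0\|_{L^2(Y)}^2\leq\|V\|_{L^\infty(\mathbb{R}^d)}\|\nabla\chi_0\|_{L^2(Y)}$ and $\|\nabla\chi_0\|_{L^2(Y)}\leq\mu^{-1}\kappa$ by $\eqref{a:3}$. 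Bundling the finitely many components gives $\eqref{pri:2.1.1}$ with a constant depending only on $\mu,\kappa,d,m$.

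\emph{Step 2 (symmetry of $\widehat{A}$).} I would first record the bilinear representation of $\widehat{A}$. Testing the weak form of $L_1(\chi_j^{\cdot\beta}+P_j^\beta)=0$ against $\chi_i^{\cdot\alpha}$ gives $\int_Y a_{kl}^{\gamma\delta}\,\partial_{y_l}(\chi_j^{\delta\beta}+P_j^\beta)\,\partial_{y_k}\chi_i^{\gamma\alpha}\,dy=0$; adding $\int_Y a_{kl}^{\gamma\delta}\,\partial_{y_l}(\chi_j^{\delta\beta}+P_j^\beta)\,\delta_{ki}\delta_{\gamma\alpha}\,dy$ to both sides and using $\partial_{y_k}P_i^{\gamma\alpha}=\delta_{ki}\delta_{\gamma\alpha}$ together with $\eqref{eq:2.1.1}$ identifies the result as $\widehat{a}_{ij}^{\alpha\beta}$, so that
\begin{equation*}
\widehat{a}_{ij}^{\alpha\beta}=\int_Y a_{kl}^{\gamma\delta}\,\partial_{y_k}\big(\chi_i^{\gamma\alpha}+P_i^\alpha\big)\,\partial_{y_l}\big(\chi_j^{\delta\beta}+P_j^\beta\big)\,dy .
\end{equation*}
When $A=A^*$, i.e. $a_{kl}^{\gamma\delta}=a_{lk}^{\delta\gamma}$, the right-hand side is unchanged under $(i,\alpha)\leftrightarrow(j,\beta)$ after the relabelling $(k,\gamma)\leftrightarrow(l,\delta)$, which is exactly $\widehat{A}=\widehat{A}^*$.

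\emph{Step 3 (ellipticity of $\widehat{A}$).} Fix $\xi=(\xi_i^\alpha)\in\mathbb{R}^{md}$ and set $w^\gamma=\xi_i^\alpha(\chi_i^{\gamma\alpha}+P_i^{\gamma\alpha})$ with $P_i^{\gamma\alpha}=y_i\delta_{\gamma\alpha}$, so that $-\text{div}(A\nabla w)=0$ in $\mathbb{R}^d$, the function $w-\ell$ with $\ell^\gamma=\xi_i^\gamma y_i$ is $Y$-periodic, and Step 2 reads $\xi^T\widehat{A}\xi=\int_Y a_{kl}^{\gamma\delta}\,\partial_{y_k}w^\gamma\,\partial_{y_l}w^\delta\,dy$. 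For the lower bound, $\partial_{y_k}w^\gamma-\xi_k^\gamma$ is a periodic gradient, hence of zero mean, so $\int_Y\partial_{y_k}w^\gamma\,dy=\xi_k^\gamma$; Jensen's inequality and $|Y|=1$ give $\int_Y|\nabla w|^2\,dy\geq|\xi|^2$, and the lower bound in $\eqref{a:1}$ yields $\xi^T\widehat{A}\xi\geq\mu|\xi|^2$. For the upper bound, testing $-\text{div}(A\nabla w)=0$ against the periodic function $w-\ell$ gives $\int_Y A\nabla w\cdot\nabla w\,dy=\int_Y A\nabla w\cdot\nabla\ell\,dy$; since under $A=A^*$ the form $(\eta,\zeta)\mapsto A\eta\cdot\zeta$ is a genuine inner product, the Cauchy--Schwarz inequality for it gives $\xi^T\widehat{A}\xi=\int_Y A\nabla w\cdot\nabla w\,dy\leq\int_Y A\nabla\ell\cdot\nabla\ell\,dy\leq\mu^{-1}\int_Y|\nabla\ell|^2\,dy=\mu^{-1}|\xi|^2$, using the upper bound in $\eqref{a:1}$ and $|\nabla\ell|\equiv|\xi|$.

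\emph{Step 4 (bounds on $\widehat{V},\widehat{B},\widehat{c}$) and conclusion.} These drop out of inserting the $L^2(Y)$ estimates of Step 1 into the explicit formulas $\eqref{eq:2.1.1}$ and applying Cauchy--Schwarz with $\eqref{a:3}$; for instance $|\widehat{V}_i^{\alpha\beta}|\leq\|V\|_{L^\infty(\mathbb{R}^d)}+\|A\|_{L^\infty(\mathbb{R}^d)}\|\nabla\chi_0\|_{L^2(Y)}\leq\kappa(1+\mu^{-2})$, and likewise $|\widehat{B}_i^{\alpha\beta}|\leq\kappa(1+\mu^{-2})$, $|\widehat{c}^{\alpha\beta}|\leq\kappa+\mu^{-1}\kappa^2$, all dominated by $C(\mu,\kappa,d,m)$; together with Steps 2--3 this gives the three assertions of $\eqref{a:5}$. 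The only genuinely delicate point is Step 3: obtaining the \emph{sharp} constants $\mu$ and $\mu^{-1}$ for $\widehat{A}$ (rather than a larger power of $\mu^{-1}$) forces the use of the Cauchy--Schwarz inequality attached to the symmetric form $A$ combined with the corrector equation, in place of the crude estimate $|a_{kl}^{\gamma\delta}\eta_k^\gamma\zeta_l^\delta|\leq\mu^{-1}|\eta||\zeta|$; everything else is index bookkeeping in the matrix- and vector-valued framework.
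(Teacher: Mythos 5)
Your proof is correct and follows essentially the same approach as the paper, which sketches the energy estimates $\frac{\mu}{2}\int_Y|\nabla\chi_0|^2 \leq \frac{1}{\mu}\int_Y|V|^2$, $\frac{\mu}{2}\int_Y|\nabla\chi_k|^2 \leq \frac{1}{\mu}\int_Y|A|^2$ for Step 1, cites \cite[pp.23--24]{ABJLGP} for the symmetry and ellipticity of $\widehat{A}$ (your Steps 2--3), and notes that the bounds on $\widehat{V},\widehat{B},\widehat{c}$ follow from $\eqref{pri:2.1.1}$ and $\eqref{eq:2.1.1}$ as in your Step 4. You have simply supplied the standard bilinear representation and $A$-weighted Cauchy--Schwarz argument that the cited textbook provides.
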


\begin{proof}
It is not hard to see that the estimate $\eqref{pri:2.1.1}$ is based upon
\begin{equation*}
\frac{\mu}{2}\int_Y|\nabla\chi_0|^2 dy \leq \frac{1}{\mu}\int_Y|V|^2dy
\quad \text{and} \quad
\frac{\mu}{2}\int_Y|\nabla\chi_k|^2 dy \leq \frac{1}{\mu}\int_Y|A|^2dy
\end{equation*}
for $k=1,\cdots,d$.
The first and second lines in $\eqref{a:5}$ may be found in \cite[pp.23-24]{ABJLGP}, and
the last one follows from $\eqref{pri:2.1.1}$ and \eqref{eq:2.1.1}.
\end{proof}

In view of Theorem $\ref{thm:2.0}$ we similarly obtain the following theorem.

\begin{thm}
Assume the coefficients of $\mathcal{L}_0$ satisfy $\eqref{a:5}$, then there exists
$\widehat{\lambda}$ depending on $\mu,\kappa,m,d$ such that the equation
$\mathcal{L}_0(u_0) = F$ in $\mathbb{R}^d$ has an unique weak solution $u_0$ in
$H^1(\mathbb{R}^d;\mathbb{R}^m)$,
whenever $\lambda\geq\max\{\widehat{\lambda},\mu\}$.
Moreover, we may have
\begin{equation}\label{pri:2.2.2}
\lambda\|u_0\|_{H^1(\mathbb{R}^d)}^2
\leq 2\mathrm{B}_{\mathcal{L}_0;\mathbb{R}^d}[u_0,u_0].
\end{equation}
\end{thm}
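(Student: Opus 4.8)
The plan is to mirror the proofs of Lemma \ref{lemma:2.1} and Theorem \ref{thm:2.0}, with the structural bounds \eqref{a:5} playing the role that \eqref{a:1} and \eqref{a:3} play for $\mathcal{L}_\varepsilon$. Writing out the bilinear form attached to $\mathcal{L}_0$ (cf. \eqref{eq:2.1.2} and Definition \ref{def:2.1}),
\[
\mathrm{B}_{\mathcal{L}_0;\mathbb{R}^d}[u,v]=\int_{\mathbb{R}^d}\Big\{\big[\widehat{a}_{ij}^{\alpha\beta}\partial_j u^\beta+\widehat{V}_i^{\alpha\beta}u^\beta\big]\partial_i v^\alpha+\big[\widehat{B}_i^{\alpha\beta}\partial_i u^\beta+\widehat{c}^{\alpha\beta}u^\beta+\lambda u^\alpha\big]v^\alpha\Big\}\,dx,
\]
I would first observe that \eqref{a:5} supplies exactly what is needed of the homogenized coefficients: symmetry of $\widehat A$ and the two-sided bound $\mu|\xi|^2\le\xi^T\widehat A\xi\le\mu^{-1}|\xi|^2$, together with $\max\{|\widehat V|,|\widehat B|,|\widehat c|\}\le C_1(\mu,\kappa,d,m)$. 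Boundedness of $\mathrm{B}_{\mathcal{L}_0;\mathbb{R}^d}$ on $H^1(\mathbb{R}^d;\mathbb{R}^m)$ then follows exactly as in Lemma \ref{lemma:2.1} by Cauchy--Schwarz, with a constant depending on $\mu,\kappa,\lambda,m,d$.

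The substantive step is coercivity. Taking $v=u$, the leading term contributes at least $\mu\|\nabla u\|_{L^2(\mathbb{R}^d)}^2$ by the ellipticity in \eqref{a:5}; each of the two mixed first-order terms is bounded in modulus by $C_1\|u\|_{L^2}\|\nabla u\|_{L^2}$, which Young's inequality absorbs into $\tfrac{\mu}{2}\|\nabla u\|_{L^2}^2+C_2(\mu,\kappa,d,m)\|u\|_{L^2}^2$; and the zeroth-order term is at most $C_1\|u\|_{L^2}^2$ in modulus. Collecting these,
\[
\mathrm{B}_{\mathcal{L}_0;\mathbb{R}^d}[u,u]\ \ge\ \tfrac{\mu}{2}\|\nabla u\|_{L^2(\mathbb{R}^d)}^2+\big(\lambda-C_3(\mu,\kappa,d,m)\big)\|u\|_{L^2(\mathbb{R}^d)}^2 .
\]
Taking $\widehat{\lambda}:=2C_3$, for every $\lambda\ge\widehat{\lambda}$ one gets $2\mathrm{B}_{\mathcal{L}_0;\mathbb{R}^d}[u,u]\ge\mu\|\nabla u\|_{L^2}^2+\lambda\|u\|_{L^2}^2$, the exact analogue of \eqref{pri:2.0.2}, which in particular yields \eqref{pri:2.2.2}; note that $\widehat{\lambda}$ depends only on $\mu,\kappa,d,m$, since every constant entering this computation comes from \eqref{a:5}, which is itself $\lambda$-independent.

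With boundedness and coercivity on the Hilbert space $H^1(\mathbb{R}^d;\mathbb{R}^m)$ in hand, the Lax--Milgram theorem produces, for each $F\in H^{-1}(\mathbb{R}^d;\mathbb{R}^m)$, a unique $u_0\in H^1(\mathbb{R}^d;\mathbb{R}^m)$ with $\mathrm{B}_{\mathcal{L}_0;\mathbb{R}^d}[u_0,v]=\langle F,v\rangle$ for all $v\in H^1(\mathbb{R}^d;\mathbb{R}^m)$, i.e. a unique weak solution of $\mathcal{L}_0(u_0)=F$ in $\mathbb{R}^d$; moreover $\|u_0\|_{H^1(\mathbb{R}^d)}\le C(\mu,d,m)\|F\|_{H^{-1}(\mathbb{R}^d)}$ follows at once, using $\lambda\ge\mu$, just as for \eqref{pri:2.0.1}. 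I do not anticipate any real obstacle here: no compactness or Rellich-type input is required on $\mathbb{R}^d$, and the one point worth watching — that the lower-order terms of $\mathcal{L}_0$, which (as emphasized in the introduction) cannot in general be viewed as a compact perturbation of the leading term, do not interfere — is automatic, because for $\lambda$ past the threshold $\widehat{\lambda}$ coercivity simply absorbs them. The hypothesis $\lambda\ge\mu$ serves, as in Theorem \ref{thm:2.0}, only to let the coercivity constant control the full $H^1$ norm.
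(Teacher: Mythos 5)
Your approach — establish boundedness and coercivity of $\mathrm{B}_{\mathcal{L}_0;\mathbb{R}^d}$ under \eqref{a:5}, then apply Lax--Milgram — is exactly what the paper intends (it offers no proof, merely the sentence ``In view of Theorem~\ref{thm:2.0} we similarly obtain the following theorem''), and your derivation of the G{\aa}rding-type coercivity
\[
2\mathrm{B}_{\mathcal{L}_0;\mathbb{R}^d}[u,u]\ \ge\ \mu\|\nabla u\|_{L^2(\mathbb{R}^d)}^2+\lambda\|u\|_{L^2(\mathbb{R}^d)}^2
\qquad\text{for }\lambda\ge\widehat\lambda:=2C_3(\mu,\kappa,d,m)
\]
is correct. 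The one flaw is the closing sentence claiming this ``in particular yields \eqref{pri:2.2.2}.'' It does not: since $\lambda\ge\mu$, one has
\[
\lambda\|u\|_{H^1}^2=\lambda\|\nabla u\|^2+\lambda\|u\|^2\ \ge\ \mu\|\nabla u\|^2+\lambda\|u\|^2,
\]
so the inequality you derived is a lower bound by the \emph{smaller} quantity; $\lambda\|u\|_{H^1}^2\le 2\mathrm{B}[u,u]$ would require the gradient integral to be bounded below by $\lambda\|\nabla u\|^2$, which ellipticity \eqref{a:5} cannot give once $\lambda>2\mu$ (consider $\widehat A=\mu I$, $\widehat V=\widehat B=\widehat c=0$, and $u$ highly oscillatory so that $\|u\|_{L^2}\ll\|\nabla u\|_{L^2}$). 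What your argument genuinely delivers is the exact analogue of \eqref{pri:2.0.2}, namely $\mu\|\nabla u_0\|^2+\lambda\|u_0\|^2\le 2\mathrm{B}_{\mathcal{L}_0}[u_0,u_0]$, equivalently $\mu\|u_0\|_{H^1}^2\le 2\mathrm{B}_{\mathcal{L}_0}[u_0,u_0]$ using $\lambda\ge\mu$. The displayed form \eqref{pri:2.2.2} in the paper appears to be a slip (it is only invoked later to conclude $\|u_0\|_{H^1}^2\le C\,\mathrm{B}[u_0,u_0]$ with an unspecified constant, which the corrected estimate already supplies), so you should state the corrected coercivity inequality rather than asserting the implication to \eqref{pri:2.2.2} as written. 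Everything else — boundedness via Cauchy--Schwarz and \eqref{a:5}, absorption of the first- and zeroth-order terms by Young, Lax--Milgram for existence and uniqueness, and the resulting $H^1$ bound in terms of $\|F\|_{H^{-1}}$ — is sound and matches the paper's route.
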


\begin{remark}
\emph{Let $\Omega\subset\mathbb{R}^d$ be a bounded Lipschitz domain.
In fact, the estimate $\eqref{pri:2.2.2}$ will still hold for $\Omega$ and $\Omega_{-}$,
in which the notation $\Omega_{-}$ represents the exterior of $\Omega$.}
\end{remark}

\subsection{Scaling-invariant estimates}

\begin{lemma}[Caccioppoli's inequality]\label{lemma:2.2}
Let $B=B(x_0,R)\subset\mathbb{R}^d$ with any $R>0$.
Suppose that the coefficients of $\mathcal{L}_\varepsilon$
satisfy $\eqref{a:1}$ and $\eqref{a:3}$.
Assume that $u_\varepsilon\in H^1(4B;\mathbb{R}^m)$ is
a weak solution to $\mathcal{L}_\varepsilon(u_\varepsilon) = \emph{div}(f)+F$
in $4B$, where $f\in L^2(4B;\mathbb{R}^{md})$ and
$F\in L^q(4B;\mathbb{R}^m)$ with $q=2d/(d+2)$.
Then there exists a positive constant $\lambda_0$
such that for any $\lambda\geq \lambda_0$ there holds the following estimate
\begin{equation}\label{pri:2.6}
\sqrt{\mu}\Big(\dashint_B |\nabla u_\varepsilon|^2\Big)^{\frac{1}{2}}
+ \sqrt{\lambda} \Big(\dashint_B |u_\varepsilon|^2 \Big)^{\frac{1}{2}}
\leq \frac{C_\mu}{R}\Big(\dashint_{2B} |u_\varepsilon|^2 \Big)^{\frac{1}{2}}
+ C\Big(\dashint_{2B} |f|^2 \Big)^{\frac{1}{2}}
+CR\Big(\dashint_{2B} |F|^q \Big)^{\frac{1}{q}},
\end{equation}
where $C_\mu$ depends only on $\mu,m,d$. In particular, if the source terms $f$ and $F$ vanish, then
for any integer $k>0$, there exists $C_k$ depending only on $C_\mu$ and $k$ such that
\begin{equation}\label{pri:2.1}
\mu \int_{B} |\nabla u_\varepsilon|^2 dx
+\lambda\int_{B} |u_\varepsilon|^2 dx
\leq \frac{C_k}{(1+\lambda R^2)^kR^2}\int_{2B} |u_\varepsilon|^2 dx.
\end{equation}
\end{lemma}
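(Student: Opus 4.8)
The plan is to prove the Caccioppoli inequality \eqref{pri:2.6} by the standard energy method, carefully tracking the dependence on $\lambda$, and then to derive the iterated estimate \eqref{pri:2.1} as a corollary by a bootstrap over dyadic shells. First I would fix a cutoff $\eta\in C_c^\infty(2B)$ with $\eta\equiv 1$ on $B$, $0\le\eta\le1$, and $|\nabla\eta|\le C/R$, and test the weak formulation of $\mathcal{L}_\varepsilon(u_\varepsilon)=\operatorname{div}(f)+F$ against $v=\eta^2 u_\varepsilon$. Using the ellipticity \eqref{a:1} on the leading term one gets $\mu\int\eta^2|\nabla u_\varepsilon|^2 + \lambda\int\eta^2|u_\varepsilon|^2$ controlled by the cross terms. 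The terms coming from $V,B,c$ are estimated by \eqref{a:3}, i.e.\ by $\kappa$: for instance $\int V(\cdot/\varepsilon)u_\varepsilon\cdot\nabla(\eta^2 u_\varepsilon)$ splits into a piece with $\eta^2\nabla u_\varepsilon$, absorbed into $\frac{\mu}{4}\int\eta^2|\nabla u_\varepsilon|^2$ at the cost of $\frac{C\kappa^2}{\mu}\int\eta^2|u_\varepsilon|^2$, and a piece with $\eta\nabla\eta\, u_\varepsilon$, handled by Young. The $B$ and $c$ terms are similar (the $c$ term produces $\kappa\int\eta^2|u_\varepsilon|^2$, consistent with the $\|c\|_\infty$, not $\|c\|_\infty^2$, appearing in \eqref{KEY:1}). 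This is exactly where the constant $\lambda_0=\frac{c(m,d)}{\mu}\{\|V\|_\infty^2+\|B\|_\infty^2+\|c\|_\infty\}$ from \eqref{KEY:1} enters: choosing $c(m,d)$ large enough, the total contribution of the lower-order terms of the form $\frac{C}{\mu}\{\kappa^2+\kappa^2+\kappa\}\int\eta^2|u_\varepsilon|^2$ is at most $\frac12\lambda\int\eta^2|u_\varepsilon|^2$ whenever $\lambda\ge\lambda_0$, so it can be absorbed into the left side. Equivalently, this absorption is just the coercivity \eqref{pri:2.0.2} of Lemma \ref{lemma:2.1} applied with the cutoff in place. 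What remains on the right are $\frac{C}{R^2}\int_{2B}|u_\varepsilon|^2$, $\int_{2B}|f|^2$ (via $\int f\cdot\nabla(\eta^2 u_\varepsilon)$ and Young, absorbing the gradient piece), and the $F$ term $\int F\cdot\eta^2 u_\varepsilon$, which by Hölder with exponents $q=2d/(d+2)$ and $q'=2d/(d-2)=2^*$ and the Sobolev inequality $\|\eta u_\varepsilon\|_{L^{2^*}(2B)}\le C\|\nabla(\eta u_\varepsilon)\|_{L^2}$ is bounded by $C\|F\|_{L^q(2B)}\big(\|\nabla u_\varepsilon\|_{L^2(2B\cap\{\eta>0\})}+R^{-1}\|u_\varepsilon\|_{L^2(2B)}\big)$; the gradient part is again absorbed. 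Rescaling the resulting inequality to the normalized (averaged) form and taking square roots gives \eqref{pri:2.6}, with $C_\mu$ depending only on $\mu,m,d$ and $C$ additionally on the structural constants.

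For the iterated bound \eqref{pri:2.1} with $f=F=0$, I would start from the already-absorbed inequality in un-normalized form,
\begin{equation*}
\mu\int_{B(x_0,r)}|\nabla u_\varepsilon|^2 + \lambda\int_{B(x_0,r)}|u_\varepsilon|^2
\le \frac{C_\mu}{(\rho-r)^2}\int_{B(x_0,\rho)}|u_\varepsilon|^2,
\qquad R\le r<\rho\le 2R,
\end{equation*}
which is the natural localized version obtained by using a cutoff supported in $B(x_0,\rho)$ equal to $1$ on $B(x_0,r)$. Dropping the gradient term, the pure $L^2$ estimate $\lambda\int_{B(x_0,r)}|u_\varepsilon|^2\le\frac{C_\mu}{(\rho-r)^2}\int_{B(x_0,\rho)}|u_\varepsilon|^2$ alone is not enough to gain the factor $(1+\lambda R^2)^{-k}$, so the idea is a telescoping over $k$ concentric shells: set $r_j=R+\frac{j}{k}R$ for $j=0,\dots,k$, so $r_0=R$, $r_k=2R$, and $r_{j+1}-r_j=R/k$. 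Applying the inequality between $B(x_0,r_j)$ and $B(x_0,r_{j+1})$ gives $\lambda\int_{B(x_0,r_j)}|u_\varepsilon|^2\le\frac{C_\mu k^2}{R^2}\int_{B(x_0,r_{j+1})}|u_\varepsilon|^2$, i.e.\ $\int_{B(x_0,r_j)}|u_\varepsilon|^2\le\frac{C_\mu k^2}{\lambda R^2}\int_{B(x_0,r_{j+1})}|u_\varepsilon|^2$. Iterating this $k$ times from $j=0$ to $j=k$ yields $\int_B|u_\varepsilon|^2\le\big(\frac{C_\mu k^2}{\lambda R^2}\big)^k\int_{2B}|u_\varepsilon|^2$. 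Combining with the plain Caccioppoli $\mu\int_B|\nabla u_\varepsilon|^2+\lambda\int_B|u_\varepsilon|^2\le\frac{C_\mu}{R^2}\int_{B(x_0,r_1)}|u_\varepsilon|^2$ and then iterating $k-1$ of the remaining steps gives
\begin{equation*}
\mu\int_B|\nabla u_\varepsilon|^2+\lambda\int_B|u_\varepsilon|^2
\le \frac{C_\mu}{R^2}\Big(\frac{C_\mu k^2}{\lambda R^2}\Big)^{k-1}\int_{2B}|u_\varepsilon|^2.
\end{equation*}
When $\lambda R^2\le 1$ this is already $\le\frac{C_k}{(1+\lambda R^2)^{k}R^2}\int_{2B}|u_\varepsilon|^2$ up to relabeling $C_k$ (since $(1+\lambda R^2)^k\le 2^k$), and when $\lambda R^2\ge 1$ one has $\frac{1}{R^2}\big(\frac{k^2}{\lambda R^2}\big)^{k-1}\le k^{2(k-1)}\frac{1}{\lambda^{k-1}R^{2k}} \le \frac{2^k k^{2k}}{(1+\lambda R^2)^k R^2}$ because $\lambda^{k-1}R^{2k}=R^2\,(\lambda R^2)^{k-1}\ge R^2\,2^{-(k-1)}(1+\lambda R^2)^{k-1}$ and one more factor of $(1+\lambda R^2)$ is supplied by $\lambda R^2\le 1+\lambda R^2$ after pulling one $k^2$ out — so in all cases \eqref{pri:2.1} holds with $C_k$ depending only on $C_\mu$ and $k$.

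I expect the only genuinely delicate point to be the bookkeeping in the first step: making the coefficient of $\int\eta^2|u_\varepsilon|^2$ produced by the first-order terms $V,B$ come out proportional to $\|V\|_\infty^2$ and $\|B\|_\infty^2$ (via Young's inequality against the $\frac{\mu}{4}\int\eta^2|\nabla u_\varepsilon|^2$ reservoir) rather than to a bare power of $\kappa$, and the $c$-term proportional to $\|c\|_\infty$, so that the threshold matches \eqref{KEY:1} exactly and $\lambda\ge\lambda_0$ suffices for the absorption; this is precisely the reformulation of the coercivity estimate \eqref{pri:2.0.2} with a cutoff, so it should go through cleanly. The shell-telescoping in the second step is elementary once the localized Caccioppoli with explicit $(\rho-r)^{-2}$ dependence is in hand; the main care there is simply to check the two regimes $\lambda R^2\lessgtr 1$ so that the factor $(1+\lambda R^2)^{-k}$ is produced with a constant $C_k$ that does not secretly depend on $\lambda$ or $R$. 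No compactness or perturbation argument is needed here, which is consistent with the paper's remark that the lower-order terms of $\mathcal{L}_\varepsilon$ cannot be treated as a compact perturbation but can be absorbed once $\lambda$ is large.
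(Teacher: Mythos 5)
Your proof of the Caccioppoli inequality \eqref{pri:2.6} is the standard energy argument and matches what the paper does (it cites an earlier paper for this step); that part is fine, including the observation about how $\|V\|_\infty^2,\|B\|_\infty^2,\|c\|_\infty$ enter $\lambda_0$.

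Your approach to the iterated bound \eqref{pri:2.1} via $k$ concentric shells between $R$ and $2R$ is a genuinely different route from the paper's. The paper instead iterates Caccioppoli over dyadic balls $B(x,R/2^k)\subset B(x,R/2^{k-1})\subset\cdots\subset B(x,R)$ centered at a running point $x\in B(0,R)$, then uses covering arguments to pass from point-centered balls to $B(0,R)$ and to $B(0,(3/2)R)$, and only then applies one final Caccioppoli on the pair $(B(0,R),B(0,(3/2)R))$ to pick up the $R^{-2}$ and the gradient term. That ordering is precisely what lets the paper collect the \emph{full} factor $(\lambda R^2)^{-k}$ from the iteration before spending the last Caccioppoli on the gradient.

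Your version has a genuine gap at exactly this point. With $k$ shells $r_0=R<r_1<\cdots<r_k=2R$, the one full Caccioppoli (the step that retains $\mu\int_B|\nabla u_\varepsilon|^2$) must be spent on the pair $(B(r_0),B(r_1))$, which leaves only $k-1$ pure-$L^2$ iterations from $r_1$ to $r_k$. The result is
\[
\mu\int_B|\nabla u_\varepsilon|^2+\lambda\int_B|u_\varepsilon|^2
\le \frac{C_\mu k^2}{R^2}\Big(\frac{C_\mu k^2}{\lambda R^2}\Big)^{k-1}\int_{2B}|u_\varepsilon|^2,
\]
which is one power of $(\lambda R^2)^{-1}$ short. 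In the regime $\lambda R^2\ge 1$ this cannot be bounded by $C_k(1+\lambda R^2)^{-k}R^{-2}$: one needs $(1+\lambda R^2)^k\le C(\lambda R^2)^{k-1}$, and the left side grows like $(\lambda R^2)^k$ while the right like $(\lambda R^2)^{k-1}$, so the ratio is unbounded. The closing sentence, ``one more factor of $(1+\lambda R^2)$ is supplied by $\lambda R^2\le 1+\lambda R^2$ after pulling one $k^2$ out,'' does not produce that missing factor; $\lambda R^2\le 1+\lambda R^2$ goes in the wrong direction for this purpose. Likewise, your treatment of the regime $\lambda R^2\le 1$ is incorrect: the shell-iterated right side $\frac{C_\mu}{R^2}\big(\frac{C_\mu k^2}{\lambda R^2}\big)^{k-1}$ diverges as $\lambda R^2\to 0$, while the target $\frac{C_k}{(1+\lambda R^2)^kR^2}$ stays comparable to $R^{-2}$; in that regime you must fall back on the plain Caccioppoli $\mu\int_B|\nabla u_\varepsilon|^2+\lambda\int_B|u_\varepsilon|^2\le C_\mu R^{-2}\int_{2B}|u_\varepsilon|^2$ and use $(1+\lambda R^2)^k\le 2^k$. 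Both defects are easy to repair — use $k+1$ shells (equivalently, prove the statement for $k+1$ and re-index), and split into the two regimes with the plain Caccioppoli covering $\lambda R^2\le 1$ and the shell iteration covering $\lambda R^2>1$ — but as written the last step does not establish \eqref{pri:2.1}.
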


\begin{proof}
In fact, the stated estimate $\eqref{pri:2.6}$ has already been established in
\cite[Lemma 2.7]{QXS}, in which we proved
\begin{equation*}
\begin{aligned}
\frac{\mu}{2}\int_{\mathbb{R}^d} \phi^2|\nabla u_\varepsilon|^2 dx
&+  (\lambda - C^\prime)\int_{\mathbb{R}^d} \phi^2 |u_\varepsilon|^2 dx \\
&\leq  C\int_{\mathbb{R}^d} |\nabla\phi|^2|u_\varepsilon|^2 dx
+C\int_{\mathbb{R}^d} \phi^2|f|^2 dx + \int_{\mathbb{R}^d} \phi^2|F||u_\varepsilon| dx,
\end{aligned}
\end{equation*}
and it is not hard to verify that $C^\prime$ may be given in the form of $\eqref{KEY:1}$,
where $\phi\in C^1_0(\Omega)$ is a cut-off function satisfying
$\phi = 1$ in $B$, $\phi = 0$ outside $2B$, and $|\nabla\phi| \leq 2/r$.
Here we may choose $\lambda_0 = C^\prime$, and the reminder of the proof
is standard.

We now offer a proof for the estimate $\eqref{pri:2.1}$.
By translation we may assume $x_0=0$. For any integer $k>0$ and $x\in B(0,R)$ let $r=R/2^{k}$.
Then it follows from $\eqref{pri:2.6}$ that
\begin{equation*}
\mu\int_{B(x,r)} |\nabla u_\varepsilon|^2 dx
+\lambda\int_{B(x,r)} |u_\varepsilon|^2 dx
\leq \frac{C_\mu}{r^2}\int_{B(x,2r)} |u_\varepsilon|^2 dx.
\end{equation*}
By iteration, this implies that
\begin{equation*}
 \int_{B(x,r)}|u_\varepsilon|^2 dx
 \leq \frac{C_k}{\lambda^k r^{2k}} \int_{B(x,2^kr)} |u_\varepsilon|^2 dx,
\end{equation*}
where $C_k$ depends only on $C_\mu$ and $k$.
Recalling $r=R/2^k$ we have
\begin{equation*}
\int_{B(x,R/2^{k})}|u_\varepsilon|^2 dx
\leq \frac{C_k}{(\lambda R^2)^k}\int_{B(x,R)}|u_\varepsilon|^2dx
\leq \frac{C_k}{(\lambda R^2)^k}\int_{B(0,2R)}|u_\varepsilon|^2dx
\end{equation*}
for any $x\in B(0,R)$. Thus a covering argument leads to
\begin{equation*}
\int_{B(0,R)}|u_\varepsilon|^2 dx
\leq \frac{C_k}{(\lambda R^2)^k}\int_{B(0,2R)}|u_\varepsilon|^2dx.
\end{equation*}
which gives a family of inequalities
\begin{equation*}
\int_{B(x^\prime,R^\prime)}|u_\varepsilon|^2 dx
\leq \frac{C_k}{(\lambda R^2)^k}\int_{B(x^\prime,2R^\prime)}|u_\varepsilon|^2dx.
\end{equation*}
with $R^\prime = R/2$ and $x^\prime\in B(0,(3/2)R)$. Repeat the covering argument and
we consequently have
\begin{equation}\label{f:2.1}
\int_{B(0,(3/2)R)}|u_\varepsilon|^2 dx
\leq \frac{C_k}{(\lambda R^2)^k}\int_{B(0,2R)}|u_\varepsilon|^2dx.
\end{equation}

Using the estimate $\eqref{pri:2.6}$ again, we may obtain
\begin{equation*}
\begin{aligned}
\mu\int_{B(0,R)} |\nabla u_\varepsilon|^2 dx
+\lambda\int_{B(0,R)} |u_\varepsilon|^2 dx
&\leq \frac{C_\mu}{R^2}\int_{B(0,(3/2)R)}|u_\varepsilon|^2dx,\\
\mu\int_{B(0,R)} |\nabla u_\varepsilon|^2 dx
+\lambda\int_{B(0,R)} |u_\varepsilon|^2 dx
&\leq \frac{C_k}{(\lambda R^2)^kR^2}\int_{B(0,2R)}|u_\varepsilon|^2dx,
\end{aligned}
\end{equation*}
where we use the estimate $\eqref{f:2.1}$ in the last inequality, and this gives the stated estimate
$\eqref{pri:2.1}$. We have completed the proof.
\end{proof}


\noindent\textbf{Proof of Theorem $\ref{thm:0.1}$.}
In fact, the stated estimates $\eqref{pri:0.1}$, $\eqref{pri:0.2}$ and $\eqref{pri:0.3}$ have
been established in \cite[Theorem 3.3,~Corollary 3.5,~Theorem 4.4]{QXS}, respectively. However,
we do not seek the constants there to be scaling-invariant since it is sufficient to
establish the corresponding global ones as the interior parts. So,
the main purpose of the theorem is to figure out the scaling-invariant constants
$\eqref{KEY:2}$ and $\eqref{KEY:3}$.

Step 1. Show $C_p$ to be the form of $\eqref{KEY:2}$ in $W^{1,p}$ estimates $\eqref{pri:0.1}$.
Recall the result of \cite[Theorem 3.3]{QXS}, in which we proved
\begin{equation}\label{f:0.1}
\|u\|_{W^{1,p}(B/2)} \leq C\|u\|_{L^{2}(B)},\qquad p\geq 2,
\end{equation}
and the constant $C$ may be given by
\begin{equation}\label{f:0.2}
C=C_L(\mu,m,d,p,\|A\|_{\text{VMO}})\Big(\underbrace{\|A\|_{L^\infty(\mathbb{R}^d)}}_X
+\underbrace{\|V\|_{L^\infty(\mathbb{R}^d)}+\|B\|_{L^\infty(\mathbb{R}^d)}}_Y
+\underbrace{\|c\|_{L^\infty(\mathbb{R}^d)}+\lambda}_Z\Big)^{k_0}
\end{equation}
where $k_0 = [\frac{d}{2}]+1$ and the notation $[\frac{d}{2}]$ denotes the integer part of
$\frac{d}{2}$, which is due the so-called ``bootstrap'' process.
Also, $C_L$ came from the counterpart of $W^{1,p}$ estimates related to
the leading term
$L = -\text{div}(A(x/\varepsilon)\nabla)$.
In view of $\eqref{pri:2.1}$ we have
\begin{equation*}
 \|u_\varepsilon\|_{L^2(B)}\leq C_k\lambda^{-k/2}.
\end{equation*}
with an integer $k\geq 0$ and, it will be given in the later computation. This together with
$\eqref{f:0.1}$ and $\eqref{f:0.2}$ leads to
\begin{equation*}
\|u_\varepsilon\|_{W^{1,p}(B/2)}
\leq C_L(X+Y+Z)^{k_0}\lambda^{-k/2}
\leq C_L(X+Y/\sqrt{\lambda}+Z/\lambda)^{k_0}:=C_p
\end{equation*}
(by mention that we do not distinguish the constants
if the difference depends only on $\mu,m,d$),
where the integer $k$ may be chosen as $0,k_0,2k_0$ in the second inequality,
according to the following fact
\begin{equation*}
X^{k_0}+Y^{k_0}+Z^{k_0}\leq (X+Y+Z)^{k_0}\leq 2^{k_0-1}(X^{k_0}+Y^{k_0}+Z^{k_0}).
\end{equation*}
Then the estimate $\eqref{pri:0.2}$ immediately follows from the Sobolev embedding theorem with
$\sigma = 1-d/p$.

To show $\eqref{KEY:3}$ will be much involved. We have to first establish
$\eqref{KEY:3}$ for the classical Schauder estimates,
which means that in the case of $\varepsilon=1$ without periodicity condition
$\eqref{a:2}$, there holds
\begin{equation}\label{f:0.3}
\|u_1\|_{C^{1,\tau}(B/2)}\leq C_{\tau}
\end{equation}
and the constant $C_\tau$ in the form of $\eqref{KEY:3}$ is valid.
Note that in such the case the estimate $\eqref{f:0.3}$ is no longer
scaling-invariant for $R>1$, and in the next step we will handle
the estimate $\eqref{f:0.3}$.

Step 2. Recall the notation $\mathcal{L} = \mathcal{L}_1$ and $u$
denotes the corresponding $u_1$. Thus we rewrite $\mathcal{L}(u) = 0$ in $2B$ as
\begin{equation*}
 -\text{div}(A(x)\nabla u) = \text{div}(V(x)u) - B\nabla u - cu-\lambda I u
 \qquad \text{in}~ 2B.
\end{equation*}
It follows from the well-known interior Schauder theory
(see for example \cite[Theorem 5.19]{MGLM}) that
\begin{equation*}
\|u\|_{C^{1,\tau}(B/2)}
\leq C_{L,\tau}
\Big\{\|V\|_{C^{0,\tau}(B)}\|u\|_{C^{0,\tau}(B)}
+\|V+B+c+\lambda\|_{L^\infty(B)}\|u\|_{W^{1,p}(B)}\Big\}
\end{equation*}
where $p>d$, and $C_{L,\tau}=C_{L,\tau}\big(\mu,m,d,[A]_{C^{0,\tau}(\mathbb{R}^d)}\big)$ came from
the related homogeneous system. In terms of the estimates $\eqref{pri:0.1}$ and
$\eqref{pri:0.2}$ we then obtain
\begin{equation*}
\begin{aligned}
\|u\|_{C^{1,\tau}(B/2)}
&\leq C_{L,\tau}C_p
\Big\{\|V\|_{C^{0,\tau}(\mathbb{R}^d)}
+\|V+B+c+\lambda\|_{L^\infty(\mathbb{R}^d)}\Big\}\|u\|_{L^2(2B)} \\
&\leq C_{L,\tau}C_p
\Big\{\|V/\sqrt{\lambda}\|_{C^{0,\tau}(\mathbb{R}^d)}
+\|(V+B)/\sqrt{\lambda}
\|_{L^\infty(\mathbb{R}^d)}
+\|c/\lambda\|_{L^\infty(\mathbb{R}^d)}+1\Big\} :=C_{\tau},
\end{aligned}
\end{equation*}
in which we also employ Caccioppoli's inequality $\eqref{pri:2.1}$ in the last inequality.

Step 3. Before proceeding further, it is better illustrating the difficulties
and the source of ideas in the proof. Note that the following transformation as defined in
\cite[Theorem 4.4]{QXS}
\begin{equation}\label{T:1}
 u_\varepsilon = T(x,\varepsilon)v_\varepsilon
 = [I+\varepsilon\chi_0(x/\varepsilon)]v_\varepsilon
\end{equation}
is not scaling-invariant for $R>1$, since $\chi_0^R = R\chi_0$ in $RY=(0,R]^d$,
and it satisfies
\begin{equation}
 -\text{div}(A\nabla\chi_0^R) = \text{div}(RV) \quad\text{in}~ \mathbb{R}^d
 \qquad\text{and}\qquad\dashint_{RY} \chi_0^R = 0.
\end{equation}
Let $\widetilde{A}(x/\varepsilon) = A(x/\varepsilon)[I+\varepsilon\chi_0(x/\varepsilon)]$,
and $\varepsilon^\prime = \varepsilon/R$.
Although the scaled coefficients
\begin{equation}
\widetilde{A}(x/\varepsilon^\prime) = A(x/\varepsilon^\prime)\big[I+\varepsilon^\prime\chi_0^R(x/\varepsilon^\prime)\big]
\end{equation}
keep a similar pattern compared to $\widetilde{A}(x/\varepsilon)$,
the operators determined by them
are definitely not in the same type class of operators and,
the new one has changed both the periodicity and the bound of
$[\widetilde{A}]_{C^{0,\tau}(\mathbb{R}^d)}$.
Thus, the proof of \cite[Theorem 4.4]{QXS} can not be simply improved
by a rescaling argument when $R>1$.

Fortunately, the methods developed for the uniform global Lipschitz estimate
\cite[Theorem 1.3]{QXS} are still useful here, by which
we overcome the difficulties arising from the corresponding Dirichlet correctors without
the periodicity near a boundary.
By the transformation $\eqref{T:1}$, it is not hard
to have the following equation
\begin{equation}\label{eq:0.1}
-\text{div}(A(x/\varepsilon)\nabla v_\varepsilon)
=\text{div}(\widetilde{f}+\varepsilon A(x/\varepsilon)\chi_0(x/\varepsilon)\nabla v_\varepsilon
) + \widetilde{F} \qquad \text{in}\quad 4B,
\end{equation}
with
\begin{equation}\label{eq:0.2}
\begin{aligned}
\widetilde{f} &= \varepsilon V(x/\varepsilon)\chi_0(x/\varepsilon)v_\varepsilon \\
\widetilde{F}&=  A(x/\varepsilon)\nabla_y\chi_0\nabla v_\varepsilon
+ V(x/\varepsilon)\nabla v_\varepsilon - B(x/\varepsilon)\nabla u_\varepsilon
- (c(x/\varepsilon)+\lambda I)u_\varepsilon
\end{aligned}
\end{equation}
where $y=x/\varepsilon$. Thus, the problem is reduced to estimate the quantities
$\|v_\varepsilon\|_{C^{1,\sigma_1}(B)}$, $\|\widetilde{f}\|_{C^{0,\sigma_1}(B)}$,
$\|\widetilde{F}\|_{L^{\bar{p}}(B)}$ with $\bar{p}=d/(1-\sigma_1)$, as well as
$\|T^{-1}\|_{C^{0,\sigma_1}(B)}$,
where $\sigma_1\in(0,\tau]$ will be fixed later.

Step 4.
We now proceed to prove $\eqref{pri:0.3}$ with the constant
in the form of $\eqref{KEY:3}$. Applying the interior Lipschitz estimates
\cite[Lemma 4.3]{QXS} (originally due to \cite[Lemma 16]{MAFHL}) to
$\eqref{eq:0.1}$, we have
\begin{equation}\label{f:0.4}
\|\nabla v_\varepsilon\|_{L^\infty(B/2)}
\leq C_{L_\varepsilon,\tau}
\bigg\{
\varepsilon\|A(x/\varepsilon)\chi_0(x/\varepsilon)
\nabla v_\varepsilon\|_{C^{0,\sigma_1}(B)} + \|\widetilde{f}\|_{C^{0,\sigma_1}(B)}
+\|\widetilde{F}\|_{L^{\bar{p}}(B)}\bigg\}
\end{equation}
where we mention that $C_{L_\varepsilon,\tau} = C_{L,\tau}$ which
reveals that the constant is independent of $\varepsilon$.

To estimate the right-hand side
of $\eqref{f:0.4}$, we need to derive some properties for $T(x,\varepsilon)$,
and we have that $\|T\|_{L^\infty(B)}\in (1/2,3/2)$ and
$\|T^{-1}\|_{L^\infty(B)}\in (3/2,2)$ whenever $0<\varepsilon<\varepsilon_0$ with
$\varepsilon_0\in(0,1/2]$ being sufficiently small, as well as,
\begin{equation}\label{f:0.5}
\begin{aligned}
\|T\|_{C^{0,\sigma_1}(B)} +
\|T^{-1}\|_{C^{0,\sigma_1}(B)}
&\leq C_L(\mu,m,d,\bar{p},[A]_{\text{VMO}})\|V\|_{L^\infty(\mathbb{R}^d)}, \\
\|\chi_0\|_{C^{1,\sigma_1}(B)}
&\leq C_{L,\tau}(\mu,m,d,[A]_{C^{0,\tau}})\|V\|_{C^{0,\tau}(\mathbb{R}^d)}
\qquad \text{with}~\sigma_1\in(0,\tau],
\end{aligned}
\end{equation}
which are based upon the classical interior Schauder estimate
(see \cite[Theorem 5.19]{MGLM}) and $H^1$ theory.

We also need the following quantitative estimates
\begin{equation}\label{f:0.6}
\begin{aligned}
\|u_\varepsilon\|_{C^{0,\sigma_1}(B)}
+\|u_\varepsilon\|_{W^{1,\bar{p}}(B)}
&\leq C_{\bar{p}}C_k\lambda^{-k/2}\\
\|\nabla u_\varepsilon\|_{L^{\infty}(B)}
&\leq \varepsilon^{\sigma-1}C_{\tau}C_{\sigma}C_k\lambda^{-k/2} \\
\|u_\varepsilon\|_{C^{1,\sigma_1}(B)}
&\leq \varepsilon^{\sigma-\sigma_1-1}C_{\tau}C_{\sigma}C_k\lambda^{-k/2}
\qquad \text{with}~\sigma\in(0,1).
\end{aligned}
\end{equation}
We mention that the constant $C_\tau$ in the second, third lines of
$\eqref{f:0.6}$ actually comes from the classical Schauder estimate $\eqref{f:0.3}$,
and $C_{\sigma},C_k$ are from the uniform estimates $\eqref{pri:0.2}$ and $\eqref{pri:2.1}$,
respectively. This type of these two estimates is referred to as ``a nonuniform estimate'',
which has been originally developed in \cite[Lemma 4.10]{QXS}, and also been used to
avoid the Rellich type estimate applied to deriving
a sharp one (see \cite[Theorem 1.2]{GX}). We finally remark that the order of the constants
in fact have shown the outline of the arguments.

From the transformation $\eqref{T:1}$, it follows that
\begin{equation}\label{T:2}
\begin{aligned}
 \nabla u_\varepsilon
 &= T\nabla v_\varepsilon
 + \nabla_y\chi_0 v_\varepsilon \\
 \nabla v_\varepsilon
 &= T^{-1}\Big(\nabla u_\varepsilon
 - \nabla_y\chi_0 T^{-1}u_\varepsilon\Big),
\end{aligned}
\end{equation}
and this together with $\eqref{f:0.5}$ and $\eqref{f:0.6}$ gives
\begin{equation}\label{f:0.7}
\begin{aligned}
\|\nabla v_\varepsilon\|_{C^{0,\sigma_1}(B)}
&\leq \|T^{-1}\|_{C^{0,\sigma_1}(B)}\|\nabla u_\varepsilon\|_{C^{0,\sigma_1}(B)}
+ \|T^{-1}\|_{C^{0,\sigma_1}(B)}^2\|\nabla_y \chi_0\|_{C^{0,\sigma_1}(B)}
\|u_\varepsilon\|_{C^{0,\sigma_1}(B)} \\
&\leq \varepsilon^{\sigma-\sigma_1-1}C_LC_{\tau}C_{\sigma}C_k\lambda^{-k/2}
\|V\|_{L^\infty(\mathbb{R}^d)}
+ C_{L,\tau} C_{\bar{p}}C_k\lambda^{-k/2} \|V\|_{C^{0,\tau}(\mathbb{R}^d)}^3\\
&\leq \varepsilon^{\sigma-\sigma_1-1} C_{\tau}C_{\bar{p}}\lambda^{-(k-3)/2}
\|V/\sqrt{\lambda}\|_{C^{0,\tau}(\mathbb{R}^d)}^3
\end{aligned}
\end{equation}
The last inequality obeys the following conventions:
(1) We say $C_1 \leq C_2$ if $C_2$ partially depends on $C_1$;
(2) $C_1+C_2\leq C_1C_2$; (3) $C^k = C$ for any integer $k>0$.

Hence, by $\eqref{f:0.5}$ and $\eqref{f:0.7}$ we have
\begin{equation}\label{f:0.8}
\begin{aligned}
\|A(x/\varepsilon)\chi_0(x/\varepsilon)
\nabla v_\varepsilon\|_{C^{0,\sigma_1}(B)}
&\leq \|A(x/\varepsilon)\chi_0(x/\varepsilon)\|_{C^{0,\sigma_1}(B)}
\|\nabla v_\varepsilon\|_{C^{0,\sigma_1}(B)}\\
&\leq \varepsilon^{\sigma-2\sigma_1-1} C_{\tau}C_{\bar{p}}
\|V/\sqrt{\lambda}\|_{C^{0,\tau}(\mathbb{R}^d)}^5,
\end{aligned}
\end{equation}
and a routine computation will lead to
\begin{equation}\label{f:0.9}
\begin{aligned}
&\|\widetilde{f}\|_{C^{0,\sigma_1}(B)}
\leq C_{\bar{p}}\|V\sqrt{\lambda}\|^3_{C^{0,\tau}(\mathbb{R}^d)},\\
&\|\widetilde{F}\|_{L^{\bar{p}}(B)} \leq
 C_\tau C_{\bar{p}}
 \Big\{\|V/\sqrt{\lambda}\|_{C^{0,\tau}(\mathbb{R}^d)}^2
 +\|B/\sqrt{\lambda}\|_{L^\infty(\mathbb{R}^d)}
 + \|c/\sqrt{\lambda}\|_{L^\infty(\mathbb{R}^d)}+1\Big\}
\end{aligned}
\end{equation}
which are based upon $\eqref{T:1}$, $\eqref{T:2}$, $\eqref{f:0.5}$ and $\eqref{f:0.6}$
and we omit the details here. Then plugging the estimates $\eqref{f:0.8}$ and
$\eqref{f:0.9}$ back into $\eqref{f:0.4}$ we obtain
\begin{equation*}
\begin{aligned}
\|\nabla v_\varepsilon\|_{L^\infty(B/2)}
\leq \varepsilon^{\sigma-2\sigma_1}C_\tau C_{\bar{p}}
\leq C_\tau C_{\bar{p}}
\end{aligned}
\end{equation*}
for any $0<\varepsilon<\varepsilon_0$
by choosing $\sigma_1\leq \min\{\tau,\sigma/2\}$. This consequently implies
the stated estimate $\eqref{pri:0.3}$ with the constant $C_\tau C_{\bar{p}}$
being the form of $\eqref{KEY:3}$. We finally remark
the case $\varepsilon_0\leq \varepsilon\leq 1$ is trivial
due to the estimate $\eqref{f:0.3}$.
\qed

\begin{corollary}
Let $B=B(x,R)\subset\mathbb{R}^d$ with $R>0$, and
$C_\sigma$ be given in Theorem $\ref{thm:0.1}$. Suppose that
the coefficients of $\mathcal{L}_\varepsilon$ satisfy
$\eqref{a:1}$ and $\eqref{a:3}$ with $\lambda\geq\lambda_0$, and
$A\in \emph{VMO}(\mathbb{R}^d)$ satisfies $\eqref{a:2}$.
If $u_\varepsilon\in H^1(2B)$ is a weak solution to
$\mathcal{L}_\varepsilon(u_\varepsilon)=0$ in $2B$.
then for any $p>0$, there exists a constant
$C$ depending on $C_\sigma$ and $p$ such that
\begin{equation}\label{pri:2.2.1}
\|u_\varepsilon\|_{L^\infty(B)}
\leq C\Big(\dashint_{2B}|u_\varepsilon|^p\Big)^{1/p}.
\end{equation}
\end{corollary}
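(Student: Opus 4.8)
The plan is to upgrade the $L^\infty$--$L^2$ bound $\|u_\varepsilon\|_{L^\infty(B)} \le C_\sigma (\dashint_{2B} |u_\varepsilon|^2)^{1/2}$, which is exactly estimate $(\mathrm{ii})$ in $\eqref{pri:0.2}$ (applied after rescaling to a ball of radius $R$; note the right side is scale-invariant), to an $L^\infty$--$L^p$ bound for every $p > 0$. For $p \ge 2$ this is immediate from Jensen's inequality, so the real content is the range $0 < p < 2$. First I would set up the standard Caccioppoli-type self-improvement iteration: for concentric balls $B(x,r) \subset B(x,s) \subset 2B$, a rescaled version of $(\mathrm{ii})$ in $\eqref{pri:0.2}$ gives
\begin{equation*}
\|u_\varepsilon\|_{L^\infty(B(x,r))} \le C_\sigma \Big(\dashint_{B(x,s)} |u_\varepsilon|^2\Big)^{1/2},
\end{equation*}
and I would interpolate the $L^2$ norm on the right: $(\dashint_{B(x,s)} |u_\varepsilon|^2)^{1/2} \le \|u_\varepsilon\|_{L^\infty(B(x,s))}^{1-p/2} (\dashint_{B(x,s)} |u_\varepsilon|^p)^{p/2 \cdot 1/p}$ — more precisely $\int_{B(x,s)}|u_\varepsilon|^2 \le \|u_\varepsilon\|_{L^\infty(B(x,s))}^{2-p}\int_{B(x,s)}|u_\varepsilon|^p$.

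Combining these, with $M(t) := \|u_\varepsilon\|_{L^\infty(B(x,t))}$, one gets for $R \le r < s \le 2R$ an inequality of the shape
\begin{equation*}
M(r) \le C \Big(\frac{s}{s-r}\Big)^{d/2} M(s)^{1-p/2} \Big(\dashint_{2B} |u_\varepsilon|^p\Big)^{1/2},
\end{equation*}
where the factor $(s/(s-r))^{d/2}$ comes from comparing the averaging volumes $|B(x,s)|$ and the volume over which the average in the original statement is taken. Since $1 - p/2 < 1$, I would now invoke the standard Young-inequality absorption lemma (e.g. the iteration lemma in Giaquinta's book, or Lemma in Han--Lin): this yields $M(R) \le C (\dashint_{2B}|u_\varepsilon|^p)^{1/p}$ with $C$ depending only on $C_\sigma$, $p$ and $d$. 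The constant is independent of $R$ and $\varepsilon$ precisely because $C_\sigma$ is, by Theorem $\ref{thm:0.1}$.

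The main obstacle — really the only point requiring care — is bookkeeping the scaling and the covering/averaging normalizations so that the iteration is genuinely over concentric balls with a controlled geometric factor, and confirming the absorption lemma applies with exponent $1 - p/2 \in [0,1)$ (for $p \ge 2$ one just uses Jensen and no iteration is needed). One should also observe that $(\mathrm{ii})$ in $\eqref{pri:0.2}$ is stated for the unit ball; to apply it on $B(x,R)$ one uses the dilation $\tilde u(z) = u_\varepsilon(x + Rz)$, which solves $\mathcal{L}_{\varepsilon/R}(\tilde u) = 0$, and notes that the hypotheses $\eqref{a:1}$, $\eqref{a:3}$ and the VMO condition $\eqref{a:2}$ are all preserved under this rescaling (with the VMO modulus only improving), so $C_\sigma$ does not change. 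Everything else is routine.
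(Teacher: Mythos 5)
Your proposal is correct, and since the paper states this corollary without proof, there is nothing in the source to contrast it against; what you propose is the expected (indeed standard) argument, so the omission of a proof in the paper is justified. A few small points worth tightening if you write this out in full.

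First, the rescaling bookkeeping you outline at the end is exactly right and worth stating: with $\tilde u(z) = u_\varepsilon(y+tz)$ the rescaled operator has coefficients $\tilde A = A((y+t\cdot)/\varepsilon)$, $\tilde V = tV$, $\tilde B = tB$, $\tilde c = t^2 c$, $\tilde\lambda = t^2\lambda$; hence $\tilde V/\sqrt{\tilde\lambda} = V/\sqrt\lambda$, $\tilde B/\sqrt{\tilde\lambda} = B/\sqrt\lambda$, $\tilde c/\tilde\lambda = c/\lambda$, the VMO modulus of $\tilde A$ is unchanged, and $\tilde\lambda_0 = t^2\lambda_0$, so $\lambda\ge\lambda_0$ is preserved. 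This is precisely why $C_\sigma$ in $\eqref{KEY:2}$ is scale-invariant and why the corollary can be stated with a constant independent of $R$ and $\varepsilon$; this is in fact the whole point of Theorem $\ref{thm:0.1}$, so it is good that you flag it explicitly.

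Second, a very minor technical point for the iteration: to invoke the Giaquinta-type absorption lemma you need $M(s) = \|u_\varepsilon\|_{L^\infty(B(x,s))}$ to be finite on the whole interval over which you iterate, and this is not a priori clear at $s = 2R$ since you only know $u_\varepsilon\in H^1(2B)$. The fix is trivial: run the iteration on $[R, (3/2)R]$ or $[R, (2-\delta)R]$, where the interior estimate $(\mathrm{ii})$ of $\eqref{pri:0.2}$ already guarantees $M(s)<\infty$; the final average can still be enlarged to $\dashint_{2B}|u_\varepsilon|^p$ at the cost of a dimensional constant. With that caveat, the Young-inequality absorption with exponents $\tfrac{2}{2-p}$ and $\tfrac{2}{p}$ turns
$M(r)\le C\bigl(\tfrac{s}{s-r}\bigr)^{d/2}M(s)^{1-p/2}\bigl(\dashint_{2B}|u_\varepsilon|^p\bigr)^{1/2}$
into
$M(r)\le \tfrac12 M(s) + C\bigl(\tfrac{s}{s-r}\bigr)^{d/p}\bigl(\dashint_{2B}|u_\varepsilon|^p\bigr)^{1/p}$,
and the iteration lemma closes the argument. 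Everything is correct.
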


\noindent\textbf{Proof of Theorem $\ref{thm:0.2}$.}
We first to handle the estimate $\eqref{pri:0.4}$, and the other one
$\eqref{pri:0.5}$ could
be derived by a similar argument.
Let $\varphi\in C_0^1(\mathbb{R}^d)$ be a cut-off function
such that $\varphi = 1$ on $B(0,R)$ and $\varphi =0$ outside $B(0,2R)$ with
$|\nabla\varphi|\leq C/R$. Then we have
\begin{equation*}
\mathcal{L}_\varepsilon(\varphi u_\varepsilon)
= \text{div}(\varphi f - A(x/\varepsilon)\nabla\varphi u_\varepsilon)
- f\nabla\varphi + F\varphi - A(x/\varepsilon)\nabla\varphi\nabla u_\varepsilon
+\big(B(x/\varepsilon)-V(x/\varepsilon)\big)\nabla\varphi u_\varepsilon.
\end{equation*}

For any $x\in B(0,R/2)$,
it follows the expression $\eqref{pri:2.3.4}$ that
\begin{equation*}
\begin{aligned}
&u_\varepsilon(x)
= \underbrace{\int_{2B}\nabla_y\mathbf{\Gamma}_\varepsilon(x,y)
A(y/\varepsilon)\nabla\varphi(y) u_\varepsilon(y) dy}_{I_1}
- \underbrace{\int_{2B}\nabla_y\mathbf{\Gamma}_\varepsilon(x,y)
 f(y)\varphi(y) dy}_{I_2} \\
&+ \underbrace{\int_{2B}
\mathbf{\Gamma}_\varepsilon(x,y)\Big[F(y)\varphi(y) - f(y)\nabla\varphi(y)
-A(y/\varepsilon)\nabla\varphi(y)\nabla u_\varepsilon(y)
+\big(B(y/\varepsilon)-V(y/\varepsilon)\big)\nabla\varphi(y) u_\varepsilon(y)\Big]dy}_{I_3},
\end{aligned}
\end{equation*}
where we employ the integration by part in the first term of the right-hand side above.
Due to the decay estimate $(\text{H}_1)$, it is not hard to see that
\begin{equation*}
\begin{aligned}
|I_3|&\leq C_0\bigg(\int_{B(0,2R)}\frac{dy}{|x-y|^{(d-2)q^\prime}}\bigg)^{\frac{1}{q^\prime}}
\bigg(\int_{B(0,2R)}|F(y)|^qdy\bigg)^{\frac{1}{q}} + C_0R\dashint_{B(0,2R)}
\big(|f|+|\nabla u_\varepsilon|+|u_\varepsilon|\big)\\
&\leq C_0\bigg\{R^2 \Big(\dashint_{B(0,2R)}|F|^q\Big)^{\frac{1}{q}}
+  R\Big(\dashint_{B(0,2R)}|f|^p\Big)^{\frac{1}{p}}
+ R\Big(\dashint_{B(0,2R)}|\nabla u_\varepsilon|^2\Big)^{\frac{1}{2}}\bigg\}
+ CR\Big(\dashint_{B(0,2R)}|u_\varepsilon|^2\Big)^{\frac{1}{2}}\\
&\leq C\Big(\dashint_{B(0,4R)}|u_\varepsilon|^2\Big)^{\frac{1}{2}}
+C_0\bigg\{R\Big(\dashint_{B(0,4R)}|f|^p\Big)^{\frac{1}{p}}
+R^2 \Big(\dashint_{B(0,4R)}|F|^q\Big)^{\frac{1}{q}}\bigg\}
\end{aligned}
\end{equation*}
where $C$ is dependent of $\mu,m,d, C_0$ and
$\|(B-V)/\sqrt{\lambda}\|_{L^\infty(\mathbb{R}^d)}$, and we use
Caccioppoli's inequality $\eqref{pri:2.6}$ and H\"older's inequality in the last step.

Then we show the estimate for $I_1$,
\begin{equation*}
\begin{aligned}
|I_1|&\leq \frac{C}{R}\int_{B(0,2R)\setminus B(0,R/2)}
|\nabla\mathbf{\Gamma}_\varepsilon(x,y)||u_\varepsilon(y)|dy\\
&\leq\frac{C}{R}\bigg(\int_{B(0,2R)\setminus B(0,R/2)}
|\nabla\mathbf{\Gamma}_\varepsilon(x,y)|^2dy\bigg)^{\frac{1}{2}}
\bigg(\int_{B(0,2R)\setminus B(0,R/2)}
|u_\varepsilon|^2dy\bigg)^{\frac{1}{2}} \\
&\leq \frac{C}{R^2}\bigg(\int_{B(0,2R)\setminus B(0,R/2)}
|\mathbf{\Gamma}_\varepsilon(x,y)|^2dy\bigg)^{\frac{1}{2}}
\bigg(\int_{B(0,2R)}|u_\varepsilon|^2dy\bigg)^{\frac{1}{2}}
\leq C\bigg(\dashint_{B(0,2R)}|u_\varepsilon|^2\bigg)^{\frac{1}{2}}
\end{aligned}
\end{equation*}
where we use Caccioppoli's inequality $\eqref{pri:2.6}$
in the third step and the last one follows from the decay estimate $(\text{H}_1)$.
Finally, the estimate for $I_2$ is base upon the following computation:
\begin{equation*}
\begin{aligned}
\int_{B(0,R)}|\nabla_y\mathbf{\Gamma}_\varepsilon(x,y)f(y)|dy
&\leq C \sum_{k=0}^\infty(2^{-k}R)^d
\dashint_{2^{-k-1}R\leq |x-y|\leq 2^{-k}R}\Big|\nabla_y\mathbf{\Gamma}_\varepsilon(x,y)f(y)
\Big|dy \\
&\leq C\sum_{k=0}^\infty(2^{-k}R)^{d}
\bigg(\dashint_{2^{-k-1}R\leq |x-y|\leq 2^{-k}R}|\nabla_y\mathbf{\Gamma}_\varepsilon(x,y)|^2
dy\bigg)^{\frac{1}{2}}
\bigg(\dashint_{B(x,2^{-k}R)}|f|^p
\bigg)^{\frac{1}{p}}\\
&\leq C\sum_{k=0}^\infty(2^{-k}R)^{d-1-\frac{d}{p}}
\bigg(\dashint_{2^{-k-2}R\leq |x-y|\leq 2^{-k+1}R}|\mathbf{\Gamma}_\varepsilon(x,y)|^2
dy\bigg)^{\frac{1}{2}}\|f\|_{L^p(B(0,2R))} \\
& \leq CR\sum_{k=0}^\infty(2^{-k})^{1-\frac{d}{p}}
\bigg(\dashint_{B(0,2R)}|f|^p\bigg)^{\frac{1}{p}}
\end{aligned}
\end{equation*}
and this together the fact $p>d$ implies
\begin{equation*}
|I_2| \leq CR
\bigg(\dashint_{B(0,2R)}|f|^p\bigg)^{\frac{1}{p}}
\end{equation*}
Combining the estimates for $I_1,I_2,I_3$ will finally lead to the stated estimate
$\eqref{pri:0.4}$.

Concerning the estimate $\eqref{pri:0.5}$,
the tough term in the computations is to estimate
\begin{equation*}
\bigg|\int_{2B}
\nabla_x\nabla_y\mathbf{\Gamma}_\varepsilon(x,y)f(y)\varphi(y)dy\bigg|,
\end{equation*}
which may be controlled by
\begin{equation*}
\begin{aligned}
\int_{B(x,3R)}
&\big|\nabla_x\nabla_y\mathbf{\Gamma}_\varepsilon(x,y)\big[f(y)-f(x)\big]\big|dy
\quad + \\
&\qquad\|f\|_{L^\infty(2B)}
\bigg\{\int_{\partial(2B)}|\nabla_x\mathbf{\Gamma}_\varepsilon(x,y)|dy
+ \int_{2B\setminus B}|\nabla_x\nabla_y\mathbf{\Gamma}_\varepsilon(x,y)|dy\bigg\}.
\end{aligned}
\end{equation*}
This will lead to
\begin{equation*}
\bigg|\int_{2B}
\nabla_x\nabla_y\mathbf{\Gamma}_\varepsilon(x,y)f(y)\varphi(y)dy\bigg|
\leq C\Big\{R^{\sigma}[f]_{C^{0,\sigma}(4B)}+ \|f\|_{L^\infty(2B)}\Big\}
\end{equation*}
on account of $(\text{H}_2)$,
where $C$ depends only on $m,d,\sigma$ and $C_{00}$.
The remainder of the argument is standard
and left to the reader, and we ends the proof here.
\qed

\noindent\textbf{Proof of Theorem $\ref{thm:2.1}$.}
We mention that Theorem $\ref{thm:0.2}$ actually shows that
\begin{equation*}
 (\text{H}_1) \Rightarrow \eqref{pri:0.4}  \Rightarrow (\text{ii}) ~\text{in}~\eqref{pri:0.2}.
\end{equation*}
In the next section, Theorem $\ref{thm:2.3.1}$ will give that
$(\text{ii}) ~\text{in}~\eqref{pri:0.2} \Rightarrow (\text{H}_1)$, and this implies
the equivalence $\eqref{relation:1}$. Concerning $\eqref{relation:2}$, we just mention that
$\eqref{pri:0.5} \Rightarrow (\text{H}_2)$ has been included in \cite[Lemma 4.11]{QXS}
(or see \cite{MAFHL}). We end the proof here.
\qed

\subsection{Fundamental solutions}

\begin{thm}[Fundamental solutions]\label{thm:2.3.1}
Suppose that the coefficients of $\mathcal{L}_\varepsilon$ satisfy
$\eqref{a:1}$ and $\eqref{a:3}$ with $\lambda\geq\max\{\lambda_0,\mu\}$.
Assume the coefficient $A\in\emph{VMO}(\mathbb{R}^d)$ satisfies
$\eqref{a:2}$.
Then there exists a unique fundamental matrix
$\mathbf{{\Gamma}}_\varepsilon(\cdot,x)$ in
$H^{1}(\mathbb{R}^d\setminus B(x,r);\mathbb{R}^{m^2})
\cap W^{1,s}_{loc}(\mathbb{R}^d;\mathbb{R}^{m^2})$
with $s\in[1,\frac{d}{d-1})$ for any $x\in \mathbb{R}^d$ and $r>0$, such that
\begin{equation}\label{eq:2.3.1}
\mathrm{B}_{\mathcal{L}_\varepsilon;\mathbb{R}^d}
\big[\mathbf{{\Gamma}}_\varepsilon^\gamma(\cdot,x),\phi\big]
= \phi^\gamma(x) \qquad \forall \phi\in C_0^{\infty}(\mathbb{R}^d;\mathbb{R}^m).
\end{equation}
In fact, $\mathbf{{\Gamma}}_\varepsilon(y,x)$ is H\"older continuous in
$\{(x,y)\in\mathbb{R}^d\times\mathbb{R}^d:x\not=y\}$, and
if ${^*\mathbf{\Gamma}}_\varepsilon(x,y)$ denotes the fundamental matrix related to
$\mathcal{L}_\varepsilon^*$, then we have
$\mathbf{^*{\Gamma}}_\varepsilon(x,y) = [\mathbf{{\Gamma}}_\varepsilon(y,x)]^t$,
and the following estimate
\begin{eqnarray}\label{pri:2.3.3}
 |\mathbf{\Gamma}_\varepsilon(y,x)| \leq \frac{C}{|x-y|^{d-2}}
\end{eqnarray}
for any $x,y\in \mathbb{R}^d$ with $x\not=y$,
where $C$ depends only on $\mu,\kappa,\lambda,d,m,\|A\|_{\emph{VMO}}$.
Moreover, for any $F\in H^{-1}(\mathbb{R}^d;\mathbb{R}^m)\cap
L^q_{loc}(\mathbb{R}^d;\mathbb{R}^m)$ with $q>(d/2)$, the weak solution $u_\varepsilon$
to $\mathcal{L}_\varepsilon(u_\varepsilon) = F$ in $\mathbb{R}^d$ is continuous and
has the following representation
\begin{equation}\label{pri:2.3.4}
 u^\gamma_\varepsilon(x) =
 \int_{\mathbb{R}^d} \mathbf{{\Gamma}}_\varepsilon^{\gamma\alpha}(x,y)F^\alpha(y)dy.
\end{equation}
\end{thm}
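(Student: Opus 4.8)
The plan is to construct $\mathbf{\Gamma}_\varepsilon(\cdot,x)$ as a limit of approximate fundamental solutions obtained by mollifying the Dirac mass, following the classical scheme of Hofmann--Kim (as cited in \cite{HS}) adapted to our operator $\mathcal{L}_\varepsilon$. First, for fixed $x\in\mathbb{R}^d$ and $\rho>0$, let $\mathbf{\Gamma}_\varepsilon^\rho(\cdot,x)$ be the weak solution in $H^1(\mathbb{R}^d;\mathbb{R}^{m^2})$ to $\mathcal{L}_\varepsilon(\mathbf{\Gamma}_\varepsilon^{\rho,\gamma}(\cdot,x)) = \dashint_{B(x,\rho)} e_\gamma$, which exists and is unique by Theorem \ref{thm:2.0} since $\lambda\geq\max\{\lambda_0,\mu\}$; the same argument applied to $\mathcal{L}_\varepsilon^*$ produces $^*\mathbf{\Gamma}_\varepsilon^\rho$, and testing one equation against the other yields the duality relation $^*\mathbf{\Gamma}_\varepsilon^\rho(x,y) = [\mathbf{\Gamma}_\varepsilon^\rho(y,x)]^t$ up to the averaging. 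The key quantitative input is the pointwise bound $|\mathbf{\Gamma}_\varepsilon^\rho(y,x)|\leq C|x-y|^{2-d}$ uniform in $\rho$: this is exactly where the scaling-invariant estimate $(\mathrm{ii})$ in \eqref{pri:0.2} of Theorem \ref{thm:0.1} enters, since away from the pole $\mathbf{\Gamma}_\varepsilon^\rho(\cdot,x)$ solves $\mathcal{L}_\varepsilon(\cdot)=0$ and one runs the usual dyadic-annulus iteration, combining the $L^\infty$--$L^2$ bound \eqref{pri:2.2.1} with Caccioppoli's inequality \eqref{pri:2.6} and the $L^{2^*}$ energy estimate for the solution on the whole space. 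One also records the $W^{1,s}_{loc}$ bound for $s<d/(d-1)$ by a standard interpolation of the $|x-y|^{1-d}$-type gradient control against the size bound, again uniformly in $\rho$.

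With these uniform estimates in hand, I would extract a subsequence $\rho\to 0$ converging weakly in $W^{1,s}_{loc}$ and locally uniformly away from $x$ to a limit $\mathbf{\Gamma}_\varepsilon(\cdot,x)$; passing to the limit in the weak formulation against $\phi\in C_0^\infty$ gives \eqref{eq:2.3.1}, and the locally uniform convergence transfers the size bound \eqref{pri:2.3.3} and, via the corresponding convergence for $^*\mathbf{\Gamma}_\varepsilon^\rho$, the symmetry $^*\mathbf{\Gamma}_\varepsilon(x,y)=[\mathbf{\Gamma}_\varepsilon(y,x)]^t$. Hölder continuity in $\{x\neq y\}$ follows from the interior De Giorgi--Nash--Moser / VMO regularity (the $C^{0,\sigma}$ part of \eqref{pri:0.2}) applied on annuli, together with the symmetry to get joint continuity in both variables. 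Uniqueness is obtained by the standard duality argument: if $\widetilde{\mathbf{\Gamma}}_\varepsilon$ is another such matrix with the same size bound, then for $\phi\in C_0^\infty$ the function $w = (\mathbf{\Gamma}_\varepsilon - \widetilde{\mathbf{\Gamma}}_\varepsilon)(\cdot,x)$ satisfies $\mathrm{B}_{\mathcal{L}_\varepsilon}[w,\phi]=0$; representing $\phi$ via the inverse of $\mathcal{L}_\varepsilon^*$ from Theorem \ref{thm:2.0} and using that $w$ has the decay to justify the integration by parts forces $w\equiv 0$.

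Finally, the representation formula \eqref{pri:2.3.4}: given $F\in H^{-1}\cap L^q_{loc}$ with $q>d/2$, let $u_\varepsilon\in H^1(\mathbb{R}^d;\mathbb{R}^m)$ be the weak solution from Theorem \ref{thm:2.0}. For $F$ smooth and compactly supported one checks $v(x):=\int \mathbf{\Gamma}_\varepsilon^{\gamma\alpha}(x,y)F^\alpha(y)\,dy$ solves the same equation by Fubini and \eqref{eq:2.3.1} (using the duality to move the operator onto the test function), hence $v=u_\varepsilon$ by uniqueness in $H^1$; the general case follows by density, with the size bound \eqref{pri:2.3.3} ensuring the integral converges absolutely and the $L^q$ local integrability of $F$ giving, via the convolution estimate against $|x-y|^{2-d}$, that $v$ is continuous (here $q>d/2$ is exactly the threshold making $|\cdot|^{2-d}\in L^{q'}_{loc}$). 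I expect the main obstacle to be the $\rho$-uniform pointwise size estimate \eqref{pri:2.3.3}: one must be careful that all constants in the annular iteration depend only on the scaling-invariant quantities $\|V/\sqrt{\lambda}\|_\infty$, $\|B/\sqrt{\lambda}\|_\infty$, $\|c/\lambda\|_\infty$, $\|A\|_{\mathrm{VMO}}$ and not on $\varepsilon$, which is precisely the content that Theorem \ref{thm:0.1} was set up to provide, and that the lower-order terms — which cannot be treated perturbatively — do not spoil the energy estimate on $\mathbb{R}^d$, a point already secured by the coercivity \eqref{pri:2.0.2} for $\lambda\geq\lambda_0$.
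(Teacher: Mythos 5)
Your proposal follows the same overall strategy as the paper: define approximate fundamental matrices $\Gamma_{\rho,\varepsilon}(\cdot,x)$ by replacing the Dirac mass with $\dashint_{B(x,\rho)}$ via Theorem \ref{thm:2.0}, derive the $\rho$-uniform size estimate $|\Gamma_{\rho,\varepsilon}(y,x)|\leq C|x-y|^{2-d}$ from the scaling-invariant local boundedness \eqref{pri:2.2.1} plus Caccioppoli \eqref{pri:2.6}, record the $W^{1,s}_{loc}$ bound by a weak-$L^{d/(d-1)}$ estimate on the gradient, extract a weak limit, then obtain H\"older continuity from \eqref{pri:0.2}, symmetry by testing the two approximating problems against each other, and the representation formula by duality. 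This matches the paper's proof step by step.

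The one place you depart is uniqueness. The paper proves it as a corollary of the representation formula \eqref{pri:2.3.4}: if $\widetilde{\mathbf{\Gamma}}_\varepsilon$ is another fundamental matrix, then $\int[\widetilde{\mathbf{\Gamma}}_\varepsilon-\mathbf{\Gamma}_\varepsilon](x,z)F(z)\,dz=0$ for all $F\in C_0^\infty$ by uniqueness of the weak solution, and this forces pointwise equality. You instead set $w=(\mathbf{\Gamma}_\varepsilon-\widetilde{\mathbf{\Gamma}}_\varepsilon)(\cdot,x)$ and try to show $w\equiv 0$ by pairing against $(\mathcal{L}_\varepsilon^*)^{-1}\psi$ for $\psi\in C_0^\infty$. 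That pairing only makes sense once you have extended $\mathrm{B}_{\mathcal{L}_\varepsilon}[w,\cdot]=0$ from $C_0^\infty$ test functions to $H^1(\mathbb{R}^d)$ ones, which in turn requires $w\in H^1(\mathbb{R}^d)$. But the decay $|w(z)|\lesssim|z-x|^{2-d}$ only yields $w\in L^2(\mathbb{R}^d)$ for $d\geq 5$ (for $d=3$ one gets $\int_{B_R}|w|^2\sim R$, and for $d=4$ a logarithmic divergence), so the integration by parts you invoke is not justified in general. The gap is fixable without duality: $w$ solves $\mathcal{L}_\varepsilon(w)=0$ in all of $\mathbb{R}^d$ (the singularities cancel), so iterating Caccioppoli \eqref{pri:2.1} gives $\lambda\int_{B_R}|w|^2\leq C R^{-2}\int_{B_{2R}}|w|^2\lesssim R^{-2}\cdot R^{\max(4-d,\,1)}\to 0$ as $R\to\infty$, forcing $w\equiv 0$. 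Either use that Liouville-type energy argument, or simply adopt the paper's route and deduce uniqueness directly from \eqref{pri:2.3.4}. Apart from this, the proposal is sound.
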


\begin{lemma}[Approximating fundamental matrices]
Assume the same conditions as in Theorem $\ref{thm:2.1}$. Define
the approximating fundamental matrices
${\Gamma}_{\rho,\varepsilon}(\cdot,x)$ satisfying
\begin{equation}\label{def:2.0}
 \mathrm{B}_{\mathcal{L}_\varepsilon;\mathbb{R}^d}
 [{\Gamma}_{\rho,\varepsilon}^{\gamma}(\cdot,x),u]
 = \dashint_{B(x,\rho)} u^\gamma \qquad \forall u
 \in C_0^\infty(\mathbb{R}^d;\mathbb{R}^m),
\end{equation}
where $1\leq \gamma\leq m$.
Then we have the uniform estimate
\begin{eqnarray}\label{pri:2.3.1}
 \big|{\Gamma}_{\rho,\varepsilon}^\gamma(y,x)\big| \leq \frac{C}{|x-y|^{d-2}}
\end{eqnarray}
for any $x,y\in\mathbb{R}^d$ with $x\not=y$, and any $\rho<|x-y|/8$,
where $C$ depends only on $\mu,\kappa,\lambda,d,m$ and $\|A\|_{\emph{VMO}}$.
Moreover, for any $s\in [1,\frac{d}{d-1})$ and $R>0$, we have
\begin{equation}\label{pri:2.3.2}
 \sup_{\rho>0}\big\|{\Gamma}_{\rho,\varepsilon}^\gamma(\cdot,x)
 \big\|_{W^{1,s}(B(x,R))} \leq C(\mu,\kappa,\lambda,d,m,s,\|A\|_{\emph{VMO}},R).
\end{equation}
\end{lemma}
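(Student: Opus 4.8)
This is the Grüter--Widman approximation scheme (compare \cite{HS} and the references there), and the plan is to run it while carefully tracking the constants produced by the lower order terms and by the coercivity threshold $\lambda\geq\lambda_0$. First I would observe that the right-hand side of \eqref{def:2.0} defines an element of $H^{-1}(\mathbb{R}^d;\mathbb{R}^m)$, since $\big|\dashint_{B(x,\rho)}u^\gamma\big|\leq C\rho^{-d/2}\|u\|_{L^2(\mathbb{R}^d)}$; hence Lemma~\ref{lemma:2.1} (boundedness and, for $\lambda\geq\lambda_0$, coercivity \eqref{pri:2.0.2} of $\mathrm{B}_{\mathcal{L}_\varepsilon;\mathbb{R}^d}$) together with Lax--Milgram---or directly Theorem~\ref{thm:2.0}---yields a unique $\Gamma_{\rho,\varepsilon}^\gamma(\cdot,x)\in H^1(\mathbb{R}^d;\mathbb{R}^m)$ solving \eqref{def:2.0}, and collecting $\gamma=1,\dots,m$ produces the matrix. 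Taking $u=\Gamma_{\rho,\varepsilon}^\gamma(\cdot,x)=:v$ in \eqref{def:2.0} and using \eqref{pri:2.0.2} and the Sobolev inequality on the averaged right-hand side gives the global energy bounds
\[
\|\nabla v\|_{L^2(\mathbb{R}^d)}+\|v\|_{L^{2^*}(\mathbb{R}^d)}\leq C\rho^{1-\frac d2},\qquad 2^*=\tfrac{2d}{d-2}.
\]

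Since the average in \eqref{def:2.0} is supported in $B(x,\rho)$, the function $v$ solves $\mathcal{L}_\varepsilon(v)=0$ in $\mathbb{R}^d\setminus\overline{B(x,\rho)}$, so Caccioppoli's inequality \eqref{pri:2.6} (Lemma~\ref{lemma:2.2}) is available on every ball lying there---this is precisely where $\lambda\geq\lambda_0$ enters, to absorb the lower order terms. For \eqref{pri:2.3.1} I would fix $x\neq y$, put $r=|x-y|$ and $\rho<r/8$. Because $A\in\mathrm{VMO}$, the scaling-invariant local sup-bound in \eqref{pri:0.2} of Theorem~\ref{thm:0.1} applies to $v$ on $B(y,r/2)\subset\mathbb{R}^d\setminus\overline{B(x,\rho)}$ and gives $|v(y)|\leq C\big(\dashint_{B(y,r/2)}|v|^2\big)^{1/2}\leq C\big(\dashint_{B(x,2r)\setminus B(x,r/2)}|v|^2\big)^{1/2}$, so it remains to prove the decay
\[
\Big(\dashint_{B(x,2t)\setminus B(x,t/2)}|v|^2\Big)^{1/2}\leq C\,t^{2-d}\qquad(t\geq 8\rho).
\]
This is the classical De Giorgi/Grüter--Widman iteration over the dyadic annuli $B(x,2^{k}t_0)\setminus B(x,2^{k-1}t_0)$ with $t_0\sim\rho$: on each annulus one couples the exterior Caccioppoli inequality with the Sobolev inequality and a Hölder step, starting from the global $L^{2^*}$ bound above. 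Carrying the constant through this iteration is the main obstacle; the $+\lambda I$ term strengthens the coercivity but destroys exact scaling, so the iteration must be organized so that the final $C$ depends only on $\mu,\kappa,\lambda,d,m$ and $\|A\|_{\mathrm{VMO}}$, uniformly in $\rho$.

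For the uniform bound \eqref{pri:2.3.2} with $1\leq s<d/(d-1)$, fix $R>0$ and decompose $B(x,R)=B(x,c\rho)\cup\bigcup_{j\geq0}A_j$ with $A_j=B(x,2^{-j}R)\setminus B(x,2^{-j-1}R)$. On each $A_j$ with $2^{-j}R\gtrsim\rho$, Caccioppoli \eqref{pri:2.6} together with the pointwise bound \eqref{pri:2.3.1} just established gives $\int_{A_j}|\nabla v|^2\leq C(2^{-j}R)^{2-d}$, hence by Hölder $\int_{A_j}|\nabla v|^s\leq C(2^{-j}R)^{d-s(d-1)}$ and $\int_{A_j}|v|^s\leq C(2^{-j}R)^{d-s(d-2)}$; both exponents are positive exactly when $s<d/(d-1)$ (resp. $s<d/(d-2)$, which is weaker for $d\geq3$), so the series over $j$ converge and are bounded by $CR^{d-s(d-1)}$ and $CR^{d-s(d-2)}$. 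On the core ball $B(x,c\rho)$ one instead uses the global bounds $\|\nabla v\|_{L^2}+\|v\|_{L^{2^*}}\leq C\rho^{1-d/2}$ and Hölder, contributing $C\rho^{d-s(d-1)}+C\rho^{d-s(d-2)}$, which for $0<\rho\leq R$ stays bounded by the same powers of $R$; the case $\rho>R$ follows directly from the global bound. Summing the pieces yields \eqref{pri:2.3.2} with a constant of the stated form, independent of $\rho$. I expect the annular iteration for \eqref{pri:2.3.1}, with explicit control of the constant, to be the technical heart; everything else is routine once \eqref{pri:2.3.1} is in hand.
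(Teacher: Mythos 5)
Your treatment of the existence step and of the $W^{1,s}$ estimate \eqref{pri:2.3.2} is sound and close in spirit to the paper's (the paper routes \eqref{pri:2.3.2} through weak-$L^{d/(d-2)}$ and weak-$L^{d/(d-1)}$ distribution bounds rather than summing dyadic annuli, but both rest on the same input: the pointwise bound \eqref{pri:2.3.1}, Caccioppoli, and the global energy estimate). The genuine gap is in the proof of \eqref{pri:2.3.1} itself, which you correctly flag as the technical heart and then defer to a ``classical De~Giorgi/Gr\"uter--Widman iteration.'' That iteration is a \emph{scalar}-equation device: Gr\"uter--Widman combine nonnegativity of $G^\rho$ (weak maximum principle) with the weak Harnack inequality, and neither is available for a general system $\mathcal{L}_\varepsilon$. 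Running instead the Caccioppoli--Sobolev--H\"older hole-filling you sketch, with $v=\Gamma_{\rho,\varepsilon}^\gamma(\cdot,x)$ and $\Psi(t)=\int_{\mathbb{R}^d\setminus B(x,t)}|v|^{2^*}$, gives only $\Psi(t)\leq\theta\,\Psi(t/4)$ for some fixed $\theta\in(0,1)$ tied to the structural constants, hence $\Psi(t)\leq C(t/\rho)^{-\alpha}\rho^{-d}$ with $\alpha=\log_4(1/\theta)$; unless $\theta\leq 4^{-d}$ (which cannot be guaranteed) one has $\alpha<d$ and the $\rho^{-d}$ factor is not absorbed as $\rho\to 0$. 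The same failure shows without iterating: feeding the global bound $\|v\|_{L^{2^*}(\mathbb{R}^d)}\leq C\rho^{1-d/2}$ into the local sup-bound gives $|v(y)|\leq C(r/\rho)^{(d-2)/2}$, not the uniform $Cr^{2-d}$.

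The paper's proof (following Hofmann--Kim \cite{HS}) replaces the iteration by a duality step, which is the idea missing from your plan. Let $R=|x-y|/4$, take $F\in C_0^\infty(B(y,R);\mathbb{R}^m)$, and let $v_\varepsilon\in H^1(\mathbb{R}^d;\mathbb{R}^m)$ solve the adjoint problem $\mathcal{L}_\varepsilon^*(v_\varepsilon)=F$. Testing \eqref{def:2.0} yields
\begin{equation*}
\int_{B(y,R)}\Gamma_{\rho,\varepsilon}^{\beta\gamma}(z,x)F^\beta(z)\,dz
=\mathrm{B}_{\mathcal{L}_\varepsilon;\mathbb{R}^d}\big[\Gamma_{\rho,\varepsilon}^\gamma(\cdot,x),v_\varepsilon\big]
=\dashint_{B(x,\rho)}v_\varepsilon^\gamma .
\end{equation*}
Since $F$ is supported away from $x$, $\mathcal{L}_\varepsilon^*(v_\varepsilon)=0$ on $B(x,R)$, so the scaling-invariant local boundedness \eqref{pri:2.2.1} (applied on the adjoint side) together with Sobolev and the energy estimate controls the right-hand side by $CR^{2}\big(\dashint_{B(y,R)}|F|^2\big)^{1/2}$, \emph{uniformly} for $\rho<R/2$. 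Dualizing gives $\big(\dashint_{B(y,R)}|\Gamma_{\rho,\varepsilon}^\gamma(\cdot,x)|^2\big)^{1/2}\leq CR^{2-d}$, and one more use of local boundedness (this time on $\Gamma_{\rho,\varepsilon}^\gamma(\cdot,x)$, which solves $\mathcal{L}_\varepsilon=0$ on $B(y,R)$) yields \eqref{pri:2.3.1}. The $\rho$-uniformity is built in because the adjoint solution $v_\varepsilon$ does not depend on $\rho$ at all; that is precisely what the direct iteration cannot deliver.
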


\begin{proof}
First of all, we show ${\Gamma}_{\rho,\varepsilon}(y,x)=
\big[{\Gamma}_{\rho,\varepsilon}^{\beta\gamma}(y,x)\big]$ is well defined.
Let $I(u) = \dashint_{B(x,\rho)} u dy$, then $I\in H^{-1}(\Omega;\mathbb{R}^m)$ and
$|I(u)|\leq C\rho^{1-d/2}\|\nabla u\|_{L^{2}(\mathbb{R}^d)}$.
It follows from Theorem $\ref{thm:2.0}$ that there exists a unique weak solution
${\Gamma}_{\rho,\varepsilon}^{\gamma}(\cdot,x)\in H^{1}(\mathbb{R}^d;\mathbb{R}^m)$
satisfying $\eqref{def:2.0}$ and
\begin{equation}\label{f:2.3.1}
 \|\nabla {\Gamma}_{\rho,\varepsilon}(\cdot,x)\|_{L^2(\mathbb{R}^d)}
 \leq C\rho^{1-d/2}.
\end{equation}
For any $F\in C^\infty_0(\mathbb{R}^d;\mathbb{R}^m)$, consider the equation
$\mathcal{L}_\varepsilon^*(v_\varepsilon) = F$ in $\mathbb{R}^d$.
There exists the unique solution
$v_\varepsilon\in H^{1}(\mathbb{R}^d;\mathbb{R}^m)$ such that
\begin{equation}
 \int_{\mathbb{R}^d}  {\Gamma}_{\rho,\varepsilon}^{\beta\gamma}(z,x) F^\beta(z)dz
 = \mathrm{B}_{\mathcal{L}_\varepsilon^*;\mathbb{R}^d}
 [v_\varepsilon, {\Gamma}_{\rho,\varepsilon}^\gamma(\cdot,x)]
 = \mathrm{B}_{\mathcal{L}_\varepsilon;\mathbb{R}^d}
 [{\Gamma}_{\rho,\varepsilon}^\gamma(\cdot,x),v_\varepsilon]
 = \dashint_{B(x,\rho)} v^\gamma_\varepsilon.
\end{equation}

Let $R=|x-y|/4$ and $\mathrm{supp}(F) \subsetneqq B(y,R)$.
Then $\mathcal{L}_\varepsilon^*(v_\varepsilon) = 0$ in $B(x,R)$ and it follows
from a local boundedness estimate $\eqref{pri:2.2.1}$ that
\begin{eqnarray*}
\Big|\int_{B(y,R)}  {\Gamma}_{\rho,\varepsilon}^\gamma(z,y) F(z)dz\Big|
\leq  \|v_\varepsilon\|_{L^\infty(B(x,R/2))}
\leq C\bigg(\dashint_{B(x,R)}|v_\varepsilon|^2\bigg)^{\frac{1}{2}}
\end{eqnarray*}
for any $\rho < R/2$. This together with
\begin{equation*}
\bigg(\dashint_{B(x,R)}|v_\varepsilon|^2\bigg)^{\frac{1}{2}}
\leq \bigg(\dashint_{B(x,R)}|v_\varepsilon|^{\frac{2d}{d-2}}\bigg)^{\frac{d-2}{2d}}
\leq CR^{2}
\bigg(\dashint_{B(y,R)}|F|^2\bigg)^{\frac{1}{2}}
\end{equation*}
where we also use the estimate $\eqref{pri:2.0.1}$,
implies
\begin{equation*}
 \bigg(\dashint_{B(y,R)}\big|{\Gamma}_{\rho,\varepsilon}^\gamma(\cdot,y)
 \big|^2\bigg)^{\frac{1}{2}} \leq C R^{2-d}.
\end{equation*}

In view of $\eqref{def:2.0}$, it is not hard to see that
${\Gamma}_{\rho,\varepsilon}(y,x)$ actually satisfies
$\mathcal{L}_\varepsilon\big({\Gamma}_{\rho,\varepsilon}^\gamma
(\cdot,x)\big) =0$ in $B(y,R)$.
By using the local boundedness estimate $\eqref{pri:2.2.1}$ again, we obtain
\begin{equation*}
 \big|{\Gamma}_{\rho,\varepsilon}^\gamma(y,x)\big|
 \leq C \dashint_{B(y,R)}\bigg(
 \big|{\Gamma}_{\rho,\varepsilon}^\gamma(\cdot,x)\big|^2\bigg)^{1/2}
 \leq C |x-y|^{2-d}
\end{equation*}
for any $\rho < |x-y|/8$, where $C$ may be given by $C_\sigma$.

Then we will prove $\eqref{pri:2.3.2}$, and it will be established by three steps.

Step 1. We verify the following estimates
\begin{eqnarray}\label{f:2.3.2}
\int_{\mathbb{R}^d\setminus B(x,R)}
|\nabla {\Gamma}_{\rho,\varepsilon}(z,x)|^2dz
\leq CR^{2-d}, \quad
\int_{\mathbb{R}^d\setminus B(x,R)}
|{\Gamma}_{\rho,\varepsilon}(z,x)|^{\frac{2d}{d-2}}dz
\leq CR^{-d}
\end{eqnarray}
for any $\rho>0$ and $R>0$ (note: this $R$ is a new one and will be used below).

On the one hand, let $\varphi\in C^1_0(\mathbb{R}^d)$
be a cut-off function satisfying
$\varphi\equiv 0$ on $B(x,R/2)$ and $\varphi \equiv 1$ outside $B(x,R)$ with
$|\nabla\varphi| \leq C/R$.
Set $u = {\Gamma}_{\rho,\varepsilon}^\gamma(\cdot,x)\varphi^2 $
in $\eqref{def:2.0}$ and $\lambda\geq \lambda_0$.
It follows from Caccioppoli's inequality $\eqref{pri:2.6}$ and the estimate $\eqref{pri:2.3.1}$ that
\begin{equation}\label{f:2.3.3}
\begin{aligned}
 \int_{\mathbb{R}^d} \varphi^2 |\nabla {\Gamma}_{\rho,\varepsilon}^\gamma(z,x)|^2 dz
 &\leq C\int_{\mathbb{R}^d}
 |\nabla\varphi|^2|{\Gamma}_{\rho,\varepsilon}^\gamma(z,x)|^2 dz\\
 &\leq \frac{C}{R^2}\int_{B(x,R)\setminus B(x,R/2)} |z-y|^{4-2d} dz
 \leq CR^{2-d}
\end{aligned}
\end{equation}
for any $\rho<R/2$.
On the other hand, in the case of $\rho>(R/2)$ it follows from $\eqref{f:2.3.1}$ that
\begin{eqnarray*}
\int_{\mathbb{R}^d\setminus B(x,R)}|\nabla {\Gamma}_{\rho,\varepsilon}^\gamma(z,x)|^2dz
\leq  \int_{\mathbb{R}^d}|\nabla {\Gamma}_{\rho,\varepsilon}^\gamma(z,x)|^2 dz
\leq CR^{2-d}.
\end{eqnarray*}
Thus we have the first inequality of $\eqref{f:2.3.2}$ for all $\rho>0$.

For the second estimate in $\eqref{f:2.3.2}$,
we observe
\begin{equation}\label{f:2.3.4}
\begin{aligned}
 \int_{\mathbb{R}^d}\big|{\Gamma}_{\rho,\varepsilon}^\gamma(z,x)\varphi\big|^{\frac{2d}{d-2}} dz
 &\leq C\Big(\int_{\mathbb{R}^d}
 \big|\nabla\big({\Gamma}_{\rho,\varepsilon}^\gamma(z,x) \varphi\big)
 \big|^2 dz\Big)^{\frac{d}{d-2}} \\
 &\leq C\bigg(\int_{\mathbb{R}^d}
 \Big(|\nabla\varphi|^2|{\Gamma}_{\rho,\varepsilon}^\gamma(z,x)|^2
 + \varphi^2|\nabla {\Gamma}_{\rho,\varepsilon}^\gamma(z,x)|^2\Big)dz
 \bigg)^{\frac{d}{d-2}} \leq CR^{-d}
\end{aligned}
\end{equation}
for any $\rho < R/2$, where we use Sobolev's inequality in the first
inequality and $\eqref{f:2.3.3}$ in the last inequality. We remark that
the constant $C$ does not involve $R$. For $\rho \geq R/2$,
we have
\begin{equation*}
\int_{\mathbb{R}^d\setminus B(x,R)}
|{\Gamma}_{\rho,\varepsilon}^\gamma(z,x)|^{\frac{2d}{d-2}} dz
\leq C\Big(\int_{\mathbb{R}^d}
|\nabla {\Gamma}_{\rho,\varepsilon}^\gamma(z,x)|^{2}dz\Big)^{\frac{d}{d-2}}
\leq C R^{-d},
\end{equation*}
where we use Sobolev's inequality in the first inequality
and $\eqref{f:2.3.1}$ in the last inequality.
This together with $\eqref{f:2.3.4}$ proved the second inequality in
the stated estimate $\eqref{f:2.3.2}$ for all $\rho>0$.

Step 2. In term of the parameter $\rho$,
we now show the uniform weak-$L^{\frac{d}{d-2}}$ estimate
for ${\Gamma}_{\rho,\varepsilon}^\gamma(\cdot,x)$ and weak-$L^{\frac{d}{d-1}}$
for $\nabla {\Gamma}_{\rho,\varepsilon}^\gamma(\cdot,x)$.
In the case of $t=R^{1-d}$, we obtain
\begin{eqnarray}\label{f:2.3.5}
\big|\big\{y\in\mathbb{R}^d: |\nabla {\Gamma}_{\rho,\varepsilon}^\gamma(\cdot,x)|>t\big\}\big|
\leq CR^{d} + t^{-2}\int_{\mathbb{R}^d\setminus B(x,R)}
|\nabla {\Gamma}_{\rho,\varepsilon}^\gamma(\cdot,x)|^2
\leq C t^{-\frac{d}{d-1}}\quad \forall \rho>0,
\end{eqnarray}
where we use the first inequality in $\eqref{f:2.3.2}$. Similarly,
for $t=R^{2-d}$ we have
\begin{eqnarray}\label{f:2.3.6}
\big|\big\{y\in\mathbb{R}^d: |{\Gamma}_{\rho,\varepsilon}^\gamma(\cdot,x)|>t\big\}\big|
\leq C t^{-\frac{d}{d-2}}\qquad \forall \rho>0.
\end{eqnarray}

Step 3. In view of $\eqref{f:2.3.5}$ and $\eqref{f:2.3.6}$ we finally have
\begin{eqnarray}\label{f:2.3.7}
 \int_{B(x,R)}|{\Gamma}_{\rho,\varepsilon}^\gamma(\cdot,x)|^s
 \leq C \theta^{s}R^{d} + C\int_{\theta}^\infty t^{s-1}\cdot t^{-\frac{d}{d-2}} dt
 \leq CR^{(2-d)s+d}
\end{eqnarray}
for $s\in[1,\frac{d}{d-2})$, where we choose $\theta = R^{2-d}$. By the same computation,
there holds
\begin{eqnarray*}
 \int_{B(x,R)}|\nabla {\Gamma}_{\rho,\varepsilon}^\gamma(\cdot,x)|^s
 \leq CR^{(1-d)s+d}
\end{eqnarray*}
for $s\in [1,\frac{d}{d-1})$,
where $C$ depends only on $\mu,\kappa,\lambda,d,m,s,\|A\|_{\text{VMO}}$.
Combining the two inequalities above leads to
the stated estimate $\eqref{pri:2.3.2}$, and the proof is complete.
\end{proof}

\noindent\textbf{Proof of Theorem \ref{thm:2.3.1}.}
Fix $s\in(1,\frac{d}{d-1})$.
From the uniform estimate $\eqref{pri:2.3.2}$ and
a diagonalization process,
it follows that there exist a subsequence of
$\{{\Gamma}_{\rho_n,\varepsilon}^\gamma(\cdot,x)\}_{n=1}^\infty$
and $\mathbf{{\Gamma}}_{\varepsilon}^\gamma(\cdot,x)$ in
$W^{1,s}_{loc}(\mathbb{R}^d;\mathbb{R}^{m})$,
such that
\begin{eqnarray}\label{f:2.3.8}
 {\Gamma}_{\rho_n,\varepsilon}^\gamma(\cdot,x)~\rightharpoonup~
 \mathbf{{\Gamma}}_{\varepsilon}^\gamma(\cdot,x)
 \quad \text{weakly in}~ W^{1,s}(B(x,R);\mathbb{R}^m)
\end{eqnarray}
for any $R>0$, as $\lim_{n\to\infty}\rho_n =0$.
Hence, in terms of $\eqref{def:2.0}$ we may immediately derive
\begin{equation*}
\mathrm{B}_{\mathcal{L}_\varepsilon;\mathbb{R}^d}
[\mathbf{{\Gamma}}_\varepsilon^\gamma(\cdot,x),\phi]
= \lim_{n\to\infty}  \mathrm{B}_{\mathcal{L}_\varepsilon;\mathbb{R}^d}
[{\Gamma}_{\rho_n,\varepsilon}^\gamma(\cdot,x),\phi]
= \lim_{n\to\infty} \dashint_{B(x,\rho_n)} \phi^\gamma
= \phi^\gamma(x)
\end{equation*}
for any $\phi\in C^{\infty}_0(\mathbb{R}^d;\mathbb{R}^m)$.
In other words, we have
$\mathcal{L}_\varepsilon(\mathbf{{\Gamma}}_\varepsilon(\cdot,x))
= 0$ in $\mathbb{R}^d\setminus B(x,R)$ with any $R>0$.

Let $r = \text{dist}(K,x)>0$ for any compact set $K\subset\mathbb{R}^d\setminus\{x\}$.
For any $B=B(y,r/4)$ with $y\in K$, we have
\begin{equation}\label{f:2.3.10}
\begin{aligned}
\big[{\Gamma}_{\rho,\varepsilon}(\cdot,x)\big]_{C^{0,\sigma}(B)}
&\leq Cr^{-\sigma}\Big(\dashint_{2B}|{\Gamma}_{\rho,\varepsilon}(\cdot,x)|^2\Big)^{1/2} \\
&\leq Cr^{1-\sigma-\frac{d}{2}}
\Big(\int_{\mathbb{R}^d\setminus B(x,r/4)}|{\Gamma}_{\rho,\varepsilon}(\cdot,x)|^{2^*}\Big)^{1/2^*}
\leq Cr^{2-\sigma-d}
\end{aligned}
\end{equation}
and
\begin{equation}\label{f:2.3.11}
\begin{aligned}
\big\|{\Gamma}_{\rho,\varepsilon}(\cdot,x)\big\|_{L^{\infty}(B)}
\leq Cr^{2-d},
\end{aligned}
\end{equation}
where $\sigma\in(0,1)$ and we use the estimate $\eqref{pri:0.2}$ and $\eqref{f:2.3.2}$.
Combining the estimates $\eqref{f:2.3.10}$ and $\eqref{f:2.3.11}$ implies that the sequence
$\big\{{\Gamma}_{\rho_n,\varepsilon}(\cdot,x)\big\}_{n=1}^\infty$ is
equicontinuous on $K$. Therefore, one may conclude that ${\mathbf{\Gamma}}_\varepsilon(\cdot,x)$
is locally H\"older continuous in $\mathbb{R}^d\setminus\{x\}$.

Also, it follows from
$\eqref{f:2.3.7}$ and $\eqref{f:2.3.8}$ that
\begin{equation*}
 \Big(\dashint_{B(x,R)} |\mathbf{{\Gamma}}_\varepsilon^\gamma(\cdot,x)|^s\Big)^{1/s}
 \leq \lim_{n\to\infty}\Big(\dashint_{B(x,R)} |{\Gamma}_{\rho_n,\varepsilon}^\gamma(\cdot,x)|^s\Big)^{1/s}
 \leq CR^{2-d}
\end{equation*}
for any $s\in (1,\frac{d}{d-2})$.
Let $R = |x-y|/4$ again,
and then the above estimate together with $\eqref{pri:2.2.1}$ gives
\begin{equation*}
|\mathbf{{\Gamma}}_\varepsilon(y,x)|
\leq C\dashint_{B(y,R)}|\mathbf{{\Gamma}}_\varepsilon(\cdot,x)|
\leq C\Big(\dashint_{B(x,5R)}|\mathbf{{\Gamma}}_\varepsilon(\cdot,x)|^s\Big)^{1/s}
\leq CR^{2-d}.
\end{equation*}
which is the stated estimate $\eqref{pri:2.3.3}$.

Let ${^*\mathbf{\Gamma}}_\varepsilon(\cdot,y)$ be the fundamental matrix associated with
$\mathcal{L}_\varepsilon^*$, which
may be similarly determined by a family of
$\{{^*\Gamma}_{\varrho,\varepsilon}(\cdot,y)\}$ with $\varrho>0$, satisfying
\begin{equation*}
\mathrm{B}_{\mathcal{L}_\varepsilon^*;\mathbb{R}^d}
\big[{^*\Gamma}_{\varrho,\varepsilon}^\theta(\cdot,y),\phi\big]
= \dashint_{B(y,\varrho)} \phi^\theta \qquad \forall \phi\in C_0^{\infty}(\mathbb{R}^d;\mathbb{R}^m).
\end{equation*}
Thus for any $\rho,\varrho>0$, we obtain
\begin{eqnarray*}
 \dashint_{B(x,\rho)}{^*\Gamma}^{\gamma\theta}_{\varrho,\varepsilon}(z,y) dz
 = \mathrm{B}_{\mathcal{L}_\varepsilon;\mathbb{R}^d}
 [\Gamma_{\rho,\varepsilon}^\gamma(\cdot,x),{^*\Gamma}^{\theta}_{\varrho,\varepsilon}(\cdot,y)]
 = \mathrm{B}_{\mathcal{L}_\varepsilon^*;\mathbb{R}^d}
 [{^*\Gamma}^{\theta}_{\varrho,\varepsilon}(\cdot,y),\Gamma_{\rho,\varepsilon}^\gamma(\cdot,x)]
 =  \dashint_{B(y,\varrho)}{\Gamma}^{\theta\gamma}_{\rho,\varepsilon}(z,x) dz .
\end{eqnarray*}
Note that $\mathcal{L}_\varepsilon^*[{^*\Gamma}_{\varrho,\varepsilon}^{\theta}(\cdot,y)] = 0$
in $\mathbb{R}^d\setminus B(y,\rho)$ and
$\mathcal{L}_\varepsilon[{\Gamma}_{\rho,\varepsilon}^{\gamma}(\cdot,x)] = 0$
in $\mathbb{R}^d\setminus B(x,\varrho)$.
In view of the estimates $\eqref{pri:0.2}$, $\eqref{f:2.3.10}$ and $\eqref{f:2.3.11}$,
it has been known that
${^*\Gamma}_{\varrho,\varepsilon}^{\theta}(\cdot,y)$
and ${\Gamma}_{\rho,\varepsilon}^{\gamma}(\cdot,x)$ are locally H\"{o}lder continuous, respectively.
Therefore, we conclude
${^*\mathbf{\Gamma}}_{\varepsilon}^{\gamma\theta}(x,y)
= {\mathbf{\Gamma}}_{\varepsilon}^{\theta\gamma}(y,x)$ as $\rho\to 0$ and $\varrho\to 0$,
which gives
${^*\mathbf{\Gamma}}_\varepsilon(x,y) = [\mathbf{\Gamma}_\varepsilon(y,x)]^t$
for any $x,y\in\mathbb{R}^d$ and $x\not=y$.

For any $F\in L^{p}_{loc}(\mathbb{R}^d;\mathbb{R}^m)\cap H^{-1}(\mathbb{R}^d;\mathbb{R}^m)$
with $p>(d/2)$,
the weak solution $u_\varepsilon\in H^{1}(\mathbb{R}^d;\mathbb{R}^m)$ to
$\mathcal{L}_\varepsilon(u_\varepsilon) = F$ in $\mathbb{R}^d$
is locally H\"older continuous, according to \cite[Corollary 3.5]{QXS}.
Then we have
\begin{equation*}
 u^\theta_\varepsilon(y)
 = \mathrm{B}_{\mathcal{L}_\varepsilon^*;\mathbb{R}^d}
 [{^*\mathbf{\Gamma}}_\varepsilon^\theta(\cdot,y),u_\varepsilon]
 = \mathrm{B}_{\mathcal{L}_\varepsilon;\mathbb{R}^d}
 [u_\varepsilon,{^*\mathbf{\Gamma}}_\varepsilon^\theta(\cdot,y)]
 = \int_{\mathbb{R}^d} {^*\mathbf{\Gamma}}_\varepsilon^{\alpha\theta}(z,y)F^\alpha(z) dz.
\end{equation*}
which together with ${^*\mathbf{\Gamma}}_\varepsilon(x,y) = [\mathbf{\Gamma}_\varepsilon(y,x)]^t$
implies the stated formula $\eqref{pri:2.3.4}$.

Finally, we verify the uniqueness. If
$\widetilde{{\mathbf{\Gamma}}}_\varepsilon$ is another
fundamental matrix of $\mathcal{L}_\varepsilon$, we then
have $\tilde{u}_\varepsilon(x) = \int_{\mathbb{R}^d}
\widetilde{{\mathbf{\Gamma}}}_\varepsilon(x,z) F(z)dz$.
It follows from the uniqueness of the weak solution that
$ \int_{\mathbb{R}^d}
\big[\widetilde{{\mathbf{\Gamma}}}_\varepsilon(x,z)
- {\mathbf{\Gamma}}_\varepsilon(x,z)\big] F(z)dz = 0$
for any $F\in C^{\infty}_{0}(\mathbb{R}^d;\mathbb{R}^m)$, and thus
$\widetilde{{\mathbf{\Gamma}}}_\varepsilon(x,y) =
{\mathbf{\Gamma}}_\varepsilon(x,y)$ in
$\{(x,y)\in\mathbb{R}^d\times\mathbb{R}^d:x\not=y\}$.
\qed

\begin{lemma}\label{lemma:2.3.1}
Suppose that the coefficients of $\mathcal{L}$ and $\widetilde{\mathcal{L}}$ satisfy
the same conditions as in Theorem $\ref{thm:2.3.1}$.
Let $\mathbf{\Gamma}_{\mathcal{L}}$ and
$\mathbf{\Gamma}_{\widetilde{\mathcal{L}}}$ be the fundamental solutions of
$\mathcal{L}$ and $\widetilde{\mathcal{L}}$, respectively. Then there holds
\begin{equation}\label{eq:5.1}
\begin{aligned}
\mathbf{\Gamma}^{\alpha\delta}_{\mathcal{L}}(x,y)
-{\mathbf{\Gamma}}^{\alpha\delta}_{\widetilde{\mathcal{L}}}(x,y)
&= \int_{\mathbb{R}^d}\Big[\widetilde{a}_{ij}^{\beta\gamma}(z)-a_{ij}^{\beta\gamma}(z)\Big]
\frac{\partial {\mathbf{\Gamma}}^{\gamma\delta}_{\widetilde{\mathcal{L}}}}{\partial z_j}(z,y)
\frac{\partial \mathbf{\Gamma}^{\alpha\beta}_{\mathcal{L}}}{\partial z_i}(x,z)dz\\
&+\int_{\mathbb{R}^d}\Big[\widetilde{V}_{i}^{\beta\gamma}(z)-V_{i}^{\beta\gamma}(z)\Big]
\mathbf{\Gamma}^{\gamma\delta}_{\widetilde{\mathcal{L}}}(z,y)
\frac{\partial\mathbf{\Gamma}^{\alpha\beta}_{\mathcal{L}}}{\partial z_i}(x,z)dz\\
&+\int_{\mathbb{R}^d}\Big[\widetilde{B}_{i}^{\beta\gamma}(z)-B_{i}^{\beta\gamma}(z)\Big]
\frac{\partial \mathbf{\Gamma}^{\gamma\delta}_{\widetilde{\mathcal{L}}}}{\partial z_i}(z,y)
\mathbf{\Gamma}^{\alpha\beta}_{\mathcal{L}}(x,z) dz \\
&+\int_{\mathbb{R}^d}\Big[\widetilde{\mathbf{c}}^{\beta\gamma}(z)-\mathbf{c}^{\beta\gamma}(z)\Big]
\mathbf{\Gamma}^{\gamma\delta}_{\widetilde{\mathcal{L}}}(z,y)
\mathbf{\Gamma}^{\alpha\beta}_{\mathcal{L}}(x,z)dz
\end{aligned}
\end{equation}
for any $x,y\in\mathbb{R}^d$ with $x\not=y$,
where $\mathbf{c} = c+\lambda I$ and $\widetilde{\mathbf{c}}=\widetilde{c}+\widetilde{\lambda}I$.
\end{lemma}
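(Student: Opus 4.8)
The identity (\ref{eq:5.1}) is the operator-theoretic analogue of the second resolvent identity $\mathcal{L}^{-1} - \widetilde{\mathcal{L}}^{-1} = \widetilde{\mathcal{L}}^{-1}(\widetilde{\mathcal{L}} - \mathcal{L})\mathcal{L}^{-1}$, rewritten in terms of kernels. The plan is to start from the two defining weak formulations of the fundamental solutions. Fix $y\in\mathbb{R}^d$. By Theorem \ref{thm:2.3.1}, $\mathbf{\Gamma}_{\mathcal{L}}^{\cdot\delta}(\cdot,y)$ and $\mathbf{\Gamma}_{\widetilde{\mathcal{L}}}^{\cdot\delta}(\cdot,y)$ are the (distributional) solutions of $\mathcal{L}(\mathbf{\Gamma}_{\mathcal{L}}^{\cdot\delta}(\cdot,y)) = \delta_y e_\delta$ and $\widetilde{\mathcal{L}}(\mathbf{\Gamma}_{\widetilde{\mathcal{L}}}^{\cdot\delta}(\cdot,y)) = \delta_y e_\delta$. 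The natural approach is to test the equation for $\mathbf{\Gamma}_{\mathcal{L}}^{\cdot\delta}(\cdot,y)$ against the adjoint fundamental solution ${^*\mathbf{\Gamma}}_{\widetilde{\mathcal{L}}}^{\cdot\alpha}(\cdot,x)$ of $\widetilde{\mathcal{L}}^*$, which by the last part of Theorem \ref{thm:2.3.1} equals $[\mathbf{\Gamma}_{\widetilde{\mathcal{L}}}(\cdot,x)]^t$; this is exactly the device already used at the end of the proof of Theorem \ref{thm:2.3.1} to derive the representation formula (\ref{pri:2.3.4}).

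Concretely, I would compute $\mathrm{B}_{\mathcal{L};\mathbb{R}^d}[\mathbf{\Gamma}_{\mathcal{L}}^{\cdot\delta}(\cdot,y), {^*\mathbf{\Gamma}}_{\widetilde{\mathcal{L}}}^{\cdot\alpha}(\cdot,x)]$ in two ways. On the one hand, since $\mathbf{\Gamma}_{\mathcal{L}}^{\cdot\delta}(\cdot,y)$ satisfies (\ref{eq:2.3.1}) with $\mathcal{L}$, this bilinear form equals ${^*\mathbf{\Gamma}}_{\widetilde{\mathcal{L}}}^{\delta\alpha}(y,x) = \mathbf{\Gamma}_{\widetilde{\mathcal{L}}}^{\alpha\delta}(x,y)$ (after using the transpose relation). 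On the other hand, I split $\mathrm{B}_{\mathcal{L};\mathbb{R}^d} = \mathrm{B}_{\widetilde{\mathcal{L}};\mathbb{R}^d} + (\mathrm{B}_{\mathcal{L};\mathbb{R}^d} - \mathrm{B}_{\widetilde{\mathcal{L}};\mathbb{R}^d})$. The first piece, $\mathrm{B}_{\widetilde{\mathcal{L}};\mathbb{R}^d}[\mathbf{\Gamma}_{\mathcal{L}}^{\cdot\delta}(\cdot,y), {^*\mathbf{\Gamma}}_{\widetilde{\mathcal{L}}}^{\cdot\alpha}(\cdot,x)]$, equals $\mathrm{B}_{\widetilde{\mathcal{L}}^*;\mathbb{R}^d}[{^*\mathbf{\Gamma}}_{\widetilde{\mathcal{L}}}^{\cdot\alpha}(\cdot,x), \mathbf{\Gamma}_{\mathcal{L}}^{\cdot\delta}(\cdot,y)]$ by the symmetry of the bilinear forms, which in turn equals $\mathbf{\Gamma}_{\mathcal{L}}^{\alpha\delta}(x,y)$ by the defining property of the adjoint fundamental solution. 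The difference of the two evaluations then yields
$$
\mathbf{\Gamma}_{\widetilde{\mathcal{L}}}^{\alpha\delta}(x,y) - \mathbf{\Gamma}_{\mathcal{L}}^{\alpha\delta}(x,y)
= \big(\mathrm{B}_{\widetilde{\mathcal{L}};\mathbb{R}^d} - \mathrm{B}_{\mathcal{L};\mathbb{R}^d}\big)\big[\mathbf{\Gamma}_{\mathcal{L}}^{\cdot\delta}(\cdot,y), {^*\mathbf{\Gamma}}_{\widetilde{\mathcal{L}}}^{\cdot\alpha}(\cdot,x)\big],
$$
and writing out the difference of the two bilinear forms from Definition \ref{def:2.1} — the leading coefficients contribute $\int (\widetilde a - a)\,\partial_j(\cdot)\,\partial_i(\cdot)$, the $V$-terms contribute $\int(\widetilde V - V)(\cdot)\partial_i(\cdot)$, and so on — produces precisely the four integrals on the right of (\ref{eq:5.1}), after substituting ${^*\mathbf{\Gamma}}_{\widetilde{\mathcal{L}}}^{\cdot\alpha}(\cdot,x) = [\mathbf{\Gamma}_{\widetilde{\mathcal{L}}}(\cdot,x)]^t$ and matching indices; note the asymmetric roles of $V$ and $B$ in the two forms are exactly compensated by passing to the adjoint, so that $\mathbf{\Gamma}_{\widetilde{\mathcal{L}}}$ always carries the $\partial_j$ or $\partial_i$ in the $V$- and $B$-terms as written.

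The main obstacle is justifying that the bilinear forms may legitimately be evaluated on the pair $(\mathbf{\Gamma}_{\mathcal{L}}^{\cdot\delta}(\cdot,y), {^*\mathbf{\Gamma}}_{\widetilde{\mathcal{L}}}^{\cdot\alpha}(\cdot,x))$: neither argument lies in $H^1(\mathbb{R}^d;\mathbb{R}^m)$ globally (they are only in $H^1$ away from the pole and in $W^{1,s}_{loc}$ with $s<d/(d-1)$), and there are two distinct singularities, at $x$ and at $y$, plus a decay issue at infinity. The standard remedy, which I would follow, is to work first with the approximating fundamental matrices $\Gamma_{\rho,\varepsilon}$ of (\ref{def:2.0}): for fixed $\rho,\varrho>0$ these do lie in $H^1(\mathbb{R}^d;\mathbb{R}^m)$ with the quantitative bounds (\ref{f:2.3.1})–(\ref{f:2.3.2}), so all the pairings above are classical; one derives the identity with $\Gamma_{\rho,\mathcal{L}}$ and ${^*\Gamma}_{\varrho,\widetilde{\mathcal{L}}}$ in place of the true kernels, where the left-hand side is a difference of averages over $B(x,\varrho)$ and $B(y,\rho)$ of the corresponding objects. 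Then one passes to the limit $\rho,\varrho\to 0$: on the left using the local Hölder continuity of all the fundamental matrices established in the proof of Theorem \ref{thm:2.3.1}, and on the right using the weak-$W^{1,s}_{loc}$ convergence (\ref{f:2.3.8}) together with the pointwise size bounds (\ref{pri:2.3.3}) and the gradient bounds from (\ref{f:2.3.2}), which guarantee that each of the four integrands is dominated by an integrable function like $|z-x|^{1-d}|z-y|^{1-d}$ near the poles and decays fast enough at infinity because $\lambda > 0$ forces exponential-type decay of $\mathbf{\Gamma}$ (or at worst one may truncate and control the error via Caccioppoli's inequality (\ref{pri:2.6}) on large annuli). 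Dominated convergence then delivers (\ref{eq:5.1}) for $x\ne y$.
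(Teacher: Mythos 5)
Your overall strategy --- reading the resolvent-difference identity off the weak formulations of the two fundamental solutions, using the adjoint kernel and the transpose relation --- is exactly the paper's; the paper's own proof is this pairing in compressed form. However, the particular pairing you chose does not yield \eqref{eq:5.1}, and there is a sign slip along the way.

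First the sign: with your own evaluations $\mathrm{B}_{\mathcal{L};\mathbb{R}^d}[u,v]=\mathbf{\Gamma}_{\widetilde{\mathcal{L}}}^{\alpha\delta}(x,y)$ and $\mathrm{B}_{\widetilde{\mathcal{L}};\mathbb{R}^d}[u,v]=\mathbf{\Gamma}_{\mathcal{L}}^{\alpha\delta}(x,y)$, the split $\mathrm{B}_{\mathcal{L}}=\mathrm{B}_{\widetilde{\mathcal{L}}}+(\mathrm{B}_{\mathcal{L}}-\mathrm{B}_{\widetilde{\mathcal{L}}})$ gives $\mathbf{\Gamma}_{\widetilde{\mathcal{L}}}^{\alpha\delta}(x,y)-\mathbf{\Gamma}_{\mathcal{L}}^{\alpha\delta}(x,y)=(\mathrm{B}_{\mathcal{L}}-\mathrm{B}_{\widetilde{\mathcal{L}}})[u,v]$, not $(\mathrm{B}_{\widetilde{\mathcal{L}}}-\mathrm{B}_{\mathcal{L}})[u,v]$ as in your display. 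Second, and more substantively: with $u=\mathbf{\Gamma}_{\mathcal{L}}^{\cdot\delta}(\cdot,y)$, $v={^*\mathbf{\Gamma}}_{\widetilde{\mathcal{L}}}^{\cdot\alpha}(\cdot,x)$, so $v^\beta(z)=\mathbf{\Gamma}_{\widetilde{\mathcal{L}}}^{\alpha\beta}(x,z)$, the expansion of the bilinear-form difference produces kernels like $\big[\widetilde{a}_{ij}^{\beta\gamma}-a_{ij}^{\beta\gamma}\big]\partial_{z_j}\mathbf{\Gamma}_{\mathcal{L}}^{\gamma\delta}(z,y)\,\partial_{z_i}\mathbf{\Gamma}_{\widetilde{\mathcal{L}}}^{\alpha\beta}(x,z)$, i.e.\ with $\mathcal{L}$ and $\widetilde{\mathcal{L}}$ interchanged inside the integral relative to \eqref{eq:5.1}; this is a valid ``sister'' identity but not the one stated. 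Your closing remark that $\mathbf{\Gamma}_{\widetilde{\mathcal{L}}}$ ``always carries the $\partial_j$ or $\partial_i$ in the $V$- and $B$-terms'' is consistent with neither \eqref{eq:5.1} nor your own pairing: in \eqref{eq:5.1} the placement alternates --- $\mathbf{\Gamma}_{\mathcal{L}}(x,\cdot)$ sits under the derivative in the $V$-integral and $\mathbf{\Gamma}_{\widetilde{\mathcal{L}}}(\cdot,y)$ under the derivative in the $B$-integral --- simply because $V$ enters $\mathrm{B}_{\mathcal{L};\mathbb{R}^d}[u,v]$ paired with $\nabla v$ while $B$ enters paired with $\nabla u$. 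The fix is to swap the roles, which is what the paper does: test the \emph{tilde} fundamental solution $\mathbf{\Gamma}_{\widetilde{\mathcal{L}}}^{\cdot\delta}(\cdot,y)$ against the adjoint kernel ${^*\mathbf{\Gamma}}^{\cdot\alpha}(\cdot,x)$ of $\mathcal{L}^*$ (not of $\widetilde{\mathcal{L}}^*$). Then $\mathrm{B}_{\widetilde{\mathcal{L}};\mathbb{R}^d}[\mathbf{\Gamma}_{\widetilde{\mathcal{L}}}^{\cdot\delta}(\cdot,y),{^*\mathbf{\Gamma}}^{\cdot\alpha}(\cdot,x)]={^*\mathbf{\Gamma}}^{\delta\alpha}(y,x)=\mathbf{\Gamma}_{\mathcal{L}}^{\alpha\delta}(x,y)$, while $\mathrm{B}_{\mathcal{L};\mathbb{R}^d}[\mathbf{\Gamma}_{\widetilde{\mathcal{L}}}^{\cdot\delta}(\cdot,y),{^*\mathbf{\Gamma}}^{\cdot\alpha}(\cdot,x)]=\mathrm{B}_{\mathcal{L}^*;\mathbb{R}^d}[{^*\mathbf{\Gamma}}^{\cdot\alpha}(\cdot,x),\mathbf{\Gamma}_{\widetilde{\mathcal{L}}}^{\cdot\delta}(\cdot,y)]=\mathbf{\Gamma}_{\widetilde{\mathcal{L}}}^{\alpha\delta}(x,y)$; subtracting gives $\mathbf{\Gamma}_{\mathcal{L}}^{\alpha\delta}(x,y)-\mathbf{\Gamma}_{\widetilde{\mathcal{L}}}^{\alpha\delta}(x,y)=(\mathrm{B}_{\widetilde{\mathcal{L}}}-\mathrm{B}_{\mathcal{L}})[\mathbf{\Gamma}_{\widetilde{\mathcal{L}}}^{\cdot\delta}(\cdot,y),{^*\mathbf{\Gamma}}^{\cdot\alpha}(\cdot,x)]$, whose expansion is precisely \eqref{eq:5.1}. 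The approximation-and-limit paragraph you supply to justify the pairings on non-$H^1$ arguments is sound (indeed more careful than the paper, which records only the formal evaluations); once the pairing is corrected as above, that part goes through unchanged.
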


\begin{proof}
Before approaching the proof, we identify the notation
for the reader's convenience, which are
${^*\mathbf{\Gamma}}(\cdot,x) = \mathbf{\Gamma}_{\mathcal{L}^*}(\cdot,x)$ and
${\mathbf{\Gamma}}(x,\cdot) = \mathbf{\Gamma}_{\mathcal{L}}(x,\cdot)$. Thus
one may have $\mathbf{\Gamma}_{\mathcal{L}^*}(x,y) = [\mathbf{\Gamma}_{\mathcal{L}}(y,x)]^t$
from ${^*\mathbf{\Gamma}}_\varepsilon(x,y) = [\mathbf{\Gamma}_\varepsilon(y,x)]^t$.

In view of $\eqref{eq:2.3.1}$ we have
\begin{equation*}
\begin{aligned}
\mathrm{B}_{\mathcal{L}^*;\mathbb{R}^d}[{^*\mathbf{\Gamma}}^{\cdot\alpha}(\cdot,x),
\mathbf{\Gamma}_{\widetilde{\mathcal{L}}}^{\cdot\delta}(\cdot,y)]
&= \mathbf{\Gamma}_{\widetilde{\mathcal{L}}}^{\alpha\delta}(x,y), \\
\mathrm{B}_{\widetilde{\mathcal{L}};\mathbb{R}^d}
[\mathbf{\Gamma}^{\cdot\delta}_{\widetilde{\mathcal{L}}}(\cdot,y),
{^*\mathbf{\Gamma}}^{\cdot\alpha}(\cdot,x)]
&= {^*\mathbf{\Gamma}}^{\delta\alpha}(y,x)
= \mathbf{\Gamma}^{\alpha\delta}(x,y),
\end{aligned}
\end{equation*}
where we use the fact ${^*\mathbf{\Gamma}}_\varepsilon(x,y) = [\mathbf{\Gamma}_\varepsilon(y,x)]^t$
in the last equality. This will give the stated equality $\eqref{eq:5.1}$, and we are done.
\end{proof}

\subsection{Asymptotic behavior of fundamental solutions}

\noindent\textbf{Proof of Theorem $\ref{thm:0.4}$.}
On account of the results in Theorem $\ref{thm:0.2}$,
the main idea is similar to that given in \cite[pp.901-903]{MAFHL3},
and we provide a proof for the sake of the completeness.
A notable difference is that the fundamental solutions
$\mathbf{\Gamma}_{\varepsilon}(x,y)$ and $\mathbf{\Gamma}_0(x,y)$ studied here have no
homogeneous properties.
We also mention that
the case of $|x-y|\leq \varepsilon$
is trivial since it follows from the estimates
$\eqref{pri:2.3.3}$ and $\eqref{pri:3.0}$ that
\begin{equation*}
\big|\mathbf{\Gamma}_{\varepsilon}(x,y)-\mathbf{\Gamma}_0(x,y)\big|
\leq C|x-y|^{2-d}\leq C\varepsilon|x-y|^{1-d}.
\end{equation*}

Thus, it suffices to study the case $|x-y|>\varepsilon$.
Let $r=|x-y|/4$, and $f\in C^{1}_0(B(y,r))$.
Suppose that $u_\varepsilon,u_0\in H^1(\mathbb{R}^d)$ satisfy
\begin{equation*}
 \mathcal{L}_\varepsilon(u_\varepsilon) = f = \mathcal{L}_0(u_0) \quad\text{in}\quad \mathbb{R}^d.
\end{equation*}
To achieve our goal, we introduce the first order approximating corrector
\begin{equation}
w_\varepsilon
= u_\varepsilon - u_0 - \varepsilon\chi_0(x/\varepsilon)u_0
-\varepsilon\chi_k(x/\varepsilon)\nabla_k u_0,
\end{equation}
and by a similar calculation given earlier in \cite[Lemma 5.1]{QXS1} it verifies
\begin{equation}
 \mathcal{L}_\varepsilon(w_\varepsilon) = \varepsilon\text{div}(\tilde{f})
 + \varepsilon \tilde{F} \qquad\text{in}\quad\mathbb{R}^d,
\end{equation}
where the notation $\tilde{f}$ and
$\tilde{F}$ are related to $\nabla^2 u_0$,
$\nabla u_0$ and $u_0$, as well as the correctors together with dual correctors.
Furthermore, we may derive $\|\tilde{f}\|_{L^2(\mathbb{R}^d)}+\|\tilde{F}\|_{L^2(\mathbb{R}^d)}
\leq C\|u_0\|_{H^2(\mathbb{R}^d)}$ due to the $L^\infty$ bounds of correctors and
dual correctors (see \cite[Lemma 2.9]{QXS1} or \cite[Remark 2.9]{QXS}).

Thus on the one hand, in view of the estimate $\eqref{pri:2.0.1}$ we have
\begin{equation}\label{f:5.1}
\|\nabla w_\varepsilon\|_{L^2(\mathbb{R}^d)}
\leq C\varepsilon\Big\{\|\tilde{f}\|_{L^2(\mathbb{R}^d)} +
\|\tilde{F}\|_{L^2(\mathbb{R}^d)}\Big\}
\leq C\varepsilon\|u_0\|_{H^2(\mathbb{R}^d)},
\end{equation}
where $C$ depends on $\mu,\kappa,\lambda,m,d$.

One the other hand, it follows from the estimate $\eqref{pri:0.4}$ that
\begin{equation*}
|w_\varepsilon(x)| \leq C\Big(\dashint_{B(x,r)}|w_\varepsilon|^2\Big)^{\frac{1}{2}}
+ C\varepsilon\bigg\{
r\Big(\dashint_{B(x,r)}|\tilde{f}|^p\Big)^{\frac{1}{p}}
+r^2\Big(\dashint_{B(x,r)}|\tilde{F}|^q\Big)^{\frac{1}{q}}\bigg\}
\end{equation*}
where $p>d$ and $q>(d/2)$, and this implies
\begin{equation}\label{f:5.2}
\begin{aligned}
\big|u_\varepsilon(x)-u_0(x)\big|
&\leq C\bigg(\dashint_{B(x,r)}|w_\varepsilon|^2dz\bigg)^{\frac{1}{2}}
+C\varepsilon\big\|u_0\big\|_{W^{1,\infty}(B(x,r))}
+C\varepsilon(r+r^2)\big\|u_0\big\|_{W^{2,\infty}(B(x,r))}\\
&\leq Cr^{1-\frac{d}{2}}\|\nabla w_\varepsilon\|_{L^2(\mathbb{R}^d)}
+C\varepsilon r^{1-\frac{d}{2}}\|\nabla u_0\|_{H^1(\mathbb{R}^d)}
+C\varepsilon r^{1-\frac{d}{2}}\|u_0\|_{H^2(\mathbb{R}^d)}
\end{aligned}
\end{equation}
where we employ the H\"older's inequality and Sobolev's inequality, as well as,
the following interior estimates
\begin{equation*}
\|\nabla^l u_0\|_{L^\infty(B(x,r))}
\leq C\Big(\dashint_{B(x,2r)}|\nabla^l u_0|^2 dz\Big)^{1/2}
\leq \frac{C}{1+\sqrt{\lambda}r}\Big(\dashint_{B(x,3r)}|\nabla^l u_0|^2 dz\Big)^{1/2}
\end{equation*}
for $l=0,1,2$, since $\mathcal{L}_0(u_0) = 0$ in $B(x,3r)$, in which
$\nabla^0 u_0$ denotes $u_0$ by abusing the notation. The last inequality in
the above estimate follows from Caccioppoli's inequality $\eqref{pri:2.1}$, coupled
with the fact that $\nabla^l u_0$ will still be a solution of
$\mathcal{L}_0(u_0) = 0$ in $B(x,3r)$.

Plugging the estimate $\eqref{f:5.1}$ back into $\eqref{f:5.2}$, we have
\begin{equation}
\big|u_\varepsilon(x)-u_0(x)\big|
\leq C\varepsilon r^{1-\frac{d}{2}}\|u_0\|_{H^2(\mathbb{R}^d)},
\end{equation}
where $C$ is independent of $r$.

Noting that $u_\varepsilon$ and $u_0$ may be represented by the related fundamental solutions
\begin{equation*}
 u_\varepsilon(x) = \int_{\mathbb{R}^d}
 \mathbf{\Gamma}_\varepsilon(x,z)f(z) dz
 \quad \text{and}
 \quad
 u_0(x) = \int_{\mathbb{R}^d}\mathbf{\Gamma}_0(x,z)f(z) dz
\end{equation*}
(see $\eqref{pri:2.3.4}$),
we are able to obtain
\begin{equation*}
\Big|\int_{B(y,r)}\big[\mathbf{\Gamma}_\varepsilon(x,z)- \mathbf{\Gamma}_0(x,z)\big]f(z)dz\Big|
\leq C\varepsilon r^{1-\frac{d}{2}} \|u_0\|_{H^2(\mathbb{R}^d)}
\leq C\varepsilon r^{1-\frac{d}{2}} \|f\|_{L^2(B(y,r))},
\end{equation*}
where we use the $H^2$ estimates  in the last step, and this implies
\begin{equation}\label{f:5.5}
\bigg(\dashint_{B(y,r)}\big|\mathbf{\Gamma}_\varepsilon(x,z)- \mathbf{\Gamma}_0(x,z)\big|^2dz\bigg)^{1/2}
\leq C\varepsilon r^{1-d}.
\end{equation}

Recall that the fundamental solutions $\mathbf{\Gamma}_\varepsilon(x,\cdot)$ and
$\mathbf{\Gamma}_0(x,\cdot)$ satisfy
\begin{equation*}
\mathcal{L}_\varepsilon^*\big[{^*\mathbf{\Gamma}}_\varepsilon(\cdot,x)\big] =
\mathcal{L}_0^*\big[{^*\mathbf{\Gamma}}_0(\cdot,x)\big]=0 \qquad\text{in}\quad B(y,2r)
\end{equation*}
(by neglecting the transport notation),
and let
\begin{equation*}
\Phi_\varepsilon^x(z)= {^*\mathbf{\Gamma}}_\varepsilon(z,x)
- {^*\mathbf{\Gamma}}_0(z,x) - \varepsilon\chi_0^*(z/\varepsilon){^*\mathbf{\Gamma}}_0(z,x)
-\varepsilon\chi_k^*(z/\varepsilon)\nabla_k{^*\mathbf{\Gamma}}_0(z,x),
\end{equation*}
where $\chi_k^*$ with $k=0,1,\cdots,d$ are the correctors associated
with $\mathcal{L}_\varepsilon^*$,
and it admits that
\begin{equation*}
\mathcal{L}_\varepsilon^*(\Phi_\varepsilon^x) =
\varepsilon \text{div}(\breve{f}) + \varepsilon \breve{F}
\qquad\text{in}\quad B(y,2r),
\end{equation*}
where $\breve{f}$ and $\breve{F}$ can be archived by setting $u_0 = \mathbf{\Gamma}_0(x,\cdot)$ in
the expressions in $\tilde{f}$ and $\tilde{F}$, respectively.
We refer the reader to \cite[Remark 2.23]{QX2} or \cite[Lemma 5.1]{QXS}
for the concrete expressions of $\breve{f}$ and $\breve{F}$.

we may use the estimate $\eqref{pri:0.4}$ again, similar to the first line of $\eqref{f:5.2}$, and
\begin{equation}
\begin{aligned}
\big|\mathbf{\Gamma}_\varepsilon(x,y)&-\mathbf{\Gamma}_0(x,y)\big|
\leq
C\bigg(\dashint_{B(y,r)}\big|\mathbf{\Gamma}_\varepsilon(x,z)- \mathbf{\Gamma}_0(x,z)\big|^2dz\bigg)^{1/2} \\
&+ C\varepsilon \big\|\mathbf{\Gamma}_0(x,\cdot)\big\|_{W^{1,\infty}(B(y,r))}
+ C\varepsilon\big(r+r^2\big)\big\|\mathbf{\Gamma}_0(x,\cdot)\big\|_{W^{2,\infty}(B(y,r))}
\leq C\varepsilon r^{1-d},
\end{aligned}
\end{equation}
where we use the estimate $\eqref{pri:3.0}$ in the last inequality. This is exactly
the stated estimate $\eqref{pri:0.6}$.

We now turn to prove the estimate $\eqref{pri:0.7}$, and we shall adopt the same procedure
as in the proof of $\eqref{pri:0.6}$. Also, it suffices to prove the case of
$r=|x-y|>\varepsilon$, and
it follows from the interior
Lipschitz estimate $\eqref{pri:0.5}$ that
\begin{equation}\label{f:5.3}
|\nabla \Phi_\varepsilon^x(y)|
\leq \frac{C}{r}\Big(\dashint_{B(y,r)}|\Phi_\varepsilon^x|^2 dz\Big)^{\frac{1}{2}}
+ C\varepsilon\bigg\{\|\breve{f}\|_{L^\infty(B(y,r))}
+  r^\sigma[\breve{f}]_{C^{0,\sigma}(B(y,r))}
+  r \|\breve{F}\|_{L^\infty(B(y,r))}\bigg\},
\end{equation}
where $\sigma\in(0,1)$.
Then a routine computation will
will show the estimates for $\|\breve{f}\|_{C^{0,\sigma}(B(y,r))}$
and $\|\breve{F}\|_{L^\infty(B(y,r))}$, and
the rigorous proofs in the following is left to the reader.
\begin{equation}\label{f:5.4}
\begin{aligned}
&~\|\breve{f}\|_{L^\infty(B(y,r))} + \|\breve{F}\|_{L^\infty(B(y,r))}
\leq C\|\mathbf{\Gamma}_0(x,\cdot)\|_{W^{2,\infty}(B(y,r))}
\leq C_k(1+\sqrt{\lambda} r)^{-k}r^{-d},\\
&~[\breve{f}]_{C^{0,\sigma}(B(y,r))}
\leq C\varepsilon^{-\sigma}\|\mathbf{\Gamma}_0(x,\cdot)\|_{W^{2,\infty}(B(y,r))}
+ C\|\mathbf{\Gamma}_0(x,\cdot)\|_{C^{2,\sigma}(B(y,r))}
\leq \frac{C_k\varepsilon^{-\sigma}}{(1+\sqrt{\lambda}r)^kr^{d}},
\end{aligned}
\end{equation}
where $C$ depends on $\mu,\kappa,\tau,\lambda,m,d,\sigma$,
and we use the estimate $\eqref{pri:3.0}$ in the last inequality of each line,
as well as the fact $r>\varepsilon$. Inserting
$\eqref{f:5.4}$ into $\eqref{f:5.3}$ leads to
\begin{equation*}
\begin{aligned}
|\nabla\Phi_\varepsilon^x(y)|
&\leq \frac{C}{r}\Bigg\{\bigg(\dashint_{B(y,r)} |\mathbf{\Gamma}_\varepsilon(x,z)-\mathbf{\Gamma}_0(x,z)|^2 dz\bigg)^{1/2}
+ \varepsilon\|\mathbf{\Gamma}_0(x,\cdot)\|_{W^{1,\infty}(B(y,r))}\Bigg\} \\
&+ \frac{C_k}{(1+\sqrt{\lambda}r)^k}
\bigg\{\varepsilon r^{-d} + \varepsilon^{1-\sigma}r^{\sigma-d}\bigg\}
\leq C\Big\{\varepsilon r^{-d} + \varepsilon^{1-\sigma}r^{\sigma-d}\Big\}
\leq C\varepsilon^{1-\sigma}r^{\sigma-d},
\end{aligned}
\end{equation*}
where the second inequality
follows from the estimate $\eqref{f:5.5}$.
This will give the second line of the stated estimate $\eqref{pri:0.7}$ by letting $\rho = 1-\sigma$,
and the first one will be achieved by the same way.
We have completed the proof.
\qed

\begin{corollary}
Assume the same conditions as in Theorem $\ref{thm:0.4}$.
Let $\mathbf{\Gamma}_{\mathcal{L}}(x,y)$
denote $\mathbf{\Gamma}_{\varepsilon}(x,y)$ in the case of $\varepsilon =1$. Then for any $|x-y|\geq 1$
and $\rho\in(0,1)$, there holds the following estimates£º
\begin{equation}\label{pri:5.9}
\begin{aligned}
|\mathbf{\Gamma}_{\mathcal{L}}(x,y)|&\leq C|x-y|^{1-d}, \\
|\nabla_x\mathbf{\Gamma}_{\mathcal{L}}(x,y)|
+ |\nabla_y \mathbf{\Gamma}_{\mathcal{L}}(x,y)|&\leq C|x-y|^{1-d-\rho},
\end{aligned}
\end{equation}
where $C$ depends on $\mu,\kappa,\tau,\lambda,m,d$ and $\rho$.
\end{corollary}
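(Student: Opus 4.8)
The plan is to obtain both inequalities of $\eqref{pri:5.9}$ by specializing Theorem $\ref{thm:0.4}$ to the single scale $\varepsilon=1$ and then absorbing the correction terms in its asymptotic expansions into the main term, using the decay of the homogenized fundamental solution $\mathbf{\Gamma}_0$ from $\eqref{pri:3.0}$ together with the uniform boundedness of the gradients of the (dual) correctors. For $|x-y|>1$ we are precisely in the range $|x-y|>\varepsilon$ in which Theorem $\ref{thm:0.4}$ is effective when $\varepsilon=1$; the remaining compact range $|x-y|$ of order one is immediate, since there $|x-y|^{2-d}$, $|x-y|^{1-d}$ and $|x-y|^{1-d-\rho}$ are all comparable to a dimensional constant, and both $\mathbf{\Gamma}_{\mathcal{L}}(x,y)$ and $\nabla\mathbf{\Gamma}_{\mathcal{L}}(x,y)$ are then $O(1)$ by the size estimate $\eqref{pri:2.3.3}$ and the interior Lipschitz estimate $\eqref{pri:0.3}$ applied to $\mathbf{\Gamma}_{\mathcal{L}}(\cdot,y)$ and $\mathbf{\Gamma}_{\mathcal{L}}(x,\cdot)$ (solutions of $\mathcal{L}_1$ and $\mathcal{L}_1^*$ away from the pole). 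So I fix $r=|x-y|>1$.

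For the size bound I would split $\mathbf{\Gamma}_{\mathcal{L}}=(\mathbf{\Gamma}_{\mathcal{L}}-\mathbf{\Gamma}_0)+\mathbf{\Gamma}_0$. Estimate $\eqref{pri:0.6}$ at $\varepsilon=1$ gives $|\mathbf{\Gamma}_{\mathcal{L}}(x,y)-\mathbf{\Gamma}_0(x,y)|\leq Cr^{1-d}$, and, since the homogenized operator $\mathcal{L}_0$ has a genuinely positive zeroth order part $\widehat{c}+\lambda I$ for $\lambda\geq\mu$ (by the bound on $\widehat{c}$ in $\eqref{a:5}$), the estimate $\eqref{pri:3.0}$ furnishes $|\mathbf{\Gamma}_0(x,y)|\leq C_k(1+\sqrt{\lambda}\,r)^{-k}r^{2-d}$; taking $k\geq1$ this is $\leq Cr^{1-d}$, and summing yields the first line of $\eqref{pri:5.9}$.

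For the gradient bound I would invoke the $\nabla_x$-line of $\eqref{pri:0.7}$ at $\varepsilon=1$, so that $\nabla_x\mathbf{\Gamma}_{\mathcal{L}}(x,y)$ equals $\nabla_x\mathbf{\Gamma}_0(x,y)+\nabla\chi_0(x)\mathbf{\Gamma}_0(x,y)+\nabla\chi_k(x)\nabla_{x_k}\mathbf{\Gamma}_0(x,y)$ up to an error bounded by $Cr^{1-d-\rho}$. Under the Hölder hypothesis $\eqref{a:4}$, interior Schauder theory applied to the corrector equations $L_1(\chi_k+P_k)=0$ and $L_1(\chi_0)=\mathrm{div}(V)$, together with periodicity, gives $\chi_0,\chi_k\in C^{1,\tau}$, in particular $\|\nabla\chi_0\|_{L^\infty(\mathbb{R}^d)}+\|\nabla\chi_k\|_{L^\infty(\mathbb{R}^d)}\leq C$; and $\eqref{pri:3.0}$ with $k$ large gives $|\mathbf{\Gamma}_0(x,y)|+|\nabla_x\mathbf{\Gamma}_0(x,y)|\leq Cr^{-d}$. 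Since $r>1$ and $0<\rho<1$ we have $r^{-d}\leq r^{1-d-\rho}$, so each of the three displayed terms, as well as the error, is $\leq Cr^{1-d-\rho}$, whence $|\nabla_x\mathbf{\Gamma}_{\mathcal{L}}(x,y)|\leq Cr^{1-d-\rho}$. The estimate for $\nabla_y\mathbf{\Gamma}_{\mathcal{L}}(x,y)$ follows verbatim from the $\nabla_y$-line of $\eqref{pri:0.7}$, using that $\chi_0^*,\chi_k^*$ are the correctors of $\mathcal{L}^*$, whose leading and first-order coefficients $A^*=A$ and $V^*=B$ also satisfy $\eqref{a:4}$, so that $\|\nabla\chi_0^*\|_{L^\infty(\mathbb{R}^d)}+\|\nabla\chi_k^*\|_{L^\infty(\mathbb{R}^d)}\leq C$.

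The main obstacle is the single non-automatic point: the ``correction'' terms $\nabla\chi_\bullet\,\mathbf{\Gamma}_0$ in $\eqref{pri:0.7}$ inherit only the decay of $\mathbf{\Gamma}_0$, whose bare polynomial rate $r^{2-d}$ is \emph{worse} than the target $r^{1-d-\rho}$ once $r>1$; this is exactly why one must exploit the full exponential-type gain $(1+\sqrt{\lambda}\,r)^{-k}$ in $\eqref{pri:3.0}$ — available precisely because $\mathcal{L}_0$ carries a positive zeroth order term — to push these terms, and $\nabla\mathbf{\Gamma}_0$, below $r^{1-d-\rho}$. Once this is noted the rest is routine tracking of constants, which reproduces the asserted dependence on $\mu,\kappa,\tau,\lambda,m,d$ and $\rho$.
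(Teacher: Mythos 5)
Your argument is correct, but it takes a genuinely different route from the paper's. The paper proves this corollary in one line by invoking the refined decay estimates $\eqref{pri:2.11}$ and $\eqref{pri:2.12}$ (which follow, independently of any asymptotic expansion, from iterating the Caccioppoli bound $\eqref{pri:2.1}$ on $\mathbf{\Gamma}_\varepsilon(\cdot,y)$ against the base estimates $\eqref{pri:2.3.3}$ and $\eqref{pri:0.3}$), then setting $\varepsilon=1$ and absorbing the factor $(1+\sqrt{\lambda}\,r)^{-k}$ into $r^{-1}$ or $r^{-\rho}$ for $r\geq 1$. You instead go through the asymptotic expansions $\eqref{pri:0.6}$--$\eqref{pri:0.7}$ of Theorem~$\ref{thm:0.4}$ specialised to $\varepsilon=1$, bound the corrector terms $\nabla\chi_\bullet\,\mathbf{\Gamma}_0$ and $\nabla\chi_k\,\nabla_{x_k}\mathbf{\Gamma}_0$ via Schauder $L^\infty$-bounds on the correctors and the $(1+\sqrt{\lambda}\,r)^{-k}$ decay of $\mathbf{\Gamma}_0$ from $\eqref{pri:3.0}$, and observe that the expansion error $r^{1-d-\rho}$ is already of the target size. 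This is valid, and you correctly identified the only non-trivial point, namely that the bare $r^{2-d}$ decay of the corrector terms is too weak and one must exploit the exponential-type gain coming from the $\lambda I$ term. The trade-off is that your route is structurally heavier: it calls on the full machinery of correctors and asymptotic expansions (including periodicity in an essential way) to re-derive a scale-invariant decay statement that the paper gets more economically from the refined decay estimates alone. On the other hand, your argument makes explicit how the homogenized operator's decay propagates to $\mathcal{L}_\varepsilon$ through the two-scale expansion, which is closer in spirit to the rest of the paper's large-scale analysis.

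One small remark: the expansions $\eqref{pri:0.6}$--$\eqref{pri:0.7}$ are asserted (and proved) for all $x\neq y$, not only $|x-y|>\varepsilon$, so your separate treatment of the compact range $|x-y|\approx 1$ is harmless but not actually needed once $\varepsilon=1$ and $|x-y|\geq 1$.
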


\begin{proof}
The stated estimates $\eqref{pri:5.9}$ directly follow from
the estimates $\eqref{pri:2.11}$ and $\eqref{pri:2.12}$, coupled with
$\eqref{pri:3.0}$, in which we set $\varepsilon =1$.
\end{proof}

In fact, one may have refined decay estimates.
\begin{thm}
Assume the same conditions as in Theorem $\ref{thm:2.3.1}$.
Let $\mathbf{\Gamma}_\varepsilon(x,y)$ be the fundamental solution of
$\mathcal{L}_\varepsilon$. Then for any $k>0$ there exists a constant
$C_k$ such that
\begin{equation}\label{pri:2.11}
|\mathbf{\Gamma}_\varepsilon(x,y)|
\leq \frac{C_k}{\big(1+\sqrt{\lambda}|x-y|\big)^k|x-y|^{d-2}}
\end{equation}
for any $x,y\in\mathbb{R}^d$ with $x\not=y$,
where $C_k$ depends on $\mu,\kappa,\lambda,m,d,\|A\|_{\emph{VMO}}$ and $k$. Moreover,
if the coefficients $A,V,B$ satisfy $\eqref{a:4}$, then we have
\begin{equation}\label{pri:2.12}
|\nabla_x\mathbf{\Gamma}_\varepsilon(x,y)| + |\nabla_y\mathbf{\Gamma}_\varepsilon(x,y)|
\leq \frac{\widetilde{C}_k}{\big(1+\sqrt{\lambda}|x-y|\big)^k|x-y|^{d-1}}
\end{equation}
and
\begin{equation}\label{pri:2.13}
|\nabla_x\nabla_y\mathbf{\Gamma}_\varepsilon(x,y)|
\leq \frac{\widetilde{C}_k}{\big(1+\sqrt{\lambda}|x-y|\big)^k|x-y|^{d}},
\end{equation}
where $\widetilde{C}_k$ depends on
$\mu,\kappa,\tau,\lambda,m,d$ and $k$.
\end{thm}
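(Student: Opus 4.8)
The plan is to produce the decaying weight $(1+\sqrt{\lambda}|x-y|)^{-k}$ by feeding the iterated Caccioppoli inequality $\eqref{pri:2.1}$ into the interior $L^\infty$ and Lipschitz bounds of Theorems $\ref{thm:0.1}$ and $\ref{thm:0.2}$; this is the resolvent-estimate mechanism for operators carrying a positive zeroth order term. The one algebraic fact that keeps the bookkeeping clean is $1+\lambda t^{2}\geq\tfrac12(1+\sqrt{\lambda}t)^{2}$ (which is just $(\sqrt{\lambda}t-1)^{2}\geq0$), converting the decay factor $(1+\lambda R^{2})^{-N}$ produced by $\eqref{pri:2.1}$ into $(1+\sqrt{\lambda}R)^{-N}$ up to the constant $2^{N/2}$.

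First I would establish $\eqref{pri:2.11}$. Fix $x\neq y$, set $r=|x-y|$, and note that $z\mapsto\mathbf{\Gamma}_\varepsilon(z,x)$ is an $H^1$ weak solution of $\mathcal{L}_\varepsilon(\cdot)=0$ in $B(y,r/2)\subset\mathbb{R}^d\setminus\{x\}$. Since $|z-x|\geq r/2$ on that ball, the size estimate $\eqref{pri:2.3.3}$ gives $\int_{B(y,r/2)}|\mathbf{\Gamma}_\varepsilon(\cdot,x)|^{2}\leq Cr^{4-d}$; then $\eqref{pri:2.1}$ applied on $B(y,r/4)$ with iteration number $N$ yields
\[
\dashint_{B(y,r/8)}|\mathbf{\Gamma}_\varepsilon(\cdot,x)|^{2}
\;\leq\;\frac{C_N\, r^{2-2d}}{\lambda\,(1+\lambda r^{2})^{N}} .
\]
Combining this with the local boundedness estimate $\eqref{pri:2.2.1}$ (with $p=2$) and the algebraic inequality above, one gets $|\mathbf{\Gamma}_\varepsilon(y,x)|\leq C_N(\sqrt{\lambda}r)^{-1}(1+\sqrt{\lambda}r)^{-N}r^{2-d}$, which is $\leq C_N(1+\sqrt{\lambda}r)^{-N}r^{2-d}$ once $\sqrt{\lambda}r\geq1$; for $\sqrt{\lambda}r<1$ the plain bound $\eqref{pri:2.3.3}$ already gives $\eqref{pri:2.11}$ after absorbing a factor $2^{N}$. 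Taking $N=k$ yields $\eqref{pri:2.11}$ for $|\mathbf{\Gamma}_\varepsilon(y,x)|$; since the coefficients of $\mathcal{L}_\varepsilon^{*}$ are of the same class ($A^{*}$ is still VMO and periodic, the lower order terms still bounded), the identical argument for $\mathcal{L}_\varepsilon^{*}$ combined with ${^*\mathbf{\Gamma}}_\varepsilon(x,y)=[\mathbf{\Gamma}_\varepsilon(y,x)]^{t}$ gives the same bound for $|\mathbf{\Gamma}_\varepsilon(x,y)|$.

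For $\eqref{pri:2.12}$ I would invoke the interior Lipschitz estimate $\eqref{pri:0.5}$ with $f=F=0$; under $\eqref{a:4}$ the estimate $\eqref{pri:0.3}$ holds (Theorem $\ref{thm:0.1}$), hence $(\text{H}_2)$ and $\eqref{pri:0.5}$ by Theorem $\ref{thm:2.1}$, and $\eqref{pri:0.5}$ then reads $\|\nabla u_\varepsilon\|_{L^\infty(B)}\leq (C/R)\big(\dashint_{2B}|u_\varepsilon|^{2}\big)^{1/2}$ with $B=B(z_0,R)$ and constants independent of $\varepsilon,R$ whenever $\mathcal{L}_\varepsilon u_\varepsilon=0$ on $4B$. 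Applying it to $u(\cdot)=\mathbf{\Gamma}_\varepsilon(\cdot,y)$ on a ball centered at $x$ of radius a small multiple of $r$, and bounding the $L^2$ average by $\eqref{pri:2.11}$ (note $|z-y|\sim r$ on that ball), gives the $\nabla_x$ half of $\eqref{pri:2.12}$; the $\nabla_y$ half follows by the same argument run for $\mathcal{L}_\varepsilon^{*}$. For $\eqref{pri:2.13}$, a standard difference-quotient argument --- the coefficients of $\mathcal{L}_\varepsilon$ do not depend on the pole, and the gradient bounds just proved allow passage to the limit --- shows that $z\mapsto\nabla_{y}\mathbf{\Gamma}_\varepsilon(z,y)$ is again a weak solution of $\mathcal{L}_\varepsilon(\cdot)=0$ in $\mathbb{R}^d\setminus\{y\}$; applying $\eqref{pri:0.5}$ to it on a ball of radius $\sim r$ centered at $x$ and controlling its $L^2$ average by the just-proved $\eqref{pri:2.12}$ gives $\eqref{pri:2.13}$.

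The truly routine parts are the bookkeeping of constants and of the weight and the final substitutions. The delicate point is the first step: the decay in $\sqrt{\lambda}|x-y|$ must be squeezed entirely out of $\eqref{pri:2.1}$, which supplies only finitely iterated decay $(1+\lambda R^{2})^{-N}$ --- precisely why the statement is phrased ``for every $k$'' rather than as a single exponential bound --- and one must check that the interior $L^\infty$ and Lipschitz bounds used along the way carry constants independent of $\varepsilon$ and of the scale, which is the content of Theorems $\ref{thm:0.1}$ and $\ref{thm:0.2}$. Once $\eqref{pri:2.11}$ is secured, the gradient estimates follow the classical Avellaneda--Lin scheme and present no further obstacle beyond justifying the differentiation of $\mathbf{\Gamma}_\varepsilon$ in its variables.
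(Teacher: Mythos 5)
Your proposal is correct and follows essentially the same route as the paper: derive $\eqref{pri:2.11}$ by feeding the size bound $\eqref{pri:2.3.3}$ into the iterated Caccioppoli inequality $\eqref{pri:2.1}$ on a ball away from the pole and then applying the uniform local boundedness $\eqref{pri:2.2.1}$, then obtain $\eqref{pri:2.12}$ from $\eqref{pri:2.11}$ and the scale-invariant interior Lipschitz estimate (the paper cites $\eqref{pri:0.3}$ directly while you pass through $\eqref{pri:0.5}$ via Theorem $\ref{thm:2.1}$, which is equivalent), and finally obtain $\eqref{pri:2.13}$ by observing that $\nabla_y\mathbf{\Gamma}_\varepsilon(\cdot,y)$ is again a solution away from $y$. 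The paper's proof is a terse sketch; your write-up supplies the same computation with the bookkeeping made explicit.
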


\begin{proof}
The main idea may be found in \cite[Theorem 1.14]{SZW26}, and
the stated estimate $\eqref{pri:2.11}$
will directly be derived from the estimates $\eqref{pri:2.3.3}$ and $\eqref{pri:2.1}$, in which
we note that $\mathcal{L}_\varepsilon(\mathbf{\Gamma}_\varepsilon(\cdot,y)) = 0$ in
$\mathbb{R}^d\setminus\{y\}$. Then the estimate $\eqref{pri:2.12}$ is based upon
$\eqref{pri:2.11}$ and
\begin{equation*}
|\nabla_x\mathbf{\Gamma}_\varepsilon(x,y)|
\leq \frac{C_\tau}{r}
\Big(\dashint_{B(x,r)}|\mathbf{\Gamma}_\varepsilon(z,y)|^2dz\Big)^{\frac{1}{2}}
\end{equation*}
which has been done in the estimate $\eqref{pri:0.3}$, where $r=|x-y|/2$, and
$C_\tau$ is independent of $r$ and $\varepsilon$. The estimate related to
$|\nabla_y\mathbf{\Gamma}_\varepsilon(x,y)|$ may be obtained by noting that
$\nabla_y\mathbf{\Gamma}_\varepsilon(x,y) = \nabla_x[\mathbf{\Gamma}_\varepsilon^*(y,x)]^t$.
Furthermore, the estimate $\eqref{pri:2.13}$ may be derived by the observation that
$\mathcal{L}_\varepsilon(\nabla_y\mathbf{\Gamma}_\varepsilon(\cdot,y)) = 0$
in $\mathbb{R}^d\setminus\{y\}$, coupled with $\eqref{pri:2.12}$, and
we have completed the proof.
\end{proof}

\section{Elliptic systems with constant coefficients}\label{sec:3}

Let $\partial/\partial n_0 = n\cdot\widehat{A}\nabla$ be
the conormal derivative related to $\mathcal{L}_0$ on $\partial\Omega$.

\begin{thm}\label{thm:3.1}
Suppose that the coefficients of $\mathcal{L}_0$ satisfy the condition $\eqref{a:5}$ with
$\lambda\geq\max\{\widehat{\lambda},\mu\}$.
Then we have the following results:
\begin{itemize}
  \item for any $g\in L^2(\partial\Omega;\mathbb{R}^m)$, there exists a unique solution
  $u_0\in C^{\infty}(\Omega;\mathbb{R}^m)$ to the Dirichlet problem
  \begin{equation}\label{pde:3.2}
(\mathbf{DH_0})\left\{
\begin{aligned}
\mathcal{L}_0(u_0) &= 0 &\quad &\emph{in}~~\Omega, \\
      u_0 &= g &\emph{n.t.~}&\emph{on} ~\partial\Omega, \\
     (u_0)^* &\in L^2(\partial\Omega), &\quad &
\end{aligned}\right.
\end{equation}
satisfying the nontangential maximal function estimate
$\|(u_0)^*\|_{L^2(\partial\Omega)} \leq C\|g\|_{L^2(\partial\Omega)};$
  \item for any $f\in L^2(\partial\Omega;\mathbb{R}^m)$,
the Neumann problem
\begin{equation}\label{pde:3.1}
(\mathbf{NH_0})\left\{
\begin{aligned}
\mathcal{L}_0(u_0) &= 0 &\quad &\emph{in}~~\Omega, \\
 \frac{\partial u_0}{\partial n_0} &= f &\emph{n.t.~}&\emph{on} ~\partial\Omega, \\
 (\nabla u_0)^* &\in L^2(\partial\Omega) &\quad &
\end{aligned}\right.
\end{equation}
has a unique solution $u_0\in C^\infty(\Omega;\mathbb{R}^m)$, and there holds the estimate
$\|(\nabla u_0)^*\|_{L^2(\partial\Omega)} \leq C\|f\|_{L^2(\partial\Omega)};$
  \item for any $g\in H^1(\partial\Omega;\mathbb{R}^m)$,
  there exists a unique solution
  $u_0$ to the regular problem
  \begin{equation}\label{pde:3.3}
(\mathbf{RH_0})\left\{
\begin{aligned}
\mathcal{L}_0(u_0) &= 0 &\quad &\emph{in}~~\Omega, \\
      u_0 &= g &\emph{n.t.~}&\emph{on} ~\partial\Omega, \\
     (\nabla u_0)^* &\in L^2(\partial\Omega), &\quad &
\end{aligned}\right.
\end{equation}
and it satisfies the estimate
$\|(u_0)^*\|_{L^2(\partial\Omega)} +
\|(\nabla u_0)^*\|_{L^2(\partial\Omega)} \leq C\|g\|_{H^1(\partial\Omega)},$
\end{itemize}
where $C$ depends only on $\mu,\kappa,\lambda,d,m$ and $\Omega$.
\end{thm}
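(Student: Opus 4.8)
The plan is to solve the three boundary value problems by the method of layer potentials, regarding the lower order part of $\mathcal{L}_0$ as a compact perturbation of its leading operator $L_0 = -\mathrm{div}(\widehat A\nabla)$. First I would introduce the single layer potential
\[
\mathcal{S}_0(f)(x) = \int_{\partial\Omega}\mathbf{\Gamma}_0(x,y)\,f(y)\,dS_y
\]
and the double layer potential $\mathcal{D}_0$ built from the conormal derivative (in the second variable) of $\mathbf{\Gamma}_0^*(\cdot,x)$, where $\mathbf{\Gamma}_0$ is the fundamental solution of $\mathcal{L}_0$ given by Theorem \ref{thm:2.3.1}. Since $\widehat A$ is constant and $\widehat V,\widehat B,\widehat c$ are constant (hence smooth) as well, $\mathcal{L}_0$ satisfies both $(\text{H}_1)$ and $(\text{H}_2)$, so $|\mathbf{\Gamma}_0(x,y)|\le C|x-y|^{2-d}$, $|\nabla\mathbf{\Gamma}_0(x,y)|\le C|x-y|^{1-d}$ and $|\nabla_x\nabla_y\mathbf{\Gamma}_0(x,y)|\le C|x-y|^{-d}$; this is all the kernel information the layer potentials require.

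Next I would establish the uniform bounds $\|(\mathcal{S}_0 f)^*\|_{L^2(\partial\Omega)} + \|(\nabla\mathcal{S}_0 f)^*\|_{L^2(\partial\Omega)}\le C\|f\|_{L^2(\partial\Omega)}$ and $\|(\mathcal{D}_0 g)^*\|_{L^2(\partial\Omega)}\le C\|g\|_{L^2(\partial\Omega)}$. As indicated in the introduction, the nontangential maximal functions here are controlled by the corresponding radial maximal functions together with an interior Caccioppoli/De Giorgi--Nash--Moser estimate, following \cite{GaoW,SZW2}, which bypasses the Rellich identity entirely; the $L^p$-boundedness ($1<p<\infty$) of the boundary operator $\mathcal{K}_0$ (the principal-value double layer, and by duality the principal-value conormal derivative of $\mathcal{S}_0$) follows after decomposing $\mathbf{\Gamma}_0$ near the diagonal into a convolution kernel plus a smoother remainder and invoking the Coifman--McIntosh--Meyer theorem on Lipschitz graphs. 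I would then record the jump relations $(\mathcal{D}_0 g)^{\pm} = (\mp\tfrac12 I + \mathcal{K}_0)g$ and $(\partial(\mathcal{S}_0 f)/\partial n_0)^{\pm} = (\pm\tfrac12 I + \mathcal{K}_0^{*})f$ on $\partial\Omega$, together with the continuity of $\mathcal{S}_0\colon L^2(\partial\Omega)\to H^1(\partial\Omega)$.

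The crucial observation is the comparison with $L_0$. Applying Lemma \ref{lemma:2.3.1} with $\widetilde{\mathcal{L}} = -\mathrm{div}(\widehat A\nabla)$ --- so that $\widetilde{\mathcal{L}}$ and $\mathcal{L}_0$ share the leading coefficient and differ only in the bounded lower order terms $\widehat V,\widehat B,\widehat c+\lambda I$ --- shows that $\mathbf{\Gamma}_0 - \Gamma_{\widehat A}$ and one order of its derivatives are strictly less singular on the diagonal than the corresponding pieces of $\Gamma_{\widehat A}$. Consequently $\mathcal{K}_0 - \mathcal{K}_{\widehat A}$ and $\mathcal{K}_0^{*} - \mathcal{K}_{\widehat A}^{*}$ map $L^2(\partial\Omega;\mathbb{R}^m)$ into a fractional Sobolev space on $\partial\Omega$ and are therefore compact by Rellich--Kondrachov, and $\mathcal{D}_0 - \mathcal{D}_{\widehat A}$ is smoothing in the same sense. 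Since the classical theory for the symmetric constant-coefficient system $L_0$ (Dahlberg--Kenig--Verchota, Fabes--Kenig--Verchota; cf. \cite{DKV,Fabes,SZW3}) gives that $\pm\tfrac12 I + \mathcal{K}_{\widehat A}$ and $\pm\tfrac12 I + \mathcal{K}_{\widehat A}^{*}$ are isomorphisms of $L^2(\partial\Omega;\mathbb{R}^m)$ and that $\mathcal{S}_{\widehat A}$ maps $L^2(\partial\Omega)$ isomorphically onto $H^1(\partial\Omega)$, the operators $\pm\tfrac12 I + \mathcal{K}_0$, $\pm\tfrac12 I + \mathcal{K}_0^{*}$ and the trace of $\mathcal{S}_0$ are Fredholm of index zero, so their invertibility reduces to injectivity.

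Injectivity I would deduce from the uniqueness of $(\mathbf{DH_0})$, $(\mathbf{NH_0})$ and $(\mathbf{RH_0})$ in both $\Omega$ and the exterior domain $\Omega_-$, itself a consequence of the coercivity estimate $\eqref{pri:2.2.2}$ (valid on $\Omega$ and on $\Omega_-$ by the remark following it) combined with a Green identity made legitimate by the $L^2$ nontangential maximal function control; the hypothesis $\lambda\ge\max\{\widehat\lambda,\mu\}$ is exactly what forces the relevant interior and exterior kernels to vanish. Representing $u_0 = \mathcal{D}_0\big((-\tfrac12 I + \mathcal{K}_0)^{-1}g\big)$ for Dirichlet, $u_0 = \mathcal{S}_0\big((\tfrac12 I + \mathcal{K}_0^{*})^{-1}f\big)$ for Neumann, and $u_0 = \mathcal{S}_0\big((\,\text{trace of }\mathcal{S}_0\,)^{-1}g\big)$ for the regularity problem then yields existence together with the claimed nontangential maximal function estimates, while $u_0\in C^\infty(\Omega;\mathbb{R}^m)$ is immediate from interior elliptic regularity since all coefficients of $\mathcal{L}_0$ are constant. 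The step I expect to be the main obstacle is the nontangential maximal function estimate for the layer potentials of $\mathcal{L}_0$ itself: the first- and zeroth-order terms destroy both the scaling invariance and the self-adjointness on which the usual Rellich-identity argument rests, so one must genuinely run the radial-maximal-function scheme of \cite{GaoW} and exploit the positivity furnished by $\lambda$ being large.
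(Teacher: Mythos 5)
Your plan is sound and lands on essentially the same structural points as the paper --- layer potentials for $\mathcal{L}_0$, kernel estimates deduced from comparing $\mathbf{\Gamma}_0$ with $\mathbf{\Gamma}_{\widehat A}$ via the identity in Lemma \ref{lemma:2.3.1}, compactness of $\mathcal{K}_0-\mathcal{K}_{\widehat A}$ by the gain of one order of smoothness (Lemma \ref{lemma:3.6}), Fredholm index zero inherited from $\pm\tfrac12 I+\mathcal{K}_{\widehat A}$, and the standard representation formulas --- but two steps are organized differently from the paper, and they are worth contrasting.

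First, you derive injectivity of $\pm\tfrac12 I+\mathcal{K}_0$ (and of $\mathcal{S}_0$ as a map $L^2\to H^1$) from uniqueness of the interior and exterior boundary value problems, using the coercivity \eqref{pri:2.2.2} and the decay of the single layer potential. This is the classical route and it is correct here. The paper instead proves the quantitative Rellich-type bound $\|f\|_{L^2(\partial\Omega)}\le C\|(\pm\tfrac12 I+\mathcal{K}_0)f\|_{L^2(\partial\Omega)}$ directly (Theorem \ref{thm:3.3}, estimate \eqref{pri:3.11}), via the localization Lemmas \ref{lemma:3.4}--\ref{lemma:3.5}, which in turn rest on the GaoW-based nontangential maximal function estimates of Theorems \ref{thm:3.2} and \ref{thm:3.4}. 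The two routes give the same Theorem \ref{thm:3.1}, but the paper's gives an explicit constant, which is what it actually needs downstream (the continuity argument in Section \ref{sec:4} requires uniform control of the inverse trace operator along the interpolation family $\mathcal{L}^t$; an open-mapping constant would not travel through that argument). Your version is shorter if the goal were only the present theorem, but the quantitative bound has to be produced anyway somewhere in the paper.

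Second, a minor overstatement: the nontangential maximal function bound for the double layer potential of $\mathcal{L}_0$ is \emph{not} obtained in the paper by running the radial-maximal-function scheme on $\mathcal{D}_0$. The radial maximal function device of Lemma \ref{lemma:3.7} plus \eqref{pri:3.2.1} is used to prove $(\nabla u_0)^*\in L^2$ for \emph{solutions} of the Neumann and regularity problems (Theorems \ref{thm:3.2}, \ref{thm:3.4}), after the lower order terms are moved to the right-hand side and the problem is split as $u_0=v+w$ with $w$ solving a GaoW problem for $L_0$. The bound $\|(\mathcal{D}_0 f)^*\|_{L^p}\le C\|f\|_{L^p}$ (Lemma \ref{lemma:3.9}) comes from a pointwise comparison of $\mathcal{D}_0 f$ with $\mathcal{D}_{\widehat A}f$ using the kernel estimate \eqref{pri:3.4}, and the bound on $(\nabla\mathcal{S}_0 f)^*$ is read off the truncated singular integral bound (Theorem \ref{lemma:3.3}) and the jump relation (Lemma \ref{lemma:3.1}), not a radial-maximal argument. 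You would do well to keep these three mechanisms distinct, because only the first genuinely replaces the Rellich identity, while the latter two are ordinary Calder\'on--Zygmund technology on the kernel difference.
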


\begin{lemma}
Let $\lambda\geq\widehat{\lambda}$, and $R>0$. Assume that $u_0$ is a solution of
$\mathcal{L}_0(u_0) = 0$ in $B(P,2R)$. Then for any integer $k>0$ and
any multi index $l$ we have the interior estimate
\begin{equation}\label{pri:3.15}
 |\nabla^l u_0(P)| \leq \frac{C_{l,k}}{(1+\sqrt{\lambda}R)^kR^{|l|}}
 \Big(\dashint_{B(P,2R)}|u_0|^2 dx\Big)^{1/2},
\end{equation}
where  $C_{l,k}$ depends on $\mu,d,m,l$ and $k$.
\end{lemma}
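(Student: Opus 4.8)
The plan is to reduce the general multi-index $l$ to the case $l=0$ by differentiating the equation, to manufacture the decay factor $(1+\sqrt{\lambda}R)^{-k}$ out of Caccioppoli's inequality \eqref{pri:2.1}, and to recover the factor $R^{-|l|}$ from the scale-invariant interior estimates already at our disposal. First, since $\mathcal{L}_0$ has \emph{constant} coefficients $\widehat{A},\widehat{V},\widehat{B},\widehat{c}$, differentiating $\mathcal{L}_0(u_0)=0$ shows that every derivative $\nabla^l u_0$ is again a (smooth) solution of $\mathcal{L}_0(\nabla^l u_0)=0$ in $B(P,2R)$. Moreover $\mathcal{L}_0$ satisfies \eqref{a:1} and \eqref{a:3} (with $\widehat{A},\widehat{V},\widehat{B},\widehat{c}$ in place of $A,V,B,c$) and $\lambda\ge\widehat{\lambda}$ exceeds the associated coercivity threshold, so Lemma~\ref{lemma:2.2} (hence \eqref{pri:2.1} and \eqref{pri:2.6}) and the local boundedness estimate \eqref{pri:2.2.1} apply verbatim to $\mathcal{L}_0$; and since $\lambda\ge\widehat{\lambda}$, the normalized quantities $\|\widehat{V}\|/\sqrt{\lambda}$, $\|\widehat{B}\|/\sqrt{\lambda}$, $\|\widehat{c}\|/\lambda$ are bounded by a constant depending only on $\mu,m,d$, so the constants in all these estimates depend only on $\mu,m,d$.

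Next I would extract the decay at the coarsest scale. Applying \eqref{pri:2.1} to $\mathcal{L}_0$ on $B(P,R)$ and discarding the gradient term gives
\begin{equation*}
\dashint_{B(P,R)}|u_0|^2 \le \frac{C_k}{\lambda R^2\,(1+\lambda R^2)^k}\dashint_{B(P,2R)}|u_0|^2 .
\end{equation*}
When $\sqrt{\lambda}R\ge1$ one has $\lambda R^2(1+\lambda R^2)^k\ge 2^{-k-2}(1+\sqrt{\lambda}R)^{2k+2}$, whereas when $\sqrt{\lambda}R<1$ the factor $(1+\sqrt{\lambda}R)^{k}$ is comparable to $1$ and it suffices to use the trivial bound $\dashint_{B(P,R)}|u_0|^2\le 2^d\dashint_{B(P,2R)}|u_0|^2$; since $k>0$ is arbitrary, in either case
\begin{equation*}
\dashint_{B(P,R)}|u_0|^2 \le \frac{C_k}{(1+\sqrt{\lambda}R)^{k}}\dashint_{B(P,2R)}|u_0|^2 .
\end{equation*}

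It then remains to plug this into the standard interior chain for the constant-coefficient system. By standard interior estimates for $\mathcal{L}_0$ --- obtained, e.g., by iterating Caccioppoli's inequality applied successively to the solutions $u_0,\nabla u_0,\dots,\nabla^{|l|-1}u_0$ over nested balls between $B(P,R/2)$ and $B(P,R)$, or by interior $H^{|l|}$-estimates followed by Sobolev embedding --- one obtains $\big(\dashint_{B(P,R/2)}|\nabla^l u_0|^2\big)^{1/2}\le C_l R^{-|l|}\big(\dashint_{B(P,R)}|u_0|^2\big)^{1/2}$; and applying \eqref{pri:2.2.1} to the smooth solution $\nabla^l u_0$ on the ball $B(P,R/4)$ gives $|\nabla^l u_0(P)|\le C\big(\dashint_{B(P,R/2)}|\nabla^l u_0|^2\big)^{1/2}$. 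Chaining these two inequalities with the preceding display and taking square roots produces \eqref{pri:3.15}, with $C_{l,k}$ depending only on $\mu,m,d,l$ and $k$.

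I do not expect a genuine obstacle here: the statement is a routine consequence of the machinery already assembled. The one point that needs a little care is the bookkeeping of the $\lambda$-dependence --- extracting the decay at the coarsest ball $B(P,2R)$, where $1+\sqrt{\lambda}R$ is largest, and converting the Caccioppoli weight $(1+\lambda R^2)^{-k}$ into $(1+\sqrt{\lambda}R)^{-k}$ via the elementary case analysis above --- together with verifying that $\lambda\ge\widehat{\lambda}$ forces every constant to be independent of $\lambda$ and $\kappa$, so that $C_{l,k}$ indeed depends only on $\mu,m,d,l,k$ as claimed.
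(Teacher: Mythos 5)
Your proof is correct and follows essentially the same route as the paper's: the factor $R^{-|l|}$ comes from iterated Caccioppoli/interior estimates using that each $\nabla^j u_0$ is again a solution of the constant-coefficient system, the pointwise bound from a local $L^\infty$ estimate, and the decay factor $(1+\sqrt{\lambda}R)^{-k}$ from iterating \eqref{pri:2.1} on the outermost annulus. The only cosmetic difference is that the paper first normalizes to $R=1$ via translation and dilation and obtains the $L^\infty$ bound from an $H^k$-Sobolev embedding, whereas you work directly at scale $R$ and invoke \eqref{pri:2.2.1}; the elementary case split converting $(1+\lambda R^2)^{-k}$ into $(1+\sqrt{\lambda}R)^{-k}$ is a point the paper glosses over but you handle correctly.
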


\begin{proof}
By the translation and dilation we may assume $P=0$ and $R=1$. Then it follows
from the Sobolev theorem and Caccioppoli's inequality $\eqref{pri:2.1}$ (only replaying $\lambda_0$ by
$\widehat{\lambda}$) that
\begin{equation}\label{f:3.19}
|\nabla^lu_0(0)| \leq \|\nabla^l u_0\|_{L^\infty(B(0,1/2))}
\leq  C\|u_0\|_{H^k(B(0,1/2))} \leq C \|u_0\|_{L^2(B(0,3/2))},
\end{equation}
where $k>|l|+d/2+1$, and $C$ depends only on $\mu,d,m$ and $l$. We mention that
$\nabla^l u_0$ for any $l$ will still be a solution and by this observation we may
repeat using Caccioppoli's inequalities in order.
Let $v(y) = u_0(Ry)$
with $y\in B(0,2)$, and in view of $\eqref{f:3.19}$ we may have
\begin{equation*}
|\nabla^lu_0(0)| \leq \frac{C_l}{R^{|l|}}\Big(\dashint_{B(0,(3/2)R)}|u_0|^2 dx\Big)^{1/2}
\leq \frac{C_l}{R^{|l|}}\Big(\dashint_{B(0,2R)}|u_0|^2 dx\Big)^{1/2}.
\end{equation*}
This together with the estimate $\eqref{f:2.1}$ gives the stated estimate
$\eqref{pri:3.15}$, and we are done.
\end{proof}

Let $\mathbf{\Gamma}_{0}(x,y) = \mathbf{\Gamma}_{0}(x-y)$
denote the fundamental solution of $\mathcal{L}_0$ with pole at $y\in\mathbb{R}^d$, and
it follows from the estimate $\eqref{pri:3.15}$ that
\begin{equation}\label{pri:3.0}
|\nabla^l\mathbf{\Gamma}_{0}(x-y)| \leq
\frac{C_{l,k}}{(1+\sqrt{\lambda}|x-y|)^k|x-y|^{d-2+|l|}}.
\end{equation}

\begin{remark}
\emph{If $m=1$, the fundamental solution may be formulated by
\begin{equation}\label{eq:3.1}
\mathbf{\Gamma}_{0}(x) = -\frac{e^{-\frac{1}{2}(\widehat{B}-\widehat{V})\widehat{A}^{-1} x}}{2^{d/2}\pi^{d/2}
\sqrt{\text{det}\widehat{A}}}\cdot\Big(\frac{x\widehat{A}^{-1}x^t}{L}\Big)
\cdot K_{d/2 -1}(\sqrt{L\cdot x\widehat{A}^{-1}x^t}),
\end{equation}
and $L= \lambda+\frac{1}{4}(\widehat{V}-\widehat{B})
\widehat{A}^{-1}(\widehat{V}-\widehat{B})^{t}$ and
$K_{d/2-1}$ is the modified Hankel function, whose details may be found
in \cite[pp.841-842]{AL} or \cite[pp.167-168]{NOPW}.
The estimate $\eqref{pri:3.0}$ may be derived from $\eqref{eq:3.1}$ straightforwardly
(see \cite[pp.843]{AL}) in such the case.}
\end{remark}

\begin{lemma}[comparing lemma I]\label{lemma:3.2}
Let $d\geq 3$ and  $l\geq 1$, and assume $\lambda\geq\max\{\widehat{\lambda},\mu\}$.
Then we have
\begin{equation}\label{pri:3.4}
\big|\nabla^l [\mathbf{\Gamma}_0(x-y) - \mathbf{\Gamma}_{\widehat{A}}(x-y)]\big|
\leq C|x-y|^{3-d-l}
\end{equation}
for any $x,y\in\mathbb{R}^d$ with
$x\not=y$,
where $C$ depends only on $\mu,\kappa,\lambda,d,m$ and $l$.
\end{lemma}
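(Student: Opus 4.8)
The natural tool is the comparison identity of Lemma $\ref{lemma:2.3.1}$. I would apply $\eqref{eq:5.1}$ with $\mathcal{L}=\mathcal{L}_0$ (coefficients $\widehat A,\widehat V,\widehat B,\widehat c,\lambda$) and with $\widetilde{\mathcal{L}}$ the constant-coefficient operator obtained from $\mathcal{L}_0$ by deleting the corrector terms $\widehat V,\widehat B,\widehat c$, whose fundamental matrix is $\mathbf{\Gamma}_{\widehat A}$. Since $\widetilde{\mathcal{L}}$ and $\mathcal{L}_0$ share both the leading coefficient $\widehat A$ and the zeroth-order part $\lambda I$, the $\widetilde a-a$ integral in $\eqref{eq:5.1}$ disappears and the $\lambda$'s cancel inside the $\mathbf{c}$-difference; hence, with $y$ fixed and $r=|x-y|$, the difference
\begin{equation*}
w(x):=\mathbf{\Gamma}_0(x-y)-\mathbf{\Gamma}_{\widehat A}(x-y)
\end{equation*}
is a finite sum of integrals of the schematic form $\int_{\mathbb{R}^d}\nabla^{a}\mathbf{\Gamma}_{\widehat A}(z-y)\,M\,\nabla^{b}\mathbf{\Gamma}_0(x-z)\,dz$ with $a+b\le1$ and $M$ a constant coefficient drawn from $\widehat V,\widehat B,\widehat c$; equivalently, $w$ solves the constant-coefficient equation $-\mathrm{div}(\widehat A\nabla w)+\lambda w=\mathrm{div}(\widehat V\mathbf{\Gamma}_0(\cdot-y))-\widehat B\nabla\mathbf{\Gamma}_0(\cdot-y)-\widehat c\,\mathbf{\Gamma}_0(\cdot-y)$ on $\mathbb{R}^d$. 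The only inputs are the pointwise bounds $\eqref{pri:3.0}$ for the derivatives of $\mathbf{\Gamma}_0$ and their obvious analogue for $\mathbf{\Gamma}_{\widehat A}$.

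Fix $l\ge1$ and apply $\nabla_x^{l}$ to the representation; the derivatives land only on the factors $\mathbf{\Gamma}_0(x-z)$. Since the case $r\ge1$ is immediate from the exponential decay in $\eqref{pri:3.0}$, assume $r\le1$ and split $\mathbb{R}^d=\{|z-y|<r/2\}\cup\{|z-x|<r/2\}\cup(\text{the rest})$. On $\{|z-y|<r/2\}$ one has $|x-z|\approx r$, so $|\nabla_x^{l}\nabla^{b}\mathbf{\Gamma}_0(x-z)|\lesssim r^{2-d-b-l}$, while $\nabla^{a}\mathbf{\Gamma}_{\widehat A}(z-y)$ carries the locally integrable singularity $|z-y|^{2-d-a}$ with $a\le1$; integrating gives $r^{2-d-b-l}\cdot r^{2-a}\le r^{3-d-l}$. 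On $(\text{the rest})$, where both distances exceed $r/2$, a radial integration combined with the decay of the $\mathbf{\Gamma}_0$-factor yields the same power (the only borderline sub-case, $d=3$ and $l=1$ for the $\widehat c$-term, produces a harmless $\log(1/r)\lesssim r^{-1}$). The delicate region is $\{|z-x|<r/2\}$, where $|z-y|\approx r$ but $\nabla_x^{l}\nabla^{b}\mathbf{\Gamma}_0(x-z)\sim|z-x|^{2-d-b-l}$ fails to be absolutely integrable near $z=x$ as soon as $b+l\ge2$: there I would subtract from the smooth factor $\mathbf{\Gamma}_{\widehat A}(\cdot-y)$ its Taylor polynomial at $z=x$ of the order needed for convergence (this factor is $C^{\infty}$ on the region, with $|\nabla^{j}\mathbf{\Gamma}_{\widehat A}(z-y)|\lesssim r^{2-d-j}$), so that the remainder gains a power of $|z-x|$ per subtracted order and the integral scales like $r^{3-d-l}$, while the finitely many Taylor terms contribute boundary integrals over $\partial B(x,r/2)$ that are again $O(r^{3-d-l})$. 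Adding the three contributions gives $\eqref{pri:3.4}$.

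The main obstacle is exactly this near-diagonal analysis: justifying the differentiated singular integrals as principal values and carrying out the Calder\'on--Zygmund-type subtraction so that every resulting piece scales like $r^{3-d-l}$. It is also the reason the statement excludes $l=0$: then the $\widehat c$-integral is only $O(r^{4-d})$, hence a logarithm in $d=3$, a genuine loss at the diagonal. The region-by-region bookkeeping for the other two pieces and the passage to exponential decay at scale $\lambda^{-1/2}$ are routine once $\eqref{pri:3.0}$ is in hand, and the representation of $w$ needs no separate justification, being the already-proven identity $\eqref{eq:5.1}$.
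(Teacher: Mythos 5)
Your choice of comparison operator and your claim about $\lambda$-cancellation are inconsistent with the statement being proved. The quantity on the right in \eqref{pri:3.4} is $\mathbf{\Gamma}_{\widehat{A}}$, the fundamental solution of the homogeneous operator $L_0=-\mathrm{div}(\widehat{A}\nabla)$, which carries \emph{no} zeroth-order term. So in the identity \eqref{eq:5.1} one must take $\widetilde{\mathcal{L}}=L_0$, and then $\widetilde{\mathbf{c}}-\mathbf{c}=-(\widehat{c}+\lambda I)=-\widehat{\mathbf{c}}$; the $\lambda$'s do \emph{not} cancel (the paper's representation keeps $\widehat{\mathbf{c}}$ explicitly). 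Relatedly, the PDE you assign to $w=\mathbf{\Gamma}_0-\mathbf{\Gamma}_{\widehat A}$ is incorrect: the correct equation is $L_0(w)=\mathrm{div}(\widehat V\mathbf{\Gamma}_0)-\widehat B\nabla\mathbf{\Gamma}_0-\widehat{\mathbf{c}}\,\mathbf{\Gamma}_0$ with no $\lambda w$ on the left; if you insist on writing $L_0+\lambda I$ on the left then $-\lambda\mathbf{\Gamma}_{\widehat A}$ must appear on the right. These are not fatal for the estimates (they only change $\widehat c$ into the still-constant $\widehat{\mathbf{c}}$), but they show the setup is not carefully matched to the statement.

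More substantively, your treatment of $l\ge 2$ departs from the paper and is left as an unresolved sketch. You propose differentiating the integral representation $l$ times and coping with the resulting nonintegrable singularity at $z=x$ by Taylor-subtracting the smooth factor and tracking boundary terms on $\partial B(x,r/2)$; you yourself identify this Calder\'on--Zygmund bookkeeping as ``the main obstacle,'' and you do not carry it out. The paper avoids this entirely: it observes that $G^x=\mathbf{\Gamma}_{\widehat A}-\mathbf{\Gamma}_0$ solves a constant-coefficient elliptic equation whose source is built from $\mathbf{\Gamma}_0$; since the coefficients are constant, $\nabla^{l-1}G^x$ solves the same equation with a differentiated source, and a single interior $W^{1,\infty}$ estimate on $B(y,r/4)$ (the inequality \eqref{f:3.9}) gives $|\nabla^l G^x(y)|$ from $|\nabla^{l-1}G^x|$ plus $\|\nabla^l\mathbf{\Gamma}_0\|_{L^q}$, yielding \eqref{pri:3.4} by induction on $l$ starting from the $l=1$ case. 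Your $l=1$ computation (the region decomposition into $B(x,r/4)$, $B(y,3r/4)$, and the complement) does agree with the paper's treatment of $T_1,T_2$, and your observation about why $l=0$ is excluded is sound. To make your route rigorous for $l\ge2$ you would have to actually justify the principal-value limits, verify the cancellations among Taylor terms, and estimate the boundary integrals --- substantially more work than the induction the paper uses, and work that you have not done.
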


\begin{proof}
Set $\mathcal{L} = \mathcal{L}_0$ and $\widetilde{\mathcal{L}}
=-\text{div}(\widehat{A}\nabla)$ in Lemma $\ref{lemma:2.3.1}$. Then it follows
from the identity $\eqref{eq:5.1}$ that
\begin{equation*}
\begin{aligned}
&\mathbf{\Gamma}_0^{\alpha\delta}(x-y) - \mathbf{\Gamma}_{\widehat{A}}^{\alpha\delta}(x-y) \\
&\qquad\quad=\int_{\mathbb{R}^d} \Bigg\{\big(\widehat{V}_i^{\beta\gamma} - \widehat{B}_i^{\beta\gamma}\big)
\frac{\partial \mathbf{\Gamma}_{\widehat{A}}^{\gamma\delta}}{\partial z_i}(z-y)
 \mathbf{\Gamma}_{0}^{\alpha\beta}(x-z)
-\widehat{\mathbf{c}}^{\beta\gamma}
\mathbf{\Gamma}_{\widehat{A}}^{\gamma\delta}(z-y)
\mathbf{\Gamma}_0^{\alpha\beta}(x-z) \Bigg\} dz,
\end{aligned}
\end{equation*}
in which $\widehat{\mathbf{c}}= \widehat{c}+\lambda I$, and
noting the decay estimates $\eqref{pri:3.0}$ we also employ integration by parts.
Then differentiating both sides of the above equation with respect to $x$  gives
\begin{equation}
\begin{aligned}
&\nabla_{x_k}\mathbf{\Gamma}_0^{\alpha\delta}(x-y) -
\nabla_{x_k}\mathbf{\Gamma}_{\widehat{A}}^{\alpha\delta}(x-y) \\
&\qquad\quad=\int_{\mathbb{R}^d} \Bigg\{\big(\widehat{V}_i^{\beta\gamma} - \widehat{B}_i^{\beta\gamma}\big)
\frac{\partial \mathbf{\Gamma}_{\widehat{A}}^{\gamma\delta}}{\partial z_i}(z-y)
 \frac{\partial \mathbf{\Gamma}_{0}^{\alpha\beta}}{\partial x_k}(x-z)
-\widehat{\mathbf{c}}^{\beta\gamma}
\mathbf{\Gamma}_{\widehat{A}}^{\gamma\delta}(z-y)
\frac{\partial \mathbf{\Gamma}_0^{\alpha\beta}}{\partial x_k}(x-z) \Bigg\} dz,
\end{aligned}
\end{equation}
and this implies
\begin{equation}\label{f:3.8}
\begin{aligned}
\big|\nabla_{x}\mathbf{\Gamma}_0(x-y)
- \nabla_{x}\mathbf{\Gamma}_{\widehat{A}}(x-y)\big|
&\leq C\underbrace{\int_{\mathbb{R}^d} \frac{dz}{|z-y|^{d-1}|x-z|^{d-1}}}_{T_1}
+ C\underbrace{\int_{\mathbb{R}^d}
\frac{|\nabla_x\mathbf{\Gamma}_0(x-z)|dz}{|z-y|^{d-2}}}_{T_2} \\
&\leq C|x-y|^{2-d}.
\end{aligned}
\end{equation}
The computations for $T_1$ and $T_2$ could be done as follows. Let $r=|x-y|>0$ and
$Q=(x+y)/2\in\mathbb{R}^d$, then
the integral domain $\mathbb{R}^d$ decomposes into $B(x,r/4)$, $B(y,3r/4)$ and
$\mathbb{R}^d\setminus(B(x,r/4)\cup B(y,3r/4))$.
\begin{equation}
\begin{aligned}
T_1 &\leq \bigg\{\int_{B(x,r/4)}+\int_{B(y,3r/4)}
+\int_{\mathbb{R}^d\setminus(B(x,r/4)\cup B(y,3r/4))}\bigg\}
\frac{dz}{|x-z|^{d-1}|z-y|^{d-1}}\\
&\leq Cr^{1-d}\Big\{\int_0^{r/4} ds + \int_0^{3r/4} ds\Big\}
+ C\int_{r/4}^\infty\frac{ds}{s^{d-1}}\\
&\leq Cr^{2-d},
\end{aligned}
\end{equation}
and
\begin{equation}
\begin{aligned}
T_2 \leq Cr^{2-d}\int_0^{r/4} ds + Cr^{1-d}\int_0^{3r/4} sds
+ C\lambda^{-\frac{1}{2}}\int_{r/4}^\infty\frac{ds}{s^{d-1}}
\leq Cr^{2-d},
\end{aligned}
\end{equation}
where we use the geometry facts in Remark $\ref{remark:3.1.1}$.

We now turn to study the cases $l\geq 2$, and Fixed $x\in\mathbb{R}^d$,
let $G^x(y) = \mathbf{\Gamma}_{\widehat{A}}(x-y)-\mathbf{\Gamma}_0(x-y)$, and
without loss of generality, consider the following equation
\begin{equation*}
 L_0(G^x) = \text{div}(\widehat{A}\nabla\mathbf{\Gamma}_{0})
 \qquad \text{in}\quad B(y,r/2).
\end{equation*}
Since its coefficients are constant,
taking derivatives of $(l-1)$ order, we may have
\begin{equation*}
 L_0(\nabla^{l-1}G^x) = \text{div}(\widehat{A}\nabla^l\mathbf{\Gamma}_{0})
 \qquad \text{in}\quad B(y,r/2).
\end{equation*}
Note  that $|z-x|\geq (r/2)$ for any $z\in B(y,r/4)$.

Hence, it follows from an interior estimate that
\begin{equation}\label{f:3.9}
\begin{aligned}
|\nabla^l G^x(y)|
&\leq \frac{C}{r}\dashint_{B(y,r/4)}|\nabla^{l-1} G^x(z)| dz
+ Cr \bigg(\dashint_{B(y,r/4)}\big(|\nabla^l\mathbf{\Gamma}_{0}(x-z)|^q dz\bigg)^{\frac{1}{q}} \\
&\leq \frac{C}{r}\dashint_{B(y,r/4)}|\nabla^{l-1} G^x(z)| dz + Cr^{3-d-l}
\end{aligned}
\end{equation}
with $q>d$, where we employ the estimate $\eqref{pri:3.0}$ in the last inequality.
Due to the earlier estimate $\eqref{f:3.8}$,
it is not hard to see that $|\nabla^2 G^x|\leq Cr^{1-d}$ when $l=2$,
and the stated estimate $\eqref{pri:3.4}$ follows from
mathematical induction on $l$ through $\eqref{f:3.9}$. We have completed the proof.
\end{proof}

\begin{remark}\label{remark:3.1.1}
\emph{Let $x,y\in\mathbb{R}^d$ with $x\not=y$. Set $r=|x-y|$, and $Q=(x+y)/2\in\mathbb{R}^d$, and
we refer the reader to the following geometry facts:
\begin{enumerate}
  \item[(1)] $|z-y|>(3r/4)$ whenever $z\in B(x,r/4)$;
  \item[(2)] $|x-z|>(r/4)$ if $z\in B(y,3r/4)$;
  \item[(3)] $\frac{1}{3}|Q-z|<|x-z|\leq 3|Q-z|$ and $\frac{3}{5}|Q-z|<|z-y|\leq 3|Q-z|$ whenever
  $z\in\mathbb{R}^d\setminus (B(x,r/4)\cup B(y,3r/4))$, which means
  $|x-z|\approx |z-Q|$ and $|y-z|\approx |z-Q|$ in such the case;
  \item[(4)] $\mathbb{R}^d\setminus (B(x,r/4)\cup B(y,3r/4)) \subset \{z\in\mathbb{R}^d:|z-Q|>(r/4)\}$.
\end{enumerate}}
\end{remark}

\subsection{Nontangential maximal function estimates}

Let $\mathrm{M}_{\partial\Omega}$ be the Hardy-Littlewood maximal operator on $\partial\Omega$.
We define the radial maximal function $\mathcal{M}(h)$ on $\partial\Omega$ as
\begin{equation*}
\mathcal{M}(h)(Q) = \sup\big\{|h(T_r(Q))|:0<r\leq R_0/100\big\},
\end{equation*}
where $T_r:\partial\Omega\to\partial\Sigma_r$ are bi-Lipschitz maps, and
$\Sigma_r=\{x\in\Omega:\text{dist}(x,\partial\Omega)>r\}$.
We refer the reader to
\cite[Remark 2.18]{QX2} for the details.

\begin{lemma}\label{lemma:3.7}
Assume that $u_0$ satisfy $\mathcal{L}_0(u_0) = 0$ in $\Omega$, then
for any $Q\in\partial\Omega$, we have
\begin{equation}\label{pri:3.2}
 (\nabla u_0)^*(Q)
 \leq C\mathrm{M}_{\partial\Omega}(\mathcal{M}(\nabla u_0))(Q),
\end{equation}
where $C$ depends on $\mu,\kappa,d,m$ and $\Omega$.
\end{lemma}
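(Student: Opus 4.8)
The plan is to combine an interior estimate for $\mathcal{L}_0$ with the foliation of a collar neighbourhood of $\partial\Omega$ by the level sets $\partial\Sigma_r$, $0<r\le R_0/100$, via the bi-Lipschitz maps $T_r$. Fix $Q\in\partial\Omega$ and $y\in\Lambda_{N_0}^{+}(Q)$, and put $\delta=\mathrm{dist}(y,\partial\Omega)$; then $B(y,\delta)\subset\Omega$, and choosing $Q'\in\partial\Omega$ with $|y-Q'|=\delta$ the cone condition $|y-Q|\le N_0\delta$ gives $|Q-Q'|\le(N_0+1)\delta$. Since the coefficients $\widehat{A},\widehat{V},\widehat{B},\widehat{c}$ of $\mathcal{L}_0$ are constant, each partial derivative $\partial_j u_0$ is again a solution of $\mathcal{L}_0(\cdot)=0$ in $\Omega$; applying the local boundedness estimate $\eqref{pri:2.2.1}$ for $\mathcal{L}_0$ to the solution $\nabla u_0$ with exponent $p=1$ (the $L^2$-average version is immediate from $\eqref{pri:3.15}$ and is then self-improved to an $L^1$ average in the usual way) yields
\[
|\nabla u_0(y)|\ \le\ C\,\dashint_{B(y,\delta/2)}|\nabla u_0(x)|\,dx .
\]

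Next I would convert this solid average into a surface average over $\partial\Omega$. Suppose first that $3\delta/2\le R_0/100$. Every $x\in B(y,\delta/2)$ has $\delta/2<\mathrm{dist}(x,\partial\Omega)<3\delta/2$, so $x=T_r(P)$ for some $r\asymp\delta$ and some $P\in\partial\Omega$ with $|P-Q'|\le C\delta$, hence $|P-Q|\le C'\delta$; moreover $|\nabla u_0(x)|=|\nabla u_0(T_r(P))|\le\mathcal{M}(\nabla u_0)(P)$ directly from the definition of $\mathcal{M}$. Using the change of variables $x\mapsto(r,P)$, whose Jacobian is bounded above and below by constants depending only on the Lipschitz character of $\Omega$ (see \cite[Remark 2.18]{QX2}), together with the coarea formula,
\[
\dashint_{B(y,\delta/2)}|\nabla u_0|\,dx
\ \le\ \frac{C}{\delta^{d}}\int_{B(Q,C'\delta)\cap\partial\Omega}\ \int_{\delta/2}^{3\delta/2}\mathcal{M}(\nabla u_0)(P)\,dr\,dS(P)
\ \le\ \frac{C}{\delta^{\,d-1}}\int_{B(Q,C'\delta)\cap\partial\Omega}\mathcal{M}(\nabla u_0)\,dS .
\]
Since the surface ball $B(Q,C'\delta)\cap\partial\Omega$ has measure comparable to $\delta^{d-1}$, the right-hand side is at most $C\,\mathrm{M}_{\partial\Omega}(\mathcal{M}(\nabla u_0))(Q)$, and taking the supremum over the admissible $y$ establishes $\eqref{pri:3.2}$ for this range of heights.

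It then remains to treat the ``deep interior'' points $y\in\Lambda_{N_0}^{+}(Q)$ with $\mathrm{dist}(y,\partial\Omega)\gtrsim R_0$. These lie in a fixed compact subset of $\Omega$, so a Harnack-chain argument --- or simply a finite chain of balls joined to one reaching depth $\asymp R_0$, using $\eqref{pri:2.2.1}$ at each step --- bounds $|\nabla u_0(y)|$ by $C\dashint_{B(z_0,cR_0)}|\nabla u_0|$ for some $z_0$ with $\mathrm{dist}(z_0,\partial\Omega)\asymp cR_0$, which is already covered by the collar estimate above (now with the constant also depending on $\Omega$). The main obstacle is the second step: one must make the change of variables through the family $\{T_r\}$ precise, i.e. check that a ball of radius $\asymp\delta$ centred at height $\asymp\delta$ is swept out as $P$ ranges over a surface ball of radius $\asymp\delta$ and $r$ over an interval of length $\asymp\delta$, with all the relevant Jacobians uniformly comparable. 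This is exactly the geometry of the Lipschitz collar recorded in \cite[Remark 2.18]{QX2}, so no essentially new difficulty arises.
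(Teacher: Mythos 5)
Your proof is correct and follows essentially the same route as the paper's: start from the interior estimate $|\nabla u_0(y)|\le C\dashint_{B(y,c\delta)}|\nabla u_0|$ (valid because $\nabla u_0$ is again a solution of the constant-coefficient system), convert the solid average at height $\delta$ into a surface average of $\mathcal{M}(\nabla u_0)$ over a boundary ball of radius $\asymp\delta$ centred near $Q$, and dominate that by $\mathrm{M}_{\partial\Omega}(\mathcal{M}(\nabla u_0))(Q)$. You simply spell out the solid-to-surface passage (coarea formula together with the bi-Lipschitz collar parametrization by $T_r$) and the deep-interior case $\delta\gtrsim R_0$ in more detail than the paper, which compresses both into the single displayed chain of inequalities; nothing is added or omitted in substance.
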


\begin{proof}
Fix a point $Q\in\partial\Omega$, and let $\Lambda_{N_0}(Q)$ be a cone with $Q$ as the vertex.
For any $x\in \Lambda_{N_0}(Q)$, the distances between $|x-Q|$
and $\delta(x)=\text{dist}(x,\partial\Omega)$ are comparable. Let $r=\delta(x)$.
By definition, it follows from the interior estimate that
\begin{equation*}
\begin{aligned}
|\nabla u_0(x)| & \leq C\dashint_{B(x,r)}|\nabla u_0(y)| dy \\
&\leq C\dashint_{B(Q,N_0r)\cap\partial\Omega}|\mathcal{M}(\nabla u_0)| dS
\leq C\mathrm{M}_{\partial\Omega}(\mathcal{M}(\nabla u_0))(Q),
\end{aligned}
\end{equation*}
where $N_0$ is independent of $Q$. This gives the desired estimate $\eqref{pri:3.2}$, and
we are done.
\end{proof}

\begin{lemma}
Let $\Omega\subset\mathbb{R}^d$ be a bounded
Lipschitz domain, and $\mathcal{M}$ be defined
as the radical maximal function operator. Then
for any $h\in H^1(\Omega)$, we have the following estimate
\begin{equation}\label{pri:3.2.1}
\|\mathcal{M}(h)\|_{L^2(\partial\Omega)}
\leq C\|h\|_{H^1(\Omega)}
\end{equation}
where $C$ depends only on $d$ and
the character of $\Omega$.
\end{lemma}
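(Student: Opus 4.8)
The plan is to reduce the statement to functions that are smooth up to the boundary, and then to exploit the fact that along the transversal curves $s\mapsto T_s(Q)$ the function $h$ is absolutely continuous, so that its values on the surfaces $\partial\Sigma_r$ are controlled by a one–dimensional fundamental theorem of calculus. Since $\Omega$ is Lipschitz, $C^\infty(\overline\Omega)$ is dense in $H^1(\Omega)$ and the constant in the asserted inequality is independent of $h$; hence it suffices to prove the bound for $h\in C^\infty(\overline\Omega)$ and pass to the limit (for a general, possibly discontinuous, $h\in H^1(\Omega)$ the quantity $\mathcal{M}(h)$ is understood through the interior traces of $h$ on $\partial\Sigma_r$). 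Recall from the construction of the maps $T_r$ in \cite{QX2} that $\Phi(Q,s):=T_s(Q)$ is a bi-Lipschitz homeomorphism of $\partial\Omega\times(0,R_0/100]$ onto a relative neighbourhood $U$ of $\partial\Omega$ in $\Omega$; in particular its Jacobian $|J\Phi|$ is bounded above and below by positive constants, $|\partial_s\Phi(Q,s)|\le C$ for a.e.\ $(Q,s)$, and $\Phi(Q,s)\to Q$ as $s\to 0^+$.

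First I would fix $Q\in\partial\Omega$ and, for $0<r\le R_0/100$, apply the fundamental theorem of calculus to the Lipschitz function $s\mapsto h(T_s(Q))$ on $[r,R_0/100]$:
\begin{equation*}
h\big(T_r(Q)\big)=h\big(T_{R_0/100}(Q)\big)-\int_r^{R_0/100}\nabla h\big(T_s(Q)\big)\cdot\partial_s T_s(Q)\,ds .
\end{equation*}
Using $|\partial_s T_s(Q)|\le C$ and taking the supremum over $r$ gives the pointwise domination
\begin{equation*}
\mathcal{M}(h)(Q)\le \big|h\big(T_{R_0/100}(Q)\big)\big|+C\int_0^{R_0/100}\big|\nabla h\big(T_s(Q)\big)\big|\,ds=:A(Q)+B(Q),
\end{equation*}
so it remains to bound $\|A\|_{L^2(\partial\Omega)}$ and $\|B\|_{L^2(\partial\Omega)}$ by $C\|h\|_{H^1(\Omega)}$.

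For $B$, the Cauchy–Schwarz inequality in the $s$–variable yields $B(Q)^2\le C\int_0^{R_0/100}|\nabla h(T_s(Q))|^2\,ds$; integrating in $Q$ over $\partial\Omega$ and changing variables $x=\Phi(Q,s)$ (legitimate by the area formula, since $|J\Phi|$ is bounded away from $0$) one gets
\begin{equation*}
\int_{\partial\Omega}B(Q)^2\,dS(Q)\le C\int_0^{R_0/100}\!\!\int_{\partial\Omega}\big|\nabla h(\Phi(Q,s))\big|^2\,dS(Q)\,ds\le C\int_U|\nabla h(x)|^2\,dx\le C\|\nabla h\|_{L^2(\Omega)}^2 .
\end{equation*}
For $A$, note that $\partial\Sigma_{R_0/100}=T_{R_0/100}(\partial\Omega)$ is a Lipschitz hypersurface lying at distance $\gtrsim R_0$ from $\partial\Omega$; the standard trace inequality on the Lipschitz domain $\Sigma_{R_0/100}$ gives $\|h\|_{L^2(\partial\Sigma_{R_0/100})}\le C\|h\|_{H^1(\Sigma_{R_0/100})}\le C\|h\|_{H^1(\Omega)}$, and the bi-Lipschitz change of variable $T_{R_0/100}$ converts this into $\|A\|_{L^2(\partial\Omega)}\le C\|h\|_{H^1(\Omega)}$ (alternatively one avoids the trace theorem by averaging the identity above in $s$ over $[R_0/200,R_0/100]$ and repeating the change-of-variables argument used for $B$). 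Combining the two bounds gives $\|\mathcal{M}(h)\|_{L^2(\partial\Omega)}\le\|A\|_{L^2(\partial\Omega)}+\|B\|_{L^2(\partial\Omega)}\le C\|h\|_{H^1(\Omega)}$.

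The main obstacle here is not analytical depth but bookkeeping: one must check that the maps $T_r$ (equivalently $\Phi$) are uniformly bi-Lipschitz with Jacobian bounded away from $0$ and $\infty$ and with a bounded $s$–derivative — all of which is built into their construction in \cite{QX2} — and one must carry out the density reduction carefully so that $\mathcal{M}(h)$ is meaningful, and the limit procedure valid, for a general $h\in H^1(\Omega)$.
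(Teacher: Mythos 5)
Your argument is correct and is the standard one for this type of radial maximal function estimate. Note, however, that the paper itself contains no proof: it simply writes ``See \cite[Lemma 2.24]{QX2}''. So there is nothing in this paper to compare against, but your write-up is a sound reconstruction of what that reference establishes. The decomposition $\mathcal{M}(h)(Q)\le A(Q)+B(Q)$ via the fundamental theorem of calculus along the curves $s\mapsto T_s(Q)$, followed by Cauchy--Schwarz and the area-formula change of variables, is exactly the expected argument; the alternative you mention for the $A$-term (averaging the FTC identity in $s$ and changing variables a second time, rather than invoking the trace theorem on $\Sigma_{R_0/100}$) is in fact the cleaner route, since it keeps the whole proof self-contained and avoids verifying that $\Sigma_{R_0/100}$ is uniformly Lipschitz. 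The only ingredients you take for granted from the construction in \cite{QX2} --- that $\Phi(Q,s)=T_s(Q)$ is bi-Lipschitz with Jacobian bounded above and below and with $|\partial_s\Phi|\le C$ --- are indeed built into that construction, so no gap remains.
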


\begin{proof}
See \cite[Lemma 2.24]{QX2}.
\end{proof}

\begin{thm}[nontangential maximal function estimate]\label{thm:3.2}
Given $g\in L^2(\partial\Omega;\mathbb{R}^m)$,
let $u_0$ be the solution of $\eqref{pde:3.1}$.
Then we have the nontangential maximal function estimate
\begin{equation}\label{pri:3.1}
\|(\nabla u_0)^*\|_{L^2(\partial\Omega)} \leq C\|g\|_{L^2(\partial\Omega)},
\end{equation}
where $C$ depends on $\mu,\kappa,\lambda,d,m$ and $\Omega$.
\end{thm}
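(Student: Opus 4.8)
The plan is to derive the nontangential maximal function bound \eqref{pri:3.1} from three ingredients already available: the pointwise majorization of $(\nabla u_0)^*$ by an iterated maximal function (Lemma~\ref{lemma:3.7}), the $L^2$-boundedness of the Hardy--Littlewood maximal operator $\mathrm{M}_{\partial\Omega}$ on the space of homogeneous type $\partial\Omega$, and the radial maximal function estimate \eqref{pri:3.2.1}. Since $\mathcal{L}_0(u_0)=0$ in $\Omega$, Lemma~\ref{lemma:3.7} gives $(\nabla u_0)^*(Q)\leq C\,\mathrm{M}_{\partial\Omega}\big(\mathcal{M}(\nabla u_0)\big)(Q)$ for a.e. $Q\in\partial\Omega$. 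Taking $L^2(\partial\Omega)$-norms and using the $L^2$-boundedness of $\mathrm{M}_{\partial\Omega}$ reduces matters to controlling $\|\mathcal{M}(\nabla u_0)\|_{L^2(\partial\Omega)}$, and then \eqref{pri:3.2.1} applied with $h=\nabla u_0$ yields
\[
\|(\nabla u_0)^*\|_{L^2(\partial\Omega)}\leq C\,\|\mathcal{M}(\nabla u_0)\|_{L^2(\partial\Omega)}\leq C\,\|\nabla u_0\|_{H^1(\Omega)}.
\]
Thus the theorem is reduced to the global estimate $\|\nabla u_0\|_{H^1(\Omega)}\leq C\|g\|_{L^2(\partial\Omega)}$ for the solution of the Neumann problem \eqref{pde:3.1}.

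For this last reduction I would exploit that $\mathcal{L}_0$ has constant (hence smooth) coefficients, so $u_0\in C^\infty(\Omega;\mathbb{R}^m)$ and $\nabla^l u_0$ is again a solution for every $l$; the interior estimates \eqref{pri:3.15} and the decay \eqref{pri:3.0} of $\mathbf{\Gamma}_0$ then handle everything at a fixed distance from $\partial\Omega$. The behaviour up to $\partial\Omega$ is supplied by the regularity theory for constant-coefficient elliptic systems in Lipschitz domains from \cite{GaoW}, which is precisely what controls $\|\nabla u_0\|_{H^1(\Omega)}$ by $\|g\|_{L^2(\partial\Omega)}$; the lower-order terms $\widehat V,\widehat B,\widehat c+\lambda I$ are absorbed through the coercivity estimate \eqref{pri:2.2.2}, which, as remarked after it, remains valid on $\Omega$. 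I would first establish the whole chain for smooth data $g$, where every step is unambiguously legitimate, and then pass to a general $g\in L^2(\partial\Omega)$ by density, using the uniform bound just obtained along an approximating sequence together with the uniqueness in Theorem~\ref{thm:3.1}. It is worth stressing that, unlike the variable-coefficient case treated later, no Rellich identity intervenes here: the passage from interior gradient control to the boundary is carried entirely by the maximal-function machinery of Lemma~\ref{lemma:3.7} and \eqref{pri:3.2.1}.

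The main obstacle is exactly the global $H^1(\Omega)$-bound on $\nabla u_0$ on a domain that is merely Lipschitz, since naive energy and interior arguments do not by themselves reach regularity up to $\partial\Omega$; this is where the cited results of \cite{GaoW} for constant-coefficient systems are essential, and one must verify that the accompanying constant depends only on $\mu,\kappa,\lambda,d,m$ and the Lipschitz character of $\Omega$. The remaining manipulations --- trace inequalities, H\"older's inequality, and a Caccioppoli-type absorption of the lower-order terms --- are routine and are omitted.
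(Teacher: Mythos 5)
Your opening reduction is the same as the paper's: apply Lemma~\ref{lemma:3.7} and the $L^2$-boundedness of $\mathrm{M}_{\partial\Omega}$ to pass from $(\nabla u_0)^*$ to $\mathcal{M}(\nabla u_0)$, then control the latter. But the step where you apply \eqref{pri:3.2.1} with $h=\nabla u_0$ to get $\|\mathcal{M}(\nabla u_0)\|_{L^2(\partial\Omega)}\leq C\|\nabla u_0\|_{H^1(\Omega)}$ reduces the theorem to the claim $\|\nabla u_0\|_{H^1(\Omega)}\leq C\|g\|_{L^2(\partial\Omega)}$ --- that is, an $H^2(\Omega)$ estimate for the Neumann solution with $L^2$ Neumann data --- and this is false on a general Lipschitz domain. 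Even for the Laplacian, the Neumann problem with $L^2(\partial\Omega)$ data produces solutions that lie in $H^{3/2}(\Omega)$ but not $H^{2}(\Omega)$ (think of a reentrant or even convex corner in dimension $d\geq 2$: $\nabla u$ behaves like $r^{\alpha-1}$ with $\alpha<1$ and $\nabla^2 u\notin L^2$). The nontangential maximal function estimate of \cite{GaoW} is a \emph{substitute} for $H^2$ regularity precisely because $H^2$ fails; it controls $\|(\nabla w)^*\|_{L^2(\partial\Omega)}$, not $\|\nabla w\|_{H^1(\Omega)}$, so it cannot supply the bound you appeal to. The density argument does not rescue the step, because even for smooth $g$ the $H^2(\Omega)$-norm of $u_0$ is not uniformly controlled by $\|g\|_{L^2(\partial\Omega)}$ on a fixed Lipschitz domain.

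The paper avoids this by never asking \eqref{pri:3.2.1} to act on $\nabla u_0$ itself. Instead it moves the lower-order terms to the right-hand side, sets $\breve F=(\widehat V-\widehat B)\nabla u_0-(\widehat c+\lambda I)u_0$ extended by zero, and splits $u_0=v+w$, where $v=\Gamma_{\widehat A}*\breve F$ solves $L_0(v)=\breve F$ in $\mathbb{R}^d$ and $w$ solves the \emph{leading-order} Neumann problem $L_0(w)=0$ in $\Omega$, $\partial w/\partial n_0=g-\partial v/\partial n_0$. The Newtonian-potential part $v$ genuinely lies in $H^2(\mathbb{R}^d)$ with $\|\nabla v\|_{H^1}\leq C\|u_0\|_{H^1(\Omega)}$ by Calder\'on--Zygmund and fractional-integral estimates, so \eqref{pri:3.2.1} \emph{is} legitimately applied to $h=\nabla v$, while $w$ is handled by the GaoW nontangential maximal function estimate for $L_0=-\mathrm{div}(\widehat A\nabla)$ directly, with no $H^2$ claim anywhere. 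The energy/coercivity estimate \eqref{pri:2.2.2} then closes the loop by bounding $\|u_0\|_{H^1(\Omega)}$ by $\|g\|_{L^2(\partial\Omega)}$. Without this decomposition your argument cannot be completed.
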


\begin{proof}
Let $L_0 = -\text{div}(\widehat{A}\nabla)$.
By moving the lower order terms of $\mathcal{L}_0$ to the right-hand side of $\eqref{pde:3.1}$,
we may rewrite $(\mathbf{NH}_0)$ into
\begin{equation*}
 L_0(u_0) = (\widehat{V} -\widehat{B})\nabla u_0 - (\widehat{c}+\lambda I)u_0 \quad \text{in}~\Omega,
 \qquad \partial u_0/\partial n_0 = g - n\cdot \widehat{V} u_0 \quad \text{on}~\partial\Omega.
\end{equation*}
Moreover, we consider $u_0 = v+w$, and they satisfy
\begin{equation}
 (\text{i})~ L_0(v) = \breve{F} \quad \text{in}~\mathbb{R}^d,
 \qquad\quad
 (\text{ii})~ \left\{\begin{aligned}
 L_0(w) &= 0  &\quad&\text{in}~~\Omega,\\
     \frac{\partial w}{\partial n_0}  &= g - \frac{\partial v}{\partial n_0} &\quad&\text{on}~\partial\Omega,
 \end{aligned}\right.
\end{equation}
where $\breve{F} = (\widehat{V} -\widehat{B})\nabla u_0 - (\widehat{c}+\lambda I)u_0 $ in $\Omega$ and $\breve{F} = 0$
on $\mathbb{R}^d\setminus\Omega$. Note that the existence of $w$
may be found in \cite[Theorem 2.2]{GaoW}.

Let $\Gamma_{\widehat{A}}$ denote the fundamental solution of $L_0$, and then we have
$v = \Gamma_{\widehat{A}}*\breve{F}$ in $\mathbb{R}^d$. It follows from
the well known singular integral and fractional integral estimates
(see for example \cite{MGLM}) that
\begin{equation*}
\|\nabla^2 v\|_{L^2(\mathbb{R}^d)} + \|\nabla v\|_{L^2(\mathbb{R}^d)} \leq C\|u_0\|_{H^1(\Omega)}.
\end{equation*}
Hence, this together with the estimate $\eqref{pri:3.2.1}$ and the trace theorem gives
\begin{equation}\label{f:3.1}
\|\mathcal{M}(\nabla v)\|_{L^2(\partial\Omega)}
+ \|\nabla v\|_{L^2(\partial\Omega)}\leq C\|u_0\|_{H^1(\Omega)}.
\end{equation}

We now turn to study $(\text{ii})$.
In view of \cite[Theorem 2.2]{GaoW}, it is not hard to derive
\begin{equation}\label{f:3.2}
\begin{aligned}
\|\mathcal{M}(\nabla w)\|_{L^2(\partial\Omega)}
&\leq \|(\nabla w)^*\|_{L^2(\partial\Omega)}
\leq C\Big\{\|g\|_{L^2(\partial\Omega)}
+\|u_0\|_{L^2(\partial\Omega)}
+ \|\nabla v\|_{L^2(\partial\Omega)}\Big\} \\
&\leq C\Big\{\|g\|_{L^2(\partial\Omega)}
+\|u_0\|_{H^1(\Omega)}\Big\},
\end{aligned}
\end{equation}
where we also employ the trace theorem and $\eqref{f:3.1}$.
Obviously,
combining the estimates $\eqref{f:3.1}$ and $\eqref{f:3.2}$ will
lead to
\begin{equation*}
\|\mathcal{M}(\nabla u_0)\|_{L^2(\partial\Omega)}
\leq C\Big\{\|g\|_{L^2(\partial\Omega)}
+\|u_0\|_{H^1(\Omega)}\Big\}.
\end{equation*}

This together with the estimate $\eqref{pri:3.2}$ further
show the desired estimate $\eqref{pri:3.1}$,
and we have completed the
proof.
\end{proof}

Similarly, we have the following nontangential maximal function
estimate for the regular problem.

\begin{thm}\label{thm:3.4}
Given $g\in H^1(\partial\Omega;\mathbb{R}^m)$,
let $u_0$ be the solution of $\eqref{pde:3.3}$.
Then we have the nontangential maximal function estimate
\begin{equation}\label{pri:3.6}
\|(\nabla u_0)^*\|_{L^2(\partial\Omega)} \leq C\|g\|_{H^1(\partial\Omega)},
\end{equation}
where $C$ depends on $\mu,\kappa,\lambda,d,m$ and $\Omega$.
\end{thm}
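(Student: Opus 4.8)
The plan is to follow closely the proof of Theorem~$\ref{thm:3.2}$, replacing the Neumann data by the regularity (tangential) data and carrying along the extra $H^1(\partial\Omega)$ half of the norm. Writing $L_0 = -\text{div}(\widehat{A}\nabla)$ and transferring the lower order terms of $\mathcal{L}_0$ to the right-hand side, $\eqref{pde:3.3}$ becomes
\begin{equation*}
L_0(u_0) = (\widehat{V}-\widehat{B})\nabla u_0 - (\widehat{c}+\lambda I)u_0 \quad\text{in}~\Omega,
\qquad u_0 = g \quad\text{n.t.\ on}~\partial\Omega .
\end{equation*}
I would decompose $u_0 = v+w$, where $v = \Gamma_{\widehat{A}}\ast\breve{F}$ solves $L_0(v)=\breve{F}$ in $\mathbb{R}^d$ with $\breve{F} = (\widehat{V}-\widehat{B})\nabla u_0 - (\widehat{c}+\lambda I)u_0$ on $\Omega$ and $\breve{F}=0$ off $\Omega$, and $w$ solves the homogeneous $L^2$ regularity problem for $L_0$ with data $g-v\in H^1(\partial\Omega)$, whose well-posedness is available from \cite{GaoW}.

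For $v$, the Calder\'on--Zygmund and fractional integral estimates for the Newtonian potential give $\|\nabla^2 v\|_{L^2(\mathbb{R}^d)} + \|\nabla v\|_{L^2(\mathbb{R}^d)} \leq C\|u_0\|_{H^1(\Omega)}$, and hence, via the radial maximal function estimate $\eqref{pri:3.2.1}$ and the trace theorem,
\begin{equation*}
\|\mathcal{M}(\nabla v)\|_{L^2(\partial\Omega)} + \|v\|_{H^1(\partial\Omega)} \leq C\|u_0\|_{H^1(\Omega)} .
\end{equation*}
For $w$, \cite{GaoW} yields $\|(\nabla w)^*\|_{L^2(\partial\Omega)} \leq C\|g-v\|_{H^1(\partial\Omega)} \leq C\big(\|g\|_{H^1(\partial\Omega)} + \|u_0\|_{H^1(\Omega)}\big)$. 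Since the radial maximal function is dominated by the nontangential one, adding the two pieces and applying Lemma~$\ref{lemma:3.7}$ together with the $L^2(\partial\Omega)$ boundedness of $\mathrm{M}_{\partial\Omega}$ gives
\begin{equation*}
\big\|(\nabla u_0)^*\big\|_{L^2(\partial\Omega)} \leq C\big\|\mathcal{M}(\nabla u_0)\big\|_{L^2(\partial\Omega)} \leq C\big(\|g\|_{H^1(\partial\Omega)} + \|u_0\|_{H^1(\Omega)}\big) .
\end{equation*}

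It then remains to absorb the term $\|u_0\|_{H^1(\Omega)}$. Here I would use the coercivity estimate $\eqref{pri:2.2.2}$, valid on $\Omega$ by the remark following it: integrating by parts and using $\mathcal{L}_0(u_0)=0$ gives $\mathrm{B}_{\mathcal{L}_0;\Omega}[u_0,u_0] = \int_{\partial\Omega}\tfrac{\partial u_0}{\partial n_0}\cdot u_0\,dS$, so that $\|u_0\|_{H^1(\Omega)}^2 \leq C\big\|(\nabla u_0)^*\big\|_{L^2(\partial\Omega)}\|g\|_{L^2(\partial\Omega)}$ because $u_0=g$ on $\partial\Omega$ and $|\partial u_0/\partial n_0|\leq C(\nabla u_0)^*$ pointwise on $\partial\Omega$. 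Inserting this into the preceding display and using Young's inequality to absorb $\|u_0\|_{H^1(\Omega)}$ yields $\|u_0\|_{H^1(\Omega)}\leq C\|g\|_{H^1(\partial\Omega)}$ and therefore the claimed estimate $\eqref{pri:3.6}$. The main obstacle I anticipate is making this last ``closing the loop'' step rigorous: one must first justify that the solution, a priori only controlled through $(\nabla u_0)^*\in L^2(\partial\Omega)$, actually lies in $H^1(\Omega)$, that $\nabla u_0$ has $L^2(\partial\Omega)$ nontangential boundary traces, and that the n.t.\ value of $u_0$ coincides with its Sobolev trace $g$ — so that the integration by parts on $\partial\Omega$ is legitimate; this is a standard but somewhat delicate approximation argument over the surfaces $\partial\Sigma_r$. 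The decomposition and the potential estimates for $v$ are routine once $\eqref{pri:3.2.1}$, Lemma~$\ref{lemma:3.7}$ and \cite{GaoW} are granted.
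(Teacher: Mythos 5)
Your proposal follows the same skeleton as the paper's proof of Theorem~\ref{thm:3.2} (decompose $u_0 = v + w$, use Calder\'on--Zygmund estimates plus the radial maximal function for $v$, cite \cite{GaoW} for $w$, finish with Lemma~\ref{lemma:3.7}), which is exactly what the paper intends since it explicitly omits this proof as ``quite similar to that given for Theorem~\ref{thm:3.2}.'' The one place where you deviate, and needlessly complicate matters, is the ``closing the loop'' step. To absorb $\|u_0\|_{H^1(\Omega)}$ you invoke the bilinear form identity $\mathrm{B}_{\mathcal{L}_0;\Omega}[u_0,u_0]=\int_{\partial\Omega}(\partial u_0/\partial n_0)\,u_0\,dS$, bound $|\partial u_0/\partial n_0|$ by $(\nabla u_0)^*$, and close via Young's inequality. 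As you correctly anticipate, this forces you to justify the nontangential traceability of $\nabla u_0$, the coincidence of the n.t.\ boundary value of $u_0$ with its Sobolev trace $g$, and the Green's formula — facts that are standard (and the paper takes them for granted in Lemma~\ref{lemma:3.4}) but are simply not needed here.

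The cleaner route, which is also the genuine analogue of what Theorem~\ref{thm:3.2} does implicitly for the Neumann problem, is a direct energy bound. Since $g\in H^1(\partial\Omega)\hookrightarrow H^{1/2}(\partial\Omega)$, choose an extension $\tilde{g}\in H^1(\Omega)$ with trace $g$ and $\|\tilde{g}\|_{H^1(\Omega)}\leq C\|g\|_{H^{1/2}(\partial\Omega)}\leq C\|g\|_{H^1(\partial\Omega)}$; then $u_0-\tilde{g}\in H_0^1(\Omega)$ solves $\mathcal{L}_0(u_0-\tilde{g})=-\mathcal{L}_0\tilde{g}$, and the coercivity estimate \eqref{pri:2.2.2} (which holds on $\Omega$ by the remark following it) gives $\|u_0\|_{H^1(\Omega)}\leq C\|g\|_{H^1(\partial\Omega)}$ outright. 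Substituting this into your display $\|(\nabla u_0)^*\|_{L^2(\partial\Omega)}\leq C\bigl(\|g\|_{H^1(\partial\Omega)}+\|u_0\|_{H^1(\Omega)}\bigr)$ finishes the proof, with no a priori use of the finiteness of $\|(\nabla u_0)^*\|_{L^2(\partial\Omega)}$, no boundary Green's formula, and no Young absorption. Your version is not wrong — the hypothesis $(\nabla u_0)^*\in L^2(\partial\Omega)$ in \eqref{pde:3.3} makes the quantity finite, so the absorption is legitimate — but it buys nothing and invites exactly the delicacies you flag. One last small point: when you bound $\|v\|_{H^1(\partial\Omega)}$, you should note explicitly that $\nabla^2 v,\nabla v\in L^2$ places $v$ in $H^2(\Omega)$ (after fixing the constant/$L^2$ normalization of $v$ on the bounded set $\Omega$), so its Sobolev trace lands in $H^{3/2}(\partial\Omega)\subset H^1(\partial\Omega)$; the paper only needs $\nabla v\in L^2(\partial\Omega)$ in Theorem~\ref{thm:3.2}, so this extra half-derivative of trace regularity is a small but genuine point to check.
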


\begin{proof}
The proof of this result is quite similar to that given for Theorem $\ref{thm:3.2}$
and so is omitted.
\end{proof}

\begin{lemma}
[localization]\label{lemma:3.4}
Let $\lambda\geq\max\{\widehat{\lambda},\mu\}$, and
$u_0$ be the solution to $\mathcal{L}_0(u_0) = 0$ in $\Omega$
with $(\nabla u_0)^*\in L^2(\partial\Omega)$. We assume that
$\nabla u_0$ have nontangential limits almost everywhere on $\partial\Omega$. Then we have
\begin{equation}\label{pri:3.8}
\int_{\partial\Omega} \big(|\nabla u_0|^2 + |u_0|^2\big)dS
\leq C\int_{\partial\Omega}\Big|\frac{\partial u_0}{\partial n_0}\Big|^2 dS
\end{equation}
and
\begin{equation}\label{pri:3.9}
\int_{\partial\Omega} |\nabla u_0|^2 dS
\leq C\int_{\partial\Omega}|\nabla_{\emph{tan}}u_0|^2 dS
+ C\int_{\Omega}(|\nabla u_0|^2 + |u_0|^2) dx,
\end{equation}
where $C$ depends on $\mu,\kappa,\lambda,d,m$ and $\Omega$.
\end{lemma}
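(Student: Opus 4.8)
The plan is to reduce the two global Rellich-type estimates \eqref{pri:3.8} and \eqref{pri:3.9} to the classical Rellich identity for the constant-coefficient leading operator $L_0 = -\text{div}(\widehat A\nabla)$, treating the lower-order terms of $\mathcal{L}_0$ as forcing terms. First I would rewrite $\mathcal{L}_0(u_0)=0$ as $L_0(u_0) = (\widehat V-\widehat B)\nabla u_0 - (\widehat c+\lambda I)u_0 =: G$ in $\Omega$, observing that the conormal derivative $\partial u_0/\partial n_0 = n\cdot\widehat A\nabla u_0$ is exactly the $L_0$-conormal derivative (since $\widehat V$ does not enter the definition of $\partial/\partial n_0$). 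Then one applies the standard Rellich--Pohozaev identity for $L_0$: for a suitable vector field $h\in C^1(\overline\Omega;\mathbb{R}^d)$ with $h\cdot n\geq c_0>0$ on $\partial\Omega$ (which exists because $\Omega$ is Lipschitz and star-shaped-like in the sense of the standard construction), integrating $h\cdot\nabla u_0 \cdot L_0(u_0)$ by parts twice produces
\begin{equation*}
\int_{\partial\Omega}(h\cdot n)\,\widehat A\nabla u_0\cdot\nabla u_0\, dS
= 2\int_{\partial\Omega}(h\cdot\nabla u_0)\,\frac{\partial u_0}{\partial n_0}\, dS
+ \int_\Omega (\text{div }h)\,\widehat A\nabla u_0\cdot\nabla u_0\,dx + (\text{terms with }\nabla h) + 2\int_\Omega (h\cdot\nabla u_0)\cdot G\, dx,
\end{equation*}
and a symmetric identity isolating the conormal part. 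Here the ellipticity $\mu|\xi|^2\le \xi^T\widehat A\xi$ from \eqref{a:5} is what lets me compare $\widehat A\nabla u_0\cdot\nabla u_0$ with $|\nabla u_0|^2$, and $A=A^*$ (hence $\widehat A=\widehat A^*$) is needed so the Rellich identity closes.

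For \eqref{pri:3.8}, I would split $\nabla u_0$ on $\partial\Omega$ into its tangential part $\nabla_{\text{tan}}u_0$ and normal part, use the algebraic fact that the conormal derivative controls the full normal component modulo tangential data (a pointwise linear-algebra statement using ellipticity of $\widehat A$), and then the Rellich identity above to bound $\|\nabla_{\text{tan}}u_0\|_{L^2(\partial\Omega)}$ by $\|\partial u_0/\partial n_0\|_{L^2(\partial\Omega)}$ plus the interior bulk terms $\int_\Omega(|\nabla u_0|^2+|u_0|^2)$; the interior terms are then absorbed by the energy inequality $\lambda\|u_0\|_{H^1(\Omega)}^2\le 2\mathrm{B}_{\mathcal{L}_0;\mathbb{R}^d}[u_0,u_0]$ (the version of \eqref{pri:2.2.2} valid on $\Omega$, noted in the Remark after that estimate), which bounds the Dirichlet energy by a boundary integral of $u_0\,\partial u_0/\partial n_0$ and hence, via trace and Cauchy--Schwarz, by $\|\partial u_0/\partial n_0\|_{L^2(\partial\Omega)}^2$ after a small absorption. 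This also recovers the $\|u_0\|_{L^2(\partial\Omega)}$ term on the left of \eqref{pri:3.8}. For \eqref{pri:3.9} the argument is the mirror image: the Rellich identity bounds $\|\partial u_0/\partial n_0\|_{L^2(\partial\Omega)}^2$ (and then the full $\|\nabla u_0\|_{L^2(\partial\Omega)}^2$ via the same linear-algebra splitting) by $\|\nabla_{\text{tan}}u_0\|_{L^2(\partial\Omega)}^2$ plus the bulk term $\int_\Omega(|\nabla u_0|^2+|u_0|^2)\,dx$, which is precisely the stated right-hand side, so here no absorption of the interior term is attempted — it is simply left in place.

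The main obstacle I anticipate is controlling the extra boundary and volume integrals generated by the lower-order forcing $G$ and by the term $n\cdot\widehat V u_0$ that would appear if one instead used the $\mathcal{L}_0$-conormal derivative: in the Rellich identity the cross term $2\int_\Omega (h\cdot\nabla u_0)\cdot G\,dx$ contains $\nabla u_0\cdot\nabla u_0$-type contributions through $(\widehat V-\widehat B)\nabla u_0$, which are only bounded by $\|\widehat V-\widehat B\|_\infty\int_\Omega|\nabla u_0|^2$ — not a priori small. One must therefore be careful that this volume term lands on the correct side: in \eqref{pri:3.9} it is legitimately part of the right-hand side, while in \eqref{pri:3.8} it must be defeated by the coercivity \eqref{pri:2.2.2}, which is why the hypothesis $\lambda\geq\max\{\widehat\lambda,\mu\}$ is essential (a large $\lambda$ makes the energy identity dominate all the lower-order junk). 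A secondary technical point is the construction of the Rellich vector field $h$ on a general bounded Lipschitz domain and the justification of the integrations by parts given only $(\nabla u_0)^*\in L^2(\partial\Omega)$ and nontangential limits a.e.; this is handled by the standard approximation over the family $\Sigma_r=\{\text{dist}(x,\partial\Omega)>r\}$ with the bi-Lipschitz maps $T_r$ already introduced before Lemma~\ref{lemma:3.7}, passing $r\to 0$ and using the dominated convergence theorem with majorant $(\nabla u_0)^*$.
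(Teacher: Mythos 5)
Your proposal is correct, but it takes a genuinely different route from the paper. Where you run the classical Rellich--Pohozaev machinery for $L_0=-\mathrm{div}(\widehat A\nabla)$ directly — writing $\mathcal{L}_0(u_0)=0$ as $L_0(u_0)=G$ with $G=(\widehat V-\widehat B)\nabla u_0-(\widehat c+\lambda I)u_0$, invoking a Rellich vector field $h$, splitting $\nabla u_0|_{\partial\Omega}$ into tangential and conormal parts, and then absorbing the resulting bulk term $\|u_0\|_{H^1(\Omega)}^2$ via the coercivity \eqref{pri:2.2.2} — the paper deliberately avoids running a Rellich identity here. Its proof of \eqref{pri:3.8} simply cites the nontangential maximal function estimate \eqref{pri:3.1} of Theorem~\ref{thm:3.2} (obtained by decomposing $u_0=v+w$ and importing the Neumann NTM estimate for $L_0$ as a black box from \cite{GaoW}), then bounds $\|u_0\|_{L^2(\partial\Omega)}$ by the trace inequality \eqref{f:3.5} combined with the coercivity/integration-by-parts identity; and its proof of \eqref{pri:3.9} uses a localization device on half-balls $D_r=B(P,r)\cap\Omega$ with the NTM estimate \eqref{pri:3.6} for the regularity problem, integrating the resulting inequality in $r$ and covering. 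The authors explicitly flag this design choice in the introduction (``the radial maximal function coupled with an interior estimate ... releases us from the Rellich identity compared to the case of variable coefficients''); your approach re-derives the Rellich estimates from scratch, which is more self-contained and in fact more elementary, but it duplicates the argument already buried inside the cited \cite{GaoW} result and misses the structural point the paper is making. One small thing worth tightening in your write-up: the integration-by-parts identity produces $\partial u_0/\partial\nu_{\mathcal{L}_0}=\partial u_0/\partial n_0+n\cdot\widehat V u_0$ rather than $\partial u_0/\partial n_0$ alone, and the Rellich cross-term $2\int_\Omega(h\cdot\nabla u_0)\cdot G$ carries a factor $\lambda$ from $\lambda I u_0$; both are absorbed precisely because the coercivity gives $\|u_0\|_{H^1(\Omega)}^2\lesssim\lambda^{-1}\|\partial u_0/\partial n_0\|_{L^2(\partial\Omega)}\|u_0\|_{L^2(\partial\Omega)}$, so you should state this $\lambda$-bookkeeping explicitly, and you do not need any star-shapedness for the vector field $h$ — a partition-of-unity construction on a Lipschitz boundary suffices.
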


\begin{proof}
In view of the estimate $\eqref{pri:3.1}$, it is clear to see that
\begin{equation*}
\|\nabla u_0\|_{L^2(\partial\Omega)}
\leq \|(\nabla u_0)^*\|_{L^2(\partial\Omega)}
\leq C\|(\partial u_0/\partial n_0)\|_{L^2(\partial\Omega)}.
\end{equation*}

Set $h\in C_0^1(\mathbb{R}^d;\mathbb{R}^d)$ such that $\big<h,n\big>\geq c>0$ on $\partial\Omega$.
By the divergence theorem, we have
\begin{equation}\label{f:3.5}
\int_{\partial\Omega}|u_0|^2 dS
\leq C\int_{\Omega} \big(|\nabla u_0||u_0| + |u_0|^2 \big)dx
\leq C\|u_0\|_{H^1(\Omega)}^2,
\end{equation}
which may be referred to as the trace theorem.
On the other hand, it follows from the estimate
$\eqref{pri:2.2.2}$ that
\begin{equation*}
 \frac{\lambda}{2}
 \|u_0\|_{H^1(\Omega)}^2
 \leq \mathrm{B}_{\mathcal{L}_0;\Omega}[u_0,u_0]
 =\int_{\partial\Omega}\frac{\partial u_0}{\partial n_0} u_0 dS
 \leq \|(\partial u_0/\partial n_0)\|_{L^2(\partial\Omega)}
 \| u_0\|_{L^2(\partial\Omega)}.
\end{equation*}
Plugging it back into $\eqref{f:3.5}$ leads to
\begin{equation*}
\| u_0\|_{L^2(\partial\Omega)} \leq C\|(\partial u_0/\partial n_0)\|_{L^2(\partial\Omega)}.
\end{equation*}

We proceed to prove the estimate $\eqref{pri:3.9}$.
The main idea is based upon \cite[Remark 3.1]{SZW12}. Let $D_r = B(P,r)\cap\Omega$ with
$P\in\partial\Omega$, and $\Delta_r = B(P,r) \cap \partial\Omega$, where $r\in[1/4,1)$.
Since $\mathcal{L}_0(u_0) = 0$ in $D_r$, it follows from the estimate $\eqref{pri:3.6}$ that
\begin{equation*}
\begin{aligned}
\int_{\partial D_r}|\nabla u_0|^2 dS
&\leq C\int_{\partial D_r}\big(|\nabla_{\text{tan}} u_0|^2 + |u_0|^2\big) dS \\
&\leq C\int_{\partial\Omega}|\nabla_{\text{tan}}u_0|^2 dS
+ C\int_{\partial D_r\setminus\Delta_1} |\nabla_{\text{tan}}u_0|^2 dS
+ C\int_{\partial D_r}|u_0|^2 dS,
\end{aligned}
\end{equation*}
where we actually employ the estimate $\eqref{f:3.5}$ in $D_r$,
and this implies
\begin{equation*}
\int_{\Delta_{1/4}}|\nabla u_0|^2 dS
\leq C\int_{\partial\Omega}|\nabla_{\text{tan}}u_0|^2 dS
+ C\int_{\partial D_r\setminus\Delta_1}|\nabla u_0|^2 dS
+ C\int_{D_r}\big(|\nabla u_0|^2+|u_0|^2\big) dx.
\end{equation*}
Integrating both sides above with respect to $r$ from $1/4$ to 1, we acquire
\begin{equation*}
\int_{\Delta_{1/4}}|\nabla u_0|^2 dS
\leq C\int_{\partial\Omega} |\nabla_{\text{tan}}u_0|^2 dS
+ C\int_{\Omega}\big( |\nabla u_0|^2 + |u_0|^2\big) dx.
\end{equation*}

The covering technique finally gives the desired estimate $\eqref{pri:3.9}$, and
the proof is complete.
\end{proof}

Recall that the notation
$\Omega_{-}=\mathbb{R}^d\setminus\Omega$ denotes
the exterior of $\Omega$.

\begin{lemma}
[localization for the exterior of domain]
\label{lemma:3.5}
Let $\lambda\geq\max\{\widehat{\lambda},\mu\}$.
Suppose that $u_0$ satisfies $\mathcal{L}_0(u_0) = 0$ in $\Omega_{-}$
with $(\nabla u_0)^*\in L^2(\partial\Omega)$, and
$\nabla u_0$ exists in the sense of nontangential convergence on $\partial\Omega$. We further assume that
$|u(x)|=O(|x|^{2-d})$ with $|\nabla u(x)|=O(|x|^{1-d})$ as $|x|\to\infty$. Then, there holds
\begin{equation}\label{pri:3.5}
\begin{aligned}
&\int_{\partial\Omega} |\nabla u_0|^2 dS
\leq C\int_{\partial\Omega}|\nabla_{\emph{tan}}u_0|^2 dS
+ C\int_{\Omega_{-}}(|\nabla u_0|^2 + |u_0|^2) dx \\
&\int_{\partial\Omega} |\nabla u_0|^2 dS
\leq C\int_{\partial\Omega}\Big|\frac{\partial u_0}{\partial n_0}\Big|^2 dS
+ C\int_{\Omega_{-}}|\nabla u_0|^2 dx
\end{aligned}
\end{equation}
and
\begin{equation}\label{pri:3.7}
 \int_{\partial\Omega}|u|^2 dS
 \leq C\int_{\partial\Omega} \Big|\frac{\partial u_0}{\partial n_0}\Big|^2dS
\end{equation}
where $C$ depends on $\mu,\kappa,\lambda,d,m$ and $\Omega$.
\end{lemma}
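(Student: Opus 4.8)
These are the exterior‑domain counterparts of Lemma \ref{lemma:3.4}, and I would prove them by the same two mechanisms used there, adapted to $\Omega_-$: a \emph{localization} argument in the spirit of \cite[Remark 3.1]{SZW12}, in which the bounded‑Lipschitz‑domain nontangential maximal function estimates of Theorems \ref{thm:3.2} and \ref{thm:3.4} are applied on small boundary pieces $D_r=B(P,r)\cap\Omega_-$, together with the global coercivity \eqref{pri:2.2.2} (which, by the Remark following it, holds on $\Omega_-$) combined with a trace inequality. The decay hypotheses $|u_0(x)|=O(|x|^{2-d})$, $|\nabla u_0(x)|=O(|x|^{1-d})$ — and, since $\mathcal{L}_0$ carries the term $\lambda I$ with $\lambda>0$, its fundamental solution and hence the relevant $u_0$ decay to arbitrary polynomial order by \eqref{pri:3.0}, so that $u_0\in H^1(\Omega_-)$ — are exactly what makes the divergence theorem on $\Omega_-$ run with no contribution at infinity.

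For \eqref{pri:3.7} I would apply the divergence theorem on $B(0,R)\setminus\overline\Omega$ and let $R\to\infty$: the boundary integral over $\partial B(0,R)$ is $O(R^{d-1}\cdot R^{1-d}\cdot R^{2-d})=O(R^{2-d})\to 0$, so $\mathrm{B}_{\mathcal{L}_0;\Omega_-}[u_0,u_0]=-\int_{\partial\Omega}\tfrac{\partial u_0}{\partial n_0}\,u_0\,dS$, the sign reflecting that the outward normal of $\Omega_-$ along $\partial\Omega$ is $-n$. By \eqref{pri:2.2.2} on $\Omega_-$ the left side dominates $\tfrac{\lambda}{2}\|u_0\|_{H^1(\Omega_-)}^2$, and the right side is bounded by $\|\partial u_0/\partial n_0\|_{L^2(\partial\Omega)}\|u_0\|_{L^2(\partial\Omega)}$; invoking the trace theorem on a bounded Lipschitz neighbourhood of $\partial\Omega$ inside $\Omega_-$ to get $\|u_0\|_{L^2(\partial\Omega)}\le C\|u_0\|_{H^1(\Omega_-)}$ and cancelling one power of $\|u_0\|_{H^1(\Omega_-)}$ yields $\|u_0\|_{H^1(\Omega_-)}\le C\|\partial u_0/\partial n_0\|_{L^2(\partial\Omega)}$, which fed back into the trace inequality gives \eqref{pri:3.7}.

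For the first line of \eqref{pri:3.5}: fix $P\in\partial\Omega$ and for $r\in[1/4,1)$ set $D_r=B(P,r)\cap\Omega_-$, $\Delta_r=B(P,r)\cap\partial\Omega$; these are Lipschitz domains whose character is controlled by that of $\Omega$, so \eqref{pri:3.1} and \eqref{pri:3.6} apply on them with uniform constants. Since $\mathcal{L}_0(u_0)=0$ in $D_r$, \eqref{pri:3.6} (regular problem on $D_r$ with data $u_0|_{\partial D_r}$) gives $\int_{\partial D_r}|\nabla u_0|^2\,dS\le C\int_{\partial D_r}\big(|\nabla_{\text{tan}}u_0|^2+|u_0|^2\big)\,dS$. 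Splitting $\partial D_r=\Delta_r\cup(\partial B(P,r)\cap\Omega_-)$, bounding $|\nabla_{\text{tan}}u_0|\le|\nabla u_0|$ on the spherical part, and controlling $\int_{\Delta_1}|u_0|^2\,dS$ by the trace theorem on $D_1$ by $C\|u_0\|_{H^1(D_1)}^2$, one arrives at $\int_{\Delta_{1/4}}|\nabla u_0|^2\,dS\le C\int_{\partial\Omega}|\nabla_{\text{tan}}u_0|^2\,dS+C\int_{\partial B(P,r)\cap\Omega_-}|\nabla u_0|^2\,dS+C\int_{B(P,1)\cap\Omega_-}\big(|\nabla u_0|^2+|u_0|^2\big)\,dx$. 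Integrating in $r$ over $[1/4,1)$ converts the middle term into $C\int_{(B(P,1)\setminus B(P,1/4))\cap\Omega_-}|\nabla u_0|^2\,dx$, and a finite covering of $\partial\Omega$ by such balls produces the first line of \eqref{pri:3.5}. The second line is the same argument with \eqref{pri:3.1} (Neumann problem on $D_r$) in place of \eqref{pri:3.6}: on $\Delta_r$ the conormal of $D_r$ equals $-\partial u_0/\partial n_0$ and on the spherical part it is $\le C|\nabla u_0|$, so after the same covering one obtains $\int_{\partial\Omega}|\nabla u_0|^2\,dS\le C\int_{\partial\Omega}|\partial u_0/\partial n_0|^2\,dS+C\int_{\Omega_-}|\nabla u_0|^2\,dx$.

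The only step requiring genuine care is the passage to the limit at infinity in \eqref{pri:3.7}: one must know $u_0\in H^1(\Omega_-)$, so that \eqref{pri:2.2.2} and the integration by parts are legitimate, and that the flux over $\partial B(0,R)$ vanishes — both are guaranteed by the decay hypotheses, and in the paper's applications $u_0$ arises from single‑layer potentials of $\mathcal{L}_0$ and hence decays to every polynomial order by \eqref{pri:3.0}. A secondary point is the uniformity of the nontangential maximal function constants over the family $\{D_r\}$, which is immediate since each $D_r$ is a boundary piece of the fixed Lipschitz domain; the remaining manipulations (the integration in $r$, the covering, the elementary trace estimates on the $D_r$) are routine.
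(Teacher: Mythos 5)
Your proposal is correct and follows essentially the same route as the paper: both prove \eqref{pri:3.5} by applying the interior‑domain nontangential maximal function estimates of Theorems \ref{thm:3.2} and \ref{thm:3.4} on the Lipschitz pieces $D_r=B(P,r)\cap\Omega_-$, integrating in $r$, and covering $\partial\Omega$; and both prove \eqref{pri:3.7} by combining a trace estimate on a boundary layer of $\Omega_-$ with the coercivity \eqref{pri:2.2.2} on $B(0,R)\setminus\Omega$ and passing $R\to\infty$, using the decay hypotheses to kill the flux over $\partial B(0,R)$.
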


\begin{proof}
An argument similar to the one used in the proof of the estimate $\eqref{pri:3.8}$ shows
the first line of $\eqref{pri:3.5}$, and it will not be reproduced here.
However we will provide a proof for the second line of $\eqref{pri:3.5}$ for the sake of the completeness.
Let $D_r = B(P,r)\cap\Omega_{-}$ with $P\in\partial\Omega$, and $\Delta_r = B(P,r) \cap \partial\Omega$,
where $r\in[1/4,1)$. Since $\mathcal{L}_0(u_0) = 0$ in $D_r$,
in view of the estimate $\eqref{pri:3.1}$ we have
\begin{equation*}
\begin{aligned}
\int_{\partial D_r}|\nabla u_0|^2 dS
&\leq C\int_{\partial D_r}\Big|\frac{\partial u_0}{\partial n_0}\Big|^2 dS \\
&\leq C\int_{\partial\Omega}\Big|\frac{\partial u_0}{\partial n_0}\Big|^2 dS
+ C\int_{\partial D_r\setminus\Delta_1} |\nabla u_0|^2 dS,
\end{aligned}
\end{equation*}
and this gives
\begin{equation*}
\int_{\Delta_{1/4}}|\nabla u_0|^2 dS
\leq C\int_{\partial\Omega}\Big|\frac{\partial u_0}{\partial n_0}\Big|^2 dS
+ C\int_{\partial D_r\setminus\Delta_1} |\nabla u_0|^2 dS
\end{equation*}
Integrating both sides above with respect to $r$ from $1/4$ to 1 and then
using the covering technique, we consequently obtain
the second line of $\eqref{pri:3.5}$.

We now turn to show the estimate $\eqref{pri:3.7}$.
Since $\partial\Omega$ is compact in $\mathbb{R}^d$,
it may be covered by finite balls centered at $\partial\Omega$ and intersected by $\Omega_{-}$.
Let $D_{1/4}$ be one of them, and $\phi\in C_0^1(B(P,1/4))$ be a cut-off function such that
$\phi =1$ in $B(P,1/8)$ and $\phi=0$ outside $B(P,3/16)$ with $|\nabla\phi|\leq C$. Hence,
proceeding as in the estimate $\eqref{f:3.5}$, we obtain
\begin{equation}\label{f:3.6}
\int_{\Delta_{1/4}}|u_0|^2 dS
\leq \int_{\partial D_{1/4}}|\phi u_0|^2 dS
\leq C\int_{D_{1/4}} \big(|u_0|^2 + |\nabla u_0|^2\big) dx
\leq C\int_{\Omega_{-}} \big(|u_0|^2 + |\nabla u_0|^2\big) dx.
\end{equation}
The problem is reduced to estimate the most right-hand side above.
Let $B(0,R)\setminus\Omega\subset \Omega_{-}$, and it is not hard to see that
\begin{equation*}
\mathrm{B}_{\mathcal{L}_0;B(0,R)\setminus\Omega}[u_0,u_0]
= \int_{\partial B(0,R)}\frac{\partial u_0}{\partial n_0} u_0 dS
-\int_{\partial \Omega}\frac{\partial u_0}{\partial n_0}u_0 dS.
\end{equation*}
Since
\begin{equation*}
\Big|\int_{\partial B(0,R)}\frac{\partial u_0}{\partial n_0} u_0 dS\Big|
\leq CR^{2-d}
\end{equation*}
goes to $0$ as $R\to\infty$, we may derive
\begin{equation}\label{f:3.14}
\frac{\lambda}{2}\int_{\Omega_{-}}\big(|u_0|^2 + |\nabla u_0|^2\big) dx
\leq \mathrm{B}_{\mathcal{L}_0;\Omega_{-}}[u_0,u_0] =
-\int_{\partial \Omega}\frac{\partial u_0}{\partial n_0}u_0 dS,
\end{equation}
where we use the estimate $\eqref{pri:2.2.2}$ in the first step. This coupled with the estimate $\eqref{f:3.6}$
leads to
\begin{equation*}
\int_{\partial\Omega}|u_0|^2 dS \leq C\int_{\partial \Omega}
\Big|\frac{\partial u_0}{\partial n_0}u_0\Big| dS.
\end{equation*}
Consequently, the desired estimate $\eqref{pri:3.7}$ will be done by Cauchy's inequality, and we have
completed the whole proof.
\end{proof}

\begin{corollary}
Assume the same conditions as in Lemma $\ref{lemma:3.5}$. Then we have
\begin{equation}\label{pri:3.13}
\int_{\partial\Omega}\big(|(\nabla u_0)_{-}|^2 + |(u_0)_{-}|^2 \big)dS
\leq C\int_{\partial\Omega}\Big|\Big(\frac{\partial u_0}{\partial n_0}\Big)_{-}\Big|^2dS,
\end{equation}
where the subscript
``$-$'' indicate nontangential limits taken outside $\overline{\Omega}$,
and $C$ depends on $\mu,\kappa,\lambda,d,m$ and $\Omega$.
\end{corollary}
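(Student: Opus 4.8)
The plan is to assemble the two inequalities proved in Lemma \ref{lemma:3.5} together with the energy identity \eqref{f:3.14} obtained in its proof. Since the subscript ``$-$'' merely records that every boundary trace is taken as a nontangential limit from $\Omega_{-}$, the estimate \eqref{pri:3.13} is essentially a repackaging of Lemma \ref{lemma:3.5}; the only genuine work is to eliminate the interior term $\int_{\Omega_{-}}|\nabla u_0|^2\,dx$ that still appears on the right-hand side of the second line of \eqref{pri:3.5}.

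First I would invoke \eqref{pri:3.7}, which already gives $\int_{\partial\Omega}|(u_0)_{-}|^2\,dS \leq C\int_{\partial\Omega}|(\partial u_0/\partial n_0)_{-}|^2\,dS$ and thus disposes of the $|(u_0)_{-}|^2$ contribution. Next, from the energy identity \eqref{f:3.14} one has
\[
\int_{\Omega_{-}}\big(|u_0|^2+|\nabla u_0|^2\big)\,dx
\leq C\Big|\int_{\partial\Omega}\frac{\partial u_0}{\partial n_0}\,u_0\,dS\Big|
\leq C\Big\|\Big(\frac{\partial u_0}{\partial n_0}\Big)_{-}\Big\|_{L^2(\partial\Omega)}\big\|(u_0)_{-}\big\|_{L^2(\partial\Omega)},
\]
and inserting \eqref{pri:3.7} into the last factor yields $\int_{\Omega_{-}}|\nabla u_0|^2\,dx \leq C\|(\partial u_0/\partial n_0)_{-}\|_{L^2(\partial\Omega)}^2$.

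Finally, substituting this bound into the second line of \eqref{pri:3.5} controls $\int_{\partial\Omega}|(\nabla u_0)_{-}|^2\,dS$ by $C\|(\partial u_0/\partial n_0)_{-}\|_{L^2(\partial\Omega)}^2$, and adding the $|(u_0)_{-}|^2$ estimate from the first step gives \eqref{pri:3.13}. I do not expect any real obstacle here; the only step requiring a little care is the application of Cauchy--Schwarz on $\partial\Omega$ followed by \eqref{pri:3.7} to absorb $\|(u_0)_{-}\|_{L^2(\partial\Omega)}$, which is exactly the device already used at the end of the proof of Lemma \ref{lemma:3.5}.
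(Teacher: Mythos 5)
Your proposal is correct and takes essentially the same route as the paper: both start from the energy identity \eqref{f:3.14}, use Cauchy--Schwarz and \eqref{pri:3.7} to absorb the interior term $\int_{\Omega_-}|\nabla u_0|^2\,dx$, and then conclude via the second line of \eqref{pri:3.5} together with \eqref{pri:3.7}. No substantive difference.
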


\begin{proof}
It follows from the estimate $\eqref{f:3.14}$ that
\begin{equation*}
\int_{\Omega_{-}} |\nabla u_0|^2 dx
\leq \int_{\partial\Omega}\Big|\frac{\partial u_0}{\partial n_0}\Big|
\big|u_0\big|dS,
\end{equation*}
and this together with the second line of $\eqref{pri:3.5}$ and $\eqref{pri:3.7}$
leads to the stated estimate $\eqref{pri:3.13}$.
\end{proof}

\subsection{Estimates for layer potentials}

Given $f\in L^p(\partial\Omega;\mathbb{R}^m)$ with $1<p<\infty$, the single layer potential is
defined by
\begin{equation}\label{def:3.1}
 \mathcal{S}_{\Theta}(f)(x) = \int_{\partial\Omega} \mathbf{\Gamma}_{\Theta}(x-y)f(y)dS(y),
\end{equation}
and the double layer potential is in the form of
\begin{equation}\label{def:3.2}
\mathcal{D}_{\Theta}(f)(x) = \int_{\partial\Omega}
\frac{\partial}{\partial n_{0}^*(y)}\big\{\mathbf{\Gamma}_{\Theta}(x-y)\big\}f(y)dS(y),
\end{equation}
where $\partial/\partial n_{0}^*(y) = n(y)\cdot \widehat{A}^*\nabla_y$.

Throughout this section, the subscript $\Theta$ may be given by $0$ or $\widehat{A}$.
If $\Theta = 0$, notation with such the subscript means that they are related to the operator
$\mathcal{L}_0$. In the case of $\Theta = \widehat{A}$, those symbols with this subscript are
associated with the homogeneous operator $L_0$.
For example, $\mathbf{\Gamma}_{0}$ and $\mathbf{\Gamma}_{\widehat{A}}$
represent the fundamental solution of $\mathcal{L}_0$ and $L_0$, respectively.
So do the above stated definitions of the single and double potential layers .

Define the truncated singular integral
\begin{equation}
\mathcal{T}_{\Theta}^\delta(f)(P) = \int_{y\in\partial\Omega\atop
|y-P|>\delta} \nabla \mathbf{\Gamma}_{\Theta}(P-y)f(y)dS(y),
\end{equation}
and then the singular integral operator and the associated maximal singular integral one may
be denoted by
\begin{equation}
\begin{aligned}
\mathcal{T}_{\Theta}(f)(P) & = \text{p.v.}\int_{\partial\Omega}
\nabla \mathbf{\Gamma}_{\Theta}(P-y)f(y)dS(y)
:= \lim_{\delta\to 0} \mathcal{T}_{\Theta}^\delta(f)(P), \\
&\qquad~\mathcal{T}_{\Theta}^*(f)(P) = \sup_{\delta>0}|T_{\Theta}^\delta(f)(P)|,
\end{aligned}
\end{equation}
respectively.

\begin{thm}\label{lemma:3.3}
Let $f\in L^p(\partial\Omega;\mathbb{R}^m)$ for $1<p<\infty$, and
assume $\lambda\geq\max\{\widehat{\lambda},\mu\}$.
Then $\mathcal{T}_{\Theta}(f)$ exists for a.e. $P\in\partial\Omega$ such that
\begin{equation}\label{pri:3.3}
\|\mathcal{T}_{\Theta}(f)\|_{L^p(\partial\Omega)} + \|\mathcal{T}_{\Theta}^*(f)\|_{L^p(\partial\Omega)}
\leq C_\Theta\|f\|_{L^p(\partial\Omega)},
\end{equation}
holds for $\Theta =0,\widehat{A}$,
in which $C_0$ depends only on $\mu,\kappa,\lambda,d,m,p$ and $\Omega$.
\end{thm}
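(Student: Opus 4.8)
The plan is to reduce the $L^p$ boundedness of $\mathcal{T}_\Theta$ and $\mathcal{T}_\Theta^*$ to the classical Calder\'on--Zygmund theory of singular integrals on Lipschitz surfaces, for which the archetype is the Coifman--McIntosh--Meyer theorem on the Cauchy integral and its vector-valued generalizations (see, e.g., \cite{CMM}). First I would recall that for $\Theta = \widehat A$, i.e. for the homogeneous constant-coefficient operator $L_0 = -\operatorname{div}(\widehat A\nabla)$, the fundamental solution $\mathbf{\Gamma}_{\widehat A}(x-y)$ is positively homogeneous of degree $2-d$ and smooth away from the origin; hence $\nabla\mathbf{\Gamma}_{\widehat A}(x-y)$ is a standard Calder\'on--Zygmund kernel of convolution type whose $L^p(\partial\Omega)$ boundedness (together with that of the maximal truncated operator) on a Lipschitz graph is exactly the content of the classical theory. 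This disposes of the case $\Theta = \widehat A$ and provides the value of $C_{\widehat A}$, depending only on $\mu,d,m,p$ and the Lipschitz character of $\Omega$.

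For $\Theta = 0$, the strategy is to write $\nabla\mathbf{\Gamma}_0 = \nabla\mathbf{\Gamma}_{\widehat A} + \nabla(\mathbf{\Gamma}_0 - \mathbf{\Gamma}_{\widehat A})$ and treat the two pieces separately. The leading piece is handled by the previous paragraph. For the remainder, the key input is comparing lemma~I (Lemma~\ref{lemma:3.2}): for $l\geq 1$ one has
\begin{equation*}
\big|\nabla^l[\mathbf{\Gamma}_0(x-y) - \mathbf{\Gamma}_{\widehat A}(x-y)]\big|
\leq C|x-y|^{3-d-l},
\end{equation*}
so that $\nabla(\mathbf{\Gamma}_0 - \mathbf{\Gamma}_{\widehat A})$ is bounded by $C|x-y|^{2-d}$ and its gradient by $C|x-y|^{1-d}$. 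On the $(d-1)$-dimensional surface $\partial\Omega$ the kernel $\nabla(\mathbf{\Gamma}_0-\mathbf{\Gamma}_{\widehat A})(P-y)$ is therefore only weakly singular --- it is locally integrable in $y$ on $\partial\Omega$ with a uniform bound --- so the associated integral operator is a genuine (non-principal-value) operator whose $L^p(\partial\Omega)\to L^p(\partial\Omega)$ boundedness, and that of its maximal truncation, follows from Schur's test (or simply from Young-type estimates using $\int_{\partial\Omega}|P-y|^{2-d}\,dS(y)\leq C$ for $\partial\Omega$ compact Lipschitz) together with the decay estimate $\eqref{pri:3.0}$ to control the tails at infinity if $\partial\Omega$ is unbounded, which it is not since $\Omega$ is bounded. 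One must also verify that $\mathcal{T}_0^\delta(f)(P)$ converges as $\delta\to0$ for a.e.\ $P$: this follows because convergence already holds for the $\widehat A$-part (classical theory) and the remainder part has an absolutely convergent integral, hence needs no principal value at all.

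Assembling the two contributions gives $\|\mathcal{T}_0(f)\|_{L^p(\partial\Omega)} + \|\mathcal{T}_0^*(f)\|_{L^p(\partial\Omega)} \leq C_{\widehat A}\|f\|_{L^p} + C\|f\|_{L^p} =: C_0\|f\|_{L^p(\partial\Omega)}$, with $C_0$ depending on $\mu,\kappa,\lambda,d,m,p$ and $\Omega$ --- the dependence on $\kappa$ and $\lambda$ entering only through the constant in Lemma~\ref{lemma:3.2} that governs the remainder. The main obstacle here is not conceptual but organizational: one must be careful that the constant from the comparing lemma really is uniform in the relevant parameters and that the splitting $\nabla\mathbf{\Gamma}_0 = \nabla\mathbf{\Gamma}_{\widehat A} + \nabla(\mathbf{\Gamma}_0-\mathbf{\Gamma}_{\widehat A})$ is legitimate at the level of principal-value operators (the $\widehat A$-part carries the full singularity and the cancellation, the remainder is harmless). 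I expect the trickiest bookkeeping to be the maximal truncated operator $\mathcal{T}_0^*$: for the $\widehat A$-part one invokes the classical maximal-function bound, while for the weakly singular remainder one dominates $|\mathcal{T}^{\delta}(f)|$ pointwise by $\int_{\partial\Omega}|P-y|^{2-d}|f(y)|\,dS(y)$ uniformly in $\delta$, hence by a fractional integral that is bounded on all $L^p$ with $1<p<\infty$ on the compact surface $\partial\Omega$.
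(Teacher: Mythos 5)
Your proof is correct and uses essentially the same strategy as the paper: establish the $\Theta=\widehat A$ case by citation (the paper invokes Gao's Theorem 1.1 rather than CMM, which is the right reference for constant-coefficient elliptic \emph{systems} on Lipschitz surfaces), and control $\nabla\mathbf{\Gamma}_0-\nabla\mathbf{\Gamma}_{\widehat A}$ via the comparing lemma~I so that the remainder kernel is weakly singular of order $|x-y|^{2-d}$ and its integral operator is bounded on $L^p(\partial\Omega)$. The only organizational difference is that the paper truncates at $\delta=1/\sqrt\lambda$, uses the decay estimate $\eqref{pri:3.0}$ on the far region, and arrives at the pointwise bound $\mathcal{T}_0^*(f)\leq 2\mathcal{T}_{\widehat A}^*(f)+C\mathrm{M}_{\partial\Omega}(f)$, whereas you dominate the remainder by the Riesz potential $I_1(f)$; both routes give the stated $L^p$ estimate.
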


\begin{proof}
The original idea may be found in \cite[Lemma 3.1]{SZW23}, and we provide a proof for the
sake of the completeness. Note that if we choose $\Theta = \widehat{A}$,
then the result $\eqref{pri:3.3}$ had already been established in \cite[Theorem 1.1]{GaoW}.
We now study the case of $\Theta = 0$. It is sufficient to estimate the integral
\begin{equation*}
 \Big|\int_{y\in\partial\Omega\atop
 |y-P|>\delta} \nabla \mathbf{\Gamma}_0(P-y)f(y)dS(y)\Big|,
\end{equation*}
and it will confront with two cases: (1) $\delta \geq 1/\sqrt{\lambda}$; (2) $\delta < 1/\sqrt{\lambda}$.
For (1), it follows from the estimate $\eqref{pri:3.0}$ that
\begin{equation}\label{f:3.3}
 \Big|\int_{y\in\partial\Omega\atop
 |y-P|>\delta} \nabla \mathbf{\Gamma}_0(P-y)f(y)dS(y)\Big|
 \leq C\int_{y\in\partial\Omega\atop
 |y-P|>\delta} \frac{|f(y)|}{|P-y|^{d}}dS(y) \leq C\mathrm{M}_{\partial\Omega}(f)(P).
\end{equation}
We proceed to investigate (2). In such the case, it is not hard to see that
\begin{equation}\label{f:3.4}
\begin{aligned}
 \Big|\int_{y\in\partial\Omega\atop
 |y-P|>\delta} \nabla \mathbf{\Gamma}_0(P-y)f(y)dS(y)\Big|
& \leq  \Big|\int_{y\in\partial\Omega\atop
 |y-P|\geq (1/\sqrt{\lambda})} \nabla \mathbf{\Gamma}_0(P-y)f(y)dS(y)\Big| \\
& +  \int_{y\in\partial\Omega\atop
 \delta <|y-P|< (1/\sqrt{\lambda})} \big|\nabla \mathbf{\Gamma}_{0}(P-y)
 - \nabla\mathbf{\Gamma}_{\widehat{A}}(P-y)\big||f(y)|dS(y) \\
& +  \Big|\int_{y\in\partial\Omega\atop
 \delta<|y-P|<(1/\sqrt{\lambda})} \nabla \mathbf{\Gamma}_{\widehat{A}}(P-y)f(y)dS(y)\Big| \\
&\leq 2\mathcal{T}_{\widehat{A}}^*(f)(P) + C\mathrm{M}_{\partial\Omega}(f)(P),
\end{aligned}
\end{equation}
where we use the estimate $\eqref{pri:3.4}$ in the last inequality. Combining the estimates
$\eqref{f:3.3}$ and $\eqref{f:3.4}$, we have
\begin{equation*}
 \mathcal{T}_0^*(f)(P) \leq \mathcal{T}_{\widehat{A}}^*(f)(P) + C\mathrm{M}_{\partial\Omega}(f)(P),
\end{equation*}
and this consequently implies the estimate $\eqref{pri:3.3}$. The proof is complete.
\end{proof}

\begin{lemma}\label{lemma:3.1}
Assume the same conditions as in Lemma $\ref{lemma:3.3}$.
Then for a.e. $P\in\partial\Omega$,
we have
\begin{equation}\label{Id:2.1}
\big(\nabla \mathcal{S}_{\Theta}(f)\big)_{\pm}(P)
= \pm\frac{1}{2} n(P)\mathbf{H}(n(P))f(P) + \emph{p.v.}\int_{\partial\Omega} \nabla \mathbf{\Gamma}_{\Theta}(P-y)f(y)dS(y)
\end{equation}
for $\Theta = 0, \widehat{A}$,
where $\mathbf{H}(n) = (\widehat{a}_{ij}^{\alpha\beta}n_in_j)^{-1}_{m\times m}$, and the subscripts
``$+$'' and ``$-$'' indicate nontangential limits taken inside $\Omega$ and outside $\overline{\Omega}$.
Moreover, we have
\begin{equation}\label{Id:2.2}
\Big(\frac{\partial \mathcal{S}_{\Theta}(f)}{\partial n_0}\Big)_{\pm} =
\Big(\pm\frac{1}{2}I + \mathcal{K}_{\Theta}\Big)(f)
\quad\emph{on}~\partial\Omega,
\end{equation}
where the integral operator $\mathcal{K}_{\Theta}$ is defined by
\begin{equation*}
\small
 \mathcal{K}_{\Theta}(f)(P)
 =\emph{p.v.}\int_{\partial\Omega} \frac{\partial}{\partial n_{0}(P)}
 \Big\{\mathbf{\Gamma}_{\Theta}(P-y)\Big\}f(y) dS(y),
\end{equation*}
and $\partial/\partial n_{0}(P) = n(P)\widehat{A}\nabla$.
\end{lemma}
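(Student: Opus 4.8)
The statement to prove is Lemma~\ref{lemma:3.1}, the jump relations for the single layer potential $\mathcal{S}_\Theta$ associated with $\mathcal{L}_0$ (for $\Theta=0$) and with $L_0$ (for $\Theta=\widehat A$).

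Let me think about this. The single layer potential for $L_0 = -\mathrm{div}(\widehat A\nabla)$ is classical—the jump relations are known (Gao-W, or the classical Verchota-type theory for second-order elliptic systems with constant coefficients). For $\Theta = \widehat A$, the result is essentially stated in the literature already cited. The new case is $\Theta = 0$, i.e. the operator $\mathcal{L}_0 = -\mathrm{div}(\widehat A\nabla + \widehat V) + \widehat B\nabla + \widehat c + \lambda I$ with constant coefficients but lower-order terms.

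Key idea: $\mathbf{\Gamma}_0(x-y) - \mathbf{\Gamma}_{\widehat A}(x-y)$ is much more regular near the diagonal than $\mathbf{\Gamma}_{\widehat A}$ itself. From Lemma~\ref{lemma:3.2} (comparing lemma I), $|\nabla^l[\mathbf{\Gamma}_0 - \mathbf{\Gamma}_{\widehat A}](x-y)| \le C|x-y|^{3-d-l}$, so the difference of gradients is $O(|x-y|^{2-d})$, which is weakly singular and gives a bounded—indeed Hölder continuous—integral operator on $L^p(\partial\Omega)$ with no jump. Hence $\nabla\mathcal{S}_0(f)$ and $\nabla\mathcal{S}_{\widehat A}(f)$ have the same jump across $\partial\Omega$, and the p.v. integrals differ by a continuous term which one absorbs into the p.v. integral defined with $\mathbf{\Gamma}_0$. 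The principal value $\mathcal{T}_0(f)$ exists a.e. by Theorem~\ref{lemma:3.3}. For the conormal-derivative relation \eqref{Id:2.2}, apply $n\cdot\widehat A\nabla$ to \eqref{Id:2.1}: the jump term becomes $\pm\tfrac12 n\cdot\widehat A\, n\,\mathbf H(n)f = \pm\tfrac12 f$ since $\mathbf H(n)$ is by definition the inverse of the matrix $(\widehat a_{ij}^{\alpha\beta}n_in_j)$, and the p.v. part becomes $\mathcal{K}_\Theta(f)$.

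So the plan is: First, record the classical jump formula \eqref{Id:2.1} for $\Theta=\widehat A$, citing \cite{GaoW} (or the standard Calderón–Zygmund/Verchota machinery, using that $L_0$ has constant coefficients and $\mathbf{\Gamma}_{\widehat A}$ is homogeneous of degree $2-d$). Second, write $\mathcal{S}_0(f) = \mathcal{S}_{\widehat A}(f) + \mathcal{R}(f)$ where $\mathcal{R}(f)(x) = \int_{\partial\Omega}[\mathbf{\Gamma}_0 - \mathbf{\Gamma}_{\widehat A}](x-y)f(y)dS(y)$, and show via \eqref{pri:3.4} with $l=1$ that $\nabla\mathcal{R}$ extends continuously up to $\partial\Omega$ from both sides, so it contributes no jump and the boundary trace of $\nabla\mathcal{R}(f)(P)$ equals $\int_{\partial\Omega}\nabla[\mathbf{\Gamma}_0-\mathbf{\Gamma}_{\widehat A}](P-y)f(y)dS(y)$ (an absolutely convergent integral). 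Third, add this to the $\Theta=\widehat A$ formula and collapse the two integrals into one p.v. integral with kernel $\nabla\mathbf{\Gamma}_0$, whose existence a.e. is guaranteed by Theorem~\ref{lemma:3.3}; this yields \eqref{Id:2.1} for $\Theta=0$. Fourth, contract \eqref{Id:2.1} with $n(P)\widehat A$ and use $n_i\widehat a_{ij}^{\alpha\beta}n_j \mathbf H(n) = I$ to get \eqref{Id:2.2}.

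The main obstacle—really the only nontrivial point—is justifying that $\nabla\mathcal{R}(f)$ has continuous (hence nontangential) boundary values equal to the direct integral, and that differentiation may be passed under the integral sign up to the boundary. This is handled by the decay $|\nabla\mathbf{\Gamma}_0 - \nabla\mathbf{\Gamma}_{\widehat A}|(z) \le C|z|^{2-d}$ from \eqref{pri:3.4}, together with a standard dominated-convergence/uniform-integrability argument over the $(d-1)$-dimensional Lipschitz surface $\partial\Omega$: since $2-d > -(d-1)$, the kernel is integrable on $\partial\Omega$ uniformly in $x$ in a neighborhood of $\partial\Omega$, so $\mathcal{R}(f)\in C^1$ up to $\partial\Omega$ on both sides and the traces agree. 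Everything else is either cited or a one-line algebraic contraction.
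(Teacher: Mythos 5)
Your proposal is correct and follows essentially the same route as the paper: both reduce $\Theta=0$ to the classical $\Theta=\widehat A$ jump formula from \cite[Lemma 1.4]{GaoW} via the comparing estimate $\eqref{pri:3.4}$, which makes $\nabla\mathbf{\Gamma}_0-\nabla\mathbf{\Gamma}_{\widehat A}$ weakly singular of order $|x-y|^{2-d}$ and hence jump-free. One small caveat in wording: for $f\in L^p(\partial\Omega)$ with $p<\infty$, the statement ``$\mathcal{R}(f)\in C^1$ up to $\partial\Omega$'' is too strong; the dominated-convergence argument (using $|x-y|\gtrsim|P-y|$ for $x\in\Lambda_{N_0}^{\pm}(P)$) gives existence of the nontangential limit only at the a.e. points $P$ where $\int_{\partial\Omega}|P-y|^{2-d}|f(y)|\,dS(y)<\infty$, which is exactly what the lemma asserts and what the paper's quantitative estimate in terms of $\mathrm{M}_{\partial\Omega}(f)(P)$ makes explicit.
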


\begin{proof}
We first mention that in the case of $\Theta = \widehat{A}$,
the identities $\eqref{Id:2.1}$ and $\eqref{Id:2.2}$
have been well known in \cite[Lemma 1.4]{GaoW}, while we focus on the case $\Theta = 0$ here.
We can employ the idea developed in
in \cite[Lemma 2.3]{SZW23} or \cite[Theorem 4.4]{SZW24} to prove our results directly. However,
it is possible to provide another one due to the constant coefficients, and the reader will realize
the benefits in the later sections although the idea is not very new in today's view.

To obtain $\eqref{Id:2.1}$, let $r = |x-P|$ where $x\in\Lambda_{N_0}^{\pm}(P)$.
For any $t>0$, we compute the following quantity
\begin{equation*}
\Big|\nabla\mathcal{S}_0(f)(x) - \nabla\mathcal{S}_{\widehat{A}}(f)(x)
- \mathcal{T}_0^{tr}(f)(P) + \mathcal{T}_{\widehat{A}}^{tr}(f)(P)\Big|,
\end{equation*}
which is controlled by
\begin{equation*}
\begin{aligned}
&\int_{y\in\partial\Omega\atop
|y-P|>tr} \big|\nabla\mathbf{\Gamma}_0(x-y)-\nabla\mathbf{\Gamma}_0(P-y)\big||f(y)|dS(y) \\
&+\int_{y\in\partial\Omega\atop
|y-P|>tr} \big|\nabla\mathbf{\Gamma}_{\widehat{A}}(x-y)-\nabla\mathbf{\Gamma}_{\widehat{A}}(P-y)\big|
|f(y)|dS(y) \\
&+\int_{y\in\partial\Omega\atop
|y-P|\leq tr} \big|\nabla\mathbf{\Gamma}_0(x-y)-\nabla\mathbf{\Gamma}_{\widehat{A}}(x-y)\big|
|f(y)|dS(y) := I_1 + I_2 + I_3.
\end{aligned}
\end{equation*}
In fact, the calculation on $I_1$ is similar to that on $I_2$, and we take $I_1$ for example.
\begin{equation*}
\begin{aligned}
I_1 \leq Cr\int_{y\in\Omega\atop
|P-y|>tr} \frac{|f(y)|dS(y)}{|P-y|^d}
\leq Cr\sum_{k=0}^{\infty}\frac{(2^{k}tr)^{-1}}{(2^ktr)^{d-1}}\int_{y\in\Omega\atop
|P-y|\leq 2^{k+1}tr}|f(y)| dS(y)
\leq \frac{C}{t}\mathrm{M}_{\partial\Omega}(f)(P),
\end{aligned}
\end{equation*}
where we use the facts
\begin{equation*}
\big|\nabla\mathbf{\Gamma}_0(x-y)-\nabla\mathbf{\Gamma}_0(P-y)\big|
\leq Cr\max_{z\in\Lambda_{N_0}^{\pm}(P)}|\nabla^2\mathbf{\Gamma}_0(z-y)|
\end{equation*}
and
\begin{equation*}
|P-y|\leq |P-z| + |z-y| \leq (N_0+1)|z-y|,
\end{equation*}
as well as the estimate $\eqref{pri:3.0}$ in the first inequality.
By the same token, it follows from the estimate $\eqref{pri:3.4}$ that
\begin{equation*}
I_3 \leq \int_{y\in\partial\Omega\atop
|P-y|\leq tr} \frac{|f(y)|dS(y)}{|x-y|^{d-2}}
\leq Ct^{d-1}r\mathrm{M}_{\partial\Omega}(f)(P),
\end{equation*}
where we employ the observation $|x-y|\geq \text{dist}(x,\partial\Omega)\geq r/N_0$. Thus we have
\begin{equation}
I_1 + I_2 + I_3
\leq C\Big\{t^{-1}+t^{d-1}r\Big\}\mathrm{M}_{\partial\Omega}(f)(P)
\leq C\sqrt[d]{r}\mathrm{M}_{\partial\Omega}(f)(P),
\end{equation}
where we choose $t=1/\sqrt[d]{r}$. Consequently, let $r\to 0$,
\begin{equation*}
\lim_{x\to P\atop x\in\Lambda_{N_0}^{\pm}(P)}
\nabla\mathcal{S}_0(f)(x) - \nabla\mathcal{S}_{\widehat{A}}(f)(x)
=
(\nabla\mathcal{S}_0(f))_{\pm}(P) - (\nabla\mathcal{S}_{\widehat{A}}(f))_{\pm}(P)
= \mathcal{T}_0(f)(P) - \mathcal{T}_{\widehat{A}}(f)(P).
\end{equation*}
This together with the known result (see \cite[Lemma 1.4]{GaoW})
\begin{equation*}
(\nabla\mathcal{S}_{\widehat{A}}(f))_{\pm}(P)
= \pm\frac{1}{2} n(P)\mathbf{H}(n(P))f(P) + \text{p.v.}\int_{\partial\Omega}
\nabla \mathbf{\Gamma}_{\widehat{A}}(P-y)f(y)dS(y)
\end{equation*}
implies the identity $\eqref{Id:2.1}$, and therefore the later one $\eqref{Id:2.2}$ immediately follows
from the definition of conormal derivative.
Up to now, we have completed the proof.
\end{proof}

\begin{lemma}\label{lemma:3.8}
Let $f\in L^p(\partial\Omega;\mathbb{R}^m)$ with $1<p<\infty$. Then  we have
\begin{equation}\label{Id:2.3}
\big(\mathcal{D}_{0}(f)\big)_{\pm} =
\Big(\mp\frac{1}{2}I + \mathcal{K}_{0}^*\Big)(f)
\quad\emph{on}~\partial\Omega,
\end{equation}
where $\mathcal{K}_{0}^*$ is the dual operator of $\mathcal{K}_0$, defined by
\begin{equation*}
\mathcal{K}_{0}^*(f)(P)
= \emph{p.v.}\int_{\partial\Omega}
\frac{\partial}{\partial n_0^*(y)}\Big\{\mathbf{\Gamma}_0(P,y)\Big\}f(y)dS(y)
\end{equation*}
for a.e. $P\in\partial\Omega$.
\end{lemma}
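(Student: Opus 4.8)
The plan is to derive the jump relation for the double layer potential $\mathcal{D}_0$ from the already–established jump relation \eqref{Id:2.2} for the conormal derivative of the single layer potential $\mathcal{S}_0$, using the duality between $\mathcal{L}_0$ and its adjoint $\mathcal{L}_0^*$. The crucial observation is that the double layer kernel $\frac{\partial}{\partial n_0^*(y)}\{\mathbf{\Gamma}_0(x-y)\}$ is, up to a transpose, precisely the conormal derivative kernel appearing in $\mathcal{K}_0^*$ when read against the fundamental solution ${^*\mathbf{\Gamma}}_0$ of $\mathcal{L}_0^*$; indeed $\mathbf{\Gamma}_0^*(x-y)=[\mathbf{\Gamma}_0(y-x)]^t$ by the identity ${^*\mathbf{\Gamma}}_\varepsilon(x,y)=[\mathbf{\Gamma}_\varepsilon(y,x)]^t$ from Theorem~\ref{thm:2.3.1}. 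Thus $\mathcal{D}_0$ for $\mathcal{L}_0$ is the analogue of $\mathcal{S}_{\mathcal{L}_0^*}$'s conormal derivative operator, and the jump term should be $\mp\frac12 I$ with sign opposite to that in \eqref{Id:2.2}.

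Concretely, first I would fix $f\in L^p(\partial\Omega;\mathbb{R}^m)$ and, exactly as in the proof of Lemma~\ref{lemma:3.1}, reduce the problem to the constant–coefficient homogeneous case: writing $\mathcal{D}_0=\mathcal{D}_{\widehat A}+(\mathcal{D}_0-\mathcal{D}_{\widehat A})$, the difference $\mathcal{D}_0-\mathcal{D}_{\widehat A}$ has kernel $\frac{\partial}{\partial n_0^*(y)}\{\mathbf{\Gamma}_0(x-y)-\mathbf{\Gamma}_{\widehat A}(x-y)\}$, which by comparing lemma~I \eqref{pri:3.4} (with $l=1$) is bounded by $C|x-y|^{2-d}$, hence defines a weakly singular, non-principal-value operator whose nontangential limits from both sides coincide and equal the principal-value integral with no jump. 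Therefore the jump of $\mathcal{D}_0$ equals the jump of $\mathcal{D}_{\widehat A}$, and for the latter the relation $(\mathcal{D}_{\widehat A}(f))_\pm=(\mp\frac12 I+\mathcal{K}_{\widehat A}^*)(f)$ is the classical one already recorded for $L_0$ in \cite[Lemma 1.4]{GaoW}. Combining, $(\mathcal{D}_0(f))_\pm=\mp\frac12 f+\mathcal{K}_{\widehat A}^*(f)+(\mathcal{D}_0-\mathcal{D}_{\widehat A})(f)\big|_{\partial\Omega}=\mp\frac12 f+\mathcal{K}_0^*(f)$, since the last two terms reassemble into the principal-value integral defining $\mathcal{K}_0^*$.

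The one point requiring care — and the main obstacle — is justifying that the difference operator $\mathcal{D}_0-\mathcal{D}_{\widehat A}$ genuinely has continuous nontangential boundary values with no jump, i.e. that the bound $|x-y|^{2-d}$ together with the Lipschitz character of $\partial\Omega$ suffices. This is the standard fact that a kernel of order $|x-y|^{2-d}$ against surface measure on a $(d-1)$-dimensional Lipschitz graph yields an operator that is continuous up to the boundary (no principal value needed), which one checks by the usual splitting of $\partial\Omega$ into the portion within distance $tr$ of $P$ and its complement, estimating the near part by $\int_{|y-P|\le tr}|x-y|^{2-d}\,dS(y)\le C(tr)\,\mathrm{M}_{\partial\Omega}(f)(P)\to 0$ and using equicontinuity of the far part — exactly the argument already executed for $I_3$ in the proof of Lemma~\ref{lemma:3.1}. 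Once this is in hand, the identity $L^p$-boundedness of $\mathcal{K}_0^*$ follows automatically as the dual statement of Theorem~\ref{lemma:3.3} for $\mathcal{K}_0$ (which is the conormal-trace operator of $\mathcal{S}_0$ bounded on $L^{p'}$), so no additional work is needed there. I would present the proof in this "subtract off the homogeneous operator, invoke the classical jump relation, and reassemble" form, mirroring the structure of the proof of Lemma~\ref{lemma:3.1}.
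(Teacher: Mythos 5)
Your proposal is correct, but it follows a genuinely different route from the paper's. The paper proves Lemma~\ref{lemma:3.8} by reducing the double-layer jump to the single-layer jump already established in Lemma~\ref{lemma:3.1}: it observes the algebraic identity $\mathcal{D}_0(f)(x)=-\nabla\mathcal{S}_0(n\widehat A^*f)(x)$ (contracting the $\nabla$-index against $n\widehat A^*$ and using $\nabla_P\mathbf\Gamma_0(P-y)=-\nabla_y\mathbf\Gamma_0(P-y)$), then simply applies \eqref{Id:2.1} with density $n\widehat A^*f$ and notes that the jump term $\pm\tfrac12 n\mathbf H(n)n\widehat A^*f$ simplifies to $\mp\tfrac12 f$ by the definition of $\mathbf H$. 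Your proof instead decomposes $\mathcal{D}_0=\mathcal{D}_{\widehat A}+(\mathcal{D}_0-\mathcal{D}_{\widehat A})$, uses comparing lemma~I \eqref{pri:3.4} (with $l=1$) to show the difference has weakly singular kernel $O(|x-y|^{2-d})$ with no jump, and invokes the classical jump relation for the constant-coefficient $\mathcal{D}_{\widehat A}$. Both arguments are sound; the paper's is shorter because it leverages Lemma~\ref{lemma:3.1} verbatim and avoids repeating the constant-coefficient comparison, whereas yours essentially re-runs the same "compare to $\widehat A$" argument already executed for $\mathcal{S}_0$ in Lemma~\ref{lemma:3.1}. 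Your approach has the minor merit of making the "no jump in the weakly singular correction" mechanism explicit, which is the structural reason why the jump constants of $\mathcal{D}_0$ and $\mathcal{D}_{\widehat A}$ coincide. One small caveat: be sure to also include the verification that $\mathcal{K}_0^*$ as defined is in fact the adjoint of $\mathcal{K}_0$ — the lemma asserts this, and the paper closes the proof by establishing the identity $\int_{\partial\Omega}\mathcal{K}_0^*(f)\,g\,dS=\int_{\partial\Omega}f\,\mathcal{K}_0(g)\,dS$, which your outline leaves implicit.
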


\begin{proof}
Recalling the
definition of the double layer potential, we observe that
\begin{equation*}
\mathcal{D}_0(f)(x) = -\nabla \mathcal{S}_0(n\widehat{A}^*f)(x)
\end{equation*}
where $f\in L^p(\partial\Omega;\mathbb{R}^d)$ and $x\in\mathbb{R}^d\setminus\partial\Omega$.
This together with the identity $\eqref{Id:2.1}$
leads  to
\begin{equation*}
\begin{aligned}
\big[\mathcal{D}_0(f)\big]_{\pm}(P)
&= - \big[\nabla\mathcal{S}_0(n\widehat{A}^*f)\big]_{\pm}(P)\\
&=  \mp (1/2)I(f)(P)
+ \text{p.v.}\int_{\partial\Omega} n(y)\widehat{A}^*\nabla_y\mathbf{\Gamma}_0(P-y)f(y)dS(y)
\end{aligned}
\end{equation*}
for a.e. $P\in\partial\Omega$,
where we use the fact that $\nabla_P\mathbf{\Gamma}_0(P-y)
=- \nabla_y\mathbf{\Gamma}_0(P-y)$.

Due to the estimates $\eqref{pri:3.3}$
it is not hard to infer that $\mathcal{K}_0^*(f)\in L^p(\partial\Omega;\mathbb{R}^m)$ with
$1<p<\infty$. Also, for any $g\in L^{p^\prime}(\partial\Omega;\mathbb{R}^m)$ with $1/p+1/p^\prime=1$ we have
\begin{equation}\label{eq:3.3}
\int_{\partial\Omega}\mathcal{K}_0^*(f)(P)g(P)dS(P)
= \int_{\partial\Omega}f(y)\mathcal{K}_0(g)(y)dS(y),
\end{equation}
which reveals that $\mathcal{K}_0^*$ is the dual operator of $\mathcal{K}_0$,
and this ends the proof.
\end{proof}

\begin{lemma}\label{lemma:3.6}
Let $1<p<\infty$, and the operators $\mathcal{K}_{0}$ and $\mathcal{K}_{\widehat{A}}$ be given as
in Lemma $\ref{lemma:3.1}$. Then for any $f\in L^p(\partial\Omega;\mathbb{R}^m)$
we have
\begin{equation}\label{pri:3.10}
\big\|(\mathcal{K}_{\widehat{A}}-\mathcal{K}_{0})(f)\big\|_{W^{1,p}(\partial\Omega)}
\leq C\|f\|_{L^p(\partial\Omega)}
\end{equation}
and so the operator $\mathcal{K}_{\widehat{A}}-\mathcal{K}_{0}$ is
compact on $L^p(\partial\Omega;\mathbb{R}^m)$, where $C$ depends on $\mu,\kappa,\lambda,d,m,p$ and
$\Omega$.
\end{lemma}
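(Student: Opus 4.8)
The plan is to compare the two kernels directly and extract from the comparing lemma (Lemma~\ref{lemma:3.2}) that the difference operator has a less singular kernel, hence gains one derivative. First I would write, for $f\in L^p(\partial\Omega;\mathbb{R}^m)$,
\begin{equation*}
(\mathcal{K}_{\widehat{A}}-\mathcal{K}_0)(f)(P)
= \emph{p.v.}\int_{\partial\Omega}
n(P)\widehat{A}\nabla_P\big[\mathbf{\Gamma}_{\widehat{A}}(P-y)-\mathbf{\Gamma}_0(P-y)\big]f(y)\,dS(y),
\end{equation*}
and observe that by $\eqref{pri:3.4}$ with $l=1$ the kernel $K(P,y):=n(P)\widehat{A}\nabla_P[\mathbf{\Gamma}_{\widehat{A}}-\mathbf{\Gamma}_0](P-y)$ satisfies $|K(P,y)|\leq C|P-y|^{2-d}$, i.e.\ it is weakly singular (indeed integrable on $\partial\Omega$, a $(d-1)$-dimensional surface). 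So the ``p.v.'' is actually an absolutely convergent integral, and $\mathcal{K}_{\widehat{A}}-\mathcal{K}_0$ is already bounded on $L^p(\partial\Omega)$ with no cancellation needed. The point of $\eqref{pri:3.10}$ is the extra tangential derivative.

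Next I would differentiate under the integral sign in a tangential direction. Writing $\partial_{\mathrm{tan}}$ schematically for a tangential derivative on $\partial\Omega$, one has $\partial_{\mathrm{tan},P} K(P,y)$ controlled by $|\partial_{\mathrm{tan}}n(P)|\,|\nabla_P(\mathbf{\Gamma}_{\widehat{A}}-\mathbf{\Gamma}_0)| + |\nabla_P^2(\mathbf{\Gamma}_{\widehat{A}}-\mathbf{\Gamma}_0)|$. The first term is bounded by $C|P-y|^{2-d}$ times the $L^\infty$ bound on $\nabla n$ (here the Lipschitz character of $\Omega$ and the fact that $\partial\Omega$ is locally a Lipschitz graph enter; for a genuinely $W^{1,p}$ statement one works with difference quotients so only the BMO/Lipschitz bound on $n$ is used). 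For the second term, apply $\eqref{pri:3.4}$ with $l=2$, giving $|\nabla_P^2(\mathbf{\Gamma}_{\widehat{A}}-\mathbf{\Gamma}_0)(P-y)|\leq C|P-y|^{1-d}$. Thus $|\partial_{\mathrm{tan},P}K(P,y)|\leq C|P-y|^{1-d}$, which is precisely the borderline singularity of a fractional integral operator of order one on the $(d-1)$-dimensional surface $\partial\Omega$. Such an operator maps $L^p(\partial\Omega)\to L^p(\partial\Omega)$ boundedly (indeed it gains $L^p\to L^q$ for a range of $q$), by the standard Hardy--Littlewood--Sobolev / Schur-test argument on spaces of homogeneous type. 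Combining the $L^p$ bound on $\mathcal{K}_{\widehat{A}}-\mathcal{K}_0$ itself with the $L^p$ bound on its tangential derivative yields $\eqref{pri:3.10}$.

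Finally, the compactness assertion follows immediately: the inclusion $W^{1,p}(\partial\Omega)\hookrightarrow L^p(\partial\Omega)$ is compact (Rellich--Kondrachov on the compact Lipschitz manifold $\partial\Omega$), so $\mathcal{K}_{\widehat{A}}-\mathcal{K}_0$, being bounded $L^p(\partial\Omega)\to W^{1,p}(\partial\Omega)$, is compact as an operator on $L^p(\partial\Omega;\mathbb{R}^m)$.

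The main obstacle, and the step to handle with care, is the tangential differentiation of the kernel on a merely Lipschitz boundary: one cannot literally write $\partial_{\mathrm{tan}}n$, so the rigorous argument should be phrased via difference quotients of $K(P,y)$ along $\partial\Omega$, using that the difference $\mathbf{\Gamma}_{\widehat{A}}-\mathbf{\Gamma}_0$ and its first derivatives are, by $\eqref{pri:3.4}$, Lipschitz away from the diagonal with the stated quantitative bounds, while the translation by a tangential vector of length $h$ changes $P-y$ by $O(h)$ and changes $n(P)$ by $O(h)$ in $L^\infty$ (Lipschitz graph). One then invokes the characterization of $W^{1,p}(\partial\Omega)$ by $L^p$-boundedness of difference quotients. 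Everything else — the Schur test for the fractional-integral bound and the Rellich compactness — is standard and will only be sketched.
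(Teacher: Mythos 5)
Your first step — controlling $|K(P,y)|\leq C|P-y|^{2-d}$ via $\eqref{pri:3.4}$ with $l=1$, observing the kernel is integrable so that $\mathcal{K}_{\widehat A}-\mathcal{K}_0$ is a smoothing (fractional-integral) operator — agrees with the paper. The compactness step at the end (Rellich--Kondrachov for $W^{1,p}(\partial\Omega)\hookrightarrow L^p(\partial\Omega)$) is also the same. The gap is in the middle, and it is a genuine one.

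After tangentially differentiating under the integral you correctly obtain, via $\eqref{pri:3.4}$ with $l=2$, a kernel bounded by $C|P-y|^{1-d}$. But on the $(d-1)$-dimensional surface $\partial\Omega$, this is \emph{not} a fractional integral of order one: the fractional integral of order one on $\partial\Omega$ has kernel $\sim|P-y|^{2-d}$, which is what you had \emph{before} differentiating. The kernel $|P-y|^{1-d}=|P-y|^{-(d-1)}$ sits exactly at the critical homogeneity of a singular integral operator, and the size estimate alone does not yield $L^p$-boundedness (indeed $\int_{\partial\Omega}|P-y|^{1-d}\,dS(y)$ diverges logarithmically at the diagonal). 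A Schur test or Hardy--Littlewood--Sobolev argument fails at this exponent; cancellation in the kernel is essential. This is precisely why the paper, after establishing the size estimate $\eqref{f:3.10}$ on $\nabla\tilde G$, also proves the H\"ormander condition $\eqref{f:3.11}$, namely
\begin{equation*}
\big|\nabla\tilde G(x-y)-\nabla\tilde G(z-y)\big|\leq C\frac{|x-z|}{|x-y|^{d}}\quad\text{when}~|x-y|>2|x-z|,
\end{equation*}
obtained from $\eqref{pri:3.4}$ with $l=3$ via the mean value theorem. The pair (size estimate, H\"ormander condition) then yields $L^p$-boundedness of the tangentially differentiated operator by Calder\'on--Zygmund theory on the space of homogeneous type $\partial\Omega$. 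To repair your argument, replace the Schur/HLS step with: apply $\eqref{pri:3.4}$ at order $l=3$ to establish the H\"ormander regularity of the kernel, then invoke the standard singular-integral theorem (or transfer to \cite{CMM}-type results already used for $\mathcal{T}_{\widehat A}$).

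Your concern about tangential differentiation of $n(P)$ on a Lipschitz boundary is legitimate, but it can be sidestepped, as the paper implicitly does: the factor $n(P)\widehat A$ is an $L^\infty$ matrix multiplier acting \emph{after} the integral operator with kernel $\nabla_P[\mathbf{\Gamma}_{\widehat A}-\mathbf{\Gamma}_0](P-y)$, so the difference quotient (or $\nabla_{\mathrm{tan}}$) falls only on the convolution-type kernel in $\mathbb{R}^d$, not on $n$. That is why the paper states the problem in terms of $\tilde G(x)=\nabla\mathbf{\Gamma}_0(x)-\nabla\mathbf{\Gamma}_{\widehat A}(x)$ as a function on $\mathbb{R}^d$ and proves the H\"ormander condition there.
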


\begin{proof}
The proof is straightforward. Let $f\in L^p(\partial\Omega;\mathbb{R}^m)$ with
$1<p<\infty$, and $P\in\partial\Omega$.
In view of the estimates $\eqref{pri:3.0}$ and $\eqref{pri:3.4}$ we obtain
\begin{equation}\label{f:3.7}
\big|(\mathcal{K}_0- \mathcal{K}_{\widehat{A}})(f)(P)\big|
\leq C\int_{\partial\Omega}\frac{|f(y)|}{|P-y|^{d-2}}dS(y) := I_1(f)(P),
\end{equation}
where $I_1(f)$ denotes the Riesz potential of order $1$ on $\partial\Omega$.
Thus by the fractional integral estimates we have
\begin{equation}\label{f:3.12}
\|(\mathcal{K}_0- \mathcal{K}_{\widehat{A}})(f)\|_{L^q(\partial\Omega)}
\leq C\|f\|_{L^p(\partial\Omega)},
\end{equation}
where $1/q=1/p-1/(d-1)$. Since $W^{1,p}(\partial\Omega)\subset
\subset L^q(\partial\Omega)\subset L^p(\partial\Omega)$,
it suffices to verify that
\begin{equation}\label{f:3.13}
\|\nabla_{\text{tan}}(\mathcal{K}_0- \mathcal{K}_{\widehat{A}})(f)\|_{L^p(\partial\Omega)}
\leq C\|f\|_{L^p(\partial\Omega)},
\end{equation}
which is equivalent to showing that
\begin{equation}\label{f:3.10}
|\nabla \tilde{G}(x-y)|
\leq \frac{C}{|x-y|^{d-1}},
\end{equation}
where $\tilde{G}(x) = \nabla \mathbf{\Gamma}_0(x)-\nabla \mathbf{\Gamma}_{\widehat{A}}(x)$, and
\begin{equation}\label{f:3.11}
\big|\nabla \tilde{G}(x-y)- \nabla \tilde{G}(z-y)\big| \leq C\frac{|x-z|}{|x-y|^{d}}
\quad \text{if}\quad |x-y|>2|x-z|.
\end{equation}
The estimate $\eqref{f:3.10}$ has already been established in Lemma $\ref{lemma:3.2}$, while
the estimate $\eqref{f:3.11}$ is also based upon it. By mean-value theorem,
\begin{equation*}
\begin{aligned}
\big|\nabla \tilde{G}(x-y)- \nabla \tilde{G}(z-y)\big|
&\leq \sup_{t\in[0,1]}\big|\nabla^2 \tilde{G}(tz+(1-t)x-y)\big||x-z|\\
&\leq C\sup_{t\in[0,1]}\frac{|x-z|}{|tz+(1-t)x-y|^d}\\
&\leq C\frac{|x-z|}{|x-y|^{d}}
\end{aligned}
\end{equation*}
where we employ the estimate $\eqref{pri:3.4}$ in the second step, and in the last one we use the
fact that $|tz+(1-t)x-y|>(|x-y|/2)$ due to the condition $|x-y|>2|x-z|$.
Hence, combining the estimates $\eqref{f:3.12}$ and $\eqref{f:3.13}$ leads to
the desired estimate $\eqref{pri:3.10}$. We ends the proof by mention that
the stated estimate $\eqref{f:3.11}$ is referred to as the H\"ormander condition.
\end{proof}

\begin{lemma}\label{lemma:3.9}
Given $f\in L^p(\partial\Omega;\mathbb{R}^m)$ with $1<p<\infty$,
let $w_0=\mathcal{D}_{0}(f)$ be the double layer potential. Then we have
\begin{equation}\label{pri:3.14}
\|(w_0)^*\|_{L^p(\partial\Omega)}
\leq C\|f\|_{L^p(\partial\Omega)},
\end{equation}
where $C$ depends on $\mu,\tau,\kappa,\lambda,m,d,p$ and $\Omega$.
\end{lemma}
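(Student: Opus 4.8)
The plan is to reduce the nontangential maximal function bound for the double layer potential $w_0 = \mathcal{D}_0(f)$ to the corresponding estimate for the homogeneous operator $L_0$, for which the result is already available in \cite{GaoW}, by exploiting the kernel comparison $\eqref{pri:3.4}$ from Lemma $\ref{lemma:3.2}$. Recall from the proof of Lemma $\ref{lemma:3.8}$ that $\mathcal{D}_0(f)(x) = -\nabla\mathcal{S}_0(n\widehat{A}^*f)(x)$ for $x \in \mathbb{R}^d\setminus\partial\Omega$, so it suffices to control $\|(\nabla\mathcal{S}_0(g))^*\|_{L^p(\partial\Omega)}$ with $g = n\widehat{A}^*f \in L^p(\partial\Omega;\mathbb{R}^{m})$. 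Writing
\begin{equation*}
\nabla\mathcal{S}_0(g)(x) = \nabla\mathcal{S}_{\widehat{A}}(g)(x) + \int_{\partial\Omega}\big[\nabla\mathbf{\Gamma}_0(x-y) - \nabla\mathbf{\Gamma}_{\widehat{A}}(x-y)\big]g(y)\,dS(y),
\end{equation*}
the first term is handled by the known homogeneous estimate $\|(\nabla\mathcal{S}_{\widehat{A}}(g))^*\|_{L^p(\partial\Omega)} \leq C\|g\|_{L^p(\partial\Omega)}$ (this is contained in \cite[Theorem 1.1]{GaoW}, equivalently in $\eqref{pri:3.3}$ together with the jump relations), so the task is to bound the nontangential maximal function of the difference term.

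For the difference term, set $\widetilde{G}(x) = \nabla\mathbf{\Gamma}_0(x) - \nabla\mathbf{\Gamma}_{\widehat{A}}(x)$ and let $x \in \Lambda_{N_0}^{\pm}(P)$ with $r = \delta(x) = \operatorname{dist}(x,\partial\Omega) \approx |x-P|$. By Lemma $\ref{lemma:3.2}$ with $l=1$ we have $|\widetilde{G}(x-y)| \leq C|x-y|^{3-d}$, which is a kernel of order $3-d$ rather than the singular order $2-d$; hence the operator $g \mapsto \int_{\partial\Omega}\widetilde{G}(x-y)g(y)\,dS(y)$ is, for $x$ fixed, essentially a fractional integral on $\partial\Omega$ with kernel $|x-y|^{-(d-3)}$ on a $(d-1)$-dimensional surface. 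Splitting $\partial\Omega$ into the caps $|y-P| \leq 2r$, $2^k r < |y-P| \leq 2^{k+1}r$, and using $|x-y| \geq \max\{r/N_0, c|y-P|\}$, each annular piece contributes $\lesssim (2^k r)^{3-d}\cdot(2^k r)^{d-1}\dashint_{|y-P|\le 2^{k+1}r}|g| = C(2^k r)^2\,\mathrm{M}_{\partial\Omega}(g)(P)$; the innermost cap contributes $\lesssim r^2\,\mathrm{M}_{\partial\Omega}(g)(P)$. Since $r \leq R_0/100$ is bounded by the diameter, summing the finitely-many-scales part inside $\partial\Omega$ and the tail where $|y-P|\gtrsim 1$ (on which $\widetilde{G}$ is bounded by $\eqref{pri:3.0}$ and $\eqref{pri:3.4}$, or simply by integrability) yields
\begin{equation*}
\Big|\int_{\partial\Omega}\widetilde{G}(x-y)g(y)\,dS(y)\Big| \leq C\,\mathrm{M}_{\partial\Omega}(g)(P)
\end{equation*}
uniformly in $x \in \Lambda_{N_0}^{\pm}(P)$, whence the nontangential maximal function of the difference term at $P$ is bounded by $C\,\mathrm{M}_{\partial\Omega}(g)(P)$.

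Combining the two contributions gives $(w_0)^*(P) \leq C\big[(\nabla\mathcal{S}_{\widehat{A}}(g))^*(P) + \mathrm{M}_{\partial\Omega}(g)(P)\big]$, and taking $L^p(\partial\Omega)$ norms, using the $L^p$-boundedness of $\mathrm{M}_{\partial\Omega}$ for $1<p<\infty$ and $\|g\|_{L^p(\partial\Omega)} = \|n\widehat{A}^*f\|_{L^p(\partial\Omega)} \leq C\|f\|_{L^p(\partial\Omega)}$, produces the claimed estimate $\eqref{pri:3.14}$ with $C$ depending only on $\mu,\tau,\kappa,\lambda,m,d,p$ and $\Omega$. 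I expect the main obstacle to be purely bookkeeping: making the annular decomposition of $\partial\Omega$ rigorous (in particular that for $x$ close to $P$ the surface measure of $\{y \in \partial\Omega: 2^k r < |y-P| \leq 2^{k+1}r\}$ is $\lesssim (2^k r)^{d-1}$, which uses the Lipschitz character of $\Omega$) and checking that the comparison estimate $\eqref{pri:3.4}$ is being invoked only for $l=1$, where it reads $|\widetilde{G}| \leq C|x-y|^{3-d}$ — no higher derivatives of $\widetilde{G}$ are needed here, in contrast to Lemma $\ref{lemma:3.6}$. Everything else is a routine adaptation of the standard nontangential maximal function machinery for homogeneous second-order elliptic systems in Lipschitz domains.
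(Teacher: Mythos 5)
Your proposal is correct in spirit and reaches the conclusion, but it takes a somewhat different route from the paper and contains a slip that should be flagged. First, the slip: you quote Lemma~\ref{lemma:3.2} with $l=1$ and write $|\widetilde{G}(x-y)|\le C|x-y|^{3-d}$, but the estimate \eqref{pri:3.4} with $l=1$ actually reads $|\nabla[\mathbf{\Gamma}_0-\mathbf{\Gamma}_{\widehat{A}}]|\le C|x-y|^{3-d-1}=C|x-y|^{2-d}$; since $\widetilde G=\nabla(\mathbf{\Gamma}_0-\mathbf{\Gamma}_{\widehat{A}})$ already contains one derivative, the correct order is $2-d$. This does not break your argument --- $|x-y|^{2-d}$ is still a weakly singular (order-one fractional-integral) kernel on the $(d-1)$-dimensional Lipschitz surface, so each annulus contributes $(2^kr)^1\mathrm{M}_{\partial\Omega}(g)(P)$ and the sum over the finitely many scales up to $R_0$ is still bounded --- but your characterization of the ``singular order'' as $2-d$ is confused: the Calder\'on--Zygmund threshold on $\partial\Omega$ is $1-d$ (the order of the full double-layer kernel), while $2-d$ is already one order milder.

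As for the approach: the paper works directly with the double layer potentials, comparing $w_0=\mathcal{D}_0(f)$ to $v_0=\mathcal{D}_{\widehat{A}}(f)$ and splitting the integral over $\partial\Omega$ into three zones $|y-P|<r$, $r<|y-P|<1/\sqrt{\lambda}$, $|y-P|>1/\sqrt{\lambda}$, using the $\lambda$-decay \eqref{pri:3.0} for the outer zone and the kernel comparison \eqref{pri:3.4} only in the intermediate zone. You instead first pass to the single layer via the identity $\mathcal{D}_0(f)=-\nabla\mathcal{S}_0(n\widehat{A}^*f)$ from Lemma~\ref{lemma:3.8}, then decompose $\nabla\mathcal{S}_0(g)=\nabla\mathcal{S}_{\widehat{A}}(g)+\widetilde{G}\!*\,g$ and apply \eqref{pri:3.4} uniformly over all scales, absorbing the entire difference into the Hardy--Littlewood maximal function. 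Your version avoids the case split on $r$ and does not use the refined $\lambda$-decay \eqref{pri:3.0} at all; what it buys is a slightly cleaner bookkeeping, at the cost of not exhibiting the $\lambda$-dependence of the constant as sharply (which is immaterial here, since the lemma permits $C$ to depend on $\lambda$). One more small caveat: the parenthetical claim that the nontangential bound for $\nabla\mathcal{S}_{\widehat{A}}$ follows ``equivalently'' from \eqref{pri:3.3} together with the jump relations is a bit loose --- \eqref{pri:3.3} controls the maximal truncated singular integral and, combined with a Cotlar-type domination of the interior values by $\mathcal{T}^*_{\widehat{A}}+\mathrm{M}_{\partial\Omega}$, does yield the result, but that last step is not mere ``jump relations''; the clean way is simply to cite \cite[Theorem 1.1]{GaoW} directly as the paper does.
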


\begin{proof}
Let $v_0=\mathcal{D}_{\widehat{A}}(f)$ denote the single layer potential
associated with the operator $L_0=-\text{div}(\widehat{A}\nabla)$, and
it follows from \cite[Theorem 1.1]{GaoW} that $(v_0)^*(P)$ exists for
a.e. $P\in\partial\Omega$ and there holds
\begin{equation}\label{f:3.18}
\|(v_0)^*\|_{L^p(\partial\Omega)}\leq C\|f\|_{L^p(\partial\Omega)}
\end{equation}
for any $f\in L^p(\partial\Omega;\mathbb{R}^m)$ with $1<p<\infty$.

Fixed $P\in\partial\Omega$, let $r=|x-P|$, where $x\in\Lambda_{N_0}^{\pm}(P)$.
The idea is that we manage to use $(v_0)^*(P)$
to control the behavior of $w_0(x)$ when $x$ is close to $P$, which is
actually inspired by \cite[Theorem 3.5]{SZW24}.

In the case of $r\geq 1/\sqrt{\lambda}$, we have
\begin{equation}\label{f:3.15}
\begin{aligned}
\big|w_0(x)\big|
&\leq C\int_{y\in\partial\Omega\atop |y-P|\leq 2r}
\frac{|f(y)|}{|x-y|^{d-1}}dS(y)
+ C\int_{y\in\partial\Omega\atop |y-P|>2r}
\big|\nabla_y\mathbf{\Gamma}_{0}(x-y)
f(y)\big|dS(y)\\
&\leq  \frac{C}{r^{d-1}}
\int_{y\in\partial\Omega\atop |y-P|\leq 2r}|f(y)|dS(y)
+ \frac{C}{\sqrt{\lambda}}\int_{y\in\partial\Omega\atop |y-P|>2r}
\frac{|f(y)|}{|y-P|^{d}} dS(y)\\
& \leq C\mathrm{M}_{\partial\Omega}(f)(P).
\end{aligned}
\end{equation}
In the second inequality, we mainly employ the facts that
$|x-y|\geq r/N_0$ in the case of $|y-P|\leq 2r$, and
$|x-y|\geq |y-P|-r \geq 1/\sqrt{\lambda}$ whenever $|y-P|>2r$, as well as the estimate
$\eqref{pri:3.0}$.

We now turn to the case of $r < 1/\sqrt{\lambda}$. In such the case, an similar argument
leads to
\begin{equation*}
\begin{aligned}
\big|w_0(x)\big|
&\leq C\underbrace{\int_{y\in\partial\Omega\atop |y-P|<r}
\frac{|f(y)|}{|x-y|^{d-1}}dS(y)}_{I_1}
+ \underbrace{\Big|\int_{y\in\partial\Omega\atop r<|y-P|<(1/\sqrt{\lambda})}
\frac{\partial}{\partial n_0(y)}
\Big\{\mathbf{\Gamma}_{0}(x-y)
\Big\}f(y)dS(y)\Big|}_{I_2}\\
&
+ \frac{C}{\sqrt{\lambda}}\underbrace{\int_{y\in\partial\Omega\atop |y-P|>(1/\sqrt{\lambda})}\frac{|f(y)|}{|x-y|^{d}}dS(y)}_{I_3}
\end{aligned}
\end{equation*}
where we use the decay estimates $\eqref{pri:3.0}$ in the inequality, and by
the analogous computations as in $\eqref{f:3.15}$ it is not hard to derive that
\begin{equation}\label{f:3.16}
I_1 + I_3  \leq C\mathrm{M}_{\partial\Omega}(f)(P),
\end{equation}
and we proceed to estimate $I_2$ by using $v_\varepsilon$.
\begin{equation}\label{f:3.17}
\begin{aligned}
I_2 &\leq C\int_{y\in\partial\Omega\atop
r<|y-P|<1}\big|\nabla_y\mathbf{\Gamma}_{0}(x-y)-
\nabla_y\mathbf{\Gamma}_{\widehat{A}}(x-y)\big||f(y)|dS(y) \\
&+ \bigg|\int_{y\in\partial\Omega\atop
r<|y-P|<(1/\sqrt{\lambda})}\frac{\partial}{\partial n_0(y)}
\Big\{\mathbf{\Gamma}_{\widehat{A}}(x-y)\Big\}f(y) dS(y)\bigg| \\
&\leq C\int_{y\in\partial\Omega\atop
r<|y-P|<(1/\sqrt{\lambda})} \frac{|f(y)|}{|P-y|^{d-2}}dS(y)
+ (v_0)^*(P) + C\mathrm{M}_{\partial\Omega}(f)(P)\\
&\leq C\mathrm{M}_{\partial\Omega}(f)(P)
+ (v_0)^*(P)
\end{aligned}
\end{equation}
where we use the estimate $\eqref{pri:3.4}$ in the second inequality.

Hence, combining the estimates $\eqref{f:3.15}$, $\eqref{f:3.16}$ and
$\eqref{f:3.17}$, we consequently derived that for any
$x\in \Lambda_{N_0}^{\pm}(P)$ there holds
\begin{equation*}
|w_0(x)| \leq C\mathrm{M}_{\partial\Omega}(f)(P)
+ (v_0)^*(P),
\end{equation*}
which together with $\eqref{f:3.18}$ implies the desired estimate
$\eqref{pri:3.14}$ and we have completed the proof.
\end{proof}

\subsection{Solvability of $L^2$ Dirichlet, Neumann, and regular problems}

\begin{thm}\label{thm:3.3}
Let the singular integral operator $\mathcal{K}_0$ be
defined in Lemma $\ref{lemma:3.1}$.
Then the operators $\pm(1/2)I+\mathcal{K}_0$ are isomorphisms on $L^2(\partial\Omega;\mathbb{R}^m)$ and
there holds
\begin{equation}\label{pri:3.11}
\big\|f\big\|_{L^2(\partial\Omega)}\leq C\big\|\big(\pm(1/2)I+\mathcal{K}_0\big)(f)\big\|_{L^2(\partial\Omega)}
\end{equation}
for any $f\in L^2(\partial\Omega;\mathbb{R}^m)$.  Moreover,
let $\mathcal{K}_0^*$ be the dual operator of $\mathcal{K}_0$, given in Lemma $\ref{lemma:3.8}$,
and then the operators $\pm(1/2)I+\mathcal{K}_0^*$ are also invertible on
$L^2(\partial\Omega;\mathbb{R}^m)$, satisfying the estimate
\begin{equation}\label{pri:3.12}
\big\|f\big\|_{L^2(\partial\Omega)}
\leq C\big\|\big(\mp(1/2)I+\mathcal{K}_0^*\big)(f)\big\|_{L^2(\partial\Omega)},
\end{equation}
where $C$ depends on $\mu,\kappa,\lambda,d,m$ and $\Omega$.
\end{thm}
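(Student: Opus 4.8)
The plan is to establish the invertibility of $\pm\frac12 I + \mathcal{K}_0$ on $L^2(\partial\Omega;\mathbb{R}^m)$ by combining the compactness of $\mathcal{K}_{\widehat{A}} - \mathcal{K}_0$ (Lemma~\ref{lemma:3.6}) with the known invertibility of $\pm\frac12 I + \mathcal{K}_{\widehat{A}}$ for the constant-coefficient operator $L_0 = -\mathrm{div}(\widehat{A}\nabla)$, together with a Fredholm/continuity argument. First I would recall from \cite{GaoW} that $\pm\frac12 I + \mathcal{K}_{\widehat{A}}$ are isomorphisms on $L^2(\partial\Omega;\mathbb{R}^m)$ with a uniform lower bound $\|f\|_{L^2(\partial\Omega)} \leq C\|(\pm\frac12 I + \mathcal{K}_{\widehat{A}})(f)\|_{L^2(\partial\Omega)}$. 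Writing $\pm\frac12 I + \mathcal{K}_0 = (\pm\frac12 I + \mathcal{K}_{\widehat{A}}) + (\mathcal{K}_0 - \mathcal{K}_{\widehat{A}})$ and using that $\mathcal{K}_0 - \mathcal{K}_{\widehat{A}}$ is a compact operator on $L^2(\partial\Omega;\mathbb{R}^m)$, we see that $\pm\frac12 I + \mathcal{K}_0$ is a Fredholm operator of index zero. Hence it suffices to prove injectivity, equivalently the a~priori estimate \eqref{pri:3.11}.

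For injectivity I would argue via the single layer potential. Suppose $(\frac12 I + \mathcal{K}_0)(f) = 0$ for some $f \in L^2(\partial\Omega;\mathbb{R}^m)$. Set $u = \mathcal{S}_0(f)$. By Lemma~\ref{lemma:3.1}, $(\partial u/\partial n_0)_+ = (\frac12 I + \mathcal{K}_0)(f) = 0$ on $\partial\Omega$, so $u$ solves $\mathcal{L}_0(u) = 0$ in $\Omega$ with zero conormal data and $(\nabla u)^* \in L^2(\partial\Omega)$ (by Theorem~\ref{lemma:3.3}). The coercivity estimate \eqref{pri:2.2.2} applied on $\Omega$, via $\frac{\lambda}{2}\|u\|_{H^1(\Omega)}^2 \leq \mathrm{B}_{\mathcal{L}_0;\Omega}[u,u] = \int_{\partial\Omega}\frac{\partial u}{\partial n_0}\,u\,dS = 0$, forces $u \equiv 0$ in $\Omega$. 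For the exterior region, $u = \mathcal{S}_0(f)$ decays like $|x|^{2-d}$ with $|\nabla u| = O(|x|^{1-d})$ at infinity, so \eqref{f:3.14} (or \eqref{pri:3.13}) applies and gives $u \equiv 0$ in $\Omega_-$ as well, using $(\partial u/\partial n_0)_- = (-\frac12 I + \mathcal{K}_0)(f) = (\frac12 I + \mathcal{K}_0)(f) - f = -f$ together with the jump relation — more precisely, the jump of the conormal derivative across $\partial\Omega$ is $(\partial u/\partial n_0)_+ - (\partial u/\partial n_0)_- = f$, so $f = 0 - (\partial u/\partial n_0)_-$, and vanishing of $u$ on both sides forces $f = 0$. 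The case $-\frac12 I + \mathcal{K}_0$ is symmetric, interchanging the roles of $\Omega$ and $\Omega_-$. Then \eqref{pri:3.11} follows from the open mapping theorem applied to the bijective bounded operator, with the constant controlled uniformly by the Fredholm structure and the lower bound for $\pm\frac12 I + \mathcal{K}_{\widehat{A}}$.

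For the dual operators $\pm\frac12 I + \mathcal{K}_0^*$, I would use that $\mathcal{K}_0^*$ is the $L^2$-adjoint of $\mathcal{K}_0$ (equation~\eqref{eq:3.3}), so $(\pm\frac12 I + \mathcal{K}_0^*) = (\mp\frac12 I + \mathcal{K}_0)^*$ up to the sign bookkeeping in the statement; since a bounded operator on a Hilbert space is invertible if and only if its adjoint is, and the operator norm of the inverse equals that of the adjoint's inverse, the estimate \eqref{pri:3.12} follows immediately from \eqref{pri:3.11}. Alternatively, one can run the direct argument with the double layer potential $\mathcal{D}_0$ and Lemma~\ref{lemma:3.8}, using $(\mathcal{D}_0(f))_\pm = (\mp\frac12 I + \mathcal{K}_0^*)(f)$ and the decay/uniqueness machinery of Lemmas~\ref{lemma:3.4} and~\ref{lemma:3.5}.

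The main obstacle I anticipate is not any single step in isolation but ensuring the uniqueness arguments on the exterior domain $\Omega_-$ are fully justified: one needs the decay hypotheses $|u(x)| = O(|x|^{2-d})$, $|\nabla u(x)| = O(|x|^{1-d})$ at infinity for $u = \mathcal{S}_0(f)$ (which do hold because of the decay estimate \eqref{pri:3.0} for $\mathbf{\Gamma}_0$ and the compactness of $\partial\Omega$), and then invoke Lemma~\ref{lemma:3.5} and the identity \eqref{f:3.14} correctly to conclude $u \equiv 0$ in $\Omega_-$. A secondary delicate point is that the jump relations in Lemma~\ref{lemma:3.1} are stated for $\mathcal{S}_\Theta$ with $\Theta = 0$, so one must be careful that $\mathcal{S}_0(f)$ genuinely solves $\mathcal{L}_0(\mathcal{S}_0(f)) = 0$ off $\partial\Omega$ — this is where the fact that $\mathbf{\Gamma}_0$ is the fundamental solution of $\mathcal{L}_0$ (not just of $L_0$) is used, and it must be tracked that all the layer-potential estimates above are for the full operator $\mathcal{L}_0$ including lower-order terms.
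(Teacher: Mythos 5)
Your Fredholm argument (compactness of $\mathcal{K}_0 - \mathcal{K}_{\widehat{A}}$ plus the known index-zero property of $\pm\tfrac12 I + \mathcal{K}_{\widehat{A}}$), your duality argument for $\eqref{pri:3.12}$, and your energy-based injectivity argument are all essentially sound; the injectivity argument is a clean alternative to the paper's route. The genuine gap is your final step: you assert that the a priori bound $\eqref{pri:3.11}$, with $C$ depending only on $\mu,\kappa,\lambda,d,m,\Omega$, follows from the open mapping theorem ``with the constant controlled uniformly by the Fredholm structure and the lower bound for $\pm\tfrac12 I + \mathcal{K}_{\widehat{A}}$.'' This is false. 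If $T = T_0 + K$ with $T_0$ invertible, $\|T_0^{-1}\|$ bounded, and $K$ compact, then the fact that $T$ is injective gives no control of $\|T^{-1}\|$ in terms of $\|T_0^{-1}\|$ and $\|K\|$; the inverse can blow up as $K$ varies over a bounded equicompact family, precisely because $T$ can drift toward noninvertibility while remaining injective. Here $K = \mathcal{K}_0 - \mathcal{K}_{\widehat{A}}$ depends on the homogenized lower-order coefficients $\widehat{V}, \widehat{B}, \widehat{c}$, and the theorem asserts a constant uniform over all admissible choices of the underlying oscillating coefficients (controlled only through $\mu,\kappa$); a qualitative invertibility argument supplies no such uniformity.

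The paper closes this gap by proving $\eqref{pri:3.11}$ \emph{directly}, before and independently of the Fredholm step, via the jump relation $f = (\partial\mathcal{S}_0(f)/\partial n_0)_+ - (\partial\mathcal{S}_0(f)/\partial n_0)_-$ together with the quantitative localization estimates of Lemmas~$\ref{lemma:3.4}$ and~$\ref{lemma:3.5}$ (and the Corollary giving $\eqref{pri:3.13}$). Those yield $\|(\partial u_0/\partial n_0)_\mp\|_{L^2(\partial\Omega)} \leq C\|(\partial u_0/\partial n_0)_\pm\|_{L^2(\partial\Omega)}$ for $u_0 = \mathcal{S}_0(f)$ with an explicit constant, hence $\eqref{pri:3.11}$; injectivity is then an immediate \emph{consequence} of $\eqref{pri:3.11}$, not a separate step. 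Your energy argument is fine as far as the bare isomorphism claim goes, but to obtain $\eqref{pri:3.11}$ with the advertised constant dependence you still need to run the quantitative two-sided conormal-derivative comparison (or an equivalent Rellich-type estimate); it cannot be retrieved after the fact from abstract functional analysis.
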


\begin{proof}
We first address the estimate $\eqref{pri:3.11}$. According to the identity $\eqref{Id:2.2}$,
we have the following jump relation
\begin{equation*}
f = \Big(\frac{\partial \mathcal{S}_0(f)}{\partial n_0}\Big)_{+} -
\Big(\frac{\partial \mathcal{S}_0(f)}{\partial n_0}\Big)_{-}
\end{equation*}
for any $f\in L^2(\partial\Omega;\mathbb{R}^m)$. Hence, the problem is reduced to show
\begin{equation*}
\Big\|\Big(\frac{\partial \mathcal{S}_0(f)}{\partial n_0}\Big)_{\mp}\Big\|_{L^2(\partial\Omega)}
\leq C\Big\|\Big(\frac{\partial \mathcal{S}_0(f)}{\partial n_0}\Big)_{\pm}\Big\|_{L^2(\partial\Omega)}.
\end{equation*}

Let $u_0 = \mathcal{S}_0(f)$. It is clear to see that $\mathcal{L}_0(u_0) = 0$ in
$\mathbb{R}^d\setminus\partial\Omega$. In view of Lemmas $\ref{lemma:3.3}$ and $\ref{lemma:3.1}$,
we have $(\nabla u_0)^*\in L^2(\partial\Omega)$ and $\nabla u_0$ exists on $\partial\Omega$
in the sense of nontangential convergence.
For any $x\in\mathbb{R}^d\setminus\Omega$ with $\text{dist}(x,\partial\Omega)>R$, we have
\begin{equation*}
  |u_0(x)|\leq C\int_{\partial\Omega}\frac{|f(y)|}{|x-y|^{d-2}}dS(y)
  \leq CR^{2-d}\|f\|_{L^2(\partial\Omega)},
\end{equation*}
and $|\nabla u(x)|\leq CR^{1-d}\|f\|_{L^2(\partial\Omega)}$ by the interior estimate, which means
$|u_0(x)|+|x||\nabla u_0(x)| = O(|x|^{2-d})$ as $x\to\infty$. We now have verified all the
conditions in Lemmas $\ref{lemma:3.4}$ and $\ref{lemma:3.5}$. Thus, there holds
\begin{equation*}
\begin{aligned}
\Big\|\Big(\frac{\partial u_0}{\partial n_0}\Big)_{\mp}\Big\|_{L^2(\partial\Omega)}^2
&\leq C\big\|(\nabla_{\tan}u_0)_{\mp}\big\|_{L^2(\partial\Omega)}^2
+ C\int_{\Omega_{\mp}}\big(|\nabla u_0|^2 + |u_0|^2\big) dx \\
&\leq C\big\|(\nabla_{\tan}u_0)_{\pm}\big\|_{L^2(\partial\Omega)}^2
+ C\int_{\partial\Omega}
\Big|\Big(\frac{\partial u_0}{\partial n_0}\Big)_{\mp}\Big|\big|(u_0)_{\pm}\big| dx \\
&\leq C\big\|(\nabla u_0)_{\pm}\big\|_{L^2(\partial\Omega)}^2
+ C\delta\Big\|\Big(\frac{\partial u_0}{\partial n_0}\Big)_{\mp}\Big\|_{L^2(\partial\Omega)}^2
+ C_\delta\|(u_0)_{\pm}\|_{L^2(\partial\Omega)}^2,
\end{aligned}
\end{equation*}
where we use the estimates $\eqref{pri:3.9}$ and $\eqref{pri:3.5}$ in the first step, and
the facts $(\nabla_{\text{tan}}u_0)_{\mp} = (\nabla_{\text{tan}}u_0)_{\pm}$ and
$(u_0)_{\mp} = (u_0)_{\pm}$ in the second one, as well as Young's inequality in the last one.
Then we may choose $\delta\in(0,1)$ such that $C\delta = 1/2$, and
this implies
\begin{equation*}
\Big\|\Big(\frac{\partial u_0}{\partial n_0}\Big)_{\mp}\Big\|_{L^2(\partial\Omega)}^2
\leq C\Big\{\big\|(\nabla u_0)_{\pm}\big\|_{L^2(\partial\Omega)}^2
+ \|(u_0)_{\pm}\|_{L^2(\partial\Omega)}^2\Big\}
\leq C\Big\|\Big(\frac{\partial u_0}{\partial n_0}\Big)_{\pm}\Big\|_{L^2(\partial\Omega)}^2
\end{equation*}
where we employ the estimates $\eqref{pri:3.8}$ and $\eqref{pri:3.13}$ in the last inequality. Up to now,
we have proved the stated estimate $\eqref{pri:3.11}$.

Since the operators $\pm(1/2)I+\mathcal{K}_0$ is injective by $\eqref{pri:3.11}$,
to verify they are isomorphisms on $L^2(\partial\Omega;\mathbb{R}^m)$ is equivalent
to showing $\pm(1/2)I+\mathcal{K}_0:
L^2(\partial\Omega;\mathbb{R}^m)\to L^2(\partial\Omega;\mathbb{R}^m)$ are Fredholm operators with index zero.
Recall that $\pm(1/2)I + \mathcal{K}_{\widehat{A}}$ are Fredholm operators with index zero by
\cite[Lemma 2.1]{GaoW}, and so are $\pm(1/2)I + \mathcal{K}_{0}$,
since we have known $\mathcal{K}_0 - \mathcal{K}_{\widehat{A}}$ is compact
on $L^2(\partial\Omega;\mathbb{R}^m)$ in Lemma $\ref{lemma:3.6}$.

Finally, the estimate $\eqref{pri:3.12}$ will be derived
by a duality method. For any $h\in L^2(\partial\Omega;\mathbb{R}^m)$, there exists
$g\in L^2(\partial\Omega;\mathbb{R}^m)$ such that $h = (\frac{1}{2}I+\mathcal{K}_0)(g)$,
and then it follows from the estimate $\eqref{pri:3.11}$ that
$\|g\|_{L^2(\partial\Omega)}
= \|(\frac{1}{2}I+\mathcal{K}_0)^{-1}(h)\|_{L^2(\partial\Omega)}\leq C\|h\|_{L^2(\partial\Omega)}$.
For any $f\in L^2(\partial\Omega;\mathbb{R}^m)$, it follows from  $\eqref{eq:3.3}$ that
\begin{equation*}
\int_{\partial\Omega}fhdS = \int_{\partial\Omega} (\frac{1}{2}I+\mathcal{K}_0)(g)f dS
= \int_{\partial\Omega} (\frac{1}{2}I+\mathcal{K}_0^*)(f)g dS,
\end{equation*}
which implies $\|f\|_{L^2(\partial\Omega)}
\leq C\|(\frac{1}{2}I+\mathcal{K}_0^*)(f)\|_{L^2(\partial\Omega)}$, and using the same procedure
leads to the estimate $\|f\|_{L^2(\partial\Omega)}
\leq C\|(-\frac{1}{2}I+\mathcal{K}_0^*)(f)\|_{L^2(\partial\Omega)}$. Obviously, the operators
$\pm(1/2)+\mathcal{K}_0^*$ are invertible on $L^2(\partial\Omega;\mathbb{R}^m)$, and we have completed
the whole proof.
\end{proof}

\noindent\textbf{Proof of Theorem $\ref{thm:3.1}$.}
We now proceed to establish the existence for
the Dirichlet problem $(\mathbf{DH}_0)$ with
the given data $g\in L^2(\partial\Omega;\mathbb{R}^m)$.
From Theorem $\ref{thm:3.3}$ we know that
the operator $(-1/2)I+\mathcal{K}^*_{0}$ is invertible.
So by
$\eqref{pri:3.12}$ one may have
$\|((-1/2)I+\mathcal{K}^*_{0})^{-1}\|_{L^2(\partial\Omega)\to L^2(\partial\Omega)}
\leq C$. Then the double layer potential
$w_0=\mathcal{D}_{\mathcal{L}}\big((-\frac{1}{2}I+\mathcal{K}^*_{0})^{-1}(g)\big)$
satisfies $\mathcal{L}(w_0) = 0$ in $\Omega$, and
$\|(w_0)^*\|_{L^2(\partial\Omega)}\leq C\|g\|_{L^2(\partial\Omega)}$
due to the estimate $\eqref{pri:3.14}$.
For the Neumann problem $(\mathbf{NH}_0)$, the estimate
$\|(\nabla u_0)^*\|_{L^2(\partial\Omega)}
\leq C\|g\|_{L^2(\partial\Omega)}$ is given by Theorem $\ref{thm:3.2}$.
Let $\phi\in L^2(\partial\Omega;\mathbb{R}^m)$, and it is not hard to see
that the single layer potential $u_0=\mathcal{S}_{0}(\phi)$ satisfies
$\mathcal{L}_0(u_0) =0$ in $\Omega$. According to
Theorem $\ref{thm:3.3}$,
the trace operator $(1/2)I+\mathcal{K}_{0}$ is invertible on
$L^2(\partial\Omega;\mathbb{R}^m)$, and then
for any $f\in L^2(\partial\Omega;\mathbb{R}^m)$ the expression
$u_0 = \mathcal{S}_{0}
\big((\frac{1}{2}I+\mathcal{K}_{\mathcal{L}})^{-1}(f)\big)$ gives
a solution of $(\mathbf{NH}_0)$.
Concerned with the regular problem $(\mathbf{RH}_0)$,
the existence and the uniqueness
may be known from the Dirichlet problem, and the estimate
$\|(\nabla u_0)^*\|_{L^2(\partial\Omega)}
\leq C\|g\|_{H^1(\partial\Omega)}$ has been shown in Theorem
$\ref{lemma:3.4}$. We ends the proof by mention that the uniqueness
follows from the same arguments stated in the later proof of
Theorem $\ref{thm:4.2}$.
\qed

\section{Well-posed properties in small scales}\label{sec:4}

We may assume $\mathrm{Y}= [-\frac{1}{2},\frac{1}{2}]$ by a translation,
and the following assumption will be convenient for our later discussion, while
it may be removed in the end of the section.
\begin{equation}\label{a:5.1}
\left\{\begin{aligned}
&A,V,B\in C^1(\mathrm{Y}\setminus\partial\Omega),\\
&|\nabla A(x)|+|\nabla V(x)|+|\nabla B(x)|\leq C\big[\text{dist}(x,\partial\Omega)\big]^{\tau_0-1}
\text{~for~any~} x\in \mathrm{Y}\setminus\partial\Omega,
\end{aligned}\right.
\end{equation}
where $\tau_0\in(0,1)$.

Recall that the notation $\mathcal{L}$ denotes
the elliptic operator $\mathcal{L}_\varepsilon$ in the case of $\varepsilon =1$.

\begin{thm}\label{thm:4.2}
Let $\Omega\subset\mathbb{R}^d$ be a bounded Lipschitz domain with $R_0\leq 1/4$.
Suppose that the coefficients of $\mathcal{L}$ satisfy
$\eqref{a:1}$, $\eqref{a:3}$ and $\eqref{a:4}$ with
$\lambda\geq\max\{\lambda_0,\mu\}$.
Assume that the coefficients $A,V,B$ satisfy
the additional condition $\eqref{a:5.1}$.
Then we have the following results:
\begin{itemize}
  \item[$(1)$] for any $g\in L^2(\partial\Omega;\mathbb{R}^m)$,
  there exists a unique solution
  $u\in C^1(\Omega;\mathbb{R}^m)$ to the Dirichlet problem $(\mathbf{DH}_1)$
such that
$\|(u)^*\|_{L^2(\partial\Omega)} \leq C\|g\|_{L^2(\partial\Omega)}$, provided
the coefficients $V,B$ additionally satisfy $\|V-B\|_{L^\infty(\partial\Omega)}\leq \epsilon_0$,
where $\epsilon_0>0$ is sufficiently small;
  \item[$(2)$] for any $g\in L^2(\partial\Omega;\mathbb{R}^m)$, there exists a
unique solution $u\in C^1(\Omega;\mathbb{R}^m)$ to the Neumann problem $(\mathbf{NH}_1)$
and one may have the estimate
$\|(\nabla u)^*\|_{L^2(\partial\Omega)} \leq C\|g\|_{L^2(\partial\Omega)};$
  \item[$(3)$] for any $g\in H^1(\partial\Omega;\mathbb{R}^m)$,
  there exists a unique solution
  $u\in C^1(\Omega;\mathbb{R}^m)$ to the regular problem $(\mathbf{RH}_1)$
and there holds the estimate
$\|(\nabla u)^*\|_{L^2(\partial\Omega)} \leq C\|g\|_{H^1(\partial\Omega)}$.
\end{itemize}
Here the constant $C$ depends only on $\mu,\kappa,\lambda,d,m,\tau,\tau_0$ and $\Omega$.
\end{thm}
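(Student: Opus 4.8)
The plan is to solve $(\mathbf{DH}_1)$, $(\mathbf{NH}_1)$ and $(\mathbf{RH}_1)$ by the method of layer potentials built on the fundamental solution $\mathbf{\Gamma}_{\mathcal{L}}$ of $\mathcal{L}=\mathcal{L}_1$, exactly mirroring the structure already carried out for constant coefficients in Section \ref{sec:3} (Theorems \ref{thm:3.1}, \ref{thm:3.3}). First I would record the needed properties of $\mathbf{\Gamma}_{\mathcal{L}}$: the size and decay estimates $\eqref{pri:2.3.3}$, $\eqref{pri:2.11}$, $\eqref{pri:2.12}$, $\eqref{pri:2.13}$ (valid since $A\in C^{0,\tau}$, hence $A\in\text{VMO}$, and $\eqref{a:4}$ holds), together with the large-scale decay $\eqref{pri:5.9}$. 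Since $R_0\leq 1/4$, the domain sits inside the unit cell $\mathrm{Y}$, so only the ``small scale'' $0<r\leq 1=\varepsilon$ is in play; under the auxiliary hypothesis $\eqref{a:5.1}$ the coefficients are $C^1$ away from $\partial\Omega$, which gives the smoothness of $\mathbf{\Gamma}_{\mathcal{L}}(x,y)$ off the diagonal needed to run the jump-relation and nontangential-maximal-function arguments. Then I would define $\mathcal{S}_{\mathcal{L}}$ and $\mathcal{D}_{\mathcal{L}}$ and the associated trace operator $\mathcal{K}_{\mathcal{L}}$ as in $\eqref{def:3.1}$, $\eqref{def:3.2}$, Lemma \ref{lemma:3.1}, and establish the $L^p$-boundedness of the singular integral $\mathcal{T}_{\mathcal{L}}$ and of the nontangential maximal functions by the ``freezing coefficients'' comparison: write $\mathbf{\Gamma}_{\mathcal{L}}(x,y)=\mathbf{\Gamma}_{\mathcal{L}^{x}}(x,y)+[\mathbf{\Gamma}_{\mathcal{L}}(x,y)-\mathbf{\Gamma}_{\mathcal{L}^{x}}(x,y)]$ where $\mathcal{L}^{x}$ is the constant-coefficient operator obtained by freezing $A,V,B,c$ at $x$, and use the identity $\eqref{eq:5.1}$ of Lemma \ref{lemma:2.3.1} together with the Hölder modulus $\eqref{a:4}$ to show the difference kernel is more regular (locally integrable with a gain), so its contribution is controlled by a Riesz potential and a Hardy--Littlewood maximal term, while the frozen part is handled by Theorem \ref{lemma:3.3}.

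The core of the argument is the Rellich-type estimate. For a solution $u$ of $\mathcal{L}(u)=0$ in $\Omega$ (or in $\Omega_{-}$) with $(\nabla u)^*\in L^2(\partial\Omega)$, I would prove
\begin{equation*}
\|\nabla u\|_{L^2(\partial\Omega)}\leq C\big\|\partial u/\partial\nu\big\|_{L^2(\partial\Omega)},
\qquad
\|\nabla u\|_{L^2(\partial\Omega)}\leq C\|\nabla_{\mathrm{tan}}u\|_{L^2(\partial\Omega)}
\end{equation*}
(the analogue of $\eqref{pri:0.8}$). Because only small scales occur, this is done purely by the Rellich identity, i.e. integrating $\mathrm{div}$ of a suitable vector field built from $h\cdot\nabla u$ for $h\in C_0^1(\mathbb{R}^d;\mathbb{R}^d)$ with $\langle h,n\rangle\geq c>0$, using the ellipticity $\eqref{a:1}$ to absorb the good term and the $L^\infty$ bounds $\eqref{a:3}$ plus Caccioppoli's inequality $\eqref{pri:2.1}$ and the coercivity $\eqref{pri:2.0.2}$ to control the lower-order and interior remainder terms; the condition $\lambda\geq\max\{\lambda_0,\mu\}$ makes all of this work. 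For the exterior problem I would additionally use the decay of $u$ at infinity (inherited from the single layer potential via $\eqref{pri:5.9}$) to kill boundary terms on large spheres, as in Lemma \ref{lemma:3.5} and $\eqref{f:3.14}$. Then, exactly as in Theorem \ref{thm:3.3}, the jump relations $\eqref{Id:2.2}$ combined with these Rellich estimates give the invertibility bounds
\begin{equation*}
\|f\|_{L^2(\partial\Omega)}\leq C\big\|(\pm\tfrac12 I+\mathcal{K}_{\mathcal{L}})(f)\big\|_{L^2(\partial\Omega)},
\end{equation*}
and Fredholmness of index zero follows by the continuity/homotopy method: deform $\mathcal{L}$ to $\mathcal{L}_0$ (or to $L_0=-\mathrm{div}(\widehat A\nabla)$) through operators $\mathcal{L}_t$, note the trace operators $\mathcal{K}_{\mathcal{L}_t}$ vary continuously in operator norm, the invertibility estimate is uniform in $t$ by the same Rellich argument, and $\pm\tfrac12 I+\mathcal{K}_{L_0}$ is Fredholm of index zero by \cite[Lemma 2.1]{GaoW}; hence so is $\pm\tfrac12 I+\mathcal{K}_{\mathcal{L}}$, and together with injectivity this yields isomorphisms, and by duality the same for $\pm\tfrac12 I+\mathcal{K}_{\mathcal{L}}^*$.

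With the trace operators inverted, existence in $(2)$ follows by setting $u=\mathcal{S}_{\mathcal{L}}\big((\tfrac12 I+\mathcal{K}_{\mathcal{L}})^{-1}(g)\big)$, whose nontangential maximal estimate $\|(\nabla u)^*\|_{L^2}\leq C\|g\|_{L^2}$ comes from the $L^2$-boundedness of $\mathcal{T}_{\mathcal{L}}^*$ established above; existence in $(1)$ follows by setting $u=\mathcal{D}_{\mathcal{L}}\big((-\tfrac12 I+\mathcal{K}_{\mathcal{L}}^*)^{-1}(g)\big)$ with $\|(u)^*\|_{L^2}\leq C\|g\|_{L^2}$ from the double-layer bound (the analogue of Lemma \ref{lemma:3.9}, proved by comparison with $\mathcal{D}_{L_0}$ through the freezing-coefficients estimate on $\nabla_y[\mathbf{\Gamma}_{\mathcal{L}}-\mathbf{\Gamma}_{\mathcal{L}^x}]$); here the extra smallness $\|V-B\|_{L^\infty(\partial\Omega)}\leq\epsilon_0$ is needed because the conormal derivative uses $A\nabla+V$ whereas the homogenized-type energy identity involves $B$, so the $(V-B)$ mismatch enters the Rellich identity for $\mathcal{D}_{\mathcal{L}}$ and must be absorbed as a small perturbation. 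Uniqueness in all three cases is the standard Green's-identity argument: a null solution has vanishing Cauchy data, so its layer-potential representation vanishes, whence $u\equiv0$; for the Dirichlet problem one uses the uniqueness for the adjoint regular/Neumann problem, and for $(\mathbf{RH}_1)$ one reduces to $(\mathbf{DH}_1)$, exactly as indicated at the end of the proof of Theorem \ref{thm:3.1}. The $H^{3/2}$-regularity and square-function bounds in the final statement are not asserted here and are left to the global proofs in the last section. The main obstacle I anticipate is the freezing-coefficients step: unlike classical Schauder theory one must freeze not only the leading coefficient $A$ but simultaneously $V$ and $B$ (as the paper itself flags), so the comparison kernel $\mathbf{\Gamma}_{\mathcal{L}}-\mathbf{\Gamma}_{\mathcal{L}^x}$ must be estimated carefully via $\eqref{eq:5.1}$ to extract a genuine gain in local integrability uniform over the freezing point, and it is exactly there that the low regularity $\eqref{a:4}$ (rather than Lipschitz) coefficients and the auxiliary hypothesis $\eqref{a:5.1}$ are delicate to reconcile.
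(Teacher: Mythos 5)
Your overall architecture matches the paper's: single/double layer potentials, jump relations via Lebesgue dominated convergence after freezing coefficients, Rellich-type estimates, and the method of continuity to invert $\pm\tfrac12 I+\mathcal{K}_{\mathcal{L}}$ by deforming along a line segment from $\mathcal{L}$ to a constant-coefficient operator. You also correctly identify that the $(V-B)$ mismatch is behind the smallness hypothesis in (1), even if you attribute it to ``the Rellich identity for $\mathcal{D}_{\mathcal{L}}$'' whereas the paper actually isolates it as the compact operator $T$ in $\eqref{eq:5.4}$ separating $\mathcal{K}_{\mathcal{L}^*}$ from the true transpose $\mathcal{K}_{\mathcal{L}}^*$, and then absorbs it by a Neumann-series argument (Theorem~\ref{thm:4.6}).

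However, there is a genuine gap in your treatment of the Rellich estimate, which is the heart of the argument, and it is directly tied to a misplacement of where hypothesis $\eqref{a:5.1}$ acts. You claim to derive the closed Rellich bound
\begin{equation*}
\|\nabla u\|_{L^2(\partial\Omega)}\leq C\big\|\partial u/\partial\nu_{\mathcal{L}}\big\|_{L^2(\partial\Omega)}
\end{equation*}
``purely by the Rellich identity, \ldots using the ellipticity, the $L^\infty$ bounds $\eqref{a:3}$, Caccioppoli $\eqref{pri:2.1}$ and the coercivity $\eqref{pri:2.0.2}$ to control the lower-order and interior remainder terms.'' This cannot work. Integrating $\mathrm{div}\big(h\,a_{ij}\partial_j u^\beta\partial_i u^\alpha\big)$ inevitably produces the interior terms
\begin{equation*}
\int_{\Omega} h_k\,\frac{\partial a_{ij}^{\alpha\beta}}{\partial x_k}\,\frac{\partial u^\beta}{\partial x_j}\frac{\partial u^\alpha}{\partial x_i}\,dx
\qquad\text{and}\qquad
\int_{\Omega} h_k\,\frac{\partial u^\alpha}{\partial x_k}\,\mathrm{div}\big(V^{\alpha\beta}\big)u^\beta\,dx,
\end{equation*}
i.e.\ terms in $\nabla A$ and $\nabla V$, which an $L^\infty$ bound on $A,V$ and Caccioppoli simply do not touch (cf.\ $\eqref{eq:4.1}$--$\eqref{eq:4.2}$). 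The auxiliary hypothesis $\eqref{a:5.1}$ is used \emph{precisely here}: $|\nabla A(x)|+|\nabla V(x)|\le C[\delta(x)]^{\tau_0-1}$ lets one split the interior integral at the level set $\delta=\theta$, obtaining $C\theta^{\tau_0}\int_{\partial\Omega}|(\nabla u)^*|^2\,dS + C\theta^{\tau_0-1}\int_\Omega|\nabla u|^2\,dx$ as in $\eqref{f:4.9}$--$\eqref{f:4.10}$. Consequently, even with $\eqref{a:5.1}$, the resulting Rellich-type inequality (Lemma~\ref{lemma:4.6}) is \emph{not closed}: it carries a free term $C\theta^{\tau_0}\int_{\partial\Omega}|(\nabla u)^*|^2\,dS$, and closing the argument requires first establishing the nontangential maximal function estimate for $\nabla\mathcal{S}_{\mathcal{L}}(f)$ (Theorem~\ref{thm:4.5}), then absorbing the small $(\nabla u)^*$ contribution into the jump relation via $\eqref{f:5.20}$ inside the proof of Theorem~\ref{thm:4.1}. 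Your plan omits this coupling entirely. By contrast, the role you assign to $\eqref{a:5.1}$ --- in the freezing-coefficients comparison of $\mathbf{\Gamma}_{\mathcal{L}}$ with $\mathbf{E}(\cdot,\cdot;x)$ --- is incorrect: that step (Lemmas~\ref{lemma:4.2}--\ref{lemma:4.4}) uses only the Hölder condition $\eqref{a:4}$ and does not see $\eqref{a:5.1}$ at all. Fixing your sketch requires (i) weakening your Rellich claim to the two-sided, $\theta$-parametrized form of Lemma~\ref{lemma:4.6} with the $(\nabla u)^*$ remainder, (ii) placing $\eqref{a:5.1}$ there rather than in the kernel comparison, and (iii) explicitly invoking $\|(\nabla\mathcal{S}_{\mathcal{L}}(f))^*\|_{L^2}\le C\|f\|_{L^2}$ before absorbing the remainder.

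A secondary note: your invertibility paragraph conflates the Fredholm/compact-perturbation mechanism (used in Section~\ref{sec:3} for constant coefficients, where $\mathcal{K}_0-\mathcal{K}_{\widehat A}$ is smoothing of order one) with the method of continuity used here. The difference $\mathcal{K}_{\mathcal{L}}-\mathcal{K}_{L_0}$ is not compact, so ``$\pm\tfrac12 I+\mathcal{K}_{L_0}$ is Fredholm of index zero, hence so is $\pm\tfrac12 I+\mathcal{K}_{\mathcal{L}}$'' does not follow; what the paper actually does (Theorem~\ref{thm:4.1}) is prove a $t$-uniform a priori estimate along $\mathcal{L}^t=t\mathcal{L}+(1-t)\mathcal{L}_{x_0}$, show $\|\mathcal{K}_{\mathcal{L}^{t_1}}-\mathcal{K}_{\mathcal{L}^{t_2}}\|\le C|t_1-t_2|$ via $\eqref{pri:5.11}$, and deduce surjectivity at $t=1$ from surjectivity at $t=0$ --- pure continuity, no Fredholm theory.
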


\subsection{Rellich estimates}

\begin{lemma}
Suppose that the coefficients of $\mathcal{L}$ satisfies $\eqref{a:1}$ and $\eqref{a:3}$
with $A\in \emph{VMO}(\mathbb{R}^d)$.
Let $u$ be the solution of $\mathcal{L}(u) = 0$ in $\Omega$.
Then we have the following estimate
\begin{equation}\label{pri:4.1}
 (u)^*(Q) \leq C\mathrm{M}_{\partial\Omega}(\mathcal{M}(u))(Q)
\end{equation}
for any $Q\in\partial\Omega$,
where $C$ depends on $\mu,\kappa,\lambda,m,d$ and $\|A\|_{\emph{VMO}}$.
\end{lemma}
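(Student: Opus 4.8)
The statement is the exact analogue, for the variable–coefficient operator $\mathcal{L}=\mathcal{L}_1$ and for $u$ itself (rather than $\nabla u$), of Lemma~\ref{lemma:3.7}. The plan is therefore to follow that proof line by line, the only substantive change being that the interior gradient estimate used there is replaced by the interior $L^\infty$ (Moser–type) bound $\eqref{pri:2.2.1}$, which is available precisely because the hypotheses of this lemma ($\eqref{a:1}$, $\eqref{a:3}$, $A\in\mathrm{VMO}$, and $\lambda\geq\lambda_0$) are those under which $\eqref{pri:2.2.1}$ was proved.

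\textbf{Main steps.} First I would fix $Q\in\partial\Omega$ and take an arbitrary point $x\in\Lambda_{N_0}(Q)$, so that $|x-Q|$ and $\delta(x)=\mathrm{dist}(x,\partial\Omega)$ are comparable with constants depending only on $N_0$ and the Lipschitz character of $\Omega$. Put $r=\delta(x)/2$, so that $B(x,2r)\subset\Omega$ and $\mathcal{L}(u)=0$ in $B(x,2r)$; applying $\eqref{pri:2.2.1}$ with exponent $p=1$ on the ball $B(x,r)$ gives
\[
|u(x)|\leq \|u\|_{L^\infty(B(x,r))}\leq C\dashint_{B(x,2r)}|u(y)|\,dy .
\]
Second, I would convert this solid average into a surface average of the radial maximal function $\mathcal{M}(u)$. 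Using the bi-Lipschitz ``polar'' parametrization of a collar neighbourhood of $\partial\Omega$ by the maps $T_s\colon\partial\Omega\to\partial\Sigma_s$ (as in the definition of $\mathcal{M}$ and in \cite{QX2}), every $y\in B(x,2r)$ lies on a curve $s\mapsto T_s(Q')$ with $Q'$ in the surface ball $\Delta=B(Q,CN_0r)\cap\partial\Omega$, whence $|u(y)|\leq\mathcal{M}(u)(Q')$; the Jacobian of this change of variables is comparable to a power of $s$, which turns $\dashint_{B(x,2r)}$ into $\lesssim\dashint_{\Delta}$. This yields
\[
|u(x)|\leq C\dashint_{\Delta}\mathcal{M}(u)\,dS\leq C\,\mathrm{M}_{\partial\Omega}\big(\mathcal{M}(u)\big)(Q),
\]
with constants independent of $x$. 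Taking the supremum over $x\in\Lambda_{N_0}(Q)$ gives $\eqref{pri:4.1}$. (For the finitely many $x$ in the cone with $\delta(x)\gtrsim R_0$ the bound is immediate, since $u$ is then bounded on a fixed interior ball by $\eqref{pri:2.2.1}$ in terms of $\mathcal{M}(u)$ at a single nearby boundary point.)

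\textbf{Main obstacle.} The only point requiring care is the passage from the $d$-dimensional ball average to the $(d-1)$-dimensional boundary average in the second step, i.e.\ making quantitative that the cone near $Q$ is swept out, with controlled Jacobian, by the radial curves over a surface ball of comparable radius. This is exactly the mechanism already used in Lemma~\ref{lemma:3.7} and in the properties of $\mathcal{M}$ recorded in \cite{QX2}, so it can be invoked essentially verbatim; the rest of the argument is a routine combination of $\eqref{pri:2.2.1}$ with the definition of the Hardy–Littlewood maximal operator on $\partial\Omega$.
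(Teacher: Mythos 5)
Your proposal is correct and follows essentially the same route as the paper's: fix $Q$, take $x$ in the cone, apply the interior $L^\infty$ estimate coming from Theorem~\ref{thm:0.1}/Corollary~\eqref{pri:2.2.1} to get $|u(x)|\le C\dashint_{B(x,r)}|u|$, and convert the solid average to a surface average of $\mathcal{M}(u)$ over a comparable boundary ball, exactly as in Lemma~\ref{lemma:3.7}. The only (immaterial) difference is that you use the $p=1$ form of $\eqref{pri:2.2.1}$ while the paper quotes the $p=2$ form.
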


\begin{proof}
The estimate $\eqref{pri:4.1}$ is based upon the interior estimate
\begin{equation*}
 |u(x)|\leq C\Big(\dashint_{B(x,r)}|u|^2\Big)^{1/2},
\end{equation*}
where $r=\text{dist}(x,\partial\Omega)$, and the remainder of the argument is
analogous to that in Lemma $\ref{lemma:3.9}$.
\end{proof}

\begin{lemma}\label{lemma:4.1}
Suppose that the coefficients of $\mathcal{L}$ satisfy $\eqref{a:1}$, $\eqref{a:3}$ and
$\eqref{a:5.1}$ with $A^* = A$.
Assume that $u_{\pm}$ are the solutions to
$\mathcal{L}(u_{\pm}) = 0$ in $\Omega_{\pm}$ with $(\nabla u_{\pm})^*\in L^2(\partial\Omega)$. Let
$\mathbf{h}$ be a $C^1$ vector field on $\mathbb{R}^d$ such that
$\emph{supp}(\mathbf{h})\subset\{x:\emph{dist}(x,\partial\Omega)<c R_0\}$,
and $\big<\mathbf{h},n\big>\geq c>0$ on $\partial\Omega$ with
$|\nabla \mathbf{h}|\leq C/R_0$ in $\mathbb{R}^d$.
Then we have two Rellich type identities
\begin{equation}\label{eq:4.1}
\begin{aligned}
\int_{\partial\Omega} \big<\mathbf{h},n\big>a_{ij}^{\alpha\beta}\frac{\partial u^\beta}{\partial x_j}
\frac{\partial u^\alpha}{\partial x_i} dS
&= 2\int_{\partial\Omega}\mathbf{h}_k\frac{\partial u^\alpha}{\partial x_k}
\Big(\frac{\partial u}{\partial\nu_{\mathcal{L}}}\Big)^{\alpha}dS
-  2\int_{\partial\Omega}\mathbf{h}_k\frac{\partial u^\alpha}{\partial x_k}
n_i V_i^{\alpha\beta}u^\beta dS\\
&+ \int_{\Omega_{\pm}}\emph{div}\big(\mathbf{h}\big)a_{ij}^{\alpha\beta}
\frac{\partial u^\beta}{\partial x_j}
\frac{\partial u^\alpha}{\partial x_i} dx
+ \int_{\Omega_{\pm}} \mathbf{h}_k
\frac{\partial}{\partial x_k}\big(a_{ij}^{\alpha\beta}\big)\frac{\partial u^\beta}{\partial x_j}
\frac{\partial u^\alpha}{\partial x_i}dx\\
&  -2 \int_{\Omega_{\pm}} \frac{\partial \mathbf{h}_k}{\partial x_i}a_{ij}^{\alpha\beta}
\frac{\partial u^\beta}{\partial x_j}\frac{\partial u^\alpha}{\partial x_k}
+ 2 \int_{\Omega_{\pm}} \mathbf{h}_k
\frac{\partial u^\alpha}{\partial x_k} \emph{div}\big(V^{\alpha\beta}\big)u^\beta dx \\
&+ 2\int_{\Omega_{\pm}} \mathbf{h}_k \frac{\partial u^\alpha}{\partial x_k} F^\alpha dx,
\end{aligned}
\end{equation}
where $\partial/\partial\nu_{\mathcal{L}} = n\cdot(A\nabla + V)$, and
\begin{equation}\label{eq:4.2}
\begin{aligned}
\int_{\partial\Omega} \big<\mathbf{h},n\big>a_{ij}^{\alpha\beta}\frac{\partial u^\beta}{\partial x_j}
\frac{\partial u^\alpha}{\partial x_i} dS
&= 2\int_{\partial\Omega}\mathbf{h}_ka_{ij}^{\alpha\beta}\frac{\partial u^\beta}{\partial x_j}
\nabla_{\emph{tan}}u^\alpha dS
- 2\int_{\Omega_{\pm}} \mathbf{h}_k \frac{\partial u^\alpha}{\partial x_k} F^\alpha dx\\
&- \int_{\Omega_{\pm}}\emph{div}\big(\mathbf{h}\big)a_{ij}^{\alpha\beta}
\frac{\partial u^\beta}{\partial x_j}
\frac{\partial u^\alpha}{\partial x_i} dx
- \int_{\Omega_{\pm}} \mathbf{h}_k
\frac{\partial}{\partial x_k}\big(a_{ij}^{\alpha\beta}\big)\frac{\partial u^\beta}{\partial x_j}
\frac{\partial u^\alpha}{\partial x_i}dx\\
&  +2 \int_{\Omega_{\pm}} \frac{\partial \mathbf{h}_k}{\partial x_i}a_{ij}^{\alpha\beta}
\frac{\partial u^\beta}{\partial x_j}\frac{\partial u^\alpha}{\partial x_k}
- 2 \int_{\Omega_{\pm}} \mathbf{h}_k
\frac{\partial u^\alpha}{\partial x_k} \emph{div}\big(V^{\alpha\beta}\big)u^\beta dx,
\end{aligned}
\end{equation}
where $\nabla_{\emph{tan}}=n_k\frac{\partial}{\partial x_i}-n_i\frac{\partial}{\partial x_k}$, and
$F^\alpha =\big( V^{\alpha\beta}_i - B_i^{\alpha\beta}\big)\frac{\partial u^\beta}{\partial x_i}
 + c^{\alpha\beta} u^\beta + \lambda u^\alpha.$
\end{lemma}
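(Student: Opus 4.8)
The plan is to derive both identities \eqref{eq:4.1} and \eqref{eq:4.2} from the classical Rellich--Pohozaev trick, treating the lower order terms as a right-hand side. First I would rewrite the equation $\mathcal{L}(u)=0$ in $\Omega_{\pm}$ as a second-order divergence-form system with a source: namely
\begin{equation*}
-\frac{\partial}{\partial x_i}\Big(a_{ij}^{\alpha\beta}\frac{\partial u^\beta}{\partial x_j}\Big)
= \frac{\partial}{\partial x_i}\big(V_i^{\alpha\beta}u^\beta\big) - B_i^{\alpha\beta}\frac{\partial u^\beta}{\partial x_i} - c^{\alpha\beta}u^\beta - \lambda u^\alpha
= \frac{\partial}{\partial x_i}\big(V_i^{\alpha\beta}u^\beta\big) - F^\alpha,
\end{equation*}
so that the ``principal part'' source is $\operatorname{div}(Vu)$ and the zeroth-order remainder is $F^\alpha$ as defined in the statement. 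The regularity assumption \eqref{a:5.1} together with \eqref{a:4} guarantees $u\in C^1$ up to $\partial\Omega$ from each side (Theorem \ref{thm:0.1} and Theorem \ref{thm:2.1}) and that $\nabla A$, $\nabla V$ are locally integrable on $\Omega_{\pm}$, so all the integrations by parts below are legitimate; the hypothesis $(\nabla u_\pm)^*\in L^2(\partial\Omega)$ controls the boundary terms.

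The core computation is to multiply the equation by $\mathbf{h}_k\,\partial u^\alpha/\partial x_k$ and integrate over $\Omega_{\pm}$. For the leading term one uses the standard identity
\begin{equation*}
\int_{\Omega_\pm}\frac{\partial}{\partial x_i}\Big(a_{ij}^{\alpha\beta}\frac{\partial u^\beta}{\partial x_j}\Big)\mathbf{h}_k\frac{\partial u^\alpha}{\partial x_k}\,dx,
\end{equation*}
integrating by parts to move $\partial/\partial x_i$ onto $\mathbf{h}_k\,\partial u^\alpha/\partial x_k$; the term where the derivative lands on $\partial u^\alpha/\partial x_k$ is symmetrized using $A^*=A$ and the product rule, producing the divergence $\operatorname{div}(\mathbf h)\,a_{ij}^{\alpha\beta}\partial_j u^\beta\partial_i u^\alpha$ via $\partial_k(\partial_i u^\alpha\partial_j u^\beta\,a_{ij}^{\alpha\beta})$, and the boundary contributions $\int_{\partial\Omega}\langle\mathbf h,n\rangle\,a_{ij}^{\alpha\beta}\partial_j u^\beta\partial_i u^\alpha\,dS$ and $-2\int_{\partial\Omega}\mathbf h_k\,\partial_k u^\alpha\,n_i a_{ij}^{\alpha\beta}\partial_j u^\beta\,dS$. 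The extra terms $-2\int\partial_i\mathbf h_k\,a_{ij}^{\alpha\beta}\partial_j u^\beta\partial_k u^\alpha$ and $\int\mathbf h_k\,\partial_k(a_{ij}^{\alpha\beta})\,\partial_j u^\beta\partial_i u^\alpha$ arise precisely because $\mathbf h$ and $A$ are not constant — these are the genuinely new pieces compared with the constant-coefficient Rellich identity. For the lower order source $\operatorname{div}(Vu)-F$, I integrate $\int_{\Omega_\pm}\big[\partial_i(V_i^{\alpha\beta}u^\beta)-F^\alpha\big]\mathbf h_k\,\partial_k u^\alpha\,dx$ by parts once on the $\operatorname{div}(Vu)$ piece, which yields the boundary term $-2\int_{\partial\Omega}\mathbf h_k\,\partial_k u^\alpha\,n_i V_i^{\alpha\beta}u^\beta\,dS$ and the interior terms $2\int\mathbf h_k\,\partial_k u^\alpha\operatorname{div}(V^{\alpha\beta})u^\beta$ and those absorbed into $2\int\mathbf h_k\,\partial_k u^\alpha F^\alpha$ after collecting; keeping careful bookkeeping of a factor $2$ (there are two ``directions'' $i\leftrightarrow k$ that get symmetrized) gives exactly \eqref{eq:4.1}. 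Identity \eqref{eq:4.2} is the companion obtained by the same manipulation but pairing the equation against the tangential derivative combination $n_k\partial_i-n_i\partial_k$ of $u$, or equivalently by rearranging \eqref{eq:4.1} using the algebraic decomposition $\partial_k u = n_k\,\partial u/\partial n + (\text{tangential part})$ on $\partial\Omega$ and feeding the normal derivative of the leading term back in via the equation; this swaps the conormal boundary term for the tangential-gradient boundary term $2\int_{\partial\Omega}\mathbf h_k a_{ij}^{\alpha\beta}\partial_j u^\beta\nabla_{\text{tan}}u^\alpha\,dS$ and flips the signs of the interior correction terms, as recorded in the statement.

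The main obstacle I anticipate is purely technical rather than conceptual: making the integration by parts rigorous near $\partial\Omega$, where by \eqref{a:5.1} the gradients of $A,V,B$ blow up like $[\operatorname{dist}(x,\partial\Omega)]^{\tau_0-1}$. This is integrable, so the terms $\int_{\Omega_\pm}\mathbf h_k\,\partial_k(a_{ij}^{\alpha\beta})\,\partial_j u^\beta\partial_i u^\alpha\,dx$ and $\int_{\Omega_\pm}\mathbf h_k\,\partial_k u^\alpha\operatorname{div}(V^{\alpha\beta})u^\beta\,dx$ are absolutely convergent (using $\nabla u\in L^\infty_{\mathrm{loc}}$ from the interior Lipschitz estimate \eqref{pri:0.3} and $(\nabla u)^*\in L^2(\partial\Omega)$ near the boundary), but one must justify the limiting procedure — approximating $\Omega$ by $\Sigma_\delta=\{x:\operatorname{dist}(x,\partial\Omega)>\delta\}$, applying the divergence theorem on the smooth exhaustion, and letting $\delta\to 0$, controlling the boundary integrals over $\partial\Sigma_\delta$ by the nontangential maximal function. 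For the exterior domain $\Omega_-$ one additionally needs the decay $|u|=O(|x|^{2-d})$, $|\nabla u|=O(|x|^{1-d})$ to kill the contribution at infinity, which is why $\operatorname{supp}(\mathbf h)$ is assumed compact near $\partial\Omega$. Once this approximation is set up, both identities follow by direct (if lengthy) algebra, and I would relegate the bookkeeping to the reader as is customary for Rellich-type identities.
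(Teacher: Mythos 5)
Your approach is the right one and is essentially what the paper has in mind: the paper's ``proof'' is a bare citation to [Kenig--Shen, Lemma 6.4], which does the same multiply-by-$\mathbf h_k\partial_k u$, integrate-by-parts, symmetrize-using-$A=A^*$ computation for the pure leading term, and the new features here (the non-constant $A$ producing $\partial_k(a_{ij})$, the $\text{div}(V)u$ term, the boundary $n_iV_iu$ correction turning $n_i a_{ij}\partial_j u$ into $\partial u/\partial\nu_{\mathcal L}$) and the $\Sigma_\delta$-exhaustion argument needed because $\nabla A,\nabla V$ blow up at the boundary are all correctly identified in your sketch. Your observation that \eqref{eq:4.2} follows from \eqref{eq:4.1} by the pointwise boundary identity $h_kn_k\,a_{ij}\partial_j u\,\partial_i u = h_k\partial_k u\, n_i a_{ij}\partial_j u + h_k a_{ij}\partial_j u\,(n_k\partial_i-n_i\partial_k)u$ is also correct.

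There is, however, one concrete algebraic slip you should not let stand: the asserted rewriting
\[
-\partial_i\big(a_{ij}^{\alpha\beta}\partial_j u^\beta\big)
=\partial_i\big(V_i^{\alpha\beta}u^\beta\big)-B_i^{\alpha\beta}\partial_i u^\beta-c^{\alpha\beta}u^\beta-\lambda u^\alpha
\stackrel{?}{=}\partial_i\big(V_i^{\alpha\beta}u^\beta\big)-F^\alpha
\]
is false with the $F$ of the lemma, since $\partial_i(V_iu)-F=\partial_i(V_iu)-(V-B)\nabla u - cu-\lambda u$ differs from the middle expression by $(V-2B)\nabla u$. If you expand $\partial_i(V_iu)=\text{div}(V)u+V\nabla u$ and carry the Rellich computation through cleanly, the source contribution you obtain is $2\int\mathbf h_k\partial_k u\,\big[\text{div}(V)u+(V-B)\nabla u - cu-\lambda u\big]$, i.e.\ it agrees with $\text{div}(V)u+F$ in the gradient part but has the opposite sign on the zeroth-order terms $cu+\lambda u$. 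You should therefore either record $F^\alpha=(V-B)\nabla u - cu-\lambda u$ (which is what the honest computation yields and which is equally serviceable in Lemma~\ref{lemma:4.6}, where only $|F|$ is used) or explicitly flag the discrepancy with the statement rather than assert the equality. As written, the chain of equalities in your first display does not hold, and a reader checking the bookkeeping will stall there.
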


\begin{proof}
The calculation is standard, and we refer the reader to \cite[Lemma 6.4]{SZW24}.
\end{proof}

\begin{lemma}\label{lemma:4.6}
Assume the same conditions as in Lemma $\ref{lemma:4.1}$, and then we have
\begin{equation}\label{pri:4.5}
\begin{aligned}
\int_{\partial\Omega}|\nabla u|^2 dS
&\leq C\theta^{\tau_0-1}\int_{\partial\Omega}\Big|\frac{\partial u}{\partial \nu_{\mathcal{L}}}\Big|^2dS
+ C\theta^{\tau_0}\int_{\partial\Omega}|(\nabla u)^*|^2dS\\
\int_{\partial\Omega}|\nabla u|^2 dS
&\leq C\int_{\partial\Omega}|\nabla_{\emph{tan}}u|^2dS
+ C\theta^{\tau_0}\int_{\partial\Omega}|(\nabla u)^*|^2dS
+ C\theta^{2\tau_0-2}\int_{\partial\Omega}|u|^2 dS.
\end{aligned}
\end{equation}
and
\begin{equation}\label{pri:4.6}
\int_{\partial\Omega}|u|^2 dS
\leq C\int_{\partial\Omega}\Big|\frac{\partial u}{\partial \nu_{\mathcal{L}}}\Big|^2dS,
\end{equation}
where $C$ depends on $\mu,\kappa,\lambda,m,d,\tau_0$ and $R_0$.
\end{lemma}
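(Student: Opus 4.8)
The plan is to extract these estimates from the two Rellich identities \eqref{eq:4.1} and \eqref{eq:4.2} in Lemma \ref{lemma:4.1}, treating the interior terms as lower-order corrections that can be absorbed. First I would fix a small parameter $\theta\in(0,cR_0)$ and choose the vector field $\mathbf{h}$ so that $\mathrm{supp}(\mathbf{h})\subset\{x:\mathrm{dist}(x,\partial\Omega)<\theta\}$, $\langle\mathbf{h},n\rangle\geq c>0$ on $\partial\Omega$, and $|\nabla\mathbf{h}|\leq C/\theta$ (this is the standard ``thin collar'' vector field, a rescaled version of the one in Lemma \ref{lemma:4.1}). With this $\mathbf{h}$, the surface term on the left of \eqref{eq:4.1} is bounded below by $c\mu\int_{\partial\Omega}|\nabla u|^2\,dS$ by ellipticity \eqref{a:1}, so it suffices to bound each term on the right.

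The boundary terms on the right-hand side of \eqref{eq:4.1} are handled by Cauchy--Schwarz: $2\int_{\partial\Omega}\mathbf{h}_k\partial_k u^\alpha(\partial u/\partial\nu_{\mathcal L})^\alpha\,dS$ is $\leq\delta\int_{\partial\Omega}|\nabla u|^2 + C_\delta\int_{\partial\Omega}|\partial u/\partial\nu_{\mathcal L}|^2$, and similarly the term $2\int_{\partial\Omega}\mathbf{h}_k\partial_k u^\alpha\,n_iV_i^{\alpha\beta}u^\beta\,dS$ is $\leq\delta\int_{\partial\Omega}|\nabla u|^2 + C\int_{\partial\Omega}|u|^2$, which by \eqref{pri:4.6} (proved independently) is in turn $\leq C\int_{\partial\Omega}|\partial u/\partial\nu_{\mathcal L}|^2$. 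For the interior integrals I would use that they are supported in the collar of width $\theta$, so each can be estimated by $\|(\nabla u)^*\|^2$ or $\|(\nabla u)^*\|\|(u)^*\|$ against the area $|\{\mathrm{dist}<\theta\}|\approx\theta$; the one genuinely delicate term is $\int_{\Omega_\pm}\mathbf{h}_k\partial_k(a_{ij}^{\alpha\beta})\,\partial_j u^\beta\,\partial_i u^\alpha\,dx$, where I invoke the structural hypothesis \eqref{a:5.1}, namely $|\nabla A(x)|\leq C[\mathrm{dist}(x,\partial\Omega)]^{\tau_0-1}$. Integrating $[\mathrm{dist}(x,\partial\Omega)]^{\tau_0-1}$ over the collar $\{\mathrm{dist}<\theta\}$ produces the factor $\int_0^\theta s^{\tau_0-1}\,ds\approx\theta^{\tau_0}$, and combining with the pointwise bound $|\nabla u|^2\leq |(\nabla u)^*|^2$ along each nontangential ray gives $C\theta^{\tau_0}\int_{\partial\Omega}|(\nabla u)^*|^2\,dS$. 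The terms $\int_{\Omega_\pm}\mathrm{div}(\mathbf{h})|\nabla u|^2$ and $\int_{\Omega_\pm}\partial_i\mathbf{h}_k\,a\,\partial_j u\,\partial_k u$ carry $|\nabla\mathbf{h}|\leq C/\theta$ times a collar of width $\theta$, contributing $O(1)\cdot\|(\nabla u)^*\|^2$; absorbing the $\delta$-terms into the left and using $\int_{\partial\Omega}|u|^2\leq C\int_{\partial\Omega}|\partial u/\partial\nu_{\mathcal L}|^2$ from \eqref{pri:4.6}, the terms involving $\mathrm{div}(V)$ likewise are handled (again by \eqref{a:5.1}, $|\nabla V|\leq C[\mathrm{dist}]^{\tau_0-1}$, giving a $\theta^{\tau_0}$ or $\theta^{\tau_0-1}$ weight), and one arrives at
$$\int_{\partial\Omega}|\nabla u|^2\,dS\leq C\theta^{\tau_0-1}\int_{\partial\Omega}\Big|\frac{\partial u}{\partial\nu_{\mathcal L}}\Big|^2\,dS + C\theta^{\tau_0}\int_{\partial\Omega}|(\nabla u)^*|^2\,dS,$$
which is the first line of \eqref{pri:4.5}. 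The $\theta^{\tau_0-1}$ blow-up in the first term comes precisely from the worst collar integrals $\int_0^\theta s^{\tau_0-1}\,ds$ paired with $|\nabla\mathbf{h}|\sim\theta^{-1}$ landing on the conormal term. The second line of \eqref{pri:4.5} is proved identically but starting from \eqref{eq:4.2}: the leading boundary term is now $2\int_{\partial\Omega}\mathbf{h}_k a_{ij}^{\alpha\beta}\partial_j u^\beta\nabla_{\mathrm{tan}}u^\alpha\,dS\leq\delta\int|\nabla u|^2+C\int|\nabla_{\mathrm{tan}}u|^2$, and the $\mathrm{div}(V)u$ interior term, after integrating $|\nabla V|\,|u|\,|\nabla u|$ over the collar, produces $C\theta^{2\tau_0-2}\int_{\partial\Omega}|u|^2\,dS$ (two factors of the singular weight, one per derivative of $V$ and one from the $1/\theta$ in differentiating $\mathbf{h}$, i.e. $\int_0^\theta s^{\tau_0-1}s^{\tau_0-1}\,ds\cdot\theta\sim\theta^{2\tau_0-1}$ type bookkeeping, combined with the nontangential maximal function). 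Finally, \eqref{pri:4.6} is obtained by the divergence-theorem/coercivity argument already used for \eqref{pri:3.7}: write $\int_{\partial\Omega}|u|^2\leq C\|u\|_{H^1(\Omega_\pm)}^2$ by the trace theorem with a cutoff, then use the coercivity estimate $\tfrac{\lambda}{2}\|u\|_{H^1}^2\leq\mathrm{B}_{\mathcal L;\Omega_\pm}[u,u]=\pm\int_{\partial\Omega}\frac{\partial u}{\partial\nu_{\mathcal L}}u\,dS$ (valid for $\lambda\geq\lambda_0$ via Lemma \ref{lemma:2.1}, and for the exterior domain noting $u$ decays), and conclude by Cauchy's inequality.

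The main obstacle is the bookkeeping of the singular coefficient weights from \eqref{a:5.1}: one must check carefully that each interior term's combination of (i) the number of derivatives hitting $A$ or $V$ (hence factors of $[\mathrm{dist}]^{\tau_0-1}$), (ii) the factors of $|\nabla\mathbf{h}|\sim1/\theta$, and (iii) the collar width $\theta$, integrates to a \emph{finite} power of $\theta$ — this works only because $\tau_0>0$ makes $\int_0^\theta s^{\tau_0-1}\,ds$ convergent — and that the resulting exponents are exactly $\theta^{\tau_0-1}$, $\theta^{\tau_0}$, and $\theta^{2\tau_0-2}$ as claimed. A secondary subtlety is that $\nabla u$ must have nontangential limits a.e.\ on $\partial\Omega$ for the boundary integrals in \eqref{eq:4.1}--\eqref{eq:4.2} and the pointwise bound $|\nabla u|\leq(\nabla u)^*$ to be meaningful; this is part of the hypothesis $(\nabla u_\pm)^*\in L^2(\partial\Omega)$ together with the interior regularity supplied by the Lipschitz estimate of Theorem \ref{thm:0.1}.
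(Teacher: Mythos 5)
Your proof takes a genuinely different route at a key point, and that route does not produce the claimed estimates. The paper fixes a single vector field $\mathbf{h}$ once and for all, with $\mathrm{supp}(\mathbf{h})\subset\{\mathrm{dist}(x,\partial\Omega)<cR_0\}$ and $|\nabla\mathbf{h}|\leq C/R_0$, exactly as stated in Lemma~\ref{lemma:4.1}; the parameter $\theta$ enters only as a \emph{splitting level} inside the collar integrals $T_1=\int_\Omega(|\nabla A|+|\nabla V|)|\nabla u|^2$ and $T_2=\int_\Omega|\nabla V||u|^2$, where one cuts at $\Sigma_\theta$ and uses $\int_0^\theta s^{\tau_0-1}\,ds\sim\theta^{\tau_0}$ on the near-boundary piece and $\theta^{\tau_0-1}\int_\Omega|\nabla u|^2$ on the interior piece. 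The remaining interior terms in the Rellich identity --- those involving $\mathrm{div}(\mathbf{h})$ and $\nabla\mathbf{h}$ acting on $|\nabla u|^2$, and the $F$-term --- are simply bounded by $C\|u\|_{H^1(\Omega)}^2$, which in turn is controlled by $\int_{\partial\Omega}|\partial u/\partial\nu_{\mathcal L}|^2\,dS$ via coercivity $\eqref{pri:2.0.2}$ and $\eqref{eq:4.7}$; they never need to see the factor $\theta^{\tau_0}$ at all, because they land in the $C\theta^{\tau_0-1}\int|\partial u/\partial\nu_{\mathcal L}|^2$ slot (which is $\geq C\int|\partial u/\partial\nu_{\mathcal L}|^2$ for $\theta\leq1$).

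You instead make $\mathbf{h}$ itself $\theta$-dependent, supported in a $\theta$-collar with $|\nabla\mathbf{h}|\sim 1/\theta$, and you even observe that the resulting $\mathrm{div}(\mathbf{h})$ and $\partial\mathbf{h}$ interior terms "contribute $O(1)\cdot\|(\nabla u)^*\|^2$". This is the gap: an $O(1)$ coefficient on $\|(\nabla u)^*\|_{L^2(\partial\Omega)}^2$ is not of the form $C\theta^{\tau_0}\|(\nabla u)^*\|_{L^2(\partial\Omega)}^2$ and cannot be made small by taking $\theta\to 0$. The whole point of the $\theta^{\tau_0}$ prefactor is that it is later absorbed in Theorem~\ref{thm:4.1} by choosing $\theta$ small; an $O(1)$ constant there would destroy the absorption and the continuity argument. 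Your thin-collar construction is therefore not a harmless variant but actually breaks the inequality you state. Likewise, your bookkeeping for the $\theta^{2\tau_0-2}$ exponent in the second line of $\eqref{pri:4.5}$ (you write "$\theta^{2\tau_0-1}$ type bookkeeping" from squaring the singular weight) does not match; in the paper that exponent comes from applying Young's inequality to $\theta^{\tau_0-1}\|u\|_{H^1(\Omega)}^2\leq C\theta^{\tau_0-1}\|\nabla u\|_{L^2(\partial\Omega)}\|u\|_{L^2(\partial\Omega)}\leq\delta\|\nabla u\|_{L^2(\partial\Omega)}^2+C_\delta\theta^{2\tau_0-2}\|u\|_{L^2(\partial\Omega)}^2$, a Peter--Paul step on a fixed-$\mathbf{h}$ energy bound, not a product of two singular-weight integrals. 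Your proof of $\eqref{pri:4.6}$ is correct and matches the paper; to repair the rest, keep $\mathbf{h}$ fixed as in Lemma~\ref{lemma:4.1}, bound the $\nabla\mathbf{h}$ interior terms through $\|u\|_{H^1(\Omega)}^2$ and coercivity, and restrict the $\theta$-splitting to $T_1$ and $T_2$ only.
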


\begin{proof}
The main idea may be found in \cite[Lemma 6.6]{SZW24}, and we provide a proof for the sake of
the completeness. We first address the first line of $\eqref{pri:4.5}$, and it follows from the
identity $\eqref{eq:4.1}$ that
\begin{equation*}
\int_{\partial\Omega}|\nabla u|^2 dS
\leq C\int_{\partial\Omega}\Big|\frac{\partial u}{\partial\nu_{\mathcal{L}}}\Big|^2dS
+ C\|u\|_{H^1(\Omega)}^2
+ C\underbrace{\int_{\Omega}\big(|\nabla A|+|\nabla V|\big)|\nabla u|^2 dx}_{T_1}
+ C\underbrace{\int_{\Omega}|\nabla V||u|^2 dx}_{T_2}
\end{equation*}
where we use Young's inequality and the estimate $\eqref{f:3.5}$. To complete the proof,
set $\Sigma_{\theta} = \{x\in\Omega:\text{dist}(x,\partial\Omega)>\theta\}$.
Since the condition $|\nabla A(x)|+|\nabla V(x)|\leq C[\text{dist}(x,\partial\Omega)]^{\tau_0-1}$ for
any $x\in\Omega$, we have
\begin{equation}\label{f:4.9}
\begin{aligned}
T_1 &\leq C\int_{\Omega\setminus\Sigma_{\theta}}
[\text{dist}(x,\partial\Omega)]^{\tau_0-1} |\nabla u|^2 dx
+ C\theta^{\tau_0-1}\int_{\Sigma_{\theta}}|\nabla u|^2 dx \\
&\leq C\theta^{\tau_0}\int_{\partial\Omega}|(\nabla u)^*|^2 dS
+ C\theta^{\tau_0-1}\int_\Omega|\nabla u|^2 dx,
\end{aligned}
\end{equation}
and similarly,
\begin{equation}\label{f:4.10}
T_2 \leq
 C\theta^{\tau_0}\int_{\partial\Omega}|(u)^*|^2 dS
+ C\theta^{\tau_0-1}\int_\Omega|u|^2 dx.
\end{equation}

Combining the above estimates for $T_1$ and $T_2$ we obtain
\begin{equation}\label{f:4.7}
\int_{\partial\Omega}|\nabla u|^2 dS
\leq C\int_{\partial\Omega}\Big|\frac{\partial u}{\partial\nu_{\mathcal{L}}}\Big|^2dS
+ C(1+\theta^{\tau_0-1})\|u\|_{H^1(\Omega)}^2
+ C\theta^{\tau_0}\int_{\partial\Omega}\Big(|(\nabla u)^*|^2 + |(u)^*|^2\Big) dS.
\end{equation}
In fact, the integral of $|(u)^*|^2$ over $\partial\Omega$
may be controlled by the second term in the right-hand side of
the above inequality  since we have the following estimates
\begin{equation}\label{f:4.8}
 \|(u)^*\|_{L^2(\partial\Omega)}\leq C\|\mathcal{M}(u)\|_{L^2(\partial\Omega)}
 \leq C\|u\|_{H^1(\Omega)}
\end{equation}
where we employ the estimate $\eqref{pri:4.1}$ in the first inequality
and the second one follows from $\eqref{pri:3.2.1}$. Thus, the problem is reduced to estimate the quantity $\|u\|_{H^1(\Omega)}$.
Recall $\mathcal{L}(u) = 0$ in $\Omega$, and integrating by parts we derive that
\begin{equation}\label{eq:4.7}
 \mathrm{B}_{\mathcal{L};\Omega}[u,u] = \int_{\partial\Omega}\frac{\partial u}{\partial \nu_{\mathcal{L}}}udS
\end{equation}
which implies the estimate
\begin{equation}
\begin{aligned}
\frac{\lambda}{2}\|u\|_{H^1(\Omega)}^2 \leq
C_\delta\int_{\partial\Omega}\Big|\frac{\partial u}{\partial\nu_{\mathcal{L}}}
\Big|^2 dS
+ \delta \int_{\partial\Omega} |u|^2 dS
\end{aligned}
\end{equation}
where we use the estimate $\eqref{pri:2.0.2}$ and Young's inequality with $\delta$. Thus this together with
$\eqref{f:3.5}$ gives the stated estimate $\eqref{pri:4.6}$, where we choose $\delta>0$ such that
$C\delta = 1/2$. Also, we may have
\begin{equation*}
\|u\|_{H^1(\Omega)}^2
\leq C\int_{\partial\Omega}\Big|\frac{\partial u}{\partial\nu_{\mathcal{L}}}
\Big|^2 dS,
\end{equation*}
by inserting this estimate to $\eqref{f:4.7}$ and $\eqref{f:4.8}$, we obtain
the first line of the estimate $\eqref{pri:4.5}$.

We now proceed to prove the second line of $\eqref{pri:4.5}$. Similarly, by $\eqref{eq:4.2}$ we may have
\begin{equation*}
\begin{aligned}
\int_{\partial\Omega}|\nabla u|^2 dS
&\leq C\int_{\partial\Omega}|\nabla_{\text{tan}}u|dS
+ C\|u\|_{H^1(\Omega)}^2
+ C\int_{\Omega}\big(|\nabla A|+|\nabla V|\big)|\nabla u|^2 dx
+ C\int_{\Omega}|\nabla V||u|^2 dx \\
& \leq C\int_{\partial\Omega}|\nabla_{\text{tan}}u|^2dS
+ C(1+\theta^{\tau_0-1})\|u\|_{H^1(\Omega)}^2
+ C\theta^{\tau_0}\int_{\partial\Omega}|(\nabla u)^*|^2dS,
\end{aligned}
\end{equation*}
where we use the estimates $\eqref{f:4.9}$,$\eqref{f:4.10}$ and $\eqref{f:4.8}$ in the
last inequality. Thus the desired estimate is based upon
\begin{equation}
\theta^{\tau_0-1}\|u\|_{H^1(\Omega)}^2
\leq C\delta\int_{\partial\Omega}|\nabla u|^2 dS
+ C_\delta\theta^{2\tau_0-2}\int_{\partial\Omega}|u|^2 dS,
\end{equation}
where we employ Young's inequality with $\delta$, and this ends the whole proof.
\end{proof}

\begin{lemma}\label{lemma:4.5}
Suppose that $u$ satisfies $\mathcal{L}(u) = 0$ in $\Omega_{-}$
with $(\nabla u)^*\in L^2(\partial\Omega)$, and
$\nabla u$ exists in the sense of nontangential convergence on $\partial\Omega$.
We further assume that
$|u(x)|=O(|x|^{2-d})$ with $|\nabla u(x)|=O(|x|^{1-d})$ as $|x|\to\infty$.
Then we have
\begin{equation}\label{pri:4.7}
\begin{aligned}
&\int_{\partial\Omega} |(\nabla u)_{-}|^2 dS
\leq C\theta^{\tau_0-1}\int_{\partial\Omega}\Big|\Big(\frac{\partial u}{\partial \nu_{\mathcal{L}}}\Big)_{-}\Big|^2 dS
+ C\theta^{\tau_0}\int_{\partial\Omega}|(\nabla u)^*|^2dS\\
&\int_{\partial\Omega} |(\nabla u)_{-}|^2 dS
\leq C\int_{\partial\Omega}|(\nabla_{\emph{tan}}u)_{-}|^2 dS
+ C\theta^{\tau_0}\int_{\partial\Omega}|(\nabla u)^*|^2dS
+ C\theta^{2\tau_0-2}\int_{\partial\Omega}|u_{-}|^2 dS,
\end{aligned}
\end{equation}
and there holds
\begin{equation}\label{pri:4.8}
 \int_{\partial\Omega}|u_{-}|^2 dS
 \leq C\int_{\partial\Omega} \Big|\Big(\frac{\partial u}{\partial \nu_{\mathcal{L}}}\Big)_{-}\Big|^2dS
\end{equation}
where $C$ depends on $\mu,\kappa,\lambda,d,m,\tau_0$ and $\Omega$.
\end{lemma}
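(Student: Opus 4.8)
The plan is to mimic the proof of Lemma~\ref{lemma:4.6}, replacing the interior domain $\Omega$ by the exterior domain $\Omega_{-}$ throughout, keeping track of the sign change produced by the fact that the outward unit normal of $\Omega_{-}$ along $\partial\Omega$ is $-n$, and using the decay hypotheses to discard the contributions coming from infinity. First I would fix a $C^1$ vector field $\mathbf{h}$ as in Lemma~\ref{lemma:4.1} with $\mathrm{supp}(\mathbf{h})\subset\{x:\text{dist}(x,\partial\Omega)<cR_0\}$, and apply the Rellich identity \eqref{eq:4.1} on $\Omega_{-}$; this is legitimate because Lemma~\ref{lemma:4.1} already covers $\Omega_{\pm}$, while the support condition on $\mathbf{h}$ together with $|u(x)|=O(|x|^{2-d})$ and $|\nabla u(x)|=O(|x|^{1-d})$ makes every term over $\Omega_{-}$ absolutely convergent and the flux through $\partial B(0,R)$ tend to $0$ as $R\to\infty$. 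Together with Young's inequality and the collar trace bound $\int_{\partial\Omega}|u_{-}|^2\,dS\le C\|u\|_{H^1(\Omega_{-})}^2$ proved exactly as \eqref{f:3.6}, this yields
\begin{equation*}
\int_{\partial\Omega}|(\nabla u)_{-}|^2\,dS
\le C\int_{\partial\Omega}\Big|\Big(\frac{\partial u}{\partial\nu_{\mathcal{L}}}\Big)_{-}\Big|^2\,dS
+ C\|u\|_{H^1(\Omega_{-})}^2 + C(T_1+T_2),
\end{equation*}
where $T_1=\int_{\Omega_{-}}(|\nabla A|+|\nabla V|)|\nabla u|^2\,dx$ and $T_2=\int_{\Omega_{-}}|\nabla V||u|^2\,dx$, both effectively taken over the collar $\Omega_{-}\cap\{\text{dist}(\cdot,\partial\Omega)<cR_0\}$.

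Next I would bound $T_1$ and $T_2$ by means of the hypothesis \eqref{a:5.1}. Splitting the collar along $\Sigma_{\theta}=\{x\in\Omega_{-}:\text{dist}(x,\partial\Omega)>\theta\}$ and using $|\nabla A|+|\nabla V|\le C[\text{dist}(x,\partial\Omega)]^{\tau_0-1}$ with a Fubini slicing by the level sets of the distance function, exactly as in \eqref{f:4.9}--\eqref{f:4.10}, gives
\begin{equation*}
T_1\le C\theta^{\tau_0}\int_{\partial\Omega}|(\nabla u)^*|^2\,dS + C\theta^{\tau_0-1}\|u\|_{H^1(\Omega_{-})}^2,
\qquad
T_2\le C\theta^{\tau_0}\int_{\partial\Omega}|(u)^*|^2\,dS + C\theta^{\tau_0-1}\|u\|_{H^1(\Omega_{-})}^2,
\end{equation*}
and here $\|(u)^*\|_{L^2(\partial\Omega)}\le C\|\mathcal{M}(u)\|_{L^2(\partial\Omega)}\le C\|u\|_{H^1(\Omega_{-})}$ by the exterior analogue of \eqref{pri:4.1} and by \eqref{pri:3.2.1}.

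It then remains to control $\|u\|_{H^1(\Omega_{-})}$. From $\mathcal{L}(u)=0$ in $\Omega_{-}$, integrating by parts over $B(0,R)\setminus\overline{\Omega}$ and letting $R\to\infty$ (the term on $\partial B(0,R)$ being $O(R^{2-d})\to0$, as in \eqref{f:3.14}), one obtains $\mathrm{B}_{\mathcal{L};\Omega_{-}}[u,u]=-\int_{\partial\Omega}(\partial u/\partial\nu_{\mathcal{L}})_{-}\,u_{-}\,dS$. The coercivity \eqref{pri:2.2.2}, which holds on $\Omega_{-}$ by the remark following it, together with Young's inequality, gives
\begin{equation*}
\frac{\lambda}{2}\|u\|_{H^1(\Omega_{-})}^2
\le C_\delta\int_{\partial\Omega}\Big|\Big(\frac{\partial u}{\partial\nu_{\mathcal{L}}}\Big)_{-}\Big|^2\,dS
+ \delta\int_{\partial\Omega}|u_{-}|^2\,dS;
\end{equation*}
combining this with the collar trace bound and choosing $\delta$ small proves \eqref{pri:4.8} and, in particular, $\|u\|_{H^1(\Omega_{-})}^2\le C\int_{\partial\Omega}|(\partial u/\partial\nu_{\mathcal{L}})_{-}|^2\,dS$. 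Substituting the latter back into the estimates for $T_1,T_2$ and absorbing the resulting $\theta^{\tau_0-1}\|u\|_{H^1(\Omega_{-})}^2$ on the right yields the first line of \eqref{pri:4.7}, while the second line follows identically from the other Rellich identity \eqref{eq:4.2}, closed by the Young-inequality step $\theta^{\tau_0-1}\|u\|_{H^1(\Omega_{-})}^2\le C\delta\int_{\partial\Omega}|(\nabla u)_{-}|^2\,dS + C_\delta\theta^{2\tau_0-2}\int_{\partial\Omega}|u_{-}|^2\,dS$, exactly as in the proof of Lemma~\ref{lemma:4.6}. The only genuinely new point, and hence the main obstacle, is the justification at infinity: one must verify that all the surface and volume integrals appearing in the Rellich identities and in the energy identity are absolutely convergent and that the flux through $\partial B(0,R)$ vanishes in the limit, which is precisely where the decay assumptions $|u(x)|=O(|x|^{2-d})$ and $|\nabla u(x)|=O(|x|^{1-d})$ are used; everything else is a transcription of the interior argument.
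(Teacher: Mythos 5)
Your proposal is correct and follows essentially the same route as the paper's own (very terse) proof, which simply says to repeat the argument of Lemma~\ref{lemma:4.6} on $\Omega_{-}$ and to handle the behaviour at infinity as in Lemma~\ref{lemma:3.5}; you have spelled out exactly these steps. One tiny slip: the coercivity you invoke on $\Omega_{-}$ should be the bound \eqref{pri:2.0.2} for $\mathcal{L}=\mathcal{L}_{1}$ (extended to subdomains as noted after \eqref{pri:2.2.2}), not \eqref{pri:2.2.2} itself, which is stated for the homogenized operator $\mathcal{L}_0$.
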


\begin{proof}
An argument similar to the one used in the proof of the estimate $\eqref{pri:4.5}$ shows
the estimates $\eqref{pri:4.7}$, and it will not be reproduced here.
We want to point out that the condition
$|u(x)|=O(|x|^{2-d})$ with $|\nabla u(x)|=O(|x|^{1-d})$ as $|x|\to\infty$ guarantees
the truth of the estimate $\eqref{pri:4.8}$. The reader may find the related details in
the proof of Lemma $\ref{lemma:3.5}$.
\end{proof}

\subsection{Comparability between fundamental solutions}

If we fix the coefficients of $\mathcal{L}$ at the point $x\in\mathbb{R}^d$,
it turns to be an operator with constant coefficients whose
fundamental solution is denoted by
$\mathbf{E}(\cdot,\cdot;x)$. For a function $F = F(x,y,z)$, we use the notation
\begin{equation*}
\nabla_1 F(x,y,z) = \nabla_x F(x,y,z)
\quad\text{and}\quad
\nabla_2 F(x,y,z) = \nabla_y F(x,y,z)
\end{equation*}
(see \cite[pp.7]{SZW24}), and this notation will be used throughout.

\begin{lemma}\label{lemma:4.2}
Suppose that the coefficients of $\mathcal{L}$ satisfy $\eqref{a:1}$ and $\eqref{a:3}$
with $\lambda\geq\max\{\lambda_0,\mu\}$. Assume that $A,V,B$ satisfy $\eqref{a:2}$ and $\eqref{a:4}$.
Then we have
\begin{equation}\label{pri:4.3}
\begin{aligned}
|\nabla_1\mathbf{\Gamma}_{\mathcal{L}}(x,y)-\nabla_1 \mathbf{E}(x,y;x) |
&\leq C|x-y|^{1-d+\tau}, \\
|\nabla_1\mathbf{\Gamma}_{\mathcal{L}}(x,y)-\nabla_1 \mathbf{E}(x,y;y) |
&\leq C|x-y|^{1-d+\tau},
\end{aligned}
\end{equation}
and
\begin{equation}\label{pri:4.4}
\begin{aligned}
|\nabla_2\mathbf{\Gamma}_{\mathcal{L}}(x,y)-\nabla_2 \mathbf{E}(x,y;y) |
&\leq C|x-y|^{1-d+\tau}, \\
|\nabla_2\mathbf{\Gamma}_{\mathcal{L}}(x,y)-\nabla_2 \mathbf{E}(x,y;x) |
&\leq C|x-y|^{1-d+\tau}
\end{aligned}
\end{equation}
for any $x,y\in\mathbb{R}^d$ with $0<|x-y|\leq 1$,
where $C$ depends on $\mu,\kappa,\lambda,d,m,\tau$.
\end{lemma}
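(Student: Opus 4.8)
The plan is to prove the two estimates $\eqref{pri:4.3}$ for $\nabla_1$; the estimates $\eqref{pri:4.4}$ for $\nabla_2$ will then follow by reading $\eqref{pri:4.3}$ for the adjoint operator. Indeed $\mathcal{L}^*$ obeys the same hypotheses with the same structural constants, one has $\mathbf{\Gamma}_{\mathcal{L}}(x,y)=[\mathbf{\Gamma}_{\mathcal{L}^*}(y,x)]^t$ by Theorem $\ref{thm:2.3.1}$, and the operator obtained by freezing the coefficients of $\mathcal{L}^*$ at a point $x_0$ is the adjoint of the one obtained by freezing $\mathcal{L}$, so that $\mathbf{E}(x,y;x_0)=[\mathbf{E}^*(y,x;x_0)]^t$ with $\mathbf{E}^*(\cdot,\cdot;x_0)$ the frozen fundamental solution of $\mathcal{L}^*$; hence $\nabla_2\mathbf{\Gamma}_{\mathcal{L}}(x,y)-\nabla_2\mathbf{E}(x,y;x_0)=[\nabla_1\mathbf{\Gamma}_{\mathcal{L}^*}(y,x)-\nabla_1\mathbf{E}^*(y,x;x_0)]^t$, which $\eqref{pri:4.3}$ for $\mathcal{L}^*$ bounds by $C|x-y|^{1-d+\tau}$. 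So fix $x_0\in\{x,y\}$, set $r=|x-y|$, let $\mathcal{L}^{x_0}$ be the constant-coefficient operator obtained by freezing the coefficients of $\mathcal{L}$ at $x_0$ (its fundamental solution being $\mathbf{E}(\cdot,\cdot;x_0)$), and put $H=\mathbf{\Gamma}_{\mathcal{L}}(\cdot,y)-\mathbf{E}(\cdot,y;x_0)$. Since $y\notin B(x,r/2)$ and $\mathbf{\Gamma}_{\mathcal{L}}(\cdot,y)$, $\mathbf{E}(\cdot,y;x_0)$ are null solutions of $\mathcal{L}$, $\mathcal{L}^{x_0}$ there, a direct computation yields $\mathcal{L}(H)=\mathrm{div}(f)+F$ in $B(x,r/2)$, with $f=(A-A(x_0))\nabla\mathbf{E}(\cdot,y;x_0)+(V-V(x_0))\mathbf{E}(\cdot,y;x_0)$ and $F=-(B-B(x_0))\nabla\mathbf{E}(\cdot,y;x_0)-(c-c(x_0))\mathbf{E}(\cdot,y;x_0)$.

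First I would establish the zeroth-order comparison $|\mathbf{\Gamma}_{\mathcal{L}}(z,y)-\mathbf{E}(z,y;x_0)|\le C|z-y|^{2-d+\tau}$ for $0<|z-y|\le 1$ and $x_0\in\{z,y\}$. Applying Lemma $\ref{lemma:2.3.1}$ with $\widetilde{\mathcal{L}}=\mathcal{L}^{x_0}$ writes this difference as a sum of four integrals over $w\in\mathbb{R}^d$, whose kernels pair a coefficient difference $A(x_0)-A(w)$ (resp. the $V$, $B$, $c$ differences) against $\partial\mathbf{E}(w,y;x_0)$ and $\partial\mathbf{\Gamma}_{\mathcal{L}}(z,w)$. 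By $\eqref{a:4}$ the first three differences are $\le C\min(|x_0-w|^\tau,1)$ and the last is $\le 2\kappa$; using the decay estimates $\eqref{pri:2.11}$--$\eqref{pri:2.12}$ for $\mathbf{\Gamma}_{\mathcal{L}}$ and the analogues of $\eqref{pri:3.0}$ for the constant-coefficient $\mathbf{E}(\cdot,\cdot;x_0)$ (with constants uniform in $x_0$), and splitting $\mathbb{R}^d$ into $B(z,\cdot)$, $B(y,\cdot)$ and their complement as in Remark $\ref{remark:3.1.1}$, each integral is $O(|z-y|^{2-d+\tau})$. The extra factor $|z-y|^\tau$ comes, near the two poles, from the Hölder modulus of the frozen coefficient, and in the far region from bounding the coefficient difference by $\min(|x_0-w|^\tau,1)$ — this is where $|z-y|\le 1$ enters; the $c$-term, which carries no derivative, is even $O(|z-y|^{3-d})$ despite $c$ not being Hölder continuous.

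With this in hand I would apply the interior Lipschitz estimate $\eqref{pri:0.5}$ — available for $\mathcal{L}=\mathcal{L}_1$ since $(\mathrm{H}_2)$ holds here by $\eqref{relation:2}$ of Theorem $\ref{thm:2.1}$ together with $\eqref{pri:0.3}$ of Theorem $\ref{thm:0.1}$ — to $H$ on $B(x,r/8)$ with $\sigma=\tau$, giving
\[
|\nabla_1\mathbf{\Gamma}_{\mathcal{L}}(x,y)-\nabla_1\mathbf{E}(x,y;x_0)|=|\nabla H(x)|\le C\Big\{\tfrac1r\Big(\dashint_{B(x,r/4)}|H|^2\Big)^{1/2}+\|f\|_{L^\infty(B(x,r/4))}+r^\tau[f]_{C^{0,\tau}(B(x,r/4))}+r\Big(\dashint_{B(x,r/4)}|F|^p\Big)^{1/p}\Big\}.
\]
On $B(x,r/4)$ one has $|z-y|\approx r$, $|A-A(x_0)|+|V-V(x_0)|+|B-B(x_0)|\le Cr^\tau$, $|c-c(x_0)|\le 2\kappa$, and $\|\nabla^l\mathbf{E}(\cdot,y;x_0)\|_{L^\infty(B(x,r/4))}\le Cr^{2-d-l}$ from the analogue of $\eqref{pri:3.0}$; together with the zeroth-order comparison, which handles the first term, routine bookkeeping shows every term is $\le Cr^{1-d+\tau}=C|x-y|^{1-d+\tau}$.

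I expect the main difficulty to be the sharp bookkeeping of the four integrals in the zeroth-order comparison: one is forced to freeze the lower-order coefficients $V$, $B$ (and, formally, $c$) at the same point $x_0$ as $A$ — freezing $A$ alone leaves non-perturbative lower-order errors — and the gain $|x-y|^\tau$ must be recovered uniformly over all three regions of the decomposition, the far one being where the hypothesis $|x-y|\le 1$ is essential. The interior-estimate step is then mechanical. The case $x_0=y$ is a little more delicate than $x_0=x$, since the frozen coefficient then no longer vanishes near the pole of $\mathbf{\Gamma}_{\mathcal{L}}(\cdot,y)$; handling everything through the auxiliary solution $H$ and $\eqref{pri:0.5}$, rather than differentiating the identity of Lemma $\ref{lemma:2.3.1}$ in $x$ directly (which would produce a non-integrable $\nabla_1\nabla_2\mathbf{\Gamma}_{\mathcal{L}}$), sidesteps this.
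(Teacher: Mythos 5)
Your proof is correct, but it takes a genuinely different route from the paper's and you have slightly mischaracterized what the paper does. The paper takes the identity of Lemma $\ref{lemma:2.3.1}$ with $\widetilde{\mathcal{L}}$ frozen at $x$ and differentiates it in $x$ directly, obtaining an expression $\eqref{f:4.2}$ whose worst kernel is $|A(x)-A(z)|\,|\nabla_x\nabla_z\mathbf{\Gamma}_{\mathcal{L}}(x,z)|\,|\nabla_z\mathbf{E}(z,y;x)|\lesssim |x-z|^{\tau-d}|z-y|^{1-d}$: the singularity you flag is already rendered integrable by the H\"older modulus of the frozen coefficient, so there is nothing to ``sidestep'' there. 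Only for the variants with $x_0=y$ does the paper appeal to a triangle inequality with $\eqref{pri:4.12}$; and for $\eqref{pri:4.4}$ it simply re-runs the same argument freezing at $y$ and differentiating in $y$. Your route instead first proves the zeroth-order bound from the undifferentiated identity and then upgrades to the gradient bound by applying the interior estimate $\eqref{pri:0.5}$ to the null-difference $H$, and disposes of $\eqref{pri:4.4}$ by the adjoint duality $\mathbf{\Gamma}_{\mathcal{L}}(x,y)=[\mathbf{\Gamma}_{\mathcal{L}^*}(y,x)]^t$. The two approaches are comparable in length; yours avoids differentiating a principal-value integral and bundles the $\nabla_1$/$\nabla_2$ symmetry into one duality step, while the paper's yields the explicit integral estimates $I_1,\dots,I_4$ that it re-uses in Lemmas $\ref{lemma:4.3}$ and $\ref{lemma:4.4}$.

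One step in your write-up should be filled in. You state the zeroth-order comparison for $x_0\in\{z,y\}$, i.e.\ for the freeze point coinciding with an argument of the kernel. But in the application with $x_0=x$ fixed, the term $\frac1r(\dashint_{B(x,r/4)}|H|^2)^{1/2}$ needs $|\mathbf{\Gamma}_{\mathcal{L}}(z,y)-\mathbf{E}(z,y;x)|\lesssim r^{2-d+\tau}$ for \emph{all} $z\in B(x,r/4)$, not just for $z=x$. This does follow — either observe that the zeroth-order argument only requires $|x_0-z|\lesssim r$ (so the coefficient difference $|A(x_0)-A(w)|\le C|x_0-w|^\tau$ still yields the gain near $w=z$), or insert the triangle step $|\mathbf{E}(z,y;z)-\mathbf{E}(z,y;x)|\le C\vartheta_1|z-y|^{2-d}\le Cr^{\tau}r^{2-d}$ via $\eqref{pri:4.11}$ — but as written the claim does not literally cover what you use.
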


\begin{proof}
By suitable modification to the proof of \cite[Lemma 2.2]{SZW24}, it follows from
$\eqref{eq:5.1}$ that
\begin{equation}\label{f:4.2}
\begin{aligned}
\big|\nabla_1\mathbf{\Gamma}_{\mathcal{L}}(x,y) -
\nabla_1\mathbf{\Gamma}_{\widetilde{\mathcal{L}}}(x,y)\big|
& \leq C\int_{\mathbb{R}^d} \frac{|\widetilde{A}(z)-A(z)|}{|x-z|^d|z-y|^{d-1}}dz \\
&+C\int_{\mathbb{R}^d} \frac{|\widetilde{V}(z)-V(z)|}{|x-z|^d|z-y|^{d-2}}dz \\
&+C\int_{\mathbb{R}^d} \frac{|\widetilde{B}(z)-B(z)|}{|x-z|^{d-1}|z-y|^{d-1}}dz\\
&+C\int_{\mathbb{R}^d} |\widetilde{c}(z)-c(z)||\mathbf{\Gamma}_{\widetilde{\mathcal{L}}}(z,y)|
|\nabla_1\mathbf{\Gamma}_{\mathcal{L}}(x,z)|dz.
\end{aligned}
\end{equation}
To obtain the stated estimate $\eqref{pri:4.3}$, we fix $x\in\mathbb{R}^d$ and let
$\widetilde{A}, \widetilde{B}, \widetilde{V},\widetilde{c}$ be valued at this point $x$.
In such the case, we replace $\mathbf{\Gamma}_{\widetilde{\mathcal{L}}}(\cdot,y)$
by $\mathbf{E}(\cdot,y;x)$.
Hence, the problem
is reduced to estimate the following quantities
\begin{equation}\label{f:4.1}
\underbrace{\int_{\mathbb{R}^d}\frac{dz}{|x-z|^{d-\tau}|z-y|^{d-1}}}_{I_1}
+ \underbrace{\int_{\mathbb{R}^d}\frac{dz}{|x-z|^{d-\tau}|z-y|^{d-2}}}_{I_2}
+ \underbrace{\int_{\mathbb{R}^d}\frac{dz}{|x-z|^{d-1-\tau}|z-y|^{d-1}}}_{I_3}
\end{equation}
and
\begin{equation}\label{f:4.2.2}
I_4=\int_{\mathbb{R}^d}|\nabla_1\mathbf{\Gamma}_{\mathcal{L}}(x,z)||\mathbf{E}(z,y;x)|dz.
\end{equation}
For $\eqref{f:4.1}$, it is clear to see that the three integrals in $\eqref{f:4.1}$
own a similar form and so we only
address the first one in details while the other two will follow the same computations.
In fact, the following calculation has been used for $\eqref{f:3.8}$.
Let $r=|x-y|$, and $Q=(x+y)/2\in\mathbb{R}^d$.
\begin{equation}\label{f:4.3}
\begin{aligned}
I_1&\leq \bigg\{\int_{B(x,r/4)}+\int_{B(y,3r/4)}
+\int_{\mathbb{R}^d\setminus(B(x,r/4)\cup B(y,3r/4))}\bigg\}
\frac{dz}{|x-z|^{d-\tau}|z-y|^{d-1}}\\
&\leq Cr^{1-d}\int_0^{r/4} \frac{ds}{s^{1-\tau}} + Cr^{\tau-d}\int_0^{3r/4} ds
+ C\int_{r/4}^\infty\frac{ds}{s^{d-\tau}}\\
&\leq Cr^{1-d+\tau},
\end{aligned}
\end{equation}
where we refer the reader to some geometry facts in Remark $\ref{remark:3.1.1}$.
By the same token, it is not hard to derive that
\begin{equation}\label{f:4.4}
I_2\leq Cr^{2-d+\tau},\qquad I_3 \leq Cr^{2-d+\tau}.
\end{equation}

We now turn to estimate $I_4$. By the estimate
$\eqref{pri:3.0}$ we may have $|\mathbf{E}(z,y;x)|\leq C\lambda^{-\frac{1}{2}}|z-y|^{1-d}$
for $|z-y|\geq 1/\sqrt{\lambda}$, and
$|\mathbf{E}(z,y;x)|\leq C|z-y|^{2-d}$ for any $x,z\in\mathbb{R}^d$.
Let $r^* = \max\{1/\sqrt{\lambda},2r\}$.
Proceeding as in the estimates for $I_1$, we obtain
\begin{equation*}
\begin{aligned}
I_4
&\leq \int_{B(Q,r^*)}\frac{dz}{|x-z|^{d-1}|y-z|^{d-2}}
+ \int_{\mathbb{R}^d\setminus B(Q,r^*)}\frac{dz}{|x-z|^{d-1}|y-z|^{d-1}} \\
&\leq Cr^{2-d}\int_0^{r/4} ds + Cr^{1-d}\int_0^{3r/4}sds
+ C\int_{r/4}^{r^*}\frac{ds}{s^{d-2}} + C\lambda^{-\frac{1}{2}}\int_{r^*}^\infty \frac{ds}{s^{d-1}}.
\end{aligned}
\end{equation*}
In the case of $r>1/(2\sqrt{\lambda})$, we have
\begin{equation}\label{f:4.5}
I_4\leq C\left\{\begin{aligned}
&1,&\quad& d=3;\\
&r^{3-d},&\quad& d\geq 4.
\end{aligned}\right.
\end{equation}
For the case $r\leq 1/(2\sqrt{\lambda})$, it is not hard to see
\begin{equation}\label{f:4.2.1}
I_4\leq C\left\{\begin{aligned}
&\ln(4/(\sqrt{\lambda}r)),&\quad& d=3;\\
&r^{3-d},&\quad& d\geq 4,
\end{aligned}\right.
\end{equation}
where $C$ depends on $\mu,\kappa,\tau,\lambda,m,d$. Up to now, we have established
\begin{equation}\label{f:4.6}
|\nabla_1\mathbf{\Gamma}_{\mathcal{L}}(x,y)-\nabla_1 \mathbf{E}(x,y;x) |
\leq Cr^{1-d+\tau},
\end{equation}
from the estimates $\eqref{f:4.2}$, $\eqref{f:4.3}$ $\eqref{f:4.4}$, $\eqref{f:4.5}$ and
$\eqref{f:4.2.1}$  with $r\in(0,1]$.

The second line of the stated estimate $\eqref{pri:4.3}$ is following from $\eqref{f:4.6}$
and $\eqref{pri:4.12}$,
while the desired estimate $\eqref{pri:4.4}$ will be proved by the same argument, and it suffices to
fix $y$ and let the coefficients of $\widetilde{\mathcal{L}}$ be frozen at $y$. We have completed the
whole proof.
\end{proof}

\begin{lemma}\label{lemma:4.3}
Suppose that the coefficients of $\mathcal{L}$ and $\widetilde{\mathcal{L}}$
satisfy $\eqref{a:1},\eqref{a:2}$, $\eqref{a:3}$ and $\eqref{a:4}$
with $\lambda = \tilde{\lambda}$. Let
\begin{equation*}
\begin{aligned}
\vartheta_1
&= \max\Big\{\|A-\widetilde{A}\|_{L^\infty(\mathbb{R}^d)},
\|V-\widetilde{V}\|_{L^\infty(\mathbb{R}^d)},
\|B-\widetilde{B}\|_{L^\infty(\mathbb{R}^d)},
\|c-\widetilde{c}\|_{L^\infty(\mathbb{R}^d)}\Big\},\\
\vartheta_2
&=\max\Big\{\|A-\widetilde{A}\|_{C^{0,\tau}(\mathbb{R}^d)},
\|V-\widetilde{V}\|_{C^{0,\tau}(\mathbb{R}^d)},
\|B-\widetilde{B}\|_{L^\infty(\mathbb{R}^d)},
\|c-\widetilde{c}\|_{L^\infty(\mathbb{R}^d)}\Big\}.
\end{aligned}
\end{equation*}
Then
we have
\begin{equation}\label{pri:4.9}
\big|\mathbf{\Gamma}_{\mathcal{L}}(x,y)
-\mathbf{\Gamma}_{\widetilde{\mathcal{L}}}(x,y)\big|
\leq C\vartheta_1
|x-y|^{2-d},
\end{equation}
and
\begin{equation}\label{pri:4.10}
\begin{aligned}
\big|\nabla_x\mathbf{\Gamma}_{\mathcal{L}}(x,y) -
\nabla_x\mathbf{\Gamma}_{\widetilde{\mathcal{L}}}(x,y)\big|
&\leq C\vartheta_2|x-y|^{1-d},\\
\big|\nabla_x\nabla_y\mathbf{\Gamma}_{\mathcal{L}}(x,y) -
\nabla_x\nabla_y\mathbf{\Gamma}_{\widetilde{\mathcal{L}}}(x,y)\big|
&\leq C\vartheta_2|x-y|^{-d}
\end{aligned}
\end{equation}
for any $x,y\in\mathbb{R}^d$ with $0<|x-y|\leq 1$, where $C$ depends on $\mu,\kappa,\lambda,d,m$ and $\tau$.
\end{lemma}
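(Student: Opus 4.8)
The plan is to obtain all three estimates from the comparison identity of Lemma~\ref{lemma:2.3.1}, the pointwise decay estimates \eqref{pri:2.11}--\eqref{pri:2.13}, and the scaling-invariant interior estimates of Theorem~\ref{thm:0.2}. Throughout I write $r=|x-y|\le 1$, and note that $\mathbf{c}-\widetilde{\mathbf{c}}=c-\widetilde{c}$ since $\lambda=\widetilde\lambda$, so the terms $\lambda I$ never contribute.

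For \eqref{pri:4.9} I would insert the crude bounds $|\widetilde a-a|,\,|\widetilde V-V|,\,|\widetilde B-B|,\,|\widetilde c-c|\le\vartheta_1$ together with \eqref{pri:2.11}--\eqref{pri:2.12} into the four integrals on the right of \eqref{eq:5.1}, and estimate each resulting convolution-type integral by decomposing $\mathbb{R}^d$ into $B(x,r/4)$, $B(y,3r/4)$ and the complement, exactly as in the evaluation of $T_1,T_2$ in the proof of Lemma~\ref{lemma:3.2} (the factor $(1+\sqrt\lambda|\cdot|)^{-k}$ in \eqref{pri:2.11} securing convergence at infinity). Since each of the four integrals is $O(r^{2-d})$, this gives \eqref{pri:4.9}.

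For the first line of \eqref{pri:4.10} I would \emph{not} differentiate \eqref{eq:5.1}. Instead fix $y$, put $w:=\mathbf{\Gamma}_{\mathcal{L}}(\cdot,y)-\mathbf{\Gamma}_{\widetilde{\mathcal{L}}}(\cdot,y)$, and observe that in $B(x_0,r/2)$ with $r=|x_0-y|$ one has, using $\mathcal{L}\mathbf{\Gamma}_{\mathcal{L}}(\cdot,y)=0=\widetilde{\mathcal{L}}\mathbf{\Gamma}_{\widetilde{\mathcal{L}}}(\cdot,y)$,
\[
\mathcal{L}w=\text{div}\big((A-\widetilde{A})\nabla\mathbf{\Gamma}_{\widetilde{\mathcal{L}}}(\cdot,y)+(V-\widetilde{V})\mathbf{\Gamma}_{\widetilde{\mathcal{L}}}(\cdot,y)\big)-(B-\widetilde{B})\nabla\mathbf{\Gamma}_{\widetilde{\mathcal{L}}}(\cdot,y)-(c-\widetilde{c})\mathbf{\Gamma}_{\widetilde{\mathcal{L}}}(\cdot,y).
\]
The key point is that only $A-\widetilde{A}$ and $V-\widetilde{V}$ enter the divergence-form datum, and these are small in $C^{0,\tau}$, so that datum is small in $C^{0,\sigma}$ for any $\sigma\le\tau$, controlled by $\vartheta_2$. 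Now apply the scaling-invariant interior Lipschitz estimate \eqref{pri:0.5} (valid because $\mathcal{L}$, hence also $\mathcal{L}^*$, satisfies $(\text{H}_1)$ and $(\text{H}_2)$ by Theorem~\ref{thm:2.1} under \eqref{a:2},\eqref{a:4}): bound the $\big(\dashint_{B(x_0,r/2)}|w|^2\big)^{1/2}$ term by \eqref{pri:4.9}, and the $L^\infty$, $C^{0,\sigma}$ and $L^p$ norms of the data on $B(x_0,r/2)$ by \eqref{pri:2.12}--\eqref{pri:2.13} and interior Schauder bounds for $\mathbf{\Gamma}_{\widetilde{\mathcal{L}}}(\cdot,y)$; since $r\le1$ every term comes out $\le C\vartheta_2 r^{1-d}$. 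For the second line I would iterate this: fix $x$, set $W:=\nabla_1\mathbf{\Gamma}_{\mathcal{L}}(x,\cdot)-\nabla_1\mathbf{\Gamma}_{\widetilde{\mathcal{L}}}(x,\cdot)$, use $\mathbf{\Gamma}_{\mathcal{L}}(x,\cdot)=[\mathbf{\Gamma}_{\mathcal{L}^*}(\cdot,x)]^t$ and the fact that a derivative of a fundamental solution in its pole variable is again a solution of the (adjoint) equation, so that $W$ solves $\mathcal{L}^*W=\text{div}(g)+G$ near $y_0$ with data built from $(\mathcal{L}^*-\widetilde{\mathcal{L}}^*)\nabla_x\mathbf{\Gamma}_{\widetilde{\mathcal{L}}^*}(\cdot,x)$; feeding the first line of \eqref{pri:4.10} into the $\big(\dashint|W|^2\big)^{1/2}$ term and \eqref{pri:2.12}--\eqref{pri:2.13} into the data should yield $|\nabla_{y_0}W(y_0)|\le C\vartheta_2|x-y_0|^{-d}$.

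The main obstacle lies precisely in this last step. Passing to $\mathcal{L}^*$ interchanges $V$ and $B$, so the \emph{divergence}-form part of $g$ now contains $(B-\widetilde{B})^t\nabla_x\mathbf{\Gamma}_{\widetilde{\mathcal{L}}^*}(\cdot,x)$, whereas $B-\widetilde{B}$ is controlled only in $L^\infty$ (this restriction is genuine: in the applications $\widetilde{B}$ is a frozen constant, so $[B-\widetilde{B}]_{C^{0,\tau}}$ stays of size $\kappa$). Consequently $[g]_{C^{0,\sigma}}$ cannot be bounded by $\vartheta_2$ by the naive product rule, and this term must be treated apart --- either by keeping it inside the conditionally convergent integral obtained from \eqref{eq:5.1} differentiated in both $x$ and $y$ and performing a two-point ``freezing'' of $\widetilde B-B$ (once near $x$, once near $y$), where the resulting loss $r^\tau$ is harmless because $|x-y|\le1$, or by exploiting that it carries only a single derivative of the smooth kernel $\nabla_x\mathbf{\Gamma}_{\widetilde{\mathcal{L}}^*}(\cdot,x)$. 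Making this bookkeeping keep the bound linear in $\|B-\widetilde{B}\|_{L^\infty}$ is where I expect the real effort; everything else is routine given \eqref{pri:4.9}, \eqref{pri:0.5} and the decay estimates.
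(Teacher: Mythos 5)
Your treatment of \eqref{pri:4.9} and of the first line of \eqref{pri:4.10} coincides with the paper's: \eqref{pri:4.9} is obtained by inserting the pointwise decay estimates into the identity \eqref{eq:5.1} and splitting $\mathbb{R}^d$ into $B(x,r/4)$, $B(y,3r/4)$ and the complement, and the first line of \eqref{pri:4.10} by setting $v^y=\mathbf{\Gamma}_{\mathcal{L}}(\cdot,y)-\mathbf{\Gamma}_{\widetilde{\mathcal{L}}}(\cdot,y)$, writing $\mathcal{L}(v^y)=(\widetilde{\mathcal{L}}-\mathcal{L})\mathbf{\Gamma}_{\widetilde{\mathcal{L}}}(\cdot,y)$, and applying \eqref{pri:0.5} with $\eqref{pri:4.9}$ feeding the $L^\infty$ term. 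There the divergence datum involves only $A-\widetilde A$ and $V-\widetilde V$, both of which $\vartheta_2$ controls in $C^{0,\tau}$, so the scheme closes.

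For the second line the paper gives no argument at all, only a citation to \cite[Lemma 2.6]{SZW24}; that reference treats the homogeneous operator $-\mathrm{div}(A\nabla)$ and never meets the phenomenon you flag. Your diagnosis is correct and identifies a genuine gap that the paper glosses over. To pass the extra $y$-derivative through \eqref{pri:0.5} one needs the $\nabla_y$ analogue of the first line of \eqref{pri:4.10} for the $L^2$-average term, and in the adjoint formulation $V^*-\widetilde V^*=(B-\widetilde B)^t$ enters the \emph{divergence}-form datum, where \eqref{pri:0.5} demands a $C^{0,\sigma}$ seminorm. Since $\vartheta_2$ controls $B-\widetilde B$ only in $L^\infty$, the product rule leaves a term of size $[B-\widetilde B]_{C^{0,\sigma}}\|\nabla_x\mathbf{\Gamma}_{\widetilde{\mathcal{L}}}(x,\cdot)\|_{L^\infty}\,r^\sigma\sim\kappa\, r^{1-d+\sigma}$, which is not $O(\vartheta_2\, r^{-d})$ uniformly when $\vartheta_2$ is small; interpolating $[B-\widetilde B]_{C^{0,\sigma}}\lesssim\vartheta_1^{1-\sigma/\tau}\kappa^{\sigma/\tau}$ does not remove the obstruction, and the two-point freezing you sketch leaves the same residual $\kappa\, r^\tau$ factor coming from the background H\"older seminorm of $B$. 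As stated the lemma appears to require $\|B-\widetilde B\|_{C^{0,\tau}}$ included in $\vartheta_2$ for its second line. The downstream uses are unaffected: in the deformations of Theorem~\ref{thm:4.1} ($\mathcal{L}^t=t\mathcal{L}+(1-t)\mathcal{L}_{x_0}$) and of Theorem~\ref{thm:5.3}, the quantity $\|B-\widetilde B\|_{C^{0,\tau}}$ is also small, so all applications survive with the strengthened hypothesis --- but the lemma as a self-contained statement needs that extra control, and your proposal would need it as well to close the last step.
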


\begin{proof}
The main ideas may be found in \cite[Lemma 2.6]{SZW24}, and we provide a proof
for the sake of the completeness.
In view of the identity $\eqref{eq:5.1}$, it is not hard to see that
\begin{equation}\label{f:4.11}
\begin{aligned}
\big|\mathbf{\Gamma}_{\mathcal{L}}(x,y)
-\mathbf{\Gamma}_{\widetilde{\mathcal{L}}}(x,y)\big|
&\leq C\|\widetilde{A}-A\|_{L^\infty(\mathbb{R}^d)}
\int_{\mathbb{R}^d}\frac{dz}{|x-z|^{d-1}|z-y|^{d-1}}\\
&+C\|\widetilde{V}-V\|_{L^\infty(\mathbb{R}^d)}\int_{\mathbb{R}^d}
|\nabla\mathbf{\Gamma}_{\mathcal{L}}(x,z)|
|\mathbf{\Gamma}_{\widetilde{\mathcal{L}}}(z,y)|dz\\
&+C\|\widetilde{B}-B\|_{L^\infty(\mathbb{R}^d)}\int_{\mathbb{R}^d}
|\mathbf{\Gamma}_{\mathcal{L}}(x,z)|
|\nabla\mathbf{\Gamma}_{\widetilde{\mathcal{L}}}(z,y)|dz\\
&+C\|\widetilde{c}-c\|_{L^\infty(\mathbb{R}^d)}\int_{\mathbb{R}^d}
|\mathbf{\Gamma}_{\mathcal{L}}(x,z)|
|\mathbf{\Gamma}_{\widetilde{\mathcal{L}}}(z,y)|dz
\end{aligned}
\end{equation}
Then we will show the right-hand side of $\eqref{f:4.11}$ term by term, and
the computations are quite similar to those given earlier for Lemma $\ref{lemma:4.2}$. Let
$r=|x-y|$, and $Q=(x+y)/2\in\mathbb{R}^d$.
\begin{equation}\label{f:4.12}
\int_{\mathbb{R}^d}\frac{dz}{|x-z|^{d-1}|z-y|^{d-1}}
\leq Cr^{1-d}\Big\{\int_0^{\frac{r}{4}}+\int_0^{\frac{3r}{4}}\Big\}ds
+ \int_{\frac{r}{4}}^\infty\frac{ds}{s^{d-1}}\leq Cr^{2-d},
\end{equation}
where the reader may refer to some geometry facts in Remark $\ref{remark:3.1.1}$.
The calculations for
the second line and the third line are similar to those given for $\eqref{f:4.2.2}$,
and we take the second line for example. Let $r^* = \max\{1/\sqrt{\lambda},2r\}$.
By $\eqref{pri:2.11}$ and $\eqref{pri:2.12}$, it is not hard to see that
\begin{equation*}
\begin{aligned}
\int_{\mathbb{R}^d}
|\nabla\mathbf{\Gamma}_{\mathcal{L}}(x,z)|
|\mathbf{\Gamma}_{\widetilde{\mathcal{L}}}(z,y)|dz
&\leq Cr^{2-d}\int_0^{\frac{r}{4}}ds + Cr^{1-d}\int_0^{\frac{3r}{4}}sds
+ C\int_{\frac{r}{4}}^{r^*} \frac{ds}{s^{d-2}}
+ C\int_{r^*}^\infty\frac{ds}{s^{d-1}}\\
&\leq C\left\{\begin{aligned}
&1 + \ln(4r^*/r) + 1/{r^*} &\quad& d=3,\\
&r^{2-d}+ (r^*)^{2-d} &\quad& d\geq 4.
\end{aligned}\right.
\end{aligned}
\end{equation*}
Thus by noting
that $r\in(0,1]$ and an analogous computation to the third line of $\eqref{f:4.11}$ we may derive
\begin{equation}\label{f:4.13}
\int_{\mathbb{R}^d}
|\nabla\mathbf{\Gamma}_{\mathcal{L}}(x,z)|
|\mathbf{\Gamma}_{\widetilde{\mathcal{L}}}(z,y)|dz
+ \int_{\mathbb{R}^d}
|\mathbf{\Gamma}_{\mathcal{L}}(x,z)|
|\nabla\mathbf{\Gamma}_{\widetilde{\mathcal{L}}}(z,y)|dz
\leq Cr^{2-d}.
\end{equation}

We now proceed
to investigate the last line of $\eqref{f:4.11}$, and
\begin{equation*}
\begin{aligned}
\int_{\mathbb{R}^d}|\mathbf{\Gamma}_{\mathcal{L}}(x,z)|
|\mathbf{\Gamma}_{\widetilde{\mathcal{L}}}(z,y)|dz
&\leq Cr^{2-d}\Big\{\int_0^{\frac{r}{4}}+\int_0^{\frac{3r}{4}}\Big\}sds +
C\int_{\frac{r}{4}}^{r^*}\frac{ds}{s^{d-3}} + C\int_{r^*}^\infty\frac{ds}{s^{d-1}}\\
&\leq C\left\{\begin{aligned}
&r+ r^* + 1/r^{*} &\quad& d=3,\\
&1+ \ln(4r^*/r) + (r^*)^{-2} &\quad& d=4,\\
&r^{2-d}+ (r^*)^{2-d}&\quad& d\geq 5,
\end{aligned}\right.
\end{aligned}
\end{equation*}
and this will lead to
\begin{equation}\label{f:4.14}
\begin{aligned}
\int_{\mathbb{R}^d}|\mathbf{\Gamma}_{\mathcal{L}}(x,z)|
|\mathbf{\Gamma}_{\widetilde{\mathcal{L}}}(z,y)|dz
\leq Cr^{2-d}
\end{aligned}
\end{equation}
for $0<r\leq 1$.
Plugging the estimates $\eqref{f:4.12}$, $\eqref{f:4.13}$ and $\eqref{f:4.14}$ back into
the estimate $\eqref{f:4.11}$ we obtain the desired estimate $\eqref{pri:4.9}$.

Then we continue to show the first line of $\eqref{pri:4.10}$. Let
$v^y(z) = \mathbf{\Gamma}_{\mathcal{L}}(z,y)-\mathbf{\Gamma}_{\widetilde{\mathcal{L}}}(z,y)$ in
$B=B(x,r/2)$, and
\begin{equation*}
\mathcal{L}(v^y) = -\mathcal{L}(\mathbf{\Gamma}_{\widetilde{\mathcal{L}}}(\cdot,y))
= \big(\widetilde{\mathcal{L}}-\mathcal{L}\big)(\mathbf{\Gamma}_{\widetilde{\mathcal{L}}}(\cdot,y))
\qquad\text{in}\quad B(x,r/2).
\end{equation*}
Thus by the interior estimate $\eqref{pri:0.5}$ there holds
\begin{equation*}
\begin{aligned}
\big|\nabla v^y(x)\big|
\leq Cr^{-1}\|v^y\|_{L^\infty(B)}
&+ Cr^\tau \big[(\widetilde{A}-A)\nabla\mathbf{\Gamma}_{\widetilde{\mathcal{L}}}(\cdot,y)
+ (\widetilde{V}-V)\mathbf{\Gamma}_{\widetilde{\mathcal{L}}}(\cdot,y)\big]_{C^{0,\tau}(B)}\\
&+ C\big\|(\widetilde{A}-A)\nabla\mathbf{\Gamma}_{\widetilde{\mathcal{L}}}(\cdot,y)
+ (\widetilde{V}-V)\mathbf{\Gamma}_{\widetilde{\mathcal{L}}}(\cdot,y)\big\|_{L^\infty(B)}\\
&+ Cr\big\|(\widetilde{B}-B)\nabla\mathbf{\Gamma}_{\widetilde{\mathcal{L}}}(\cdot,y)
+ (\widetilde{c}-c)\mathbf{\Gamma}_{\widetilde{\mathcal{L}}}(\cdot,y)\big\|_{L^\infty(B)}.
\end{aligned}
\end{equation*}
Since it is known that $\|\nabla\mathbf{\Gamma}_{\widetilde{\mathcal{L}}}(\cdot,y)\|_{C^{0,\tau}}\leq Cr^{1-d-\tau}$
and $\|\mathbf{\Gamma}_{\widetilde{\mathcal{L}}}(\cdot,y)\|_{C^{0,\tau}}\leq Cr^{2-d-\tau}$ from interior
Schauder estimate $\eqref{f:0.3}$,
the above estimate together with $\eqref{pri:4.9}$ actually leads to
\begin{equation*}
|\nabla v^y(x)|\leq C\vartheta_2 r^{1-d},
\end{equation*}
which is exactly the first line of $\eqref{pri:4.10}$.
By the same method stated in the proof of \cite[Lemma 2.6]{SZW24},
the second line of the desired estimate $\eqref{pri:4.10}$ will be established without any real
difficulty and so we do not reproduce here. The whole proof is complete.
\end{proof}

\begin{corollary}
Assume the same conditions as in Lemma $\ref{lemma:4.3}$.
Fix all the coefficients of $\mathcal{L}$ and $\widetilde{\mathcal{L}}$ at a point $x$, and
let $\mathbf{E}(\cdot,0;x)$ and $\widetilde{\mathbf{E}}(\cdot,0;x)$ be two related fundamental solutions,
respectively.
Then for any integer $l\geq 0$ there holds
\begin{equation}\label{pri:4.11}
\big|\nabla^l \mathbf{E}(z,0;x)
- \nabla^l \widetilde{\mathbf{E}}(z,0;x)\big|\leq C\vartheta_1|z|^{2-d-l}
\end{equation}
for any $z\in\mathbb{R}^d$ with $0<|z|\leq 1$, where $C$ depends on $\mu,\kappa,\lambda,d,m$ and $l$.
Moreover,
if $\widetilde{\mathcal{L}}=\mathcal{L}$ and
its coefficients are evaluated at $y$, then we have
\begin{equation}\label{pri:4.12}
\big|\nabla \mathbf{E}(x-y,0;x)
- \nabla \mathbf{E}(x-y,0;y)\big|\leq C|x-y|^{1-d+\tau}
\end{equation}
for any $x,y\in\mathbb{R}^d$ with $0<|x-y|\leq1$,
where $C$ depends on $\mu,\kappa,\lambda,d,m$ and $\tau$.
\end{corollary}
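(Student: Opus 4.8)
The plan is to treat the two inequalities separately, since (\ref{pri:4.11}) is a purely constant-coefficient (i.e. homogeneous second-order, constant coefficients) comparison while (\ref{pri:4.12}) is a genuine H\"older-continuity-in-the-pole statement. For (\ref{pri:4.11}), both $\mathbf{E}(\cdot,0;x)$ and $\widetilde{\mathbf{E}}(\cdot,0;x)$ are fundamental solutions of operators of the form $\mathcal{L}_0$-type with frozen constant coefficients, so they are smooth away from the origin and satisfy the decay bounds (\ref{pri:3.0}). First I would invoke Lemma \ref{lemma:3.2} (comparing lemma I) and its identity (\ref{eq:5.1}) applied to $\mathcal{L}=\mathbf{E}(\cdot,0;x)$'s operator and $\widetilde{\mathcal{L}}=\widetilde{\mathbf{E}}(\cdot,0;x)$'s operator; the difference of leading coefficients is bounded by $\vartheta_1$, and so are the differences of the (constant) lower-order coefficients. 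This reduces the $l=0$ case to the same convolution estimates $T_1,T_2$ that appear in the proof of Lemma \ref{lemma:3.2}, yielding $|\mathbf{E}(z,0;x)-\widetilde{\mathbf{E}}(z,0;x)|\leq C\vartheta_1|z|^{2-d}$ for $0<|z|\leq 1$. For $l\geq 1$ I would then bootstrap exactly as in the $l\geq 2$ part of Lemma \ref{lemma:3.2}: writing $G(z)=\mathbf{E}(z,0;x)-\widetilde{\mathbf{E}}(z,0;x)$, the function $G$ solves a constant-coefficient equation with right-hand side $\mathrm{div}$ of something controlled by $\vartheta_1|\nabla\widetilde{\mathbf{E}}(z,0;x)|$, so interior estimates on $B(z,|z|/4)$ combined with (\ref{pri:3.0}) and induction on $l$ give $|\nabla^l G(z)|\leq C\vartheta_1|z|^{2-d-l}$.

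For (\ref{pri:4.12}), the point is that $\mathbf{E}(\cdot,0;x)$ and $\mathbf{E}(\cdot,0;y)$ are fundamental solutions of two constant-coefficient operators whose coefficients are the values of $A,V,B,c,\lambda$ at $x$ and at $y$ respectively. By the regularity hypothesis (\ref{a:4}) — here exactly the $C^{0,\tau}$ bound is used — the difference of the frozen leading coefficients is $O(|x-y|^\tau)$, and likewise for $V,B$; the zeroth-order part differs by $O(|x-y|^\tau)$ as well (the $c$ bound in (\ref{a:4}) is only $L^\infty$, but the contribution of the $c$-term in (\ref{eq:5.1}) is lower order and does not spoil the estimate — here one only needs $\|c(x)-c(y)\|\le \kappa$, which is automatic). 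So I would apply the first line of (\ref{pri:4.10}) in Lemma \ref{lemma:4.3} (or re-run its proof) with $\widetilde{\mathcal{L}}$ = the $y$-frozen operator and $\mathcal{L}$ = the $x$-frozen operator, which gives $|\nabla_z \mathbf{E}(z,0;x)-\nabla_z \mathbf{E}(z,0;y)|\leq C\vartheta_2 |z|^{1-d}$ with $\vartheta_2\leq C|x-y|^\tau$; evaluating at $z=x-y$ yields $|\nabla\mathbf{E}(x-y,0;x)-\nabla\mathbf{E}(x-y,0;y)|\leq C|x-y|^{\tau}|x-y|^{1-d}=C|x-y|^{1-d+\tau}$, which is precisely (\ref{pri:4.12}).

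The main obstacle, and the only place where any care is really needed, is the bookkeeping of the lower-order terms in the identity (\ref{eq:5.1}) when one freezes \emph{all} the coefficients (not just the leading one), since the paper has already flagged this as the novel subtlety: the $V$- and $B$-integrals in (\ref{eq:5.1}) involve $\nabla_1\mathbf{\Gamma}$ against $\mathbf{\Gamma}$, and one has to verify that the frozen fundamental solutions $\mathbf{E}$ still obey the sharp pointwise bounds (\ref{pri:3.0}) uniformly in the freezing point — which they do, because the constant coefficients obtained by freezing remain uniformly elliptic with the \emph{same} $\mu$ and uniformly bounded lower-order data with the \emph{same} $\kappa$, so all implicit constants are uniform in $x$ and $y$. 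Once that uniformity is noted, the convolution estimates $I_1,\dots,I_4$ are literally the ones already carried out in the proofs of Lemmas \ref{lemma:3.2}, \ref{lemma:4.2} and \ref{lemma:4.3}, and the restriction $0<|z|\le 1$ (resp.\ $0<|x-y|\le 1$) is exactly what lets us absorb the $r^*=\max\{1/\sqrt{\lambda},2r\}$ logarithmic borderline cases into the stated powers. I would therefore write the proof as: (i) cite (\ref{eq:5.1}), note uniformity, reduce to convolution integrals; (ii) quote the convolution bounds from the earlier lemmas for $l=0$; (iii) induct on $l$ via interior estimates for (\ref{pri:4.11}); (iv) apply (\ref{pri:4.10}) with $\vartheta_2\lesssim|x-y|^\tau$ and specialize $z=x-y$ for (\ref{pri:4.12}).
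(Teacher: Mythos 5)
Your argument is correct and follows essentially the same route as the paper's proof: obtain $l=0$ from the $L^\infty$ bound on differences of fundamental solutions (the paper simply specializes $\eqref{pri:4.9}$ to the two frozen operators, whereas you re-run the $\eqref{eq:5.1}$-convolution argument, which in effect re-proves $\eqref{pri:4.9}$ in the constant-coefficient case), bootstrap to $l\geq 1$ via constant-coefficient interior estimates, and then specialize for $\eqref{pri:4.12}$.

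One small imprecision is worth flagging. In your derivation of $\eqref{pri:4.12}$ you claim $\vartheta_2\leq C|x-y|^\tau$, but $\vartheta_2$ as defined in Lemma $\ref{lemma:4.3}$ includes $\|c-\widetilde c\|_{L^\infty(\mathbb{R}^d)}=|c(x)-c(y)|$; since $c$ satisfies only the boundedness condition $\eqref{a:3}$ and not the H\"older condition $\eqref{a:4}$, that term is merely $O(1)$, so $\vartheta_2$ is \emph{not} $O(|x-y|^\tau)$. You already notice that the $c$-contribution is lower order, and that is the correct resolution --- but it should replace, not sit alongside, the $\vartheta_2\lesssim|x-y|^\tau$ claim. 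Concretely, re-running the interior-estimate step in the proof of $\eqref{pri:4.10}$ with the two frozen operators, the zeroth-order term enters weighted by an extra factor of $r^2$, contributing $|c(x)-c(y)|\,r^{3-d}\leq 2\kappa\,r^{1-d+\tau}$ at $r=|x-y|\leq 1$ (since $r^{2-\tau}\leq 1$), while the $A,V,B$ differences each contribute $O(|x-y|^\tau\,r^{1-d})$ because those coefficients are $C^{0,\tau}$. So the right statement is term-by-term rather than $\vartheta_2\lesssim|x-y|^\tau$. The paper's own one-line justification --- that $\eqref{pri:4.12}$ ``directly follows from the definition of $\vartheta_1$ and $\eqref{pri:4.11}$'' --- elides the same point, so your proposal is no less rigorous than the paper's proof, but a clean write-up should make the $c$-term observation explicit.
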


\begin{proof}
In the case $l=0$, the estimate $\eqref{pri:4.11}$ may directly follow from $\eqref{pri:4.9}$.
Since $\mathbf{E}$ and $\widetilde{\mathbf{E}}$ are related to the elliptic operators with constant coefficients,
there holds the translation invariant property, which means
$\mathbf{E}(z,y;x)=\mathbf{E}(z-y,0;x)=\mathbf{E}(y-z,0;x)=\mathbf{E}(y,z;x)$. Thus by some
manipulations as we did in Lemma $\ref{lemma:3.2}$
it is not hard to derive the stated estimate $\eqref{pri:4.11}$ for the case $l\geq 1$, while the
estimate $\eqref{pri:4.12}$ directly follows
from the definition of $\vartheta_1$ and the estimate $\eqref{pri:4.11}$, and this ends the proof.
\end{proof}

We borrow the notation from \cite{SZW24}, and define
\begin{equation}\label{eq:4.4}
\Pi_{\mathcal{L}}^i(x,y) = \nabla_i\mathbf{\Gamma}_{\mathcal{L}}(x,y) - \nabla_i \mathbf{E}(x,y;x)
\end{equation}
for $i=1,2$.

\begin{lemma}\label{lemma:4.4}
Suppose that the coefficients of $\mathcal{L}$ and $\bar{\mathcal{L}}$ satisfy
$\eqref{a:1}$, $\eqref{a:2}$, $\eqref{a:3}$ and $\eqref{a:4}$. Let $\vartheta_1,\vartheta_2$ be given in
Lemma $\ref{lemma:4.3}$. Then we obtain
\begin{equation}\label{pri:4.14}
\begin{aligned}
\big|\Pi_{\mathcal{L}}^1(x,y)-\Pi_{\bar{\mathcal{L}}}^1(x,y)\big|
&\leq C\vartheta_2|x-y|^{1-d+\tau}\\
\big|\Pi_{\mathcal{L}}^2(x,y)-\Pi_{\bar{\mathcal{L}}}^2(x,y)\big|
&\leq C\vartheta_2|x-y|^{1-d+\tau}
\end{aligned}
\end{equation}
for any $x,y\in\mathbb{R}^d$ with $0<|x-y|< 1/4$,
where $C$ depends on $\mu,\kappa,\tau,\lambda,m,d$.
\end{lemma}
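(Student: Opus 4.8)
The plan is to estimate $\Pi_{\mathcal{L}}^1(x,y)-\Pi_{\bar{\mathcal{L}}}^1(x,y)$ by splitting it into the difference of the true fundamental solutions' gradients and the difference of the frozen-coefficient fundamental solutions' gradients, namely
\begin{equation*}
\Pi_{\mathcal{L}}^1-\Pi_{\bar{\mathcal{L}}}^1
= \big[\nabla_1\mathbf{\Gamma}_{\mathcal{L}}(x,y) - \nabla_1\mathbf{\Gamma}_{\bar{\mathcal{L}}}(x,y)\big]
- \big[\nabla_1\mathbf{E}(x,y;x) - \nabla_1\bar{\mathbf{E}}(x,y;x)\big].
\end{equation*}
The first bracket is controlled by Lemma~\ref{lemma:4.3}, specifically the first line of $\eqref{pri:4.10}$, which gives the bound $C\vartheta_2|x-y|^{1-d}$ --- already better than what we need. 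The second bracket, involving the fundamental solutions of the two constant-coefficient operators obtained by freezing $\mathcal{L}$ and $\bar{\mathcal{L}}$ at the same point $x$, is handled by the corollary following Lemma~\ref{lemma:4.3}: applying $\eqref{pri:4.11}$ with $l=1$ yields $C\vartheta_1|x-y|^{1-d}$, and $\vartheta_1\le\vartheta_2$. Combining, we would already obtain the stated estimate with exponent $1-d$, which is stronger than $1-d+\tau$ on the range $0<|x-y|<1/4$; so in fact the proof amounts to quoting these two earlier results and adding the bounds.

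However, I suspect the point of stating the estimate with the weaker exponent $1-d+\tau$ is that the genuinely delicate comparison is not between the two operators at a \emph{common} freezing point, but rather a version where the freezing points differ or where one wants uniformity that is only available with the extra H\"older gain. So the cleaner route, which I would actually carry out, is to write $\Pi_{\mathcal{L}}^1(x,y) = \nabla_1\mathbf{\Gamma}_{\mathcal{L}}(x,y) - \nabla_1\mathbf{E}(x,y;x)$ and first record, via Lemma~\ref{lemma:4.2} (the estimate $\eqref{pri:4.3}$), that each term $\Pi_{\mathcal{L}}^1$ and $\Pi_{\bar{\mathcal{L}}}^1$ is individually $O(|x-y|^{1-d+\tau})$; this gives the crude bound but with a constant that does \emph{not} see $\vartheta_2$. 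To get the factor $\vartheta_2$ we must pair the two expressions before estimating. I would therefore return to the integral representation $\eqref{f:4.2}$ that underlies Lemma~\ref{lemma:4.2}: for $\nabla_1\mathbf{\Gamma}_{\mathcal{L}}(x,y)-\nabla_1\mathbf{E}(x,y;x)$ one subtracts off the frozen operator at $x$ and the kernel difference is multiplied by $|A(z)-A(x)|\le\kappa|z-x|^\tau$, etc.; subtracting the analogous expression for $\bar{\mathcal{L}}$ produces an extra factor measuring $\|A-\bar A\|_{C^{0,\tau}}$ and similarly for the lower-order coefficients, i.e. exactly $\vartheta_2$, while the geometric integrals $I_1,I_2,I_3,I_4$ from $\eqref{f:4.1}$–$\eqref{f:4.2.2}$ are the same ones already computed in the proof of Lemma~\ref{lemma:4.2}, each contributing the decay $|x-y|^{1-d+\tau}$ (or better).

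Concretely the steps are: (i) write out $\Pi^1_{\mathcal{L}}$ and $\Pi^1_{\bar{\mathcal{L}}}$ using the comparing identity $\eqref{eq:5.1}$ with $\widetilde{\mathcal{L}}$ taken to be the frozen operator $\mathbf{E}(\cdot,\cdot;x)$ resp.\ $\bar{\mathbf{E}}(\cdot,\cdot;x)$; (ii) subtract, grouping the difference of the ``$\mathcal{L}$-side'' fundamental solutions on one hand and the difference of frozen coefficients on the other, using the algebraic identity $ab-\bar a\bar b = (a-\bar a)b + \bar a(b-\bar b)$ on each of the four integrand products; (iii) bound the coefficient-difference factors by $\vartheta_2$ (using $\|A-\bar A\|_{C^{0,\tau}}$, $\|V-\bar V\|_{C^{0,\tau}}$, $\|B-\bar B\|_{L^\infty}$, $\|c-\bar c\|_{L^\infty}$), bound the remaining kernel factors by the estimates $\eqref{pri:2.11}$–$\eqref{pri:2.13}$ and the Schauder-type bounds on $\mathbf{E}$ from $\eqref{f:0.3}$, and invoke Lemma~\ref{lemma:4.3} and $\eqref{pri:4.10}$ for the terms in which a \emph{genuine} (non-frozen) fundamental solution difference appears; (iv) carry out the geometric integrations by the now-standard decomposition of $\mathbb{R}^d$ into $B(x,r/4)$, $B(y,3r/4)$, and the complement (Remark~\ref{remark:3.1.1}), getting $O(r^{1-d+\tau})$ in each case for $r=|x-y|<1/4$. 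The second line of $\eqref{pri:4.14}$, for $\Pi^2$, follows by the symmetric argument, freezing at $y$ and using ${}^*\mathbf{\Gamma}_\varepsilon(x,y)=[\mathbf{\Gamma}_\varepsilon(y,x)]^t$ to transfer the bound to $\nabla_2$. The main obstacle is bookkeeping: making sure that in step (ii) every product is split so that at least one factor carries a coefficient difference (hence $\vartheta_2$) and the leftover factors still have enough combined decay for the integrals to converge and produce $r^{1-d+\tau}$ rather than something worse; in particular the terms where the surviving factor is a \emph{difference} of fundamental solutions (rather than a single one) must be absorbed via $\eqref{pri:4.9}$–$\eqref{pri:4.10}$, and one must check that those estimates are applied only on $0<|x-y|\le 1$ where they are valid, which is why the restriction $|x-y|<1/4$ appears.
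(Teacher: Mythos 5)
Your first paragraph rests on a reversed inequality. For $0<r<1$ one has $r^{1-d}>r^{1-d+\tau}$ (raising a number in $(0,1)$ to a larger exponent makes it smaller), so the bound with exponent $1-d+\tau$ is the \emph{stronger} one, not the weaker one. Consequently, the naive decomposition
\begin{equation*}
\Pi^1_{\mathcal{L}}-\Pi^1_{\bar{\mathcal{L}}}
=\big[\nabla_1\mathbf{\Gamma}_{\mathcal{L}}(x,y)-\nabla_1\mathbf{\Gamma}_{\bar{\mathcal{L}}}(x,y)\big]
-\big[\nabla_1\mathbf{E}(x,y;x)-\nabla_1\bar{\mathbf{E}}(x,y;x)\big],
\end{equation*}
together with $\eqref{pri:4.10}$ and $\eqref{pri:4.11}$, yields only $C\vartheta_2|x-y|^{1-d}$, which does \emph{not} imply the desired $C\vartheta_2|x-y|^{1-d+\tau}$. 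So the ``quote-two-lemmas'' route fails; the extra H\"older gain $|x-y|^{\tau}$ is precisely what the lemma is after, and it can only come from pairing the two expressions before estimating, as you do in your second approach. Your stated reason for mistrusting the naive route (different freezing points, uniformity) is also off-base --- the actual reason is the exponent.

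Your steps (i)--(iv) then closely track the paper's proof, with two details you should fill in. First, the paper does not integrate over all of $\mathbb{R}^d$: it writes the comparing identity on the ball $\Omega=B(x,3/4)$, producing in $\eqref{f:4.17}$ four solid integrals over $\Omega$ \emph{and} four surface integrals over $\partial\Omega$; the surface pieces must also be split by the algebraic identity and estimated, which is harmless because $|x-z|=3/4$ and $|y-z|>1/2$ there, so they contribute $O(\vartheta_2)$, absorbed into $C\vartheta_2 r^{1-d+\tau}$ since $r<1/4$. Second, each integrand is a triple product, so the paper applies $ABC-\bar A\bar B\bar C=(A-\bar A)BC+\bar A(B-\bar B)C+\bar A\bar B(C-\bar C)$; your two-term identity works if iterated, but be explicit that in every resulting summand exactly one factor is a difference carrying $\vartheta_1$ or $\vartheta_2$, while the remaining factors retain enough decay. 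The H\"older gain then enters through the frozen-coefficient difference $[A(x)-A(z)]-[\bar A(x)-\bar A(z)]=(A-\bar A)(x)-(A-\bar A)(z)$, bounded by $\vartheta_2|x-z|^{\tau}$; the factor $|x-z|^{\tau}$ upgrades the convolution integrals $\int_\Omega |x-z|^{-d}|z-y|^{1-d}dz$ to the convergent $\int_\Omega |x-z|^{\tau-d}|z-y|^{1-d}dz\le Cr^{1-d+\tau}$. With these corrections your outline matches the paper.
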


\begin{proof}
The main idea may be found in the proof of \cite[Lemma 2.7]{SZW24}. Although the lower order terms
in $\mathcal{L}$ and $\bar{\mathcal{L}}$ do not cause any real difficulty,
we still provide a proof for the sake of the completeness.

Let $\mathbf{\Gamma}_{\mathcal{L}}(\cdot,y)$ and
$\mathbf{\Gamma}_{\widetilde{\mathcal{L}}}(\cdot,y)$ be the fundamental solutions of
$\mathcal{L}$ and $\widetilde{\mathcal{L}}$ and it is fine to assume $\widetilde{\lambda}=\lambda$.
Set $r=|x-y|<1/4$, and
$\Omega = B(x,3/4)$.
In view of $\eqref{eq:2.3.1}$, we have
\begin{equation*}
\begin{aligned}
\mathbf{\Gamma}_{\mathcal{L}}^{\alpha\delta}(x,y)
- \mathbf{\Gamma}_{\widetilde{\mathcal{L}}}^{\alpha\delta}(x,y)
&=\underbrace{\mathrm{B}_{\widetilde{\mathcal{L}};\Omega}
\big[\mathbf{\Gamma}_{\widetilde{\mathcal{L}}}^{\cdot\delta}(\cdot,y),
\mathbf{\Gamma}_{\mathcal{L}}^{\alpha\cdot}(x,\cdot)\big]
- \mathrm{B}_{\mathcal{L}^*;\Omega}
\big[\mathbf{\Gamma}_{\mathcal{L}}^{\alpha\cdot}(x,\cdot),
\mathbf{\Gamma}_{\widetilde{\mathcal{L}}}^{\cdot\delta}(\cdot,y)\big]}_{T_1(x,y)} \\
&+ \underbrace{\mathrm{B}_{\widetilde{\mathcal{L}};\mathbb{R}^d\setminus\Omega}
\big[\mathbf{\Gamma}_{\widetilde{\mathcal{L}}}^{\cdot\delta}(\cdot,y),
\mathbf{\Gamma}_{\mathcal{L}}^{\alpha\cdot}(x,\cdot)\big]
- \mathrm{B}_{\mathcal{L}^*;\mathbb{R}^d\setminus\Omega}
\big[\mathbf{\Gamma}_{\mathcal{L}}^{\alpha\cdot}(x,\cdot),
\mathbf{\Gamma}_{\widetilde{\mathcal{L}}}^{\cdot\delta}(\cdot,y)\big]}_{T_2(x,y)},
\end{aligned}
\end{equation*}
and it is not hard to see that
\begin{equation*}
\begin{aligned}
T_2(x,y) &= -\int_{\partial\Omega}
\frac{\partial\mathbf{\Gamma}_{\widetilde{\mathcal{L}}}}{\partial \nu_{\widetilde{\mathcal{L}}}}(z,y)
\mathbf{\Gamma}_{\mathcal{L}}(x,z)dS(z)
+\int_{\partial\Omega}
\frac{\partial\mathbf{\Gamma}_{\mathcal{L}}}{\partial \nu_{\mathcal{L}^*}}(x,z)
\mathbf{\Gamma}_{\widetilde{\mathcal{L}}}(z,y)dS(z)\\
& = -\int_{\partial\Omega}n(z)\widetilde{A}(z)
\nabla_z\mathbf{\Gamma}_{\widetilde{\mathcal{L}}}(z,y)
\mathbf{\Gamma}_{\mathcal{L}}(x,z) dS(z)
+\int_{\partial\Omega}n(z){A}(z)\nabla_z\mathbf{\Gamma}_{{\mathcal{L}}}(x,z)
\mathbf{\Gamma}_{\widetilde{\mathcal{L}}}(z,y)dS(z)\\
&\qquad-\int_{\partial\Omega}n(z)\widetilde{V}(z)
\mathbf{\Gamma}_{\widetilde{\mathcal{L}}}(z,y)\mathbf{\Gamma}_{\mathcal{L}}(x,z)dS(z)
+\int_{\partial\Omega}n(z){B}(z)
\mathbf{\Gamma}_{\mathcal{L}}(x,z)\mathbf{\Gamma}_{\widetilde{\mathcal{L}}}(z,y)dS(z),
\end{aligned}
\end{equation*}
where we employ the decay estimates $\eqref{pri:5.9}$ and integration by parts. Also, we observe
that
\begin{equation*}
\begin{aligned}
T_1(x,y)&= \int_{\Omega}\Big[\widetilde{A}(z)-A(z)\Big]
\nabla_z\mathbf{\Gamma}_{\mathcal{L}}(x,z)
\nabla_z\mathbf{\Gamma}_{\widetilde{\mathcal{L}}}(z,y)dz
+\int_{\Omega}\Big[\widetilde{B}(z)-B(z)\Big]
\mathbf{\Gamma}_{\mathcal{L}}(x,z)
\nabla_z\mathbf{\Gamma}_{\widetilde{\mathcal{L}}}(z,y)dz\\
&+\int_{\Omega}\Big[\widetilde{V}(z)-V(z)\Big]
\nabla_z\mathbf{\Gamma}_{\mathcal{L}}(x,z)
\mathbf{\Gamma}_{\widetilde{\mathcal{L}}}(z,y) dz
+\int_{\Omega}\Big[\widetilde{c}(z)-c(z)\Big]
\mathbf{\Gamma}_{\mathcal{L}}(x,z)
\mathbf{\Gamma}_{\widetilde{\mathcal{L}}}(z,y)dz
\end{aligned}
\end{equation*}
in terms of $\eqref{eq:5.1}$. Thus we have
\begin{equation*}
\nabla_1\mathbf{\Gamma}_{\mathcal{L}}(x,y)
- \nabla_1\mathbf{\Gamma}_{\widetilde{\mathcal{L}}}(x,y)
= \nabla_x T_1(x,y) + \nabla_x T_2(x,y),
\end{equation*}
and then by setting $\mathbf{\Gamma}_{\widetilde{\mathcal{L}}}(\cdot,y)=\mathbf{E}(\cdot,y;x)$
there holds
\begin{equation}\label{f:4.17}
\small
\begin{aligned}
\Pi_{\mathcal{L}}^1(x,y)
&= \int_{\Omega}\Big[A(x)-A(z)\Big]
\nabla_z\nabla_x\mathbf{\Gamma}_{\mathcal{L}}(x,z)
\nabla_z\mathbf{E}(z,y;x)dz
+\int_{\Omega}\Big[B(x)-B(z)\Big]
\nabla_x\mathbf{\Gamma}_{\mathcal{L}}(x,z)
\nabla_z\mathbf{E}(z,y;x)dz\\
&+\int_{\Omega}\Big[V(x)-V(z)\Big]
\nabla_x\nabla_z\mathbf{\Gamma}_{\mathcal{L}}(x,z)
\mathbf{E}(z,y;x) dz
+\int_{\Omega}\Big[c(x)-c(z)\Big]
\nabla_x\mathbf{\Gamma}_{\mathcal{L}}(x,z)
\mathbf{E}(z,y;x)dz\\
& -\int_{\partial\Omega}n(z)A(x)\nabla_z\mathbf{E}(z,y;x)
\nabla_x\mathbf{\Gamma}_{\mathcal{L}}(x,z)dS(z)
+\int_{\partial\Omega}n(z)A(z)\nabla_x\nabla_z
\mathbf{\Gamma}_{\mathcal{L}}(x,z)\mathbf{E}(z,y;x)dS(z)\\
&-\int_{\partial\Omega}n(z)V(x)\mathbf{E}(z,y;x)
\nabla_x\mathbf{\Gamma}_{\mathcal{L}}(x,z)dS(z)
+\int_{\partial\Omega}n(z)B(z)
\nabla_x\mathbf{\Gamma}_{\mathcal{L}}(x,z)\mathbf{E}(z,y;x)dS(z).
\end{aligned}
\end{equation}

The remainder task is to estimate the quantity
$\Pi_{\mathcal{L}}^i(x,y)-\Pi_{\bar{\mathcal{L}}}^i(x,y)$
for $i=1,2$.
In terms of the right-hand side of $\eqref{f:4.17}$, the core idea is based upon the following
algebra fact
\begin{equation}\label{f:4.18}
ABC - \bar{A}\bar{B}\bar{C} = (A-\bar{A})BC + \bar{A}(B-\bar{B})C + \bar{A}\bar{B}(C-\bar{C}).
\end{equation}
According to $\eqref{f:4.18}$ the full formula on $\Pi_{\mathcal{L}}(x,y)-\Pi_{\bar{\mathcal{L}}}(x,y)$
will be too long to be given in the paper.
Taking into account both conciseness and details of the proof,
we offer some examples to show how to carry out $\eqref{f:4.18}$ on solid integrals and surface integrals
in $\Pi_{\mathcal{L}}^i(x,y)-\Pi_{\bar{\mathcal{L}}}^i(x,y)$ for $i=1,2$.

The first one is
\begin{equation*}
\small
\begin{aligned}
\Pi_{\mathcal{L}}^1(x,y)-\Pi_{\bar{\mathcal{L}}}^1(x,y)
&=\int_{\Omega}\Big[A(x)-A(z)\Big]
\nabla_z\nabla_x\mathbf{\Gamma}_{\mathcal{L}}(x,z)
\nabla_z\mathbf{E}(z,y;x)dz &~&\\
&- \int_{\Omega}\Big[\bar{A}(x)-\bar{A}(z)\Big]
\nabla_z\nabla_x\mathbf{\Gamma}_{\bar{\mathcal{L}}}(x,z)
\nabla_z\bar{\mathbf{E}}(z,y;x)dz + \text{other~terms}  &~&\\
&= \int_{\Omega}\Big[A(x)-A(z)-\bar{A}(x)+\bar{A}(z)\Big]
\nabla_z\nabla_x\mathbf{\Gamma}_{\mathcal{L}}(x,z)
\nabla_z\mathbf{E}(z,y;x)dz  &:=I_1 &\\
&+ \int_{\Omega}\Big[\bar{A}(x)-\bar{A}(z)\Big]\Big[
\nabla_z\nabla_x\mathbf{\Gamma}_{\mathcal{L}}(x,z)
-\nabla_z\nabla_x\mathbf{\Gamma}_{\bar{\mathcal{L}}}(x,z)\Big]
\nabla_z\mathbf{E}(z,y;x)dz  &:=I_2 &\\
&+ \int_{\Omega}\Big[\bar{A}(x)-\bar{A}(z)\Big]
\Big[\nabla_z\mathbf{E}(z,y;x) - \nabla_z\bar{\mathbf{E}}(z,y;x)
\Big]\nabla_z\nabla_x\mathbf{\Gamma}_{\bar{\mathcal{L}}}(x,z)dz
&:=I_3&\\
& +\text{other~terms,} &~&
\end{aligned}
\end{equation*}
and it is not hard to see that
\begin{equation}\label{f:4.19}
\begin{aligned}
|I_1|&\leq C\|A-\bar{A}\|_{C^{0,\tau}(\mathbb{R}^d)}
\int_{\Omega}\frac{dz}{|x-z|^{d-\tau}|z-y|^{d-1}} \\
&\leq C\|A-\bar{A}\|_{C^{0,\tau}(\mathbb{R}^d)}
\Bigg\{r^{1-d}\int_0^{\frac{r}{t}}\frac{ds}{s^{1-\tau}}
+r^{\tau-d}\int_0^{\frac{3r}{4}}ds + \int_{\frac{r}{4}}^{1}\frac{ds}{s^{d-\tau}}\Bigg\}
\leq C\vartheta_2r^{1-d+\tau},
\end{aligned}
\end{equation}
where we also use $\eqref{pri:2.13}$ and $\eqref{pri:3.0}$ in the first inequality. A similar computation
will give
\begin{equation}\label{f:4.20}
|I_2| \leq C\vartheta_2\int_{\Omega}\frac{dz}{|x-z|^{d-\tau}|z-y|^{d-1}}
\leq C\vartheta_2 r^{1-d+\tau}
\end{equation}
where we use the estimates $\eqref{pri:4.10}$ and $\eqref{pri:3.0}$ in the first inequality, and
\begin{equation}\label{f:4.21}
|I_3| \leq C\vartheta_1\int_{\Omega}\frac{dz}{|x-z|^{d-\tau}|z-y|^{d-1}}
\leq C\vartheta_1 r^{1-d+\tau}
\end{equation}
by using $\eqref{pri:2.13}$ and $\eqref{pri:4.11}$ in the same place.

The second example is related to the computations on the lower order terms of $\mathcal{L}$ and
$\bar{\mathcal{L}}$, and we will find that the results may be
controlled by those from the leading terms. See
\begin{equation*}
\small
\begin{aligned}
\Pi_{\mathcal{L}}^1(x,y)-\Pi_{\bar{\mathcal{L}}}^1(x,y)
&= \int_{\Omega}\Big[V(x)-V(z)-\bar{V}(x)+\bar{V}(z)\Big]
\nabla_z\nabla_x\mathbf{\Gamma}_{\mathcal{L}}(x,z)
\mathbf{E}(z,y;x)dz  &:=I_4 &\\
&+ \int_{\Omega}\Big[\bar{V}(x)-\bar{V}(z)\Big]\Big[
\nabla_z\nabla_x\mathbf{\Gamma}_{\mathcal{L}}(x,z)
-\nabla_z\nabla_x\mathbf{\Gamma}_{\bar{\mathcal{L}}}(x,z)\Big]
\mathbf{E}(z,y;x)dz  &:=I_5 &\\
&+ \int_{\Omega}\Big[\bar{V}(x)-\bar{V}(z)\Big]
\Big[\mathbf{E}(z,y;x) - \bar{\mathbf{E}}(z,y;x)
\Big]\nabla_z\nabla_x\mathbf{\Gamma}_{\bar{\mathcal{L}}}(x,z)dz
&:=I_6&\\
& +\text{other~terms,} &~&
\end{aligned}
\end{equation*}
and from the similar calculations as those in $\eqref{f:4.19}$,
$\eqref{f:4.20}$ and $\eqref{f:4.21}$, it follows that
\begin{equation*}
|I_4|+|I_5|+|I_6|\leq C(\vartheta_2+\vartheta_1) r^{2-d+\tau}
\leq C\vartheta_2 r^{1-d+\tau}
\end{equation*}
in terms of the estimates $\eqref{pri:2.13}$, $\eqref{pri:3.0}$, $\eqref{pri:4.10}$ and $\eqref{pri:4.11}$, as well as
the facts $r\in(0,1)$ and $\vartheta_2\geq\vartheta_1$.

Another example is shown the computations related to the zero
order terms of $\mathcal{L}$ and $\bar{\mathcal{L}}$, and we will see that
\begin{equation*}
\small
\begin{aligned}
\Pi_{\mathcal{L}}^1(x,y)-\Pi_{\bar{\mathcal{L}}}^1(x,y)
&= \int_{\Omega}\Big[c(x)-c(z)-\bar{c}(x)+\bar{c}(z)\Big]
\nabla_x\mathbf{\Gamma}_{\mathcal{L}}(x,z)
\mathbf{E}(z,y;x)dz  &:=I_7 &\\
&+ \int_{\Omega}\Big[\bar{c}(x)-\bar{c}(z)\Big]\Big[
\nabla_x\mathbf{\Gamma}_{\mathcal{L}}(x,z)
-\nabla_x\mathbf{\Gamma}_{\bar{\mathcal{L}}}(x,z)\Big]
\mathbf{E}(z,y;x)dz  &:=I_8 &\\
&+ \int_{\Omega}\Big[\bar{c}(x)-\bar{c}(z)\Big]
\Big[\mathbf{E}(z,y;x) - \bar{\mathbf{E}}(z,y;x)
\Big]\nabla_x\mathbf{\Gamma}_{\bar{\mathcal{L}}}(x,z)dz
&:=I_9&\\
& +\text{other~terms.} &~&
\end{aligned}
\end{equation*}
Thus, in view of $\eqref{pri:2.12}$, $\eqref{pri:3.0}$, $\eqref{pri:4.10}$ and $\eqref{pri:4.11}$
we obtain that
\begin{equation*}
\begin{aligned}
|I_7|+|I_8|+|I_9|
&\leq C\Big\{\|c-\bar{c}\|_{L^\infty(\mathbb{R}^d)}
+\vartheta_2 + \vartheta_1\Big\}\int_{\Omega}\frac{dz}{|x-z|^{d-1}|y-z|^{d-2}}\\
&\leq C\vartheta_2\left\{\begin{aligned}
&\ln(1/r) &~& d=3,\\
&r^{3-d}  &~& d\geq 4
\end{aligned}\right.\\
&\leq C\vartheta_2 r^{1-d+\tau},
\end{aligned}
\end{equation*}
where the second step follows from a similar manipulation to that used for $\eqref{f:4.2.2}$, and
the last inequality is due to the assumption $0<r<1/4$.

We now turn to study the surface integrals in $\Pi_{\mathcal{L}}^i(x,y)-\Pi_{\bar{\mathcal{L}}}^i(x,y)$
with $i=1,2$.
Before proceeding further, we note that $|y-z|>(1/2)$ for any
$z\in\partial\Omega$ since $0<|y-x|<1/4$.
The last example is
\begin{equation*}
\begin{aligned}
\Pi_{\mathcal{L}}^1(x,y)-\Pi_{\bar{\mathcal{L}}}^1(x,y)
&= \int_{\partial\Omega}n(z)A(z)\nabla_z\nabla_x\mathbf{\Gamma}_{\mathcal{L}}(x,z)
\mathbf{E}(z,y;x)dS(z) &~& \\
& -\int_{\partial\Omega}n(z)\bar{A}(z)\nabla_z\nabla_x\mathbf{\Gamma}_{\bar{\mathcal{L}}}(x,z)
\bar{\mathbf{E}}(z,y;x)dS(z) + \text{other~terms} &~&\\
& =\int_{\partial\Omega}n(z)\big[A(z)-\bar{A}(z)]\nabla_z\nabla_x\mathbf{\Gamma}_{\mathcal{L}}(x,z)
\mathbf{E}(z,y;x)dS(z) &:=I_{10}&\\
&+\int_{\partial\Omega}n(z)\bar{A}(z)\big[\nabla_z\nabla_x\mathbf{\Gamma}_{\mathcal{L}}(x,z)
-\nabla_z\nabla_x\mathbf{\Gamma}_{\bar{\mathcal{L}}}(x,z)\big]\mathbf{E}(z,y;x)dS(z)
&:=I_{11}&\\
&+\int_{\partial\Omega}n(z)\bar{A}(z)\nabla_z\nabla_x\mathbf{\Gamma}_{\bar{\mathcal{L}}}(x,z)
\big[\mathbf{E}(z,y;x) - \bar{\mathbf{E}}(z,y;x)\big]dS(z) &:=I_{12}&\\
&+\text{other terms,} &~&
\end{aligned}
\end{equation*}
and it is not hard to see that
\begin{equation*}
|I_{10}|+|I_{11}|+|I_{12}|
\leq C\Big\{\|A-\bar{A}\|_{L^\infty(\mathbb{R}^d)}+\vartheta_2 + \vartheta_1\Big\}
\int_{\partial\Omega}\frac{dS(z)}{|x-z|^d|z-y|^{d-2}}\leq C\vartheta_2,
\end{equation*}
where we employ the estimates $\eqref{pri:2.13}$, $\eqref{pri:3.0}$, $\eqref{pri:4.10}$ and
$\eqref{pri:4.11}$ in the first inequality. Similarly, the other surface integrals are also controlled
by $C\vartheta_2$. In sum, we may derive
\begin{equation*}
\big|\Pi_{\mathcal{L}}^1(x,y)-\Pi_{\bar{\mathcal{L}}}^1(x,y)\big|
\leq C\vartheta_2(1+r^{1-d+\tau})\leq C\vartheta_2r^{1-d+\tau}
\end{equation*}
by the fact that $0<r<1/4$. Similarly, we can obtain the second line of the stated estimate
$\eqref{pri:4.14}$, and this consequently ends the proof.
\end{proof}

Fix all the coefficients of $\mathcal{L}$ and $\bar{\mathcal{L}}$ at a point $x$, and
the related fundamental solutions are denoted by $\mathbf{E}(\cdot,0;x)$ and $\bar{\mathbf{E}}(\cdot,0;x)$,
respectively. Define $\mathbf{E}_A(\cdot,0;x)$ as the principal part of $\mathbf{E}(\cdot,0;x)$, which
is related to the operator $L=-\text{div}(A(x)\nabla)$, and the principal part of
$\mathbf{E}_{\bar{A}}(\cdot,0;x)$ is represented by $\mathbf{E}_{\bar{A}}(\cdot,0;x)$.
We now introduce the new notation as follows:
\begin{equation}\label{eq:4.3}
\begin{aligned}
\mathbf{R}(\cdot,0;x) &= \mathbf{E}(\cdot,0;x) - \mathbf{E}_{A}(\cdot,0;x),\\
\bar{\mathbf{R}}(\cdot,0;x) &= \bar{\mathbf{E}}(\cdot,0;x) - \mathbf{E}_{\bar{A}}(\cdot,0;x),
\end{aligned}
\end{equation}
which represent the corresponding lower order terms of $\mathbf{E}(\cdot,0;x)$ and $\bar{\mathbf{E}}(\cdot,0;x)$,
respectively.

\begin{lemma}
Assume the same conditions as in Lemma $\ref{lemma:4.4}$.
Let $\mathbf{R}(\cdot,0;x)$ and $\bar{\mathbf{R}}(\cdot,0;x)$ be given in $\eqref{eq:4.3}$.
Then we have
\begin{equation}\label{pri:4.13}
\big|\nabla\mathbf{R}(z,0;x) - \nabla\bar{\mathbf{R}}(z,0;x)\big|\leq C\vartheta_1|z|^{2-d}
\end{equation}
for any $z\in\mathbb{R}^d$ with $0<|z|<1/4$, where $C$ depends on $\mu,\kappa,\lambda,m,d$.
\end{lemma}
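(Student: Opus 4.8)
The plan is to compare the lower order parts of the two frozen-coefficient fundamental solutions by subtracting off the leading terms from the estimate $\eqref{pri:4.11}$ and exploiting the algebraic structure of the decomposition $\eqref{eq:4.3}$. First I would observe that, by definition, $\nabla\mathbf{R}(z,0;x)-\nabla\bar{\mathbf{R}}(z,0;x) = \big[\nabla\mathbf{E}(z,0;x)-\nabla\bar{\mathbf{E}}(z,0;x)\big] - \big[\nabla\mathbf{E}_{A}(z,0;x)-\nabla\mathbf{E}_{\bar{A}}(z,0;x)\big]$. The first bracket is controlled by $C\vartheta_1|z|^{1-d}$ from $\eqref{pri:4.11}$ with $l=1$; unfortunately this is one power worse than the claimed $|z|^{2-d}$, so a direct subtraction is not enough. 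The point of the lemma is that the two \emph{leading} parts $\mathbf{E}_A$ and $\mathbf{E}_{\bar A}$ carry exactly the same $|z|^{1-d}$ singularity, and their difference must cancel the worst part of the first bracket.

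Thus the real plan is to set up an equation for $G(z) := \mathbf{R}(z,0;x)-\bar{\mathbf{R}}(z,0;x)$ and run a Schauder/Caccioppoli argument as in Lemma $\ref{lemma:3.2}$. Both $\mathbf{E}(\cdot,0;x)$ and $\mathbf{E}_A(\cdot,0;x)$ are fundamental solutions of constant-coefficient operators sharing the same leading term $L=-\text{div}(A(x)\nabla)$, so $\mathbf{R}(\cdot,0;x)$ solves $L(\mathbf{R}(\cdot,0;x)) = \text{div}(V(x)\mathbf{E}(\cdot,0;x)) - B(x)\nabla\mathbf{E}(\cdot,0;x) - (c(x)+\lambda I)\mathbf{E}(\cdot,0;x)$ away from the origin, and similarly for $\bar{\mathbf{R}}$ with the barred coefficients and $L_{\bar A}=-\text{div}(\bar A(x)\nabla)$. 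Subtracting, $G$ satisfies an equation of the form $L(G) = \text{div}\big((A(x)-\bar A(x))\nabla\bar{\mathbf{R}}(\cdot,0;x)\big) + \text{div}(V(x)\mathbf{E} - \bar V(x)\bar{\mathbf{E}}) - (\text{lower order in }\mathbf{E},\bar{\mathbf{E}})$ in $B(z,|z|/2)$ (with pole excluded). Applying the interior estimate $\eqref{pri:0.5}$ (or the classical constant-coefficient Lipschitz estimate) on $B(z,|z|/4)$, one gets $|\nabla G(z)| \lesssim |z|^{-1}\|G\|_{L^\infty(B(z,|z|/2))} + (\text{contributions from the right-hand side})$. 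The $L^\infty$ norm of $G$ itself is already $\lesssim \vartheta_1|z|^{2-d}$ by $\eqref{pri:4.11}$ with $l=0$, which gives the desired $|z|^{1-d}$ from the first term — still not good enough on its own, so one must check that the right-hand side contributions are also $O(\vartheta_1|z|^{1-d})$ and that they combine with a cancellation. The cleanest route is probably to use the algebraic identity $\eqref{f:4.18}$ in the form $A\nabla\mathbf{E} - \bar A\nabla\bar{\mathbf{E}} = (A-\bar A)\nabla\mathbf{E} + \bar A(\nabla\mathbf{E}-\nabla\bar{\mathbf{E}})$ (and similarly for each coefficient), then bound each piece using $\eqref{pri:3.0}$, the decay $|\nabla^l\mathbf{E}_{\bar A}(z,0;x)|\lesssim|z|^{2-d-l}$, and $\eqref{pri:4.11}$; every term then carries at least one factor $\vartheta_1$ and the homogeneity bookkeeping produces $|z|^{2-d}$ for $\nabla G$.

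Concretely, the key steps in order are: (1) write $G = \mathbf{R}(\cdot,0;x)-\bar{\mathbf{R}}(\cdot,0;x) = [\mathbf{E}-\bar{\mathbf{E}}] - [\mathbf{E}_A - \mathbf{E}_{\bar A}]$, and record that the second bracket solves $L_{\bar A}(\mathbf{E}_A-\mathbf{E}_{\bar A}) = \text{div}((\bar A-A)\nabla\mathbf{E}_A)$, hence by a Lemma-$\ref{lemma:3.2}$-style induction $|\nabla^l(\mathbf{E}_A-\mathbf{E}_{\bar A})(z,0;x)|\le C\vartheta_1|z|^{2-d-l}$ for $l\ge 0$; (2) derive the PDE for $G$ on punctured balls $B(z,|z|/2)\setminus\{0\}$ with a right-hand side that, after applying $\eqref{f:4.18}$ coefficient-by-coefficient, is a sum of terms each of size $C\vartheta_1\,(\text{product of two frozen fundamental solutions or their derivatives})$; (3) invoke the interior Lipschitz estimate $\eqref{pri:0.5}$ for the constant-coefficient operator $L$ on $B(z,|z|/4)$, controlling $\|G\|_{L^\infty}$ by $\eqref{pri:4.11}$ with $l=0$ and the H\"older seminorm and $L^\infty$ norm of the right-hand data by the decay bounds $\eqref{pri:3.0}$ and by $\eqref{pri:4.11}$ with $l=1$; (4) collect powers of $|z|$ — the dangerous $|z|^{1-d}$ terms from different sources cancel thanks to the subtraction of the leading parts, leaving $|\nabla G(z)|\le C\vartheta_1|z|^{2-d}$ — and note the dependence of $C$ on $\mu,\kappa,\lambda,m,d$ only. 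The main obstacle I anticipate is step (4): ensuring the cancellation is genuine and not just a heuristic, i.e.\ verifying that after subtracting $\mathbf{E}_A-\mathbf{E}_{\bar A}$ no residual $|z|^{1-d}$ term survives in $\nabla G$. This is exactly the structural reason the leading terms were split off in $\eqref{eq:4.3}$, and it should be handled by writing $\mathbf{R}(\cdot,0;x)$ and $\bar{\mathbf{R}}(\cdot,0;x)$ via their defining integral representations against $\mathbf{E}_A(\cdot,0;x)$, $\mathbf{E}_{\bar A}(\cdot,0;x)$ (as in the derivation of $\eqref{f:4.17}$), so that the difference is manifestly $O(\vartheta_1)$ with the correct homogeneity.
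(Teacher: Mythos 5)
Your primary route --- setting up a PDE for $G=\mathbf{R}(\cdot,0;x)-\bar{\mathbf{R}}(\cdot,0;x)$ on punctured balls $B(z,|z|/2)$ and invoking the interior Lipschitz estimate $\eqref{pri:0.5}$ --- cannot close, and the reason is more fundamental than a missing cancellation. The term $\frac{1}{R}\|G\|_{L^\infty(2B)}$ with $R\sim|z|$ caps you at $C\vartheta_1|z|^{1-d}$: the bound $\|G\|_{L^\infty(B(z,|z|/2))}\lesssim\vartheta_1|z|^{2-d}$ coming from $\eqref{pri:4.11}$ with $l=0$ is not improvable in general (it is already saturated when only $A$ differs from $\bar A$), and there is no mechanism inside the interior estimate by which the right-hand-side contributions reduce the order of the $L^\infty$ term --- they are additive, not subtractive. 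The same structural issue is visible in the paper's proof of Lemma~$\ref{lemma:3.2}$: the interior-estimate iteration $\eqref{f:3.9}$ is used only to go from $l$ to $l+1$, while the base case $l=1$ is handled by direct computation with the integral identity from $\eqref{eq:5.1}$. The present lemma \emph{is} that base case (it is exactly a gradient statement), so the Caccioppoli/Schauder shortcut has nothing to gain traction on.

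Your closing sentence, however, does point to the correct and, in fact, the paper's approach: specialize the identity $\eqref{f:4.17}$ to the pair $(\mathcal{L},\widetilde{\mathcal{L}})=(L_x,\mathcal{L}_x)$ of constant-coefficient operators frozen at $x$, i.e.\ replace $\mathbf{\Gamma}_{\mathcal{L}}$ by $\mathbf{E}_A(\cdot,\cdot;x)$, which forces $V(z)=B(z)=c(z)=0$ and $A(z)=A(x)$ in $\eqref{f:4.17}$. The solid integral carrying $[A(x)-A(z)]$ then vanishes identically, and after an integration by parts one obtains the closed formula
\begin{equation*}
\nabla_x\mathbf{R}(x,y;x)
=\big(B(x)-V(x)\big)\int_{\Omega}\nabla_x\mathbf{E}_{A}(x,z;x)\nabla_z\mathbf{E}(z,y;x)\,dz
-\mathbf{c}(x)\int_{\Omega}\nabla_x\mathbf{E}_{A}(x,z;x)\mathbf{E}(z,y;x)\,dz
+\text{(surface terms)}
\end{equation*}
over $\Omega=B(x,3/4)$. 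Subtracting the barred formula and applying the trilinear splitting $\eqref{f:4.18}$ factor by factor, each piece carries $\vartheta_1$ (from a coefficient difference at the frozen point, or from $\eqref{pri:4.11}$ for a difference of frozen fundamental solutions), and the $|z|$-homogeneity of the remaining integrals gives $\vartheta_1|z|^{2-d}$ for the $(B-V)$ term, $\vartheta_1|z|^{3-d}$ (or $\vartheta_1\ln(1/|z|)$ in $d=3$) for the $\mathbf{c}$ term, and $O(\vartheta_1)$ for the surface terms since $|y-z|>1/2$ there; all dominated by $\vartheta_1|z|^{2-d}$ for $0<|z|<1/4$. It is this explicit-representation bookkeeping, not an interior-estimate argument, that carries the content of the lemma, so you should promote your fallback to the main argument and carry it out.
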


\begin{proof}
The main idea of the proof is inspired by Lemma $\ref{lemma:4.4}$. If ignored
the quantity $\vartheta_1$ in the estimate, we would derive the stated estimate $\eqref{pri:4.13}$
in terms of Lemma $\ref{lemma:3.2}$ at once. However $\vartheta_1$ is important in our later purposes.

Let $r=|x-y|<1/4$ and $\Omega = B(x,3/4)$.
Observing the identity $\eqref{f:4.17}$, we replace $\mathbf{\Gamma}_{\mathcal{L}}(\cdot,\cdot)$
by $\mathbf{E}_{A}(\cdot,\cdot;x)$, which means $B(z)=V(z)=c(z)=0$ in $\eqref{f:4.17}$. Due to
$A(z)=A(x)$ for all $z\in\mathbb{R}^d$ in our cases, it follows from $\eqref{f:4.17}$ and
integration by parts that
\begin{equation}
\small
\begin{aligned}
&\nabla_x\mathbf{R}(x,y;x)
=\big(B(x)-V(x)\big)\int_{\Omega}
\nabla_x\mathbf{E}_{A}(x,z;x)
\nabla_z\mathbf{E}(z,y;x)dz
-\mathbf{c}(x)\int_{\Omega}
\nabla_x\mathbf{E}_{A}(x,z;x)
\mathbf{E}(z,y;x)dz\\
& - \int_{\partial\Omega}n(z)A(x)\nabla_x\nabla_z\mathbf{E}_{A}(x,z;x)
\mathbf{E}(z,y;x)dS(z)
+\int_{\partial\Omega}n(z)A(x)
\nabla_z\mathbf{E}(z,y;x)\nabla_x\mathbf{E}_A(x,z;x)dS(z)
\end{aligned}
\end{equation}
where $\mathbf{c}(x) = c(x)+\lambda I$. Thus we have
\begin{equation*}
\begin{aligned}
\nabla_x\mathbf{R}(x,y;x)-\nabla_x\bar{\mathbf{R}}(x,y;x)
&=\big(B(x)-V(x)\big)\int_{\Omega}
\nabla_x\mathbf{E}_{A}(x,z;x)
\nabla_z\mathbf{E}(z,y;x)dz &:=I_1&\\
&-\big(\bar{B}(x)-\bar{V}(x)\big)\int_{\Omega}
\nabla_x\mathbf{E}_{\bar{A}}(x,z;x)
\nabla_z\bar{\mathbf{E}}(z,y;x)dz &:=I_2&\\
& -c(x)\int_{\Omega}
\nabla_x\mathbf{E}_{A}(x,z;x)
\mathbf{E}(z,y;x)dz &:= I_3&\\
&+\bar{c}(x)\int_{\Omega}
\nabla_x\mathbf{E}_{\bar{A}}(x,z;x)
\bar{\mathbf{E}}(z,y;x)dz &:= I_4&\\
&+ \text{other~terms}
\end{aligned}
\end{equation*}
and then proceeding as in the proof of Lemma $\ref{lemma:4.4}$ and using the identity $\eqref{f:4.18}$
it is not hard to derive
\begin{equation*}
|I_1 + I_2|\leq C\vartheta_1 r^{2-d}
\quad\text{and}\quad
|I_3+I_4|\leq C\vartheta_1\left\{\begin{aligned}
&\ln(1/r) &~& d=3,\\
&r^{3-d}  &~& d\geq4,
\end{aligned}\right.
\end{equation*}
where we use the estimates $\eqref{pri:4.11}$ and $\eqref{pri:3.0}$ in the inequalities.
The remainder of the argument is analogous to that in Lemma $\ref{lemma:4.4}$, and we consequently obtain
\begin{equation*}
|\nabla_x\mathbf{R}(x,y;x)-\nabla_x\bar{\mathbf{R}}(x,y;x)|
\leq C\vartheta_1|x-y|^{2-d}
\end{equation*}
by noting the fact that $0<|x-y|<1/4$.
Let $z=x-y$ and a translation argument leads to the stated estimate $\eqref{pri:4.13}$.
We have completed the proof.
\end{proof}

\subsection{Estimates for layer potentials}

\begin{definition}\label{def:4.2}
Let $\mathbf{\Gamma}_{\mathcal{L}}$ and
$\mathbf{\Gamma}_{A}$ represent the fundamental solutions of
$\mathcal{L}$ and $L = -\emph{div}(A\nabla)$, respectively.
Set $P\in\partial\Omega$. Define the truncated singular integral operators
\begin{equation}
\begin{aligned}
\mathcal{T}_{\Theta}^{1,\delta}(f)(P) &= \int_{y\in\partial\Omega\atop
|y-P|>\delta} \nabla_1 \mathbf{\Gamma}_{\Theta}(P,y)f(y)dS(y),\\
\mathcal{T}_{\Theta}^{2,\delta}(f)(P) &= \int_{y\in\partial\Omega\atop
|y-P|>\delta} \nabla_2 \mathbf{\Gamma}_{\Theta}(P,y)f(y)dS(y),
\end{aligned}
\end{equation}
and then the singular integral operators and the associated maximal singular integral ones may
be denoted by
\begin{equation}\label{eq:5.2}
\begin{aligned}
\mathcal{T}_{\Theta}^{1}(f)(P) & = \text{p.v.}\int_{\partial\Omega}
\nabla_1 \mathbf{\Gamma}_{\Theta}(P,y)f(y)dS(y)
:= \lim_{\delta\to 0} \mathcal{T}_{\Theta}^{1,\delta}(f)(P), \\
\mathcal{T}_{\Theta}^{2}(f)(P) & = \text{p.v.}\int_{\partial\Omega}
\nabla_2 \mathbf{\Gamma}_{\Theta}(P,y)f(y)dS(y)
:= \lim_{\delta\to 0} \mathcal{T}_{\Theta}^{2,\delta}(f)(P), \\
&\qquad~\mathcal{T}_{\Theta}^{1,*}(f)(P) = \sup_{\delta>0}|T_{\Theta}^{1,\delta}(f)(P)|,\\
&\qquad~\mathcal{T}_{\Theta}^{2,*}(f)(P) = \sup_{\delta>0}|T_{\Theta}^{2,\delta}(f)(P)|,\\
\end{aligned}
\end{equation}
where the subscript $\Theta$ will be given by $\mathcal{L}$ or $A$ in this section,
and by $\varepsilon$ or $A_\varepsilon$ in the next section.
\end{definition}

\begin{lemma}[comparing lemma II]\label{lemma:4.7}
Suppose that the coefficients of $\mathcal{L}$ satisfy $\eqref{a:1}$ and $\eqref{a:3}$
with $\lambda\geq\max\{\lambda_0,\mu\}$. Assume that
$A,V,B$ satisfy $\eqref{a:2}$ and $\eqref{a:4}$.
Then there holds
\begin{equation}\label{pri:5.8}
\begin{aligned}
|\nabla_1\mathbf{\Gamma}_{\mathcal{L}}(x,y)-\nabla_1\mathbf{\Gamma}_{A}(x,y)|
&\leq C |x-y|^{1-d+\tau},\\
|\nabla_2\mathbf{\Gamma}_{\mathcal{L}}(x,y)-\nabla_2\mathbf{\Gamma}_{A}(x,y)|
&\leq C |x-y|^{1-d+\tau}
\end{aligned}
\end{equation}
for any $x,y\in\mathbb{R}^d$ with $0<|x-y|\leq 1$, where
$\mathbf{\Gamma}_A(\cdot,y)$ denotes the principal part of
$\mathbf{\Gamma}_{\mathcal{L}}(\cdot,y)$, and $C$ depends on $\mu,\kappa,\lambda,m,d$ and
$\tau$.
\end{lemma}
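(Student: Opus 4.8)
The plan is to prove \eqref{pri:5.8} by splitting the comparison into two telescoping steps through the ``frozen-coefficient'' fundamental solution $\mathbf{E}(\cdot,\cdot;x)$. Concretely, for the first line I would write
$$
\nabla_1\mathbf{\Gamma}_{\mathcal{L}}(x,y)-\nabla_1\mathbf{\Gamma}_{A}(x,y)
= \big[\nabla_1\mathbf{\Gamma}_{\mathcal{L}}(x,y)-\nabla_1\mathbf{E}(x,y;x)\big]
+ \big[\nabla_1\mathbf{E}(x,y;x)-\nabla_1\mathbf{E}_A(x,y;x)\big],
$$
where I also use that $\mathbf{\Gamma}_A(\cdot,y)$ is by definition the principal part of $\mathbf{\Gamma}_{\mathcal{L}}(\cdot,y)$, so that $\nabla_1\mathbf{E}_A(x,y;x)$ indeed plays the role of $\nabla_1\mathbf{\Gamma}_A(x,y)$ at the level of the frozen operator (more precisely, one needs the companion estimate $|\nabla_1\mathbf{\Gamma}_A(x,y)-\nabla_1\mathbf{E}_A(x,y;x)|\le C|x-y|^{1-d+\tau}$ for the constant-leading-coefficient operator $L=-\mathrm{div}(A\nabla)$, which is exactly the classical Schauder-type estimate and is the $V=B=c=0$ case of the argument in Lemma \ref{lemma:4.2}). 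The first bracket is precisely $\Pi^1_{\mathcal{L}}(x,y)$ in the notation \eqref{eq:4.4}, and it is bounded by $C|x-y|^{1-d+\tau}$ directly from the first line of \eqref{pri:4.3}. The second bracket is $\nabla\mathbf{R}(x-y,0;x)$ after a translation, where $\mathbf{R}$ is the ``lower-order remainder'' of the frozen fundamental solution introduced in \eqref{eq:4.3}; here I would invoke the interior Schauder/Calderón–Zygmund estimates for the constant-coefficient operator $L=-\mathrm{div}(A(x)\nabla)$ applied to the equation satisfied by $\mathbf{R}(\cdot,0;x)$, namely
$$
L\big(\mathbf{R}(\cdot,0;x)\big) = \mathrm{div}\big((B(x)-V(x))\,\mathbf{E}(\cdot,0;x)\big) - \mathbf{c}(x)\,\mathbf{E}(\cdot,0;x) \quad\text{(up to signs)},
$$
which yields $|\nabla\mathbf{R}(z,0;x)|\le C|z|^{2-d}\le C|z|^{1-d+\tau}$ for $0<|z|\le 1$ since $\tau<1$ — this is exactly the kind of bound that \eqref{pri:4.13} refines, and here we only need the crude form. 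Summing the two brackets gives the first line of \eqref{pri:5.8}; the second line, concerning $\nabla_2$, follows by the symmetric argument using the second line of \eqref{pri:4.4} together with ${^*\mathbf{\Gamma}}_\varepsilon(x,y)=[\mathbf{\Gamma}_\varepsilon(y,x)]^t$, i.e. by exchanging the roles of $x$ and $y$ and passing to the adjoint operator $\mathcal{L}^*$ (whose coefficients still satisfy \eqref{a:1}, \eqref{a:2}, \eqref{a:4}).

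I expect the main obstacle to be the careful bookkeeping of exactly which ``principal part'' is being compared to which, and making sure the two intermediate objects $\mathbf{E}(x,y;x)$ and $\mathbf{E}_A(x,y;x)$ are inserted consistently so that the two brackets really do telescope to the desired difference. In particular one must verify that $\nabla_1\mathbf{\Gamma}_A(x,y)$, which appears in the statement, and $\nabla_1\mathbf{E}_A(x,y;x)$, which appears naturally in the telescoping, differ only by a term of the allowed order $|x-y|^{1-d+\tau}$; this is the Schauder estimate for the leading operator with Hölder-continuous (but not constant) coefficient $A$, and it is really the same mechanism as Lemma \ref{lemma:4.2} specialized to the leading term, so it can be quoted rather than redone. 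Once that identification is pinned down, the remaining estimates are immediate consequences of \eqref{pri:4.3}, \eqref{pri:4.4} and the elementary inequality $|z|^{2-d}\le |z|^{1-d+\tau}$ valid for $0<|z|\le 1$, so no new singular-integral computations are needed. I would close by remarking that the restriction $0<|x-y|\le 1$ is what makes the loss $|z|^{2-d}\rightsquigarrow|z|^{1-d+\tau}$ harmless, which is consistent with the small-scale philosophy of this section.
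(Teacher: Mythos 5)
Your approach is essentially the same as the paper's: the paper also inserts the two frozen-coefficient intermediaries $\mathbf{E}(x,y;x)$ and $\mathbf{E}_A(x,y;x)$ and reduces \eqref{pri:5.8} to exactly the three ingredients you identify, namely \eqref{pri:4.3}, a constant-coefficient comparison of the type in Lemma~\ref{lemma:3.2}, and the leading-term estimate $|\nabla_1\mathbf{\Gamma}_A(x,y)-\nabla_1\mathbf{E}_A(x,y;x)|\le C|x-y|^{1-d+\tau}$ from \cite[Lemma~2.2]{SZW24}. One small presentational caveat: your displayed identity as written is false, since its right-hand side telescopes to $\nabla_1\mathbf{\Gamma}_{\mathcal{L}}(x,y)-\nabla_1\mathbf{E}_A(x,y;x)$ rather than to $\nabla_1\mathbf{\Gamma}_{\mathcal{L}}(x,y)-\nabla_1\mathbf{\Gamma}_A(x,y)$; your parenthetical about the ``companion estimate'' is precisely the third telescoping term that needs to be added back, so the argument is logically complete, but it would be cleaner to write the three-term telescope explicitly as the paper does.
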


\begin{proof}
Although the results
are quite similar to those shown in Lemma $\ref{lemma:3.2}$,
we can not use the arguments developed there except
$V\in C^1(\mathbb{R}^d;\mathbb{R}^{m^2})$. That makes us go back to the method of
frozen coefficients, which has been developed in Lemma $\ref{lemma:4.2}$.
Fix the coefficient of $L = -\text{div}(A\nabla)$ at $x$, and let
$\mathbf{E}_A(x,y;x)$ be the corresponding fundamental solution.
Thus it is not hard
to observe that
\begin{equation*}
\begin{aligned}
\nabla_1\mathbf{\Gamma}_{\mathcal{L}}(x,y) - \nabla_1\mathbf{\Gamma}_{A}(x,y)
&= \nabla_1\mathbf{\Gamma}_{\mathcal{L}}(x,y) - \nabla_1\mathbf{E}(x,y;x) \\
&+ \nabla_1\mathbf{E}(x,y;x) - \nabla_1\mathbf{E}_A(x,y;x)
+ \nabla_1\mathbf{E}_A(x,y;x) - \nabla_1\mathbf{\Gamma}_{A}(x,y),
\end{aligned}
\end{equation*}
and the first line of
$\eqref{pri:5.8}$
follows from the estimates $\eqref{pri:4.3}$, $\eqref{pri:3.4}$ and \cite[Lemma 2.2]{SZW24}.
By the same token, we can derive the second one and the proof ends here.
\end{proof}

\begin{thm}\label{thm:5.2}
Suppose that the coefficients $\mathcal{L}$ satisfy the conditions
$\eqref{a:1}-\eqref{a:4}$ with $\lambda\geq\max\{\lambda_0,\mu\}$.
Let $f\in L^p(\partial\Omega;\mathbb{R}^m)$ for $1<p<\infty$.
Then $\mathcal{T}_{\Theta}(f)$ exists for a.e. $P\in\partial\Omega$ and
\begin{equation}\label{pri:5.10}
\begin{aligned}
\|\mathcal{T}_{\Theta}^{1}(f)\|_{L^p(\partial\Omega)}
+ \|\mathcal{T}_{\Theta}^{1,*}(f)\|_{L^p(\partial\Omega)} &\leq C\|f\|_{L^p(\partial\Omega)},\\
\|\mathcal{T}_{\Theta}^{2}(f)\|_{L^p(\partial\Omega)}
+ \|\mathcal{T}_{\Theta}^{2,*}(f)\|_{L^p(\partial\Omega)} &\leq C\|f\|_{L^p(\partial\Omega)},
\end{aligned}
\end{equation}
hold for $\Theta = A,\mathcal{L}$,
where $C$ depends only on $\mu,\kappa,d,m,p$ and $\Omega$.
\end{thm}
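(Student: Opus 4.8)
The plan is to reduce both cases of $\eqref{pri:5.10}$ to the corresponding estimates for the pure second-order operator $L=-\text{div}(A(x)\nabla)$, and then to pass from $L$ to $\mathcal{L}$ by treating the lower order terms as an absolutely integrable perturbation of the kernel. Since we are in the small-scale regime of this section ($R_0\leq 1/4$), every pair $x,y\in\partial\Omega$ satisfies $|x-y|\leq 1$, so the comparing lemma II (Lemma $\ref{lemma:4.7}$) applies throughout and gives, for $i=1,2$,
\begin{equation*}
\big|\nabla_i\mathbf{\Gamma}_{\mathcal{L}}(x,y)-\nabla_i\mathbf{\Gamma}_{A}(x,y)\big|\leq C|x-y|^{1-d+\tau},
\end{equation*}
a kernel that is absolutely integrable over the $(d-1)$-dimensional surface $\partial\Omega$. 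Hence for every $\delta>0$ and a.e.\ $P\in\partial\Omega$,
\begin{equation*}
\big|\mathcal{T}_{\mathcal{L}}^{i,\delta}(f)(P)-\mathcal{T}_{A}^{i,\delta}(f)(P)\big|\leq C\int_{\partial\Omega}\frac{|f(y)|}{|P-y|^{d-1-\tau}}\,dS(y)=:C\,\mathcal{I}(|f|)(P),
\end{equation*}
uniformly in $\delta$. The operator $\mathcal{I}$ is a Riesz potential of positive order $\tau$ on the Ahlfors-regular set $\partial\Omega$, hence bounded (indeed compact) on $L^p(\partial\Omega;\mathbb{R}^m)$ for all $1<p<\infty$ by the fractional integration theorem. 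Therefore $\eqref{pri:5.10}$ for $\Theta=\mathcal{L}$ will follow once it is proved for $\Theta=A$: take $\delta\to0$ for $\mathcal{T}_{\mathcal{L}}^{i}$ and the supremum over $\delta$ for $\mathcal{T}_{\mathcal{L}}^{i,*}$, and note that the a.e.\ existence of $\mathcal{T}_{\mathcal{L}}^{i}(f)$ is inherited from that of $\mathcal{T}_{A}^{i}(f)$ because the difference kernel is absolutely integrable.

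It remains to prove $\eqref{pri:5.10}$ for $\Theta=A$. Here $L=-\text{div}(A(x)\nabla)$ coincides with the homogeneous operator $L_\varepsilon$ at $\varepsilon=1$ studied by Kenig and Shen, and its coefficient lies in $\text{VMO}(\mathbb{R}^d)\cap C^{0,\tau}(\mathbb{R}^d)$; the bounds $\eqref{pri:5.10}$ with $\Theta=A$, together with the a.e.\ existence of the principal values $\mathcal{T}_{A}^{i}(f)$, are contained in \cite{SZW24}. For completeness one reproves them by freezing coefficients: comparing $\nabla_i\mathbf{\Gamma}_{A}(x,y)$ with $\nabla_i\mathbf{E}_{A}(x,y;x)$, the derivative of the fundamental solution of the constant-coefficient operator $-\text{div}(A(x)\nabla)$, the estimate of \cite[Lemma 2.2]{SZW24} again produces an error of size $C|x-y|^{1-d+\tau}$, which as above contributes a bounded operator. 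One is then left with the operator whose kernel is the $P$-frozen convolution kernel $\nabla_i\mathbf{E}_{A}(\cdot,\cdot;P)$: this is a genuine Calder\'on--Zygmund kernel, homogeneous of degree $1-d$ and satisfying the size and H\"ormander estimates that descend from the decay bounds of a constant-coefficient fundamental solution, so its $L^p(\partial\Omega)$ boundedness, the $L^p$ boundedness of its maximal truncations, and the a.e.\ existence of its principal value are exactly the Coifman--McIntosh--Meyer theory on Lipschitz surfaces, already invoked in Theorem $\ref{lemma:3.3}$ and Lemma $\ref{lemma:3.1}$. The only additional point is that the frozen base point $P$ varies along $\partial\Omega$; this is absorbed, precisely as in the proofs of Theorem $\ref{lemma:3.3}$ and Lemma $\ref{lemma:3.1}$, by comparing $\mathcal{T}_{A}^{i,\delta}(f)(P)$ with the $P$-frozen truncated singular integral and bounding the error by $C\{\mathrm{M}_{\partial\Omega}(f)(P)+\mathcal{I}(|f|)(P)\}$, uniformly in $\delta$. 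For the $\nabla_2$-kernels one runs the same argument for the adjoint operator $L^*=-\text{div}(A^*\nabla)$, which satisfies the same hypotheses, using $\mathbf{\Gamma}_{A}^*(x,y)=[\mathbf{\Gamma}_{A}(y,x)]^t$.

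The hard part will be the frozen-coefficient bookkeeping in the $\Theta=A$ step: one must check that after evaluating the coefficients at a boundary point $P$, the discrepancy between $\mathcal{T}_{A}^{i,\delta}(f)(P)$ and the frozen truncated singular integral is controlled, uniformly in $\delta$, by a Hardy--Littlewood maximal function of $f$ plus the Riesz potential $\mathcal{I}(|f|)(P)$ of positive order. This requires both the pointwise comparison $|x-y|^{1-d+\tau}$ with the frozen kernel and the H\"older dependence of $\mathbf{E}_{A}(\cdot,\cdot;P)$ on $P$, and it is the same technical device that already appears in Theorem $\ref{lemma:3.3}$ and Lemma $\ref{lemma:3.1}$. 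Once this is in hand, the Hardy--Littlewood maximal inequality and the fractional integration theorem immediately yield $\eqref{pri:5.10}$ and the a.e.\ existence of $\mathcal{T}_{A}^{i}(f)$; the passage to $\mathcal{T}_{\mathcal{L}}^{i}$ and $\mathcal{T}_{\mathcal{L}}^{i,*}$ described in the first paragraph then completes the proof, all estimates for the maximal operators being obtained in parallel since every error term above was bounded uniformly in the truncation parameter.
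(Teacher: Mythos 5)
Your reduction of the $\Theta = \mathcal{L}$ case to the $\Theta = A$ case through Lemma~\ref{lemma:4.7} is exactly the key idea of the paper's proof, and the passage from $L^p$-boundedness of the Riesz potential $\mathcal{I}$ to the conclusion is fine. The genuine gap is in the very first sentence: you invoke a standing hypothesis $R_0 \leq 1/4$ to confine every boundary pair to the range $|x-y|\leq 1$ where Lemma~\ref{lemma:4.7} applies, but the statement of Theorem~\ref{thm:5.2} imposes no such restriction on the diameter of $\Omega$ (the constant is merely allowed to depend on $\Omega$), and the paper's proof explicitly treats truncation parameters $\delta > 1$. When $\mathrm{diam}(\Omega)>1$, the kernel comparison $|\nabla_i\mathbf{\Gamma}_{\mathcal{L}}(x,y)-\nabla_i\mathbf{\Gamma}_{A}(x,y)|\leq C|x-y|^{1-d+\tau}$ from Lemma~\ref{lemma:4.7} is simply not available on the far portion of the surface, so neither your pointwise domination of the truncated operators nor the maximal inequality that follows from it is justified there.

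The fix, which is how the paper proceeds, is to split each truncated integral at $|y-P|=1$: on $\{|y-P|>1\}$ one uses the large-scale decay $\eqref{pri:5.9}$, namely $|\nabla_i\mathbf{\Gamma}_{\mathcal{L}}(x,y)|\leq C|x-y|^{1-d-\rho}$ for $|x-y|\geq 1$, which by a dyadic annulus sum gives a bound by $C\,\mathrm{M}_{\partial\Omega}(f)(P)$ uniformly in $\delta$; on $\{\delta<|y-P|<1\}$ one proceeds exactly as you do via Lemma~\ref{lemma:4.7} and the frozen-kernel theory for $\Theta=A$. Once you insert that splitting, your argument closes. Separately, your re-derivation of the $\Theta=A$ bounds by freezing coefficients is valid but superfluous for this proof: the paper just cites \cite[Theorem 3.1]{SZW24} for those, and your comparison step only needs the stated $L^p$-bounds on $\mathcal{T}_{A}^{i}$ and $\mathcal{T}_{A}^{i,*}$, not their internal mechanism.
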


\begin{proof}
The proof is quite similar to that used in Lemma $\ref{lemma:3.3}$
and the original idea may be found in \cite[Lemma 3.1]{SZW23}, and we provide a proof for the
sake of the completeness. Note that if we choose $\Theta = A$,
then the result $\eqref{pri:5.10}$ had already been established in \cite[Theorem 3.1]{SZW24}.
We now study the case of $\Theta = \mathcal{L}$. It is sufficient to estimate the integral
\begin{equation*}
 \Big|\int_{y\in\partial\Omega\atop
 |y-P|>\delta} \nabla_1 \mathbf{\Gamma}_{\mathcal{L}}(P,y)f(y)dS(y)\Big|,
\end{equation*}
and it will confront with two cases: (1) $\delta > 1$; (2) $\delta \leq 1$.
In the case of (1), it follows from the estimate $\eqref{pri:5.9}$ that
\begin{equation}\label{f:5.9}
 \Big|\int_{y\in\partial\Omega\atop
 |y-P|>\delta} \nabla_1 \mathbf{\Gamma}_{\mathcal{L}}(P,y)f(y)dS(y)\Big|
 \leq C\int_{y\in\partial\Omega\atop
 |y-P|>\delta} \frac{|f(y)|}{|P-y|^{d-1+\rho}}dS(y) \leq C\mathrm{M}_{\partial\Omega}(f)(P)
\end{equation}
with $0<\rho<1$.
We proceed to investigate the case (2). In such case, it is not hard to see that
\begin{equation}\label{f:5.10}
\begin{aligned}
 \Big|\int_{y\in\partial\Omega\atop
 |y-P|>\delta} \nabla_1 \mathbf{\Gamma}_{\mathcal{L}}(P,y)f(y)dS(y)\Big|
& \leq  \Big|\int_{y\in\partial\Omega\atop
 |y-P|\geq 1} \nabla \mathbf{\Gamma}_{\mathcal{L}}(P,y)f(y)dS(y)\Big| \\
& +  \int_{y\in\partial\Omega\atop
 \delta <|y-P|< 1} \big|\nabla_1 \mathbf{\Gamma}_{\mathcal{L}}(P,y)
 - \nabla_1\mathbf{\Gamma}_{A}(P,y)\big||f(y)|dS(y) \\
& +  \Big|\int_{y\in\partial\Omega\atop
 \delta<|y-P|<1} \nabla_1 \mathbf{\Gamma}_{A}(P,y)f(y)dS(y)\Big| \\
&\leq 2\mathcal{T}_{A}^{1,*}(f)(P) + C\mathrm{M}_{\partial\Omega}(f)(P),
\end{aligned}
\end{equation}
where we use the estimate $\eqref{pri:5.8}$ in the last inequality. Combining the estimates
$\eqref{f:5.9}$ and $\eqref{f:5.10}$, we have
\begin{equation*}
 \mathcal{T}_{\mathcal{L}}^{1,*}(f)(P)
 \leq 2\mathcal{T}_{A}^{1,*}(f)(P) + C\mathrm{M}_{\partial\Omega}(f)(P),
\end{equation*}
and this together with \cite[Theorem 3.1]{SZW24}
finally leads to
\begin{equation}
\|\mathcal{T}_{\mathcal{L}}^{1}(f)\|_{L^p(\partial\Omega)} +
\|\mathcal{T}_{\mathcal{L}}^{1,*}(f)\|_{L^p(\partial\Omega)} \leq C\|f\|_{L^p(\partial\Omega)}.
\end{equation}
The second line of $\eqref{pri:5.10}$ may be proved by the same way, and
we do not reproduce here. The proof has been completed.
\end{proof}

\begin{thm}
Let $\Omega\subset\mathbb{R}^d$ be a Lipschitz domain with $\emph{diam}(\Omega)\leq 1/4$.
Suppose that the coefficients of $\mathcal{L}$ and $\bar{\mathcal{L}}$ satisfy
$\eqref{a:1}$, $\eqref{a:3}$ and $\eqref{a:4}$ with $\bar{\lambda} = \lambda$
satisfying $\lambda\geq\max\{\lambda_0,\mu\}$. Let $\mathcal{T}_{\mathcal{L}}^{1},
\mathcal{T}_{\bar{\mathcal{L}}}^{1}, \mathcal{T}_{\mathcal{L}}^{2}$ and
$\mathcal{T}_{\bar{\mathcal{L}}}^{2}$ be defined in $\eqref{eq:5.2}$. Then for any
$1<p<\infty$ we have
\begin{equation}\label{pri:5.11}
\begin{aligned}
\|\mathcal{T}_{\mathcal{L}}^{1}(f)
-\mathcal{T}_{\bar{\mathcal{L}}}^{1}(f)\|_{L^p(\partial\Omega)}
&\leq C\vartheta_2\|f\|_{L^p(\partial\Omega)},\\
\|\mathcal{T}_{\mathcal{L}}^{2}(f)-
\mathcal{T}_{\bar{\mathcal{L}}}^{2}(f)\|_{L^p(\partial\Omega)}
&\leq C\vartheta_2\|f\|_{L^p(\partial\Omega)},
\end{aligned}
\end{equation}
where $C$ depends on $\mu,\kappa,\tau,\lambda,m,d$ and the character of $\Omega$.
\end{thm}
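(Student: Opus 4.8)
The plan is to deduce the $L^p$ bounds on the difference of singular integral operators directly from the pointwise kernel estimates already collected in Lemma~\ref{lemma:4.4}, together with the classical Calder\'on--Zygmund / Coifman--McIntosh--Meyer theory for singular integrals on Lipschitz surfaces. Specifically, write the kernel of $\mathcal{T}_{\mathcal{L}}^{i}-\mathcal{T}_{\bar{\mathcal{L}}}^{i}$ as
\begin{equation*}
K_i(x,y) = \big[\nabla_i\mathbf{\Gamma}_{\mathcal{L}}(x,y)-\nabla_i\mathbf{E}(x,y;x)\big]
- \big[\nabla_i\mathbf{\Gamma}_{\bar{\mathcal{L}}}(x,y)-\nabla_i\bar{\mathbf{E}}(x,y;x)\big]
+ \big[\nabla_i\mathbf{E}(x,y;x)-\nabla_i\bar{\mathbf{E}}(x,y;x)\big],
\end{equation*}
i.e.\ $K_i(x,y) = \Pi_{\mathcal{L}}^i(x,y)-\Pi_{\bar{\mathcal{L}}}^i(x,y) + \big[\nabla_i\mathbf{E}(x,y;x)-\nabla_i\bar{\mathbf{E}}(x,y;x)\big]$. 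The first bracket is controlled by $C\vartheta_2|x-y|^{1-d+\tau}$ via Lemma~\ref{lemma:4.4}, and the remaining term, being the difference of gradients of constant-coefficient fundamental solutions (frozen at the same point $x$), is a genuine Calder\'on--Zygmund kernel: by Corollary (the estimate $\eqref{pri:4.11}$ with $l=1$, interpreting $\nabla\mathbf{E}(\cdot,0;x)$ as $\nabla_1$ and exploiting the translation invariance $\mathbf{E}(x,y;x)=\mathbf{E}(x-y,0;x)$) it is bounded by $C\vartheta_1|x-y|^{1-d}$, and differentiating once more in either variable (again admissible for constant-coefficient kernels, as in Lemma~\ref{lemma:3.2}) gives the H\"ormander-type smoothness bound $C\vartheta_1|x-y|^{-d}$.

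First I would record that on $\partial\Omega$ with $\operatorname{diam}(\Omega)\le 1/4$ we always have $|x-y|\le 1/2<1$, so all the small-scale estimates $\eqref{pri:4.14}$ and $\eqref{pri:4.11}$ apply with no truncation needed; moreover the extra $|x-y|^{\tau}$ factor in the $\Pi$-difference is harmless and can be absorbed (it only improves integrability). Thus I would split $K_i = K_i' + K_i''$ where $K_i' = \Pi_{\mathcal{L}}^i-\Pi_{\bar{\mathcal{L}}}^i$ is a ``nice'' kernel with $|K_i'(x,y)|\le C\vartheta_2|x-y|^{1-d+\tau}$ and $K_i'' = \nabla_i\mathbf{E}(x,y;x)-\nabla_i\bar{\mathbf{E}}(x,y;x)$ satisfies the standard Calder\'on--Zygmund estimates
\begin{equation*}
|K_i''(x,y)|\le \frac{C\vartheta_1}{|x-y|^{d-1}}, \qquad
|K_i''(x,y)-K_i''(z,y)| + |K_i''(x,y)-K_i''(x,w)| \le \frac{C\vartheta_1\,(|x-z|+|y-w|)}{|x-y|^{d}}
\end{equation*}
whenever $|x-y|>2(|x-z|+|y-w|)$. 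The operator with kernel $K_i'$ is bounded on $L^p(\partial\Omega)$ for all $1<p<\infty$ by the fractional-integral (Schur-test) estimate, with norm $\le C\vartheta_2$; the operator with kernel $K_i''$ is bounded on $L^2(\partial\Omega)$ by the Coifman--McIntosh--Meyer theorem for variable-coefficient (non-convolution) Calder\'on--Zygmund kernels on Lipschitz graphs (the odd-kernel structure and the frozen-point dependence being exactly of the type treated there, cf.\ the treatment of $\mathcal{T}_A$ in \cite{GaoW,SZW24}), hence on all $L^p$, $1<p<\infty$, by standard Calder\'on--Zygmund extrapolation, again with norm $\le C\vartheta_1\le C\vartheta_2$. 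Adding the two contributions yields $\eqref{pri:5.11}$, and the same computation applied with $i=2$ (using the second lines of $\eqref{pri:4.14}$, $\eqref{pri:4.3}$--$\eqref{pri:4.4}$, and the symmetry ${^*\mathbf{\Gamma}}=[\mathbf{\Gamma}]^t$) gives the second estimate.

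The main obstacle, as I see it, is not the $K_i'$ part (which is an elementary Schur estimate) but verifying that $K_i''$ genuinely falls under an available $L^2$-boundedness theorem: the kernel $\nabla_i\mathbf{E}(x,y;x)-\nabla_i\bar{\mathbf{E}}(x,y;x)$ depends on the ``frozen'' point $x$ in a H\"older-continuous but non-smooth way through the coefficients $A(x)$, $V(x)$, etc., so one must cite (or adapt) the right version of the Coifman--McIntosh--Meyer / David--Journ\'e machinery for kernels of the form $\Phi(x,\frac{x-y}{|x-y|})|x-y|^{1-d}$ with $\Phi$ merely H\"older in the first slot and smooth (even) on the sphere. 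This is exactly the mechanism already invoked for $\mathcal{T}_{A}$ in \cite[Theorem 3.1]{SZW24} and $\mathcal{T}_{\widehat{A}}$ in \cite[Theorem 1.1]{GaoW}, so I would phrase the argument as ``by the same reasoning as in \cite[Theorem 3.1]{SZW24}, applied to the difference kernel,'' thereby reducing the proof to the kernel bounds of Lemma~\ref{lemma:4.4} and Corollary (estimate $\eqref{pri:4.11}$). I expect the write-up to be short: set up the splitting, quote the Schur estimate for $K_i'$, quote the Calder\'on--Zygmund theory for $K_i''$ with constant $\vartheta_1$, note $\vartheta_1\le\vartheta_2$, and conclude.
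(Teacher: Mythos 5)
Your overall structure tracks the paper's argument closely: you split the kernel as $K_i = K_i' + K_i''$ with $K_i' = \Pi_{\mathcal{L}}^i - \Pi_{\bar{\mathcal{L}}}^i$ handled by a Schur/fractional-integral estimate via $\eqref{pri:4.14}$ and $K_i''$ the frozen-coefficient remainder. The $K_i'$ part is fine.

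The gap is in the treatment of $K_i'' = \nabla_i\mathbf{E}(x,y;x) - \nabla_i\bar{\mathbf{E}}(x,y;x)$. You assert that this is a kernel ``of the form $\Phi(x,\frac{x-y}{|x-y|})|x-y|^{1-d}$ with $\Phi$ merely H\"older in the first slot and smooth (even) on the sphere'' and invoke the Coifman--McIntosh--Meyer / [SZW24, Theorem 3.4] machinery for frozen-coefficient odd kernels. But $\mathbf{E}(\cdot,0;x)$ is the fundamental solution of the \emph{full} constant-coefficient operator including lower-order terms, not just $-\mathrm{div}(A(x)\nabla)$, so it is \emph{not} homogeneous of degree $2-d$ in $x-y$ and $\nabla\mathbf{E}$ is \emph{not} odd. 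Concretely, $\mathbf{E}(\cdot,0;x) = \mathbf{E}_A(\cdot,0;x) + \mathbf{R}(\cdot,0;x)$ in the notation of $\eqref{eq:4.3}$, where only $\mathbf{E}_A$ has the homogeneous/odd structure required by CMM and $\mathbf{R}$ is a less singular but non-homogeneous remainder. The size estimate from $\eqref{pri:4.11}$ and the H\"ormander condition hold for the sum, but size $+$ H\"ormander alone does not yield $L^2$ boundedness of a non-convolution singular integral; that is precisely the input that the CMM-type theorem (via the odd homogeneous structure) supplies, and that input is unavailable for $K_i''$ as you wrote it. The fix is the further decomposition the paper makes: write $K_i'' = \big[\nabla_i\mathbf{E}_A(x-y,0;x) - \nabla_i\mathbf{E}_{\bar{A}}(x-y,0;x)\big] + \big[\nabla_i\mathbf{R}(x-y,0;x) - \nabla_i\bar{\mathbf{R}}(x-y,0;x)\big]$, apply [SZW24, Theorem 3.4]/[MMMT, Proposition 1.2] to the first bracket (where the $\Phi(x,\omega)|z|^{1-d}$ structure is genuine and the $L^p$ norm is $\leq C\|A-\bar{A}\|_{L^\infty}$), and apply a Schur estimate to the second bracket, which by $\eqref{pri:4.13}$ is bounded by $C\vartheta_1|x-y|^{2-d}$ and hence is only weakly singular. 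This last piece is exactly why the paper proves $\eqref{pri:4.13}$; your proposal does not use that estimate and in fact does not need it only because the $L^2$ step is being asserted rather than established.
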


\begin{remark}\label{remark:4.1}
\emph{If $\text{diam}(\Omega) = R_0> 1/4$, the constant $C$ will additionally depend on $R_0$.}
\end{remark}

\begin{proof}
The main idea may be found in \cite[Theorem 3.4]{SZW24}, and we provide a proof
for the sake of the completeness. In view of the notation given in $\eqref{eq:4.4}$ and $\eqref{eq:4.3}$,
it is not hard to see that
\begin{equation*}
\begin{aligned}
\nabla_1\mathbf{\Gamma}_{\mathcal{L}}(x,y)
-\nabla_1\mathbf{\Gamma}_{\bar{\mathcal{L}}}(x,y)
&= \Pi_{\mathcal{L}}^1(x,y) - \Pi_{\bar{\mathcal{L}}}^1(x,y)
+ \nabla_1\mathbf{E}(x-y,0;x) - \nabla_1\bar{\mathbf{E}}(x-y,0;x) \\
&= \Pi_{\mathcal{L}}^1(x,y) - \Pi_{\bar{\mathcal{L}}}^1(x,y)
+ \nabla_1\mathbf{E}_{A}(x-y,0;x) - \nabla_1\mathbf{E}_{\bar{A}}(x-y,0;x)\\
&+ \nabla_1\mathbf{R}(x-y,0;x) - \nabla_1\bar{\mathbf{R}}(x-y,0;x).
\end{aligned}
\end{equation*}

With abuse of notation the integral operators with the kernels
$\Pi_{\mathcal{L}}^1(x,y) - \Pi_{\bar{\mathcal{L}}}^1(x,y)$ and
$\nabla_1\mathbf{R}(x-y,0;x) - \nabla_1\bar{\mathbf{R}}(x-y,0;x)$ are denoted by
$\Pi_{\mathcal{L}}^1-\Pi_{\bar{\mathcal{L}}}^1$
and $\nabla_1\mathbf{R} - \nabla_1\bar{\mathbf{R}}$, respectively.
Then it follows from the estimate $\eqref{pri:4.14}$ that
\begin{equation}\label{f:5.18}
\|\big(\Pi_{\mathcal{L}}^1 - \Pi_{\bar{\mathcal{L}}}^1\big)(f)\|_{L^p(\partial\Omega)}
\leq C\vartheta_2\|f\|_{L^p(\partial\Omega)},
\end{equation}
while on account of $\eqref{pri:4.13}$ we have
\begin{equation}\label{f:5.19}
\|\big(\nabla_1\mathbf{R} - \nabla_1\bar{\mathbf{R}}\big)(f)\|_{L^p(\partial\Omega)}
\leq C\vartheta_1\|f\|_{L^p(\partial\Omega)}.
\end{equation}
Moreover, in terms of Lemma $\ref{lemma:3.6}$ it is known that the integral operator
$\nabla_1\mathbf{R}- \nabla_1\bar{\mathbf{R}}$ is compact on $L^p(\partial\Omega)$ for
$1<p<\infty$.

For the singular integral operator with kernel
$\nabla_1\mathbf{E}_{A}(x-y,0;x)-\nabla_1\mathbf{E}_{A}(x-y,0;x)$ on $L^p(\partial\Omega)$, it has
been known in \cite[Theorem 3.4]{SZW24} that
the $L^p$-bounds may be given by $C\|A-\bar{A}\|_{L^\infty(\mathbb{R}^d)}$ and we also refer the
reader to \cite[Proposition 1.2]{MMMT} for more details. Thus this together with
$\eqref{f:5.18}$ and $\eqref{f:5.19}$ finally leads to
the first line of the desired estimate $\eqref{pri:5.11}$, and the second line will be established
by the same token.
We have completed the proof.
\end{proof}

\begin{definition}\label{def:4.1}
Given $f\in L^p(\partial\Omega;\mathbb{R}^m)$ with $1<p<\infty$, the single layer potential is
defined by
\begin{equation}\label{eq:4.5}
 \mathcal{S}_{\Theta}(f)(x) = \int_{\partial\Omega} \mathbf{\Gamma}_{\Theta}(x,y)f(y)dS(y),
\end{equation}
and the double layer potential is in the form of
\begin{equation}\label{eq:4.6}
\mathcal{D}_{\Theta}(f)(x) = \int_{\partial\Omega}
\frac{\partial}{\partial\nu_{\Theta}^*(y)}
\mathbf{\Gamma}_{\Theta}(x,y)f(y)dS(y).
\end{equation}
Here the subscript $\Theta$ may be fixed by $\mathcal{L}$ or $A$,
which indicates what kind of the operator that
the single or double layer potentials are associated with, and
we may have the notation
\begin{equation}\label{eq:4.9}
\begin{aligned}
\partial/\partial\nu_{A}^*(y)
&= n(y)A^*(y)\nabla_2 \\
\partial/\partial\nu_{\mathcal{L}}^*(y)
&= n(y)\cdot\big[A^*(y)\nabla_2 + V^*(y)\big]
\end{aligned}
\end{equation}
for a.e. $y\in\partial\Omega$ in the double layer potentials $\eqref{eq:4.6}$.
\end{definition}

\begin{thm}\label{thm:5.1}
Suppose that the coefficients of $\mathcal{L}$ satisfy the conditions
$\eqref{a:1}$, $\eqref{a:2}$, $\eqref{a:3}$, and $\eqref{a:4}$ with
$\lambda\geq\max\{\lambda_0,\mu\}$.
Let $f\in L^p(\partial\Omega;\mathbb{R}^m)$ with $1<p<\infty$. Then for a.e. $P\in\partial\Omega$,
there holds
\begin{equation}\label{Id:5.1}
\big(\nabla \mathcal{S}_{\Theta}(f)\big)_{\pm}(P)
= \pm\frac{1}{2} n(P)\mathbf{H}(n(P))f(P)
+ \emph{p.v.}\int_{\partial\Omega} \nabla_1
\mathbf{\Gamma}_{\Theta}(P,y)f(y)dS(y)
\end{equation}
for $\Theta = A, \mathcal{L}$, respectively,
where $\mathbf{H}(n) = (a_{ij}^{\alpha\beta}n_in_j)^{-1}_{m\times m}$.
Moreover, we have
\begin{equation}\label{Id:5.2}
\Big(\frac{\partial \mathcal{S}_{\mathcal{L}}(f)}{\partial \nu_{\mathcal{L}}}\Big)_{\pm} =
\Big(\pm\frac{1}{2}I + \mathcal{K}_{\mathcal{L}}\Big)(f)
\quad\emph{on}~\partial\Omega,
\end{equation}
where the integral operator $\mathcal{K}_{\mathcal{L}}$ is defined by
\begin{equation*}
\small
 \mathcal{K}_{\mathcal{L}}(f)(P)
 =\emph{p.v.}\int_{\partial\Omega} \frac{\partial}{\partial\nu_{\mathcal{L}}(P)}
 \Big\{\mathbf{\Gamma}_{\mathcal{L}}(P,y)\Big\}f(y) dS(y).
\end{equation*}
and the conormal derivatives are given by
$ \partial/\partial \nu_{\mathcal{L}}(P) = n(P)A(P)\nabla_1 + n(P)V(P)$.
\end{thm}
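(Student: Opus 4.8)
\textbf{Proof proposal for Theorem \ref{thm:5.1}.}
The plan is to reduce the jump relations for $\mathcal{S}_{\mathcal{L}}$ to the already-known jump relations for the constant-coefficient layer potential $\mathcal{S}_A$, exactly in the spirit of Lemma \ref{lemma:3.1}. The key tool is the comparing Lemma \ref{lemma:4.7}, which tells us that
$\nabla_1\mathbf{\Gamma}_{\mathcal{L}}(x,y)-\nabla_1\mathbf{\Gamma}_A(x,y)$ is only \emph{weakly singular} of order $|x-y|^{1-d+\tau}$, so the ``difference kernel'' generates an operator with no jump. First I would fix $P\in\partial\Omega$, set $r=|x-P|$ for $x\in\Lambda_{N_0}^{\pm}(P)$, and estimate the quantity
$\big|\nabla\mathcal{S}_{\mathcal{L}}(f)(x)-\nabla\mathcal{S}_A(f)(x)-\mathcal{T}^{1,\mathrm{tr}}_{\mathcal{L}}(f)(P)+\mathcal{T}^{1,\mathrm{tr}}_A(f)(P)\big|$, splitting the boundary integral into $|y-P|>tr$ and $|y-P|\le tr$ for a parameter $t>0$ to be optimized. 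On the far part one uses the mean value theorem together with the decay bounds $\eqref{pri:2.12}$ for $\nabla_1\nabla_2\mathbf{\Gamma}_{\mathcal{L}}$ (and the analogous bound for $\mathbf{\Gamma}_A$, known from \cite{SZW24}), plus $|P-y|\le(N_0+1)|z-y|$; on the near part one uses the weakly-singular estimate $\eqref{pri:5.8}$ and $|x-y|\ge\operatorname{dist}(x,\partial\Omega)\ge r/N_0$. This yields a bound $C\{t^{-1}+t^{d-1+\tau}r^{\tau}\}\,\mathrm{M}_{\partial\Omega}(f)(P)$; choosing $t$ as an appropriate power of $1/r$ makes the right-hand side $o(1)$ as $r\to0$. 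Hence the nontangential limits of $\nabla\mathcal{S}_{\mathcal{L}}(f)$ and of $\nabla\mathcal{S}_A(f)+\mathcal{T}^{1}_{\mathcal{L}}(f)-\mathcal{T}^{1}_A(f)$ coincide $\Omega$-inside and $\Omega$-outside.

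Next I would invoke the known identity for the constant-coefficient operator $L=-\operatorname{div}(A\nabla)$,
\[
\big(\nabla\mathcal{S}_A(f)\big)_{\pm}(P)=\pm\tfrac12 n(P)\mathbf{H}(n(P))f(P)+\mathrm{p.v.}\!\int_{\partial\Omega}\nabla_1\mathbf{\Gamma}_A(P,y)f(y)\,dS(y),
\]
with $\mathbf{H}(n)=(a_{ij}^{\alpha\beta}n_in_j)^{-1}_{m\times m}$, which is \cite[Lemma 1.4]{GaoW} / the constant-coefficient case in \cite{SZW24}. The existence of the principal value $\mathcal{T}^1_{\mathcal{L}}(f)(P)$ a.e. and its $L^p$-boundedness are supplied by Theorem \ref{thm:5.2}. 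Adding these facts to the convergence established in the previous step gives
$\big(\nabla\mathcal{S}_{\mathcal{L}}(f)\big)_{\pm}(P)=\pm\tfrac12 n(P)\mathbf{H}(n(P))f(P)+\mathrm{p.v.}\int_{\partial\Omega}\nabla_1\mathbf{\Gamma}_{\mathcal{L}}(P,y)f(y)\,dS(y)$, which is $\eqref{Id:5.1}$. The identity $\eqref{Id:5.2}$ then follows purely formally: by definition the conormal derivative $\partial/\partial\nu_{\mathcal{L}}(P)=n(P)A(P)\nabla_1+n(P)V(P)$ applied to $\mathcal{S}_{\mathcal{L}}(f)$ picks up, from the jump term, $n(P)A(P)\cdot(\pm\tfrac12)n(P)\mathbf{H}(n(P))f(P)=\pm\tfrac12 f(P)$ because $\big(n(P)A(P)n(P)\big)\mathbf{H}(n(P))=(a_{ij}^{\alpha\beta}n_in_j)\,(a_{ij}^{\alpha\beta}n_in_j)^{-1}=I$, while the $V(P)$-term and the principal-value term combine into $\mathcal{K}_{\mathcal{L}}(f)(P)$; one only needs to note that the contribution of $n(P)V(P)$ applied to $\mathcal{S}_{\mathcal{L}}(f)$ converges nontangentially (no jump), which is immediate since $\mathbf{\Gamma}_{\mathcal{L}}(P,y)$ is only weakly singular in $|P-y|$.

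The main obstacle I anticipate is the first step — making the optimization in $t$ genuinely produce a negative power of $r$. Here the low regularity is essential: because $\nabla_1\mathbf{\Gamma}_{\mathcal{L}}-\nabla_1\mathbf{\Gamma}_A$ is only $C^{\tau}$-better (order $1-d+\tau$) rather than one full order better, the near-field term is $O(t^{d-1+\tau}r^{\tau})$ rather than $O(t^{d-1}r)$; one must check that the balance $t^{-1}\asymp t^{d-1+\tau}r^{\tau}$, i.e. $t\asymp r^{-\tau/(d+\tau)}$, still gives $t^{-1}=r^{\tau/(d+\tau)}\to0$, which it does for $\tau>0$. A second, more bookkeeping-type subtlety is that $\mathcal{T}^1_{\mathcal{L}}$ and $\mathcal{T}^1_A$ individually contain a (borderline) $|y-P|^{1-d}$ singularity, so the comparison has to be run at the level of the \emph{difference} of truncated operators $\mathcal{T}^{1,tr}_{\mathcal{L}}-\mathcal{T}^{1,tr}_A$ (as written above) rather than on each separately; this is exactly the device used in Lemma \ref{lemma:3.1}, and no new idea is needed beyond carrying the extra lower-order kernels of $\mathcal{L}$ (the $V$, $B$, $c$ terms) through the estimates, which are all dominated by the leading-order computation since they are strictly less singular.
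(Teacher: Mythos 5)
Your overall strategy — reduce the jump relation for $\mathcal{S}_{\mathcal{L}}$ to the known one for $\mathcal{S}_A$ by showing that the difference kernel $\nabla_1\mathbf{\Gamma}_{\mathcal{L}}-\nabla_1\mathbf{\Gamma}_A$ is weakly singular of order $|x-y|^{1-d+\tau}$ via Lemma \ref{lemma:4.7} — is exactly the paper's strategy, and your treatment of the identity \eqref{Id:5.2} (reading the jump off $n(P)A(P)\nabla_1$ and noting that the $n(P)V(P)$ contribution has no jump since $\mathbf{\Gamma}_{\mathcal{L}}(P,y)$ is only $|P-y|^{2-d}$-singular) is correct and matches the paper. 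Where you diverge is in \emph{how} you pass to the nontangential limit: you transplant the quantitative $t$-splitting argument from the proof of Lemma \ref{lemma:3.1}, estimating
$\big|\nabla\mathcal{S}_{\mathcal{L}}(f)(x)-\nabla\mathcal{S}_A(f)(x)-\mathcal{T}^{1,\mathrm{tr}}_{\mathcal{L}}(f)(P)+\mathcal{T}^{1,\mathrm{tr}}_A(f)(P)\big|$
and then balancing $t$; the paper instead invokes Lebesgue's dominated convergence directly, observing that $F_n(y)=\big[\nabla_1\mathbf{\Gamma}_{\mathcal{L}}(x_n,y)-\nabla_1\mathbf{\Gamma}_A(x_n,y)\big]f(y)$ converges pointwise and is dominated by $C|P-y|^{1-d+\tau}|f(y)|+C|f(y)|$ uniformly in $n$, thanks to \eqref{pri:5.8} and \eqref{f:5.16}. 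The paper's route is cleaner: it needs no regularity of the kernels beyond what $(\mathrm{H}_2)$ and the comparing lemma already provide, whereas yours additionally requires a modulus of continuity in the $x$-slot of the kernels, which is a nontrivial ingredient (see below). The upside of your route is that it is quantitative — it gives a convergence \emph{rate} $O(r^\gamma)$ for the nontangential approach — which the dominated convergence argument does not.

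There is one genuine gap in your far-field estimate that needs fixing. You propose to use ``the mean value theorem together with the decay bounds \eqref{pri:2.12} for $\nabla_1\nabla_2\mathbf{\Gamma}_{\mathcal{L}}$.'' Two problems: (i) the mean-value theorem in the $x$-variable would call for a pointwise bound on $\nabla_x^2\mathbf{\Gamma}_{\mathcal{L}}(\cdot,y)$, i.e. $\nabla_1\nabla_1$, not the mixed derivative $\nabla_1\nabla_2$ controlled by \eqref{pri:2.12}/\eqref{pri:2.13}; and (ii) under the hypothesis $A\in C^{0,\tau}$, the map $x\mapsto\mathbf{\Gamma}_{\mathcal{L}}(x,y)$ is only $C^{1,\tau}$ away from $y$, so a uniform pointwise bound on $\nabla_x^2\mathbf{\Gamma}_{\mathcal{L}}$ is simply not available — this is precisely the low-regularity obstruction the paper is designed around. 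The correct substitute is the interior Schauder estimate: $\mathcal{L}(\mathbf{\Gamma}_{\mathcal{L}}(\cdot,y))=0$ on $B(x,|x-y|/2)$, so $[\nabla_x\mathbf{\Gamma}_{\mathcal{L}}(\cdot,y)]_{C^{0,\tau}(B(x,|x-y|/4))}\le C|x-y|^{1-d-\tau}$, giving
$\big|\nabla_1\mathbf{\Gamma}_{\mathcal{L}}(x,y)-\nabla_1\mathbf{\Gamma}_{\mathcal{L}}(P,y)\big|\le Cr^{\tau}|P-y|^{1-d-\tau}$
on the far region (and the same for $\mathbf{\Gamma}_A$, which also has only $C^{0,\tau}$ leading coefficient). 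Feeding this into your sum over dyadic annuli produces a far-field bound of size $C\,t^{-\tau}\mathrm{M}_{\partial\Omega}(f)(P)$, not $C\,t^{-1}$. Your near-field bound should correspondingly read $Ct^{d-1}r^{\tau}\mathrm{M}_{\partial\Omega}(f)(P)$ (not $t^{d-1+\tau}r^{\tau}$): using $|x-y|\ge r/N_0$ and \eqref{pri:5.8}, $\int_{|y-P|\le tr}|x-y|^{1-d+\tau}|f|\,dS\le Cr^{1-d+\tau}(tr)^{d-1}\mathrm{M}_{\partial\Omega}(f)(P)$. Balancing $t^{-\tau}\asymp t^{d-1}r^{\tau}$ gives $t\asymp r^{-\tau/(d-1+\tau)}$ and hence $t^{-\tau}=r^{\tau^2/(d-1+\tau)}\to 0$, so the argument still closes, with a slightly weaker (but still positive) rate than you predicted. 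With this repair your proposal is a valid and genuinely different proof.
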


\begin{proof}
We first mention that in the case of $\Theta = A$,
the identities $\eqref{Id:5.1}$ and $\eqref{Id:5.2}$
have been well known in \cite[Theorem 4.4]{SZW24}, while we focus on the case $\Theta = \mathcal{L}$ here.
The main idea may be found in \cite[Lemma 2.3]{SZW23} or \cite[Theorem 4.4]{SZW24}, and we provide
a proof for the sake of completeness.

In fact, the key ingredient is to show
\begin{equation}\label{f:5.17}
\big[\nabla \mathcal{S}_{\mathcal{L}}(f)\big]_{\pm} -
\big[\nabla \mathcal{S}_{A}(f)\big]_{\pm}
= \text{p.v.}\int_{\partial\Omega} \big[\nabla_1 \mathbf{\Gamma}_{\mathcal{L}}(\cdot,y)
-\nabla_1 \mathbf{\Gamma}_{A}(\cdot,y)
\big]f(y)dS(y)
\quad \text{a.e.~on~}\partial\Omega,
\end{equation}
which will be completed by Lebesgue's dominated convergence theorem. Let $P\in\partial\Omega$ be
a Lebesgue point of $f$, and let $x_n\in \Lambda^{\pm}_{N_0}(P)$
such that $x_n\to P$ as $n\to\infty$. Thus we may write
\begin{equation*}
 F_n(y) = \big[\nabla_1\mathbf{\Gamma}_{\mathcal{L}}(x_n,y) -
 \nabla_1 \mathbf{\Gamma}_{A}(x_n,y)\big]f(y)
\quad\text{and}\quad
 F_0(y) = \big[\nabla_1\mathbf{\Gamma}_{\mathcal{L}}(P,y) -
 \nabla_1 \mathbf{\Gamma}_{A}(P,y)\big]f(y)
\end{equation*}
and its clear to see that $F_n \to F_0$ a.e. on $\partial\Omega$. Moreover,
for any $x_n\in \Lambda_{N_0}^\pm(P)$ we obtain the following fact
\begin{equation}\label{f:5.16}
|P-y|\leq |P-x_n| + |x_n-y| \leq (N_0 + 1)|x_n-y|,
\end{equation}
and this implies
\begin{equation*}
\int_{\partial\Omega}|F_n|dS\leq
\int_{y\in\partial\Omega\atop
|P-y|\leq 1}\frac{|f(y)|}{|P-y|^{d-1-\tau}}dS(y) +
C\int_{\partial\Omega
}|f(y)|dS(y)
\leq C\mathrm{M}_{\partial\Omega}(f)(P) + C\|f\|_{L^p(\partial\Omega)}<\infty,
\end{equation*}
where $C$ is independent of $n$, and we employ the estimate $\eqref{pri:5.8}$ coupled with
$\eqref{f:5.16}$ in the first inequality. Thus the identity $\eqref{Id:5.1}$ for the case
$\Theta =\mathcal{L}$ will follows from $\eqref{f:5.17}$ and \cite[Theorem 4.4]{SZW24} immediately.
By $\eqref{Id:5.1}$ and the definition of
conormal derivatives of $\mathcal{L}$, it is not hard to see
the identity $\eqref{Id:5.2}$, and this ends the whole proof.
\end{proof}

\begin{thm}\label{thm:4.4}
Assume the same conditions as in Theorem $\ref{thm:5.1}$ with an additional symmetry condition
$A^* = A$.
Let $f\in L^p(\partial\Omega;\mathbb{R}^m)$ with $1<p<\infty$. Then we have
\begin{equation}\label{eq:4.8}
\big(\mathcal{D}_{\Theta}(f)\big)_{\pm} =
\Big(\mp\frac{1}{2}I + \mathcal{K}_{\Theta^*}\Big)(f)
\quad\emph{on}~\partial\Omega
\end{equation}
for $\Theta = \mathcal{L},A$,
where $\mathcal{K}_{\Theta^*}$ is given by
\begin{equation*}
\mathcal{K}_{\Theta^*}(f)(P)
= \emph{p.v.}\int_{\partial\Omega}
\frac{\partial}{\partial \nu_\Theta^*(y)}
\Big\{\mathbf{\Gamma}_\Theta(P,y)\Big\}f(y)dS(y)
\end{equation*}
for a.e. $P\in\partial\Omega$, and the notation $\partial/\partial\nu_{\Theta}^*$
is shown in $\eqref{eq:4.9}$
for $\Theta = A,\mathcal{L}$, respectively.
Moreover, if we define the following operator by
\begin{equation}\label{eq:5.4}
T(f) = \int_{\partial\Omega}n(y)\cdot\big(B^*(y)-V^*(y)\big)
\mathbf{\Gamma}_{\mathcal{L}}(x,y)f(y)dS(y),
\end{equation}
and then $\mathcal{K}_{\mathcal{L}^*}+T$ is the dual operator of
$\mathcal{K}_{\mathcal{L}}$, denoted by $\mathcal{K}_{\mathcal{L}}^*$.
\end{thm}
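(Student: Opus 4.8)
The statement contains two assertions: the jump formula $\eqref{eq:4.8}$ for the double layer potential, and the duality $\mathcal{K}_{\mathcal{L}}^{*}=\mathcal{K}_{\mathcal{L}^{*}}+T$. For $\Theta=A$ the jump formula is already \cite[Theorem 4.4]{SZW24}, so the plan for the first assertion is to reduce the case $\Theta=\mathcal{L}$ to the case $\Theta=A$ by a comparison argument, entirely in the spirit of Theorem $\ref{thm:5.1}$ and Lemma $\ref{lemma:3.8}$.

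First I would split the double layer kernel according to $\partial/\partial\nu_{\mathcal{L}}^{*}(y)=n(y)A^{*}(y)\nabla_{2}+n(y)V^{*}(y)$, writing $\mathcal{D}_{\mathcal{L}}(f)=\mathcal{D}_{\mathcal{L}}^{(1)}(f)+\mathcal{D}_{\mathcal{L}}^{(2)}(f)$, where $\mathcal{D}_{\mathcal{L}}^{(1)}$ carries the kernel $n(y)A^{*}(y)\nabla_{2}\mathbf{\Gamma}_{\mathcal{L}}(x,y)$ and $\mathcal{D}_{\mathcal{L}}^{(2)}$ carries $n(y)V^{*}(y)\mathbf{\Gamma}_{\mathcal{L}}(x,y)$. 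By $\eqref{pri:2.3.3}$ the kernel of $\mathcal{D}_{\mathcal{L}}^{(2)}$ is $O(|x-y|^{2-d})$, so this term extends continuously up to $\partial\Omega$ from both sides with no jump, its boundary value being the absolutely convergent integral. For $\mathcal{D}_{\mathcal{L}}^{(1)}$ I would subtract the $A$-double layer $\mathcal{D}_{A}$: by the comparing Lemma $\ref{lemma:4.7}$, estimate $\eqref{pri:5.8}$, the difference has kernel of size $O(|x-y|^{1-d+\tau})$, i.e. weakly singular of order $d-1-\tau<d-1$, hence it too contributes no jump and a continuous boundary value. Consequently $\big(\mathcal{D}_{\mathcal{L}}(f)\big)_{\pm}=\big(\mathcal{D}_{A}(f)\big)_{\pm}+(\text{continuous terms evaluated at }P)$, and combining the known jump $\big(\mathcal{D}_{A}(f)\big)_{\pm}=(\mp\frac{1}{2}I+\mathcal{K}_{A^{*}})(f)$ with the fact that those continuous terms, restricted to $\partial\Omega$, are precisely $\mathcal{K}_{\mathcal{L}^{*}}(f)-\mathcal{K}_{A^{*}}(f)$ yields $\eqref{eq:4.8}$ for $\Theta=\mathcal{L}$; the passage to nontangential limits is justified by Lebesgue's dominated convergence theorem, the majorant being furnished by $\eqref{pri:5.8}$ and $\eqref{f:5.16}$ exactly as in Lemma $\ref{lemma:3.1}$ and Theorem $\ref{thm:5.1}$.

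For the duality I would test $\mathcal{K}_{\mathcal{L}}$ against a pair $f,g\in C(\partial\Omega;\mathbb{R}^{m})$, write out $\int_{\partial\Omega}f\cdot\mathcal{K}_{\mathcal{L}}(g)\,dS$ using the kernel $n_{i}(P)a_{ij}(P)\partial_{1,j}\mathbf{\Gamma}_{\mathcal{L}}(P,y)+n_{i}(P)V_{i}(P)\mathbf{\Gamma}_{\mathcal{L}}(P,y)$, and interchange the two surface integrals (legitimate after truncating the singular kernel, using the $L^{p}$ bounds of Theorem $\ref{thm:5.2}$ and the size bound $\eqref{pri:2.3.3}$); this exhibits $\mathcal{K}_{\mathcal{L}}^{*}$ as the operator with the transposed kernel. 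Then I would invoke the fundamental relation ${^{*}\mathbf{\Gamma}}_{\mathcal{L}}(x,y)=[\mathbf{\Gamma}_{\mathcal{L}}(y,x)]^{t}$ of Theorem $\ref{thm:2.3.1}$, which turns $[\mathbf{\Gamma}_{\mathcal{L}}(P,y)]^{t}$ into $\mathbf{\Gamma}_{\mathcal{L}^{*}}(y,P)$ and $\partial_{1,j}[\mathbf{\Gamma}_{\mathcal{L}}(P,y)]^{t}$ into $\partial_{2,j}\mathbf{\Gamma}_{\mathcal{L}^{*}}(y,P)$, together with the symmetry $A=A^{*}$ (i.e. $a_{ij}^{\alpha\beta}=a_{ji}^{\beta\alpha}$) to match the leading coefficients. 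After this substitution the transposed leading term is exactly the leading part of the kernel of $\mathcal{K}_{\mathcal{L}^{*}}$; the transposed zero-order term, however, carries the coefficient $n(y)B^{*}(y)=n(y)V^{t}(y)$, whereas the one appearing in $\mathcal{K}_{\mathcal{L}^{*}}$ carries $n(y)V^{*}(y)$, and the residual $n(y)\big(B^{*}(y)-V^{*}(y)\big)$ multiplied against $\mathbf{\Gamma}_{\mathcal{L}}$ is precisely the kernel of $T$. Since $T$ is weakly singular ($O(|x-y|^{2-d})$, hence bounded on every $L^{p}(\partial\Omega;\mathbb{R}^{m})$ by fractional integration) and $\mathcal{K}_{\mathcal{L}^{*}}$ is of Calder\'on--Zygmund type (bounded by Theorem $\ref{thm:5.2}$), the sum $\mathcal{K}_{\mathcal{L}^{*}}+T$ is a well-defined operator on $L^{p}(\partial\Omega;\mathbb{R}^{m})$, and the preceding identity exhibits it as $\mathcal{K}_{\mathcal{L}}^{*}$.

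The routine ingredients are the potential-theoretic facts used above (continuity across $\partial\Omega$ of weakly singular integrals, dominated convergence, $L^{p}$ boundedness), all of the kind already carried out in Sections $\ref{sec:3}$ and $\ref{sec:4}$. The delicate point, which I would treat with care, is the index and coefficient bookkeeping in the duality step: one must follow how $V,B$ and their adjoint counterparts $V^{*}=B^{t},B^{*}=V^{t}$ recombine under transposition, and verify that the hypothesis $A=A^{*}$ is exactly what forces the singular leading parts of $\mathcal{K}_{\mathcal{L}}^{*}$ and $\mathcal{K}_{\mathcal{L}^{*}}$ to coincide, so that the genuine difference between them is only the weakly singular operator $T$ with the stated kernel. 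The scalar prototype of this cancellation, with all lower-order terms absent, is the computation $\eqref{eq:3.3}$ in Lemma $\ref{lemma:3.8}$.
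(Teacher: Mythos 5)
Your proposal is correct in outline and would deliver the theorem, but the first half takes a genuinely different route from the paper's, which is worth flagging.

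For the jump formula with $\Theta=\mathcal{L}$, you peel off the weakly singular $V^{*}$--part $\mathcal{D}_{\mathcal{L}}^{(2)}$ and then compare the $A^{*}$--part $\mathcal{D}_{\mathcal{L}}^{(1)}$ with the homogeneous double layer $\mathcal{D}_{A}$ via the second estimate in $\eqref{pri:5.8}$, so that the jump is inherited directly from the jump of $\mathcal{D}_{A}$ (cited from \cite[Theorem 4.6]{SZW24}). The paper instead stays inside the operator $\mathcal{L}$: it uses the frozen--coefficient estimates $\eqref{pri:4.3}$--$\eqref{pri:4.4}$ together with $\nabla_x\mathbf{E}(x-y,0;x)=-\nabla_y\mathbf{E}(x-y,0;x)$ to deduce that $|\nabla_{1}\mathbf{\Gamma}_{\mathcal{L}}(x,y)+\nabla_{2}\mathbf{\Gamma}_{\mathcal{L}}(x,y)|\lesssim|x-y|^{1-d+\tau}$, and then writes $\mathcal{D}_{\mathcal{L}}(f)=-\nabla\mathcal{S}_{\mathcal{L}}(nA^{*}f)+(\text{weakly singular terms})$, reducing the jump to the already--proved single--layer identity $\eqref{Id:5.1}$. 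Both reductions rest on the same ingredients (frozen coefficients, weakly singular majorants, dominated convergence), so your version is essentially a substitute proof rather than a gap; it is slightly less self-contained (it imports the $\mathcal{D}_{A}$ jump rather than re-deriving it from $\eqref{Id:5.1}$), but perfectly valid. Your reference to Lemma $\ref{lemma:3.8}$ as the scalar prototype should be tempered: that lemma already treats the $m\times m$ constant-coefficient case, and what makes it clean there is translation invariance $\nabla_{x}\mathbf{\Gamma}_{0}(x-y)=-\nabla_{y}\mathbf{\Gamma}_{0}(x-y)$, which is exactly the feature the frozen-coefficient comparison restores approximately.

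For the duality, your plan matches the paper's rather terse argument, and your identification of the operator $T$ as the zero-order coefficient mismatch is correct (note $B^{*}-V^{*}=(V-B)^{t}$, which is what makes Theorem $\ref{thm:4.6}$ later work under $\|V-B\|_{L^{\infty}}\leq\epsilon_{0}$). One caution on the passage you call "the delicate point": after interchanging the surface integrals and applying ${^{*}\mathbf{\Gamma}}_{\mathcal{L}}(x,y)=[\mathbf{\Gamma}_{\mathcal{L}}(y,x)]^{t}$, the transposed kernel naturally carries $\mathbf{\Gamma}_{\mathcal{L}^{*}}(y,P)$, whereas $\mathcal{K}_{\mathcal{L}^{*}}$ as defined in the statement carries $\mathbf{\Gamma}_{\mathcal{L}}(P,y)$; $A=A^{*}$ alone matches the leading \emph{coefficients}, but to identify the two fundamental-solution factors one must also invoke that $\mathcal{L}$ and $\mathcal{L}^{*}$ share the same principal part, so that (by Lemma $\ref{lemma:4.7}$ applied to both) $\nabla\mathbf{\Gamma}_{\mathcal{L}}$ and $\nabla\mathbf{\Gamma}_{\mathcal{L}^{*}}$ differ only by a weakly singular term. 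If you intend the leading singular parts to coincide "exactly," that needs either this extra observation or an a priori argument via the Green identity; as written your sketch is no more (and no less) precise than the paper's own one-line appeal to $\eqref{eq:3.3}$.
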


\begin{proof}
The case of $\Theta = A$ has been shown in \cite[Theorem 4.6]{SZW24}, and
by a similar argument we may derive the equation $\eqref{eq:4.8}$ for
$\Theta = \mathcal{L}$. Since $\nabla_x\mathbf{E}(x-y,0,x)
=-\nabla_y\mathbf{E}(x-y,0,x)$ it is not hard to see that
\begin{equation}
\Big|\nabla_2\mathbf{\Gamma}_{\mathcal{L}}(x,y)
+\nabla_1\mathbf{\Gamma}_{\mathcal{L}}(x,y)\Big|
\leq C|x-y|^{1-d+\tau},
\end{equation}
according to the estimates $\eqref{pri:4.3}$ and $\eqref{pri:4.4}$.
Let $w =\mathcal{D}_{\mathcal{L}}(f)$.
Thus
using the same procedure as in the proof of Theorem $\ref{thm:5.1}$ it
follows from the Lebesgue's dominated convergence theorem that
\begin{equation}
\begin{aligned}
w_{\pm}(P) &= -\nabla_x\mathcal{S}_{\mathcal{L}}(nA^*f)(P)
+ \int_{\partial\Omega}
n(y)A^*(y)\Big[\nabla_2\mathbf{\Gamma}_{\mathcal{L}}(P,y)
+ \nabla_1\mathbf{\Gamma}_{\mathcal{L}}(P,y)\Big] f(y)dS(y)\\
&+ \int_{\partial\Omega}n(y)V^*(y)\mathbf{\Gamma}_{\mathcal{L}}(P,y)f(y)dS(y)
\end{aligned}
\end{equation}
for a.e. $P\in\partial\Omega$.
In view of $\eqref{Id:5.1}$ and $A^*=A$ we have the trace formula $\eqref{eq:4.8}$.

From the estimates $\eqref{pri:5.10}$
it is not hard to infer that $\mathcal{K}_{\mathcal{L}^*}(f)\in L^p(\partial\Omega;\mathbb{R}^m)$ with
$1<p<\infty$. Also, for any
$g\in L^{p^\prime}(\partial\Omega;\mathbb{R}^m)$ with $1/p+1/p^\prime=1$ we have
\begin{equation}\label{eq:3.3}
\int_{\partial\Omega}\big(\mathcal{K}_{\mathcal{L}^*}+T\big)(f)(P)g(P)dS(P)
= \int_{\partial\Omega}f(y)\mathcal{K}_{\mathcal{L}}(g)(y)dS(y),
\end{equation}
which implies that
$\mathcal{K}_{\mathcal{L}^*}+T$ is the dual operator of
$\mathcal{K}_{\mathcal{L}}$,
and the proof is complete.
\end{proof}

\begin{thm}\label{thm:4.5}
Given $f\in L^p(\partial\Omega;\mathbb{R}^m)$ with $1<p<\infty$,
let $u = \mathcal{S}_{\mathcal{L}}(f)$ be the
single layer potential, and $w=\mathcal{D}_{\mathcal{L}}(f)$ be the double layer potential. Then we have
\begin{equation}\label{pri:5.13}
\|(\nabla u)^*\|_{L^p(\partial\Omega)}+\|(w)^*\|_{L^p(\partial\Omega)}
\leq C\|f\|_{L^p(\partial\Omega)},
\end{equation}
where $C$ depends on $\mu,\tau,\kappa,\lambda,m,d,p$ and $\Omega$.
\end{thm}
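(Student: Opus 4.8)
The plan is to reduce both nontangential maximal function estimates to the corresponding ones for the homogeneous operator $L=-\text{div}(A\nabla)$, which are already known from \cite{SZW24}, by showing that the discrepancy between $\mathbf{\Gamma}_{\mathcal{L}}$ and its principal part $\mathbf{\Gamma}_A$ produces only \emph{weakly singular} (fractional-integral type) kernels. For such kernels the nontangential maximal function is controlled pointwise by $\mathrm{M}_{\partial\Omega}(f)$ along the approach cones, exactly as in the proofs of Lemmas \ref{lemma:3.3} and \ref{lemma:3.9}, and the $L^p$-boundedness of $\mathrm{M}_{\partial\Omega}$ then closes the argument. The comparing Lemma \ref{lemma:4.7} is the device that makes this work.

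First I would handle $(\nabla u)^{*}$ with $u=\mathcal{S}_{\mathcal{L}}(f)$. Writing $\mathbf{\Gamma}_A$ for the principal part of $\mathbf{\Gamma}_{\mathcal{L}}$ as in Lemma \ref{lemma:4.7}, decompose
\begin{equation*}
\nabla u(x)=\nabla\mathcal{S}_A(f)(x)+\int_{\partial\Omega}\big[\nabla_1\mathbf{\Gamma}_{\mathcal{L}}(x,y)-\nabla_1\mathbf{\Gamma}_A(x,y)\big]f(y)\,dS(y).
\end{equation*}
The estimate $\|(\nabla\mathcal{S}_A(f))^{*}\|_{L^p(\partial\Omega)}\le C\|f\|_{L^p(\partial\Omega)}$ is available in \cite{SZW24}. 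For the remainder, $\eqref{pri:5.8}$ gives $|\nabla_1\mathbf{\Gamma}_{\mathcal{L}}(x,y)-\nabla_1\mathbf{\Gamma}_A(x,y)|\le C|x-y|^{1-d+\tau}$ when $|x-y|\le1$, while for $|x-y|>1$ the two terms are separately dominated, with rapid decay, by $\eqref{pri:2.12}$ and $\eqref{pri:3.0}$. Splitting $\partial\Omega$ into $\{|y-P|\le2r\}$ and its complement with $r=|x-P|\approx\text{dist}(x,\partial\Omega)$ for $x\in\Lambda_{N_0}^{\pm}(P)$, a dyadic decomposition together with $\tau>0$ shows the remainder is bounded by $C\,\mathrm{M}_{\partial\Omega}(f)(P)$ uniformly in $x$. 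Hence $(\nabla u)^{*}(P)\le C(\nabla\mathcal{S}_A(f))^{*}(P)+C\,\mathrm{M}_{\partial\Omega}(f)(P)$.

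Next I would handle $(w)^{*}$ with $w=\mathcal{D}_{\mathcal{L}}(f)$. Unfolding $\partial/\partial\nu_{\mathcal{L}}^{*}(y)=n(y)\cdot[A^{*}(y)\nabla_2+V^{*}(y)]$ from $\eqref{eq:4.9}$,
\begin{equation*}
\begin{aligned}
w(x)&=\mathcal{D}_A(f)(x)+\int_{\partial\Omega}n(y)A^{*}(y)\big[\nabla_2\mathbf{\Gamma}_{\mathcal{L}}(x,y)-\nabla_2\mathbf{\Gamma}_A(x,y)\big]f(y)\,dS(y)\\
&\quad+\int_{\partial\Omega}n(y)V^{*}(y)\mathbf{\Gamma}_{\mathcal{L}}(x,y)f(y)\,dS(y).
\end{aligned}
\end{equation*}
The first term satisfies $\|(\mathcal{D}_A(f))^{*}\|_{L^p(\partial\Omega)}\le C\|f\|_{L^p(\partial\Omega)}$ by \cite{SZW24} (alternatively, $\mathcal{D}_A$ reduces, up to a weakly singular error, to a gradient of a single layer potential as in Lemma \ref{lemma:3.8}, which brings one back to the first part). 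The second integral has kernel of size $C|x-y|^{1-d+\tau}$ for $|x-y|\le1$ by $\eqref{pri:5.8}$, and the third has kernel of size $C|x-y|^{2-d}$ by $\eqref{pri:2.11}$ (both with the regime $|x-y|>1$ dispatched by $\eqref{pri:2.11}$--$\eqref{pri:2.13}$); repeating the near/far splitting along $\Lambda_{N_0}^{\pm}(P)$ exactly as in Lemma \ref{lemma:3.9} bounds each by $C\,\mathrm{M}_{\partial\Omega}(f)(P)$. Thus $(w)^{*}(P)\le C(\mathcal{D}_A(f))^{*}(P)+C\,\mathrm{M}_{\partial\Omega}(f)(P)$, and taking $L^p$ norms gives $\eqref{pri:5.13}$.

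The two near/far estimates are routine and essentially identical to computations already carried out in Lemmas \ref{lemma:3.3} and \ref{lemma:3.9}, so I would only sketch them. The genuine content is prepackaged: Lemma \ref{lemma:4.7} makes the $\mathcal{L}$-versus-$A$ discrepancy weakly singular, and \cite{SZW24} supplies the singular (principal) part. The only point demanding a little care — and the main, though modest, obstacle — is making the maximal-function domination uniform over \emph{exterior} approach cones, where $|x-y|$ may exceed $1$ so that Lemma \ref{lemma:4.7} no longer applies and one must switch to the global decay bounds $\eqref{pri:2.11}$--$\eqref{pri:2.13}$ and $\eqref{pri:3.0}$; here the standing hypothesis $\mathrm{diam}(\Omega)\le1/4$ keeps all interior interactions in the range $|x-y|\le1$ and makes the bookkeeping clean.
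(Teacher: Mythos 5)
Your proposal is correct, and the core idea — weakly singular remainders are dominated by $\mathrm{M}_{\partial\Omega}(f)$ along the cones, while the genuinely singular part is handled by known results — is exactly the spirit of the paper's proof. The decomposition you use, however, is genuinely different and in fact a streamlining: you split $\nabla\mathcal{S}_{\mathcal{L}}(f) = \nabla\mathcal{S}_A(f) + (\text{remainder})$ and invoke Lemma \ref{lemma:4.7} as a black box together with the nontangential maximal function estimate for $\nabla\mathcal{S}_A(f)$ from \cite{SZW24}. The paper instead never introduces $\nabla\mathcal{S}_A(f)$ in the proof; it compares directly to the frozen-coefficient kernels $\nabla_1\mathbf{E}(x,y;y)$ and $\nabla_1\mathbf{E}(P,y;P)$ using \eqref{pri:4.3} and \eqref{pri:4.12}, and reduces to the \emph{maximal singular integral} $\sup_{\rho>0}\big|\int_{|y-P|>\rho}\nabla_1\mathbf{E}(P,y;P)f\,dS\big|$, which is then dispatched by \eqref{pri:5.10}. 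Both routes terminate at the same underlying machinery of \cite{SZW24}, but yours wraps the frozen-coefficient analysis inside Lemma \ref{lemma:4.7} instead of re-running pieces of it, which is cleaner and highlights the role of the comparing lemma. Your alternative remark for the double layer potential — reducing $\mathcal{D}_A$ to $-\nabla\mathcal{S}_A(nA^*f)$ up to a weakly singular error — is also in line with the paper's treatment in Theorem \ref{thm:4.4}, where the error kernel is exactly $\nabla_1\mathbf{\Gamma}+\nabla_2\mathbf{\Gamma}$, of order $|x-y|^{1-d+\tau}$.

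One small inaccuracy worth fixing: for $|x-y|>1$ you cite \eqref{pri:3.0} to give rapid decay of $\nabla_1\mathbf{\Gamma}_A$. But \eqref{pri:3.0} is the decay estimate for the \emph{constant-coefficient} $\mathcal{L}_0$ (which does enjoy exponential-type suppression because of the $+\lambda I$ term); the periodic homogeneous operator $L=-\mathrm{div}(A\nabla)$ has no such damping, and $\nabla\mathbf{\Gamma}_A$ decays only like $|x-y|^{1-d}$ (Avellaneda--Lin). Fortunately, polynomial decay of that order is enough: with $r=|x-P|>1/2$ one gets $\int_{\partial\Omega}|f|/|x-y|^{d-1}\,dS \lesssim r^{1-d}\int_{\partial\Omega}|f|\lesssim \mathrm{M}_{\partial\Omega}(f)(P)$ using $\mathrm{diam}(\Omega)\le 1/4$. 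So the argument closes — only the citation should be corrected.
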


\begin{proof}
The main idea may be found in \cite[Theorem 3.5]{SZW24}, and we provide a proof for the sake of
completeness.
Let $P\in\partial\Omega$, and $x\in\mathbb{R}^d\setminus\partial\Omega$ such
that $x\in\Lambda_{N_0}^{\pm}(P)$. Set $r=|x-P|$.
In the case of $r\geq (1/2)$, it follows from the estimate $\eqref{pri:5.9}$ that
\begin{equation}\label{f:5.22}
\begin{aligned}
|\nabla u(x)|
&\leq C\int_{y\in\partial\Omega\atop
|y-P|\leq 4r}\frac{|f(y)|}{|x-y|^{d-1}}dS(y)
+ C\int_{y\in\partial\Omega\atop
|y-P|> 4r}\frac{|f(y)|}{|x-y|^{d-1+\rho}}dS(y)\\
&\leq C\mathrm{M}_{\partial\Omega}(f)(P),
\end{aligned}
\end{equation}
where we use the facts that $|x-y|>(r/N_0)$ if $|y-P|\leq 4r$, and
$|x-y|>3r$ if $|y-P|>4r$.

Then we study the case of $0<r<1/2$. In such the case,  there holds
\begin{equation*}
\begin{aligned}
\big|\nabla u(x)\big|
&\leq C\underbrace{\int_{y\in\partial\Omega\atop |y-P|<r}\frac{|f(y)|}{|x-y|^{d-1}}dS(y)}_{I_1}
+ \underbrace{\Big|\int_{y\in\partial\Omega\atop r<|y-P|<2}\nabla_1\mathbf{\Gamma}_{\mathcal{L}}(x,y)
f(y)dS(y)\Big|}_{I_2}\\
&+ C\underbrace{\int_{y\in\partial\Omega\atop |y-P|\geq 2}
\frac{|f(y)|}{|x-y|^{d-1+\rho}}dS(y)}_{I_3}
\end{aligned}
\end{equation*}
where $\rho\in(0,1)$, and we use the decay estimate $\eqref{pri:5.9}$ in $I_3$.
A similar computation as that given for $\eqref{f:5.22}$ will lead to
\begin{equation}\label{f:5.24}
I_1 + I_3 \leq C\mathrm{M}_{\partial\Omega}(f)(P).
\end{equation}
We now turn to study $I_2$. Since one may have
\begin{equation}\label{f:5.3.1}
\int_{y\in\partial\Omega\atop
(1/2)\leq |y-P|<2} \big|\nabla_1\mathbf{\Gamma}_{\mathcal{L}}(x,y)
f(y)\big|dS(y) \leq C\mathrm{M}_{\partial\Omega}(f)(P)
\end{equation}
via a simple geometry fact $\eqref{f:5.16}$,
it suffices to estimate the quantity
\begin{equation*}
\Big|\int_{y\in\partial\Omega\atop r<|y-P|<1/2}\nabla_1\mathbf{\Gamma}_{\mathcal{L}}(x,y)
f(y)dS(y)\Big|,
\end{equation*}
denoted by $I_2^\prime$, and
\begin{equation}\label{f:5.25}
\begin{aligned}
I_2^\prime &\leq C\int_{y\in\partial\Omega\atop
r<|y-P|<1/2} \frac{|f(y)|}{|x-y|^{d-1-\tau}}dS(y)
+ \bigg|\int_{y\in\partial\Omega\atop
r<|y-P|<1/2}\nabla_1 \mathbf{E}(x,y;y)f(y) dS(y)\bigg| \\
&\leq C\int_{y\in\partial\Omega\atop
r<|y-P|<1/2} \frac{|f(y)|}{|P-y|^{d-1-\tau}}dS(y)
+ Cr\int_{y\in\partial\Omega\atop
|y-P|>r}\frac{|f(y)|}{|P-y|^d}dS(y)\\
&+\bigg|\int_{y\in\partial\Omega\atop
r<|y-P|<1/2}\nabla_1 \mathbf{E}(P,y;P)f(y) dS(y)\bigg|\\
&\leq C\mathrm{M}_{\partial\Omega}(f)(P)
+2\sup_{\rho>0}\bigg|\int_{y\in\partial\Omega\atop
|y-P|>\rho}\nabla_1 \mathbf{E}(P,y;P)f(y) dS(y)\bigg|,
\end{aligned}
\end{equation}
where we use the estimate $\eqref{pri:4.3}$ in the first inequality. In the second one follows from
and the estimates $\eqref{pri:4.12}$, $\eqref{pri:3.0}$ coupled with $\eqref{f:5.16}$,
in which we also note the identity
$\mathbf{E}(x,y;y)=
\mathbf{E}(x,y;y)- \mathbf{E}(P,y;y)
+ \mathbf{E}(P,y;y)
-\mathbf{E}(P,y;P)+ \mathbf{E}(P,y;P)$. Here we mention that the constant $C$ in the above estimates does not depend on the
location of $x$. Hence, collecting the estimates $\eqref{f:5.22}$,
$\eqref{f:5.24}$, $\eqref{f:5.3.1}$, and $\eqref{f:5.25}$, we consequently derived that for any
$x\in \Lambda_{N_0}^{\pm}(P)$ there holds
\begin{equation*}
|\nabla u(x)| \leq C\mathrm{M}_{\partial\Omega}(f)(P)
+2\sup_{\rho>0}\bigg|\int_{y\in\partial\Omega\atop
|y-P|>\rho}\nabla_1 \mathbf{E}(P,y;P)f(y) dS(y)\bigg|,
\end{equation*}
which together with $\eqref{pri:5.10}$ implies the desired estimate $\eqref{pri:5.13}$. Its second line
may be derived in the same manner, and we have completed the proof.
\end{proof}

\subsection{Invertibility properties of layer potentials}
\begin{thm}\label{thm:4.1}
Let $\Omega\subset\mathbb{R}^d$ be a bounded Lipschitz domain with $\emph{diam}(\Omega)\leq (1/4)$.
Suppose that the coefficients of $\mathcal{L}$ satisfy $\eqref{a:1}$, $\eqref{a:3}$, $\eqref{a:4}$ and
$\eqref{a:5.1}$ with $\lambda\geq\max\{\lambda_0,\mu\}$.
Then the trace operators
$\pm(1/2)I+\mathcal{K}_{\mathcal{L}}:L^2(\partial\Omega;\mathbb{R}^m)\to
L^2(\partial\Omega;\mathbb{R}^m)$ are invertible, and there hold
\begin{equation}\label{pri:5.12}
\big\|f\big\|_{L^2(\partial\Omega)}
\leq C\big\|\big(\pm(1/2)I+\mathcal{K}_{\mathcal{L}}\big)(f)\big\|_{L^2(\partial\Omega)}
\end{equation}
for any $f\in L^2(\partial\Omega;\mathbb{R}^m)$,
where $\mathcal{K}_{\mathcal{L}}$ is defined in Theorem $\ref{thm:5.1}$,
and $C$ depends on
$\mu,\kappa,\lambda,m,d,\tau,\tau_0$ and $\Omega$.
\end{thm}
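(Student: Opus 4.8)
The plan is to reproduce, in the present variable-coefficient setting, the scheme used for the constant-coefficient operator in Theorem~\ref{thm:3.3}: first I would establish the Rellich-type lower bound \eqref{pri:5.12}, which forces $\pm(1/2)I+\mathcal{K}_{\mathcal{L}}$ to be injective with closed range; then I would show these operators are Fredholm of index zero, so that injectivity upgrades to invertibility and \eqref{pri:5.12} becomes an estimate for the inverse.

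For the lower bound, given $f\in L^2(\partial\Omega;\mathbb{R}^m)$ I would set $u=\mathcal{S}_{\mathcal{L}}(f)$. Then $\mathcal{L}(u)=0$ in $\mathbb{R}^d\setminus\partial\Omega$; by Theorem~\ref{thm:4.5}, $(\nabla u)^*\in L^2(\partial\Omega)$; by Theorem~\ref{thm:5.1} the nontangential traces of $\nabla u$ exist a.e. on $\partial\Omega$; and by \eqref{pri:2.11}--\eqref{pri:2.12}, $|u(x)|+|x|\,|\nabla u(x)|=O(|x|^{2-d})$ as $|x|\to\infty$, so the hypotheses of Lemmas~\ref{lemma:4.5} and~\ref{lemma:4.6} are met on $\Omega$ and on $\Omega_{-}$. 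The jump relation \eqref{Id:5.2} gives $f=(\partial u/\partial\nu_{\mathcal{L}})_{+}-(\partial u/\partial\nu_{\mathcal{L}})_{-}$, so it suffices to prove $\|(\partial u/\partial\nu_{\mathcal{L}})_{\mp}\|_{L^2(\partial\Omega)}\le C\|(\partial u/\partial\nu_{\mathcal{L}})_{\pm}\|_{L^2(\partial\Omega)}$. Using $|\partial u/\partial\nu_{\mathcal{L}}|\le C(|\nabla u|+|u|)$, I would dominate $(\nabla u)_{\mp}$ on $\partial\Omega$ by $(\nabla_{\mathrm{tan}}u)_{\mp}$ plus error terms via the second line of the Rellich inequalities \eqref{pri:4.5} (resp. \eqref{pri:4.7}), replace $(\nabla_{\mathrm{tan}}u)_{\mp}$ by $(\nabla_{\mathrm{tan}}u)_{\pm}$ and $(u)_{\mp}$ by $(u)_{\pm}$ (continuity of the single layer potential and of its tangential gradient), bound $(\nabla u)_{\pm}$ in terms of $(\partial u/\partial\nu_{\mathcal{L}})_{\pm}$ by the first line of the opposite-side Rellich inequality, and control the $(u)_{\pm}$ terms by $(\partial u/\partial\nu_{\mathcal{L}})_{\pm}$ through \eqref{pri:4.6} and \eqref{pri:4.8}. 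The key device is that every error term carries a factor $\theta^{\tau_0}\|(\nabla u)^*\|_{L^2}^2$, while $\|(\nabla u)^*\|_{L^2}\le C\|f\|_{L^2}\le C\,(\|(\partial u/\partial\nu_{\mathcal{L}})_{+}\|_{L^2}+\|(\partial u/\partial\nu_{\mathcal{L}})_{-}\|_{L^2})$ by Theorem~\ref{thm:4.5}; choosing the Rellich cut-off height $\theta$ small (legitimate since $\mathrm{diam}(\Omega)\le 1/4$) one absorbs $C\theta^{\tau_0}\|(\partial u/\partial\nu_{\mathcal{L}})_{\mp}\|_{L^2}^2$ on the left, yielding \eqref{pri:5.12}.

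For the Fredholm step, let $\mathcal{K}_{A}$ be the trace operator attached to the principal part $L=-\mathrm{div}(A\nabla)$. Comparing lemma~II, \eqref{pri:5.8}, shows $\nabla_1\mathbf{\Gamma}_{\mathcal{L}}(P,y)-\nabla_1\mathbf{\Gamma}_{A}(P,y)=O(|P-y|^{1-d+\tau})$ on $\partial\Omega\times\partial\Omega$ (note $\mathrm{diam}(\Omega)\le 1/4$), while the extra first-order term in $\partial/\partial\nu_{\mathcal{L}}$ contributes a kernel of size $|P-y|^{2-d}$; running the argument of Lemma~\ref{lemma:3.6} with \eqref{pri:5.8} and the refined comparison estimates of Lemmas~\ref{lemma:4.2}--\ref{lemma:4.4} in place of \eqref{pri:3.4}, one gets that $\mathcal{K}_{\mathcal{L}}-\mathcal{K}_{A}$ maps $L^2(\partial\Omega;\mathbb{R}^m)$ boundedly into $W^{s,2}(\partial\Omega;\mathbb{R}^m)$ for some $s>0$, hence is compact on $L^2(\partial\Omega;\mathbb{R}^m)$. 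Since $\mathrm{diam}(\Omega)\le 1/4$ and $A$ is H\"older continuous, the operators $\pm(1/2)I+\mathcal{K}_{A}$ are invertible on $L^2(\partial\Omega;\mathbb{R}^m)$ by Kenig--Shen \cite{SZW24}, hence Fredholm of index zero, and adding the compact perturbation $\mathcal{K}_{\mathcal{L}}-\mathcal{K}_{A}$ preserves the index. Combined with \eqref{pri:5.12}, this makes $\pm(1/2)I+\mathcal{K}_{\mathcal{L}}$ surjective, so invertible, with \eqref{pri:5.12} bounding the inverse.

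I expect the Rellich step to be the main obstacle. The Rellich inequalities of Section~\ref{sec:4} are weaker than in the smooth case: the blow-up of $\nabla A$ and $\nabla V$ near $\partial\Omega$ (condition \eqref{a:5.1}) produces the extra boundary term involving $(\nabla u)^*$ and, through the interior $H^1$-norm of $u$, the terms involving $(u)_{\pm}$, so one must carefully chain the four inequalities \eqref{pri:4.5}, \eqref{pri:4.6}, \eqref{pri:4.7}, \eqref{pri:4.8} together with the continuity of $u$ and $\nabla_{\mathrm{tan}}u$ across $\partial\Omega$ so that every appearance of $(\nabla u)^*$ and of the wrong-side conormal derivative is eventually absorbed. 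Making the constants close forces $\theta$ to be small, which is exactly where the hypothesis $\mathrm{diam}(\Omega)\le 1/4$ enters in an essential way; the remaining Fredholm bookkeeping is then routine.
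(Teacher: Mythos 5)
Your Rellich step coincides with the paper's: the same chain of Lemmas~\ref{lemma:4.5}, \ref{lemma:4.6} applied to $u=\mathcal{S}_{\mathcal{L}}(f)$, the jump relation from Theorem~\ref{thm:5.1}, continuity of $u$ and $\nabla_{\mathrm{tan}}u$ across $\partial\Omega$, and absorption of the $\theta^{\tau_0}\|(\nabla u)^*\|^2$ error via $\|(\nabla u)^*\|_{L^2}\le C\|f\|_{L^2}$ from Theorem~\ref{thm:4.5}. (Minor imprecision: the absorbed term is $C(\theta_1^{\tau_0}+\theta_2^{\tau_0})\|f\|^2$, not $\|(\partial u/\partial\nu_{\mathcal{L}})_{\mp}\|^2$; and the constraint $\mathrm{diam}(\Omega)\le 1/4$ is there so the comparing lemmas apply on all of $\partial\Omega\times\partial\Omega$ with $\varepsilon$-free constants, not because it enlarges the admissible range of $\theta_i$.) Where you genuinely diverge is in the invertibility step. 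The paper does \emph{not} run a compact-perturbation argument here; it instead uses the continuity method: it deforms $\mathcal{L}$ linearly to the frozen constant-coefficient operator $\mathcal{L}_{x_0}$, obtains the Rellich lower bound \eqref{pri:5.12} uniformly along the family $\mathcal{L}^{t}=t\mathcal{L}+(1-t)\mathcal{L}_{x_0}$, proves $\|\mathcal{K}_{\mathcal{L}^{t_1}}-\mathcal{K}_{\mathcal{L}^{t_2}}\|_{L^2\to L^2}\le C|t_1-t_2|$ using \eqref{pri:5.11} (hence the quantified $\vartheta_2$-dependence of Lemmas~\ref{lemma:4.3}--\ref{lemma:4.4}), and anchors at the constant-coefficient invertibility of Theorem~\ref{thm:3.3}. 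Your route instead shows $\mathcal{K}_{\mathcal{L}}-\mathcal{K}_{A}$ is compact (both pieces of the difference kernel, $n A\,[\nabla_1\mathbf{\Gamma}_{\mathcal{L}}-\nabla_1\mathbf{\Gamma}_{A}]$ of size $|P-y|^{1-d+\tau}$ and $n V\,\mathbf{\Gamma}_{\mathcal{L}}$ of size $|P-y|^{2-d}$, are weakly singular on the $(d-1)$-dimensional boundary, so compactness on $L^2$ follows even without a H\"ormander condition), then anchors at Kenig--Shen's invertibility of $\pm(1/2)I+\mathcal{K}_{A}$ under the condition \eqref{a:5.1}. Both routes work. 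Yours is a bit more economical because it only needs the comparing lemmas at the qualitative level of compactness, not the quantified $\vartheta_2$-Lipschitz estimate \eqref{pri:5.11}; the paper's is more self-contained because it anchors at Theorem~\ref{thm:3.3} of the same paper rather than at an invertibility result for $-\mathrm{div}(A\nabla)$ imported from \cite{SZW24} (which is itself proved there by a continuity method, so you are effectively delegating the deformation argument to the cited paper rather than eliminating it). One small caveat to make explicit in your write-up: when you invoke \eqref{pri:5.8} to bound the difference kernel for all $P,y\in\partial\Omega$, you are implicitly using $\mathrm{diam}(\Omega)\le 1/4<1$; this is exactly where that hypothesis enters, and it is the same place the paper uses it.
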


\begin{proof}
The main idea is to use a so-called continuity argument to establish the invertibility, which has
been well developed in \cite[Lemma 5.7]{SZW24} and \cite[Theorem 3.2]{GZS1}. To achieve our goal,
we first address the estimate $\eqref{pri:5.12}$.
Let $u = \mathcal{S}_{\mathcal{L}}(f)$. It is clear to see that $\mathcal{L}(u) = 0$ in
$\mathbb{R}^d\setminus\partial\Omega$. In view of Theorem $\ref{thm:4.5}$,
we have $(\nabla u)^*\in L^2(\partial\Omega)$ and $\nabla u$ exists on $\partial\Omega$
in the sense of nontangential convergence.
For any $x\in\mathbb{R}^d\setminus\Omega$ with $\text{dist}(x,\partial\Omega)>R$, we have
\begin{equation*}
  |u(x)|\leq C\int_{\partial\Omega}\frac{|f(y)|}{|x-y|^{d-2}}dS(y)
  \leq CR^{2-d}\|f\|_{L^2(\partial\Omega)},
\end{equation*}
and $|\nabla u(x)|\leq CR^{1-d}\|f\|_{L^2(\partial\Omega)}$, which means
$|u(x)|+|x||\nabla u(x)| = O(|x|^{2-d})$ as $x\to\infty$. We now have verified all the
conditions in Lemmas $\ref{lemma:4.5}$ and $\ref{lemma:4.6}$. In view of the identity $\eqref{eq:5.2}$
we obtain the jump relationship
\begin{equation}\label{f:5.21}
 f = \Big(\frac{\partial u}{\partial\nu_{\mathcal{L}}}\Big)_{+} -
 \Big(\frac{\partial u}{\partial\nu_{\mathcal{L}}}\Big)_{-}.
\end{equation}
Thus the stated estimate $\eqref{pri:5.12}$ may be reduced to
\begin{equation}\label{f:5.20}
\int_{\partial\Omega}\Big|\Big(\frac{\partial u}{\partial\nu_{\mathcal{L}}}\Big)_{\pm}\Big|^2dS
\leq C(\theta_1^{2\tau_0-2}+\theta_2^{\tau-1})
\int_{\partial\Omega}\Big|\Big(\frac{\partial u}{\partial\nu_{\mathcal{L}}}\Big)_{\mp}\Big|^2dS
+ C(\theta_1^{\tau_0} + \theta_2^{\tau_0})\int_{\partial\Omega}|(\nabla u)^*|^2 dS.
\end{equation}

Assume the claim $\eqref{f:5.20}$ for a moment, and it follows from $\eqref{f:5.21}$ that
\begin{equation*}
\|f\|_{L^2(\partial\Omega)}^2
\leq C(1+\theta_1^{2\tau_0-2}+\theta_2^{\tau_0-1})
\Big\|\Big(\frac{\partial u}{\partial\nu_{\mathcal{L}}}\Big)_{\pm}\Big\|_{L^2(\partial\Omega)}^2
+ C(\theta_1^{\tau_0} + \theta_2^{\tau_0})\|f\|_{L^2(\partial\Omega)}^2,
\end{equation*}
where we also use the estimate $\eqref{pri:5.13}$ in the inequality.
By choosing $\theta_1,\theta_2\in(0,1)$ such that $C(\theta_1^{\tau_0} + \theta_2^{\tau_0})=1/2$, we will obtain
\begin{equation*}
\|f\|_{L^2(\partial\Omega)}
\leq C\Big\|\Big(\frac{\partial u}{\partial\nu_{\mathcal{L}}}\Big)_{\pm}\Big\|_{L^2(\partial\Omega)},
\end{equation*}
and this gives the estimate $\eqref{pri:5.12}$.

We now turn to show the estimate $\eqref{f:5.20}$, it follows from Lemmas $\ref{lemma:4.6}$ and
$\ref{lemma:4.5}$ that
\begin{equation*}
\begin{aligned}
\Big\|\Big(\frac{\partial u}{\partial\nu_{\mathcal{L}}}\Big)_{\pm}\Big\|_{L^2(\partial\Omega)}^2
&\leq C\big\|(\nabla_{\text{tan}} u)_{\pm}\big\|_{L^2(\partial\Omega)}^2
+ C\theta_1^{\tau_0}\int_{\partial\Omega}|(\nabla u)^*|^2 dS
+ C\theta_1^{2\tau_0-2}\int_{\partial\Omega}|u_{\pm}|^2 dS\\
&\leq C\big\|(\nabla_{\text{tan}} u)_{\mp}\big\|_{L^2(\partial\Omega)}^2
+ C\theta_1^{\tau_0}\int_{\partial\Omega}|(\nabla u)^*|^2 dS
+ C\theta_1^{2\tau_0-2}\int_{\partial\Omega}|u_{\mp}|^2 dS \\
&\leq C\theta_2^{\tau_0-1}
\Big\|\Big(\frac{\partial u}{\partial\nu_{\mathcal{L}}}\Big)_{\mp}\Big\|_{L^2(\partial\Omega)}^2
+ C(\theta_1^{\tau_0} + \theta_2^{\tau_0})\int_{\partial\Omega}|(\nabla u)^*|^2 dS
+ C\theta_1^{2\tau_0-2}\int_{\partial\Omega}|u_{\mp}|^2 dS\\
&\leq C(\theta_1^{2\tau_0-2}+\theta_2^{\tau_0-1})
\Big\|\Big(\frac{\partial u}{\partial\nu_{\mathcal{L}}}\Big)_{\mp}\Big\|_{L^2(\partial\Omega)}^2
+ C(\theta_1^{\tau_0} + \theta_2^{\tau_0})\int_{\partial\Omega}|(\nabla u)^*|^2 dS,
\end{aligned}
\end{equation*}
where we use the fact $(\nabla_{\text{tan}}u)_{+}=(\nabla_{\text{tan}}u)_{-}$ and
$u_{+}=u_{-}$ on $\partial\Omega$ in the second step, and the last one follows from
the estimates $\eqref{pri:4.6}$ and $\eqref{pri:4.8}$.

We are ready to prove the invertibility of $\pm\frac{1}{2}I+\mathcal{K}_{\mathcal{L}}$
on $L^2(\partial\Omega;\mathbb{R}^m)$. By $\eqref{pri:5.12}$,
it suffices to show that $\pm\frac{1}{2}I+\mathcal{K}_{\mathcal{L}}:L^2(\partial\Omega;\mathbb{R}^m)
\to L^2(\partial\Omega;\mathbb{R}^m)$ are onto. Fixing the coefficients of $\mathcal{L}$ at some point
$x_0\in\mathbb{R}^d$ produces a new operator with constant coefficients and we denote it by
$\mathcal{L}_{x_0}$. Let
\begin{equation}\label{eq:5.3}
\mathcal{L}^t = t\mathcal{L} + (1-t)\mathcal{L}_{x_0},
\end{equation}
where $t\in [0,1]$, and it is not hard to verify that the coefficients of $\mathcal{L}^t$ are
still satisfy $\eqref{a:1}$, $\eqref{a:2}$, $\eqref{a:3}$
and $\eqref{a:5.1}$ with $\lambda\geq \max\{\lambda_0,\mu\}$. Thus there also hold
\begin{equation*}
\|f\|_{L^2(\partial\Omega)}\leq
C\big\|\big(\pm(1/2)I+\mathcal{K}_{\mathcal{L}^t}\big)(f)\big\|_{L^2(\partial\Omega)}
\end{equation*}
and the constant $C$ is independent of $t$. On the other hand, it is not hard to see that
$\big\{\pm\frac{1}{2}I+\mathcal{K}_{\mathcal{L}^t}:t\in[0,1]\big\}$ are continuous families of
bounded operators on $L^2(\partial\Omega;\mathbb{R}^m)$ since we have the estimate
\begin{equation}\label{f:5.26}
\|\mathcal{K}_{\mathcal{L}^{t_1}}-\mathcal{K}_{\mathcal{L}^{t_2}}\|_{L^2(\partial\Omega)\to
L^2(\partial\Omega)}
\leq C\vartheta_2\leq C|t_1-t_2|
\end{equation}
in terms of the estimate $\eqref{pri:5.11}$. Hence by the continuity method
the invertibility of $\pm\frac{1}{2}I +\mathcal{K}_{\mathcal{L}_{x_0}}$ implies our desired result at once,
and we have completed the proof.
\end{proof}

\begin{thm}\label{thm:4.3}
Assume the same conditions as in Theorem $\ref{thm:4.1}$.
Given $f\in L^2(\partial\Omega;\mathbb{R}^m)$, let $u=\mathcal{S}_{\mathcal{L}}(f)$ be
the single layer potential associated with $\mathcal{L}$. Then the operator
$\mathcal{S}_{\mathcal{L}}:L^2(\partial\Omega;\mathbb{R}^m)\to H^1(\partial\Omega;\mathbb{R}^m)$
is invertible and
\begin{equation}\label{pri:4.15}
 \|f\|_{L^2(\partial\Omega)}
 \leq C\|\mathcal{S}_{\mathcal{L}}(f)\|_{H^1(\partial\Omega)}
\end{equation}
where $C$ depends on $\mu,\kappa,\lambda,m,d,\tau,\tau_0$ and $\Omega$.
\end{thm}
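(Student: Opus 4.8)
The plan is to prove Theorem~\ref{thm:4.3} in two stages, following the scheme already used for Theorem~\ref{thm:4.1}: first establish the one‑sided bound \eqref{pri:4.15} (which gives injectivity and closed range of $\mathcal{S}_{\mathcal{L}}\colon L^2(\partial\Omega;\mathbb{R}^m)\to H^1(\partial\Omega;\mathbb{R}^m)$), and then obtain surjectivity by a continuity argument in the coefficients. Boundedness of $\mathcal{S}_{\mathcal{L}}$ between these spaces is not an issue: the $L^2(\partial\Omega)$‑norm of $\mathcal{S}_{\mathcal{L}}(f)$ is controlled by the weakly singular bound $|\mathbf{\Gamma}_{\mathcal{L}}(x,y)|\leq C|x-y|^{2-d}$ together with Young's inequality on $\partial\Omega$, while the tangential gradient is controlled by the $L^2$‑boundedness of $\mathcal{T}^1_{\mathcal{L}}$ from Theorem~\ref{thm:5.2} and the jump formula \eqref{Id:5.1}.

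For \eqref{pri:4.15}, set $u=\mathcal{S}_{\mathcal{L}}(f)$, so that $\mathcal{L}(u)=0$ in $\Omega$ and in $\Omega_{-}$, $(\nabla u)^*\in L^2(\partial\Omega)$ from both sides by Theorem~\ref{thm:4.5}, and $|u(x)|+|x|\,|\nabla u(x)|=O(|x|^{2-d})$ as $|x|\to\infty$. Since the single layer potential is continuous across $\partial\Omega$ (weakly singular kernel), $u_{+}=u_{-}=\mathcal{S}_{\mathcal{L}}(f)|_{\partial\Omega}$ and hence $(\nabla_{\mathrm{tan}}u)_{+}=(\nabla_{\mathrm{tan}}u)_{-}=\nabla_{\mathrm{tan}}\mathcal{S}_{\mathcal{L}}(f)$. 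Applying the tangential Rellich inequalities -- the second line of \eqref{pri:4.5} inside $\Omega$ and the second line of \eqref{pri:4.7} in $\Omega_{-}$ -- gives, for any $\theta\in(0,1)$,
\begin{equation*}
\|(\nabla u)_{\pm}\|_{L^2(\partial\Omega)}^2
\leq C\|\nabla_{\mathrm{tan}}\mathcal{S}_{\mathcal{L}}(f)\|_{L^2(\partial\Omega)}^2
+ C\theta^{\tau_0}\|(\nabla u)^*\|_{L^2(\partial\Omega)}^2
+ C\theta^{2\tau_0-2}\|\mathcal{S}_{\mathcal{L}}(f)\|_{L^2(\partial\Omega)}^2 .
\end{equation*}
The jump relation \eqref{f:5.21} together with $|\partial u/\partial\nu_{\mathcal{L}}|\leq C(|\nabla u|+|u|)$ on $\partial\Omega$ yields $\|f\|_{L^2(\partial\Omega)}\leq C\big(\|(\nabla u)_{+}\|_{L^2(\partial\Omega)}+\|(\nabla u)_{-}\|_{L^2(\partial\Omega)}+\|\mathcal{S}_{\mathcal{L}}(f)\|_{L^2(\partial\Omega)}\big)$, and \eqref{pri:5.13} bounds $\|(\nabla u)^*\|_{L^2(\partial\Omega)}$ by $C\|f\|_{L^2(\partial\Omega)}$. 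Choosing $\theta\in(0,1)$ small but fixed so that $C\theta^{\tau_0}=1/2$ lets one absorb the $\|f\|_{L^2(\partial\Omega)}$‑term, arriving at $\|f\|_{L^2(\partial\Omega)}\leq C\|\mathcal{S}_{\mathcal{L}}(f)\|_{H^1(\partial\Omega)}$, which is \eqref{pri:4.15}; in particular $\mathcal{S}_{\mathcal{L}}$ is injective with closed range in $H^1(\partial\Omega;\mathbb{R}^m)$.

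For surjectivity I would reuse the homotopy of Theorem~\ref{thm:4.1}: let $\mathcal{L}^t=t\mathcal{L}+(1-t)\mathcal{L}_{x_0}$ with $\mathcal{L}_{x_0}$ the operator obtained by freezing the coefficients of $\mathcal{L}$ at a point $x_0$. The coefficients of $\mathcal{L}^t$ satisfy \eqref{a:1}, \eqref{a:3}, \eqref{a:4}, \eqref{a:5.1} uniformly in $t$, so \eqref{pri:4.15} holds for $\mathcal{S}_{\mathcal{L}^t}$ with $C$ independent of $t$; and $t\mapsto\mathcal{S}_{\mathcal{L}^t}$ is continuous from $[0,1]$ into $\mathcal{B}(L^2(\partial\Omega;\mathbb{R}^m),H^1(\partial\Omega;\mathbb{R}^m))$, the $L^2$‑component of the bound coming from $|\mathbf{\Gamma}_{\mathcal{L}^{t_1}}-\mathbf{\Gamma}_{\mathcal{L}^{t_2}}|\leq C\vartheta_1|x-y|^{2-d}$ as in Lemma~\ref{lemma:4.3} and the tangential‑gradient component from \eqref{pri:5.11}, together with $\vartheta_1,\vartheta_2\leq C|t_1-t_2|$. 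By the method of continuity, $\mathcal{S}_{\mathcal{L}}$ is invertible onto $H^1(\partial\Omega;\mathbb{R}^m)$ iff $\mathcal{S}_{\mathcal{L}_{x_0}}$ is. For the constant‑coefficient base case one runs a further homotopy turning off the lower‑ and zeroth‑order terms to reach $-\mathrm{div}(A(x_0)\nabla)+\lambda I$, whose single layer potential differs from that of $-\mathrm{div}(A(x_0)\nabla)$ by an operator with a kernel smoother by one order (comparing‑lemma estimates in the spirit of Lemma~\ref{lemma:3.2}), hence compact from $L^2(\partial\Omega;\mathbb{R}^m)$ into $H^1(\partial\Omega;\mathbb{R}^m)$ along the lines of Lemma~\ref{lemma:3.6}; since $\mathcal{S}_{-\mathrm{div}(A(x_0)\nabla)}\colon L^2(\partial\Omega;\mathbb{R}^m)\to H^1(\partial\Omega;\mathbb{R}^m)$ is invertible by Kenig and Shen \cite{SZW24}, the uniform injectivity estimate together with the Fredholm alternative gives invertibility of the base case, which closes the continuity argument.

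The main obstacle I anticipate is the surjectivity step rather than the estimate: one must make precise both the $H^1(\partial\Omega)$‑operator‑norm continuity of $t\mapsto\mathcal{S}_{\mathcal{L}^t}$ on a merely Lipschitz boundary -- this is exactly where \eqref{pri:5.11} (not just an $L^2$ bound on the kernel difference) is indispensable -- and the compactness \emph{into} $H^1(\partial\Omega)$ of the lower/zeroth‑order perturbation in the base case, for which the kernel of the difference must gain a full tangential derivative, not merely integrability. A secondary point to verify carefully is that the Rellich estimates of Lemmas~\ref{lemma:4.6} and \ref{lemma:4.5} apply to the whole family $\{\mathcal{L}^t\}$ with the free parameter $\theta$ and constants uniform in $t$, which reduces to checking that $\mathcal{L}^t$ inherits \eqref{a:5.1} with a $\tau_0$ independent of $t$.
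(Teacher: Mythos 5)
Your proposal is correct and follows essentially the same two-stage approach as the paper's proof: the bound \eqref{pri:4.15} is obtained from the jump relation \eqref{f:5.21}, the tangential Rellich inequalities of Lemmas \ref{lemma:4.5} and \ref{lemma:4.6} applied from both sides of $\partial\Omega$, and the nontangential maximal function estimate \eqref{pri:5.13}, with absorption of the small $\theta$-term, while surjectivity is obtained by the continuity argument via $\mathcal{L}^t$ with the uniform estimate \eqref{pri:4.15} and the operator-norm Lipschitz continuity from \eqref{pri:4.9} and \eqref{pri:5.11}, exactly as the paper indicates in its reference to Theorem \ref{thm:4.1}. Your more explicit handling of the constant-coefficient base case — a further homotopy in the lower-order coefficients together with a Lemma-\ref{lemma:3.6}-type compactness argument relative to $\mathcal{S}_{-\mathrm{div}(A(x_0)\nabla)}$ — supplies details the paper leaves implicit in the phrase ``they can be derived in the same way as in the proof of Theorem \ref{thm:4.1}.''
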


\begin{proof}
We first address the estimate $\eqref{pri:4.15}$. It follows from the jump relationship
$\eqref{f:5.21}$ that
\begin{equation*}
\begin{aligned}
\big\|f\big\|_{L^2(\partial\Omega)}
&\leq \Big\|\Big(\frac{\partial u}{\partial\nu_{\mathcal{L}}}\Big)_{+}\Big\|_{L^2(\partial\Omega)}
+ \Big\|\Big(\frac{\partial u}{\partial\nu_{\mathcal{L}}}\Big)_{-}\Big\|_{L^2(\partial\Omega)}\\
&\leq C\big\|\nabla_{\text{tan}}u\big\|_{L^2(\partial\Omega)}
+ C\theta_1^{\tau_0/2}\big\|(\nabla u)^*\big\|_{L^2(\partial\Omega)}
+ C\theta_1^{\tau_0-1}\big\|u\big\|_{L^2(\partial\Omega)}\\
&\leq C\big\|\mathcal{S}_{\mathcal{L}}(f)\big\|_{H^1(\partial\Omega)}
+ C\theta_1^{\tau_0/2}\big\|f\big\|_{L^2(\partial\Omega)}
\end{aligned}
\end{equation*}
where we use the Lemmas $\ref{lemma:4.5}$ and $\ref{lemma:4.6}$ in the second inequality, and
the estimate $\eqref{pri:5.13}$ in the last one.
By choosing $\theta_1\in(0,1)$ such that $C\theta_1^{\tau_0/2} = 1/2$ we may derive the stated estimate
$\eqref{pri:4.15}$.

Then proceeding as in $\eqref{eq:5.3}$ we can construct the operator $\mathcal{L}^t$ for $t\in[0,1]$, and
$\mathcal{S}_{\mathcal{L}^t}$ denotes the corresponding single layer potential operator. The
invertibility of $\mathcal{S}_{\mathcal{L}^t}$ is based upon a continuity argument, which require that
the estimate
$\|f\|_{L^2(\partial\Omega)}
\leq C\|\mathcal{S}_{\mathcal{L}^t}(f)\|_{H^1(\partial\Omega)}$
is independent of $t$, and
$\|\mathcal{S}_{\mathcal{L}^{t_1}}
-\mathcal{S}_{\mathcal{L}^{t_2}}\|_{L^2(\partial\Omega)\to H^1(\partial\Omega)}
\leq C|t_1-t_2|$. Clearly, they can be derived in the same way as in the proof of
Theorem $\ref{thm:4.1}$, and we are done.
\end{proof}

\begin{thm}\label{thm:4.6}
Let $\epsilon_0>0$ be sufficiently small.
Assume the same conditions as in Theorem $\ref{thm:4.1}$.
If the coefficients $V,B$ additionally satisfy $\|V-B\|_{L^\infty(\partial\Omega)}\leq \epsilon_0$,
then the trace operators
$\pm(1/2)I+\mathcal{K}_{\mathcal{L}^*}:L^2(\partial\Omega;\mathbb{R}^m)\to
L^2(\partial\Omega;\mathbb{R}^m)$ are invertible, and there hold
\begin{equation}\label{pri:4.16}
\big\|f\big\|_{L^2(\partial\Omega)}
\leq C\big\|\big(\pm(1/2)I+\mathcal{K}_{\mathcal{L}^*}\big)(f)\big\|_{L^2(\partial\Omega)}
\end{equation}
for any $f\in L^2(\partial\Omega;\mathbb{R}^m)$,
where the operators $\mathcal{K}_{\mathcal{L}^*}$ is defined in Theorem $\ref{thm:4.4}$,
and $C$ depends on
$\mu,\kappa,\lambda,m,d,\tau,\tau_0$ and $\Omega$.
\end{thm}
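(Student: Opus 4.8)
The plan is to realize $\pm(1/2)I + \mathcal{K}_{\mathcal{L}^*}$ as a small bounded perturbation of the operator $\pm(1/2)I + \mathcal{K}_{\mathcal{L}}^{*}$, whose invertibility is already available. First I would record the exact link between the two. By Theorem~\ref{thm:4.4}, the $L^2$-adjoint $\mathcal{K}_{\mathcal{L}}^{*}$ of $\mathcal{K}_{\mathcal{L}}$ satisfies $\mathcal{K}_{\mathcal{L}}^{*} = \mathcal{K}_{\mathcal{L}^*} + T$, where $T$ is the weakly singular operator defined in $\eqref{eq:5.4}$; hence $\pm(1/2)I + \mathcal{K}_{\mathcal{L}^*} = \big(\pm(1/2)I + \mathcal{K}_{\mathcal{L}}^{*}\big) - T$. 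Next, starting from the invertibility of $\pm(1/2)I + \mathcal{K}_{\mathcal{L}}$ together with the lower bound $\eqref{pri:5.12}$ of Theorem~\ref{thm:4.1}, a duality argument identical to the one used for $\eqref{pri:3.12}$ in the proof of Theorem~\ref{thm:3.3} (exploiting the pairing identity in Theorem~\ref{thm:4.4}) shows that $\pm(1/2)I + \mathcal{K}_{\mathcal{L}}^{*}$ are invertible on $L^2(\partial\Omega;\mathbb{R}^m)$ with $\big\|\big(\pm(1/2)I + \mathcal{K}_{\mathcal{L}}^{*}\big)^{-1}\big\|_{L^2\to L^2}\leq C$; the constant $C$ is independent of $\epsilon_0$ because Theorem~\ref{thm:4.1} does not use any smallness of $V-B$.

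The second step is to bound $T$. Using the size estimate $\eqref{pri:2.3.3}$ for $\mathbf{\Gamma}_{\mathcal{L}}$ (that is, $\mathbf{\Gamma}_\varepsilon$ at $\varepsilon = 1$) together with $|n(y)\cdot(B^{*}(y)-V^{*}(y))| \leq \|V-B\|_{L^\infty(\partial\Omega)} \leq \epsilon_0$, one obtains the pointwise bound
\[
|T(f)(P)| \leq C\epsilon_0 \int_{\partial\Omega} \frac{|f(y)|}{|P-y|^{d-2}}\, dS(y),
\]
so $T$ is $C\epsilon_0$ times a Riesz potential of order $1$ on the $(d-1)$-dimensional surface $\partial\Omega$. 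By the fractional integral estimates this yields $\|T\|_{L^2(\partial\Omega)\to L^2(\partial\Omega)} \leq C_0\epsilon_0$; in fact $T$ even maps $L^2$ into $L^q$ with $1/q = 1/2 - 1/(d-1)$, and is compact.

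Finally I would factor
\[
\pm(1/2)I + \mathcal{K}_{\mathcal{L}^*} = \big(\pm(1/2)I + \mathcal{K}_{\mathcal{L}}^{*}\big)\Big(I - \big(\pm(1/2)I + \mathcal{K}_{\mathcal{L}}^{*}\big)^{-1} T\Big).
\]
Choosing $\epsilon_0$ so small that $CC_0\epsilon_0 \leq 1/2$, the operator $I - \big(\pm(1/2)I + \mathcal{K}_{\mathcal{L}}^{*}\big)^{-1}T$ is invertible via the Neumann series, with the norm of its inverse at most $2$. Hence $\pm(1/2)I + \mathcal{K}_{\mathcal{L}^*}$ are invertible on $L^2(\partial\Omega;\mathbb{R}^m)$ with $\big\|\big(\pm(1/2)I + \mathcal{K}_{\mathcal{L}^*}\big)^{-1}\big\|_{L^2\to L^2} \leq 2C$, and applying this inverse to $\big(\pm(1/2)I + \mathcal{K}_{\mathcal{L}^*}\big)(f)$ gives exactly the estimate $\eqref{pri:4.16}$.

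I expect the only genuine subtlety — and the reason the hypothesis $\|V-B\|_{L^\infty(\partial\Omega)}\leq\epsilon_0$ is imposed at all — to be that the perturbation $T$ has to be controlled in operator norm rather than merely known to be compact: were $T$ only compact, one would obtain that $\pm(1/2)I + \mathcal{K}_{\mathcal{L}^*}$ is Fredholm of index zero, but injectivity (equivalently $\eqref{pri:4.16}$) would not be evident without smallness. Everything else is bookkeeping: the duality step is verbatim the argument in the proof of Theorem~\ref{thm:3.3}, and the bound on $T$ is a routine weakly-singular-kernel computation relying on $\eqref{pri:2.3.3}$.
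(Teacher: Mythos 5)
Your argument is correct and follows essentially the same route as the paper: write $\mathcal{K}_{\mathcal{L}}^{*}=\mathcal{K}_{\mathcal{L}^*}+T$, transfer the lower bound $\eqref{pri:5.12}$ to $\mathcal{K}_{\mathcal{L}}^{*}$ by duality, show $\|T\|_{L^2\to L^2}\leq C\epsilon_0$, and conclude via the factorization $\pm(1/2)I+\mathcal{K}_{\mathcal{L}^*}=\big(\pm(1/2)I+\mathcal{K}_{\mathcal{L}}^{*}\big)\big[I-(\pm(1/2)I+\mathcal{K}_{\mathcal{L}}^{*})^{-1}T\big]$. Two minor variations from the paper's write-up, both of which are fine and arguably cleaner: you bound $\|T\|$ directly from the kernel size estimate $\eqref{pri:2.3.3}$ and the fractional-integral (or Schur-test) bound on the Riesz potential over $\partial\Omega$, rather than invoking $\eqref{pri:4.15}$; and you obtain invertibility outright from the Neumann series once $\|(\pm(1/2)I+\mathcal{K}_{\mathcal{L}}^{*})^{-1}T\|\leq 1/2$, whereas the paper separately records $\eqref{pri:4.16}$ from the smallness and then appeals to compactness of $T$ (via Theorem $\ref{thm:4.3}$) for Fredholm index zero — your closing remark correctly identifies that the compactness alone would not suffice without the smallness, which is precisely why the hypothesis on $\|V-B\|_{L^\infty}$ is needed.
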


\begin{proof}
Let $\mathcal{K}_{\mathcal{L}}^*$ be the dual operator of
$\mathcal{K}_{\mathcal{L}}$, and $f\in L^2(\partial\Omega;\mathbb{R}^m)$.
Then it follows from the estimate
$\eqref{pri:5.12}$ that
\begin{equation}\label{f:4.22}
\big\|f\big\|_{L^2(\partial\Omega)}
\leq C\big\|\big(\pm(1/2)I+\mathcal{K}_{\mathcal{L}}^*\big)(f)
\big\|_{L^2(\partial\Omega)}.
\end{equation}
Recalling that $\mathcal{K}_{\mathcal{L}}^* = \mathcal{K}_{\mathcal{L}^*}+T$, where
the operator $T$ is defined by $\eqref{eq:5.4}$,
we can arrive at
\begin{equation*}
\pm(1/2)I + \mathcal{K}_{\mathcal{L}^*}
= \pm(1/2)I + \mathcal{K}_{\mathcal{L}}^* - T
= \big(\pm(1/2)I + \mathcal{K}_{\mathcal{L}}^*\big)
\Big[I-\big(\pm(1/2)I + \mathcal{K}_{\mathcal{L}}^*\big)^{-1}T\Big].
\end{equation*}
On the one hand, the estimate $\eqref{f:4.22}$ coupled with the condition
$\|V-B\|_{L^\infty(\partial\Omega)}\leq \epsilon_0$ leads to
\begin{equation*}
\big\|\big(\pm(1/2)I + \mathcal{K}_{\mathcal{L}}^*\big)^{-1}T
\big\|_{L^2(\partial\Omega)\to L^2(\partial\Omega)}
\leq C\epsilon_0 \leq 1/2,
\end{equation*}
provided $\epsilon_0>0$ is sufficiently small, where
we also employ the estimate $\eqref{pri:4.15}$ in the first inequality.
This together with estimate $\eqref{f:4.22}$ gives
the stated estimate $\eqref{pri:4.16}$. On the other hand,
due to Theorem $\ref{thm:4.3}$
the operator $T:L^2(\partial\Omega;\mathbb{R}^d)
\to L^2(\partial\Omega;\mathbb{R}^d)$ is compact. Hence, it is not hard to see
that the
trace operators $\pm(1/2)I + \mathcal{K}_{\mathcal{L}^*}$ are invertible, and
we have completed the proof.
\end{proof}

\noindent\textbf{Proof of Theorem $\ref{thm:4.2}$.}
We first establish the existences for the Neumann problem
$(\mathbf{NH}_1)$ and the regular problem $(\mathbf{RH}_1)$.
Let $f\in L^2(\partial\Omega;\mathbb{R}^m)$, and it is not hard to see
that the single layer potential $u=\mathcal{S}_{\mathcal{L}}(f)$ satisfies
$\mathcal{L}(u) =0$ in $\mathbb{R}^d\setminus\partial\Omega$. According to
Theorem $\ref{thm:4.1}$,
the trace operator $(1/2)I+\mathcal{K}_{\mathcal{L}}$ is invertible on
$L^2(\partial\Omega;\mathbb{R}^m)$, and then
for any $g\in L^2(\partial\Omega;\mathbb{R}^m)$ the expression
$u = \mathcal{S}_{\mathcal{L}}
\big((\frac{1}{2}I+\mathcal{K}_{\mathcal{L}})^{-1}(g)\big)$ gives
a solution of $(\mathbf{NH}_1)$. Furthermore, it follows from the estimates
$\eqref{pri:5.12}$ and $\eqref{pri:5.13}$ that
$\|(\nabla u)^*\|_{L^2(\partial\Omega)} \leq C\|g\|_{L^2(\partial\Omega)}$.
For the regular problem $(\mathbf{RH}_1)$, the existence is based upon the invertibility
of $\mathcal{S}_{\mathcal{L}}:L^2(\partial\Omega;\mathbb{R}^m)\to H^1(\partial\Omega;\mathbb{R}^m)$
by Theorem $\ref{thm:4.3}$. For any $g\in H^1(\partial\Omega;\mathbb{R}^m)$, there exists
$f\in L^2(\partial\Omega;\mathbb{R}^m)$ such that
$\mathcal{S}_{\mathcal{L}}(f) = g$ on $\partial\Omega$, and one may consider
$u$ to be the solution of the Neumann problem with the boundary data $f\in L^2(\partial\Omega;\mathbb{R}^m)$.
Due to the estimate $\eqref{pri:4.15}$ we may derive
$\|(\nabla u)^*\|_{L^2(\partial\Omega)}
\leq C\|f\|_{L^2(\partial\Omega)}
\leq C\|g\|_{H^1(\partial\Omega)}$.
We now proceed to establish the existence for the Dirichlet problem $(\mathbf{DH}_1)$ with
the given data $g\in L^2(\partial\Omega;\mathbb{R}^m)$. From the estimate
$\eqref{pri:4.16}$ one may have
$\|((-1/2)I+\mathcal{K}_{\mathcal{L}^*})^{-1}\|_{L^2(\partial\Omega)\to L^2(\partial\Omega)}
\leq C$. Then the double layer potential
$w=\mathcal{D}_{\mathcal{L}}\big((-\frac{1}{2}I+\mathcal{K}_{\mathcal{L}^*})^{-1}(g)\big)$
satisfies $\mathcal{L}(w) = 0$ in $\Omega$, and
$\|(u)^*\|_{L^2(\partial\Omega)}\leq C\|g\|_{L^2(\partial\Omega)}$
due to the estimate $\eqref{pri:5.13}$.

Clearly, the uniqueness for the Neumann problem
$(\mathbf{NH}_1)$ is based upon the equality $\eqref{eq:4.7}$, while
the uniqueness for the regular problem $(\mathbf{RH}_1)$ may also follow from
the equality $\eqref{eq:4.7}$, or from that of the Dirichlet problem. So
we now turn to verify the uniqueness for
the Dirichlet problem $(\mathbf{DH}_1)$. To do so, suppose that
$\mathcal{L}(w) = 0$ in $\Omega$ with $(w)^*\in L^2(\partial\Omega)$ and
$w=0$ on $\partial\Omega$.

Set $\Sigma_{r}=\{x\in\Omega:\text{dist}(x,\partial\Omega)>r\}$.
Let $\psi_r\in C^1_0(\Omega)$ be a cut-off function
such that $\psi_r = 1$ in $\Sigma_{2r}$ and $\psi_r = 0$ outside $\Sigma_{r}$
with $|\nabla\psi_r|\leq Cr^{-1}$. Thus, it is not hard to derive that
\begin{equation*}
\mathcal{L}(\psi_r w)
= -\text{div}(A\nabla\psi_r w) - A\nabla w\nabla\psi_r - V\nabla\psi_r w
+ B\nabla\psi_r w
\quad\text{in}~\Omega.
\end{equation*}
Let $\mathcal{G}(x,y)$ denote the
Green's function associated with $\mathcal{L}$
(the existence and decay estimates may be found in \cite{QXS}), and for any $x\in\Sigma_{5r}$
we have
\begin{equation*}
\begin{aligned}
w(x) &= \int_{\Omega} \nabla_y\mathcal{G}(x,y)A(y)\nabla\psi_r(y)w(y)dy
-\int_{\Omega}\mathcal{G}(x,y) A(y)\nabla\psi_r(y)\nabla w(y) dy \\
&+\int_{\Omega}\mathcal{G}(x,y)\big[B(y)-V(y)\big]\nabla\psi_r(y)w(y) dy
\end{aligned}
\end{equation*}
and this gives
\begin{equation}\label{f:5.27}
\begin{aligned}
|w(x)|
&\leq \frac{C}{r}
\int_{\Sigma_r\setminus\Sigma_{2r}}\Big(|\nabla_y\mathcal{G}(x,y)|
+ |\mathcal{G}(x,y)|\Big)|w(y)|dy
+ \frac{C}{r}\int_{\Sigma_r\setminus\Sigma_{2r}}
|\mathcal{G}(x,y)||\nabla w(y)|dy\\
&\leq  Cr^{-1}[\delta(x)]^{1-d}
\int_{\Sigma_r\setminus\Sigma_{2r}}|w(y)|dy
+ \frac{C}{r}
\Big(\int_{\Sigma_r\setminus\Sigma_{2r}}|\nabla\mathcal{G}(x,y)|^2
dy\Big)^{\frac{1}{2}}
\Big(\int_{\Omega\setminus\Sigma_{4r}}|w(y)|^2dy\Big)^{\frac{1}{2}}\\
&\leq C[\delta(x)]^{1-d}\Big(\int_{\partial\Omega}
|\mathcal{M}_{5r}(w)|^2 dS\Big)^{1/2},
\end{aligned}
\end{equation}
where we use the fact that $|x-y|>(\delta(x)/2)$ for any
$y\in\Sigma_r\setminus\Sigma_{2r}$, as well as Poincar\'e's inequality
coupled with Caccioppoli's inequality $\eqref{pri:2.6}$, in the second step, and in the last
one we employ the co-area
formula and the definition of the radical maximal function. Since
$\mathcal{M}_{5r}(u)(P)\to 0$ for a.e. $P\in\partial\Omega$ as
$r\to 0$, and the fact $\mathcal{M}_{5r}(u)\leq (u)^*$ on $\partial\Omega$
with $(u)^*\in L^2(\partial\Omega)$, it follows from the Lebesgue dominated
theorem that the right-hand side of $\eqref{f:5.27}$ will converge to
zero as $r\to 0$. This completes the whole proof.
\qed

\subsection{Improvements}
In the following context,
we plan to get rid of  the condition $\eqref{a:5.1}$
in Theorem $\ref{thm:4.2}$.
The methods has originally been developed by Kenig and
Shen in \cite{SZW24}, which is referred to
as a three-step approximate argument.
Recall the notation $\Sigma_r = \{x\in\Omega:\text{dist}(x,\partial\Omega)>r\}$.

\begin{thm}\label{thm:5.3}
Let $\Omega\subset\mathbb{R}^d$ be a bounded Lipschitz
domain with $\emph{diam}(\Omega)\leq 1/4$ and $0\in\Omega$.
Let the coefficients of $\mathcal{L}$ satisfy
$\eqref{a:1}$, $\eqref{a:2}$, $\eqref{a:3}$ and $\eqref{a:4}$ with
$A^*=A$, as well as $\lambda\geq\max\{\lambda_0,\mu\}$.
Then one may construct a new operator $\widetilde{\mathcal{L}}$ such that
its coefficients satisfy similar conditions as those given for $\mathcal{L}$, and
there holds
\begin{equation}\label{a:5.2}
\widetilde{A} = A, \quad
\widetilde{V} = V, \quad \text{in}~\Omega\setminus\Sigma_{c_0R_0}
\end{equation}
for some small $c_0>0$.
Moreover, the new corresponding trace operators admit
\begin{equation}\label{pri:5.15}
\begin{aligned}
\big\|\big(\pm(1/2)I+\mathcal{K}_{\widetilde{\mathcal{L}}}
\big)^{-1}\big\|_{L^2(\partial\Omega)\to L^2(\partial\Omega)}
\leq C\quad\text{and}\quad
\big\|(\mathcal{S}_{\widetilde{\mathcal{L}}})^{-1}
\big\|_{H^1(\partial\Omega)\to L^2(\partial\Omega)}
\leq C,
\end{aligned}
\end{equation}
where $C$ depends on $\mu,\kappa,\tau,\lambda,m,d$ and $\Omega$.
\end{thm}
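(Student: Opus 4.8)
The plan is to produce $\widetilde{\mathcal{L}}$ by smoothing the coefficients of $\mathcal{L}$ away from $\partial\Omega$ --- making them $C^{1}$ in the deep interior while leaving them untouched in a collar of the boundary --- and then to deduce $\eqref{pri:5.15}$ by perturbing off an auxiliary operator whose layer potentials are already known to be invertible from Theorems $\ref{thm:4.1}$ and $\ref{thm:4.3}$. Fix a small $c_{0}\in(0,1/4)$ depending only on $\Omega$, a mollification parameter $\eta\in(0,1)$, and an even standard mollifier $\rho$. Let $A^{\sharp}(x)=\int_{\mathbb{R}^{d}}A\bigl(x-\eta\,\text{dist}(x,\partial\Omega)\,z\bigr)\rho(z)\,dz$ and define $V^{\sharp}$ the same way, keeping $\widetilde B=B$, $\widetilde c=c$ and symmetrising ($A^{\sharp\,*}=A^{\sharp}$, allowed since $A^{*}=A$). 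A routine check shows $\mathcal{L}^{\sharp}$ satisfies $\eqref{a:1}$, $\eqref{a:3}$, $\eqref{a:4}$ and, on $\mathbb{R}^{d}\setminus\partial\Omega$, the bound $|\nabla A^{\sharp}|+|\nabla V^{\sharp}|\le C\eta^{\tau-1}[\text{dist}(\cdot,\partial\Omega)]^{\tau-1}$ --- i.e.\ $\eqref{a:5.1}$ with $\tau_{0}=\tau$ and a constant that is fixed once $\eta$ is. Now set $\widetilde A=\psi A^{\sharp}+(1-\psi)A$, $\widetilde V=\psi V^{\sharp}+(1-\psi)V$, where $\psi$ is a smooth cut-off with $\psi\equiv1$ on $\Sigma_{c_{0}R_{0}}$ and $\psi\equiv0$ on $\mathbb{R}^{d}\setminus\Sigma_{c_{0}R_{0}/2}$. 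Then $\widetilde{\mathcal{L}}$ inherits $\eqref{a:1}$, $\eqref{a:3}$, $\eqref{a:4}$ with comparable constants and $\widetilde A^{*}=\widetilde A$; we have $\widetilde A=A$, $\widetilde V=V$ on $\Omega\setminus\Sigma_{c_{0}R_{0}/2}$, which is $\eqref{a:5.2}$ (and $\eqref{a:2}$ can be arranged by applying the same modification in every $\mathbb{Z}^{d}$-translate of $\Omega$, since $\text{diam}(\Omega)\le1/4$). The point of the construction is that $\widetilde A-A^{\sharp}=-(1-\psi)(A^{\sharp}-A)$ and $\widetilde V-V^{\sharp}=-(1-\psi)(V^{\sharp}-V)$ are supported in $\{\text{dist}(\cdot,\partial\Omega)\le c_{0}R_{0}/2\}$ and there satisfy $|\widetilde A-A^{\sharp}|+|\widetilde V-V^{\sharp}|\le C\eta^{\tau}[\text{dist}(\cdot,\partial\Omega)]^{\tau}$; in particular they vanish on $\partial\Omega$, where therefore $\widetilde A=A^{\sharp}=A$ and $\widetilde V=V^{\sharp}=V$.

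Theorems $\ref{thm:4.1}$ and $\ref{thm:4.3}$ applied to $\mathcal{L}^{\sharp}$ give $\|(\pm\tfrac12I+\mathcal{K}_{\mathcal{L}^{\sharp}})^{-1}\|_{L^{2}\to L^{2}}+\|\mathcal{S}_{\mathcal{L}^{\sharp}}^{-1}\|_{H^{1}\to L^{2}}\le C_{1}(\eta)$. To compare $\widetilde{\mathcal{L}}$ with $\mathcal{L}^{\sharp}$ I would feed the coefficient differences above into the identity of Lemma $\ref{lemma:2.3.1}$ and estimate the resulting integrals exactly as in the proof of $\eqref{pri:4.14}$ --- freezing coefficients as in Lemma $\ref{lemma:4.2}$ and invoking the decay bounds $\eqref{pri:2.11}$--$\eqref{pri:2.13}$, $\eqref{pri:3.0}$, $\eqref{pri:5.9}$. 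For $x,y\in\partial\Omega$ the extra factor $[\text{dist}(z,\partial\Omega)]^{\tau}\le C|z-x|^{\tau}$ exactly absorbs the worst singularity of the kernels, yielding, with a constant independent of $\eta$,
\begin{equation*}
\bigl|\nabla_{i}\mathbf{\Gamma}_{\widetilde{\mathcal{L}}}(x,y)-\nabla_{i}\mathbf{\Gamma}_{\mathcal{L}^{\sharp}}(x,y)\bigr|\le C\eta^{\tau}|x-y|^{1-d+\tau}\ \ (i=1,2),\qquad
\bigl|\mathbf{\Gamma}_{\widetilde{\mathcal{L}}}(x,y)-\mathbf{\Gamma}_{\mathcal{L}^{\sharp}}(x,y)\bigr|\le C\eta^{\tau}|x-y|^{2-d+\tau}.
\end{equation*}
Since $\widetilde A=A^{\sharp}$ and $\widetilde V=V^{\sharp}$ on $\partial\Omega$, these are weakly singular kernels on $\partial\Omega\times\partial\Omega$, so $\|\mathcal{K}_{\widetilde{\mathcal{L}}}-\mathcal{K}_{\mathcal{L}^{\sharp}}\|_{L^{2}\to L^{2}}+\|\mathcal{S}_{\widetilde{\mathcal{L}}}-\mathcal{S}_{\mathcal{L}^{\sharp}}\|_{L^{2}\to H^{1}}\le C\eta^{\tau}$. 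If $C_{1}(\eta)\,C\eta^{\tau}<\tfrac12$, a Neumann series gives the lower bounds in $\eqref{pri:5.15}$; surjectivity (equivalently Fredholm index zero) then follows either from the same perturbation or, as in the proof of Theorem $\ref{thm:4.1}$, from the continuity method applied to the homotopy $t\widetilde{\mathcal{L}}+(1-t)\widetilde{\mathcal{L}}_{x_{0}}$, which is norm-continuous by $\eqref{pri:5.11}$ and invertible at $t=0$.

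The hard part is the competition between the gain $\eta^{\tau}$ and the growth of $C_{1}(\eta)$. Tracing the proof of $\eqref{pri:4.5}$, the constant of Theorem $\ref{thm:4.1}$ for $\mathcal{L}^{\sharp}$ grows like a fixed negative power of $\eta$: the term $\int(|\nabla A^{\sharp}|+|\nabla V^{\sharp}|)|\nabla u|^{2}$ forces the free parameter $\theta$ in $\eqref{pri:4.5}$ to be a negative power of the $\eqref{a:5.1}$-constant $\sim\eta^{\tau-1}$, and then the surviving factors $\theta^{\tau_{0}-1}$ blow up. Hence $C_{1}(\eta)\eta^{\tau}<\tfrac12$ is attainable for a single choice of $\eta$ only when $\tau$ is close enough to $1$; for general $\tau\in(0,1)$ the estimate must be carried out iteratively --- this is exactly the ``three-step'' structure of \cite{SZW24}. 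One performs the smoothing in $K=K(\tau)$ successive stages $\mathcal{L}^{\sharp}=\mathcal{L}_{0},\mathcal{L}_{1},\dots,\mathcal{L}_{K}=\widetilde{\mathcal{L}}$, at each of which the coefficient difference is supported in, and vanishes like a power of the distance to, a collar widened only slightly, so that each step is a genuinely small perturbation and the uniform invertibility bound is propagated along the chain. Organising this finite induction --- keeping precise track of how the constant in $\eqref{pri:4.5}$ depends on the $\eqref{a:5.1}$-constant, so that the final bound depends only on $\mu,\kappa,\lambda,m,d,\tau$ and $\Omega$ --- is the delicate point; the rest is a faithful rerun of the comparing-lemma computations of Section $\ref{sec:4}$.
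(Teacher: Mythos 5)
Your overall framework is the right one --- construct an auxiliary operator whose coefficients satisfy the interior regularity condition $\eqref{a:5.1}$ so that Theorems $\ref{thm:4.1}$ and $\ref{thm:4.3}$ apply, then define $\widetilde{\mathcal{L}}$ by blending it with $\mathcal{L}$ near $\partial\Omega$ via a cutoff, and close with a Neumann-series perturbation estimated through Lemma $\ref{lemma:2.3.1}$ and the Section~\ref{sec:4} comparing computations. That matches the paper. The divergence, and the source of a genuine gap, is in \emph{how} the auxiliary operator is built.

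You produce $\mathcal{L}^{\sharp}$ by a variable-radius mollification at scale $\eta\,\delta(x)$, which yields $\eqref{a:5.1}$ with a constant $\sim\eta^{\tau-1}$ that blows up as $\eta\to0$, while the achievable perturbation smallness is $\sim\eta^{\tau/2}$. You correctly diagnose the resulting competition between $C_{1}(\eta)$ and $\eta^{\tau/2}$, you correctly compute that a single choice of $\eta$ suffices only for $\tau$ close to $1$, and you then propose an iterative, multi-stage smoothing to cover general $\tau$. But that iteration is precisely the part you do not carry out, and it is not at all clear it converges: at each stage the $\eqref{a:5.1}$-constant of the smoother operator must still be brought under control before the next Theorem~\ref{thm:4.1} constant can be bounded, and I do not see a clean bookkeeping that keeps the final bound depending only on $\mu,\kappa,\lambda,m,d,\tau,\Omega$. (Your invocation of the ``three-step'' structure is also a misreading: in the paper those three steps refer to Lemma~\ref{lemma:5.3}, the blending in Theorem~\ref{thm:5.3}, and the subsequent use of $\eqref{a:5.2}$ for the localisation argument of Section~\ref{sec:5}; they are not an iterated smoothing scheme.)

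The paper sidesteps the competition entirely via Lemma~\ref{lemma:5.3}: replace $A,V,B$ by their harmonic extensions $\bar A,\bar V,\bar B$ from $\partial\Omega$, solved in $\Omega$ and in $\mathrm{Y}\setminus\Omega$ with periodic closure. These agree with the original coefficients on $\partial\Omega$, inherit periodicity, boundedness, and a $C^{0,\tau_{0}}$ Hölder bound via maximum principle and global Hölder theory, and --- crucially --- the interior Lipschitz estimate gives $\eqref{a:5.1}$ with a \emph{fixed} constant depending only on $\mu,\kappa,d,m,\Omega$, independent of any auxiliary parameter. The blending $A^{t}=\psi(\delta(x)/t)A+[1-\psi(\delta(x)/t)]\bar A$ is then done in a collar of width $t$, and because $\bar A-A$ vanishes on $\partial\Omega$ to Hölder order $\tau$, the perturbation $\|\mathcal{K}_{\mathcal{L}^{t}}-\mathcal{K}_{\bar{\mathcal{L}}}\|_{L^{2}\to L^{2}}\le Ct^{\tau/2}$ with $C$ independent of $t$ while the Theorem~\ref{thm:4.1} constant for $\bar{\mathcal{L}}$ is also independent of $t$. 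A single small choice of $t$ then closes the argument with no iteration. So the missing idea in your proposal is this choice of a fixed (rather than parameter-dependent) auxiliary operator: harmonic extension from $\partial\Omega$ rather than distance-weighted mollification. Once that is substituted, the rest of your outline --- including the role of Lemma~\ref{lemma:2.3.1}, the support and vanishing order of the coefficient difference, and the Neumann-series closing --- is faithful to the paper.
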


\begin{lemma}\label{lemma:5.3}
Assume the same conditions as those in Theorem $\ref{thm:5.3}$. Then there exist
$\bar{A}$, $\bar{V}$ and $\bar{B}$ satisfying
$\eqref{a:1}$, $\eqref{a:2}$
$\eqref{a:3}$ and $\eqref{a:5.1}$ such that $\bar{A}=A$, $\bar{V}=V$ and
$\bar{B}=B$ on $\partial\Omega$. Moreover, we have $(\bar{A})^* = \bar{A}$.
\end{lemma}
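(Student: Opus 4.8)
The plan is to build $\bar A,\bar V,\bar B$ from $A,V,B$ by one mollification performed at a space-dependent scale that shrinks proportionally to the distance to $\partial\Omega$. Such a mollification is automatically $C^{\infty}$ away from $\partial\Omega$, its gradient blows up only like a negative power of $\mathrm{dist}(\cdot,\partial\Omega)$, and, because the original coefficients are continuous, it does not alter their values on $\partial\Omega$ itself. Since $0\in\Omega$ and $R_{0}=\mathrm{diam}(\Omega)\le 1/4$ we have $\Omega\subset B(0,1/4)$, so the translates $\partial\Omega+z$ with $z\in\mathbb{Z}^{d}\setminus\{0\}$ stay at distance $\ge 1/4$ from the period cell $\mathrm{Y}$; in particular $\mathrm{Y}\cap(\partial\Omega+\mathbb{Z}^{d})=\partial\Omega$. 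First I would fix a $\mathbb{Z}^{d}$-periodic function $\rho\in C^{\infty}\big(\mathbb{R}^{d}\setminus(\partial\Omega+\mathbb{Z}^{d})\big)$ with $\rho\asymp\mathrm{dist}\big(\cdot,\partial\Omega+\mathbb{Z}^{d}\big)$ and $|\nabla\rho|\le 1$ (a periodically constructed regularized distance; see the last paragraph), and a standard mollifier $\eta\in C_{0}^{\infty}\big(B(0,1/2)\big)$ with $\eta\ge 0$, $\int\eta=1$, writing $\eta_{t}(w)=t^{-d}\eta(w/t)$. Then, for $E\in\{A,V,B\}$, I would set
\[
\bar E(x)=\int_{\mathbb{R}^{d}}E\big(x-\rho(x)z\big)\,\eta(z)\,dz=\int_{\mathbb{R}^{d}}E(y)\,\eta_{\rho(x)}(x-y)\,dy ,
\]
and leave $\bar c=c$ (the regularity condition $\eqref{a:5.1}$ involves only $A,V,B$).

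The structural conditions are then immediate. Periodicity $\eqref{a:2}$: periodicity of $\rho$ and of $E$ gives $\bar E(x+z)=\int E\big(x-\rho(x)w\big)\eta(w)\,dw=\bar E(x)$ for $z\in\mathbb{Z}^{d}$. Ellipticity $\eqref{a:1}$ for $\bar A$: each value $\bar A(x)$ is an average of the matrices $A\big(x-\rho(x)z\big)$ against a probability density, and the coefficient tensors satisfying the two-sided bound in $\eqref{a:1}$ form a convex set, so $\bar A$ obeys $\eqref{a:1}$ with the same $\mu$. Boundedness $\eqref{a:3}$: $\|\bar V\|_{L^{\infty}(\mathbb{R}^{d})}\le\|V\|_{L^{\infty}(\mathbb{R}^{d})}$ and likewise for $\bar B$ and $\bar c$, all $\le\kappa$. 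Symmetry: $A^{*}=A$ reads $a_{ji}^{\beta\alpha}=a_{ij}^{\alpha\beta}$ pointwise, and averaging preserves this, whence $(\bar A)^{*}=\bar A$. Agreement on $\partial\Omega$: $\rho$ vanishes on $\partial\Omega$, so for $P\in\partial\Omega$ one gets $\bar E(P)=\int E(P)\eta(z)\,dz=E(P)$ (and $\bar E$ is continuous up to $\partial\Omega$ by the gradient estimate below).

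The substance of the lemma is $\eqref{a:5.1}$. Off $\partial\Omega+\mathbb{Z}^{d}$ the weight $\rho$ is smooth and strictly positive, so differentiating under the integral sign gives $\bar E\in C^{\infty}$ there, and by the geometric remark above $\bar E\in C^{1}(\mathrm{Y}\setminus\partial\Omega)$. For the gradient bound I would use the cancellation $\int\eta_{\rho(x)}(x-y)\,dy\equiv 1$, which forces $\int\nabla_{x}\big[\eta_{\rho(x)}(x-y)\big]\,dy=0$, hence
\[
\nabla\bar E(x)=\int_{\mathbb{R}^{d}}\big[E(y)-E(x)\big]\,\nabla_{x}\big[\eta_{\rho(x)}(x-y)\big]\,dy .
\]
On the support of the integrand $|x-y|\le\rho(x)/2$, so by $\eqref{a:4}$ one has $|E(y)-E(x)|\le[E]_{C^{0,\tau}(\mathbb{R}^{d})}|x-y|^{\tau}\le C\kappa\,\rho(x)^{\tau}$, while the chain rule together with $|\nabla\rho|\le 1$ yields $\big|\nabla_{x}[\eta_{\rho(x)}(x-y)]\big|\le C\rho(x)^{-d-1}$ there; integrating over $B\big(x,\rho(x)/2\big)$ gives $|\nabla\bar E(x)|\le C\kappa\,\rho(x)^{\tau-1}$. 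Since $\rho(x)\ge c_{1}\,\mathrm{dist}(x,\partial\Omega)$ near $\partial\Omega$ and $\tau-1<0$, this is $\le C\kappa\,[\mathrm{dist}(x,\partial\Omega)]^{\tau-1}$ there, and away from $\partial\Omega$ the left-hand side is bounded while $[\mathrm{dist}(\cdot,\partial\Omega)]^{\tau-1}$ is bounded below, so the same bound holds after enlarging $C$. Thus $\eqref{a:5.1}$ holds with $\tau_{0}=\tau$ and a constant depending only on $\kappa,\tau,d,m$ and $\Omega$; in particular this already forces $\bar E\in C^{0,\tau}(\mathbb{R}^{d})$, so $\eqref{a:4}$ is retained as well.

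The one point needing care is the choice of $\rho$: it must be simultaneously $\mathbb{Z}^{d}$-periodic, $C^{1}$ (indeed $C^{\infty}$) off $\partial\Omega+\mathbb{Z}^{d}$, equal to $0$ on $\partial\Omega$, and two-sidedly comparable to $\mathrm{dist}(\cdot,\partial\Omega+\mathbb{Z}^{d})$ with Lipschitz constant at most $1$. The plain distance function is only Lipschitz near (but off) $\partial\Omega$, so one genuinely needs a regularized version; this is a classical Whitney-type construction, carried out in one period cell in a neighborhood of $\partial\Omega$ and then extended periodically, which is legitimate because the translates of $\partial\Omega$ are mutually well separated. Once $\rho$ is in place, everything else above is a routine mollification estimate, so I expect no real obstacle beyond this.
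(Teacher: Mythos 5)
Your proof is correct, but it takes a genuinely different route from the paper's. The paper constructs $\bar V$ (and similarly $\bar B$, and $\bar A$ via \cite[Lemma 7.1]{SZW24}) by solving the Dirichlet problem for the Laplacian in $\Omega$ and in $\mathrm{Y}\setminus\Omega$ with boundary data $V|_{\partial\Omega}$ (and the constant $1$ on $\partial\mathrm{Y}$ to make the periodization seamless), then appeals to the maximum principle for \eqref{a:3}, to a global Hölder estimate in Lipschitz domains for an exponent $\tau_0\in(0,\tau]$, and to the interior Lipschitz estimate $|\nabla \bar V(x)|\le C\delta(x)^{-1}\dashint_{B(x,\delta(x))}|\bar V-\bar V(x')|$ to obtain \eqref{a:5.1}. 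You instead mollify at a scale comparable to the distance to $\partial\Omega$, using a periodic Whitney-type regularized distance $\rho$, and derive the gradient bound from the cancellation $\int\nabla_x[\eta_{\rho(x)}(x-y)]\,dy=0$ together with \eqref{a:4}. Both constructions exploit convexity of the constraint set to preserve \eqref{a:1} — yours via the probability average defining the mollification, the paper's via the Poisson-kernel representation of the harmonic extension. Your route is more elementary (no elliptic boundary regularity, just calculus once $\rho$ is in hand), gives the sharper exponent $\tau_0=\tau$ rather than a possibly smaller $\tau_0$ coming from the Lipschitz-domain Hölder estimate, and retains the $C^{0,\tau}$ bound \eqref{a:4} for $\bar A,\bar V,\bar B$ for free. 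The price is the construction of a $\mathbb{Z}^d$-periodic regularized distance to $\partial\Omega+\mathbb{Z}^d$ with $|\nabla\rho|\le 1$, which you correctly identify as the one nontrivial input; it is classical (Stein's Whitney construction, made periodic thanks to the separation $\Omega\subset B(0,1/4)$), so there is no gap. Two small remarks: continuity of $\bar E$ up to $\partial\Omega$ (hence the claimed boundary agreement) indeed follows from $|\nabla\bar E|\lesssim\delta^{\tau-1}$ since $\tau-1>-1$ makes the bound integrable along paths reaching $\partial\Omega$; and your construction also works componentwise on $A$ and preserves $a_{ij}^{\alpha\beta}=a_{ji}^{\beta\alpha}$ by linearity of the average, so the symmetry claim is fine as stated.
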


\begin{proof}
The proof on the coefficient $A$ has already been given in \cite[Lemma 7.1]{SZW24}, and
this lemma can be proved in the same way. Here we like to take $V$ as an example
to introduce the arguments developed in \cite[Lemma 7.1]{SZW24} to the reader.
By periodicity we may construct $\bar{V}_i^{\alpha\beta}$ as follows:
\begin{equation*}
(1)\left\{\begin{aligned}
\Delta \bar{V}_i^{\alpha\beta} &= 0 &~&\text{in}~\Omega,\\
 \bar{V}_i^{\alpha\beta} &= V_i^{\alpha\beta} &~&\text{on}~\partial\Omega,
\end{aligned}\right.
\qquad\text{and}\qquad
(2)\left\{\begin{aligned}
\Delta \bar{V}_i^{\alpha\beta} &= 0 &~&\text{in}~\mathrm{Y}\setminus\Omega,\\
 \bar{V}_i^{\alpha\beta} &= V_i^{\alpha\beta} &~&\text{on}~\partial\Omega, \\
 \bar{V}_i^{\alpha\beta} &= 1 &~&\text{on}~\partial\mathrm{Y}.
\end{aligned}\right.
\end{equation*}
It is clear to see that the boundary condition $\bar{V}_i^{\alpha\beta} = 1$ may
guarantee the extension of $\bar{V}$ to $\mathbb{R}^d$ in a periodic way. Thus
$\bar{V}$ actually satisfies the condition $\eqref{a:2}$. On account of
the maximum principle it is known that
$\|\bar{V}\|_{L^\infty(\mathrm{Y})}\leq \|V\|_{L^\infty(\partial\Omega)}\leq \kappa_1$, which means
that $\bar{V}$ shares the same condition $\eqref{a:3}$. Furthermore,
by a global H\"older estimate (see for example \cite[Theorem 1.2]{QXS}),
we have $\max\big\{\|\bar{V}\|_{C^{0,\tau_0}(\Omega)},
\|\bar{V}\|_{C^{0,\tau_0}(\mathrm{Y}\setminus\Omega)}\big\}\leq C\kappa$ where
$\tau_0\in(0,\tau]$. Let $\delta(x) =\text{dist}(x,\partial\Omega)$, and
$x^\prime\in\partial\Omega$ be the point such that $\delta(x)=|x-x^\prime|$.  Then it follows from a interior Lipschitz estimate that
\begin{equation*}
|\nabla V(x)| \leq \frac{C}{\delta(x)}\dashint_{B(x,\delta(x))}|V(y)-V(x^\prime)| dy
\leq C\kappa\big[\delta(x)\big]^{\tau_0-1}
\end{equation*}
and this verified the condition $\eqref{a:5.1}$. Then $\bar{B}$ follows from a similar construction like
$(1)$ and $(2)$, and it will be proved to
satisfy the conditions $\eqref{a:2}$, $\eqref{a:3}$ and $\eqref{a:5.1}$. Up to now, we have completed the proof.
\end{proof}

\noindent\textbf{Proof of Theorem $\ref{thm:5.3}$.}
The main idea may be found in \cite[pp.34-36]{SZW24}, and we provide a proof for the sake of
the completeness. Let $\psi\in C_0^1(-\frac{1}{2},\frac{1}{2})$ be a cut-off function such that
$\psi = 1$ in $(-\frac{1}{4},\frac{1}{4})$, $\psi = 0$ outside $(-\frac{3}{7},\frac{3}{7})$.
Let $\bar{A},\bar{V}$ be given as in Lemma $\ref{lemma:5.3}$, and
we may define
\begin{equation*}
\begin{aligned}
 A^t(x) &= \psi\Big(\frac{\delta(x)}{t}\Big)A(x) + \Big[1-\psi\Big(\frac{\delta(x)}{t}\Big)\Big]
 \bar{A}(x),\\
 V^t(x) &= \psi\Big(\frac{\delta(x)}{t}\Big)V(x) + \Big[1-\psi\Big(\frac{\delta(x)}{t}\Big)\Big]
 \bar{V}(x)
\end{aligned}
\end{equation*}
for any $x\in\mathrm{Y}$, where $t\in(0,1/10)$ and $\delta(x)=\text{dist}(x,\partial\Omega)$. Then
the coefficients $A^t,V^t$ may be extended to $\mathbb{R}^d$ in a periodicity way. Also, they
will satisfy the conditions $\eqref{a:1}$,$\eqref{a:3}$ and $\eqref{a:4}$, by which we can
construct a new operator
\begin{equation}
\mathcal{L}^t = -\text{div}(A^t\nabla + V^t) + B\nabla + c + \lambda I.
\end{equation}
Let the notation $\mathbf{\Gamma}_{\mathcal{L}^t}$ be the fundamental solution of $\mathcal{L}^t$, which
may define the corresponding trace operator, denoted by $\mathcal{K}_{\mathcal{L}^t}$. Thus one has the
following identity
\begin{equation}
\begin{aligned}
  (\pm1/2)I + \mathcal{K}_{\mathcal{L}^t}
  &= (\pm1/2)I + \mathcal{K}_{\bar{\mathcal{L}}}
  + \mathcal{K}_{\mathcal{L}^t} - \mathcal{K}_{\bar{\mathcal{L}}}\\
  &= \big((\pm1/2)I + \mathcal{K}_{\bar{\mathcal{L}}}\big)
  \Big[I - \underbrace{\big((\pm1/2)I + \mathcal{K}_{\bar{\mathcal{L}}}\big)^{-1}
  \big(\mathcal{K}_{\bar{\mathcal{L}}}-\mathcal{K}_{\mathcal{L}^t}\big)}_{T}\Big].
\end{aligned}
\end{equation}
We claim that the problem is reduced to
\begin{equation}\label{f:5.34}
\big\|\mathcal{K}_{\mathcal{L}^t} - \mathcal{K}_{\bar{\mathcal{L}}}
\big\|_{L^2(\partial\Omega)\to L^2(\partial\Omega)}
\leq C\vartheta_2
\leq Ct^{\tau/2},
\end{equation}
where $t\in(0,1/10)$ and we actually employ the estimate $\eqref{pri:5.11}$ in
the first inequality,
and the above concrete computations are as similar as those in
\cite[Lemma 7.2]{SZW24}. So we do not reproduce it here.
In view of $\eqref{f:5.34}$ we may derive
\begin{equation}\label{f:5.35}
\|T\|_{L^2(\partial\Omega)\to L^2(\partial\Omega)}\leq 1/2
\end{equation}
by choosing a suitable $t>0$, where we also use the estimate $\eqref{pri:5.12}$ in the inequality.
Moreover, there holds
\begin{equation*}
 \frac{1}{2}\big\|f\big\|_{L^2(\partial\Omega)}
 \leq \big\|(I-T)(f)\big\|_{L^2(\partial\Omega)}
 \leq C\big\|\big((\pm1/2)I + \mathcal{K}_{\mathcal{L}^t}\big)(f)\big\|_{L^2(\partial\Omega)}
\end{equation*}
for any $f\in L^2(\partial\Omega;\mathbb{R}^m)$, and the invertibility of the trace operators
$(\pm1/2)I + \mathcal{K}_{\mathcal{L}^t}$ is based upon the estimate $\eqref{f:5.35}$ and
the invertibility of $(\pm1/2)I + \mathcal{K}_{\bar{\mathcal{L}}}$. This gives
\begin{equation}
\big\|\big((\pm1/2)I + \mathcal{K}_{\mathcal{L}^t}\big)^{-1}
\big\|_{L^2(\partial\Omega)\to L^2(\partial\Omega)}\leq 2C,
\end{equation}
which is exactly the first estimate in $\eqref{pri:5.15}$.

Similarly, to show the second one in $\eqref{pri:5.15}$ may be reduced to estimate
\begin{equation}\label{f:5.36}
\begin{aligned}
 \|(\mathcal{S}_{\mathcal{L}^{t}}-\mathcal{S}_{\bar{\mathcal{L}}})(f)\|_{H^1(\partial\Omega)}
& \leq \|(\mathcal{T}_{\mathcal{L}^t}^{1}-\mathcal{T}_{\bar{\mathcal{L}}}^{1})(f)\|_{L^2(\partial\Omega)}
 + \|(\mathcal{S}_{\mathcal{L}^{t}}-\mathcal{S}_{\bar{\mathcal{L}}})(f)\|_{L^2(\partial\Omega)} \\
& \leq C\big\{\vartheta_2+\vartheta_1\big\}\|f\|_{L^2(\partial\Omega)}
\leq C\big\{t^{\tau/2}+t^\tau\big\}\|f\|_{L^2(\partial\Omega)}
\leq Ct^{\tau/2}\|f\|_{L^2(\partial\Omega)}
\end{aligned}
\end{equation}
where we use the estimates $\eqref{pri:4.9}$ and $\eqref{pri:5.11}$ in the second inequality,
and the third one follows from \cite[Lemma 7.2]{SZW24}, and the parameter $t\in(0,1/10)$ will be chosen later.
Then
\begin{equation*}
\mathcal{S}_{\mathcal{L}^t}
= \mathcal{S}_{\bar{\mathcal{L}}} + \mathcal{S}_{\mathcal{L}^t} - \mathcal{S}_{\bar{\mathcal{L}}}
=\mathcal{S}_{\bar{\mathcal{L}}}
\Big[I-\mathcal{S}_{\bar{\mathcal{L}}}^{-1}
\big(\mathcal{S}_{\bar{\mathcal{L}}}-\mathcal{S}_{\mathcal{L}^t}\big)\Big]
\end{equation*}
will lead to
\begin{equation}\label{f:5.37}
\big\|f\big\|_{L^2(\partial\Omega)}
\leq C\|\mathcal{S}_{\mathcal{L}^t}(f)\|_{H^1(\partial\Omega)}
+\big\|\mathcal{S}_{\bar{\mathcal{L}}}^{-1}
\big(\mathcal{S}_{\mathcal{L}^t} - \mathcal{S}_{\bar{\mathcal{L}}}\big)(f)\big\|_{L^2(\partial\Omega)}
\end{equation}
since the single layer potential $\mathcal{S}_{\bar{\mathcal{L}}}$ satisfy the estimate
$\eqref{pri:4.15}$. Thus it follows from the estimates $\eqref{pri:4.15}$ and $\eqref{f:5.36}$ that
\begin{equation*}
\big\|\mathcal{S}_{\bar{\mathcal{L}}}^{-1}
\big(\mathcal{S}_{\mathcal{L}^t} - \mathcal{S}_{\bar{\mathcal{L}}}\big)(f)\big\|_{L^2(\partial\Omega)}
\leq C\big\|\big(\mathcal{S}_{\mathcal{L}^t}
- \mathcal{S}_{\bar{\mathcal{L}}}\big)(f)\big\|_{H^1(\partial\Omega)}
\leq Ct^{\tau/2}\|f\|_{L^2(\partial\Omega)}.
\end{equation*}
Plugging the above estimate back into $\eqref{f:5.37}$ will give the desired estimate by choosing
small $t>0$ such that $Ct^{\tau/2}=1/2$. Also, the above estimate implies the invertibility of
$\mathcal{S}_{\mathcal{L}^t}$. Finally, fixed the small parameter $t$,
let the notation $\widetilde{\mathcal{L}}$ represent $\mathcal{L}^t$ and the constructed coefficients will
satisfy the condition $\eqref{a:5.2}$. We have completed the whole proof.
\qed

\begin{thm}
Suppose that the coefficients of $\mathcal{L}$ and
$\bar{\mathcal{L}}$ satisfy $\eqref{a:1}$, $\eqref{a:2}$, $\eqref{a:3}$ and
$\eqref{a:4}$ with $A^*=A$ and $\bar{A}^*=\bar{A}$. Also, we assume that
the coefficients of $\mathcal{L}$ and
$\bar{\mathcal{L}}$ agree on the condition $\eqref{a:5.2}$.
Let $u$ be the solution of
$\mathcal{L}(u) = 0$ in $\Omega$ with $(\nabla u)^*\in L^2(\partial\Omega)$. Then we have
\begin{equation}\label{pri:5.14}
\begin{aligned}
\|\nabla u\|_{L^2(\partial\Omega)}
& \leq C\Big\|\frac{\partial u}{\partial\nu}\Big\|_{L^2(\partial\Omega)} \\
\|\nabla u\|_{L^2(\partial\Omega)}
& \leq C\|\nabla_{\emph{tan}}u\|_{L^2(\partial\Omega)}
+ C\|u\|_{L^2(\partial\Omega)}
\end{aligned}
\end{equation}
where $C$ depends on $\mu,\kappa,\lambda,\tau,m,d$ and $\Omega$.
\end{thm}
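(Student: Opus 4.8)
The plan is to reduce the estimates $\eqref{pri:5.14}$ for $\mathcal{L}$ to their analogues for $\bar{\mathcal{L}}$, which by Theorem~\ref{thm:5.3} may be taken to satisfy $\eqref{a:5.2}$ together with the invertibility $\eqref{pri:5.15}$ of $\pm\tfrac12 I+\mathcal{K}_{\bar{\mathcal{L}}}$ and $\mathcal{S}_{\bar{\mathcal{L}}}$, and which agrees with $\mathcal{L}$ on the collar $\Omega\setminus\Sigma_{c_0R_0}$. The first step is to record that $\eqref{pri:5.14}$ already holds for every $v$ with $\bar{\mathcal{L}}(v)=0$ in $\Omega$ and $(\nabla v)^{\ast}\in L^2(\partial\Omega)$: since $\mathcal{S}_{\bar{\mathcal{L}}}:L^2(\partial\Omega;\mathbb{R}^m)\to H^1(\partial\Omega;\mathbb{R}^m)$ is invertible, one has $v=\mathcal{S}_{\bar{\mathcal{L}}}(\psi)$ with $\psi=\mathcal{S}_{\bar{\mathcal{L}}}^{-1}(v|_{\partial\Omega})$ (the two solutions coincide by uniqueness of the $\bar{\mathcal{L}}$-Dirichlet problem, shown exactly as in the proof of Theorem~\ref{thm:4.2}), whence $\|\psi\|_{L^2(\partial\Omega)}\le C\|v\|_{H^1(\partial\Omega)}$; the jump relation $\eqref{Id:5.2}$ for $\bar{\mathcal{L}}$ and the invertibility of $\tfrac12 I+\mathcal{K}_{\bar{\mathcal{L}}}$ also give $\|\psi\|_{L^2(\partial\Omega)}\le C\|\partial v/\partial\nu_{\bar{\mathcal{L}}}\|_{L^2(\partial\Omega)}$; and $\eqref{pri:5.13}$ applied to $\bar{\mathcal{L}}$ yields $\|\nabla v\|_{L^2(\partial\Omega)}\le\|(\nabla v)^{\ast}\|_{L^2(\partial\Omega)}\le C\|\psi\|_{L^2(\partial\Omega)}$. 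Thus the Rellich estimates for $\bar{\mathcal{L}}$ follow purely from $\eqref{pri:5.15}$ and the nontangential maximal bound, with no Rellich identity — which is essential, since $\bar{\mathcal{L}}$ obeys only $\eqref{a:4}$ and not $\eqref{a:5.1}$.

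Given now $u$ with $\mathcal{L}(u)=0$ in $\Omega$ and $(\nabla u)^{\ast}\in L^2(\partial\Omega)$, I would fix $\eta\in C_0^{\infty}(\mathbb{R}^d)$ with $\eta\equiv 1$ on $\{\delta<c_0R_0/2\}$ and $\eta\equiv 0$ on $\Sigma_{3c_0R_0/4}$, where $\delta(x)=\text{dist}(x,\partial\Omega)$. Since $\mathcal{L}=\bar{\mathcal{L}}$ on $\text{supp}(\nabla\eta)$, one has $\bar{\mathcal{L}}(\eta u)=G$ in $\Omega$, with $G=[\bar{\mathcal{L}},\eta]u$ supported in $\{c_0R_0/2<\delta<3c_0R_0/4\}\subset\subset\Omega$ and $\|G\|_{L^2(\Omega)}\le C\|u\|_{H^1(\Omega)}$. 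Put $v_1(x)=\int_{\Omega}\mathbf{\Gamma}_{\bar{\mathcal{L}}}(x,y)G(y)\,dy$, so that $\bar{\mathcal{L}}(v_1)=G$ on $\mathbb{R}^d$ and hence $\bar{\mathcal{L}}(v_1)=0$ on the two-sided collar $\{\delta<c_0R_0/2\}$; the interior Lipschitz estimate $\eqref{pri:0.3}$ for $\bar{\mathcal{L}}$ then gives $\|v_1\|_{C^1(\{\delta\le c_0R_0/4\})}\le C\|v_1\|_{L^{\infty}}\le C\|G\|_{L^2(\Omega)}\le C\|u\|_{H^1(\Omega)}$. Consequently $v_2:=\eta u-v_1$ solves $\bar{\mathcal{L}}(v_2)=0$ in $\Omega$ with $(\nabla v_2)^{\ast}\in L^2(\partial\Omega)$, and because $\eta\equiv 1$ and $\mathcal{L}=\bar{\mathcal{L}}$ near $\partial\Omega$, on $\partial\Omega$ we have $v_2=u-v_1$, $\nabla v_2=\nabla u-\nabla v_1$ and $\partial v_2/\partial\nu_{\bar{\mathcal{L}}}=\partial u/\partial\nu_{\mathcal{L}}-\partial v_1/\partial\nu_{\bar{\mathcal{L}}}$. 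Feeding these into the $\bar{\mathcal{L}}$-Rellich estimates for $v_2$ and absorbing the bounded contributions of $v_1$ produces $\|\nabla u\|_{L^2(\partial\Omega)}\le C\|\partial u/\partial\nu_{\mathcal{L}}\|_{L^2(\partial\Omega)}+C\|u\|_{H^1(\Omega)}$ and $\|\nabla u\|_{L^2(\partial\Omega)}\le C\|u\|_{H^1(\partial\Omega)}+C\|u\|_{H^1(\Omega)}$.

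It then remains to absorb $\|u\|_{H^1(\Omega)}$ into the desired right-hand sides. For the first inequality, integration by parts $\eqref{eq:4.7}$ combined with the coercivity $\eqref{pri:2.0.2}$ on $\Omega$ and the trace inequality $\eqref{f:3.5}$ gives $\|u\|_{H^1(\Omega)}^2\le C\|\partial u/\partial\nu_{\mathcal{L}}\|_{L^2(\partial\Omega)}\|u\|_{L^2(\partial\Omega)}\le C\|\partial u/\partial\nu_{\mathcal{L}}\|_{L^2(\partial\Omega)}\|u\|_{H^1(\Omega)}$, hence $\|u\|_{H^1(\Omega)}\le C\|\partial u/\partial\nu_{\mathcal{L}}\|_{L^2(\partial\Omega)}$. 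For the second, choosing an extension $\Phi\in H^1(\Omega)$ of $u|_{\partial\Omega}$ with $\|\Phi\|_{H^1(\Omega)}\le C\|u\|_{H^1(\partial\Omega)}$, the difference $u-\Phi\in H^1_0(\Omega)$ solves $\mathcal{L}(u-\Phi)=-\mathcal{L}(\Phi)\in H^{-1}(\Omega)$, so the well-posedness of the $H^1_0$-Dirichlet problem for $\mathcal{L}$ on $\Omega$ (the bounded-domain analogue of Theorem~\ref{thm:2.0}, valid since $\lambda\ge\lambda_0$) yields $\|u\|_{H^1(\Omega)}\le C\|u\|_{H^1(\partial\Omega)}=C\big(\|\nabla_{\text{tan}}u\|_{L^2(\partial\Omega)}+\|u\|_{L^2(\partial\Omega)}\big)$. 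Together these give $\eqref{pri:5.14}$.

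I expect the main obstacle to be the first step: recognising that, since neither $\mathcal{L}$ nor $\bar{\mathcal{L}}$ carries the Dini-type regularity $\eqref{a:5.1}$ required for the Rellich identities of Lemma~\ref{lemma:4.1}, the Rellich estimates for $\bar{\mathcal{L}}$ must be extracted from the invertibility $\eqref{pri:5.15}$ alone, through the single-layer representation, the jump relation, and the nontangential maximal bound. On the technical side, the delicate point is to arrange the cutoff so that the commutator $G$ lives at a fixed positive distance from $\partial\Omega$ — which is precisely what the matching condition $\eqref{a:5.2}$ provides — so that the correction $v_1$ is genuinely $C^1$ up to the Lipschitz boundary with a norm controlled by $\|u\|_{H^1(\Omega)}$.
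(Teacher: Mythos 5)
Your proposal is correct and follows the paper's proof in its essential structure: cut off on a collar where $\mathcal{L}=\bar{\mathcal{L}}$, represent the commutator error $G=[\bar{\mathcal{L}},\eta]u$ (supported strictly inside $\Omega$) via the Newtonian potential $v_{1}$ so that $v_{1}$ is $C^{1}$ near $\partial\Omega$ with norm bounded by $\|u\|_{H^{1}(\Omega)}$, apply the $\bar{\mathcal{L}}$-Rellich estimates to $v_{2}=\eta u - v_{1}$, and finally absorb $\|u\|_{H^{1}(\Omega)}$ through coercivity (for the Neumann bound) and the $H^{1}_{0}$-Dirichlet well-posedness (for the regularity bound). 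Where you are more careful than the paper is the opening step: the paper invokes Theorem~$\ref{thm:4.2}$ for the solution $w=v_2$, even though that theorem is formally stated under the extra hypothesis $\eqref{a:5.1}$ which the modified operator $\bar{\mathcal{L}}$ of Theorem~$\ref{thm:5.3}$ does not enjoy near $\partial\Omega$; your derivation of the $\bar{\mathcal{L}}$-Rellich estimates directly from the invertibility $\eqref{pri:5.15}$, the jump relation $\eqref{Id:5.2}$, and the nontangential maximal bound $\eqref{pri:5.13}$ fills this in cleanly and is exactly the mechanism the paper's citation is implicitly relying on.
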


\begin{proof}
Let $\psi\in C_0^1(\mathbb{R}^d)$ be a cut-off function such that
$\psi= 1$ in
$\big\{x\in\mathbb{R}^d:\text{dist}(x,\partial\Omega)\leq (r_0/30)\big\}$
and $\psi = 0$ outside
$\{x\in\mathbb{R}^d:\text{dist}(x,\partial\Omega)> (r_0/20)\}$ with
$|\nabla\psi|\leq Cr_0^{-1}$. Set $\tilde{u}=\psi u$, and it is not hard to
derive
\begin{equation}\label{f:5.28}
\bar{\mathcal{L}}(\tilde{u}) = -\text{div}(\bar{A}\nabla\psi u)
-\bar{A}\nabla\psi\nabla u + (\bar{B}-\bar{V})\nabla\psi u
\qquad\text{in}~\Omega,
\end{equation}
and $\tilde{u} = u$ on $\partial\Omega$,
where we use the condition $\eqref{a:5.2}$. By using it again we have
\begin{equation*}
\frac{\partial \tilde{u}}{\partial\bar{\nu}}
= \frac{\partial u}{\partial\nu}
\quad\text{on}~\partial\Omega,
\quad\text{where}~\partial/\partial\bar{\nu}
= n\cdot(\bar{A}\nabla + \bar{V}).
\end{equation*}
For the ease of the statement, we denote the right-hand side of
$\eqref{f:5.28}$ by $F$. We mention that $F$ is supported in $\Omega$ and
may be zero-extended to $\mathbb{R}^d$, still denoted by itself.
To estimate the first line of $\eqref{pri:5.14}$
we start from the following equations:
$\bar{\mathcal{L}}(\tilde{u}) = F$ in $\Omega$ with
$\partial\tilde{u}/\partial\nu_{\bar{\mathcal{L}}} = \partial u/\partial\nu_{\mathcal{L}}$ on $\partial\Omega$, which may be divided by
\begin{equation*}
(1)~\bar{\mathcal{L}}(v) = F
\quad\text{in}~\mathbb{R}^d,
\qquad\text{and}\quad
(2)~\left\{\begin{aligned}
\bar{\mathcal{L}}(w) &= 0
&~&\text{in}~\Omega,\\
\frac{\partial w}{\partial\nu_{\bar{\mathcal{L}}}}&=
\frac{\partial u}{\partial\nu_{\mathcal{L}}}
-\frac{\partial v}{\partial\nu_{\mathcal{L}}} &~&\text{on}~\partial\Omega.
\end{aligned}\right.
\end{equation*}
For $(1)$, let $\bar{\mathbf{\Gamma}}(x,y)$ be the fundamental solution associated with
$\bar{\mathcal{L}}$. Thus one may have
\begin{equation*}
\begin{aligned}
 v(x) &= \int_{\Omega}\nabla_y\bar{\mathbf{\Gamma}}(x,y)A(y)\nabla\psi(y)u(y)dy
 -\int_{\Omega}\bar{\mathbf{\Gamma}}(x,y)A(y)\nabla\psi(y)\nabla u(y)dy \\
 &-\int_{\Omega}\bar{\mathbf{\Gamma}}(x,y)\big[B(y)-V(y)\big]
 \nabla\psi(y)u(y)dy
\end{aligned}
\end{equation*}
for any $x\in\mathbb{R}^d$,
where we use the fact $A\nabla\psi u = 0$ on $\partial\Omega$. Then
for any $x\in\Omega\setminus\Sigma_{r_0/60}$ there holds
\begin{equation}\label{f:5.29}
|\nabla v(x)|^2\leq Cr_0^{-d-1}\int_\Omega
\big(|\nabla u|^2 + |u|^2\big)dy
\leq C\Big\|\frac{\partial u}{\partial\nu_{\mathcal{L}}}\Big\|_{L^2(\partial\Omega)}
\|u\|_{L^2(\partial\Omega)},
\end{equation}
where we use the identity $\eqref{eq:4.7}$.
On the other hand, it follows from
an interior $L^p$ estimate that there holds
$\|u\|_{W^{1,p}(\Sigma_{r_0/80})}\leq C_p\|u\|_{H^1(\Omega)}$ for any
$p\geq 2$. Also, by observing $\eqref{f:5.29}$
we may assume $|\nabla v(x)|=O(|x|^{-d-1})$ as $|x|\to\infty$ and so
the energy estimate will leads to
$\|v\|_{H^1(\mathbb{R}^d)}\leq C\|u\|_{H^1(\Omega)}$. Thus
the above two estimates together with an interior Lipschitz estimate show
\begin{equation}\label{f:5.30}
\|\nabla v\|_{L^\infty(\Sigma_{r_0/60})}\leq
C\Big\{\|v\|_{L^2(\Omega)}+\|u\|_{W^{1,q}(\Sigma_{r_0/80})}\Big\}
\leq C\Big\|\frac{\partial u}{\partial\nu_{\mathcal{L}}}
\Big\|_{L^2(\partial\Omega)}^{1/2}
\|u\|_{L^2(\partial\Omega)}^{1/2}
\end{equation}
where $q>d$, as well as
\begin{equation}\label{f:5.31}
\|v\|_{L^2(\partial\Omega)}\leq C\|v\|_{H^1(\Omega)}
\leq C\Big\|\frac{\partial u}{\partial\nu_{\mathcal{L}}}
\Big\|_{L^2(\partial\Omega)}^{1/2}
\|u\|_{L^2(\partial\Omega)}^{1/2}
\leq C\theta\|\nabla u\|_{L^2(\partial\Omega)}
+C_\theta\|u\|_{L^2(\partial\Omega)}
\end{equation}
where we use Young's inequality with small $\theta$.

Then the estimates $\eqref{f:5.29}$ and
$\eqref{f:5.30}$ may define the nontangential maximal function
$(\nabla v)^*$  on $\partial\Omega$, and give
\begin{equation}\label{f:5.32}
\|\nabla v\|_{L^2(\partial\Omega)}
\leq \|(\nabla v)^*\|_{L^2(\partial\Omega)}
\leq C\Big\|\frac{\partial u}{\partial\nu_{\mathcal{L}}}
\Big\|_{L^2(\partial\Omega)}^{1/2}
\|u\|_{L^2(\partial\Omega)}^{1/2}
\end{equation}


Since $(\nabla u)^*\in L^2(\partial\Omega)$, one may obtain
$(\nabla w)^*\in L^2(\partial\Omega)$. Hence the equation $(2)$ can be
considered as a Neumann problem,
and it follows from Theorem $\ref{thm:4.2}$ that
\begin{equation}\label{f:5.33}
\begin{aligned}
\|\nabla w\|_{L^2(\partial\Omega)}
&\leq C\Big\{\Big\|\frac{\partial u}{\partial\nu_{\mathcal{L}}}\Big\|_{L^2(\partial\Omega)}
+ \|\nabla v\|_{L^2(\partial\Omega)}\Big\} \\
&\leq
C\Big\{\Big\|\frac{\partial u}{\partial\nu_{\mathcal{L}}}\Big\|_{L^2(\partial\Omega)}
+ \Big\|\frac{\partial u}{\partial\nu_{\mathcal{L}}}\Big\|_{L^2(\partial\Omega)}^{1/2}
\|u\|_{L^2(\partial\Omega)}^{1/2}\Big\}
\leq C\Big\|\frac{\partial u}{\partial\nu_{\mathcal{L}}}\Big\|_{L^2(\partial\Omega)}
\end{aligned}
\end{equation}
where we also use the estimate $\eqref{pri:4.6}$ in the last inequality.
Thus recalling that $u = w+v$ near $\partial\Omega$, we have
\begin{equation*}
\|\nabla u\|_{L^2(\partial\Omega)}
\leq \|\nabla w\|_{L^2(\partial\Omega)} + \|\nabla v\|_{L^2(\partial\Omega)}
\leq  C\Big\|\frac{\partial u}{\partial\nu_{\mathcal{L}}}\Big\|_{L^2(\partial\Omega)}
\end{equation*}
where we employ the estimates $\eqref{f:5.33}$ and $\eqref{f:5.32}$ coupled with
$\eqref{pri:4.6}$ in the last inequality, and this gives the first line of
$\eqref{pri:5.14}$.

Similarly, to obtain the second line of $\eqref{pri:5.14}$
the following equations:$\bar{\mathcal{L}}(\tilde{u}) = F$ in $\Omega$ with
$\tilde{u} = u$ on $\partial\Omega$ may be divided by
\begin{equation*}
(1)~\bar{\mathcal{L}}(v) = F
\quad\text{in}~\mathbb{R}^d,
\qquad\text{and}\quad
(3)~\left\{\begin{aligned}
\bar{\mathcal{L}}(w) &= 0
&~&\text{in}~\Omega,\\
w&= u-v &~&\text{on}~\partial\Omega.
\end{aligned}\right.
\end{equation*}

Since $(\nabla w)^*\in L^2(\partial\Omega)$, the equation $(3)$ can be
regarded as a regular problem,
and it follows from Theorem $\ref{thm:4.2}$ that
\begin{equation}
\begin{aligned}
 \|(\nabla w)^*\|_{L^2(\partial\Omega)}
& \leq C\|w\|_{H^1(\partial\Omega)}
 \leq C\Big\{\|u\|_{H^1(\partial\Omega)}
 +\|v\|_{H^1(\partial\Omega)}\Big\}\\
& \leq C\|\nabla_{\text{tan}}u\|_{L^2(\partial\Omega)}
+ C\theta\|\nabla u\|_{L^2(\partial\Omega)}
+C_\theta\|u\|_{L^2(\partial\Omega)},
\end{aligned}
\end{equation}
where we employ the estimates $\eqref{f:5.31}$ and
$\eqref{f:5.32}$ in the second inequality. Then it is not hard to see
\begin{equation*}
\begin{aligned}
\|\nabla u\|_{L^2(\partial\Omega)}
&\leq \|\nabla w\|_{L^2(\partial\Omega)}
+ \|\nabla v\|_{L^2(\partial\Omega)}\\
&\leq C\|\nabla_{\text{tan}}u\|_{L^2(\partial\Omega)}
+ C\theta\|\nabla u\|_{L^2(\partial\Omega)}
+C_\theta\|u\|_{L^2(\partial\Omega)},
\end{aligned}
\end{equation*}
and this yields the second line of $\eqref{pri:5.14}$ by
choosing $\theta\in(0,1)$ such that $C\theta=1/2$. We have completed the whole proof.
\end{proof}

\section{$L^2$ boundary value problems in full scales}\label{sec:5}

\subsection{Rellich estimates for large scales}

\begin{thm}\label{thm:6.1}
Suppose that the coefficients of $\mathcal{L}_\varepsilon$ satisfy
$\eqref{a:1}$, $\eqref{a:2}$ and $\eqref{a:3}$ with $A^*=A$. For given $g\in H^1(\partial\Omega;\mathbb{R}^m)$,
let $u_\varepsilon\in H^1(\Omega;\mathbb{R}^m)$
be a weak solution to the Dirichlet problem $\eqref{pde:1.1}$. Then for any $\varepsilon\leq r<R_0$
we have
\begin{equation}\label{pri:6.3}
\bigg\{\frac{1}{r}\int_{\Omega\setminus\Sigma_{r}} \big(|\nabla u_\varepsilon|^2
+|u_\varepsilon|^2\big) dx\bigg\}^{1/2}
\leq C \|g\|_{H^1(\partial\Omega)},
\end{equation}
where $C$ depends only on $\mu,\kappa,\lambda,m,d$ and $\Omega$.
\end{thm}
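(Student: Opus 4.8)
The plan is to prove \eqref{pri:6.3} by comparing $u_\varepsilon$ with the homogenized solution $u_0$ of $\mathcal{L}_0(u_0)=0$ in $\Omega$, $u_0=g$ on $\partial\Omega$, whose existence and boundary behaviour are furnished by the constant–coefficient theory of Section \ref{sec:3}, and then transferring the estimate to $u_\varepsilon$ through a smoothed first–order corrector together with the localization scheme sketched in the introduction. First I would record two a priori bounds. (1) The global energy bound $\|u_\varepsilon\|_{H^1(\Omega)}\le C\|g\|_{H^1(\partial\Omega)}$: extend $g$ to $G\in H^1(\Omega;\mathbb{R}^m)$ with $\|G\|_{H^1(\Omega)}\le C\|g\|_{H^{1/2}(\partial\Omega)}\le C\|g\|_{H^1(\partial\Omega)}$, write $u_\varepsilon=G+v$ with $v\in H^1_0(\Omega;\mathbb{R}^m)$, test the weak formulation against $v$ and use the $\Omega$–version of Lemma \ref{lemma:2.1} (boundedness and coercivity of $\mathrm{B}_{\mathcal{L}_\varepsilon;\Omega}$, available since $\lambda\ge\lambda_0$); the same argument applies to $u_0$, and one also keeps $u_0\in H^{3/2}(\Omega)$ with $\|u_0\|_{H^{3/2}(\Omega)}\le C\|g\|_{H^1(\partial\Omega)}$ from the $H^1$ regular problem for $\mathcal{L}_0$ (Theorem \ref{thm:3.1}). (2) The boundary–layer bound for $u_0$: from the nontangential maximal function estimates $\|(u_0)^*\|_{L^2(\partial\Omega)}+\|(\nabla u_0)^*\|_{L^2(\partial\Omega)}\le C\|g\|_{H^1(\partial\Omega)}$ (Theorems \ref{thm:3.1}, \ref{thm:3.4}), together with the radial maximal function and the co–area formula as in Lemma \ref{lemma:3.7} and \eqref{pri:3.2.1}, one deduces
\begin{equation*}
\int_{\Omega\setminus\Sigma_t}\big(|\nabla u_0|^2+|u_0|^2\big)\,dx\le Ct\,\|g\|_{H^1(\partial\Omega)}^2,\qquad 0<t<R_0,
\end{equation*}
so that \eqref{pri:6.3} already holds with $u_0$ in place of $u_\varepsilon$.

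Next, fix $\varepsilon\le r<R_0$ and split $\Omega\setminus\Sigma_r=(\Omega\setminus\Sigma_{2c\varepsilon})\cup(\Sigma_{2c\varepsilon}\setminus\Sigma_r)$ for a small fixed $c$. I would introduce the smoothed first–order approximation
\begin{equation*}
w_\varepsilon=u_\varepsilon-u_0-\varepsilon\,\chi_0(x/\varepsilon)\,\eta_\varepsilon S_\varepsilon(u_0)-\varepsilon\,\chi_k(x/\varepsilon)\,\eta_\varepsilon S_\varepsilon(\nabla_k u_0),
\end{equation*}
where $S_\varepsilon$ is a Steklov–type mollification at scale $\varepsilon$ and $\eta_\varepsilon$ is a cut–off with $\eta_\varepsilon=0$ on $\Omega\setminus\Sigma_{2c\varepsilon}$ and $\eta_\varepsilon=1$ on $\Sigma_{4c\varepsilon}$. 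Since $u_\varepsilon$ and $u_0$ have the same trace $g$ and the corrector term vanishes on $\partial\Omega$, one has $w_\varepsilon\in H^1_0(\Omega;\mathbb{R}^m)$, and the convergence–rate machinery in Lipschitz domains (developed in \cite{QX2}, cf. \cite{QXS1,TS}) yields $\|\nabla w_\varepsilon\|_{L^2(\Omega)}\le C\varepsilon^{1/2}\|g\|_{H^1(\partial\Omega)}$ — the $O(\varepsilon^{1/2})$ rate in the $H^1(\Omega)$–norm referred to in the introduction — whence, by Poincar\'e's inequality, $\|w_\varepsilon\|_{L^2(\Omega)}\le C\varepsilon^{1/2}\|g\|_{H^1(\partial\Omega)}$.

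Then I would estimate the two pieces. On the inner layer $\Omega\setminus\Sigma_{2c\varepsilon}$ the corrector is absent, so $u_\varepsilon=u_0+w_\varepsilon$ there, and combining the bound for $u_0$ with the estimate for $w_\varepsilon$ gives $\int_{\Omega\setminus\Sigma_{2c\varepsilon}}\big(|\nabla u_\varepsilon|^2+|u_\varepsilon|^2\big)\,dx\le C\varepsilon\|g\|_{H^1(\partial\Omega)}^2\le Cr\|g\|_{H^1(\partial\Omega)}^2$, using $r\ge\varepsilon$. On the region $\Sigma_{c\varepsilon}\setminus\Sigma_r$ we have $u_\varepsilon=u_0+(\text{corrector})+w_\varepsilon$ with $u_0$ smooth on $\Sigma_{c\varepsilon/2}$; covering this region by cubes $Q_i$ of side $\varepsilon$ with $2Q_i\subset\Omega$, one controls $\int_{Q_i}|\nabla_y\chi(\cdot/\varepsilon)|^2|\nabla u_0|^2$ cell–by–cell by $C\int_{2Q_i}|\nabla u_0|^2$ through the interior estimate \eqref{pri:3.15} applied to $u_0$ and to $\nabla u_0$ and the periodicity bound \eqref{pri:2.1.1} for $\fint_Y|\nabla\chi|^2$ (and similarly for the $|\chi|^2|u_0|^2$ terms, using the Sobolev embedding for $\chi$), while the remaining corrector terms carry an $\varepsilon^2$–gain; summing over the bounded–overlap cover, and bounding $\int|\nabla u_\varepsilon|^2$ on each $Q_i$ near the boundary of this region by Caccioppoli's inequality \eqref{pri:2.6}, one obtains $\int_{\Sigma_{c\varepsilon}\setminus\Sigma_r}\big(|\nabla u_\varepsilon|^2+|u_\varepsilon|^2\big)\,dx\le C\int_{\Omega\setminus\Sigma_{2r}}\big(|\nabla u_0|^2+|u_0|^2\big)\,dx+C\varepsilon\|g\|_{H^1(\partial\Omega)}^2\le Cr\|g\|_{H^1(\partial\Omega)}^2$ by the first step again. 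Adding the two pieces gives \eqref{pri:6.3}.

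The hard part will be the construction in the second step: arranging the smoothed corrector in the Lipschitz domain so that simultaneously $w_\varepsilon\in H^1_0(\Omega)$, the corrector vanishes on the $\varepsilon$–boundary layer, and the $O(\varepsilon^{1/2})$ bound for $\|\nabla w_\varepsilon\|_{L^2(\Omega)}$ holds for the full operator $\mathcal{L}_\varepsilon$ under the hypotheses \eqref{a:1}--\eqref{a:3} alone — with no H\"older condition on $A$, so only $\nabla\chi\in L^2_{per}$ and a flux corrector in $H^1_{per}$ are available. This is precisely the ``localization technique coupled with the $O(\varepsilon^{1/2})$ rate'' alluded to in the introduction; by contrast, the covering argument and the cell–wise averaging in the third step, though somewhat technical, are routine.
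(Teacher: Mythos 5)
The paper's own proof of Theorem~\ref{thm:6.1} is a single sentence: it reduces to $r=\varepsilon$ and cites \cite[Theorem~4.1]{QX2}, i.e.\ the $O(\varepsilon^{1/2})$ $H^1$-convergence-rate machinery that your proposal is organized around, so your argument is a correct, detailed reconstruction of what the paper merely references — the $u_0$ boundary-layer bound via nontangential maximal functions and co-area, the smoothed corrector with a boundary cut-off keeping $w_\varepsilon\in H^1_0(\Omega)$, and the cell-by-cell Caccioppoli/periodicity argument for the corrector on the interior collar — and you even handle all $\varepsilon\le r<R_0$ directly rather than making the (unjustified-as-stated) reduction to $r=\varepsilon$. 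One cosmetic fix: the fixed constant $c$ in the cut-off must be chosen large enough (not ``small'') so that cubes $Q_i$ of side $\varepsilon$ contained in $\Sigma_{2c\varepsilon}$ satisfy $2Q_i\subset\Omega$.
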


\begin{proof}
It suffices to prove the estimate $\eqref{pri:6.3}$ in the case of $r=\varepsilon$, and
this result
will be derived from \cite[Theorem 4.1]{QX2} without any real difficulty.
\end{proof}

\begin{thm}\label{thm:6.2}
Suppose that the coefficients of $\mathcal{L}_\varepsilon$ satisfy
$\eqref{a:1}$, $\eqref{a:2}$ and $\eqref{a:3}$ with $A^*=A$. Let $u_\varepsilon\in H^1(\Omega;\mathbb{R}^m)$
be a weak solution to the Neumann problem $\eqref{pde:1.2}$ with
$g\in L^2(\partial\Omega;\mathbb{R}^d)$. Then there holds
\begin{equation}\label{pri:6.2}
\bigg\{\frac{1}{r}\int_{\Omega\setminus\Sigma_r}\big(|\nabla u_\varepsilon|^2 + |u_\varepsilon|^2\big) dx
\bigg\}^{1/2}
\leq C\|g\|_{L^2(\partial\Omega)}
\end{equation}
for any $\varepsilon\leq r<R_0$, where $C$ depends only on $\mu,\kappa,\lambda,m,d$ and $\Omega$.
\end{thm}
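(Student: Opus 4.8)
The plan is to deduce \eqref{pri:6.2} from the quantitative (convergence-rate) homogenization for the $L^2$ Neumann problem, exactly as \eqref{pri:6.3} is deduced from \cite[Theorem 4.1]{QX2} in the Dirichlet case: once the boundary-layer error estimate is available, \eqref{pri:6.2} is essentially a restatement of it. First I would record the energy estimate: inserting $\phi=u_\varepsilon$ into the weak formulation of \eqref{pde:1.2} and using the coercivity of $\mathrm{B}_{\mathcal{L}_\varepsilon;\Omega}$ (Lemma \ref{lemma:2.1}, valid since $\lambda\geq\lambda_0$) together with the trace inequality yields $\|u_\varepsilon\|_{H^1(\Omega)}\leq C\|g\|_{L^2(\partial\Omega)}$; this settles the range $r\approx R_0$, so it suffices to gain the factor $r$ for $\varepsilon\leq r\leq r_0$ with $r_0$ small. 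I will also use, for any $h$ with nontangential maximal function $h^*$ on a bounded Lipschitz domain, the standard collar bound $\int_{\Omega\setminus\Sigma_r}|h|^2\,dx\leq Cr\int_{\partial\Omega}|h^*|^2\,dS$.

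Next I would compare $u_\varepsilon$ with the homogenized Neumann solution $u_0\in C^\infty(\Omega;\mathbb{R}^m)$ of \eqref{pde:3.1} (same datum $g$), which exists by Theorem \ref{thm:3.1}; Theorem \ref{thm:3.2} gives $\|(\nabla u_0)^*\|_{L^2(\partial\Omega)}\leq C\|g\|_{L^2(\partial\Omega)}$, and the corresponding bound for $(u_0)^*$ follows from $\|u_0\|_{H^1(\Omega)}\leq C\|g\|_{L^2(\partial\Omega)}$ and \eqref{pri:3.2.1}. For the $|u_\varepsilon|^2$-part I would split $u_\varepsilon=u_0+(u_\varepsilon-u_0)$: the collar bound applied to $u_0$ gives $r^{-1}\int_{\Omega\setminus\Sigma_r}|u_0|^2\leq C\|g\|_{L^2(\partial\Omega)}^2$, while $r^{-1}\int_{\Omega\setminus\Sigma_r}|u_\varepsilon-u_0|^2\leq\varepsilon^{-1}\|u_\varepsilon-u_0\|_{L^2(\Omega)}^2\leq C\|g\|_{L^2(\partial\Omega)}^2$ by the $L^2(\Omega)$ convergence rate $\|u_\varepsilon-u_0\|_{L^2(\Omega)}\leq C\varepsilon^{1/2}\|g\|_{L^2(\partial\Omega)}$ from \cite{QX2} together with $\varepsilon\leq r$. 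For the $|\nabla u_\varepsilon|^2$-part I would use the first-order expansion with remainder $w_\varepsilon=u_\varepsilon-u_0-\varepsilon\chi_0(x/\varepsilon)S_\varepsilon(u_0)-\varepsilon\chi_k(x/\varepsilon)S_\varepsilon(\nabla_k u_0)$, where a Steklov smoothing $S_\varepsilon$ of the data and a boundary cutoff are needed here because $A$ is merely bounded measurable and the correctors $\chi_k$ lie only in $H^1_{per}$; the Neumann analogue of \cite[Theorem 4.1]{QX2} gives $\|w_\varepsilon\|_{H^1(\Omega)}\leq C\varepsilon^{1/2}\|g\|_{L^2(\partial\Omega)}$. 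Then $r^{-1}\int_{\Omega\setminus\Sigma_r}|\nabla w_\varepsilon|^2\leq\varepsilon^{-1}\|\nabla w_\varepsilon\|_{L^2(\Omega)}^2\leq C\|g\|_{L^2(\partial\Omega)}^2$; the $\nabla u_0$-contribution is handled by the collar bound and Theorem \ref{thm:3.2}; and the corrector-gradient contributions $\nabla\chi_k(x/\varepsilon)S_\varepsilon(\nabla u_0)$, $\nabla\chi_0(x/\varepsilon)S_\varepsilon(u_0)$, $\varepsilon\chi(x/\varepsilon)\nabla S_\varepsilon(\cdot)$ are controlled by the periodic bound \eqref{pri:2.1.1} on $\nabla\chi_k$ and the $L^2$-boundedness of $S_\varepsilon$, the collar $\Omega\setminus\Sigma_r$ being at least one period thick ($r\geq\varepsilon$) so that the average of $|\nabla\chi_k(\cdot/\varepsilon)|^2$ over it is comparable to $\|\nabla\chi_k\|_{L^2(Y)}^2$. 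Summing the three groups of terms yields \eqref{pri:6.2}.

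The main obstacle, as in the Dirichlet case, is not the bookkeeping above but the convergence-rate input itself: controlling the boundary layer in a merely Lipschitz domain with no smoothness assumption on $A$, which forces the Steklov-smoothing (Suslina-type) treatment carried out in \cite{QX2}. Relative to the Dirichlet estimate \eqref{pri:6.3} the only genuinely new features are that the expansion now carries the lower-order corrector $\chi_0$ and that the Neumann datum enters through $\mathrm{B}_{\mathcal{L}_\varepsilon;\Omega}[u_\varepsilon,\phi]=\int_{\partial\Omega}g\,\phi\,dS$ rather than through a nontangential trace; neither changes the structure, so I expect \eqref{pri:6.2} to follow ``without any real difficulty'' once the Neumann version of \cite[Theorem 4.1]{QX2} is in hand.
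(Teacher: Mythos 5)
Your proposal is correct in outline and follows exactly the strategy the paper announces for large scales (localization plus the $O(\varepsilon^{1/2})$ convergence rate in $H^1(\Omega)$ via a first-order two-scale expansion with Steklov smoothing and a boundary cutoff); the paper's own proof of Theorem \ref{thm:6.2} consists solely of the citation to \cite[Theorem 5.10]{QXS1}, which carries out precisely this argument. Since you defer the genuinely hard step — the Neumann $H^1$ error estimate $\|w_\varepsilon\|_{H^1(\Omega)}\leq C\varepsilon^{1/2}\|g\|_{L^2(\partial\Omega)}$ for $L^2$ data on a Lipschitz domain — to the same corpus the paper cites, the proposal and the paper are doing the same thing, and the remaining bookkeeping (collar bound via $\mathcal{M}$ and co-area, the weighted periodic inequality $\|f(\cdot/\varepsilon)S_\varepsilon(h)\|_{L^2}\lesssim\|f\|_{L^2(Y)}\|h\|_{L^2}$ for the $\nabla\chi_k$ terms rather than a naive averaging of $|\nabla\chi_k|^2$) is standard.
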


\begin{proof}
See \cite[Theorem 5.10]{QXS1}.
\end{proof}

\begin{lemma}\label{lemma:6.2}
Let $\Omega$ be bounded Lipschitz domain. Suppose that the coefficients of
$\mathcal{L}_\varepsilon$ satisfy $\eqref{a:1}$, $\eqref{a:2}$, $\eqref{a:3}$ and
$\eqref{a:4}$ with $A=A^*$. Let $u_\varepsilon\in C^{1}(\Omega;\mathbb{R}^m)$ be a solution of
$\mathcal{L}_\varepsilon(u_\varepsilon) = 0$ in $\Omega$ with
$(\nabla u_\varepsilon)^*\in L^2(\partial\Omega)$. Then we have
\begin{equation}\label{pri:6.1}
\begin{aligned}
\int_{\partial\Omega}|\nabla u_\varepsilon|^2 dS
&\leq C\int_{\partial\Omega}\Big|\frac{\partial u_\varepsilon}{\partial\nu_\varepsilon}\Big|^{2}dS, \\
\int_{\partial\Omega}|\nabla u_\varepsilon|^2 dS
&\leq C\int_{\partial\Omega}|\nabla_{\emph{tan}}u_\varepsilon|^{2}dS
+ C\int_{\partial\Omega}|u_\varepsilon|^{2}dS,
\end{aligned}
\end{equation}
where $C$ depends on $\mu,\kappa,\lambda,\tau,m,d$ and $\Omega$.
\end{lemma}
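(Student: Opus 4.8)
The plan is to reduce the $L^2$ Rellich estimates for $\mathcal{L}_\varepsilon$ in \eqref{pri:6.1} to the small-scale Rellich estimates already proved for $\mathcal{L}=\mathcal{L}_1$ (Lemma \ref{lemma:4.6}, via Theorem \ref{thm:4.2}), using a localization argument coupled with the decay/convergence information supplied by Theorems \ref{thm:6.1} and \ref{thm:6.2}. The key observation is that $\mathcal{L}_\varepsilon$ lacks the scaling-invariance \eqref{eq:0.3}, so we cannot simply rescale to $\varepsilon=1$; instead we split into a small-scale regime $r\le\varepsilon$ (really a boundary layer of width comparable to $\varepsilon$) and the bulk, and absorb the bulk contribution using the interior energy.

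First I would handle the regime where the scale is at most $\varepsilon$. By the additional smoothness condition \eqref{a:4} on $A,V,B$, inside a ball $B(P,C\varepsilon)$ with $P\in\partial\Omega$ the rescaled solution $u_\varepsilon(\varepsilon\,\cdot)$ solves an equation whose coefficients satisfy \eqref{a:1}, \eqref{a:3}, \eqref{a:4} (with the same constants, thanks to periodicity and homogeneity of the rescaling of the $C^{0,\tau}$ norm at scale $\varepsilon\le 1$), so Lemma \ref{lemma:4.6} applies on each boundary chart of size $\varepsilon$. Summing these local Rellich identities over a covering of $\partial\Omega$ by such charts produces, after rescaling back,
\begin{equation*}
\int_{\partial\Omega}|\nabla u_\varepsilon|^2\,dS
\le C\int_{\partial\Omega}\Big|\frac{\partial u_\varepsilon}{\partial\nu_\varepsilon}\Big|^2 dS
+ \frac{C}{\varepsilon}\int_{\Omega\setminus\Sigma_{C\varepsilon}}\big(|\nabla u_\varepsilon|^2+|u_\varepsilon|^2\big)\,dx,
\end{equation*}
and similarly with $\nabla_{\mathrm{tan}}u_\varepsilon$ and $u_\varepsilon$ in place of the conormal derivative on the right. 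Here I would need to be careful that the $\theta$-parameter bookkeeping in Lemma \ref{lemma:4.6} (the $\theta^{\tau_0}$, $\theta^{2\tau_0-2}$ factors) turns, under the $\varepsilon$-rescaling, into the stated $\varepsilon$-weighted bulk integral; the nontangential maximal function terms $\int|(\nabla u_\varepsilon)^*|^2$ that appear get absorbed after choosing $\theta$ small, exactly as in the proof of Theorem \ref{thm:4.1}.

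Next I would dispose of the bulk term. For the Neumann case, Theorem \ref{thm:6.2} with $r=\varepsilon$ gives precisely
$\varepsilon^{-1}\int_{\Omega\setminus\Sigma_\varepsilon}(|\nabla u_\varepsilon|^2+|u_\varepsilon|^2)\le C\|g\|_{L^2(\partial\Omega)}^2 = C\int_{\partial\Omega}|\partial u_\varepsilon/\partial\nu_\varepsilon|^2\,dS$, since $g=\partial u_\varepsilon/\partial\nu_\varepsilon$ on $\partial\Omega$; substituting into the displayed inequality yields the first line of \eqref{pri:6.1}. For the second line I would instead start from the identity $\mathrm{B}_{\mathcal{L}_\varepsilon;\Omega}[u_\varepsilon,u_\varepsilon]=\int_{\partial\Omega}(\partial u_\varepsilon/\partial\nu_\varepsilon)u_\varepsilon\,dS$ together with coercivity \eqref{pri:2.0.2} to bound $\|u_\varepsilon\|_{H^1(\Omega)}^2$ by $\|\partial u_\varepsilon/\partial\nu_\varepsilon\|_{L^2(\partial\Omega)}\|u_\varepsilon\|_{L^2(\partial\Omega)}$, hence (after the first line, plus a trace estimate as in \eqref{f:3.5}) by a constant times $\|u_\varepsilon\|_{L^2(\partial\Omega)}^2 + \|\nabla_{\mathrm{tan}}u_\varepsilon\|_{L^2(\partial\Omega)}^2$; combined with Theorem \ref{thm:6.1}'s $\varepsilon$-weighted estimate this controls the bulk term by $C\big(\|\nabla_{\mathrm{tan}}u_\varepsilon\|_{L^2(\partial\Omega)}^2+\|u_\varepsilon\|_{L^2(\partial\Omega)}^2\big)$, giving the second line of \eqref{pri:6.1}.

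The main obstacle I anticipate is the careful matching of constants and parameters between the small-scale Rellich estimates of Lemma \ref{lemma:4.6} (which are stated with the auxiliary condition \eqref{a:5.1} and for the operator $\mathcal{L}$ at scale $1$) and their $\varepsilon$-rescaled, localized-and-summed versions: one must verify that the $C^1$-in-the-bulk regularity assumption \eqref{a:5.1} is not actually needed for the final $L^2$ boundary estimate (Theorem \ref{thm:5.3} and the closing theorem of Section \ref{sec:4} already remove it, so I would invoke the improved version), and that the localization does not lose track of the $\theta$-dependence when the chart size is taken comparable to $\varepsilon$. A secondary technical point is ensuring the nontangential maximal function $(\nabla u_\varepsilon)^*\in L^2(\partial\Omega)$ (given in the hypothesis) legitimately lets us absorb the $\int|(\nabla u_\varepsilon)^*|^2$ terms on the right after the small-$\theta$ choice, rather than leaving an uncontrolled quantity; this mirrors the absorption step in Theorem \ref{thm:4.1} and should go through without new ideas.
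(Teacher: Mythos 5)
Your overall architecture matches the paper's proof: localize on boundary charts of size comparable to $\varepsilon$ (so that the rescaled domain has unit size and the rescaled coefficients satisfy the same hypotheses), invoke the already-improved Rellich estimate $\eqref{pri:5.14}$ (the final theorem of Section~$\ref{sec:4}$, with condition $\eqref{a:5.1}$ removed), integrate over $r\in[\varepsilon/4,\varepsilon)$, sum by a covering argument to reach
\[
\int_{\partial\Omega}|\nabla u_\varepsilon|^2\,dS
\le C\int_{\partial\Omega}\Big|\frac{\partial u_\varepsilon}{\partial\nu_\varepsilon}\Big|^2\,dS
+ \frac{C}{\varepsilon}\int_{\Omega\setminus\Sigma_\varepsilon}|\nabla u_\varepsilon|^2\,dx,
\]
and then bound the boundary-layer integral via Theorem~$\ref{thm:6.2}$ (resp.\ Theorem~$\ref{thm:6.1}$). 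Once you commit to using $\eqref{pri:5.14}$ rather than Lemma~$\ref{lemma:4.6}$ directly, your worries about the $\theta^{\tau_0}$, $\theta^{2\tau_0-2}$ bookkeeping and the $\int|(\nabla u_\varepsilon)^*|^2$ absorption are moot: that absorption has already been performed at the level of $\eqref{pri:5.14}$, which is a clean estimate with no $\theta$-parameter or nontangential maximal function on the right. This is precisely what the paper does.

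The one flaw is your argument for the second inequality in $\eqref{pri:6.1}$. The detour through $\mathrm{B}_{\mathcal{L}_\varepsilon;\Omega}[u_\varepsilon,u_\varepsilon]=\int_{\partial\Omega}(\partial u_\varepsilon/\partial\nu_\varepsilon)u_\varepsilon\,dS$ plus coercivity gives $\|u_\varepsilon\|_{H^1(\Omega)}^2\lesssim\|\partial u_\varepsilon/\partial\nu_\varepsilon\|_{L^2(\partial\Omega)}\|u_\varepsilon\|_{L^2(\partial\Omega)}$, but passing from that to a bound involving only $\|\nabla_{\mathrm{tan}}u_\varepsilon\|_{L^2(\partial\Omega)}$ and $\|u_\varepsilon\|_{L^2(\partial\Omega)}$ would require dominating $\|\partial u_\varepsilon/\partial\nu_\varepsilon\|_{L^2(\partial\Omega)}$ by the tangential quantities, i.e.\ essentially the second Rellich inequality itself — the step is circular. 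Nor does the first line of $\eqref{pri:6.1}$ help, since it points in the opposite direction. Fortunately this detour is superfluous: apply the second branch of $\eqref{pri:5.14}$ on $D_r$, cover, and bound the resulting $\frac{C}{\varepsilon}\int_{\Omega\setminus\Sigma_\varepsilon}(|\nabla u_\varepsilon|^2+|u_\varepsilon|^2)\,dx$ directly by $C\|u_\varepsilon\|_{H^1(\partial\Omega)}^2$ via Theorem~$\ref{thm:6.1}$ (with $g=u_\varepsilon|_{\partial\Omega}$), exactly as for the first line but with Theorem~$\ref{thm:6.1}$ in place of Theorem~$\ref{thm:6.2}$.
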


\begin{proof}
The estimate $\eqref{pri:6.1}$ is based upon Theorems
$\ref{thm:6.1}$ and $\ref{thm:6.2}$ and the main idea may be found in
\cite[Remark 3.3]{SZW12} or \cite[Theorem 3.7]{GZS1}.
Let $D_r = B(P,r)\cap\Omega$ with
$P\in\partial\Omega$, and $\Delta_r = B(P,r) \cap \partial\Omega$,
where $r\in[\varepsilon/4,\varepsilon)$. Since $\mathcal{L}_\varepsilon(u_\varepsilon) = 0$ in $D_r$,
from the estimate $\eqref{pri:5.14}$ it follows that
\begin{equation*}
\begin{aligned}
\int_{\Delta_{\varepsilon/4}}|\nabla u_\varepsilon|^2 dS\leq
\int_{\partial D_r}|\nabla u_\varepsilon|^2 dS
&\leq C\int_{\partial D_r}\Big|\frac{\partial u_\varepsilon}{\partial \nu_\varepsilon}\Big|^2 dS \\
&\leq C\int_{\Delta_{\varepsilon}}\Big|\frac{\partial u_\varepsilon}{\partial \nu_\varepsilon}\Big|^2 dS
+ C\int_{\partial D_r\setminus\Delta_\varepsilon} |\nabla u_\varepsilon|^2 dS,
\end{aligned}
\end{equation*}
and this shows
\begin{equation*}
\int_{\Delta_{\varepsilon/4}}|\nabla u_\varepsilon|^2 dS\leq
C\int_{\Delta_{\varepsilon}}\Big|\frac{\partial u_\varepsilon}{\partial \nu_\varepsilon}\Big|^2 dS
+\frac{C}{\varepsilon}\int_{D_{\varepsilon}}|\nabla u_\varepsilon|^2 dx
\end{equation*}
by integrating $r$ from $\varepsilon/4$ to $\varepsilon$. Then using a covering argument leads to
the following estimate
\begin{equation*}
\int_{\partial\Omega}|\nabla u_\varepsilon|^2 dS\leq
C\int_{\partial\Omega}\Big|\frac{\partial u_\varepsilon}{\partial \nu_\varepsilon}\Big|^2 dS
+\frac{C}{\varepsilon}\int_{\Omega\setminus\Sigma_\varepsilon}|\nabla u_\varepsilon|^2 dx.
\end{equation*}
Hence, this together with $\eqref{pri:6.2}$ gives the first line of the stated estimate $\eqref{pri:6.1}$.

To get the second one of $\eqref{pri:6.1}$, we may similarly derive
\begin{equation*}
\int_{\Delta_{\varepsilon/4}}|\nabla u_\varepsilon|^2 dS\leq
C\int_{\Delta_{\varepsilon}}\big(|\nabla_{\text{tan}}u_\varepsilon|^2
+|u_\varepsilon|^2\big) dS
+\frac{C}{\varepsilon}\int_{D_{\varepsilon}}
\big(|\nabla_{\text{tan}}u_\varepsilon|^2
+|u_\varepsilon|^2\big) dx
\end{equation*}
by using the estimate $\eqref{pri:5.14}$ and the same covering argument gives
\begin{equation*}
\begin{aligned}
\int_{\partial\Omega}|\nabla u_\varepsilon|^2 dS
&\leq C\int_{\partial\Omega}\big(|\nabla_{\text{tan}}u_\varepsilon|^2
+|u_\varepsilon|^2\big) dS
+\frac{C}{\varepsilon}\int_{\Omega\setminus\Sigma_\varepsilon}
\big(|\nabla_{\text{tan}}u_\varepsilon|^2
+|u_\varepsilon|^2\big) dx \\
&\leq  C\int_{\partial\Omega}\big(|\nabla_{\text{tan}}u_\varepsilon|^2
+|u_\varepsilon|^2\big) dS,
\end{aligned}
\end{equation*}
where we use the estimate $\eqref{pri:6.3}$ in the second inequality, and this ends the proof.
\end{proof}

\begin{lemma}\label{lemma:6.3}
Suppose that the coefficients of
$\mathcal{L}_\varepsilon$ satisfy $\eqref{a:1}$, $\eqref{a:2}$, $\eqref{a:3}$ and
$\eqref{a:4}$ with $\lambda\geq\max\{\lambda_0,\mu\}$ and $A=A^*$.
Let $u_\varepsilon$ satisfy $\mathcal{L}_\varepsilon(u_\varepsilon) = 0$ in $\Omega_{-}$
with $(\nabla u_\varepsilon)^*\in L^2(\partial\Omega)$, and
$\nabla u_\varepsilon$ exists in the sense of nontangential convergence on $\partial\Omega$.
We further assume that
$|u_\varepsilon(x)|=O(|x|^{2-d})$ with $|\nabla u_\varepsilon(x)|=O(|x|^{1-d})$ as $|x|\to\infty$.
Then, there holds
\begin{equation}\label{pri:6.6}
\begin{aligned}
&\int_{\partial\Omega} |\nabla u_\varepsilon|^2 dS
\leq C\int_{\partial\Omega}\Big|\frac{\partial u_\varepsilon}{\partial \nu_{\varepsilon}}\Big|^2 dS
+ C\int_{\Omega_{-}}|\nabla u_\varepsilon|^2 dx\\
&\int_{\partial\Omega} |\nabla u_\varepsilon|^2 dS
\leq C\int_{\partial\Omega}|\nabla_{\emph{tan}}u_\varepsilon|^2 dS
+ C\int_{\Omega_{-}}(|\nabla u_\varepsilon|^2 + |u_\varepsilon|^2) dx
\end{aligned}
\end{equation}
and
\begin{equation}\label{pri:6.7}
 \int_{\partial\Omega}|u_\varepsilon|^2 dS
 \leq C\int_{\partial\Omega}
 \Big|\frac{\partial u_\varepsilon}{\partial \nu_{\varepsilon}}\Big|^2dS
\end{equation}
where $C$ depends on $\mu,\kappa,\lambda,d,m$ and $\Omega$.
\end{lemma}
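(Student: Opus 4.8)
The plan is to follow the localization scheme of Lemma $\ref{lemma:6.2}$ for the gradient estimates $\eqref{pri:6.6}$, and the exterior Green identity argument of Lemma $\ref{lemma:3.5}$ for the zero-order estimate $\eqref{pri:6.7}$; the only genuinely new point is that the ``collar'' near $\partial\Omega$ now lies in $\Omega_{-}$. Throughout I would cover $\partial\Omega$ by finitely many balls $B(P_j,r_2)$ with $r_2=c_0R_0$ and $c_0$ small, set $D^j_r=B(P_j,r)\cap\Omega_{-}$ and $\Delta^j_r=B(P_j,r)\cap\partial\Omega$, and observe that for $r\in[r_2/2,r_2]$ each $D^j_r$ is a bounded Lipschitz domain whose character is independent of $j$, $r$ and $\varepsilon$, and on which $\mathcal{L}_\varepsilon(u_\varepsilon)=0$, $u_\varepsilon\in C^1$, $(\nabla u_\varepsilon)^*\in L^2(\partial D^j_r)$ (the contribution on the spherical caps being controlled by interior estimates).

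First I would settle $\eqref{pri:6.7}$. The trace theorem — the divergence theorem with a cut-off supported near $\partial\Omega$, as in $\eqref{f:3.6}$ — gives $\int_{\partial\Omega}|u_\varepsilon|^2\,dS\le C\int_{\Omega_{-}}\big(|u_\varepsilon|^2+|\nabla u_\varepsilon|^2\big)\,dx$. The decay hypotheses $|u_\varepsilon(x)|=O(|x|^{2-d})$, $|\nabla u_\varepsilon(x)|=O(|x|^{1-d})$ guarantee $u_\varepsilon\in H^1(\Omega_{-})$ and, exactly as in $\eqref{f:3.14}$, that the flux through $\partial B(0,R)$ vanishes as $R\to\infty$, so the exterior Green identity yields $\mathrm{B}_{\mathcal{L}_\varepsilon;\Omega_{-}}[u_\varepsilon,u_\varepsilon]=-\int_{\partial\Omega}\frac{\partial u_\varepsilon}{\partial\nu_\varepsilon}u_\varepsilon\,dS$. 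Combining this with the coercivity $\eqref{pri:2.0.2}$ on $\Omega_{-}$ (valid by the remark following $\eqref{pri:2.2.2}$) gives $\int_{\Omega_{-}}\big(|u_\varepsilon|^2+|\nabla u_\varepsilon|^2\big)\,dx\le C\|\partial u_\varepsilon/\partial\nu_\varepsilon\|_{L^2(\partial\Omega)}\|u_\varepsilon\|_{L^2(\partial\Omega)}$, whence, feeding this back into the trace inequality and dividing, $\eqref{pri:6.7}$; as a by-product $\int_{\Omega_{-}}\big(|u_\varepsilon|^2+|\nabla u_\varepsilon|^2\big)\,dx\le C\|\partial u_\varepsilon/\partial\nu_\varepsilon\|_{L^2(\partial\Omega)}^2$.

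For $\eqref{pri:6.6}$ the key move is to apply the interior-domain Rellich estimates $\eqref{pri:6.1}$ \emph{on the fixed-scale domains} $D^j_r$; this is legitimate because the constant there depends only on $\mu,\kappa,\lambda,\tau,m,d$ and the Lipschitz character, hence is uniform in $j$, $r$ and $\varepsilon$. On $\partial D^j_r=\Delta^j_r\cup\big(\partial B(P_j,r)\cap\Omega_{-}\big)$ the conormal $\partial/\partial\nu_\varepsilon$ restricted to the spherical piece is $\le C(|\nabla u_\varepsilon|+|u_\varepsilon|)$, on $\Delta^j_r$ it agrees up to the sign of $n$ with the conormal in the statement, and on $\Delta^j_r$ the tangential gradient is a piece of $\nabla_{\text{tan}}u_\varepsilon$ on $\partial\Omega$. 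So $\eqref{pri:6.1}$, after restricting its left side to $\Delta^j_{r_2/2}$, integrating $r$ over $[r_2/2,r_2]$ and using the coarea formula on the spherical caps, yields
\begin{equation*}
\int_{\Delta^j_{r_2/2}}|\nabla u_\varepsilon|^2\,dS\le C\int_{\partial\Omega}\Big|\frac{\partial u_\varepsilon}{\partial\nu_\varepsilon}\Big|^2\,dS+C\int_{\partial\Omega}|u_\varepsilon|^2\,dS+C\int_{\Omega_{-}}\big(|\nabla u_\varepsilon|^2+|u_\varepsilon|^2\big)\,dx
\end{equation*}
for the first line of $\eqref{pri:6.6}$, and the same bound with $\int_{\partial\Omega}|\partial u_\varepsilon/\partial\nu_\varepsilon|^2$ replaced by $\int_{\partial\Omega}|\nabla_{\text{tan}}u_\varepsilon|^2$ for the second. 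Summing over $j$, and absorbing the $\int_{\partial\Omega}|u_\varepsilon|^2$ and $\int_{\Omega_{-}}|u_\varepsilon|^2$ terms by $\eqref{pri:6.7}$ together with the by-product estimate above (first line), or by the plain trace inequality $\int_{\partial\Omega}|u_\varepsilon|^2\le C\int_{\Omega_{-}}(|u_\varepsilon|^2+|\nabla u_\varepsilon|^2)$ (second line), produces exactly $\eqref{pri:6.6}$.

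The main obstacle is not geometric but organizational: $\eqref{pri:6.7}$ (equivalently the exterior coercivity bound) must be proved \emph{before} $\eqref{pri:6.6}$, so that the boundary and volume $L^2$-norms of $u_\varepsilon$ that $\eqref{pri:6.1}$ inevitably produces on the spherical caps can be absorbed into the right-hand sides of $\eqref{pri:6.6}$; and one must check that applying Lemma $\ref{lemma:6.2}$ to the $\varepsilon$-independent family $\{D^j_r\}$ carries an $\varepsilon$-independent constant, which is precisely what Lemma $\ref{lemma:6.2}$ provides, since its proof already reduces every scale to the unit scale. The remaining steps are the coarea and covering manipulations already used for Lemmas $\ref{lemma:3.5}$, $\ref{lemma:4.5}$ and $\ref{lemma:6.2}$, to which I would simply refer.
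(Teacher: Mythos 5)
Your proof is correct and follows the same covering / coarea / Green-identity route as the paper: apply Lemma~$\ref{lemma:6.2}$ on fixed-scale caps $D_r = B(Q,r)\cap\Omega_-$, convert the spherical contributions into $\Omega_-$-volume integrals after averaging over $r$, and obtain the $L^2(\partial\Omega)$ bound $\eqref{pri:6.7}$ from the trace inequality combined with the exterior coercivity identity, the decay hypotheses killing the flux through $\partial B(0,R)$ exactly as in Lemma~$\ref{lemma:3.5}$. The only genuine difference is the order of operations: the paper establishes the first line of $\eqref{pri:6.6}$ first (relying on the $\int_{\Omega_-}|\nabla u_\varepsilon|^2$ term on the right to absorb the cap contribution), then $\eqref{pri:6.7}$, then the second line of $\eqref{pri:6.6}$; you derive $\eqref{pri:6.7}$ before either line of $\eqref{pri:6.6}$ and use it, plus the by-product exterior energy bound, to absorb all of the $|u_\varepsilon|$ terms, which in fact yields the sharper form $\int_{\partial\Omega}|\nabla u_\varepsilon|^2\le C\int_{\partial\Omega}|\partial u_\varepsilon/\partial\nu_\varepsilon|^2$ of the first inequality. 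Your closing remark that $\eqref{pri:6.7}$ \emph{must} precede $\eqref{pri:6.6}$ is slightly overstated for precisely that reason, but your explicit bookkeeping of the $n\cdot V(\cdot/\varepsilon)u_\varepsilon$ part of the conormal on the spherical caps is in fact more careful than the paper's, whose displayed intermediate inequality suppresses the resulting $\int_{\Omega_-}|u_\varepsilon|^2$ contribution.
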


\begin{proof}
The main idea may be found in \cite[Theorem 2.4]{GZS1}, and we provide a proof for the sake of the completeness.
Let $Q\in\partial\Omega$, and $r=r_0/8$.
Since $\mathcal{L}_\varepsilon(u_\varepsilon) = 0$ in $B(Q,r)\cap\Omega_{-}$, it follows from
the first line of $\eqref{pri:6.1}$ that
\begin{equation*}
\int_{\Delta_{r/2}}|\nabla u_\varepsilon|^2 dS
\leq \int_{\Delta_{r}}\Big|\frac{\partial u_\varepsilon}{\partial \nu_\varepsilon}\Big|^2 dS
+ C\int_{\Omega_{-}}|\nabla u_\varepsilon|^2 dx
\end{equation*}
and this implies the first line of $\eqref{pri:6.6}$ by a covering argument.
Then using the same idea that approached for $\eqref{f:3.6}$ we may have
\begin{equation}\label{f:6.5}
 \int_{\partial\Omega} |u_\varepsilon|^2 dS
 \leq C\int_{\Omega_{-}}\big(|\nabla u_\varepsilon|^2 + |u_\varepsilon|^2\big)dx
\end{equation}
where $C$ depends on $d,m$ and $r_0$. Due to the decay conditions, it is not hard to observe that
\begin{equation}\label{f:6.6}
 \frac{\lambda}{2}\int_{\Omega_{-}}\big(|\nabla u_\varepsilon|^2 + |u_\varepsilon|^2\big)dx
 \leq \int_{\partial\Omega}\Big|\frac{\partial u_\varepsilon}{\partial\nu_\varepsilon} u_\varepsilon\Big|dS
\end{equation}
where we use the estimate $\eqref{pri:2.0.2}$ with the condition $\lambda\geq\max\{\lambda_0,\mu\}$,
as well as the divergence theorem. Combining the estimates $\eqref{f:6.5}$ and $\eqref{f:6.6}$
leads to the stated estimate $\eqref{pri:6.7}$ by Young's inequality.

We now proceed to show the second line of $\eqref{pri:6.6}$.
On account of the estimate $\eqref{pri:6.1}$, one may derive that
\begin{equation*}
\begin{aligned}
\int_{\Delta_{r/2}} |\nabla u_\varepsilon|^2 dS
& \leq C\int_{\Delta_r}\big(|\nabla_{\text{tan}} u_\varepsilon|^2+|u_\varepsilon|^2\big)dS
+ C\int_{\Omega_{-}}\big(|\nabla u_\varepsilon|^2 + |u_\varepsilon|^2\big)dx \\
& \leq C\int_{\partial\Omega} |\nabla_{\text{tan}} u_\varepsilon|^2 dS
+ C\int_{\Omega_{-}}\big(|\nabla u_\varepsilon|^2 + |u_\varepsilon|^2\big) dx
\end{aligned}
\end{equation*}
where we use the estimate $\eqref{f:6.5}$ in the second inequality,
and this gives the second line of $\eqref{pri:6.6}$. We have completed the proof.
\end{proof}

\subsection{Methods of layer potentials}

Let $\mathbf{\Gamma}_{\varepsilon}$ and $\mathbf{\Gamma}_{A_\varepsilon}$
denote the fundamental solutions related to $\mathcal{L}_\varepsilon$ and
$L_\varepsilon = -\text{div}(A(x/\varepsilon)\nabla)$, respectively. Then one may have
a comparing lemma in the small scale.

\begin{lemma}[comparing lemma III]\label{lemma:5.2}
Suppose that the coefficients of $\mathcal{L}_\varepsilon$ satisfy $\eqref{a:1}$ and $\eqref{a:3}$
with $\lambda\geq\max\{\lambda_0,\mu\}$. Assume that
$A,V,B$ satisfy $\eqref{a:2}$ and $\eqref{a:4}$.
Then there holds
\begin{equation}\label{pri:6.5}
\begin{aligned}
|\nabla_1\mathbf{\Gamma}_{\varepsilon}(x,y)-\nabla_1\mathbf{\Gamma}_{A_\varepsilon}(x,y)|
&\leq C\varepsilon^{-\tau} |x-y|^{1-d+\tau},\\
|\nabla_2\mathbf{\Gamma}_{\varepsilon}(x,y)-\nabla_2\mathbf{\Gamma}_{A_\varepsilon}(x,y)|
&\leq C \varepsilon^{-\tau}|x-y|^{1-d+\tau}
\end{aligned}
\end{equation}
for any $x,y\in\mathbb{R}^d$ with $0<|x-y|\leq\varepsilon\leq 1$,
where $C$ depends only on $\mu,\kappa,\lambda,m,d$ and $\tau$.
\end{lemma}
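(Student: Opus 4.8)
The plan is to reduce the comparison between $\mathbf{\Gamma}_\varepsilon$ and $\mathbf{\Gamma}_{A_\varepsilon}$ at scale $\varepsilon$ to the already-established comparison at scale $1$ by a rescaling argument, exactly paralleling the relationship between Lemma~\ref{lemma:4.7} and this statement that is advertised in the introduction. First I would record the scaling identities for the fundamental solutions of $L_\varepsilon = -\mathrm{div}(A(x/\varepsilon)\nabla)$ and of the operator obtained from $\mathcal{L}_\varepsilon$ by extracting the leading part: since $L_\varepsilon$ is homogeneous, $\mathbf{\Gamma}_{A_\varepsilon}(x,y) = \varepsilon^{2-d}\mathbf{\Gamma}_{A_1}(x/\varepsilon,y/\varepsilon)$, and for $\mathcal{L}_\varepsilon$ the analogue of $\eqref{eq:0.3}$ gives $\mathbf{\Gamma}_\varepsilon(x,y) = \varepsilon^{2-d}\mathbf{\Gamma}_{\mathcal{L}_\varepsilon/\varepsilon}(x/\varepsilon,y/\varepsilon)$, where $\mathcal{L}_{\varepsilon/\varepsilon} = \mathcal{L}_1 = \mathcal{L}$. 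More precisely, if $u_\varepsilon$ solves $\mathcal{L}_\varepsilon(u_\varepsilon)=F$ then $v(z) = u_\varepsilon(\varepsilon z)$ solves an operator of the form $-\mathrm{div}(A(z)\nabla v + \varepsilon V(z)v) + \varepsilon B(z)\nabla v + \varepsilon^2(c(z)+\lambda)v = \varepsilon^2 F(\varepsilon z)$; I would check that this rescaled operator $\mathcal{L}^{(\varepsilon)}$ still satisfies $\eqref{a:1}$, $\eqref{a:2}$, $\eqref{a:4}$ with constants controlled uniformly in $\varepsilon\in(0,1]$ (the lower-order coefficients only get \emph{smaller}, and the relevant lower bound $\lambda\geq\max\{\lambda_0,\mu\}$ is preserved since $\lambda_0$ decreases under this scaling — this is precisely the ``periodicity condition plays a key role in scaling-invariant estimates'' remark).

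Next I would apply Lemma~\ref{lemma:4.7} to $\mathcal{L}^{(\varepsilon)}$, whose leading term is still $-\mathrm{div}(A(z)\nabla)$: for $0<|x'-y'|\le 1$,
\begin{equation*}
|\nabla_1\mathbf{\Gamma}_{\mathcal{L}^{(\varepsilon)}}(x',y') - \nabla_1\mathbf{\Gamma}_{A_1}(x',y')| \leq C|x'-y'|^{1-d+\tau},
\end{equation*}
with $C$ independent of $\varepsilon$ because the relevant norms of the coefficients of $\mathcal{L}^{(\varepsilon)}$ are bounded uniformly. Then I would translate this back through the scaling $z = x/\varepsilon$, $z' = y/\varepsilon$: chain rule in the gradient contributes a factor $\varepsilon^{-1}$, the $\varepsilon^{2-d}$ prefactor contributes $\varepsilon^{2-d}$, and $|x'-y'|^{1-d+\tau} = \varepsilon^{-(1-d+\tau)}|x-y|^{1-d+\tau}$; collecting powers of $\varepsilon$ gives $\varepsilon^{2-d}\cdot\varepsilon^{-1}\cdot\varepsilon^{-(1-d+\tau)} = \varepsilon^{-\tau}$, which is exactly the loss in $\eqref{pri:6.5}$. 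The range $0<|x-y|\le\varepsilon$ corresponds to $0<|x'-y'|\le 1$, so the hypothesis of Lemma~\ref{lemma:4.7} is met. The second line of $\eqref{pri:6.5}$ follows by the same computation applied to $\nabla_2$, using $\mathbf{\Gamma}_{\mathcal{L}_\varepsilon^*}(x,y) = [\mathbf{\Gamma}_{\mathcal{L}_\varepsilon}(y,x)]^t$ and the fact that the rescaled adjoint operator $(\mathcal{L}^{(\varepsilon)})^*$ has leading term $-\mathrm{div}(A^*(z)\nabla)$ with uniformly controlled lower-order data.

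The main obstacle I anticipate is the bookkeeping in the rescaling step: I must be careful that the rescaled operator $\mathcal{L}^{(\varepsilon)}$ genuinely falls under the hypotheses of Lemma~\ref{lemma:4.7} \emph{with constants independent of} $\varepsilon$, and in particular that the constant $\lambda_0 = \lambda_0(\mathcal{L}^{(\varepsilon)})$ does not blow up; this uses that under $y\mapsto\varepsilon z$ the coefficients $V,B$ pick up a factor $\varepsilon$ and $c$ a factor $\varepsilon^2$, so by $\eqref{KEY:1}$ the threshold $\lambda_0$ scales like $\varepsilon^2\lambda_0(\mathcal{L})\le\lambda_0(\mathcal{L})$, hence $\lambda \geq \max\{\lambda_0(\mathcal{L}),\mu\}$ certainly implies $\varepsilon^2\lambda\geq\max\{\lambda_0(\mathcal{L}^{(\varepsilon)}),\varepsilon^2\mu\}$ — actually one should rescale $\lambda$ to $\varepsilon^2\lambda$ as well, so the parameter in $\mathcal{L}^{(\varepsilon)}$ is $\varepsilon^2\lambda$, and the scale-invariant quantities $\|V/\sqrt{\lambda}\|_\infty$ etc. are genuinely preserved. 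A secondary point is that Lemma~\ref{lemma:4.7} is stated for the principal part $\mathbf{\Gamma}_{A}$ of $\mathbf{\Gamma}_{\mathcal{L}}$, which under rescaling is exactly $\mathbf{\Gamma}_{A_1}$, so that $\mathbf{\Gamma}_{A_\varepsilon}$ and the rescaled principal part match up correctly — this should be checked but presents no real difficulty.
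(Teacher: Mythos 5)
Your proposal is correct and matches the paper's own argument almost verbatim: the paper introduces the family $\mathcal{L}^\delta = -\mathrm{div}(A\nabla + \delta V) + \delta B\nabla + \delta^2(c+\lambda I)$, verifies the identity $\mathbf{\Gamma}_\varepsilon(x,y) = \varepsilon^{2-d}\mathbf{\Gamma}_{\mathcal{L}^\varepsilon}(x/\varepsilon,y/\varepsilon)$ by testing against the bilinear form, proves a scale-$1$ estimate $|\nabla_1\mathbf{\Gamma}_{\mathcal{L}^\varepsilon}(x,y)-\nabla_1\mathbf{\Gamma}_A(x,y)|\le C|x-y|^{1-d+\tau}$ with $C$ independent of $\varepsilon$, and scales back to pick up exactly the factor $\varepsilon^{-\tau}$ you computed. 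The one place you should be a touch more careful than ``apply Lemma~\ref{lemma:4.7} as a black box'': the rescaled mass parameter is $\varepsilon^2\lambda$, which need not satisfy the nominal hypothesis $\varepsilon^2\lambda\ge\mu$ of Lemma~\ref{lemma:4.7}, and the constant there is stated as depending on $\lambda$ itself rather than only on scale-invariant quantities; the paper sidesteps this by re-running the freezing-coefficient decomposition (the proofs behind $\eqref{pri:4.3}$ and $\eqref{pri:3.4}$ plus \cite[Lemma~2.2]{SZW24}) for $\mathcal{L}^\varepsilon$ directly and observing that every constant that appears is uniform in $\varepsilon\in(0,1]$, precisely because the rescaled lower-order coefficients $\varepsilon V$, $\varepsilon B$, $\varepsilon^2 c$ only shrink. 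Your ``bookkeeping'' caveat correctly anticipates this, so the plan is sound; just be explicit that you are reproducing the proof of the comparing lemma for the $\delta$-family rather than invoking it wholesale.
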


\begin{proof}
The proof is based upon a rescaling observation, and
it will be divided into two steps.

Step 1.  Consider a family of the new operators
\begin{equation*}
 \mathcal{L}^\delta
 = -\text{div}\big(A\nabla + \delta V\big) + \delta B\nabla + \delta^2 (c+\lambda I)
 \qquad \text{for}~\delta\in(0,1],
\end{equation*}
and let $\mathbf{\Gamma}_{\mathcal{L}^\delta}(\cdot,y)$ be the fundamental solution of
$\mathcal{L}^\delta$, satisfying
$\mathcal{L}^\delta\big(\mathbf{\Gamma}_{\mathcal{L}^\delta}(\cdot,y)\big) = 0$
in $\mathbb{R}^d\setminus\{y\}$.
The existence of $\mathbf{\Gamma}_{\mathcal{L}^\delta}(\cdot,y)$ may be
found in Theorem $\ref{thm:2.3.1}$. Set $\delta =\varepsilon$, and then we claim that
\begin{equation}\label{f:6.7}
 \mathbf{\Gamma}_{\varepsilon}(x,y)
 =\varepsilon^{2-d}\mathbf{\Gamma}_{\mathcal{L}^\varepsilon}(x/\varepsilon,y/\varepsilon),
 \qquad x\not=y, \quad 0<\varepsilon\leq 1.
\end{equation}

Let $v^\varepsilon(x,y)
= \varepsilon^{2-d}\mathbf{\Gamma}_{\mathcal{L}^\varepsilon}(x/\varepsilon,y/\varepsilon)$,
and it suffices to prove $v^\varepsilon(x,y) = \mathbf{\Gamma}_{\varepsilon}(x,y)$.
For any $\phi\in C_0^\infty(\mathbb{R}^d;\mathbb{R}^m)$, we obtain
\begin{equation}\label{f:6.8}
\begin{aligned}
\mathrm{B}_{\mathcal{L}_\varepsilon;\mathbb{R}^d}[v^\varepsilon(\cdot,y),\phi]
&= \int_{\mathbb{R}^d}
\Big\{A(x^\prime)\nabla_1\mathbf{\Gamma}_{\mathcal{L}^\varepsilon}(x^\prime,y^\prime)
+\varepsilon V(x^\prime)\mathbf{\Gamma}_{\mathcal{L}^\varepsilon}(x^\prime,y^\prime)\Big\}
\nabla_{x^\prime}\phi(\varepsilon x^\prime) dx^\prime \\
& + \int_{\mathbb{R}^d} \Big\{\varepsilon B(x^\prime)
\nabla_1\mathbf{\Gamma}_{\mathcal{L}^\varepsilon}(x^\prime,y^\prime)
+\varepsilon^2 \mathbf{c}(x^\prime)\mathbf{\Gamma}_{\mathcal{L}^\varepsilon}(x^\prime,y^\prime)
\Big\} \phi(\varepsilon x^\prime) dx^\prime
= \phi(y),
\end{aligned}
\end{equation}
where $\mathbf{c} = c+\lambda I$ and $x^\prime = x/\varepsilon$, and we employ the fact that
\begin{equation*}
 \int_{\mathbb{R}^d} \delta_{y/\varepsilon}(x^\prime)\phi(\varepsilon x^\prime) dx^\prime
 = \phi(y)\qquad \forall \phi\in C_0^\infty(\mathbb{R}^d),
\end{equation*}
and $\delta_{y/\varepsilon}$ is the Delta function with pole at $y/\varepsilon$.
Hence, by the uniqueness of $v^\varepsilon(\cdot,y)$ in $\eqref{f:6.8}$,
we conclude that $v^\varepsilon(x,y) = \mathbf{\Gamma}_{\varepsilon}(x,y)$.

Then, in terms of the proof of Lemma $\ref{lemma:4.7}$ we claim that
\begin{equation}\label{f:6.9}
\big|\nabla_1\mathbf{\Gamma}_{\mathcal{L}^\varepsilon}(x,y)-
\nabla_1\mathbf{\Gamma}_{A}(x,y)\big|\leq C|x-y|^{1-d+\tau}
\end{equation}
for any $x,y\in\mathbb{R}^d$ with $0<|x-y|\leq 1$, where $\varepsilon\in(0,1]$, and $C$ is
independent of $\varepsilon$.

Assume $\eqref{f:6.9}$ holds for a moment. This together with $\eqref{f:6.7}$ gives
\begin{equation*}
\begin{aligned}
\big|\nabla_1\mathbf{\Gamma}_{\varepsilon}(x,y)-\nabla_1\mathbf{\Gamma}_{A_\varepsilon}(x,y)\big|
&=\varepsilon^{1-d}
\big|\nabla_1\mathbf{\Gamma}_{\mathcal{L}^\varepsilon}(x/\varepsilon,y/\varepsilon)
-\nabla_1\mathbf{\Gamma}_{A}(x/\varepsilon,y/\varepsilon)\big| \\
&\leq C\varepsilon^{-\tau}|x-y|^{1-d+\tau}
\end{aligned}
\end{equation*}
for $0<|x-y|\leq \varepsilon$, where we also use the fact
$\mathbf{\Gamma}_{A_\varepsilon}(x,y)
=\varepsilon^{2-d}
\mathbf{\Gamma}_{A}(x/\varepsilon,y/\varepsilon)$ (see \cite[(4.23)]{SZW24}).
This will lead to the stated estimates $\eqref{pri:6.5}$.

Step 2. We will show the estimate $\eqref{f:6.9}$. For any $\varepsilon\in(0,1]$,
fix all the coefficients of $\mathcal{L}^\varepsilon$ at a point $x\in\mathbb{R}^d$ and then let
the notation $\mathbf{E}^\varepsilon(x,y;x)$ be the corresponding fundamental solution.
In view of the proof of $\eqref{pri:4.3}$, it is not
hard to obtain
\begin{equation}\label{f:6.10}
 \big|\nabla_1\mathbf{\Gamma}_{\mathcal{L}^\varepsilon}(x,y)-\nabla_1\mathbf{E}^\varepsilon(x,y;x)\big|
 \leq C|x-y|^{1-d+\tau}
\end{equation}
for any $0<|x-y|\leq 1$,
where the constant $C$ is independent of $\varepsilon$. Meanwhile,
recalling the proof of the estimate $\eqref{pri:3.4}$, one may similarly obtain
\begin{equation}\label{f:6.11}
\big|\nabla_1\mathbf{E}^\varepsilon(x,y;x)-\nabla_1\mathbf{E}_{A}(x,y;x)\big|
\leq C|x-y|^{2-d},
\end{equation}
where $C$ is also independent of $\varepsilon$. Thus, collecting the estimates
$\eqref{f:6.10}$, $\eqref{f:6.11}$ and \cite[Lemma 2.2]{SZW24}, we consequently
get the desired estimate $\eqref{f:6.9}$ by a triangle inequality.
\end{proof}

Recalling Definition $\ref{def:4.2}$,
we have the $L^p$ bounds for singular integral operators on Lipschitz surfaces.

\begin{thm}\label{thm:6.10}
Let $f\in L^p(\partial\Omega;\mathbb{R}^m)$ for $1<p<\infty$.
Then $\mathcal{T}_{\Theta}(f)$ exists for a.e. $P\in\partial\Omega$ and
\begin{equation}\label{pri:5.2.1}
\begin{aligned}
\|\mathcal{T}_{\Theta}^{1}(f)\|_{L^p(\partial\Omega)}
+ \|\mathcal{T}_{\Theta}^{1,*}(f)\|_{L^p(\partial\Omega)} &\leq C\|f\|_{L^p(\partial\Omega)},\\
\|\mathcal{T}_{\Theta}^{2}(f)\|_{L^p(\partial\Omega)}
+ \|\mathcal{T}_{\Theta}^{2,*}(f)\|_{L^p(\partial\Omega)} &\leq C\|f\|_{L^p(\partial\Omega)},
\end{aligned}
\end{equation}
hold for $\Theta = A_\varepsilon,\varepsilon$,
where $C$ depends only on $\mu,\kappa,\tau,\lambda,d,m,p$ and $\Omega$.
\end{thm}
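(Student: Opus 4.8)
\textbf{Proof proposal for Theorem~\ref{thm:6.10}.}
The plan is to reduce everything to the already-established $L^p$ bounds for the homogeneous operator $L_\varepsilon$ (Theorem~\ref{thm:5.2} applied with $\Theta = A$, after a rescaling, or invoking \cite[Theorem 3.1]{SZW24}), exactly in the spirit of Lemma~\ref{lemma:3.3} and Theorem~\ref{thm:5.2}, but now splitting the analysis of the truncated kernels into a small-scale regime $\delta \le \varepsilon$ and a large-scale regime $\delta > \varepsilon$, since the scaling-invariance \eqref{eq:0.3} is unavailable. Since the case $\Theta = A_\varepsilon$ is known, it suffices to treat $\Theta = \varepsilon$, and by the usual argument it is enough to dominate the maximal singular integrals $\mathcal{T}^{1,*}_\varepsilon(f)(P)$ and $\mathcal{T}^{2,*}_\varepsilon(f)(P)$ pointwise by $\mathcal{T}^{1,*}_{A_\varepsilon}(f)(P) + \mathcal{T}^{2,*}_{A_\varepsilon}(f)(P) + \mathrm{M}_{\partial\Omega}(f)(P)$, and then apply the $L^p$-boundedness of $\mathrm{M}_{\partial\Omega}$ and of the homogeneous maximal operators. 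I will write the argument for $\mathcal{T}^{1,*}_\varepsilon$; the kernel $\nabla_2$ case is identical.

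First I would split, for a fixed $P \in \partial\Omega$ and $\delta > 0$, the integral $\big|\int_{|y-P|>\delta} \nabla_1 \mathbf{\Gamma}_\varepsilon(P,y) f(y)\, dS(y)\big|$ according to whether $\delta \le \varepsilon$ or $\delta > \varepsilon$. In the large-scale regime $\delta > \varepsilon$, I would further peel off the part $|y-P| \ge \varepsilon$ and use the refined decay estimate \eqref{pri:2.12} (together with the boundedness of $\Omega$, so $|P-y|$ is bounded) to get a kernel bounded by $C|P-y|^{1-d}$ on $\{\varepsilon < |y-P|\}$; this piece is controlled by $C\,\mathrm{M}_{\partial\Omega}(f)(P)$ by the standard dyadic decomposition on $\partial\Omega$ (a $(d-1)$-dimensional Lipschitz surface), since $1-d = -(d-1)$ is precisely the borderline Calder\'on--Zygmund scaling and the $\varepsilon$-truncation removes the singularity. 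In the small-scale regime $0 < \delta \le \varepsilon$, I would insert and subtract $\nabla_1 \mathbf{\Gamma}_{A_\varepsilon}$: on the annulus $\delta < |y-P| < \varepsilon$ the difference $|\nabla_1\mathbf{\Gamma}_\varepsilon(P,y) - \nabla_1\mathbf{\Gamma}_{A_\varepsilon}(P,y)| \le C\varepsilon^{-\tau}|P-y|^{1-d+\tau}$ by comparing Lemma~\ref{lemma:5.2}, and since $|P-y|<\varepsilon$ we have $\varepsilon^{-\tau}|P-y|^{1-d+\tau} \le |P-y|^{1-d}\cdot(|P-y|/\varepsilon)^{\tau} \le |P-y|^{1-d}$, so this contributes at most $C\,\mathrm{M}_{\partial\Omega}(f)(P)$ after the dyadic sum; the remaining piece $\big|\int_{\delta<|y-P|<\varepsilon}\nabla_1\mathbf{\Gamma}_{A_\varepsilon}(P,y)f(y)\,dS(y)\big|$ is bounded by $2\mathcal{T}^{1,*}_{A_\varepsilon}(f)(P)$, while the tail $|y-P| \ge \varepsilon$ falls back into the large-scale estimate just done. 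Assembling these yields
\begin{equation*}
\mathcal{T}^{1,*}_\varepsilon(f)(P) \le 2\,\mathcal{T}^{1,*}_{A_\varepsilon}(f)(P) + C\,\mathrm{M}_{\partial\Omega}(f)(P),
\end{equation*}
and then \eqref{pri:5.2.1} for $\Theta = \varepsilon$ follows from the corresponding bound for $\Theta = A_\varepsilon$ (\cite[Theorem 3.1]{SZW24}) and the $L^p$-boundedness of the Hardy--Littlewood maximal operator on $\partial\Omega$. The almost-everywhere existence of $\mathcal{T}^1_\varepsilon(f)(P)$ follows by the usual density argument once the maximal-operator bound is in hand (approximate $f$ in $L^p$ by smooth functions, for which the principal value manifestly exists).

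The main obstacle I anticipate is organizing the overlap between the two regimes cleanly so that the constant $C$ does not pick up a spurious $\varepsilon$-dependence: one must be careful that the $\varepsilon^{-\tau}$ in the comparing estimate \eqref{pri:6.5} is always paired with a factor $|P-y|^{\tau}$ coming from the restriction $|P-y| < \varepsilon$, and that in the regime $|P-y| \ge \varepsilon$ one never uses \eqref{pri:6.5} at all but only the $\varepsilon$-uniform decay \eqref{pri:2.12}. A secondary technical point is that all dyadic-annulus estimates must be carried out on the Lipschitz surface $\partial\Omega$ rather than on $\mathbb{R}^d$, which is routine but requires the surface measure doubling property (valid for Lipschitz domains); the geometry facts analogous to those in Remark~\ref{remark:3.1.1} are again what make the annular sums converge. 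Modulo these bookkeeping matters, the proof is a direct transcription of the $0 < r \le \varepsilon$ vs.\ $r > \varepsilon$ dichotomy already used for Theorem~\ref{thm:5.2} and Lemma~\ref{lemma:3.3}.
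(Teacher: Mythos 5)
Your reduction to the homogeneous case and your small-scale/large-scale dichotomy at $\delta=\varepsilon$ mirror the paper's strategy, but the large-scale regime in your argument breaks down, and the reason is exactly the ``spurious $\varepsilon$-dependence'' you worried about. On the set $\{\varepsilon < |y-P| < R_0\}$ the bound $|\nabla_1\mathbf{\Gamma}_\varepsilon(P,y)| \le C|P-y|^{1-d}$ from \eqref{pri:2.12} is at the critical Calder\'on--Zygmund exponent on the $(d-1)$-dimensional surface $\partial\Omega$, so a dyadic decomposition does \emph{not} yield $C\,\mathrm{M}_{\partial\Omega}(f)(P)$; each of the $\approx \log(R_0/\varepsilon)$ annuli contributes roughly $\mathrm{M}_{\partial\Omega}(f)(P)$, producing $C\log(1/\varepsilon)\,\mathrm{M}_{\partial\Omega}(f)(P)$. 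Truncating at $\varepsilon$ cuts off the singularity but does nothing for the outer sum. This is precisely why the paper invokes the asymptotic expansion \eqref{pri:0.7} in the large-scale regime: writing $\nabla_1\mathbf{\Gamma}_\varepsilon(P,y)$ as $\nabla_1\mathbf{\Gamma}_0(P-y)$ plus the two corrector terms plus a remainder of size $C\varepsilon^\rho|P-y|^{-(d-1+\rho)}$, the $\nabla_1\mathbf{\Gamma}_0$ contribution (and the corrector-times-$\nabla\mathbf{\Gamma}_0$ contribution, since $\nabla\chi_k(P/\varepsilon)$ is a bounded multiplier) is controlled by the already-proved maximal operator $\mathcal{T}_0^*$ from Theorem~\ref{lemma:3.3}, the corrector-times-$\mathbf{\Gamma}_0$ piece is a bounded fractional integral of order one, and the remainder has genuine $\rho$-extra decay, so the dyadic sum converges and the factor $(\varepsilon/\delta)^\rho \le 1$ kills the $\varepsilon$-dependence. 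Without the expansion, the large-scale piece simply cannot be closed with a uniform constant.

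A secondary issue appears in your small-scale step: after invoking Lemma~\ref{lemma:5.2} you discard the crucial extra decay by bounding $\varepsilon^{-\tau}|P-y|^{1-d+\tau}$ above by $|P-y|^{1-d}$. With that simplification the dyadic sum over $\delta < |y-P| < \varepsilon$ again runs over $\approx\log(\varepsilon/\delta)$ annuli at the critical exponent and diverges as $\delta\to 0$. You should keep the kernel in the form $\varepsilon^{-\tau}|P-y|^{1-d+\tau}$, in which case the dyadic annulus at scale $2^{-j}\varepsilon$ contributes a factor $2^{-j\tau}$ and the geometric sum is uniformly bounded. Finally, your claimed pointwise domination $\mathcal{T}^{1,*}_\varepsilon(f)(P) \le 2\mathcal{T}^{1,*}_{A_\varepsilon}(f)(P) + C\,\mathrm{M}_{\partial\Omega}(f)(P)$ is not what the paper obtains: the correct pointwise bound also involves $\mathcal{T}_0^*(f)(P)$ (from the homogenized kernel) and the Riesz-type term $\int_{\partial\Omega}|f(y)|\,|P-y|^{2-d}\,dS(y)$ (from the corrector-times-$\mathbf{\Gamma}_0$ piece), whose $L^p$ boundedness then comes from \eqref{pri:3.3} and the fractional integral estimates, respectively.
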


\begin{proof}
We still borrow some ideas from \cite[Theorem 3.1]{SZW24}, and due to a rescaling argument, it
directly implies the stated result in the case of $\Theta =A_\varepsilon$.
Our now task is to show the estimate $\eqref{pri:5.2.1}$ for $\Theta =\varepsilon$.
It is sufficient to estimate the integral
\begin{equation*}
 \Big|\int_{y\in\partial\Omega\atop
 |y-P|>\delta} \nabla_1 \mathbf{\Gamma}_{\varepsilon}(P,y)f(y)dS(y)\Big|,
\end{equation*}
and we will study two cases: (1) $\delta >\varepsilon$; (2) $\delta \leq \varepsilon$.

For (1). We apply the estimate $\eqref{pri:0.7}$ to deriving
\begin{equation*}
\begin{aligned}
 \Big|\int_{y\in\partial\Omega\atop
 |y-P|>\delta} \nabla_1 \mathbf{\Gamma}_{\varepsilon}(P,y)f(y)dS(y)\Big|
&\leq C\varepsilon^\rho \int_{y\in\partial\Omega\atop
 |y-P|>\delta} \frac{|f(y)|}{|y-P|^{d-1+\rho}}dS(y) \\
&\leq C\Big|\int_{y\in\partial\Omega\atop
 |y-P|>\delta} \nabla_1 \mathbf{\Gamma}_{0}(P-y)f(y)dS(y)\Big|
 + C\int_{\partial\Omega}\frac{|f(y)|}{|y-P|^{d-2}}dS(y) \\
&\leq C\mathrm{M}_{\partial\Omega}(f)(P)
+ C\mathcal{T}_{0}^*(f)(P)
+ C\int_{\partial\Omega}\frac{|f(y)|}{|y-P|^{d-2}}dS(y).
\end{aligned}
\end{equation*}

For (2). We use the estimate $\eqref{pri:6.5}$ to obtain
\begin{equation*}
\begin{aligned}
 \Big|\int_{y\in\partial\Omega\atop
 |y-P|>\delta} \nabla_1 \mathbf{\Gamma}_{\varepsilon}(P,y)f(y)dS(y)\Big|
&\leq C\varepsilon^{-\tau}\int_{y\in\partial\Omega\atop
 \delta<|y-P|\leq\varepsilon} \frac{|f(y)|}{|y-P|^{d-1-\tau}}dS(y) \\
& + 2\mathcal{T}^{1,*}_{A_\varepsilon}(f)(P)
+\Big|\int_{y\in\partial\Omega\atop
 |y-P|>\varepsilon} \nabla_1 \mathbf{\Gamma}_{\varepsilon}(P,y)f(y)dS(y)\Big| \\
 \leq C\mathrm{M}_{\partial\Omega}(f)(P)
&+ 2\mathcal{T}^{1,*}_{A_\varepsilon}(f)(P)
+ C\mathcal{T}_{0}^*(f)(P)
+ C\int_{\partial\Omega}\frac{|f(y)|}{|y-P|^{d-2}}dS(y).
\end{aligned}
\end{equation*}
Obviously, combining two cases we have the first line of the estimate $\eqref{pri:5.2.1}$
by $\eqref{pri:3.3}$ and the fractional integral estimates (see \cite{MGLM}). The second line
follows by the same token and we have completed the proof.
\end{proof}

Recalling Definition $\ref{def:4.1}$,
the single and double layer potentials
$\mathcal{S}_{\Theta}(f)$ and $\mathcal{D}_{\Theta}(f)$ are given in the form of
$\eqref{eq:4.5}$ and $\eqref{eq:4.6}$, respectively. In this section,
their subscript $\Theta$ will be chosen as $A_\varepsilon$ or $\varepsilon$
to indicate what kind of the operator that the single
or double layer potentials are associated with.
Note that the conormal derivative of $u_\varepsilon$ will be given in the same way
as those in $\eqref{eq:4.9}$, and we write $\partial/\partial \nu_{\varepsilon}
= \partial/\partial \nu_{\mathcal{L}_\varepsilon}$
for the ease of the statement.

\begin{thm}\label{thm:6.3}
Suppose that the coefficients of $\mathcal{L}_\varepsilon$ satisfy the conditions
$\eqref{a:1}$, $\eqref{a:2}$, $\eqref{a:3}$, and $\eqref{a:4}$ with
$\lambda\geq\max\{\lambda_0,\mu\}$ and $A=A^*$.
Let $f\in L^p(\partial\Omega;\mathbb{R}^m)$ with $1<p<\infty$. Then for a.e. $P\in\partial\Omega$,
there holds
\begin{equation}\label{Id:6.1}
\big(\nabla \mathcal{S}_{\varepsilon}(f)\big)_{\pm}(P)
= \pm\frac{1}{2} n(P)\mathbf{H}_\varepsilon(n(P))f(P)
+ \emph{p.v.}\int_{\partial\Omega} \nabla_1
\mathbf{\Gamma}_{\varepsilon}(P,y)f(y)dS(y),
\end{equation}
where $\mathbf{H}_\varepsilon(n) = (a_{ij}^{\alpha\beta}(x/\varepsilon)n_in_j)^{-1}_{m\times m}$.
Moreover, we have
\begin{equation}\label{Id:6.2}
\begin{aligned}
\Big(\frac{\partial \mathcal{S}_{\varepsilon}(f)}{\partial \nu_{\varepsilon}}\Big)_{\pm}
& = \Big(\pm\frac{1}{2}I + \mathcal{K}_{\varepsilon}\Big)(f)
\quad\emph{on}~\partial\Omega, \\
\big(\mathcal{D}_{\varepsilon}(f)\big)_{\pm}
& = \Big(\mp\frac{1}{2}I + \mathcal{K}_{\mathcal{L}_\varepsilon^*}\Big)(f)
\quad\emph{on}~\partial\Omega,
\end{aligned}
\end{equation}
where the integral operators $\mathcal{K}_{\varepsilon}$ and $\mathcal{K}_{\mathcal{L}_\varepsilon^*}$
are defined by
\begin{equation*}
\small
\begin{aligned}
 \mathcal{K}_{\varepsilon}(f)(P)
& =\emph{p.v.}\int_{\partial\Omega} \frac{\partial}{\partial\nu_{\varepsilon}(P)}
 \Big\{\mathbf{\Gamma}_{\varepsilon}(P,y)\Big\}f(y) dS(y) \\
\mathcal{K}_{\mathcal{L}_\varepsilon^*}(f)(P)
&= \emph{p.v.}\int_{\partial\Omega}
\frac{\partial}{\partial \nu_{\varepsilon}^*(y)}
\Big\{\mathbf{\Gamma}_{\varepsilon}(P,y)\Big\}f(y)dS(y)
\end{aligned}
\end{equation*}
and the conormal derivatives are given by
$ \partial/\partial \nu_{\varepsilon}(P) = n(P)A(P/\varepsilon)\nabla_1 + n(P)V(P/\varepsilon)$ and
$\partial/\partial\nu_{\varepsilon}^*(y)
= n(y)\cdot\big[A^*(y/\varepsilon)\nabla_2 + V^*(y/\varepsilon)\big]$, respectively.
\end{thm}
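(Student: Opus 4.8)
The plan is to mimic the jump-relation arguments of Theorem~\ref{thm:5.1} and Theorem~\ref{thm:4.4}, but now working across the two scale regimes $|y-P|>\varepsilon$ and $|y-P|\le\varepsilon$ and comparing against the homogeneous operator $L_\varepsilon=-\text{div}(A(\cdot/\varepsilon)\nabla)$ rather than against a frozen-coefficient operator. The key observation is that the difference between the layer potentials for $\mathcal{L}_\varepsilon$ and $L_\varepsilon$ is governed by an absolutely convergent integral: from the comparing lemma~\ref{lemma:5.2} (and, in the large-scale regime, from the asymptotic expansion~\eqref{pri:0.7} which controls $\nabla_1\mathbf{\Gamma}_\varepsilon-\nabla_1\mathbf{\Gamma}_0$ and, via Corollary after Theorem~\ref{thm:0.4}, gives decay like $|x-y|^{1-d-\rho}$) the kernel $\nabla_1\mathbf{\Gamma}_\varepsilon(P,y)-\nabla_1\mathbf{\Gamma}_{A_\varepsilon}(P,y)$ is of order $\varepsilon^{-\tau}|P-y|^{1-d+\tau}$ for $|P-y|\le\varepsilon$ and of weakly singular type for $|P-y|>\varepsilon$, so the associated operator has a weakly singular (non-principal-value) kernel. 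Since the analogous identity~\eqref{Id:5.1} is already known for $\Theta=A_\varepsilon$ by rescaling \cite[Theorem~4.4]{SZW24} (the rescaled version of Theorem~\ref{thm:5.1}), it suffices to pass to the limit $x\to P$ nontangentially inside the \emph{difference} $\nabla\mathcal{S}_\varepsilon(f)-\nabla\mathcal{S}_{A_\varepsilon}(f)$ and show that the weakly singular part contributes nothing to the jump, exactly as $I_1,I_2,I_3$ were handled in the proof of Lemma~\ref{lemma:3.1}.

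First I would fix a Lebesgue point $P$ of $f$, take $x_n\in\Lambda_{N_0}^{\pm}(P)$ with $x_n\to P$, set $r=|x_n-P|$, and split the integral defining $\nabla\mathcal{S}_\varepsilon(f)(x_n)-\nabla\mathcal{S}_{A_\varepsilon}(f)(x_n)$ into the regions $|y-P|>tr$ and $|y-P|\le tr$ for a parameter $t$ to be optimized (this is the device used for $I_1+I_2+I_3$ in Lemma~\ref{lemma:3.1}). On $|y-P|>tr$ one uses the mean-value inequality together with the second-derivative decay of $\mathbf{\Gamma}_\varepsilon$ and $\mathbf{\Gamma}_{A_\varepsilon}$ (from~\eqref{pri:2.13} and its rescaled analogue) and the geometric fact $|P-y|\le(N_0+1)|x_n-y|$ to bound the contribution by $Ct^{-1}\mathrm{M}_{\partial\Omega}(f)(P)$; on $|y-P|\le tr$ one uses the size estimate~\eqref{pri:6.5}, namely $|\nabla_1\mathbf{\Gamma}_\varepsilon-\nabla_1\mathbf{\Gamma}_{A_\varepsilon}|\le C\varepsilon^{-\tau}|P-y|^{1-d+\tau}$ in the small scale and the weakly singular bound in the large scale, together with $|x_n-y|\ge\text{dist}(x_n,\partial\Omega)\ge r/N_0$, to bound it by $C(tr)^{?}\mathrm{M}_{\partial\Omega}(f)(P)$; choosing $t$ appropriately (e.g.\ a small power of $r$) makes both terms vanish as $r\to 0$. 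This yields
\begin{equation*}
\big(\nabla\mathcal{S}_\varepsilon(f)\big)_{\pm}(P)-\big(\nabla\mathcal{S}_{A_\varepsilon}(f)\big)_{\pm}(P)
=\text{p.v.}\!\int_{\partial\Omega}\big[\nabla_1\mathbf{\Gamma}_\varepsilon(P,y)-\nabla_1\mathbf{\Gamma}_{A_\varepsilon}(P,y)\big]f(y)\,dS(y),
\end{equation*}
and combining with the known formula for $\mathcal{S}_{A_\varepsilon}$ gives~\eqref{Id:6.1} with $\mathbf{H}_\varepsilon(n)=(a_{ij}^{\alpha\beta}(\cdot/\varepsilon)n_in_j)^{-1}$, since the jump coefficient only depends on the leading symbol evaluated at $P/\varepsilon$. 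The principal-value existence in~\eqref{Id:6.1} and the $L^p$ control of all operators involved come for free from Theorem~\ref{thm:6.10}.

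From~\eqref{Id:6.1} the first line of~\eqref{Id:6.2} follows immediately by applying $n(P)\cdot[A(P/\varepsilon)\,\cdot\,+V(P/\varepsilon)]$ and contracting $n(P)A(P/\varepsilon)n(P)\mathbf{H}_\varepsilon(n(P))=\tfrac12 I$ as in Theorem~\ref{thm:5.1}. For the second line I would repeat the argument of Theorem~\ref{thm:4.4}: write $\mathcal{D}_\varepsilon(f)=-\nabla_x\mathcal{S}_\varepsilon(nA^*f)+\int_{\partial\Omega}n(y)A^*(y/\varepsilon)[\nabla_2\mathbf{\Gamma}_\varepsilon(P,y)+\nabla_1\mathbf{\Gamma}_\varepsilon(P,y)]f\,dS+\int_{\partial\Omega}n(y)V^*(y/\varepsilon)\mathbf{\Gamma}_\varepsilon(P,y)f\,dS$, note that $\nabla_1\mathbf{E}(x-y,0;x)=-\nabla_2\mathbf{E}(x-y,0;x)$ forces $|\nabla_2\mathbf{\Gamma}_\varepsilon+\nabla_1\mathbf{\Gamma}_\varepsilon|\le C\varepsilon^{-\tau}|x-y|^{1-d+\tau}$ (the rescaled version of the bound obtained from~\eqref{pri:4.3}--\eqref{pri:4.4}), so that correction kernel is weakly singular and contributes no jump, and then invoke~\eqref{Id:6.1} with $A^*=A$. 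The main obstacle will be bookkeeping the $\varepsilon$-dependence cleanly in the small-scale region: the kernel bound~\eqref{pri:6.5} carries a factor $\varepsilon^{-\tau}$, so one must verify that after integrating $|P-y|\le tr$ against $|P-y|^{1-d+\tau}$ the resulting power of $r$ still dominates the $\varepsilon^{-\tau}$ for the range of $r$ that matters (and, when $r$ exceeds $\varepsilon$, switch to the asymptotic-expansion estimate, where the kernel is genuinely weakly singular with no bad $\varepsilon$ factor) — i.e.\ one must handle the crossover $r\sim\varepsilon$ carefully, exactly the kind of two-scale case analysis already carried out in the proof of Theorem~\ref{thm:6.10}. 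Everything else is a routine transcription of the $\varepsilon=1$ and frozen-coefficient arguments from Sections~3 and~4.
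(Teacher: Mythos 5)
Your high-level structure is right --- compare $\mathcal{S}_\varepsilon$ against $\mathcal{S}_{A_\varepsilon}$ via Lemma~\ref{lemma:5.2}, import the jump formula for the homogeneous operator by rescaling \cite[Thm.~4.4]{SZW24}, then obtain the second identity in~\eqref{Id:6.2} by the $\mathcal{D}_\varepsilon = -\nabla\mathcal{S}_\varepsilon(nA^*f) + (\text{weakly singular correction})$ decomposition of Theorem~\ref{thm:4.4}. The gap is in your far-region estimate. You split at $|y-P|>tr$ and claim a bound $Ct^{-1}\mathrm{M}_{\partial\Omega}(f)(P)$ via the mean-value inequality applied to $\nabla_1\mathbf{\Gamma}_\varepsilon(\cdot,y)$, citing~\eqref{pri:2.13}. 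But~\eqref{pri:2.13} controls the \emph{mixed} derivative $\nabla_x\nabla_y\mathbf{\Gamma}_\varepsilon$, whereas the mean-value inequality in the first variable requires the repeated derivative $\nabla_x\nabla_x\mathbf{\Gamma}_\varepsilon$. For the variable-coefficient kernel these do not coincide, and $\nabla_x^2\mathbf{\Gamma}_\varepsilon$ is nowhere estimated in the paper; indeed it blows up like $\varepsilon^{-1}|x-y|^{-d}$ (differentiate the corrector expansion $\nabla_x\mathbf{\Gamma}_\varepsilon\approx[I+\nabla\chi(x/\varepsilon)]\nabla_x\mathbf{\Gamma}_0$ once more in $x$). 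So the clean $\varepsilon$-uniform bound you write down is not available, and the device of Lemma~\ref{lemma:3.1}, which does work for the constant-coefficient kernels $\mathbf{\Gamma}_0,\mathbf{\Gamma}_{\widehat{A}}$, does not transfer.

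The paper's intended route, signalled by the reference to Theorem~\ref{thm:5.1}, avoids any repeated $x$-derivative. Set $F_n(y)=[\nabla_1\mathbf{\Gamma}_\varepsilon(x_n,y)-\nabla_1\mathbf{\Gamma}_{A_\varepsilon}(x_n,y)]f(y)$ and apply Lebesgue's dominated convergence: $F_n\to F_0$ a.e.\ by interior H\"older continuity of the gradients off the diagonal, and the dominating function comes from the cone geometry $|P-y|\le(N_0+1)|x_n-y|$ together with Lemma~\ref{lemma:5.2} (weakly singular, $C\varepsilon^{-\tau}|P-y|^{1-d+\tau}$) when $|x_n-y|\le\varepsilon$ and the elementary decay $|\nabla_1\mathbf{\Gamma}_\varepsilon|+|\nabla_1\mathbf{\Gamma}_{A_\varepsilon}|\le C\varepsilon^{1-d}$ when $|x_n-y|>\varepsilon$. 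This majorant is integrable on $\partial\Omega$ for every \emph{fixed} $\varepsilon$, which is all the jump relation needs --- it is a pointwise statement at fixed $\varepsilon$, not a uniform estimate, so your worry about the crossover $r\sim\varepsilon$ and about absorbing the $\varepsilon^{-\tau}$ factor simply does not arise. You should replace the $I_1,I_2,I_3$/mean-value argument with this dominated-convergence argument; the remainder of your derivation of~\eqref{Id:6.2} from~\eqref{Id:6.1} is fine.
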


\begin{proof}
Since the estimates $\eqref{pri:6.5}$ are established in Lemma $\ref{lemma:5.2}$,
we may accomplish the proof
by the analogous arguments to those given for Theorems $\ref{thm:5.1}$, $\ref{thm:4.4}$.
Thus we do not reproduced here.
\end{proof}

\begin{remark}
\emph{If we define the following operator
\begin{equation*}
T_\varepsilon(f) = \int_{\partial\Omega}n(y)\cdot\big(B^*(y/\varepsilon)-V^*(y/\varepsilon)\big)
\mathbf{\Gamma}_{\varepsilon}(x,y)f(y)dS(y),
\end{equation*}
then
$ \mathcal{K}_{\varepsilon}^* =
\mathcal{K}_{\mathcal{L}_\varepsilon^*}+T_\varepsilon$ will be the dual operator of
$\mathcal{K}_{\varepsilon}$ (see the proof of Theorem $\ref{thm:4.4}$).}
\end{remark}

\begin{lemma}\label{lemma:6.1}
Given $f\in L^p(\partial\Omega;\mathbb{R}^m)$ with $1<p<\infty$,
let $u_\varepsilon = \mathcal{S}_{\varepsilon}(f)$ be the
single layer potential, and $w_\varepsilon=\mathcal{D}_{\varepsilon}(f)$ be the double layer potential. Then we have
\begin{equation}\label{pri:6.4}
\|(\nabla u_\varepsilon)^*\|_{L^p(\partial\Omega)}+\|(w_\varepsilon)^*\|_{L^p(\partial\Omega)}
\leq C\|f\|_{L^p(\partial\Omega)},
\end{equation}
where $C$ depends on $\mu,\tau,\kappa,\lambda,m,d,p$ and $\Omega$.
\end{lemma}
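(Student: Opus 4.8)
The plan is to mirror the proof of Theorem \ref{thm:4.5} (and of \cite[Theorem 3.5]{SZW24}): for $x$ in a nontangential cone $\Lambda_{N_0}^{\pm}(P)$ over a Lebesgue point $P$ of $f$, I would bound $|\nabla u_\varepsilon(x)|$ and $|w_\varepsilon(x)|$ pointwise by $\mathrm{M}_{\partial\Omega}(f)(P)$ together with finitely many maximal truncated singular integrals and a Riesz potential $I_1(f)(P)$ of order one, then take the supremum over $x$ and the $L^p(\partial\Omega)$ norm. The one genuinely new point, forced by the failure of the scaling relation $\eqref{eq:0.3}$, is that the kernel $\nabla_i\mathbf{\Gamma}_\varepsilon(x,y)$ has to be analyzed separately on the small scale $|x-y|\le\varepsilon$ and on the large scale $|x-y|>\varepsilon$, exactly as in the proof of Theorem \ref{thm:6.10}. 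On the small scale I would invoke comparing lemma III, i.e.\ $\eqref{pri:6.5}$, to replace $\nabla_i\mathbf{\Gamma}_\varepsilon$ by $\nabla_i\mathbf{\Gamma}_{A_\varepsilon}$, the error kernel $\varepsilon^{-\tau}|x-y|^{1-d+\tau}$ being sub-critical and absorbed by a dyadic sum over the $\varepsilon$-scale annulus (which produces a compensating $\varepsilon^{\tau}$); the nontangential maximal function bounds for $\mathcal{S}_{A_\varepsilon}$ and $\mathcal{D}_{A_\varepsilon}$ then follow, by the scaling $\mathbf{\Gamma}_{A_\varepsilon}(x,y)=\varepsilon^{2-d}\mathbf{\Gamma}_A(x/\varepsilon,y/\varepsilon)$ and a rescaling to $\varepsilon=1$, from \cite[Theorem 3.5]{SZW24}. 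On the large scale I would use the asymptotic expansion $\eqref{pri:0.7}$ to replace $\nabla_i\mathbf{\Gamma}_\varepsilon$ by $\nabla_i\mathbf{\Gamma}_0$ plus the corrector terms $\nabla\chi_0(\cdot/\varepsilon)\mathbf{\Gamma}_0$, $\nabla\chi_k(\cdot/\varepsilon)\nabla\mathbf{\Gamma}_0$ and their duals, noting that $\mathbf{\Gamma}_0$ is a constant-coefficient fundamental solution (so $\nabla^2\mathbf{\Gamma}_0$ obeys $\eqref{pri:3.0}$) and that the oscillating factors $\nabla\chi$, $\nabla\chi^{*}$ are bounded in $L^\infty$; the error kernel $\varepsilon^{\rho}|x-y|^{-(d-1+\rho)}=(\varepsilon/|x-y|)^{\rho}|x-y|^{1-d}$ is controlled by $\mathrm{M}_{\partial\Omega}(f)(P)$ since $\varepsilon\le|x-y|$, the kernels $\nabla\chi_0(\cdot/\varepsilon)\mathbf{\Gamma}_0\sim|x-y|^{2-d}$ give a Riesz potential, and the remaining kernels reduce, after the standard comparison of $\nabla\mathbf{\Gamma}_0(x-y)$ with $\nabla\mathbf{\Gamma}_0(P-y)$, to $\mathrm{M}_{\partial\Omega}(f)(P)+\mathcal{T}_0^{*}(f)(P)$.

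Concretely, for $u_\varepsilon=\mathcal{S}_\varepsilon(f)$ I would fix $P$ and $x\in\Lambda_{N_0}^{\pm}(P)$, put $r=|x-P|$, use $\text{dist}(x,\partial\Omega)\ge r/N_0$ (hence $|x-y|\gtrsim r$ for all $y\in\partial\Omega$) to dispose of the contribution of $\{|y-P|\lesssim r\}$ by $\mathrm{M}_{\partial\Omega}(f)(P)$ as in $\eqref{f:5.22}$, and then split $\{|y-P|>r\}$ at $|y-P|=\varepsilon$ (when $r<\varepsilon$), applying the small-scale reduction on $r<|y-P|<\varepsilon$ and the large-scale reduction on $|y-P|\ge\varepsilon$ and on all of $\{|y-P|>r\}$ when $r\ge\varepsilon$. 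This yields, for every such $x$, an estimate of the form $|\nabla u_\varepsilon(x)|\le C\{\mathrm{M}_{\partial\Omega}(f)(P)+\mathcal{T}_\varepsilon^{1,*}(f)(P)+\mathcal{T}_{A_\varepsilon}^{1,*}(f)(P)+\mathcal{T}_0^{*}(f)(P)+I_1(f)(P)\}$, and the bound for $\|(\nabla u_\varepsilon)^{*}\|_{L^p(\partial\Omega)}$ then follows from the Hardy--Littlewood maximal theorem, from $\eqref{pri:5.2.1}$ (for $\mathcal{T}_\varepsilon^{1,*}$ and $\mathcal{T}_{A_\varepsilon}^{1,*}$), from $\eqref{pri:3.3}$ (for $\mathcal{T}_0^{*}$), and from the fractional integral estimates (for $I_1$). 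For $w_\varepsilon=\mathcal{D}_\varepsilon(f)$ I would use the identity established in the proof of Theorem \ref{thm:6.3}, namely $\mathcal{D}_\varepsilon(f)(x)=-\nabla_x\mathcal{S}_\varepsilon(nA^{*}f)(x)+\int_{\partial\Omega}n(y)A^{*}(y/\varepsilon)[\nabla_1+\nabla_2]\mathbf{\Gamma}_\varepsilon(x,y)f(y)\,dS(y)+\int_{\partial\Omega}n(y)V^{*}(y/\varepsilon)\mathbf{\Gamma}_\varepsilon(x,y)f(y)\,dS(y)$: the first term is controlled by the bound for $(\nabla u_\varepsilon)^{*}$ just obtained applied to $nA^{*}f$, the third term is a Riesz potential, and the middle term is treated by the same small-/large-scale split --- here $\nabla_x\mathbf{\Gamma}_0(x-y)+\nabla_y\mathbf{\Gamma}_0(x-y)=0$, so on the large scale only the bounded-coefficient corrector kernels and the $\varepsilon^{\rho}$-errors survive, while on the small scale $\eqref{pri:4.3}$--$\eqref{pri:4.4}$, used through $\eqref{pri:6.5}$, give the improved kernel $\varepsilon^{-\tau}|x-y|^{1-d+\tau}$.

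I expect the hard part to be the two-scale bookkeeping rather than any single estimate: one must check that every constant arising in the large-scale regime is genuinely independent of $\varepsilon$ --- which is where the uniform $L^\infty$ bounds on the correctors and dual correctors and the $\varepsilon$-free constants in $\eqref{pri:0.7}$, $\eqref{pri:5.2.1}$ and $\eqref{pri:3.3}$ are used --- and that the transition annulus $|y-P|\approx\varepsilon$ is shared correctly between the two regimes, so that the constraint $|x-y|\le\varepsilon$ demanded by $\eqref{pri:6.5}$ is satisfied after at most a harmless rescaling of the splitting radii; when $R_0\le\varepsilon$ there is no large scale and the whole argument collapses to the small-scale reduction. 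Once this accounting is in place no further difficulty arises, since all remaining steps are of the routine dyadic type already used in the proofs of Theorems \ref{thm:4.5} and \ref{thm:6.10}.
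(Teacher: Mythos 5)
Your proposal is correct and follows essentially the same route as the paper: a two-scale split of the kernel at $|x-y|\sim\varepsilon$, the use of comparing lemma III (estimate $\eqref{pri:6.5}$) to transfer the small-scale piece to the homogeneous operator $L_\varepsilon$ and then to \cite[Theorem 3.5]{SZW24} by rescaling, the use of the asymptotic expansion $\eqref{pri:0.7}$ on the large scale (with the oscillating corrector factors absorbed into the density for $\nabla_2$ and treated as bounded multipliers for $\nabla_1$, and the residual error kernel $(\varepsilon/|x-y|)^{\rho}|x-y|^{1-d}$ summed dyadically to a maximal function), and the reduction to the maximal operators of Theorems \ref{thm:6.10} and \ref{lemma:3.3} together with a Riesz potential. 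The one minor variation is your treatment of the double layer: you reduce $\mathcal{D}_\varepsilon(f)$ to $-\nabla_x\mathcal{S}_\varepsilon(nA^*f)$ plus the $\nabla_1+\nabla_2$-cancellation kernel plus the lower-order Riesz piece, whereas the paper decomposes $w_\varepsilon$ into $w_{\varepsilon,1}$ (with kernel $n A^*\nabla_2\mathbf{\Gamma}_\varepsilon$) and $w_{\varepsilon,2}$ (the Riesz piece) and estimates $w_{\varepsilon,1}$ head-on, proving the double-layer bound first and remarking that the single-layer case is analogous; your ordering forces you to establish the single-layer bound first, but either order works, and your cancellation argument is a legitimate shortcut already implicit in the proof of Theorem \ref{thm:4.4}. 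One small point to keep straight in the write-up is that what the small-scale regime actually requires is the bound on the \emph{truncated} integral $\bigl|\int_{r<|y-P|<\varepsilon/2}\nabla_i\mathbf{\Gamma}_{A_\varepsilon}(x,y)\cdot\bigr|$ in terms of $(v_\varepsilon)^*(P)$, $\mathrm{M}_{\partial\Omega}(f)(P)$ and $\mathcal{T}^*_{\widehat A}$, rather than the nontangential maximal function bound for $\mathcal{S}_{A_\varepsilon}$ or $\mathcal{D}_{A_\varepsilon}$ alone; this is exactly the form extracted in the paper from the proof of \cite[Theorem 3.5]{SZW24} (estimate $\eqref{f:5.3.2}$).
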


\begin{proof}
In order to simply the proof,
we manage to use the known result \cite[Theorem 3.5]{SZW24} as much as possible.
To do so, let $v_\varepsilon=\mathcal{D}_{A_\varepsilon}(f)$ denote the double layer potential
associated with the operator $L_\varepsilon=-\text{div}(A(x/\varepsilon)\nabla)$, and it is
known that $(v_\varepsilon)^*(P)$ exists for a.e. $P\in\partial\Omega$ and there holds
\begin{equation}\label{f:6.1}
\|(v_\varepsilon)^*\|_{L^p(\partial\Omega)}\leq C\|f\|_{L^p(\partial\Omega)}
\end{equation}
for any $f\in L^p(\partial\Omega;\mathbb{R}^m)$, and we refer the reader to \cite[Theorem 3.5]{SZW24}.
Moreover, its proof also reveals the following estimate
\begin{equation}\label{f:5.3.2}
\Big|\int_{y\in\partial\Omega\atop
r<|y-P|<\varepsilon/2}\frac{\partial}{\partial\nu_{A_\varepsilon}^*(y)}
\Big\{\mathbf{\Gamma}_{A_\varepsilon}(x,y)\Big\}f(y)dS(y)\Big|
\leq (v_\varepsilon)^*(P) + C\mathrm{M}_{\partial\Omega}(f)(P) + C\mathcal{T}^{*}_{\widehat{A}}(g)(P)
\end{equation}
for any $x\in \Lambda_{N_0}^{\pm}(P)$ with $P\in\partial\Omega$,
in which $|g(y)|\leq C|f(y)|$ on $\partial\Omega$ and, $C$ is independent of $\varepsilon$.

Fixed $P\in\partial\Omega$, let $r=|x-P|$, where $x\in\Lambda_{N_0}^{\pm}(P)$.
Recalling the definition of double layer potential associated with $\mathcal{L}_\varepsilon$, we have
\begin{equation*}
w_\varepsilon(x) = \underbrace{\int_{\partial\Omega} n(y)A^*(y/\varepsilon)
\nabla_2\mathbf{\Gamma}_{\varepsilon}(x,y)f(y)dS(y)}_{w_{\varepsilon,1}(x)}
+  \underbrace{\int_{\partial\Omega} n(y)V^*(y/\varepsilon)
\mathbf{\Gamma}_{\varepsilon}(x,y)f(y)dS(y)}_{w_{\varepsilon,2}(x)}.
\end{equation*}
Then it is clear to see that
\begin{equation*}
 |w_{\varepsilon,2}(x)|\leq \int_{\partial\Omega}\frac{|f(y)|}{|x-y|^{d-2}}dS(y)
 \leq C\int_{\partial\Omega}\frac{|f(y)|}{|y-P|^{d-2}}dS(y),
\end{equation*}
where we use a simple geometry fact $\eqref{f:5.16}$, and this implies
\begin{equation}\label{f:5.3.3}
 (w_{\varepsilon,2})^*(P) \leq C\int_{\partial\Omega}\frac{|f(y)|}{|y-P|^{d-2}}dS(y).
\end{equation}

Thus the main task is to estimate the quantity $(w_{\varepsilon,1})^*(P)$, and we divide the proof into
two parts: the first one handles the case $r\geq \varepsilon/2$, while
the second one is dedicated to the case $r<\varepsilon/2$. Let
\begin{equation*}
 \tilde{f}(y) = n(y)A^*(y/\varepsilon)f(y) \quad \text{with}~~|\tilde{f}(y)|\leq C|f(y)|,
\end{equation*}
where $C$ depends only on $\mu,d,m$ and the character of $\Omega$.

In the case of $r\geq \varepsilon/2$, using the estimate $\eqref{pri:0.7}$ leads to
\begin{equation*}
\begin{aligned}
|w_{\varepsilon,1}(x)|
&\leq \Big|\int_{y\in\partial\Omega\atop |y-P|>4r}
\nabla_2\mathbf{\Gamma}_{\varepsilon}(x,y)\tilde{f}(y)dS(y)\Big|
+ \Big|\int_{y\in\partial\Omega\atop |y-P|\leq 4r}
\nabla_2\mathbf{\Gamma}_{\varepsilon}(x,y)\tilde{f}(y)dS(y)\Big| \\
&\leq C\varepsilon^\rho\int_{y\in\partial\Omega\atop|y-P|>4r}
\frac{|\tilde{f}(y)|}{|x-y|^{d-1+\rho}}dS(y)
+ \Big|\int_{y\in\partial\Omega\atop|y-P|>4r}\frac{\partial}{\partial n_0^{*}(y)}
\Big\{\mathbf{\Gamma}_0(x-y)\Big\}\tilde{g}(y)dS(y)\Big| \\
& + \Big|\int_{y\in\partial\Omega\atop|y-P|>4r}\mathbf{\Gamma}_0(x-y)
\nabla_z\chi_0^*(z)\tilde{f}(y)dS(y)\Big| + C\mathrm{M}_{\partial\Omega}(f)(P),
\end{aligned}
\end{equation*}
where $z=y/\varepsilon$
and $\tilde{g}(y) = (n(y)\widehat{A^*})^{-1}n(y)A^*(y)\big[I+\nabla_z\vec{\chi}(z)^*\big]f(y)$ with
$\vec{\chi} = (\chi_1,\cdots,\chi_d)$.
An analogous computation used in Lemma $\ref{lemma:3.9}$ will lead to
\begin{equation} \label{f:6.2}
|w_{\varepsilon,1}(x)|
\leq C\mathrm{M}_{\partial\Omega}(f)(P) + (w_0)^*(P) + C\int_{\partial\Omega}
\frac{|f(y)|}{|y-P|^{d-2}} dS(y)
\end{equation}
where $w_0 = \mathcal{D}_0(\tilde{g})$ (recall the definition in $\eqref{def:3.2}$).

We now handle the case of $0<r\leq \varepsilon/2$. In such the case, we observe that
\begin{equation*}
\begin{aligned}
\big|w_{\varepsilon,1}(x)\big|
&\leq C\underbrace{\int_{y\in\partial\Omega\atop |y-P|<r}\frac{|f(y)|}{|x-y|^{d-1}}dS(y)}_{I_1}
+ \underbrace{\Big|\int_{y\in\partial\Omega\atop r<|y-P|\leq (\varepsilon/2)}
\nabla_2\mathbf{\Gamma}_{\varepsilon}(x,y)
\tilde{f}(y)dS(y)\Big|}_{I_2}\\
& + \underbrace{\Big|\int_{y\in\partial\Omega\atop (\varepsilon/2)<|y-P|\leq 2\varepsilon}
\nabla_2\mathbf{\Gamma}_{\varepsilon}(x,y)
\tilde{f}(y)dS(y)\Big|}_{I_3}
+ \underbrace{\Big|\int_{y\in\partial\Omega\atop |y-P|> 2\varepsilon}
\nabla_2\mathbf{\Gamma}_{\varepsilon}(x,y)
\tilde{f}(y)dS(y)\Big|}_{I_4}
\end{aligned}
\end{equation*}
where we use the decay estimates $\eqref{pri:2.12}$ in the inequality,
and in fact the term $I_4$ has already been developed in previous step. Thus,
\begin{equation}\label{f:6.3}
I_1 + I_3 + I_4  \leq C\mathrm{M}_{\partial\Omega}(f)(P) + (w_0)^*(P) + C\int_{\partial\Omega}
\frac{|f(y)|}{|y-P|^{d-2}} dS(y).
\end{equation}
We proceed to estimate $I_2$ by using Lemma $\ref{lemma:5.2}$ and the estimate $\eqref{f:5.3.2}$,
and we obtain
\begin{equation}\label{f:6.4}
\begin{aligned}
I_2
&\leq C\varepsilon^{-\tau}\int_{y\in\partial\Omega\atop
r<|y-P|<\varepsilon/2} \frac{|f(y)|}{|P-y|^{d-1-\tau}}dS(y)
+\Big|\int_{y\in\partial\Omega\atop
r<|y-P|<\varepsilon/2}\frac{\partial}{\partial\nu_{A_\varepsilon}^*(y)}
\Big\{\mathbf{\Gamma}_{A_\varepsilon}(x,y)\Big\}f(y)dS(y)\Big|\\
&\leq C\mathrm{M}_{\partial\Omega}(f)(P) + (v_\varepsilon)^*(P)
+ C\mathcal{T}^{*}_{\widehat{A}}(g)(P).
\end{aligned}
\end{equation}

Hence, combining the estimates $\eqref{f:5.3.3}$, $\eqref{f:6.2}$, $\eqref{f:6.3}$ and
$\eqref{f:6.4}$, we consequently derived that for any
$x\in \Lambda_{N_0}^{\pm}(P)$ there holds
\begin{equation*}
|w_\varepsilon(x)| \leq C\mathrm{M}_{\partial\Omega}(f)(P) + (w_0)^*(P)
+(v_\varepsilon)^*(P)+ C\mathcal{T}^{*}_{\widehat{A}}(g)(P)
+ C\int_{\partial\Omega}
\frac{|f(y)|}{|y-P|^{d-2}} dS(y),
\end{equation*}
which together with $\eqref{f:6.1}$, $\eqref{pri:3.14}$, $\eqref{pri:3.3}$ and
fractional integral estimates (see \cite{MGLM}) implies
\begin{equation*}
\|(w_\varepsilon)^*\|_{L^p(\partial\Omega)} \leq C\|f\|_{L^p(\partial\Omega)}.
\end{equation*}
This is one part of the estimates $\eqref{pri:6.4}$, and we plan to end the proof here
since the corresponding part for the single layer potential $u_\varepsilon$ will
be accomplished by the same procedure without any real difficulty.
\end{proof}

\begin{thm}\label{thm:6.4}
Let $\Omega\subset\mathbb{R}^d$ be a bounded Lipschitz domain.
Suppose that the coefficients of $\mathcal{L}_\varepsilon$
satisfy $\eqref{a:1}$, $\eqref{a:2}$, $\eqref{a:3}$, and $\eqref{a:4}$
with $\lambda\geq\max\{\lambda_0,\mu\}$.
Then the trace operators
$\pm(1/2)I+\mathcal{K}_{\varepsilon}:L^2(\partial\Omega;\mathbb{R}^m)\to
L^2(\partial\Omega;\mathbb{R}^m)$ are invertible,
where $\mathcal{K}_{\mathcal{L}_\varepsilon}$ is defined in Theorem $\ref{thm:6.3}$. Moreover, there holds
the following estimates
\begin{equation}\label{pri:6.8}
\big\|f\big\|_{L^2(\partial\Omega)}
\leq C\big\|\big(\pm(1/2)I+\mathcal{K}_{\varepsilon}\big)(f)\big\|_{L^2(\partial\Omega)}
\end{equation}
and
\begin{equation}\label{pri:6.9}
 \|f\|_{L^2(\partial\Omega)}
 \leq C\|\mathcal{S}_{\varepsilon}(f)\|_{H^1(\partial\Omega)}
\end{equation}
for any $f\in L^2(\partial\Omega;\mathbb{R}^m)$,
where $C$ depends on
$\mu,\kappa,\lambda,m,d,\tau$ and $\Omega$.
\end{thm}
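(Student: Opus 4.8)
The plan is to establish Theorem \ref{thm:6.4} by the same two-ingredient strategy that produced the small-scale result, Theorem \ref{thm:4.1}, combined with the localization lemmas developed in Section \ref{sec:5}. Namely, I would prove the a priori bound \eqref{pri:6.8} first, and then upgrade it to invertibility via a continuity argument connecting $\mathcal{L}_\varepsilon$ to a comparison operator whose trace operators are already known to be invertible. For the a priori estimate, I set $u_\varepsilon = \mathcal{S}_\varepsilon(f)$, so that $\mathcal{L}_\varepsilon(u_\varepsilon) = 0$ in $\mathbb{R}^d\setminus\partial\Omega$, and I record the decay $|u_\varepsilon(x)| + |x|\,|\nabla u_\varepsilon(x)| = O(|x|^{2-d})$ as $x\to\infty$, exactly as in the proof of Theorem \ref{thm:4.1}. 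By Lemma \ref{lemma:6.1} we have $(\nabla u_\varepsilon)^*\in L^2(\partial\Omega)$ and $\nabla u_\varepsilon$ has nontangential limits a.e., and by \eqref{Id:6.2} the jump relation $f = (\partial u_\varepsilon/\partial\nu_\varepsilon)_{+} - (\partial u_\varepsilon/\partial\nu_\varepsilon)_{-}$ holds. Then I invoke Lemma \ref{lemma:6.2} (the interior Rellich-type bounds for $\mathcal{L}_\varepsilon$, $A=A^*$) and Lemma \ref{lemma:6.3} (the corresponding bounds on the exterior domain), which together control $\|(\partial u_\varepsilon/\partial\nu_\varepsilon)_{\pm}\|_{L^2(\partial\Omega)}$ by $\|(\partial u_\varepsilon/\partial\nu_\varepsilon)_{\mp}\|_{L^2(\partial\Omega)}$ plus controllable error terms in $\nabla_{\text{tan}}u_\varepsilon$ and $u_\varepsilon$. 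Using the a.e.\ matching of tangential data and traces across $\partial\Omega$, plus \eqref{pri:6.1} and \eqref{pri:6.7} to absorb the lower-order boundary terms, and finally Lemma \ref{lemma:6.1} again to bound $\|(\nabla u_\varepsilon)^*\|_{L^2}$, I close the estimate and obtain \eqref{pri:6.8}. The bound \eqref{pri:6.9} for $\mathcal{S}_\varepsilon$ follows in the same way, exactly as \eqref{pri:4.15} followed from \eqref{pri:5.12} in the proof of Theorem \ref{thm:4.3}: start from the jump relation, control $\|f\|_{L^2}$ by $\|\nabla_{\text{tan}}u_\varepsilon\|_{L^2} + \theta^{\tau_0/2}\|(\nabla u_\varepsilon)^*\|_{L^2} + (\text{l.o.t.})$, then use \eqref{pri:6.4} and absorb.

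For the invertibility, since $\pm(1/2)I+\mathcal{K}_\varepsilon$ is injective by \eqref{pri:6.8}, it suffices to show surjectivity, i.e.\ that these are Fredholm operators of index zero. I would run the continuity method: fixing the coefficients of $\mathcal{L}_\varepsilon$ at a point $x_0$ produces a constant-coefficient operator $\mathcal{L}_{\varepsilon,x_0}$ whose trace operators are invertible (this is classical, and is already used in Theorem \ref{thm:4.1}), and the family $\mathcal{L}_\varepsilon^t = t\mathcal{L}_\varepsilon + (1-t)\mathcal{L}_{\varepsilon,x_0}$, $t\in[0,1]$, interpolates between them with coefficients satisfying \eqref{a:1}--\eqref{a:4} uniformly. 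The a priori bound \eqref{pri:6.8} then holds for every $\mathcal{L}_\varepsilon^t$ with a $t$-uniform constant. For continuity of $t\mapsto \pm(1/2)I+\mathcal{K}_{\mathcal{L}_\varepsilon^t}$ in operator norm I would need an analogue of \eqref{pri:5.11}, i.e.\ $\|\mathcal{T}_{\mathcal{L}_\varepsilon^{t_1}}^{1}-\mathcal{T}_{\mathcal{L}_\varepsilon^{t_2}}^{1}\|_{L^2\to L^2}\le C\vartheta_2 \le C|t_1-t_2|$, uniformly in $\varepsilon$; I would obtain this by splitting into small scales $|x-y|\le\varepsilon$, where the rescaling identity \eqref{f:6.7} reduces matters to the $\varepsilon=1$ statement already proved, and large scales $|x-y|>\varepsilon$, where the asymptotic expansion \eqref{pri:0.7} comparing $\nabla\mathbf{\Gamma}_\varepsilon$ with $\nabla\mathbf{\Gamma}_0$ reduces matters to the constant-coefficient homogenized estimates of Section \ref{sec:3}. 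Given this, the method of continuity yields that $\pm(1/2)I+\mathcal{K}_\varepsilon$ is invertible, and the analogous argument (using \eqref{pri:6.9}) gives invertibility of $\mathcal{S}_\varepsilon:L^2(\partial\Omega;\mathbb{R}^m)\to H^1(\partial\Omega;\mathbb{R}^m)$.

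The main obstacle I anticipate is the uniform-in-$\varepsilon$ continuity of the trace operators along the homotopy, i.e.\ the large-scale piece of the $\vartheta_2$-type bound. Unlike the small-scale setting of Section \ref{sec:4}, here the kernels $\nabla_1\mathbf{\Gamma}_{\mathcal{L}_\varepsilon^t}(x,y)$ oscillate on scale $\varepsilon$, and freezing coefficients is no longer legitimate for $|x-y|\gg\varepsilon$; one must genuinely exploit homogenization, separating the oscillating corrector contribution (which is compact on $L^2(\partial\Omega)$ via the gain of a tangential derivative, in the spirit of Lemma \ref{lemma:3.6}) from the smooth homogenized part (handled by the constant-coefficient results of Section \ref{sec:3} and the asymptotic expansion \eqref{pri:0.7}). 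A secondary technical point is verifying that all the exterior-domain Rellich estimates of Lemma \ref{lemma:6.3} apply to $u_\varepsilon = \mathcal{S}_\varepsilon(f)$, which requires the decay at infinity; this is routine from the size estimate \eqref{pri:2.11} and interior estimates, precisely as in the proof of Theorem \ref{thm:4.1}. Everything else — the absorption arguments, the passage from a priori bounds to Fredholmness, the duality step giving invertibility of $\pm(1/2)I+\mathcal{K}_\varepsilon^*$ if needed — follows the template already laid down in Theorems \ref{thm:3.3}, \ref{thm:4.1}, \ref{thm:4.3}, and \ref{thm:4.6}.
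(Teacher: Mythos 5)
Your proposal is essentially the paper's proof. The a priori bound \eqref{pri:6.8} is obtained exactly as you describe: put $u_\varepsilon=\mathcal{S}_\varepsilon(f)$, invoke Lemma \ref{lemma:6.1} for $(\nabla u_\varepsilon)^*\in L^2$, use the jump relation \eqref{Id:6.2}, and combine the interior/exterior Rellich estimates of Lemmas \ref{lemma:6.2} and \ref{lemma:6.3} in the pattern of Theorem \ref{thm:3.3}; \eqref{pri:6.9} then follows the Theorem \ref{thm:4.3} template. The invertibility step is also the same method of continuity along $\mathcal{L}_\varepsilon^\theta=\theta\mathcal{L}_\varepsilon+(1-\theta)\mathcal{L}_{x_0}$.

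The one place where you diverge — and over-engineer — is the continuity-in-$\theta$ of the family $\mathcal{K}_{\mathcal{L}_\varepsilon^\theta}$. You identify as the ``main obstacle'' the need for a bound $\|\mathcal{K}_{\mathcal{L}_\varepsilon^{\theta_1}}-\mathcal{K}_{\mathcal{L}_\varepsilon^{\theta_2}}\|_{L^2\to L^2}\le C|\theta_1-\theta_2|$ that is uniform in $\varepsilon$, and you propose to establish it by splitting scales at $|x-y|\sim\varepsilon$ and invoking the asymptotic expansion \eqref{pri:0.7} on the large scale. But this uniformity is not needed, and the paper explicitly bypasses it: for the method of continuity one works at fixed $\varepsilon$, and all that is required is that $\theta\mapsto\mathcal{K}_{\mathcal{L}_\varepsilon^\theta}$ be a continuous family for that $\varepsilon$. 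The paper simply cites \eqref{pri:5.11} together with Remark \ref{remark:4.1} to get $\|\mathcal{K}_{\mathcal{L}_\varepsilon^{\theta_1}}-\mathcal{K}_{\mathcal{L}_\varepsilon^{\theta_2}}\|\le C_\varepsilon|\theta_1-\theta_2|$ with an $\varepsilon$-dependent constant, and remarks that this is acceptable. The $\varepsilon$-uniformity of the final statement comes solely from the $\theta$-uniform and $\varepsilon$-uniform a priori bound \eqref{pri:6.8}, not from the Lipschitz-in-$\theta$ continuity. So your alternative scale-splitting machinery, while plausible, would require proving a new comparison lemma that the paper neither has nor needs; the obstacle you anticipated does not arise. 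A smaller imprecision: the $\theta^{\tau_0/2}$-weighted Rellich bounds you quote are the Section 4 small-scale versions; in the full-scale proof of \eqref{pri:6.9} one uses the (unweighted) Rellich estimates of Lemma \ref{lemma:6.2} in their place, though the absorption pattern is the same.
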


\begin{proof}
The main ideas have been used in Theorems $\ref{thm:3.3}$ and $\ref{thm:4.1}$, originally
from \cite[Lemma 5.7]{SZW24} and \cite[Theorem 3.2]{GZS1}.
Let $u_\varepsilon = \mathcal{S}_{\varepsilon}(f)$, and it is not hard to see that
$\mathcal{L}_\varepsilon (u_\varepsilon) = 0$ in $\mathbb{R}^d\setminus\partial\Omega$.
Then in view of Lemma $\ref{lemma:6.1}$ it is known that
$(\nabla u_\varepsilon)^*\in L^2(\partial\Omega)$. Thus due to
the jump relation
\begin{equation*}
f = \Big(\frac{\partial S_{\varepsilon}(f)}{\partial\nu_\varepsilon}\Big)_{+}
- \Big(\frac{\partial S_{\varepsilon}(f)}{\partial\nu_\varepsilon}\Big)_{-}
\end{equation*}
from $\eqref{Id:6.2}$, and the results in Lemmas $\ref{lemma:6.2}$ and $\ref{lemma:6.3}$,
the same calculation to that given for Theorem $\ref{thm:3.3}$ leads to
the following estimate
\begin{equation*}
\Big\|\Big(\frac{\partial \mathcal{S}_{\varepsilon}(f)}{\partial \nu_\varepsilon}\Big)_{\mp}
\Big\|_{L^2(\partial\Omega)}
\leq C\Big\|\Big(\frac{\partial \mathcal{S}_{\varepsilon}(f)}{\partial
\nu_\varepsilon}\Big)_{\pm}\Big\|_{L^2(\partial\Omega)}.
\end{equation*}
This yields the stated estimate $\eqref{pri:6.8}$.

The invertibility of $\pm\frac{1}{2}I+\mathcal{K}_{\varepsilon}$
on $L^2(\partial\Omega;\mathbb{R}^m)$ is based upon a continuity argument. We mention that the proof of this part
is quite similar to that given for $\eqref{thm:4.1}$.
Assume the same operator $\mathcal{L}_{x_0}$ as in $\eqref{eq:5.4}$, and let
\begin{equation}
 \mathcal{L}_\varepsilon^\theta = \theta\mathcal{L}_\varepsilon + (1-\theta)\mathcal{L}_{x_0}.
\end{equation}
Note that the coefficients of $\mathcal{L}_\varepsilon^\theta$ satisfy $\eqref{a:1}$,
$\eqref{a:2}$, $\eqref{a:3}$ and $\eqref{a:4}$ with $\lambda\geq\max\{\mu,\lambda_0\}$.
It follows from the estimate $\eqref{pri:6.8}$ that
\begin{equation*}
\|f\|_{L^2(\partial\Omega)} \leq C\big\|(\pm1/2)I+\mathcal{K}_\varepsilon^\theta\big\|_{L^2(\partial\Omega)}
\end{equation*}
where $C$ is independent of $\theta$. Also, we may obtain
\begin{equation*}
\|\mathcal{K}_{\varepsilon}^{\theta_1}-\mathcal{K}_{\varepsilon}^{\theta_2}\|_{L^2(\partial\Omega)\to
L^2(\partial\Omega)}
\leq C_\varepsilon|\theta_1-\theta_2|
\end{equation*}
from the estimate $\eqref{pri:5.11}$ and Remark $\ref{remark:4.1}$,
where $C_\varepsilon$ depending on $\varepsilon$ is also acceptable, and this
implies that
$\big\{\pm\frac{1}{2}I+\mathcal{K}_{\varepsilon}^\theta:t\in[0,1]\big\}$
are continuous families of bounded operators on $L^2(\partial\Omega;\mathbb{R}^m)$.
Since $(\pm1/2+\mathcal{K}_\varepsilon^\theta)$ is invertible in the case of $\theta =0$
(see Theorem $\ref{thm:3.3}$), we conclude that it will be invertible for any $\theta\in[0,1]$
due to the estimate $\eqref{pri:6.8}$.

Finally, the estimate $\eqref{pri:6.9}$ could be proved by the same token and we refer the reader
to Theorem $\ref{thm:4.3}$ for the details. We have completed the proof.
\end{proof}

\begin{thm}\label{thm:6.5}
Let $\epsilon_0>0$ be sufficiently small.
Assume the same conditions as in Theorem $\ref{thm:6.4}$.
If the coefficients $V,B$ additionally satisfy $\|V-B\|_{L^\infty(\mathbb{R}^d)}\leq \epsilon_0$,
then the trace operators
$\pm(1/2)I+\mathcal{K}_{\mathcal{L}_\varepsilon^*}:L^2(\partial\Omega;\mathbb{R}^m)\to
L^2(\partial\Omega;\mathbb{R}^m)$ are invertible,
where the operators $\mathcal{K}_{\mathcal{L}^*_\varepsilon}$ is defined in Theorem $\ref{thm:6.3}$.
Also, there exists a constant
$C>0$, depending on $\mu,\kappa,\lambda,m,d,\tau$ and $\Omega$, such that
for any $g\in L^2(\partial\Omega;\mathbb{R}^m)$,
\begin{equation}
\big\|g\big\|_{L^2(\partial\Omega)}
\leq C\big\|\big(\pm(1/2)I+\mathcal{K}_{\mathcal{L}^*_\varepsilon}\big)(g)\big\|_{L^2(\partial\Omega)}.
\end{equation}
\end{thm}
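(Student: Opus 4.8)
The plan is to repeat, at the scale $\varepsilon$, the argument already used for Theorem~\ref{thm:4.6}, paying attention throughout to the $\varepsilon$-independence of the constants. First I would record the two facts about $\pm(1/2)I+\mathcal{K}_\varepsilon$ supplied by Theorem~\ref{thm:6.4}: these operators are invertible on $L^2(\partial\Omega;\mathbb{R}^m)$, and they obey the uniform lower bound $\eqref{pri:6.8}$, so that $\|(\pm(1/2)I+\mathcal{K}_\varepsilon)^{-1}\|_{L^2\to L^2}\le C$ with $C$ depending only on $\mu,\kappa,\lambda,m,d,\tau$ and $\Omega$. Taking Hilbert-space adjoints and using the duality identity $\mathcal{K}_\varepsilon^*=\mathcal{K}_{\mathcal{L}_\varepsilon^*}+T_\varepsilon$ recorded in the Remark following Theorem~\ref{thm:6.3}, it follows that $\pm(1/2)I+\mathcal{K}_\varepsilon^*$ are invertible on $L^2(\partial\Omega;\mathbb{R}^m)$ with $\|(\pm(1/2)I+\mathcal{K}_\varepsilon^*)^{-1}\|_{L^2\to L^2}\le C$, uniformly in $\varepsilon$, and that $\|g\|_{L^2(\partial\Omega)}\le C\|(\pm(1/2)I+\mathcal{K}_\varepsilon^*)(g)\|_{L^2(\partial\Omega)}$. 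Here $T_\varepsilon$ is the operator with kernel $n(y)\cdot(B^*(y/\varepsilon)-V^*(y/\varepsilon))\,\mathbf{\Gamma}_\varepsilon(x,y)$ from that same Remark.

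Next I would control $T_\varepsilon$. Writing $T_\varepsilon(f)=\mathcal{S}_\varepsilon(Mf)$, where $M(y)=n(y)\cdot(B^*(y/\varepsilon)-V^*(y/\varepsilon))$ is an $L^\infty$ matrix with $\|M\|_{L^\infty(\partial\Omega)}\le C\|V-B\|_{L^\infty(\mathbb{R}^d)}\le C\epsilon_0$, the $\varepsilon$-uniform mapping bound $\mathcal{S}_\varepsilon\colon L^2(\partial\Omega)\to H^1(\partial\Omega)$ coming from Theorem~\ref{thm:6.10} (together with the $\varepsilon$-uniform size estimate $\eqref{pri:2.3.3}$ for the $L^2\to L^2$ part) gives $\|T_\varepsilon(f)\|_{H^1(\partial\Omega)}\le C\epsilon_0\|f\|_{L^2(\partial\Omega)}$; in particular $\|T_\varepsilon\|_{L^2\to L^2}\le C\epsilon_0$ with $C$ independent of $\varepsilon$, and, since $H^1(\partial\Omega)\subset\subset L^2(\partial\Omega)$, the operator $T_\varepsilon$ is compact on $L^2(\partial\Omega;\mathbb{R}^m)$. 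Then I would factor $\pm(1/2)I+\mathcal{K}_{\mathcal{L}_\varepsilon^*}=(\pm(1/2)I+\mathcal{K}_\varepsilon^*)\big[I-(\pm(1/2)I+\mathcal{K}_\varepsilon^*)^{-1}T_\varepsilon\big]$ and choose $\epsilon_0>0$ so small that $\|(\pm(1/2)I+\mathcal{K}_\varepsilon^*)^{-1}T_\varepsilon\|_{L^2\to L^2}\le C\epsilon_0\le 1/2$; the bracketed factor is then invertible by a Neumann series (or, alternatively, $\pm(1/2)I+\mathcal{K}_{\mathcal{L}_\varepsilon^*}$ is injective by the estimate just derived and a compact perturbation of an invertible operator, hence Fredholm of index zero and therefore invertible). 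The stated estimate $\|g\|_{L^2(\partial\Omega)}\le C\|(\pm(1/2)I+\mathcal{K}_{\mathcal{L}_\varepsilon^*})(g)\|_{L^2(\partial\Omega)}$ then drops out of the factorization combined with the uniform bounds on the two factors.

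The step I expect to need the most care is the bookkeeping of $\varepsilon$-independence. The continuity argument inside the proof of Theorem~\ref{thm:6.4} only needs an $\varepsilon$-dependent constant to establish the topological fact of invertibility, but the quantitative estimate $\eqref{pri:6.8}$, the mapping bounds in Theorem~\ref{thm:6.10}, and the size estimates $\eqref{pri:2.3.3}$, $\eqref{pri:2.11}$ are all uniform in $\varepsilon$; assembling these correctly is what guarantees that the product of constants multiplying $\epsilon_0$ above can be taken uniform, so that $\|V-B\|_{L^\infty(\mathbb{R}^d)}\le\epsilon_0$ is a single smallness condition and not an $\varepsilon$-dependent one. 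Once this is in place, the remainder is a verbatim transcription of the proof of Theorem~\ref{thm:4.6}.
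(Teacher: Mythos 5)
Your proof is correct and takes exactly the approach the paper intends: the paper's own proof of Theorem~\ref{thm:6.5} is a one-line deferral to the duality argument of Theorem~\ref{thm:4.6}, and your argument is a faithful transcription of that proof to the $\varepsilon$-dependent setting, with the right ingredients (Theorem~\ref{thm:6.4} for the $\varepsilon$-uniform invertibility of $\pm(1/2)I+\mathcal{K}_\varepsilon$, the identity $\mathcal{K}_\varepsilon^*=\mathcal{K}_{\mathcal{L}_\varepsilon^*}+T_\varepsilon$, and Theorem~\ref{thm:6.10} plus the size estimate $\eqref{pri:2.3.3}$ to make $T_\varepsilon$ small and compact). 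The remark on bookkeeping $\varepsilon$-independence is well placed, since it is precisely the uniform constant in $\eqref{pri:6.8}$, not the $\varepsilon$-dependent continuity argument, that guarantees a single smallness condition $\|V-B\|_{L^\infty}\le\epsilon_0$ suffices.
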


\begin{proof}
The proof is based upon a duality argument (see Theorem $\ref{thm:4.6}$),
and is not particularly difficult but will not be reproduced here.
\end{proof}

\subsection{Estimates for square functions}

\begin{thm}\label{thm:5.5}
Suppose that the same conditions as in Theorem $\ref{thm:1.1}$.
Given $g\in L^2(\partial\Omega;\mathbb{R}^m)$, let
$u_\varepsilon$  be the solution to
$\mathcal{L}_\varepsilon(u_\varepsilon)= 0$
in $\Omega$ and $u_\varepsilon = g$ n.t. on $\partial\Omega$
with $(u_\varepsilon)^*\in L^2(\partial\Omega)$.
Then we have
\begin{equation}\label{pri:5.16}
\int_\Omega |\nabla u_\varepsilon(x)|^2 \delta(x)dx
\leq C\int_{\partial\Omega}|g|^2dS,
\end{equation}
where $C$ depends on
$\mu,\tau,\kappa,\lambda,m,d$ and $\Omega$.
\end{thm}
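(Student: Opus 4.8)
The plan is to deduce the square function bound from the nontangential maximal function estimate, via the classical principle that for solutions of divergence–form elliptic systems the square function is controlled by the nontangential maximal function in $L^2$. First, by uniqueness (Theorem~\ref{thm:1.1}) and Theorem~\ref{thm:6.5}, the solution of $(\mathbf{DH_\varepsilon})$ is the double layer potential $u_\varepsilon=\mathcal D_\varepsilon\big((-\tfrac12 I+\mathcal K_{\mathcal L_\varepsilon^*})^{-1}(g)\big)$, so that Lemma~\ref{lemma:6.1} gives $\|(u_\varepsilon)^*\|_{L^2(\partial\Omega)}\le C\|g\|_{L^2(\partial\Omega)}$ with $C$ independent of $\varepsilon$ — this is the estimate \eqref{pri:1.1}. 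Hence it suffices to prove
\begin{equation*}
\int_\Omega|\nabla u_\varepsilon(x)|^2\,\delta(x)\,dx\le C\,\|(u_\varepsilon)^*\|_{L^2(\partial\Omega)}^2 .
\end{equation*}
Introducing, for $P\in\partial\Omega$, the square function $S(u_\varepsilon)(P)^2=\int_{\Lambda_{N_0}^+(P)}|\nabla u_\varepsilon(x)|^2\,\delta(x)^{2-d}\,dx$, Fubini's theorem together with the fact that $\{P\in\partial\Omega:x\in\Lambda_{N_0}^+(P)\}$ has surface measure comparable to $\delta(x)^{d-1}$ yields $\int_{\partial\Omega}S(u_\varepsilon)^2\,dS\approx\int_\Omega|\nabla u_\varepsilon|^2\delta\,dx$, so the goal becomes the Dahlberg–type inequality $\|S(u_\varepsilon)\|_{L^2(\partial\Omega)}\le C\|(u_\varepsilon)^*\|_{L^2(\partial\Omega)}$; working with the truncations obtained by restricting the $x$–integral to $\Sigma_s$ keeps everything finite and one lets $s\to0$ at the end.

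For this inequality I would run the stopping–time / good–$\lambda$ argument of Dahlberg and Dahlberg–Jerison–Kenig, whose only inputs from the equation are interior estimates valid uniformly in $\varepsilon$: the Caccioppoli inequality on Whitney balls, $\int_{B(x,\delta(x)/8)}|\nabla u_\varepsilon|^2\le C\delta(x)^{-2}\int_{B(x,\delta(x)/4)}|u_\varepsilon|^2$, supplied by Lemma~\ref{lemma:2.2} with a constant depending only on $\mu,m,d$ (even with the extra gain $(1+\lambda\delta(x)^2)^{-k}$, which can only help), together with the uniform interior Hölder continuity of $u_\varepsilon$ furnished by Theorem~\ref{thm:0.1}, the constant there again being independent of $\varepsilon$ since $\lambda\ge\lambda_0$ forces $\|V/\sqrt\lambda\|_\infty$, $\|B/\sqrt\lambda\|_\infty$, $\|c/\lambda\|_\infty$ to be controlled. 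With these two ingredients the good–$\lambda$ inequality relating $S(u_\varepsilon)$ and $(u_\varepsilon)^*$ holds with constants depending only on $\mu,m,d$ and the Lipschitz character of $\Omega$, and integrating in $\lambda$ gives the claim; the homogeneous analogue $L_\varepsilon=-\text{div}(A(\cdot/\varepsilon)\nabla)$, which is itself a system, is handled this way in \cite{SZW24}. Note that the symmetry $A=A^*$ plays no role at this stage — it has already been used to produce $u_\varepsilon$ and to bound $(u_\varepsilon)^*$ — and that the far–interior region $\{\delta(x)>cR_0\}$ contributes at most $C\|u_\varepsilon\|_{L^2(\Omega)}^2\le C\|(u_\varepsilon)^*\|_{L^2(\partial\Omega)}^2$ by Theorem~\ref{thm:0.1} and a crude co–area estimate, hence is harmless.

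The step I expect to be the main obstacle is running the good–$\lambda$ scheme uniformly in $\varepsilon$ in the presence of the lower–order terms. In the stopping–time argument one normally replaces $u_\varepsilon$ by $u_\varepsilon-c$ on a Whitney ball, but $u_\varepsilon-c$ solves $\mathcal L_\varepsilon(u_\varepsilon-c)=-(\lambda I+c(\cdot/\varepsilon))c$ rather than the homogeneous equation; this is dealt with by comparing $u_\varepsilon$ on $2B$ with the solution of $\mathcal L_\varepsilon v=-(\lambda I+c(\cdot/\varepsilon))\langle u_\varepsilon\rangle_{2B}$ sharing the boundary values of $u_\varepsilon$ on $\partial(2B)$, whose contribution is of lower order — it is estimated through the local boundedness bound \eqref{pri:0.4} of Theorem~\ref{thm:0.2} applied to the bounded source $F$ — and is absorbed, or, equivalently, one splits $u_\varepsilon$ locally into a part solving the homogeneous equation and a Newtonian–type potential of the zeroth–order term and iterates. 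Propagating carefully the constants of Lemma~\ref{lemma:2.2}, Theorem~\ref{thm:0.1} and Theorem~\ref{thm:0.2} — all scaling–invariant once $\lambda\ge\lambda_0$ — through this scheme is precisely what guarantees that the final constant $C$ depends only on $\mu,\kappa,\lambda,m,d$ and $\Omega$, as asserted.
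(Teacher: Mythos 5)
Your route is genuinely different from the paper's, and the crucial step is left as a gap. The paper proves \eqref{pri:5.16} by perturbation: rewrite $\mathcal{L}_\varepsilon(u_\varepsilon)=0$ as $L_\varepsilon(u_\varepsilon)=\text{div}(F_1)+F_0$ with $F_1=V(\cdot/\varepsilon)u_\varepsilon$ and $F_0=-B(\cdot/\varepsilon)\nabla u_\varepsilon-(c(\cdot/\varepsilon)+\lambda I)u_\varepsilon$, then split $u_\varepsilon=v_\varepsilon+w_\varepsilon$ where $v_\varepsilon$ solves $L_\varepsilon(v_\varepsilon)=\text{div}(F_1)+F_0$ in $\Omega$ with $v_\varepsilon=0$ on $\partial\Omega$ and $w_\varepsilon$ solves $L_\varepsilon(w_\varepsilon)=0$ with $w_\varepsilon=u_\varepsilon$ on $\partial\Omega$. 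The piece $w_\varepsilon$ is then handled by the known square function estimate \eqref{pri:5.18} for the homogeneous operator $L_\varepsilon$ from \cite{SZW2}, and $v_\varepsilon$ by the weighted estimate \eqref{pri:5.17} from \cite{GZS1}, with the weighted norms of $F_0,F_1$ controlled by $\|(u_\varepsilon)^*\|_{L^2(\partial\Omega)}$ via an interior estimate and a co-area argument. In this way the good-$\lambda$/stopping-time machinery is invoked only for the pure second-order operator $L_\varepsilon$, where it is already established, and the lower-order terms of $\mathcal{L}_\varepsilon$ appear only as integrable source data.

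You propose instead to rerun the Dahlberg good-$\lambda$ argument directly on $\mathcal{L}_\varepsilon$, which is precisely what the paper's decomposition is engineered to avoid. You correctly flag the obstruction -- that $u_\varepsilon-c$ is no longer a solution -- but the step absorbing the correction is left vague and is in one place wrong: in fact $\mathcal{L}_\varepsilon(u_\varepsilon-c)=\text{div}\big(V(\cdot/\varepsilon)c\big)-\big(\lambda I+c(\cdot/\varepsilon)\big)c$, so there is an additional $\text{div}(Vc)$ source term you omit, and that term is of the same order as the gradient being controlled; it cannot simply be declared lower order but must be pushed through Caccioppoli's inequality \eqref{pri:2.6} as a nontrivial $f$-term. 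Similarly, the phrase ``estimated through the local boundedness bound \eqref{pri:0.4} \ldots and absorbed'' is a heuristic rather than an estimate: \eqref{pri:0.4} gives a pointwise $L^\infty$ bound, whereas the stopping-time scheme requires a summable $L^2$-weighted bound over Whitney cubes that iterates correctly. With enough care the direct argument could likely be carried out -- the gain $(1+\lambda R^2)^{-k}$ in Lemma \ref{lemma:2.2} and the boundedness of $\Omega$ both help -- but it amounts to re-deriving a good-$\lambda$ square function inequality for a nonhomogeneous system from scratch, whereas the paper reduces in two lines to estimates already proved for $L_\varepsilon$. You should use that decomposition.
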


Our proof is based upon the related square function estimates
for homogeneous elliptic operators,
which has been new developed in \cite{GZS1,SZW2}.
This releases us from reusing the double layer potential
representation coupled with a complicated T(b)-theorem argument
(see \cite[Theorem 1.1]{MMMT}). For the ease of the statement, recall
the notation $L_\varepsilon = -\text{div}(A(x/\varepsilon)\nabla)$ and it
will appear in the following.

\begin{thm}\label{thm:5.4}
Suppose that $A$ satisfies $\eqref{a:1},\eqref{a:2}$ and $\eqref{a:4}$. Let
$u_\varepsilon$  be the solution to
$L_\varepsilon(u_\varepsilon)= F_0 + \emph{div}(F_1)$
in $\Omega$ and $u_\varepsilon = 0$ on $\partial\Omega$, where
$F_0\in L^2(\Omega;\mathbb{R}^m)$ and $F_1\in L^2(\Omega;\mathbb{R}^{md})$.
Then there holds
\begin{equation}\label{pri:5.17}
\int_\Omega |\nabla u_\varepsilon(x)|^2[\delta(x)]^{\sigma_2}dx
\leq C\int_\Omega|F_0|^2[\delta(x)]^{\sigma_1+2} dx
+C\int_\Omega|F_1|^2[\delta(x)]^{\sigma_1} dx
\end{equation}
for any $0\leq \sigma_1<\sigma_2\leq 1$, where $C$ depends on
$\mu,\tau,\kappa,m,d,\sigma_1,\sigma_2$ and $\Omega$.
\end{thm}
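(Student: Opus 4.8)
The plan is to prove Theorem~\ref{thm:5.4} by a duality argument, converting the weighted square function bound for the forced Dirichlet problem into a weighted $W^{1,2}$ estimate for the adjoint Dirichlet problem, where the strict gap $\sigma_1<\sigma_2$ is precisely the ingredient that lets one replace the (possibly non-Muckenhoupt) weight $\delta^{-\sigma_2}$ by the Muckenhoupt weight $\delta^{-\sigma_1}$. First I would reduce to an a priori estimate: if the right-hand side of \eqref{pri:5.17} is finite then $u_\varepsilon\in W^{1,2}_0(\Omega;\mathbb{R}^m)$ and all the pairings below are legitimate, while the left-hand side $\int_\Omega|\nabla u_\varepsilon|^2\delta^{\sigma_2}dx$ is automatically finite since $\delta=\delta(x)=\mathrm{dist}(x,\partial\Omega)$ is bounded. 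By duality it suffices to bound $\big|\int_\Omega\delta^{\sigma_2/2}\nabla u_\varepsilon\cdot G\,dx\big|$ for arbitrary $G\in L^2(\Omega;\mathbb{R}^{md})$ with $\|G\|_{L^2(\Omega)}\le 1$. Set $\Psi=\delta^{\sigma_2/2}G$ and let $v_\varepsilon\in W^{1,2}_0(\Omega;\mathbb{R}^m)$ solve the adjoint equation $L_\varepsilon^*(v_\varepsilon)=-\mathrm{div}(\Psi)$ in $\Omega$, where $L_\varepsilon^*=-\mathrm{div}(A^*(\cdot/\varepsilon)\nabla)$. Using $u_\varepsilon$ as a test function in the weak formulation for $v_\varepsilon$ and $v_\varepsilon$ as a test function in the weak formulation for $u_\varepsilon$, together with $\int A^*\nabla v_\varepsilon\cdot\nabla u_\varepsilon=\int A\nabla u_\varepsilon\cdot\nabla v_\varepsilon$, yields the identity $\int_\Omega\nabla u_\varepsilon\cdot\Psi\,dx=\int_\Omega F_0\,v_\varepsilon\,dx-\int_\Omega F_1\cdot\nabla v_\varepsilon\,dx$.

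From here, Cauchy--Schwarz with the weight factorizations $\delta^{(2+\sigma_1)/2}\cdot\delta^{-(2+\sigma_1)/2}$ and $\delta^{\sigma_1/2}\cdot\delta^{-\sigma_1/2}$, followed by the weighted Hardy inequality $\int_\Omega|v_\varepsilon/\delta|^2\delta^{-\sigma_1}dx\le C\int_\Omega|\nabla v_\varepsilon|^2\delta^{-\sigma_1}dx$ (valid on a bounded Lipschitz domain for the exponent $-\sigma_1>-1$, which is exactly why $\sigma_1<1$ is needed), reduces everything to the single estimate
\begin{equation*}
\Big(\int_\Omega|\nabla v_\varepsilon|^2\delta^{-\sigma_1}dx\Big)^{1/2}\le C\Big(\int_\Omega|\Psi|^2\delta^{-\sigma_2}dx\Big)^{1/2}=C\|G\|_{L^2(\Omega)}\le C,
\end{equation*}
with $C$ independent of $\varepsilon$ and $G$. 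Since $\mathrm{diam}(\Omega)<\infty$ and $-\sigma_1>-\sigma_2$ one has $\delta^{-\sigma_1}\le C\,\delta^{-\sigma_2}$, hence $\int_\Omega|\Psi|^2\delta^{-\sigma_1}dx\le C\int_\Omega|\Psi|^2\delta^{-\sigma_2}dx$; so it is enough to prove the \emph{diagonal} weighted bound $\|\nabla v_\varepsilon\|_{L^2(\delta^{-\sigma_1})}\le C\|\Psi\|_{L^2(\delta^{-\sigma_1})}$ for the adjoint Dirichlet problem.

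This diagonal bound is where the main work lies. The weight $w=\delta^{-\sigma_1}$ is Muckenhoupt --- for $0\le\sigma_1<1$ it lies even in the $A_1$ class relative to $\partial\Omega$ --- and the claim is the weighted $W^{1,2}$ estimate for $L_\varepsilon^*$ with zero Dirichlet data, uniform in $\varepsilon$. I would deduce it from the uniform-in-$\varepsilon$ unweighted estimate $\|\nabla v_\varepsilon\|_{L^p(\Omega)}\le C_p\|\Psi\|_{L^p(\Omega)}$ for $|1/p-1/2|$ small, which for $L_\varepsilon^*$ in a Lipschitz domain holds by the Meyers/reverse-Hölder improvement and is part of the uniform regularity theory of Kenig--Lin--Shen (cf. \cite{SZW12,QXS1}), followed by Rubio de Francia (limited-range) extrapolation, which promotes an $L^{p_0}$--$L^{p_0'}$ pair to the weighted $L^2$ bound for all weights in the corresponding Muckenhoupt class; as $\delta^{-\sigma_1}\in A_1$, it is admissible in any such range. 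Note that at $\sigma_2=1$ the weight $\delta^{-\sigma_2}=\delta^{-1}$ is \emph{not} Muckenhoupt, which is exactly why the theorem must require $\sigma_1<\sigma_2$: the positive room $\sigma_2-\sigma_1$ is entirely consumed in passing from $\delta^{-\sigma_2}$ to $\delta^{-\sigma_1}$.

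I expect the genuinely delicate point to be the $\varepsilon$-uniformity of the weighted $W^{1,2}$ estimate for $L_\varepsilon^*$ and the precise Muckenhoupt bookkeeping for $\delta^{-\sigma_1}$ inside the limited range produced by Meyers' exponent. An alternative, more self-contained route --- via a Whitney/dyadic decomposition of $\Omega$ with respect to $\delta$, the uniform interior and boundary Lipschitz (or $W^{1,p}$) estimates of Avellaneda--Lin type for $L_\varepsilon$, a localized Caccioppoli inequality on each dyadic shell, and summation of a geometric series whose ratio is controlled by $\sigma_2-\sigma_1$ --- reaches the same conclusion, but it is technically heavier and still rests on the same weighted-norm input in disguise. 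In either case, once the displayed bound for $v_\varepsilon$ is established, Theorem~\ref{thm:5.4} follows from the duality reduction above, with $C$ depending only on $\mu,\tau,\kappa,m,d,\sigma_1,\sigma_2$ and $\Omega$, as claimed.
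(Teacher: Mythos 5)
Your duality reduction is clean and correct: testing the weak formulations against each other gives
\begin{equation*}
\int_\Omega \nabla u_\varepsilon\cdot\Psi\,dx=\int_\Omega F_0\,v_\varepsilon\,dx-\int_\Omega F_1\cdot\nabla v_\varepsilon\,dx,
\end{equation*}
and the weight factorizations together with Hardy's inequality $\int_\Omega |v_\varepsilon/\delta|^2\delta^{-\sigma_1}\le C\int_\Omega|\nabla v_\varepsilon|^2\delta^{-\sigma_1}$ (valid because $0\le\sigma_1<1$) correctly reduce everything to the single diagonal bound $\|\nabla v_\varepsilon\|_{L^2(\delta^{-\sigma_1})}\le C\|\Psi\|_{L^2(\delta^{-\sigma_1})}$ for the adjoint Dirichlet problem, with the strict gap $\sigma_1<\sigma_2$ correctly spent absorbing $\delta^{\sigma_2-\sigma_1}$. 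That part is a genuine and elegant way to organize the theorem, and it is not what the paper does: the paper gives no proof at all and simply cites \cite[Theorem~4.3]{GZS1}.

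The genuine gap is in how you obtain that diagonal weighted bound. Limited-range Rubio de Francia extrapolation does \emph{not} run from a family of unweighted $L^p$ estimates ($|1/p-1/2|$ small) to a weighted $L^2$ estimate; its input must already be a weighted inequality, at one exponent, for a full Muckenhoupt (or limited-range) class of weights, with constant depending only on the weight characteristic. An unweighted $L^p$ bound in an interval around $2$ is not, by itself, such an input, so the step as written would fail. What actually produces the weighted estimate here is not extrapolation but the real-variable (good-$\lambda$, or Shen-type self-improving) mechanism: one exploits that $\nabla v_\varepsilon$ satisfies a uniform-in-$\varepsilon$ \emph{local} reverse H\"older inequality on interior Whitney balls (coming from the Avellaneda--Lin interior Lipschitz/$W^{1,p}$ theory) together with the uniform boundary $W^{1,p}$ Meyers estimate, and then runs a Whitney decomposition of $\Omega$ with a good-$\lambda$ argument to pass to the $A_1$ weight $\delta^{-\sigma_1}$. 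That is not ``the same weighted-norm input in disguise''; it is precisely the place where the weighted estimate is \emph{derived} rather than assumed, and it is where the $\varepsilon$-uniformity is actually established. In other words, the route you label as the heavier alternative (Whitney/dyadic shells plus uniform interior estimates plus Caccioppoli plus summing a geometric series controlled by $\sigma_2-\sigma_1$) is the one that works and is, in substance, the proof in \cite{GZS1}; the extrapolation shortcut does not. With that substitution your argument becomes correct, but as written the middle step is a genuine hole, not a mere choice of reference.
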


\begin{proof}
The proof may be found in \cite[Theorem 4.3]{GZS1}.
\end{proof}

\begin{thm}
Assume the same conditions as in Theorem $\ref{thm:5.4}$. Let
$f\in L^2(\partial\Omega;\mathbb{R}^m)$ and $u_\varepsilon$ be
the unique solution of the $L^2$ Dirichlet problem:
$L_\varepsilon(u_\varepsilon) = 0$ in $\Omega$,
$u_\varepsilon = f$ n.t. on $\partial\Omega$
with $(u_\varepsilon)^*\in L^2(\partial\Omega)$. Then we have
\begin{equation}\label{pri:5.18}
\Big(\int_{\Omega}|\nabla u_\varepsilon(x)|^2\delta(x)dx
\Big)^{1/2}
\leq C\|f\|_{L^2(\partial\Omega)},
\end{equation}
where $C$ depends on $\mu,\kappa,\tau,m,d$ and $\Omega$.
\end{thm}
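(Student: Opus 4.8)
The plan is to reduce to smooth boundary data and then deduce the estimate from the classical square-function bound for the homogenized (constant-coefficient) operator, combined with the weighted square-function estimate of Theorem~\ref{thm:5.4} applied to the homogenization error.

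First I would use a density argument. Recall from \cite{SZW24} that the $L^2$ Dirichlet problem for $L_\varepsilon$ is uniquely solvable with the uniform bound $\|(u_\varepsilon)^*\|_{L^2(\partial\Omega)}\le C\|f\|_{L^2(\partial\Omega)}$. Since $C^\infty(\partial\Omega)$ is dense in $L^2(\partial\Omega)$, since $f\mapsto u_\varepsilon$ is continuous into $L^2_{\mathrm{loc}}(\Omega)$ by this bound, and since $u\mapsto\int_\Omega|\nabla u|^2\delta(x)\,dx$ is lower semicontinuous under such convergence, it suffices to treat $f\in C^\infty(\partial\Omega)$; for such $f$ one has $u_\varepsilon\in H^1(\Omega)\cap C^\infty(\Omega)$ with $\nabla u_\varepsilon$ admitting nontangential limits on $\partial\Omega$, which legitimizes the manipulations below. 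It then suffices to prove $\int_\Omega|\nabla u_\varepsilon|^2\delta\,dx\le C\big(\|f\|_{L^2(\partial\Omega)}^2+\|(u_\varepsilon)^*\|_{L^2(\partial\Omega)}^2\big)$.

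Next I would split $\Omega=(\Omega\setminus\Sigma_{cR_0})\cup\Sigma_{cR_0}$ for a small $c>0$. On the interior piece $\Sigma_{cR_0}$, where $\delta\gtrsim R_0$, the interior Caccioppoli inequality for $L_\varepsilon$ bounds the integral by $\frac{C}{R_0}\int_{\Sigma_{cR_0/2}}|u_\varepsilon|^2\,dx$, which is dominated by $C\|(u_\varepsilon)^*\|_{L^2(\partial\Omega)}^2$ since every point of that region is seen by a boundary subset of measure $\approx1$. For the boundary layer $\Omega\setminus\Sigma_{cR_0}$ I would compare $u_\varepsilon$ with the solution $u_0$ of the homogenized problem $L_0u_0=0$ in $\Omega$, $u_0=f$ on $\partial\Omega$, where $L_0=-\mathrm{div}(\widehat A\nabla)$; for this constant-coefficient operator the square-function estimate $\int_\Omega|\nabla u_0|^2\delta\,dx\le C\|f\|_{L^2(\partial\Omega)}^2$ is classical and of Rellich type, and it is precisely the ``square function estimate for homogeneous operators'' alluded to before the statement. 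Writing the homogenization error with the first-order Dirichlet corrector, $w_\varepsilon=u_\varepsilon-u_0-(\text{corrector terms})$, one gets $w_\varepsilon=0$ on $\partial\Omega$ and $L_\varepsilon w_\varepsilon=\mathrm{div}(G_1)+G_0$, with $G_1,G_0$ built from $\nabla u_0$, $\nabla^2u_0$ and the (flux) correctors and carrying a factor $\varepsilon$. Applying Theorem~\ref{thm:5.4} with $\sigma_2=1$ then controls $\int_\Omega|\nabla w_\varepsilon|^2\delta\,dx$ by weighted norms of $G_0$ and $G_1$; together with the weighted bounds $\int_\Omega|\nabla^lu_0|^2\delta^{2l-1}\,dx\le C\|f\|_{L^2(\partial\Omega)}^2$ and the $L^\infty$ bounds on correctors, this should yield $\int_\Omega|\nabla w_\varepsilon|^2\delta\,dx\le C\varepsilon^{2\rho}\|f\|_{L^2(\partial\Omega)}^2$ for some $\rho>0$; since $\nabla u_\varepsilon=\nabla w_\varepsilon+\nabla u_0+(\text{corrector derivatives})$ and the corrector derivatives are pointwise controlled by $|\nabla u_0|+\varepsilon|\nabla^2u_0|$, assembling the three contributions closes the estimate on the layer.

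The hard part is the weight matching in this last step, and with it the $\varepsilon$-uniformity of the constant. The target weight $\delta^1$ lies exactly at the endpoint $\sigma_2=1$ permitted by Theorem~\ref{thm:5.4}, whereas the weights naturally attached to the error (powers $\delta^{2l-1}$ coming from $\nabla^lu_0$) overshoot the admissible range $[0,1)$ for $\sigma_1$. This forces a dyadic decomposition of the layer in $\delta$: on the fine scales $\delta\lesssim\varepsilon$ a rescaling reduces Theorem~\ref{thm:5.4} to the case $\varepsilon=1$ and the small-scale comparison lemmas of Section~\ref{sec:4} (e.g. Lemma~\ref{lemma:5.2}) become available, absorbing the excess weight against the $\varepsilon$-gain, while the coarser scales $\delta\gtrsim\varepsilon$ are treated separately. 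Managing this small-scale/large-scale interplay while keeping all constants independent of $\varepsilon$ on a merely Lipschitz domain is the technical core; it parallels the argument already carried out for $\mathcal{L}_\varepsilon$ in the proof of Theorem~\ref{thm:5.5}, specialized here to the homogeneous operator.
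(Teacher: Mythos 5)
The paper's own ``proof'' is the single sentence ``The proof may be found in \cite[Theorem 2.1]{SZW2}''; there is no argument in the text to compare against, so you are in effect attempting to reconstruct that cited result of Kenig, Lin and Shen.

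Your plan --- classical square-function bound for the homogenized operator $L_0=-\mathrm{div}(\widehat{A}\nabla)$, plus the weighted estimate of Theorem~\ref{thm:5.4} for the corrector error, reconciled through a dyadic decomposition in $\delta$ --- has the right overall shape, but the two steps you describe informally are exactly where the real work lies, and neither is executed. First, $w_\varepsilon=u_\varepsilon-u_0-\varepsilon\chi_k(\cdot/\varepsilon)\partial_k u_0$ does \emph{not} vanish on $\partial\Omega$ (the periodic correctors are nonzero there), so Theorem~\ref{thm:5.4}, which requires zero Dirichlet data, cannot be applied to $w_\varepsilon$ as written; one must multiply the corrector term by a cut-off $\eta_\varepsilon$ vanishing within distance $O(\varepsilon)$ of $\partial\Omega$, and the derivatives of $\eta_\varepsilon$ then generate a source living precisely in the boundary layer $\delta\lesssim\varepsilon$. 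Second, your appeal to Lemma~\ref{lemma:5.2} on those fine scales is off-target: that lemma compares $\nabla\mathbf{\Gamma}_\varepsilon$ and $\nabla\mathbf{\Gamma}_{A_\varepsilon}$ at small separation and does not yield a Carleson-type control of $|\nabla u_\varepsilon|^2\delta$ over the layer. Note also that with merely $L^2$ boundary data the unweighted layer estimate $\varepsilon^{-1}\int_{\Omega\setminus\Sigma_\varepsilon}|\nabla u_\varepsilon|^2\,dx\le C\|f\|_{L^2(\partial\Omega)}^2$ is false even for the Laplacian (high-frequency data already defeats it), so the fine scales genuinely cannot be crushed by a crude bound; they require a rescaling to the unit period together with a scale-invariant argument on the dilated Lipschitz domain. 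These are the pieces the citation to \cite{SZW2} is standing in for, and your outline identifies the difficulty without supplying the argument.
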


\begin{proof}
The proof may be found in \cite[Theorem 2.1]{SZW2}.
\end{proof}

\noindent\textbf{Proof of Theorem $\ref{thm:5.5}$.} We may
decompose $u_\varepsilon = v_\varepsilon + w_\varepsilon$ in $\Omega$
due to the linearity, where
\begin{equation*}
(1)\left\{\begin{aligned}
L_\varepsilon(v_\varepsilon) &= \text{div}(F_1) + F_0
&~&\text{in}~\Omega,\\
v_\varepsilon &= 0
&~&\text{on}~\partial\Omega,
\end{aligned}\right.
\qquad\quad
(2)\left\{\begin{aligned}
L_\varepsilon(w_\varepsilon) &= 0
&~&\text{in}~\Omega,\\
w_\varepsilon &= u_\varepsilon
&~&\text{on}~\partial\Omega,
\end{aligned}\right.
\end{equation*}
in which $F_0 = -B(x/\varepsilon)\nabla u_\varepsilon
-\big(c(x/\varepsilon)+\lambda I\big)u_\varepsilon$ and
$F_1 = V(x/\varepsilon)u_\varepsilon$. Hence, in terms of (1), it
follows from the estimate $\eqref{pri:5.17}$ that
\begin{equation}\label{f:5.38}
\int_{\Omega}|\nabla v_\varepsilon|^2 \delta(x) dx
\leq C\int_{\Omega}|\nabla u_\varepsilon|^2[\delta(x)]^{2+\sigma_1}dx
+ C\int_{\Omega} |u_\varepsilon|^2 [\delta(x)]^{\sigma_1}dx
\leq C\int_{\partial\Omega}|(u_\varepsilon)^*|^2dS.
\end{equation}
In the last inequality, we employ the interior estimate
\begin{equation*}
 |\nabla u_\varepsilon(x)|^2[\delta(x)]^{2+\sigma_1}
 \leq C[\delta(x)]^{\sigma_1}\dashint_{B(x,\delta(x)/2)}|u_\varepsilon|^2 dx,
\end{equation*}
which implies
\begin{equation*}
\int_{\Omega\setminus\Sigma_{c_0}}
|\nabla u_\varepsilon(x)|^2[\delta(x)]^{2+\sigma_1}dx
\leq C\int_{\partial\Omega}|(u_\varepsilon)^*|^2 dS\int_0^{c_0}
r^{\sigma_1}dr
\leq C\int_{\partial\Omega}|(u_\varepsilon)^*|^2 dS
\end{equation*}
by co-area formula, and
\begin{equation*}
\int_{\Sigma_{c_0}}
|\nabla u_\varepsilon(x)|^2[\delta(x)]^{2+\sigma_1}dx
\leq C\Big(\int_{\Omega}|u_\varepsilon|^{\frac{2d}{d-1}} dx\Big)^{\frac{d-1}{d}}
\leq C\int_{\partial\Omega}|(u_\varepsilon)^*|^{2} dS,
\end{equation*}
where we use $\|w\|_{L^{\frac{2d}{d-1}}(\Omega)}
\leq C\|w\|_{L^2(\partial\Omega)}$ (see for example \cite[Remark 9.3]{SZW12})
in the last step.

For (2), in view of the estimate $\eqref{pri:5.18}$ we obtain
\begin{equation*}
\int_{\Omega}|\nabla w_\varepsilon|^2\delta(x)dx
\leq C\int_{\partial\Omega} |u_\varepsilon|^2 dS,
\end{equation*}
and this together with $\eqref{f:5.38}$ leads to
\begin{equation*}
\int_{\Omega} |\nabla u_\varepsilon|^2 \delta(x)dx
\leq C\int_{\partial\Omega}
|(u_\varepsilon)^*|^2 dS
\leq C\int_{\partial\Omega}
|g|^2 dS
\end{equation*}
where we use the estimate $\eqref{pri:1.1}$ in the last inequality, and
the proof is complete.
\qed

Now, we are ready to show the well-posedness of the $L^2$ Dirichlet, Neumann, and regularity problems.

\noindent\textbf{Proof of Theorem $\ref{thm:1.1}$.}
For any $g\in L^2(\partial\Omega;\mathbb{R}^m)$, it is known by Theorem $\ref{thm:6.5}$ that
$(-\frac{1}{2}I+\mathcal{K}_{\mathcal{L}^*_\varepsilon})^{-1}(g)\in L^2(\partial\Omega;\mathbb{R}^m)$ and
\begin{equation}\label{f:6.12}
\big\|\big(-(1/2)I+\mathcal{K}_{\mathcal{L}^*_\varepsilon}\big)^{-1}(g)\big\|_{L^2(\partial\Omega)}
\leq C\|g\|_{L^2(\partial\Omega)}.
\end{equation}
Let
$u_\varepsilon=\mathcal{D}_{\mathcal{L}_\varepsilon}
\big((-\frac{1}{2}I+\mathcal{K}_{\mathcal{L}^*_\varepsilon})^{-1}(g)\big)$ be the double layer potential,
which is such that $\mathcal{L}_\varepsilon(u_\varepsilon) = 0$ in $\Omega$
and $u_\varepsilon$ n.t. on $\partial\Omega$ with $(u_\varepsilon)^*\in L^2(\partial\Omega)$
(see Lemma $\ref{lemma:6.1}$).
On account of $\eqref{f:6.12}$ one may clearly derive the stated estimate $\eqref{pri:1.1}$. Also,
the square function estimate $\eqref{pri:1.4}$ has been shown in Theorem $\ref{thm:5.5}$. Finally,
$u_\varepsilon\in H^{1/2}(\Omega)$ follows from the estimate $\eqref{pri:1.4}$ by the real interpolation
(see \cite[P.181-182]{JK}). The uniqueness of the solution will be verified
by a similar way as in the proof of Theorem $\ref{thm:4.2}$. We have completed the proof.
\qed

\noindent\textbf{Proof of Theorem $\ref{thm:1.2}$.}
For any $g\in L^2(\partial\Omega;\mathbb{R}^d)$, it follows from Theorem $\ref{thm:6.4}$ that
$(\frac{1}{2}I+\mathcal{K}_{\varepsilon})^{-1}(g)\in L^2(\partial\Omega;\mathbb{R}^m)$
and $\|(\frac{1}{2}I+\mathcal{K}_{\varepsilon})^{-1}(g)\|_{L^2(\partial\Omega)}
\leq C\|g\|_{L^2(\partial\Omega)}$. Thus, it is fine to define a single layer potential
$u_\varepsilon = \mathcal{S}_{\varepsilon}
((\frac{1}{2}I+\mathcal{K}_{\varepsilon})^{-1}(g))$, and not hard to see that
$u_\varepsilon$ satisfies $(\mathbf{NH_\varepsilon})$. Due to the estimate $\eqref{pri:6.4}$ we have
$\|(\nabla u_\varepsilon)^*\|_{L^2(\partial\Omega)}\leq C\|g\|_{L^2(\partial\Omega)}$ which is exactly
the estimate $\eqref{pri:1.2}$. The uniqueness
of the solution is based upon the equality
\begin{equation}\label{eq:6.1}
\mathrm{B}_{\mathcal{L}_\varepsilon;\Omega}[u_\varepsilon,u_\varepsilon]
 = \int_{\partial\Omega}\frac{\partial u_\varepsilon}{\partial \nu_{\varepsilon}}u_\varepsilon dS
\end{equation}
as we have pointed out in the proof of Theorem $\ref{thm:4.2}$. This ends the proof.

\noindent\textbf{Proof of Theorem $\ref{thm:1.3}$.}
The existence is due to the invertibility
of $\mathcal{S}_{\varepsilon}:L^2(\partial\Omega;\mathbb{R}^m)\to H^1(\partial\Omega;\mathbb{R}^m)$
in Theorem $\ref{thm:6.4}$.
For any $g\in H^1(\partial\Omega;\mathbb{R}^m)$, there exists
$f\in L^2(\partial\Omega;\mathbb{R}^m)$ such that
$\mathcal{S}_{\varepsilon}(f) = g$ on $\partial\Omega$, and one may consider
$u_\varepsilon$ to be the solution of the Neumann problem with the boundary data $f\in L^2(\partial\Omega;\mathbb{R}^m)$.
In view of the estimate $\eqref{pri:6.9}$ coupled with $\eqref{pri:1.2}$ we may derive
$\|(\nabla u)^*\|_{L^2(\partial\Omega)}
\leq C\|f\|_{L^2(\partial\Omega)}
\leq C\|g\|_{H^1(\partial\Omega)}$. Also, the uniqueness may be derived from $\eqref{eq:6.1}$.
The remainder of the proof is to show
$\|(u_\varepsilon)^*\|_{L^2(\partial\Omega)}\leq C\|g\|_{L^2(\partial\Omega)}$, and it will be
achieved from
\begin{equation*}
 \|(u_\varepsilon)^*\|_{L^2(\partial\Omega)}
 \leq C  \|\mathcal{M}(u_\varepsilon)\|_{L^2(\partial\Omega)}
 \leq C \|u_\varepsilon\|_{H^1(\Omega\setminus\Sigma_{c_0})}\leq C\|g\|_{L^2(\partial\Omega)},
\end{equation*}
where we use the fact that $(u_\varepsilon)^*(Q)
\leq C\mathrm{M}_{\partial\Omega}(\mathcal{M}(u_\varepsilon))(Q)$ for any $Q\in\partial\Omega$
in the first inequality, as well as, the estimates $\eqref{pri:3.2.1}$ and $\eqref{pri:6.3}$.
We have completed the proof.
\qed

\begin{center}
\textbf{Acknowledgements}
\end{center}

The authors thank Prof. Zhongwei Shen for very helpful discussions
regarding this work when he visited Peking University and Lanzhou University.
The first author was supported by the China Postdoctoral Science Foundation (Grant No. 2017M620490).
The second author was supported by the National Natural Science Foundation of China (Grant No. 11471147).
The third author was supported by the National Natural Science Foundation of China (Grant No. 11571020).

\end{document}